\documentclass[10pt]{article}

\usepackage{authblk}

\usepackage[a4paper,top=3cm,bottom=2cm,left=2.3cm,right=2.3cm,marginparwidth=1.75cm]{geometry}

\usepackage{bbm}
\usepackage{amsmath}
\usepackage{amscd}
\usepackage{amsthm}
\usepackage{amssymb}
\usepackage{verbatim}
\usepackage{mathtools}
\usepackage{enumitem}
\usepackage[colorlinks]{hyperref}
\hypersetup{%
	colorlinks = true,
	linkcolor  = black
}
\usepackage{xcolor}
\usepackage{txfonts}
\usepackage{lscape}
\usepackage{mathabx}
\theoremstyle{plain}
\newtheorem{theorem}{Theorem}[section]
\newtheorem{lemma}[theorem]{Lemma}
\newtheorem{proposition}[theorem]{Proposition}
\newtheorem{corollary}[theorem]{Corollary}
\newtheorem{remark}[theorem]{Remark}

\theoremstyle{definition}
\newtheorem{definition}[theorem]{Definition}

\newtheorem{assumption}[theorem]{Assumption}


\newcommand{\N}{\mathbb{N}}
\newcommand{\Z}{\mathbb{Z}}

\newcommand{\R}{\mathbb{R}}
\newcommand{\C}{\mathbb{C}}
\newcommand{\A}{\mathbb{A}}

\newcommand{\simgrad}{\sym\nabla}

\newcommand{\eps}{{\varepsilon}}
\newcommand{\weak}{{\rightharpoonup}}

\DeclareMathOperator{\sym}{sym}

\newcommand{\vect}[1]{\boldsymbol #1}

\newcommand{\BBB}{\color{black}}
 
\newcommand{\PPP}{\color{blue}} 
\def\e{\varepsilon}


\let\oldsqrt\sqrt
\def\sqrt{\mathpalette\DHLhksqrt}
\def\DHLhksqrt#1#2{%
\setbox0=\hbox{$#1\oldsqrt{#2\,}$}\dimen0=\ht0
\advance\dimen0-0.2\ht0
\setbox2=\hbox{\vrule height\ht0 depth -\dimen0}%
{\box0\lower0.4pt\box2}}

\AtBeginDocument{
  \let\div\relax
  \DeclareMathOperator{\div}{div}
}



\usepackage{float}
\usepackage{caption}
\usepackage{subcaption}

\newcommand*{\nsection}[1]{
	\section*{#1}
	\addcontentsline{toc}{section}{#1}
}

\begin{document}

\title{\sc Effective behaviour of critical-contrast PDEs: micro-resonances, frequency conversion, and time dispersive properties. II.}

\def\correspondingauthor{\footnote{Corresponding author: k.cherednichenko@bath.ac.uk}}

\author[1]{Kirill Cherednichenko\correspondingauthor{}}
\author[1]{Alexander V. Kiselev}
\author[2]{Igor Vel\v{c}i\'{c}\,}
\author[2]{Josip Žubrinić}
\affil[1]{Department of Mathematical Sciences, University of Bath, Claverton Down, Bath,\qquad\qquad  BA2 7AY, United Kingdom}
\affil[2]{Faculty of Electrical Engineering and Computing, University of Zagreb, Unska 3,\qquad\qquad 10000 Zagreb, Croatia}
\maketitle

\begin{abstract}
\noindent We construct an order-sharp theory for a double-porosity model in the full linear elasticity setup. Crucially, we uncover time and frequency dispersive properties of highly oscillatory elastic composites.

 \vskip 0.5cm

\noindent {\bf Keywords:} Elasticity theory $\cdot$ Highly oscillatory media $\cdot$ Resolvent asymptotics $\cdot$ Time and frequency dispersion

\vskip 0.5cm

\noindent {\bf Mathematics Subject Classification (2020):} Primary 35Q99; Secondary 47F05, 47N50, 35B27, 47A10, 81U30

\end{abstract}
\tableofcontents

\section{Introduction}
Quantitative asymptotic methods in the analysis of parameter-dependent families of PDEs,
see e.g.  \cite{Zhikov_1989, BirmanSuslina, Gri04, Zhikov_Pastukhova, Kenig, CoopKamSmysh}, serve as a natural replacement of the classical ad hoc asymptotic approach, which is known to lead to errors, as pointed out in, e.g., \cite{CC_Maxwell_2015, CherCoop, CK, CherDO}. Their key feature is the pursuit of an estimate, in a uniform operator topology, on the difference between the ``exact" (usually inaccessible) solution 
and its asymptotic approximation.
 This problem formulation has brought forth the physically relevant possibility to account for degenerate problems (such as the ``double porosity", ``flipped double porosity" \cite{CherErKis}, and thin network \cite{CEK_networks} setups).
 The principal goal of the quantitative analysis is to develop a rigorous mathematical framework for metamaterials \cite{Veselago, Capolino}. It can be argued, see e.g. the discussion in \cite{CEKRS2022}, that generically metamaterial-like behaviour is accounted for by the ``corrector" terms in the asymptotic expansion \cite{CherErKis}. Indeed, if one assumed that the family admitted a ``limiting" operator in a strong enough topology, then the latter must inherit the positive-definiteness of the original formulation, which would not permit the ``negative" effects expected of a metamaterial. This calls for quantitatively tight asymptotic expansions 
 capturing the key features of the medium at hand.

  In connection with this goal, the operator theory has emerged as a source of powerful tools, a subset of which is based on the analysis of resolvents.
  Arguably, the norm-resolvent topology is indispensable if one seeks to control both the convergence of spectra and that of (generalised) eigenvectors. 
  Furthermore, the specific choice of topical models is governed by the established consensus that non-trivial, and in particular metamaterial-like, properties arise by equipping the medium with an infinite array of small resonators. In light of the recent advances in the operator-theoretic treatment of boundary value problems, spectral theory thus assumes a new prominent r\^{o}le.  
  Indeed, for problems involving resonators as well as heterogeneous thin structures, 
   (rods, plates, shells, and their combinations -- see 
   \cite{CV, BCVZ, CVZ, CEK_networks}), the relevant operator-theoretic setup is given by a parametrised family of ``transmission" or ``boundary value" problems for PDEs. 

   In \cite{CEK, CherErKis, CEKN} we proposed to utilise the link 
   (facilitated by the classical Kre\u\i n formula) between the resolvent and the Dirichlet-to-Neumann (DtN) operator on the interface between the medium and the resonators,
   to obtain sharp operator-norm convergence estimates. This has been done in a scalar version of the model commonly known as
    double porosity \cite{ADH, Allaire1992, Zhikov2000}.
    We point out that the idea to use DtN maps can be viewed as natural in this area, the first example of its application being traceable to \cite{Friedlander_2002}. However, prior to our work \cite{CEK, CherErKis, CEKN, CherKisSil, CV, CVZ} no attempts were made at employing this machinery to establish norm-resolvent convergence.    In the moderate contrast setting, a theory covering a wide class of problems has been known since the beginning of this century, due to seminal work \cite{BirmanSuslina, BirmanSuslina_corrector}. Up to the publication of the first part of the present work \cite{CherErKis}, nothing of the kind has been available for degenerate problems, of which the double-porosity setup is arguably the most well-studied, as the degenerating coefficients make the problem considerably more challenging. The results we obtain can be viewed as running in parallel with those of \cite{BirmanSuslina}, see Section \ref{main_results_sec}. Also, while we have treated the whole-space setting, bounded regions can be dealt with in a standard way, as in \cite{Suslina_Dirichlet}. 

Apart from addressing the specific problem of double porosity, we point out several generalisations.
First, although in view of clarity we only treat the prototypical model, a wide range of similar problems is amenable to the same approach, see e.g. \cite{Yi_Sheng}. Second, essentially the same technique with minor modifications, see \cite{CEK_networks}, is directly applicable to problems with a ``geometric'' contrast, e.g., elastic networks thinning to metric graphs \cite{KuchmentZeng, KuchmentZeng2004, Figotin_Kuchment, Post, Exner}. Third, via the analysis developed in \cite{CEN}, it transpires that our methodology leads to an explicit spectral resolution of identity for the operator describing the effective properties of the medium. Once this is done, scattering problems in degenerate highly inhomogeneous media come within grasp.

\section{Setup and main results}
\label{section2}

\subsection{Notation}

For a vector $\vect a\in \C^k,$ we denote by $a_j,$ $j=1, \dots, k$ its components.
Similarly, the entries of a matrix $\vect A\in \C^{k\times k}$ are referred to as $A_{ij},$ $i,j=1, \dots, k,$ and $\sym \vect A$ denotes the symmetric part of $\vect A$. 
 The vectors of the standard orthonormal basis in $\C^k$ are denoted by $\vect e_i$, $i=1,\dots,k$. Furthermore, for $\vect a,\vect b \in \C^k,$ we denote by $\vect a \otimes \vect b \in \C^{k \times k}$  the matrix with entries $a_ib_j,$ and
set $\vect{a} \odot \vect{b}:=\sym (\vect{a} \otimes \vect{b}).$
The Frobenius inner product of matrices 
$\vect A,$ $\vect B$ is denoted by ${\vect A}:\overline{\vect B}:={\rm Tr}(\vect B^*\vect A)$ where ${\vect B}^*$ stands for the adjoint of ${\vect B},$ and we set $|\vect A|:=(\vect A:\overline{\vect A})^{1/2}.$ 

For an operator ${\mathcal A}$ (or a sesquilinear form $a$) the domain of ${\mathcal A}$ (respectively $a$)  is denoted by ${\mathcal D}({\mathcal A})$ (respectively ${\mathcal D}(a)$).  We use the notation $\overline{\mathcal{A}}$ for the closure of a closable operator $\mathcal{A},$ and denote by $\sigma({\mathcal A})$ (respectively, $\rho(\mathcal{A})$) the spectrum (respectively, the resolvent set) of an operator $\mathcal A.$ For normed vector spaces $X,Y$, we denote by $\mathfrak{B}(X,Y)$ the space of bounded linear operators from $X$ to $Y$. Furthermore, when indicating a function space $X$ in the notation for a norm $\Vert\cdot\Vert_X,$ we omit the physical domain on which functions in $X$ are defined whenever it is clear from the context. For example, we often write $\Vert\cdot\Vert_{L^2},$ $\Vert\cdot\Vert_{H^1}$  instead of $\Vert\cdot\Vert_{L^2(\Omega;{\mathbb R}^k)},$ $\Vert\cdot\Vert_{H^1(\Omega;{\mathbb R}^k)},$ $k\in \N.$ 

For $A,B \subset \mathbb{C}^k,$ by ${\rm dist}(A,B)$ we denote the distance between the sets $A$ and $B$. For $f \in L^1(A)$, we set $\langle f\rangle:=\int_A f$ and $\mathbbm{1}_A$ denotes the indicator function of $A$. Finally, $\delta_{ij}$ denotes the Kronecker delta, and $C$ generically stands for a positive constant whose value is of no importance.

\subsection{Operator of linear elasticity}
\label{elast_op_sec}
Consider the ``reference cell" $Y := [0, 1)^3 \subset \R^3$ (which is without loss of generality for what is to follow), and let $Y_{\rm soft} \subset Y$ be a connected open set with $C^{1,1}$ boundary $\Gamma$ 
 such that the closure of $Y_{\rm soft}$ is a subset of the interior of $Y$ and $Y_{\rm stiff} = Y \setminus Y_{\rm soft}.$
For a fixed period $\varepsilon>0$ of material oscillations, we are interested in the behaviour of a composite elastic medium with components whose properties are in contrast to one another. We refer to the component materials as ``soft" and ``stiff" accordingly. With this goal in mind, we view $\R^3$ as being composed of two complementary subsets, the stiff part $\Omega_{\rm stiff}^\varepsilon$ (``matrix") and the soft complement $\Omega_{\rm soft}^\varepsilon$ (``inclusions"), see Fig.~\ref{figurehighcontrast}: 
\begin{equation*}
\Omega_{\rm stiff}^\varepsilon := \R^3 \setminus \Omega_{\rm soft}^\varepsilon,\qquad\Omega_{\rm soft}^\varepsilon := \bigcup_{z \in \Z^3} \bigl\{\varepsilon(Y_{\rm soft} + z)\bigr\}.
\end{equation*}
We are interested in the approximation properties, when $\varepsilon \to 0$, of the operator family $(\mathcal{A}_\varepsilon)_{\varepsilon > 0}$, where, for every $\varepsilon>0$, the operators $\mathcal{A}_\varepsilon$ are defined as self-adjoint unbounded operators on $L^2(\R^3; \C^3)$ corresponding to the differential expressions
$-\div\bigl(\A^\varepsilon(x/\varepsilon) \simgrad\bigr)$
with domains $\mathcal{D}(\mathcal{A}_\varepsilon) \subset  H^1(\R^3, \C^3).$ These operators are defined by the sesquilinear forms
\begin{equation*}
    a_\varepsilon (\vect u,\vect v) := 	\int_{\R^3} \A^\varepsilon(x/\varepsilon)\simgrad \vect u:\overline{\simgrad \vect v}, \quad \vect u,\vect v\in H^1(\R^3;\C^3),
\end{equation*}
where the tensor-valued function $\A^\varepsilon$ represents the spatially varying elastic moduli of the medium.  The properties of the stiff and soft components, modelled by tensor-valued functions $\A_{\rm stiff(soft)},$ are assumed to be in ``critical" contrast \cite{Zhikov2000} to each other, so the ratio between the stiff and soft moduli is of order $\varepsilon^{-2}:$   
\begin{equation*}
    \A^{\varepsilon}(y) =
\left\{\begin{array}{ll}
\A_{\rm stiff}(y), & y \in Y_{\rm stiff}, \\[0.25em]
\varepsilon^2\, \A_{\rm soft}(y), & y \in Y_{\rm soft}.
\end{array} \right.
\end{equation*}
The function $\A^\varepsilon$ is defined on the unit cell $Y$ and extended to $\R^3$ by periodicity. We make the following assumptions about $\A_{\rm stiff(soft)}.$
\begin{assumption}
\label{coffassumption}
\begin{itemize}
    \item 
    Uniform positive-definiteness and uniform boundedness on symmetric matrices: there exists $\nu>0$ such that
$\nu\vert\vect\xi\vert^2\leq \A_{\rm stiff(soft)}(y)\vect\xi:\vect\xi \leq\nu^{-1}\vert\vect\xi\vert^2 \quad \forall \vect\xi \in \R^{3\times 3},\,\vect\xi^\top=\vect\xi \quad \forall y \in Y. $
    \item 
    Material symmetries
    $[\A_{\rm stiff(soft)}]_{ijkl}=[\A_{\rm stiff(soft)}]_{jikl}=[\A_{\rm stiff(soft)}]_{klij}, \quad  i,j,k,l\in\{1,2,3\}.$
\item
Lipschitz continuity:
    $[\A_{\rm stiff(soft)}]_{ijkl} \in C^{0,1}(\overline{Y_{\rm stiff(soft)}}), \quad  i,j,k,l\in\{1,2,3\}.$
\end{itemize}
\end{assumption}
\begin{figure}[t]
	\centering
	\includegraphics[width = 0.8\linewidth]{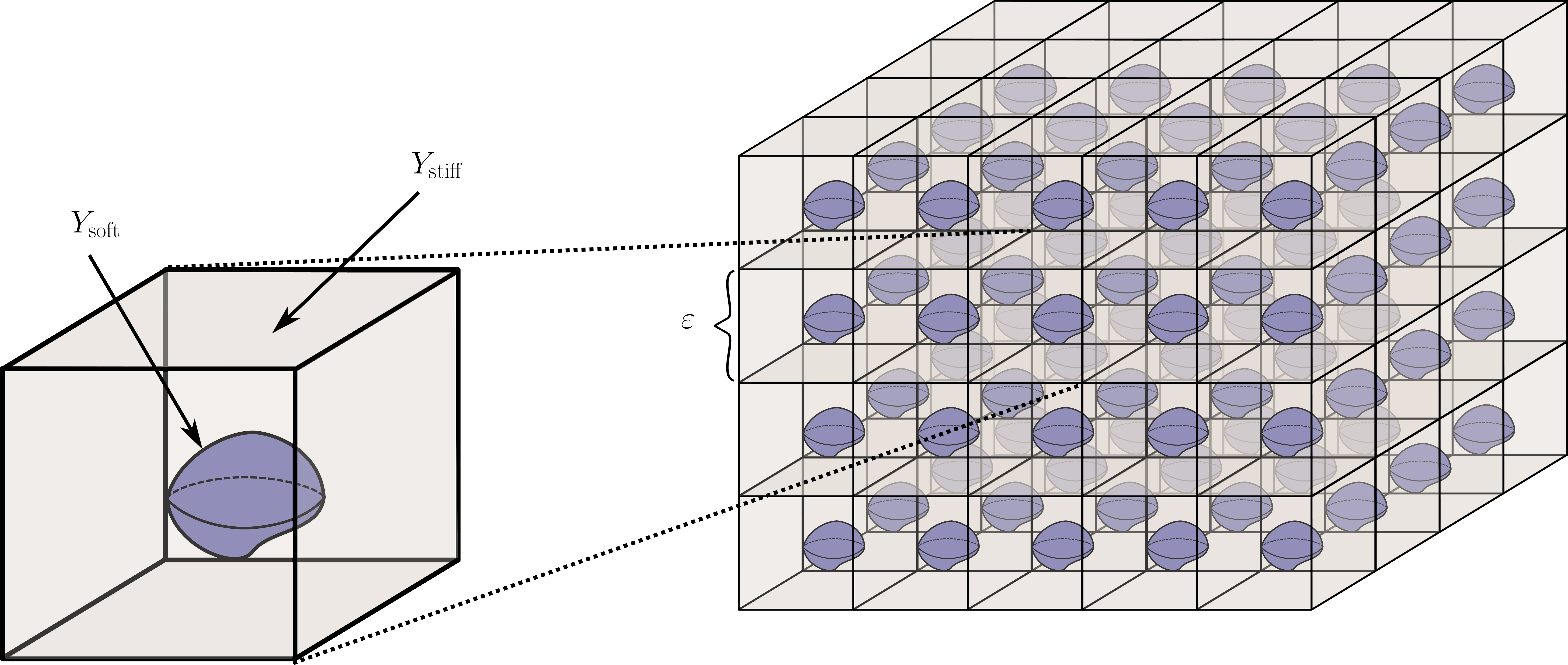}
	\caption{The illustration of a highly oscillating composite material consisting of stiff matrix with soft inclusions}
	\label{figurehighcontrast}
\end{figure}
Note that the setting of 2D (``plane-strain") elasticity, when $Y_{\rm soft}$ is a cylinder (and so $\Omega_{\rm soft}^\varepsilon$ is a union of unbounded ``fibres" parallel to one of the coordinate axes), does not satisfy the above geometric assumption about $Y_{\rm soft}$ but is still covered by our analysis with minor modifications (cf. Remark \ref{2Dremark} below), thanks to the said assumption being satisfied by the cross-section of $Y_{\rm soft}.$

We are interested in the properties of the operator $\mathcal{A}_\varepsilon$ for small $\varepsilon> 0$. Namely, we would like to obtain asymptotics of the resolvents $\left(\mathcal{A}_\varepsilon - zI \right)^{-1},$ as $\varepsilon\to0,$ in the $L^2 \to L^2$ operator norm. The spectral parameter $z \in \C$ is assumed uniformly separated from the spectrum of the operator $\mathcal{A}_\varepsilon:$ more precisely, we fix $\sigma>0$ and consider a compact 
    $K_\sigma \subset \left\{z \in \C:\ {\rm dist}( z, \R) \geq \sigma \right\}.$
We introduce the following space of functions supported only on the stiff (respectively, soft) component:
\begin{equation}
\label{L2stiffepsilon}
 L^{\rm stiff(soft)}_\varepsilon:=\Bigl\{\vect f \in L^2(\R^3; \C^3): \ \vect f = 0 \mbox{   on } \Omega_{\rm soft(stiff)}^{\varepsilon} \Bigr\}.
\end{equation}
Furthermore, we denote by $P^{\rm stiff}_\varepsilon$ the orthogonal projection from $L^2(\R^3;\C^3)$ onto $L^{\rm stiff}_\varepsilon$. To state the main result, we define the following operators, which are key components of the leading-order term of the resolvent asymptotics.

\begin{definition}[Macroscopic operator]
Consider the tensor $\A_{\rm macro} \in \R^{3\times 3\times 3\times 3}$ defined by
\begin{equation}
\label{macrodefinitionequation}
\begin{aligned}
     \A_{\rm macro}\,{\vect \xi} :{\vect \eta} = \int_{Y_{\rm stiff}} \A_{\rm stiff} \left( \simgrad \vect u_{\vect\xi} + \vect\xi \right)&:\left( \simgrad \vect u_{\vect\eta}+\vect\eta \right), \quad
     \vect\xi, \vect\eta \in \R^{3 \times 3},\ \vect\xi^\top=\vect\xi,\ \vect\eta^\top=\vect\eta. 
\end{aligned}
\end{equation}
    where $\vect u_{\vect \xi} \in H^1_{\#}(Y_{\rm stiff};\R^3)$ is the unique solution (guaranteed by the Lax-Milgram lemma) to the problem
\begin{equation*}
    \int_{Y_{\rm stiff}} \A_{\rm stiff} \left(\simgrad \vect u_{\xi} + \vect\xi \right):\simgrad \vect v= 0 \qquad \forall \vect v \in H^1_{\#}(Y_{\rm stiff};\R^3), \qquad \int_{Y_{\rm stiff}}\vect u_{\vect \xi}=0.
\end{equation*}
Henceforth  $H^1_{\#}(Y_{\rm stiff};\R^3)$ is the closure of $Y$-periodic smooth vector functions
on $Y_{\rm stiff}$ in the $H^1(Y_{\rm stiff}, \R^3)$ norm. 

We define the macroscopic operator (the operator of perforated domain) $\mathcal{A}_{\rm macro}$ as the self-adjoint unbounded operator on $L^2(\R^3;\C^3)$ corresponding to the differential expression
$-\div\left(\A_{\rm macro} \simgrad\right),$
with domain $\mathcal{D}(\mathcal{A}_{\rm macro}) \subset  H^1(\R^3, \C^3) $, defined by the sesquilinear form (cf. \cite{suslina10})
\begin{equation}
    a_{\rm macro} (\vect u,\vect v) := 	\int_{\R^3} \A_{\rm macro}\simgrad \vect u: \overline{\simgrad\vect v}, \quad \vect u,\vect v\in H^1(\R^3;\C^3).
    \label{macrooperatordef_form}
\end{equation}
\end{definition}
We will require the following lemma, whose proof is standard. 
\begin{lemma}
	\label{prop_lemma}
There tensor $\A_{\rm macro}$ is symmetric, in the sense that $[\A_{\rm macro}]_{ijkl} = [\A_{\rm macro}]_{jikl} =[\A_{\rm macro}]_{klij},$  $i,j,k,l\in\{1,2,3\},$ and positive definite: there exists a constant $\eta>0$ such that 
      $\A_{\rm macro}\vect \xi : \vect \xi \geq \eta |\vect \xi|^2$ for all $\vect \xi \in \R^{3 \times 3},$ $\vect\xi^\top=\vect\xi.$
\end{lemma}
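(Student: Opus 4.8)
The plan is to verify the three symmetry relations and the positive-definiteness directly from the defining formula \eqref{macrodefinitionequation}, using the variational characterisation of the correctors $\vect u_{\vect\xi}$.

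\textbf{Symmetry.} The relations $[\A_{\rm macro}]_{ijkl}=[\A_{\rm macro}]_{jikl}$ are immediate, since $\A_{\rm macro}\vect\xi:\vect\eta$ is only ever evaluated on symmetric $\vect\xi,\vect\eta$ (so the value is unaffected by antisymmetrising in $ij$), exactly as in the classical homogenisation computation. For $[\A_{\rm macro}]_{ijkl}=[\A_{\rm macro}]_{klij}$, i.e. $\A_{\rm macro}\vect\xi:\vect\eta=\A_{\rm macro}\vect\eta:\vect\xi$, I would use the cell problems: testing the equation for $\vect u_{\vect\xi}$ with $\vect v=\vect u_{\vect\eta}$ gives
\[
\int_{Y_{\rm stiff}}\A_{\rm stiff}\bigl(\simgrad\vect u_{\vect\xi}+\vect\xi\bigr):\simgrad\vect u_{\vect\eta}=0,
\]
and likewise with the rôles of $\vect\xi,\vect\eta$ interchanged. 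Adding the first of these (subtracted from the definition of $\A_{\rm macro}\vect\xi:\vect\eta$) and using the pointwise major symmetry $[\A_{\rm stiff}]_{ijkl}=[\A_{\rm stiff}]_{klij}$ of the elastic tensor (Assumption \ref{coffassumption}), one finds
\[
\A_{\rm macro}\vect\xi:\vect\eta=\int_{Y_{\rm stiff}}\A_{\rm stiff}\bigl(\simgrad\vect u_{\vect\xi}+\vect\xi\bigr):\bigl(\simgrad\vect u_{\vect\eta}+\vect\eta\bigr)=\int_{Y_{\rm stiff}}\A_{\rm stiff}\bigl(\simgrad\vect u_{\vect\eta}+\vect\eta\bigr):\bigl(\simgrad\vect u_{\vect\xi}+\vect\xi\bigr)=\A_{\rm macro}\vect\eta:\vect\xi,
\]
where the middle equality is precisely the pointwise symmetry of $\A_{\rm stiff}$.

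\textbf{Positive-definiteness.} Fix a symmetric $\vect\xi\neq 0$. By the lower bound in Assumption \ref{coffassumption},
\[
\A_{\rm macro}\vect\xi:\vect\xi=\int_{Y_{\rm stiff}}\A_{\rm stiff}\bigl(\simgrad\vect u_{\vect\xi}+\vect\xi\bigr):\bigl(\simgrad\vect u_{\vect\xi}+\vect\xi\bigr)\ \geq\ \nu\int_{Y_{\rm stiff}}\bigl|\simgrad\vect u_{\vect\xi}+\vect\xi\bigr|^2\ \geq\ 0,
\]
so $\A_{\rm macro}$ is at least nonnegative. To get strict positivity, suppose the right-hand side vanishes; then $\simgrad\vect u_{\vect\xi}=-\vect\xi$ a.e.\ on $Y_{\rm stiff}$, so $\vect\xi$ would be the symmetric gradient of a $Y$-periodic $H^1$ field. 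Integrating $\simgrad\vect u_{\vect\xi}$ over the periodicity cell (extending appropriately, or working on the connected set $Y_{\rm stiff}$ which touches all faces of $Y$) yields $\int_Y \simgrad\vect u_{\vect\xi}=0$ by periodicity, hence $\vect\xi=0$, a contradiction. The cleanest route is to define $\eta:=\inf\{\A_{\rm macro}\vect\xi:\vect\xi:\ \vect\xi^\top=\vect\xi,\ |\vect\xi|=1\}$; this infimum is attained by compactness of the unit sphere in the $6$-dimensional space of symmetric matrices and continuity of $\vect\xi\mapsto\A_{\rm macro}\vect\xi:\vect\xi$ (which follows from continuous dependence of $\vect u_{\vect\xi}$ on $\vect\xi$, itself a consequence of Lax--Milgram applied to the cell problem with right-hand side linear in $\vect\xi$), and it is positive by the argument just given. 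Then $\A_{\rm macro}\vect\xi:\vect\xi\geq\eta|\vect\xi|^2$ for all symmetric $\vect\xi$ by homogeneity.

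\textbf{Main obstacle.} The only genuinely non-routine point is the rigidity step in the strict positivity argument: ruling out that a nonzero constant symmetric matrix $\vect\xi$ equals $\simgrad\vect u$ for some $\vect u\in H^1_{\#}(Y_{\rm stiff};\R^3)$. One must use that $Y_{\rm stiff}$ is connected and, being the complement in $Y$ of a set whose closure lies in the interior of $Y$, wraps around to meet every pair of opposite faces, so that periodicity forces the mean of $\simgrad\vect u$ to vanish; alternatively one appeals to the fact that the only periodic fields with constant symmetric gradient are affine-plus-periodic with the affine part a rotation, forcing the symmetric part to be zero. All remaining estimates (well-posedness of the cell problem, continuity in $\vect\xi$, the Frobenius-inner-product manipulations) are standard, which is why the statement records that ``the proof is standard.''
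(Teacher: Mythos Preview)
Your proof is correct and takes a somewhat different route from the paper's. The paper does not spell out the argument but refers to a ``standard extension argument'': one extends $\vect u_{\vect\xi}$ to some $\tilde{\vect u}\in H^1_\#(Y;\R^3)$ via an extension operator that controls symmetric gradients (cf.\ Proposition~\ref{extension}), and then obtains a \emph{direct quantitative} bound
\[
\A_{\rm macro}\vect\xi:\vect\xi\ \ge\ \nu\int_{Y_{\rm stiff}}\bigl|\simgrad\vect u_{\vect\xi}+\vect\xi\bigr|^2\ \ge\ \frac{\nu}{C}\int_Y\bigl|\simgrad\tilde{\vect u}+\vect\xi\bigr|^2\ \ge\ \frac{\nu}{C}\,|\vect\xi|^2,
\]
the last step using that $\int_Y\simgrad\tilde{\vect u}=0$ for periodic fields on the full cell. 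Your route (strict positivity by contradiction, then compactness of the unit sphere of symmetric matrices) is more elementary in that it avoids invoking such an extension operator, at the price of not producing $\eta$ explicitly in terms of $\nu$ and the extension constant. One point in your write-up to tighten: the sentence ``integrating $\simgrad\vect u_{\vect\xi}$ over the periodicity cell \ldots\ yields $\int_Y\simgrad\vect u_{\vect\xi}=0$'' is not available as stated, since $\vect u_{\vect\xi}$ lives only on $Y_{\rm stiff}$ and integrating $\simgrad\vect u_{\vect\xi}$ over $Y_{\rm stiff}$ leaves a boundary term on $\Gamma$. The argument you sketch in your ``Main obstacle'' paragraph is the correct one: on the connected set $Y_{\rm stiff}$, the identity $\simgrad\vect u_{\vect\xi}=-\vect\xi$ forces $\vect u_{\vect\xi}(y)=(-\vect\xi+A)y+\vect b$ with $A$ skew-symmetric, and since $Y_{\rm stiff}$ contains a neighbourhood of $\partial Y$, periodicity across opposite faces gives $(-\vect\xi+A)\vect e_j=0$ for each $j$, whence $\vect\xi=A=0$.
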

\begin{proof} 
The proof is based on a standard extension argument, see e.g. \cite[Proposition 3.4]{BCVZ}.	
\end{proof} 	
In addition to the properties highlighted in Lemma \ref{prop_lemma}, the leading-order term in the resolvent asymptotics retains information on the microstructure, via the spectrum of the ``Bloch operator" $\mathcal{A}_{\rm Bloch}$ associated with the bilinear form
\begin{equation*}
    a_{\rm Bloch} (\vect u,\vect v) := 	\int_{Y_{\rm soft}} \A_{\rm soft}\simgrad \vect u:\overline{\simgrad \vect v}, \quad \vect u,\vect v\in H^1_0(Y_{\rm soft};\C^3), 
\end{equation*}
as a non-negative self-adjoint operator on $L^2(Y_{\rm soft};\C^3)$.
Furthermore, we define the matrix-valued ``Zhikov  function" $\mathcal{B}$ by
\begin{equation}
	\label{ante1} 
	\begin{split}
		\mathcal{B}(z)_{ij}:= z \delta_{ij}+ z^2\sum_{k = 1}^{\infty} \frac{\langle\varphi_k\rangle_i\langle\varphi_k\rangle_j}{\eta_k - z}, \quad i,j\in {1,2,3},
	\end{split}
\end{equation}
where $\eta_1<\dots\le\eta_k\le\dots\to\infty$ are the eigenvalues (indexed taking multiplicities into account) and $\varphi_k$, $k \in \N,$ are the corresponding (orthonormal) eigenfunctions of the associated Bloch operator $\mathcal{A}_{\rm Bloch}$ on $L^2(Y_{\rm soft};\C^3),$ and $z\neq\eta_k$ for all $k \in \mathbb{N}.$  
The 
function ${\mathcal B}$ 
has appeared in the context of ``qualitative" analysis of high contrast (see, e.g., \cite{zhikov2013, CC_Maxwell}). From this perspective, the results of the present paper can be viewed as demonstrating how it enters quantitative estimates,  with sharp error control, in the context of elasticity.

\subsection{Main results}
\label{main_results_sec}

We will now state the main results of the paper, which provide $O(\varepsilon)$ and $O(\varepsilon^2)$ approximations of the resolvent of the operator ${\mathcal A}_\varepsilon.$ When restricted to the stiff component, the $O(\varepsilon^2)$ approximation involves a pseudodifferential operator, which leads to a second-order differential operator at the cost of an $O(\varepsilon)$ correction. The operator estimates we prove (Theorems \ref{thmmain1}, \ref{thmamin2}) involve certain ``approximating" and ``effective" operators  $\mathcal{A}_\varepsilon^{\rm app}$ and $\mathcal{A}_\varepsilon^{\rm eff},$ which are described explicitly in Section \ref{Sectionopapp} and Section \ref{sectionopeff}, respectively. 
The proof of Theorem \ref{thmmain1} is given at the end of Section \ref{Sectionopapp}, the proof of claim (a) of Theorem \ref{thmamin2} is contained Section \ref{sectionopeff}, while its claim (b) is discussed in Section \ref{sectionopeffstiff}.
\begin{theorem} \label{thmmain1} 
	 There exists $C>0$ that depends only on $\sigma$ and ${\rm diam}(K_\sigma)$ such that for all $z \in K_\sigma$ one has
	\begin{equation}
 \label{estimate24}
	\bigl\lVert \left(\mathcal{A}_\varepsilon - z I \right)^{-1} - \Theta_\varepsilon^{\rm app} 
	\bigl(\mathcal{A}_\varepsilon^{\rm app} - z I \bigr)^{-1} \Theta^{\rm app}_\varepsilon \bigr\rVert_{L^2(\R^3;\C^3) \to L^2(\R^3;\C^3) } \leq C \varepsilon^2.
	\end{equation}
	  The operator $\Theta^{\rm app}_\varepsilon$ is an orthogonal projection (see Section \ref{Sectionopapp}), and  $\mathcal{A}_\varepsilon^{\rm app}$ is 
	  defined uniquely on $\Theta_\varepsilon^{\rm app}L^2(\R^3;\C^3)$ and then extended somehow (e.g., by the zero operator) to its orthogonal complement.
\end{theorem}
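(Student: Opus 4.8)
The plan is to reduce the resolvent estimate on $\R^3$ to a family of estimates on the reference cell $Y$ via the Gelfand transform, then analyse each fibre using the operator-theoretic (Ryzhov-triple / boundary-triple) machinery that the paper develops in later sections. First I would apply the Gelfand transform $U_\varepsilon$ to write $\mathcal{A}_\varepsilon$ as a direct integral $\int_{[0,2\pi/\varepsilon)^3}^{\oplus} \mathcal{A}_\varepsilon(\chi)\,d\chi$ of operators on $L^2(Y;\C^3)$, where $\mathcal{A}_\varepsilon(\chi)$ is the quasiperiodic elasticity operator with the high-contrast coefficient $\A^\varepsilon$; after the standard rescaling $y = x/\varepsilon$ the quasimomentum appears as $\theta = \varepsilon\chi \in [0,2\pi)^3$, so everything is controlled by the single small parameter $\varepsilon$ together with $\theta$. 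Since the operator norm of a direct integral is the essential supremum of the fibrewise norms, it suffices to prove \eqref{estimate24} fibrewise, uniformly in $\theta$, with $\mathcal{A}_\varepsilon^{\rm app}$ and $\Theta_\varepsilon^{\rm app}$ defined as the corresponding direct integrals of their fibre counterparts.

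Next, on each fibre I would set up the transmission problem: split the quasiperiodic $H^1(Y;\C^3)$ function into its restrictions to $Y_{\rm stiff}$ and $Y_{\rm soft}$, coupled through the trace on $\Gamma$. Using the boundary-triple / Ryzhov-triple formalism (Section \ref{section3} in the paper), the resolvent $(\mathcal{A}_\varepsilon(\theta) - zI)^{-1}$ is expressed via a Kre\u\i n-type formula in terms of the harmonic (or $\A_{\rm stiff}$-harmonic) lift from $\Gamma$, the solution operator of the interior soft problem, and the $M$-operator (DtN map) on $\Gamma$ as a function of $z$. The soft part contributes the factor that, after expanding the interior resolvent $(\mathcal{A}_{\rm Bloch} - z/\varepsilon^2\cdot\varepsilon^2 \cdots)$-type term in the eigenbasis $\{\varphi_k,\eta_k\}$, produces precisely the Zhikov function $\mathcal{B}(z)$; the stiff part, after homogenising the $\theta$-dependent periodic problem, produces the macroscopic tensor $\A_{\rm macro}$ and hence the symbol of $\mathcal{A}_{\rm macro}$. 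The core of the argument is then a uniform-in-$\theta$ asymptotic expansion of the Kre\u\i n formula: one shows that up to an $O(\varepsilon^2)$ error in operator norm the fibre resolvent equals the corresponding quantity built from $\A_{\rm macro}$, $\mathcal{B}(z)$, and the projections onto the relevant finite-dimensional/mean-value subspaces, which is by definition $\Theta_\varepsilon^{\rm app}(\theta)(\mathcal{A}_\varepsilon^{\rm app}(\theta) - zI)^{-1}\Theta_\varepsilon^{\rm app}(\theta)$.

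The key analytic inputs are: (i) uniform invertibility — the spectral parameter $z \in K_\sigma$ is bounded away from $\R$, hence from $\sigma(\mathcal{A}_\varepsilon)$ and from the $\eta_k$, giving resolvent bounds depending only on $\sigma$ and ${\rm diam}(K_\sigma)$; (ii) uniform (in $\theta$) elliptic estimates and extension operators on $Y_{\rm stiff}$, so that the homogenisation of the stiff phase and the approximation of its DtN map are controlled with $O(\varepsilon)$ and $O(\varepsilon^2)$ remainders — here one uses the $C^{1,1}$ regularity of $\Gamma$ and the Lipschitz continuity of the moduli from Assumption \ref{coffassumption}; (iii) the spectral decomposition of the soft problem to identify the $\mathcal{B}(z)$ contribution and to control the tail of the series $\sum_k \langle\varphi_k\rangle_i\langle\varphi_k\rangle_j/(\eta_k - z)$.

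The main obstacle I anticipate is establishing the $O(\varepsilon^2)$ (rather than merely $O(\varepsilon)$) error uniformly in the quasimomentum $\theta$: this requires the second-order term of the asymptotic expansion of the $M$-operator and of the stiff homogenisation corrector to be handled carefully, in particular showing that the would-be $O(\varepsilon)$ term either vanishes by symmetry/mean-value considerations or is exactly absorbed into the definition of $\mathcal{A}_\varepsilon^{\rm app}$, and that all constants are $\theta$-independent near the "dangerous" small-$\theta$ region where the stiff operator's symbol degenerates like $|\theta|^2$. Controlling the interplay between this degeneration and the resonant denominators $\eta_k - z$ — i.e. the frequency-conversion mechanism — uniformly down to $\theta = 0$ is the delicate point; elsewhere the estimates are routine perturbation theory.
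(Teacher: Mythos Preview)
Your overall strategy (Gelfand decomposition, fibrewise Kre\u\i n formula via the Ryzhov triple, uniform-in-$\chi$ estimates) matches the paper's architecture, but you have misidentified what the approximating operator $\mathcal{A}_\varepsilon^{\rm app}$ actually is, and this misidentification is precisely the reason you would not reach $O(\varepsilon^2)$.

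You write that on each fibre the stiff part ``after homogenising the $\theta$-dependent periodic problem, produces the macroscopic tensor $\A_{\rm macro}$'' and the soft part ``produces precisely the Zhikov function $\mathcal{B}(z)$''. That is the content of Theorem~\ref{thmamin2}, not Theorem~\ref{thmmain1}. Replacing the stiff DtN map $\widehat{\Lambda}_\chi^{\rm stiff}$ by its homogenised version $\Lambda_\chi^{\rm hom}=-|\Gamma|^{-1}({\rm i}X_\chi)^*\A_{\rm macro}{\rm i}X_\chi$ costs $O(\varepsilon)$ (see Corollary~\ref{epsilonnormestimates} and Lemma~\ref{lemmaqasymptotics}), not $O(\varepsilon^2)$; likewise the passage from $\widehat{M}_\chi^{\rm soft}(z)$ to the $\chi$-independent object underlying $\mathcal{B}(z)$ is $O(|\chi|)$. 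These are the steps that degrade the estimate from $\varepsilon^2$ to $\varepsilon$ in Section~\ref{sectionopeff}. For Theorem~\ref{thmmain1} the paper deliberately does \emph{not} homogenise: the fibre operator $\mathcal{A}_{\chi,\varepsilon}^{\rm app}$ in \eqref{homoperator} still carries the full $\chi$-dependent objects $\widehat{\Lambda}_\chi^{\rm stiff}$, $\widehat{\Pi}_\chi^{\rm stiff}$, $\widehat{\Gamma}_{1,\chi}^{\rm soft}$.

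The mechanism that actually delivers $O(\varepsilon^2)$ is different from what you describe and you have not mentioned it: it is the \emph{Steklov truncation} of Section~\ref{steklovtruncation}. One splits the boundary space $\mathcal{E}$ into the three-dimensional eigenspace $\widehat{\mathcal{E}}_\chi$ of $\Lambda_\chi^{\rm stiff}$ corresponding to the $O(|\chi|^2)$ eigenvalues and its orthogonal complement $\widecheck{\mathcal{E}}_\chi$ (Lemma~\ref{lemmasteklovorder}). On $\widecheck{\mathcal{E}}_\chi$ the scaled stiff DtN map $\varepsilon^{-2}\widecheck{\Lambda}_\chi^{\rm stiff}$ is uniformly of size $\varepsilon^{-2}$, so a Schur--Frobenius inversion of the block $M$-matrix gives $\|\widecheck{M}_{\chi,\varepsilon}(z)^{-1}\|\le C\varepsilon^2$ (Theorem~\ref{firstapproximationtheorem}). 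The second $O(\varepsilon^2)$ contribution comes from truncating the expansion $\varepsilon^{-2}M_\chi^{\rm stiff}(\varepsilon^2 z)=\varepsilon^{-2}\Lambda_\chi^{\rm stiff}+z(\Pi_\chi^{\rm stiff})^*\Pi_\chi^{\rm stiff}+O(\varepsilon^2)$ on $\widehat{\mathcal{E}}_\chi$ and from dropping the stiff Dirichlet resolvent $(\varepsilon^{-2}\mathcal{A}_{0,\chi}^{\rm stiff}-zI)^{-1}=O(\varepsilon^2)$ (Theorem~\ref{thmrefinement}). No $\chi$-expansion of $\widehat{\Lambda}_\chi^{\rm stiff}$ is performed at this stage; your anticipated difficulty about ``the would-be $O(\varepsilon)$ term'' and the ``dangerous small-$\theta$ region'' belongs to the proof of Theorem~\ref{thmamin2}, where it is indeed handled by the interplay $\min\{\varepsilon^2/|\chi|^2,1\}\cdot\max\{|\chi|^3/\varepsilon^2,|\chi|\}\cdot\min\{\varepsilon^2/|\chi|^2,1\}\le\varepsilon$ in Lemma~\ref{lemmaqasymptotics}.
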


\begin{theorem} \label{thmamin2} 
    There exists $C>0$ that depends only on $\sigma$ and ${\rm diam}(K_\sigma)$ such that, for all $z \in K_\sigma:$
    \begin{enumerate}
    \item[(a)] The ``whole-space" homogenisation estimate 	 
    \begin{equation}
    \label{estimate25a}
    \bigl\lVert \left(\mathcal{A}_\varepsilon - z I \right)^{-1} - \Theta_\varepsilon^{\rm eff} 
    \bigl(\mathcal{A}_\varepsilon^{\rm eff} - z I \bigr)^{-1} \Theta^{\rm eff}_\varepsilon \bigr\rVert_{L^2(\R^3;\C^3) \to L^2(\R^3;\C^3) } \leq C \varepsilon
    \end{equation}
    	 holds. Here  $\Theta^{\rm eff}_\varepsilon$ is an  orthogonal projection, and $\mathcal{A}_\varepsilon^{\rm eff}$ is a self-adjoint operator initially defined uniquely on $\Theta_\varepsilon^{\rm eff}L^2(\R^3;\C^3)$ and then extended somehow (e.g., by the zero operator) to its orthogonal complement.
    \item[(b)] 
  The estimate on the ``stiff" component  
    \begin{equation*}
            \bigl\lVert P_\varepsilon^{\rm stiff}\left(\mathcal{A}_\varepsilon - z I \right)^{-1} P_\varepsilon^{\rm stiff} -  P_\varepsilon^{\rm stiff} \bigl(\mathcal{A}_{\rm macro} - \mathcal{B}(z) \bigr)^{-1}  P_\varepsilon^{\rm stiff} \bigr\rVert_{L^2(\R^3;\C^3) \to L^2(\R^3;\C^3) } \leq C \varepsilon
    \end{equation*}
holds.
    Here $\mathcal{A}_{\rm macro}$ is the differential operator of linear elasticity with constant coefficients defined by the form \eqref{macrooperatordef_form} and $P_\varepsilon^{\rm stiff}$ is the orthogonal projection onto the subspace of $L^2(\R^3;\C^3)$ consisting of functions vanishing on the soft component $\Omega^\varepsilon_{\rm soft}$ of the medium. 
    \end{enumerate} 
\end{theorem}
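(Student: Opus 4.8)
The plan is to derive both claims from Theorem~\ref{thmmain1}, which already furnishes the $O(\varepsilon^2)$ approximation of $(\mathcal{A}_\varepsilon-zI)^{-1}$ in terms of the pseudodifferential operator $\mathcal{A}_\varepsilon^{\rm app}$ sandwiched by the projection $\Theta_\varepsilon^{\rm app}$. For part~(a) it then suffices to show that replacing $(\mathcal{A}_\varepsilon^{\rm app},\Theta_\varepsilon^{\rm app})$ by the second-order differential operator $(\mathcal{A}_\varepsilon^{\rm eff},\Theta_\varepsilon^{\rm eff})$ of Section~\ref{sectionopeff} perturbs the sandwiched resolvent by at most $O(\varepsilon)$ in the $L^2\to L^2$ operator norm. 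Part~(b) will then follow by compressing the estimate of part~(a) by $P_\varepsilon^{\rm stiff}$ and performing a Schur-complement computation identifying the compression of $(\mathcal{A}_\varepsilon^{\rm eff}-zI)^{-1}$ to the stiff subspace with $P_\varepsilon^{\rm stiff}(\mathcal{A}_{\rm macro}-\mathcal{B}(z))^{-1}P_\varepsilon^{\rm stiff}$.

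For part~(a), I would work in the Gelfand representation of Section~\ref{section2}, where $\mathcal{A}_\varepsilon$, $\mathcal{A}_\varepsilon^{\rm app}$ and $\mathcal{A}_\varepsilon^{\rm eff}$ are all fibred over the quasimomentum $\chi$ in the Brillouin zone, so that the operator norm of the relevant difference equals the supremum over $\chi$ of the fibrewise norms. By construction $\mathcal{A}_\varepsilon^{\rm eff}$ is obtained from $\mathcal{A}_\varepsilon^{\rm app}$ by freezing, at $\chi=0$, the dependence on the rescaled quasimomentum $\varepsilon\chi$ of the macroscopic part of the fibre symbol; the frozen symbol is $\chi$-quadratic and gives the constant-coefficient operator $\mathcal{A}_{\rm macro}$, while the $z$-dependent, quasimomentum-independent part of the symbol is the matrix $\mathcal{B}(z)$ delivered by the Kre\u\i n/DtN asymptotics of Section~\ref{Sectionopapp}. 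Since this dependence is Lipschitz in $\varepsilon\chi$, and $\varepsilon\chi$ stays in a fixed bounded set as $\chi$ runs over the Brillouin zone, the two fibre symbols differ by $O(\varepsilon|\chi|^3)$ uniformly; combined with the ellipticity-and-spectral-separation bound $\|(\text{symbol}-zI)^{-1}\|\le C(1+|\chi|^2)^{-1}$ valid for $z\in K_\sigma$ (the $\mathcal{B}(z)$ term keeping it uniform near $\chi=0$) and the second resolvent identity, this makes the fibrewise resolvent difference $O\bigl(\varepsilon|\chi|^{3}(1+|\chi|^2)^{-2}\bigr)=O(\varepsilon)$ uniformly in $\chi$. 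Finally one checks from the definitions that $\Theta_\varepsilon^{\rm app}$ and $\Theta_\varepsilon^{\rm eff}$ coincide, or differ in norm by $O(\varepsilon)$; this yields \eqref{estimate25a}.

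For part~(b), since $\|P_\varepsilon^{\rm stiff}\|=\|\Theta_\varepsilon^{\rm eff}\|=1$, compressing \eqref{estimate25a} by $P_\varepsilon^{\rm stiff}$ gives
\[
\bigl\|P_\varepsilon^{\rm stiff}(\mathcal{A}_\varepsilon-zI)^{-1}P_\varepsilon^{\rm stiff}-P_\varepsilon^{\rm stiff}\Theta_\varepsilon^{\rm eff}(\mathcal{A}_\varepsilon^{\rm eff}-zI)^{-1}\Theta_\varepsilon^{\rm eff}P_\varepsilon^{\rm stiff}\bigr\|\le C\varepsilon,
\]
so it remains to identify the second term with $P_\varepsilon^{\rm stiff}(\mathcal{A}_{\rm macro}-\mathcal{B}(z))^{-1}P_\varepsilon^{\rm stiff}$ up to $O(\varepsilon)$. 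For this I would use the block structure of $\mathcal{A}_\varepsilon^{\rm eff}$ relative to the decomposition of $\Theta_\varepsilon^{\rm eff}L^2(\R^3;\C^3)$ into a ``macroscopic'' component, into which $P_\varepsilon^{\rm stiff}$ maps, and a ``soft-localised'' component spanned, cell by cell, by the Bloch eigenfunctions $\varphi_k$: there $\mathcal{A}_\varepsilon^{\rm eff}$ acts as a $2\times2$ operator matrix with macroscopic block $\mathcal{A}_{\rm macro}$, soft block $\mathrm{diag}(\eta_k)$, and off-diagonal blocks proportional to $z\langle\varphi_k\rangle$. Eliminating the soft component (Schur complement) exhibits the macroscopic block of $(\mathcal{A}_\varepsilon^{\rm eff}-zI)^{-1}$ as $(\mathcal{A}_{\rm macro}-\mathcal{B}(z))^{-1}$, with $\mathcal{B}(z)$ exactly the quantity \eqref{ante1}: its $z\delta_{ij}$ term is the spectral parameter carried by the macroscopic row of $\mathcal{A}_\varepsilon^{\rm eff}-zI$, and its $z^2\langle\varphi_k\rangle_i\langle\varphi_k\rangle_j/(\eta_k-z)$ term is the correction produced by the off-diagonal blocks acting through $(\mathrm{diag}(\eta_k)-z)^{-1}$. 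After unwinding the Gelfand transform, the cell rescaling and the projections, the second term above is thus seen to equal $P_\varepsilon^{\rm stiff}(\mathcal{A}_{\rm macro}-\mathcal{B}(z))^{-1}P_\varepsilon^{\rm stiff}$ (up to a further $O(\varepsilon)$ if the projections fail to match exactly); here $\mathcal{A}_{\rm macro}-\mathcal{B}(z)$ is boundedly invertible, uniformly for $z\in K_\sigma$, because $\mathcal{A}_{\rm macro}\ge0$ (Lemma~\ref{prop_lemma}) and $\mathcal{B}$ is a matrix Nevanlinna function, so $\mathrm{Im}\,\mathcal{B}(z)$ is definite of the sign of $\mathrm{Im}\,z$ with a lower bound depending only on $\sigma$, whence $-\mathrm{Im}\langle(\mathcal{A}_{\rm macro}-\mathcal{B}(z))\psi,\psi\rangle\ge c(\sigma)\,|\mathrm{Im}\,z|\,\|\psi\|^2$.

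The main obstacle in part~(a) is uniformity over the \emph{entire} Brillouin zone, in particular controlling the resolvent difference on fibres with $|\chi|$ of order $\varepsilon^{-1}$, where no perturbation around $\chi=0$ is available: the estimate is rescued by the interplay between the Lipschitz dependence of the macroscopic symbol on $\varepsilon\chi$ and the $|\chi|^{-2}$-decay of both resolvents, and arranging the powers of $\varepsilon$ and $|\chi|$ to balance to $O(\varepsilon)$ rather than $O(1)$ is the delicate point (it re-uses much of the machinery behind Theorem~\ref{thmmain1}). The main obstacle in part~(b) is to make the formal block/Schur-complement computation rigorous, i.e.\ to verify that the Gelfand transform, the cell rescaling, the projection $\Theta_\varepsilon^{\rm eff}$, the projection $P_\varepsilon^{\rm stiff}$ and the identification of the soft-localised component with the span of the $\varphi_k$ all compose compatibly, so that the compression of the effective resolvent to the stiff component genuinely reproduces, up to $O(\varepsilon)$, the constant-coefficient macroscopic resolvent with the frequency-dependent (time-dispersive) shift $\mathcal{B}(z)$.
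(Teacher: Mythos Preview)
Your overall strategy matches the paper's (Sections~\ref{sectionopeff}--\ref{sectionopeffstiff}): simplify the fibrewise transmission function $\widehat{Q}^{\rm app}_{\chi,\varepsilon}(z)$ of \eqref{qoperator} to the effective $\widehat{Q}^{\rm eff}_{\chi,\varepsilon}(z)$ of \eqref{qoperatoreff} at $O(\varepsilon)$ cost, then for~(b) read off the stiff--stiff block of the Kre\u\i n formula. But the scalings you state in part~(a) are wrong and only \emph{accidentally} cancel. The dominant term in $\widehat{Q}^{\rm app}_{\chi,\varepsilon}(z)-\widehat{Q}^{\rm eff}_{\chi,\varepsilon}(z)$ is $\varepsilon^{-2}\bigl(\widehat{\Lambda}_\chi^{\rm stiff}\widehat{P}_\chi-\Lambda_\chi^{\rm hom}\widehat{P}_0\bigr)=O(\varepsilon^{-2}|\chi|^3)$ (Theorem~\ref{dirichletotneumannresolventasymptotics}, Lemma~\ref{nakk1004}), three powers of $\varepsilon$ worse than your $O(\varepsilon|\chi|^3)$; nothing in the construction depends on a ``rescaled quasimomentum $\varepsilon\chi$''. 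Correspondingly the resolvent bound you invoke, $C(1+|\chi|^2)^{-1}$, is far too weak: combined with the \emph{correct} difference it gives $\varepsilon^{-2}|\chi|^3(1+|\chi|^2)^{-2}\sim\varepsilon^{-2}$ at $|\chi|\sim 1$, which blows up. What actually saves the estimate is the much sharper bound $\bigl\|(\widehat{Q})^{-1}\bigr\|\le C\min\{\varepsilon^2|\chi|^{-2},1\}$ (Lemmas~\ref{lemmaestimateforq}, \ref{nakk1003}), driven by the $\varepsilon^{-2}|\chi|^2$ coercivity of the stiff DtN term; sandwiching the correct difference between two such factors and splitting into the regimes $|\chi|\le\varepsilon$ and $|\chi|\ge\varepsilon$ gives $O(\varepsilon)$ (Lemma~\ref{lemmaqasymptotics}). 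The projections $\Theta_\chi$ and $\Theta_0$ likewise differ by $O(|\chi|)$, not $O(\varepsilon)$, and only the same sandwich makes their contribution $O(\varepsilon)$.

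For part~(b) your Schur-complement idea is right in spirit, but two points need correcting. First, $\mathcal{A}_{\chi,\varepsilon}^{\rm eff}$ (see \eqref{effoperator}) is not a block matrix with soft block $\mathrm{diag}(\eta_k)$ and off-diagonals ``proportional to $z\langle\varphi_k\rangle$'': the $\langle\varphi_k\rangle$ and the $z$-dependence enter only through the resolvent, via $\widehat{M}_0^{\rm soft}(z)=z(\widehat{\Pi}_0^{\rm soft})^*\widehat{\Pi}_0^{\rm soft}+z^2(\widehat{\Pi}_0^{\rm soft})^*(\mathcal{A}_{0,0}^{\rm soft}-zI)^{-1}\widehat{\Pi}_0^{\rm soft}$ sitting inside $\widehat{Q}^{\rm eff}$; the paper computes the stiff block directly from \eqref{effresolventblockmatrix}. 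Second, and this is a genuine missing step, the inverse Gelfand transform of the fibre operators $\bigl(\varepsilon^{-2}X_\chi^*\A_{\rm macro}X_\chi-\mathcal{B}(z)\bigr)^{-1}$ does not give $(\mathcal{A}_{\rm macro}-\mathcal{B}(z))^{-1}$ on all of $L^2(\R^3;\C^3)$ but rather $(\mathcal{A}_{\rm macro}-\mathcal{B}(z))^{-1}\Xi_\varepsilon$, where $\Xi_\varepsilon$ is the Fourier cutoff to $|\xi|_\infty\le(2\varepsilon)^{-1}$; a separate estimate (Lemma~\ref{dropxi}) showing $(\mathcal{A}_{\rm macro}-\mathcal{B}(z))^{-1}(I-\Xi_\varepsilon)=O(\varepsilon^2)$ is required to remove it.
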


For additional discussions about the form of approximating operators in high-contrast homogenisation and the comparison of the above results to the ones of \cite{BirmanSuslina} in the moderate-contrast setting, see Remarks \ref{josipnak10}, \ref{nakk2} below. 

 One can also extend the claims of Theorems \ref{thmmain1} and \ref{thmamin2} beyond $K_{\sigma}$, provided the spectral parameter $z$ is confined to a bounded region away from the spectrum. Under this extension, Corollary \ref{ivan1}  reveals an explicit dependence of the constant $C$ on the distance of $z$ from the spectrum of the operator $\mathcal{A}_{\eps}$ and the modulus of $z,$ as follows. 

\begin{corollary} \label{ivan1} 
	The claim of Theorem \ref{thmmain1}  can be extended to all $z \notin \sigma(\mathcal{A}_{\eps}) \cup \sigma(\mathcal{A}^{\rm app}_{\eps})$ and the constant $C=C(z)$ depends on $z$ as follows:
	$$C(z)=C \left(1+(|z|+1)/{\rm dist}(z,\sigma(\mathcal{A}_{\eps}))\right) \left(1+(|z|+1)/{\rm dist}(z,\sigma(\mathcal{A}^{\rm app}_{\eps}))\right),
	$$
	where $C$ is independent of $z$. 
	A similar statement holds for the claim of Theorem \ref{thmamin2}~(a). 		
\end{corollary}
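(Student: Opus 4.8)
The plan is to reduce Corollary~\ref{ivan1} to Theorems~\ref{thmmain1} and~\ref{thmamin2}(a) by a resolvent-identity argument that tracks the $z$-dependence explicitly. Fix $z_0 \in K_\sigma$ (say on the line $\operatorname{dist}(z_0,\R)=\sigma$) for which the estimate \eqref{estimate24} holds with the uniform constant $C$, and let $z\notin\sigma(\mathcal A_\eps)\cup\sigma(\mathcal A^{\rm app}_\eps)$ be arbitrary. First I would write, for any self-adjoint operator $\mathcal A$,
\begin{equation*}
(\mathcal A - zI)^{-1} = (\mathcal A - z_0 I)^{-1} + (z - z_0)(\mathcal A - zI)^{-1}(\mathcal A - z_0 I)^{-1},
\end{equation*}
and apply this to both $\mathcal A_\eps$ and $\mathcal A^{\rm app}_\eps$ (the latter conjugated by the projection $\Theta^{\rm app}_\eps$, using that $\Theta^{\rm app}_\eps$ commutes with $\mathcal A^{\rm app}_\eps$ in the sense that $\Theta^{\rm app}_\eps(\mathcal A^{\rm app}_\eps - zI)^{-1}\Theta^{\rm app}_\eps$ is a bona fide resolvent on the range). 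Subtracting, the difference of resolvents at $z$ is expressed through the difference at $z_0$ (which is $O(\varepsilon^2)$ by Theorem~\ref{thmmain1}) plus correction terms of the form $(z-z_0)$ times products of resolvents at $z$ and $z_0$, with the ``error'' sandwiched between.

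The key quantitative step is to bound the operator norms appearing in these correction terms. For a self-adjoint operator $\mathcal A$ one has $\|(\mathcal A - zI)^{-1}\| \le \operatorname{dist}(z,\sigma(\mathcal A))^{-1}$ and $\|(\mathcal A - z_0 I)^{-1}\| \le \sigma^{-1}$; moreover $|z - z_0|$ is controlled by $|z| + |z_0| \le |z| + C(\sigma,\operatorname{diam} K_\sigma)$, hence by $C(|z|+1)$. Collecting the terms one obtains that the difference of resolvents at $z$ has norm bounded by
\begin{equation*}
C\varepsilon^2\Bigl(1 + \frac{|z|+1}{\operatorname{dist}(z,\sigma(\mathcal A_\eps))}\Bigr)\Bigl(1 + \frac{|z|+1}{\operatorname{dist}(z,\sigma(\mathcal A^{\rm app}_\eps))}\Bigr),
\end{equation*}
where the product structure arises because one resolvent identity is applied to each of the two operators, contributing one factor each. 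For Theorem~\ref{thmamin2}(a) the same computation goes through verbatim with $\mathcal A^{\rm app}_\eps$ replaced by $\mathcal A^{\rm eff}_\eps$ and $\varepsilon^2$ by $\varepsilon$, using self-adjointness of $\mathcal A^{\rm eff}_\eps$ as stated in the theorem.

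One subtlety I would address carefully is that Theorem~\ref{thmmain1} only asserts \eqref{estimate24} for $z$ in the \emph{compact} $K_\sigma$, whereas we want to anchor the identity at a single convenient $z_0$; since $z_0$ can be any point with $\operatorname{dist}(z_0,\R)\ge\sigma$ and the constant $C$ depends only on $\sigma$ and $\operatorname{diam}(K_\sigma)$, it is enough to choose $z_0$ on the horizontal line at distance $\sigma$, for which the bound holds with a constant depending on $\sigma$ alone; alternatively one re-examines the proof in Section~\ref{Sectionopapp} to see that a single $z_0$ suffices as the base point. A second point to handle is the placement of the projections $\Theta^{\rm app}_\eps$ (resp.\ $\Theta^{\rm eff}_\eps$): since these are orthogonal projections and $\mathcal A^{\rm app}_\eps$ acts within their ranges, one has $\Theta^{\rm app}_\eps(\mathcal A^{\rm app}_\eps - zI)^{-1}\Theta^{\rm app}_\eps$ equal to the resolvent of the part of $\mathcal A^{\rm app}_\eps$ in $\Theta^{\rm app}_\eps L^2$, so the resolvent identity applies to it cleanly and $\operatorname{dist}(z,\sigma(\mathcal A^{\rm app}_\eps))$ is exactly the relevant spectral distance. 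I do not expect a genuine obstacle here --- the argument is the standard ``resolvent identity to move the spectral parameter'' trick --- the only real work is bookkeeping the constants so that the stated product form emerges.
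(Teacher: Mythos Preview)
Your approach is correct and essentially the same as the paper's: both transfer the estimate from a fixed base point (the paper takes $z_1={\rm i}$) to arbitrary $z$ via resolvent algebra and the functional-calculus bound $\|(\mathcal A-z_1I)(\mathcal A-z_2I)^{-1}\|\le 1+|z_1-z_2|/\operatorname{dist}(z_2,\sigma(\mathcal A))$. The only cosmetic difference is that the paper writes down a single explicit identity \eqref{ante40} for the difference $D(z_2)$ in terms of $D(z_1)$ (together with the observation $\|(I-\Theta^{\rm app}_\eps)(\mathcal A_\eps-{\rm i}I)^{-1}\|\le C\eps^2$, which follows from Theorem~\ref{thmmain1}), whereas you subtract two first-resolvent identities and solve the resulting equation for $D(z)$; both routes produce the same product of the two factors $1+(|z|+1)/\operatorname{dist}(z,\sigma(\cdot))$.
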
 
\begin{proof}
	For the  unbounded operators $\mathcal{A}$ and $\mathcal{B}$ on the Banach space $\mathcal{X}$, the orthogonal projection $\mathcal P$ that commutes with the operator $\mathcal{B},$ and $z_1,z_2 \notin \sigma(\mathcal{A})\cup\sigma(\mathcal{B}),$ it is easy to check the identity 
	\begin{equation}
		\label{ante40} 
		\begin{aligned} 
		(\mathcal{A}-z_2I)^{-1}&-{\mathcal P}(\mathcal{B}-z_2I)^{-1}{\mathcal P}
		\\[0.3em] 
		&
		={\mathcal P}(\mathcal{B}-z_2I)^{-1}{\mathcal P}(\mathcal{B}-z_1I){\mathcal P}\left((\mathcal{A}-z_1I)^{-1}
        -{\mathcal P}(\mathcal{B}-z_1I)^{-1}{\mathcal P}\right)(\mathcal{A}-z_1I)(\mathcal{A}-z_2I)^{-1}
		\\[0.3em] &\hspace{35ex}
		+(I-{\mathcal P})(\mathcal{A}-z_1I)^{-1}(\mathcal{A}-z_1I)(\mathcal{A}-z_2I)^{-1}.
		\end{aligned}
	\end{equation} 
	Notice also that by the functional calculus for a self-adjoint $\mathcal{A}$ we have 
	\begin{equation*} 
		\bigl\|(\mathcal{A}-z_1I)(\mathcal{A}-z_2I)^{-1}\bigr\|_{\mathcal{X} \to \mathcal{X}}\leq 1+\frac{|z_2-z_1|}{{\rm dist}(z_2,\sigma\bigl(\mathcal{A})\bigr)}.
	\end{equation*} 
	We set $z_1={\rm i},$  $\mathcal{A}=\mathcal{A}_{\eps},$  $\mathcal{B}=\mathcal{A}_{\eps}^{\rm app},$ and ${\mathcal P}=\Theta^{\rm app}_\varepsilon$ in \eqref{ante40} and combine it with Theorem \ref{thmmain1}, where we set $z={\rm i},$ bearing in mind that, also as a consequence of Theorem \ref{thmmain1}, one has 
	\begin{equation*} 
		\bigl\lVert(I-\Theta^{\rm app}_\varepsilon) \left(\mathcal{A}_\varepsilon -{\rm i}I \right)^{-1}\bigr\rVert_{L^2(\R^3;\C^3) \to L^2(\R^3;\C^3) } \leq C \varepsilon^2.
	\end{equation*}
	The claim now follows immediately. 
\end{proof}
We next discuss implications of the main results for the asymptotic behaviour of the spectra of the operators ${\mathcal A}_\varepsilon.$ It is well known that these 
have a  band-gap structure (see \cite{Zhikov2000}).
 Theorem \ref{thmmain1} and Theorem \ref{thmamin2} enable us to estimate the gaps in the spectrum of $\mathcal{A}_{\varepsilon}$ on any compact interval by the gaps in the spectra of $\mathcal{A}_\varepsilon^{\rm app}$ and   $\mathcal{A}_\varepsilon^{\rm eff},$ respectively. 
\begin{corollary} \label{ivan0} 
	For every $M>0,$ one has
	\begin{eqnarray*} 
		{\rm dist} \left(\sigma(\mathcal{A}_{\eps})\cap [-M,M], \sigma(\mathcal{A}_\varepsilon^{\rm app})\cap[-M,M]\right) &\leq & C(M+1)^2\varepsilon^2,  \\[0.2em]  {\rm dist} \left(\sigma(\mathcal{A}_{\eps})\cap[-M,M],  \sigma(\mathcal{A}_\varepsilon^{\rm eff})\cap[-M,M]\right) &\leq& C (M+1)^2 \varepsilon.     
	\end{eqnarray*} 
	where $C>0$ is independent of $M.$
\end{corollary}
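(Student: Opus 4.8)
The plan is to derive this as a direct consequence of Theorems \ref{thmmain1} and \ref{thmamin2}(a) combined with Corollary \ref{ivan1}, using the standard fact that a small perturbation in the norm-resolvent sense implies Hausdorff proximity of spectra on compact intervals. I will present the argument for the first inequality (involving $\mathcal{A}_\varepsilon^{\rm app}$ with the $O(\varepsilon^2)$ rate); the second one (involving $\mathcal{A}_\varepsilon^{\rm eff}$ with the $O(\varepsilon)$ rate) follows verbatim, using Theorem \ref{thmamin2}(a) and the corresponding part of Corollary \ref{ivan1} in place of Theorem \ref{thmmain1}.

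First I would fix $M>0$ and suppose, for contradiction, that there is a point $\lambda\in\sigma(\mathcal{A}_\varepsilon)\cap[-M,M]$ (the argument with the roles of the two operators swapped is symmetric, since $\mathcal{A}_\varepsilon^{\rm app}$ is also self-adjoint on the reducing subspace $\Theta_\varepsilon^{\rm app}L^2$) whose distance to $\sigma(\mathcal{A}_\varepsilon^{\rm app})\cap[-M,M]$ exceeds $C'(M+1)^2\varepsilon^2$ for a suitable constant $C'$ to be chosen. Pick $z=\lambda+{\rm i}t$ with $t>0$ a small multiple of $(M+1)\varepsilon$ so that $z$ lies at distance $\gtrsim t$ from $\sigma(\mathcal{A}_\varepsilon)$ but also, by the contradiction hypothesis, at distance $\gtrsim C'(M+1)^2\varepsilon^2$ from $\sigma(\mathcal{A}_\varepsilon^{\rm app})$. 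Since $\lambda\in\sigma(\mathcal{A}_\varepsilon)$, one has $\lVert(\mathcal{A}_\varepsilon-zI)^{-1}\rVert \ge 1/\mathrm{dist}(z,\sigma(\mathcal{A}_\varepsilon)) \ge c/t$. On the other hand, by Corollary \ref{ivan1} applied at this $z$ (which is permissible because $z\notin\sigma(\mathcal{A}_\varepsilon)\cup\sigma(\mathcal{A}_\varepsilon^{\rm app})$), the difference $\lVert(\mathcal{A}_\varepsilon-zI)^{-1}-\Theta_\varepsilon^{\rm app}(\mathcal{A}_\varepsilon^{\rm app}-zI)^{-1}\Theta_\varepsilon^{\rm app}\rVert$ is bounded by $C(z)\varepsilon^2$ with $C(z)\lesssim (1+(|z|+1)/t)(1+(|z|+1)/(C'(M+1)^2\varepsilon^2))$; since $|z|\le M+|t|\lesssim M+1$, this is $\lesssim ((M+1)/t)\cdot(1/(C'\varepsilon^2))\cdot\varepsilon^2 = (M+1)/(C't)$. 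Meanwhile $\lVert\Theta_\varepsilon^{\rm app}(\mathcal{A}_\varepsilon^{\rm app}-zI)^{-1}\Theta_\varepsilon^{\rm app}\rVert \le 1/\mathrm{dist}(z,\sigma(\mathcal{A}_\varepsilon^{\rm app})) \le 1/(C'(M+1)^2\varepsilon^2)$. Choosing the scale of $t$ and then $C'$ large enough makes the sum of the last two bounds strictly smaller than $c/t$, contradicting the triangle inequality $c/t \le \lVert(\mathcal{A}_\varepsilon-zI)^{-1}\rVert \le \lVert(\mathcal{A}_\varepsilon-zI)^{-1}-\Theta_\varepsilon^{\rm app}(\mathcal{A}_\varepsilon^{\rm app}-zI)^{-1}\Theta_\varepsilon^{\rm app}\rVert + \lVert\Theta_\varepsilon^{\rm app}(\mathcal{A}_\varepsilon^{\rm app}-zI)^{-1}\Theta_\varepsilon^{\rm app}\rVert$. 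This yields the claimed bound with $C$ depending only on absolute constants, hence independent of $M$.

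The symmetric inclusion — every point of $\sigma(\mathcal{A}_\varepsilon^{\rm app})\cap[-M,M]$ is within $C(M+1)^2\varepsilon^2$ of $\sigma(\mathcal{A}_\varepsilon)\cap[-M,M]$ — follows by the same computation with the two operators interchanged, noting that on the orthogonal complement of $\Theta_\varepsilon^{\rm app}L^2$ the extended operator $\mathcal{A}_\varepsilon^{\rm app}$ is the zero operator, so any spectrum there sits at the origin, which is handled by the $(M+1)$-factor and by the fact that Theorem \ref{thmmain1} also controls $\lVert(I-\Theta_\varepsilon^{\rm app})(\mathcal{A}_\varepsilon-zI)^{-1}\rVert$ (used in the proof of Corollary \ref{ivan1}). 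Taking the maximum over the two directions gives the Hausdorff-distance estimate. The one genuine point requiring care — and the step I expect to be the main obstacle — is the bookkeeping of the $M$-dependence: one must verify that the intermediate scale $t\sim(M+1)\varepsilon$ is small enough that $z$ stays in a region where Corollary \ref{ivan1}'s constant behaves as claimed, while being large enough to beat the $O(\varepsilon^2)$ resolvent error, so that all the $(M+1)$ powers combine to exactly $(M+1)^2$ and no hidden dependence on $M$ leaks into $C$; everything else is the routine resolvent-identity manipulation already encapsulated in \eqref{ante40}.
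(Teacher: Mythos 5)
Your strategy is genuinely different from the paper's, and considerably heavier. The paper's own proof is essentially two lines: apply Theorem \ref{thmmain1} (resp.\ Theorem \ref{thmamin2}\,(a)) at a \emph{single fixed} point of the resolvent set, invoke the standard inequality ${\rm dist}(\sigma(\mathcal{A}),\sigma(\mathcal{B}))\le\|\mathcal{A}-\mathcal{B}\|$ for the resulting \emph{bounded} self-adjoint resolvents, and transfer the estimate on $\left|(\lambda+1)^{-1}-(\mu+1)^{-1}\right|$ back to $|\lambda-\mu|$ via $|\lambda-\mu|=|\lambda+1||\mu+1|\left|(\lambda+1)^{-1}-(\mu+1)^{-1}\right|$; the factor $(M+1)^2$ is precisely $(|\lambda|+1)(|\mu|+1)$ for $\lambda,\mu\in[-M,M]$. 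No contour off the real axis, no use of Corollary \ref{ivan1}, and no case analysis are required. Your route — testing the resolvent difference at $z=\lambda+{\rm i}t$ and contradicting $\|(\mathcal{A}_\varepsilon-zI)^{-1}\|=1/t$ — is non-circular (Corollary \ref{ivan1} does not rest on Corollary \ref{ivan0}) and can in principle be made to work, but it trades a one-line spectral-mapping argument for a delicate optimisation in $t$.

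And it is exactly at that delicate point that your proposal, as written, fails. For the $O(\varepsilon^2)$ estimate the choice $t\sim(M+1)\varepsilon$ is wrong: the term $\bigl\lVert\Theta^{\rm app}_\varepsilon(\mathcal{A}^{\rm app}_\varepsilon-zI)^{-1}\Theta^{\rm app}_\varepsilon\bigr\rVert\le 1/{\rm dist}\bigl(z,\sigma(\mathcal{A}^{\rm app}_\varepsilon)\bigr)\le \bigl(C'(M+1)^2\varepsilon^2\bigr)^{-1}$ must be beaten by a fixed fraction of $1/t$, which forces $t\lesssim C'(M+1)^2\varepsilon^2$; with $t\sim(M+1)\varepsilon$ the required inequality $\bigl(C'(M+1)^2\varepsilon^2\bigr)^{-1}<c/(3t)$ reduces to ${\rm const}<C'(M+1)\varepsilon$, which is violated for all small $\varepsilon$, so no contradiction is reached. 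The correct intermediate scale is $t\sim C'(M+1)^2\varepsilon^2$, i.e.\ comparable to the target spectral distance itself (and, analogously, $t\sim C'(M+1)^2\varepsilon$ for the effective-operator estimate). With that choice one gets $t\cdot C(z)\varepsilon^2\lesssim (M+1)\varepsilon^2+1/C'$ and $t/{\rm dist}(z,\sigma(\mathcal{A}^{\rm app}_\varepsilon))\le 1/6$, all of which can be made $<1/3$ once $C'$ is large and $(M+1)\varepsilon^2$ is small; the complementary regime is trivial because two subsets of $[-M,M]$ are at distance at most $2M\le C(M+1)^2\varepsilon^2$ there. So the argument is salvageable, but only after replacing your choice of $t$.
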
 	
\begin{proof} 
	The proof is obtained by setting $z={\rm i}$ in Theorem \ref{thmmain1} and Theorem \ref{thmamin2}\,(a).
	It is well known that for self-adjoint bounded linear  operators $\mathcal{A},$  $\mathcal{B}$ on a Hilbert space $\mathcal{X},$ one has	
	${\rm dist} \left( \sigma(\mathcal{A}),\sigma(\mathcal{B})\right)\leq \|\mathcal{A}-\mathcal{B}\|_{\mathcal{X} \to \mathcal{X}}, $
	 see e.g. \cite{Kato}. 
	Furthermore, noting that
	\begin{equation*}
		\begin{aligned}
	 &\sigma \bigl(\bigl(\mathcal{A}_\varepsilon + I \bigr)^{-1}\bigr)=
		\bigl\{(\lambda + 1)^{-1}: \lambda \in \sigma(\mathcal{A}_\varepsilon) \bigr\},\qquad 
		\sigma \bigl(\Theta_\varepsilon^{\rm app}\bigl(\mathcal{A}_\varepsilon^{\rm app} + I\bigr)^{-1}\Theta_\varepsilon^{\rm app}\bigr)=\bigl\{(\lambda+ 1)^{-1}: \lambda \in \sigma(\mathcal{A}_\varepsilon^{\rm app}) \bigr\}, \\[0.3em]
		&\sigma\bigl(\Theta_\varepsilon^{\rm eff} \bigl(\mathcal{A}_\varepsilon^{\rm eff} + I \bigr)^{-1}\Theta_\varepsilon^{\rm eff}\bigr)= \bigl\{(\lambda + 1)^{-1}: \lambda \in \sigma(\mathcal{A}_\varepsilon^{\rm eff}) \bigr\},
		\end{aligned}	
	\end{equation*} 	
	the claim follows from the fact that for arbitrary $\lambda, \mu \in \mathbb{R}$ one has
	\begin{equation*} 
	|\lambda-\mu|= |\lambda +1||\mu + 1|\left|(\lambda+1)^{-1}- (\mu+1)^{-1}\right|\leq (|\lambda|+1)  (|\mu|+1) \left|(\lambda+1)^{-1}- (\mu+1)^{-1}\right|.  \qedhere\popQED
	\end{equation*}
\end{proof}

\begin{remark} 
	By combining Corollary \ref{ivan0} and \ref{ivan1} one can make the constant  $C=C(z)$ in Theorem \ref{thmmain1} and \ref{thmamin2} dependent only on $|z|$ and 	${\rm dist}(z,\sigma(\mathcal{A}_{\eps}))$, i.e. only on $|z|$ and ${\rm dist}(z,\sigma(\mathcal{A}^{\rm app/eff}_{\eps}))$. Also, an explicit numerical value for $C>0$ in Corollary \ref{ivan0} and \ref{ivan1} that corresponds to $z={\rm i}$ in Theorem \ref{thmmain1} and \ref{thmamin2} can be provided.  
\end{remark} 	
\BBB

\subsection{Gelfand transform}
\label{Gelf_sec}
The purpose of this chapter is to decompose the original differential operator into a family of differential operators with compact resolvents
that act on functions defined on the unit cell $Y$. This is carried out in a standard way by the Gelfand transform. 

 The Gelfand transform $\mathcal{G}$ is defined on $L^2(\R^3;\C^3)$ by the formula
\begin{equation*}
    (\mathcal{G} \vect u)(y,\chi):= \left(2\pi\right)^{-3/2} \sum_{n\in \Z^3}{\rm e}^{-{\rm i}\chi(y+n)}\vect u(y+n), \quad y \in  Y, \quad \chi\in Y',
\end{equation*}
where $Y':= [- \pi, \pi)^3$. (As noted above, without loss of generality we assume that $Y=[0,1)^3.$  Lattices with other periods can be treated by the same analysis with minor modifications --- the associated ``Brillouin zone" $Y'$ is then adjusted appropriately.) In the case of 
The Gelfand transform is a unitary operator: $$\mathcal{G} : L^2(\mathbb{R}^3;\C^3) \to L^2(Y';L^2(Y; \C^3)) = \int_{Y'}^\oplus L^2(Y; \C^3, \chi)d\chi,$$
in the sense that
    $\left\langle \vect u, \vect v \right\rangle_{L^2(\R^3;\C^3)} = \left\langle \mathcal{G} \vect u,\mathcal{G} \vect v \right\rangle_{L^2(Y';L^2(Y; \C^3))}$ for all $\vect u, \vect v \in L^2(\R^3;\C^3).$
A function can be reconstructed from its Gelfand transform as follows:
\begin{equation*}
   \vect  u(x)=(2\pi)^{-3/2} \int_{Y'} {\rm e}^{{\rm i}\chi\cdot x} (\mathcal{G} \vect u)(x,\chi)d\chi,\quad x\in\R^3.
\end{equation*}
For an overview of the properties of Gelfand transform in relation to homogenisation problems, we refer to \cite{BirmanSuslina}. 

In order to deal with the setting of highly oscillating material coefficients, we consider the following scaled version of Gelfand transform. For a fixed $\varepsilon>0$ and all $\vect u \in L^2(\R^3;\C^3)$, we set 
\begin{equation*}
    (\mathcal{G}_\varepsilon \vect u)(y,\chi):= \left(\frac{\varepsilon}{2\pi}\right)^{3/2} \sum_{n\in \Z^3}{\rm e}^{-{\rm i}\chi(y+n)}\vect u(\varepsilon(y+n)), \quad y \in  Y,\ \ \chi\in Y'.
\end{equation*}
Note that $\mathcal{G}_\varepsilon$ is a composition of $\mathcal{G}$ and the unitary scaling operator $\mathcal{S}_\varepsilon : L^2(\R^3;\C^3) \to L^2(\R^3;\C^3)$ defined by
\begin{equation*}
    \mathcal{S}_\varepsilon \vect u(x) := \varepsilon^{3/2} \vect u (\varepsilon x), \quad \vect u\in L^2(\R^3;\C^3).
\end{equation*}
It follows that $\mathcal{G}_\varepsilon$
is also unitary, i.e.
\begin{equation*}
    \left\langle \vect u, \vect v \right\rangle_{L^2(\R^3;\C^3)} = \left\langle \mathcal{G}_\varepsilon \vect u,\mathcal{G}_\varepsilon \vect v \right\rangle_{L^2(Y';L^2(Y; \C^3))} \qquad \forall \vect u, \vect v \in L^2(\R^3;\C^3).
\end{equation*}
The original function is recovered from its Gelfand transform by the formula
\begin{equation}
	\label{inversegelfand}
   \vect  u(x) = (2\pi \varepsilon)^{-3/2} \int_{Y'} {\rm e}^{{\rm i}\chi\cdot x/\varepsilon} (\mathcal{G}_\varepsilon \vect u)(x/\varepsilon,\chi)d\chi.
\end{equation}
Also, by noting that for the scaled Gelfand transform of a derivative of $\vect  u \in H^1(\R^3;\C^3)$ one has
\begin{equation*}
     \mathcal{G}_\varepsilon (\partial_{x_\alpha} \vect u )=\varepsilon^{-1} \bigl(\partial_{y_\alpha}\left(\mathcal{G}_\varepsilon \vect u\right) + {\rm i}\chi_\alpha \left( \mathcal{G}_\varepsilon \vect u\right)\bigr), \quad \alpha = 1, 2,3,
\end{equation*}
we infer that
\begin{equation}
\label{gelfandvsderivatives}
        \mathcal{G}_\varepsilon\left( \simgrad \vect u\right)(y,\chi) = \varepsilon^{-1}\bigl(\simgrad_y\left(\mathcal{G}_\varepsilon \vect u\right) + {\rm i}\sym\left(\left( \mathcal{G}_\varepsilon \vect u\right)\otimes\chi\right) \bigr) 
        =\varepsilon^{-1}\bigl(\simgrad_y\left(\mathcal{G}_\varepsilon \vect u\right)+{\rm i}X_\chi\left( \mathcal{G}_\varepsilon \vect u\right) \bigr),
\end{equation}
where the for each $\chi\in Y'$ the operator $X_{\chi}$ acting on $L^2( Y; \C^3)$ is defined by
\begin{equation*}
X_{\chi}\vect u  =  \sym \left(\vect u \odot\chi\right) = \begin{bmatrix}
\chi_1 u_1 & \frac{1}{2}(\chi_1 u_2 + \chi_2 u_1) & \frac{1}{2}(\chi_1 u_3 + \chi_3 u_1)  \\[0.7em]
\frac{1}{2}(\chi_1 u_2 + \chi_2 u_1) & \chi_2 u_2 & \frac{1}{2}(\chi_3 u_2 + \chi_2 u_3) \\[0.7em]
\frac{1}{2}(\chi_3 u_1 + \chi_1 u_3) & \frac{1}{2}(\chi_3 u_2 + \chi_2 u_3) & \chi_3 u_3
\end{bmatrix},\quad \vect u \in L^2( Y; \C^3).
\end{equation*}

\begin{remark}
	\label{2Dremark}
    Note that in the setting of 2D elasticity the operator $X_\chi$ takes the form \begin{equation*}
X_{\chi}\vect u  =  \begin{bmatrix}
\chi_1 u_1 & \frac{1}{2}(\chi_1 u_2 + \chi_2 u_1)   \\[0.7em]
\frac{1}{2}(\chi_1 u_2 + \chi_2 u_1) & \chi_2 u_2  
\end{bmatrix},\quad \vect u \in L^2( Y; \C^2).
\end{equation*}
\end{remark} 
It is straightforward to show the existence of $C_1, C_2>0$ such that that
\begin{equation}
C_1|\chi|||\vect u||_{L^2(Y;\C^3)} \leq ||X_{\chi}\vect u||_{L^2(Y;\C^{3 \times  3})} \leq C_2|\chi|||\vect u||_{L^2(Y;\C^3)}\qquad \forall\vect u\in L^2(Y;\C^3).
\label{Xchi_bounds}
\end{equation}
Denote by   $H_{\#}^1( Y;\C^3)$, $H_{\#}^2( Y;\C^3)$ the spaces of $Y$-periodic functions in $H^1(Y;\C^3)$, $H^2(Y;\C^3),$ respectively. We use similar notation when $Y$ is replaced by $Y_{\rm stiff}$. For Gelfand transform ${\mathcal G}_\varepsilon,$ one can show \cite{Kuchment_Floquet_book} that
\begin{equation*}
    a_\varepsilon(\vect u,\vect v) = \int_{Y'}\varepsilon^{-2}a_{\chi,\varepsilon}(\mathcal{G}_\varepsilon\vect u,\mathcal{G}_\varepsilon\vect v) d\chi,
\end{equation*}
where
\begin{equation}
\label{formaachiepsilon}
    a_{\chi,\varepsilon} (\vect u,\vect v) := 	\int_{ Y} \A^\varepsilon(\simgrad+{\rm i}X_\chi)\vect u: \overline{(\simgrad  +{\rm i}X_\chi )\vect v}, \qquad \vect u,\vect v\in H^1_{\#}(Y;\C^3).
\end{equation}
For each $\chi \in Y'$ and $\varepsilon>0,$ we introduce the self-adjoint operator
\begin{equation*}
    \mathcal{A}_{\chi,\varepsilon} := (\simgrad+{\rm i} X_\chi)^* \A^\varepsilon (\simgrad +{\rm i}X_\chi) : \mathcal{D}(\mathcal{A}_{\chi,\varepsilon}) \subset H^1_{\#}(Y;\C^3) \to L^2(Y;\C^3)
\end{equation*}
associated with the positive definite form $a_{\chi,\varepsilon}.$ Here we use the notation $(\cdot)^*$ for the formal adjoint of the operator. Applying the scaled Gelfand transform to the resolvent yields
\begin{equation}
\label{vonneumannformula}
   \left(\mathcal{A}_\varepsilon - zI \right)^{-1} = \mathcal{G}_\varepsilon^{-1} \left( \int_{Y'}^\oplus \left(\frac{1}{\varepsilon^2}\mathcal{A}_{\chi,\varepsilon} - zI \right)^{-1}  d\chi \right) \mathcal{G}_\varepsilon, \quad z \in \rho(\mathcal{A}_\varepsilon),
\end{equation}
which is an example of the classical von Neumann direct integral formula. Due to the compactness of the embedding $H^1_{\#}(Y ;\C^3) \hookrightarrow L^2(Y;\C^3)$, the resolvents $\left(\varepsilon^{-2}\mathcal{A}_{\chi,\varepsilon} - zI \right)^{-1}$ are compact. We interpret \eqref{vonneumannformula} as follows: by applying the Gelfand transform to the problem, we have decomposed the resolvent operator $\left(\mathcal{A}_\varepsilon - zI \right)^{-1}$ into a continuum family of resolvent operators $\left(\varepsilon^{-2}\mathcal{A}_{\chi,\varepsilon} -z  I \right)^{-1}$ indexed by $\chi\in Y'$. In contrast to the original resolvent operator, this family consists of compact operators, which have discrete spectra.

%
For each $\varepsilon>0,$ the $\chi$-fibre ($\chi \in Y'$) resolvent problem for $\mathcal{A}_\varepsilon$ consists in finding, for a fixed  
$z \in \rho(\mathcal{A}_{\chi,\varepsilon})$ and every $\vect f\in L^2(Y,\C^3),$ the solution $\vect u \in \mathcal{D}(\mathcal{A}_{\chi,\varepsilon})$ to the equation
    $\left(\varepsilon^{-2}\mathcal{A}_{\chi,\varepsilon}-zI\right)\vect u =\vect f.$

\begin{remark}[Transmission boundary value problem]
	\label{deinitiontransmission}
	The equation $\left(\varepsilon^{-2}\mathcal{A}_{\chi,\varepsilon}-zI\right)\vect u =\vect f$ can  be formally recast as follows: find $\vect u_{\rm stiff}\in H_\#^2(Y_{\rm stiff},\C^3)$,   $ \vect u_{\rm soft}\in H^2(Y_{\rm soft},\C^3)$ such that 
	\begin{equation}
		\label{transmissionboundaryproblem}
		\begin{aligned}
			\varepsilon^{-2}\left(\simgrad+{\rm i}X_\chi\right)^* \A_{\rm stiff} \left(\simgrad + {\rm i}X_\chi\right) \vect u_{\rm stiff}-z \vect u_{\rm stiff} = \vect f \quad &\mbox{on $Y_{\rm stiff}$, } \\[0.3em]
			\left(\simgrad+{\rm i}X_\chi\right)^* \A_{\rm soft} \left(\simgrad +{\rm i} X_\chi\right) \vect u_{\rm soft}- z \vect u_{\rm soft} = \vect f\quad &\mbox{on $Y_{\rm soft}$, } \\[0.4em]
			{\vect u}_{\rm stiff}={\vect u}_{\rm soft} \quad&{\rm on}\ \Gamma, \\[0.3em]
			\varepsilon^{-2}\A_{\rm stiff} \left(\simgrad + {\rm i}X_\chi\right) {\vect u}_{\rm stiff} \cdot {\vect n}_{\rm stiff} + \A_{\rm soft} \left(\simgrad +{\rm i} X_\chi\right) {\vect u}_{\rm soft} \cdot {\vect n}_{\rm soft} = 0\quad&{\rm on}\ \Gamma,
		\end{aligned}
	\end{equation}
	where ${\vect n}_{\rm stiff}$, ${\vect n}_{\rm soft}$ denote the 
	outward-pointing unit normals to $\Gamma$ from $Y_{\rm stiff},$ $Y_{\rm soft}.$ The question of making this reformulation rigorous is that of regularity of functions in the domain of ${\mathcal A}_{\chi,\varepsilon}.$ 
	
	The above regularity question can alternatively be framed in terms of the solution $\vect u\in H_\#^1(Y;\C^3)$ to the weak formulation of \eqref{transmissionboundaryproblem}:
	\begin{equation}
	a_{\chi, \varepsilon}(\vect u, \vect v)+\int\vect u\cdot\overline{\vect v}=\int_Y\vect f\cdot\overline{\vect v}\qquad \forall \vect v\in H_\#^1(Y;\C^3).
	\label{weak_form_above}
	\end{equation}
	It is then tempting to interpret the problem \eqref{transmissionboundaryproblem} via understanding its first two equations 
	in the sense of distributions, the penultimate equation in the sense of equality in $H^{1/2}(\Gamma;\C^3)$, and the last equation  in the sense of equality in $H^{-1/2}(\Gamma;\C^3)$. However, even this interpretation calls for some caution, in view of the moderate regularity assumptions made  in Section \ref{elast_op_sec} about the coefficient tensors $\A^{\rm stiff(soft)}$ and interface $\Gamma$. 
	
	We do not address the $H^2$ regularity in the present work, as it has no bearing on our results.  As the formulation \eqref{transmissionboundaryproblem} is often preferred in the engineering community, at present its link to the operators ${\mathcal A}_{\chi,\varepsilon}$ is an open question, to which some of the machinery of \cite{Mitrea} may be relevant. 
    
    We note however that, in the infinitely smooth smooth (i.e., $C^\infty$ coefficients and interface), the above regularity question was settled positively in \cite{Schechter}. Under the assumptions we made in the present paper (see Section \ref{elast_op_sec}), the results we present below can be shown to yield at least $H^{3/2}$ regularity. In other words, the weak formulation \eqref{weak_form_above} is shown to be equivalent to looking for $\vect u_{\rm stiff}\in H_\#^{3/2}(Y_{\rm stiff},\C^3)$,   $ \vect u_{\rm soft}\in H^{3/2}(Y_{\rm soft},\C^3)$ such that \eqref{transmissionboundaryproblem} holds.

	

	
\end{remark}

\section{Operator theoretic approach: Ryzhov triples}\label{section3} 

The purpose of this section is to introduce an abstract framework for the transmission problem \eqref{transmissionboundaryproblem}. In Section \ref{opthap1} we recall a general construction due to Ryzhov \cite{Ryzhov_spec}, while in Sections \ref{opapth2}, \ref{secnakkk1} we show how the problem \eqref{transmissionboundaryproblem} can be seen as part of this construction and prove key properties of the operators emerging in the process. 
  
We start by introducing some basic objects required to work with Ryzhov triples, an operator framework convenient for the analysis of boundary value problems for PDEs.

\subsection{Abstract notion of a Ryzhov triple} \label{opthap1} 

The concept of a Ryzhov triple was introduced  in \cite{MR2330831,Ryzhov_spec}.  The main results  of this section are Theorems \ref{theoremsolutionformularobin}, \ref{theoremkreinformula}, which provide an operator-theoretic formula for the solution to an abstract spectral boundary value problem. 
\begin{definition}
\label{ryzhovtriple}
Let $\mathcal{H}$ be a separable Hilbert space and $\mathcal{E}$ an auxiliary Hilbert space. Suppose that:
\begin{itemize}
    \item $\mathcal{A}_0$ is a self-adjoint operator on $\mathcal{H}$ with $0 \in \rho(\mathcal{A}_0)$,
    \item $\Pi:\mathcal{E}\to \mathcal{H}$ is a bounded operator such that
    $\mathcal{D}(\mathcal{A}_0)\cap \mathcal{R}(\Pi) = \{ 0\},$ $\ker(\Pi) = \{0\}.$
    \item $\Lambda$ is a self-adjoint operator on the domain $\mathcal{D}(\Lambda) \subset \mathcal{E}$.
\end{itemize}
We refer to the triple $\left(\mathcal{A}_0, \Pi, \Lambda \right)$ as a Ryzhov triple on $(\mathcal{H},\mathcal{E})$.
\end{definition}
Following \cite{Ryzhov_spec}, we introduce the following objects. 
\begin{definition}
\label{boundary operators}
Let $\mathcal{H}$ be a separable Hilbert space, $\mathcal{E}$ an auxiliary Hilbert space, and $\left(\mathcal{A}_0, \Pi, \Lambda \right)$ a Ryzhov triple on $(\mathcal{H},\mathcal{E})$. Define the operators ${\mathcal A},$ $\Gamma_0,$ $\Gamma_1$ as follows: 
\begin{equation}\label{defopp11} 
    \begin{aligned}
        \mathcal{D}(\mathcal{A}):=\mathcal{D}(\mathcal{A}_0) \dot + \Pi(\mathcal{E}), & \qquad \mathcal{A}:\mathcal{A}_0^{-1} \vect f + \Pi \vect g \to \vect f, \quad \vect f \in \mathcal{H}, \vect g \in \mathcal{E}, \\[0.3em]
        \mathcal{D}(\Gamma_0):=\mathcal{D}(\mathcal{A}_0) \dot + \Pi(\mathcal{E}), & \qquad \Gamma_0:\mathcal{A}_0^{-1} \vect f + \Pi \vect g \to \vect g, \quad \vect f \in \mathcal{H}, \vect g \in \mathcal{E}, \\[0.3em]
        \mathcal{D}(\Gamma_1):=\mathcal{D}(\mathcal{A}_0) \dot + \Pi(\mathcal{D}(\Lambda)), & \qquad \Gamma_1:\mathcal{A}_0^{-1} \vect f + \Pi \vect g \to \Pi^* \vect f + \Lambda \vect g, \quad \vect f \in \mathcal{H}, \vect g \in \mathcal{D}(\Lambda).
    \end{aligned}
\end{equation}
We say that $(\mathcal{A},\Gamma_0,\Gamma_1)$ is the boundary triple associated with the Ryzhov triple $\left(\mathcal{A}_0, \Pi, \Lambda \right)$.
\end{definition}
Notice that, by definition, we have
\begin{equation}
\label{dirichlettoneumanndecomp}
  \Lambda \vect g = \Gamma_1 \Pi \vect g \quad \forall \vect g\in \mathcal{D}(\Lambda), \qquad \Pi^* \vect f = \Gamma_1 \mathcal{A}_0^{-1} \vect f \quad  \forall \vect f  \in \mathcal{H}.
\end{equation}
In what follows, we will consider the case when ${\mathcal A}_0$ is an (unbounded) differential operator, and the operators $\Gamma_0$ and $\Gamma_1$ assume  the r\^{o}les of the trace of a function and of its co-normal derivative on the boundary, respectively. The next result is then well expected and can be found in \cite{Ryzhov_spec}.
\begin{theorem}[Green's formula]
	Let $(\mathcal{A}_0,\Pi, \Lambda)$ be a Ryzhov triple. Then for the associated boundary triple $(\mathcal{A}$, $\Gamma_0$, $\Gamma_1)$ the following identity holds:  
\begin{equation}
\label{Greenformula}
    \left\langle \mathcal{A} \vect u, \vect v \right\rangle_{\mathcal{H}} - \left\langle  \vect u, \mathcal{A} \vect v \right\rangle_{\mathcal{H}} = \left\langle \Gamma_1 \vect u, \Gamma_0 \vect v \right\rangle_{\mathcal{E}} - \left\langle \Gamma_0 \vect u, \Gamma_1 \vect v \right\rangle_{\mathcal{E}} \qquad \forall \vect u, \vect v \in \mathcal{D}(\mathcal{A}_0) \dot + \Pi\bigl(\mathcal{D}(\Lambda)\bigr).
\end{equation}
\end{theorem}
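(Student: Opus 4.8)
The plan is to compute both sides of \eqref{Greenformula} directly from the definitions \eqref{defopp11}, exploiting the self-adjointness of $\mathcal{A}_0$ and $\Lambda$. Write $\vect u = \mathcal{A}_0^{-1}\vect f_1 + \Pi\vect g_1$ and $\vect v = \mathcal{A}_0^{-1}\vect f_2 + \Pi\vect g_2$ with $\vect f_1,\vect f_2\in\mathcal{H}$ and $\vect g_1,\vect g_2\in\mathcal{D}(\Lambda)$; this is the generic element of $\mathcal{D}(\mathcal{A}_0)\dot+\Pi(\mathcal{D}(\Lambda))$, and by the direct-sum property the decomposition is unique, so $\mathcal{A}\vect u=\vect f_1$, $\mathcal{A}\vect v=\vect f_2$, $\Gamma_0\vect u=\vect g_1$, $\Gamma_0\vect v=\vect g_2$, $\Gamma_1\vect u=\Pi^*\vect f_1+\Lambda\vect g_1$, and $\Gamma_1\vect v=\Pi^*\vect f_2+\Lambda\vect g_2$.

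Next I would expand the left-hand side. We have
\[
\langle\mathcal{A}\vect u,\vect v\rangle_{\mathcal{H}} = \langle\vect f_1,\mathcal{A}_0^{-1}\vect f_2\rangle_{\mathcal{H}} + \langle\vect f_1,\Pi\vect g_2\rangle_{\mathcal{H}},
\]
and symmetrically for $\langle\vect u,\mathcal{A}\vect v\rangle_{\mathcal{H}}$. Since $\mathcal{A}_0$ is self-adjoint and $0\in\rho(\mathcal{A}_0)$, $\mathcal{A}_0^{-1}$ is bounded and self-adjoint, so $\langle\vect f_1,\mathcal{A}_0^{-1}\vect f_2\rangle_{\mathcal{H}}=\langle\mathcal{A}_0^{-1}\vect f_1,\vect f_2\rangle_{\mathcal{H}}$; these terms cancel upon taking the difference. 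Using $\langle\vect f_1,\Pi\vect g_2\rangle_{\mathcal{H}}=\langle\Pi^*\vect f_1,\vect g_2\rangle_{\mathcal{E}}$ and similarly $\langle\Pi\vect g_1,\vect f_2\rangle_{\mathcal{H}}=\langle\vect g_1,\Pi^*\vect f_2\rangle_{\mathcal{E}}$, the left-hand side collapses to $\langle\Pi^*\vect f_1,\vect g_2\rangle_{\mathcal{E}}-\langle\vect g_1,\Pi^*\vect f_2\rangle_{\mathcal{E}}$.

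Then I would expand the right-hand side:
\[
\langle\Gamma_1\vect u,\Gamma_0\vect v\rangle_{\mathcal{E}}-\langle\Gamma_0\vect u,\Gamma_1\vect v\rangle_{\mathcal{E}} = \langle\Pi^*\vect f_1+\Lambda\vect g_1,\vect g_2\rangle_{\mathcal{E}} - \langle\vect g_1,\Pi^*\vect f_2+\Lambda\vect g_2\rangle_{\mathcal{E}}.
\]
The terms $\langle\Lambda\vect g_1,\vect g_2\rangle_{\mathcal{E}}$ and $\langle\vect g_1,\Lambda\vect g_2\rangle_{\mathcal{E}}$ cancel because $\Lambda$ is self-adjoint and $\vect g_1,\vect g_2\in\mathcal{D}(\Lambda)$, leaving exactly $\langle\Pi^*\vect f_1,\vect g_2\rangle_{\mathcal{E}}-\langle\vect g_1,\Pi^*\vect f_2\rangle_{\mathcal{E}}$, which matches the left-hand side. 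Assembling these three computations completes the proof.

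There is essentially no serious obstacle here: this is a bookkeeping argument, and the only points requiring a moment's care are (i) the well-definedness of $\mathcal{A}$, $\Gamma_0$, $\Gamma_1$ on the direct sum, i.e. that the decomposition $\vect u=\mathcal{A}_0^{-1}\vect f+\Pi\vect g$ is unique — which follows from $\mathcal{D}(\mathcal{A}_0)\cap\mathcal{R}(\Pi)=\{0\}$ and $\ker\Pi=\{0\}$ in Definition \ref{ryzhovtriple} — and (ii) keeping track of which inner product ($\mathcal{H}$ or $\mathcal{E}$) each pairing lives in when applying the adjoint relation $\langle\Pi\cdot,\cdot\rangle_{\mathcal{H}}=\langle\cdot,\Pi^*\cdot\rangle_{\mathcal{E}}$. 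If anything is "hard," it is merely verifying that all pairings are legitimate, which they are since $\mathcal{A}_0^{-1}$ and $\Pi$ are bounded and $\vect g_1,\vect g_2\in\mathcal{D}(\Lambda)$.
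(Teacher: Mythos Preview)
Your proof is correct and is exactly the standard direct computation one would carry out from the definitions \eqref{defopp11}; the paper itself does not give a proof but simply refers to \cite{Ryzhov_spec}, where the same bookkeeping argument appears.
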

\begin{definition}
\label{solutionmfunction}
 Suppose $z \in \rho(\mathcal{A}_0)$. Define the operator $S(z)$ mapping $\vect g \in \mathcal{E}$ to the solution $\vect u \in \mathcal{D}(\mathcal{A})=\mathcal{D}(\Gamma_0)$ of the spectral boundary value problem
\begin{equation*}
         \mathcal{A}\vect u = z \vect u,
         \qquad
         \Gamma_0  \vect u = \vect g.
\end{equation*}
The operator-valued function $M(z)$ defined on $\mathcal{D}(\Lambda)$ by
    $M(z)\vect:=\Gamma_1 S(z),$
is called the Weyl $M$-function of the Ryzhov triple $(\mathcal{A}_0,\Pi,\Lambda)$.
\end{definition}
In \cite{Ryzhov_spec} the following formulae for the operators $S(z)$ and $M(z)$ were proven ($z\in\rho({\mathcal A}_0)$): 
\begin{equation}
\label{solutionrepresentation}
    S(z) =  \left(I - z \mathcal{A}_0^{-1} \right)^{-1}\Pi, \qquad     M(z)= \Gamma_1 \left(I - z \mathcal{A}_0^{-1} \right)^{-1}\Pi.
\end{equation}
Note also that 
\begin{equation}
\label{usefulidentitiesresolvent}
    \left(I - z \mathcal{A}_0^{-1} \right)^{-1} = I + z \mathcal{A}_0^{-1}\left(I - z \mathcal{A}_0^{-1} \right)^{-1} = I + z(\mathcal{A}_0 - zI)^{-1},
\end{equation}
and therefore
\begin{equation}
\label{soperatoranotherform}
    S(z) = \Pi + z(\mathcal{A}_0 - zI)^{-1}\Pi.
\end{equation}
The next proposition lists key properties of the $M$-function.
\begin{proposition} (Properties of the Weyl $M$-function)
\begin{itemize}
    \item     The following representation holds:
    \begin{equation}
    \label{representationmfunctionformula}
        M(z) = \Lambda + z \Pi^*\left(I - z\mathcal{A}_0^{-1} \right)^{-1} \Pi, \quad z \in \rho(\mathcal{A}_0).
    \end{equation}
    \item $M(z)$ is an operator-valued function with values in the set of closed operators on $\mathcal{E}$ with ($z$-independent) domain $\mathcal{D}(\Lambda)$ such that $M(z)- \Lambda$ is analytic.
    \item For $z, \xi \in \rho(\mathcal{A}_0)$ the operator $M(z)-M(\xi)$ is bounded, and 
       $M(z)-M(\xi) = (z-\xi)\left( S(\overline{z})\right)^* S(\xi).$
    \item For $\vect u \in \ker(\mathcal{A}-zI) \cap \left\{ \mathcal{D}(\mathcal{A}_0) \dot{+}\Pi \mathcal{D}(\Lambda) \right\}$, the following formula holds:
    \begin{equation}
    \label{dtnmap}
        M(z) \Gamma_0 \vect u = \Gamma_1 \vect u.
    \end{equation}
\end{itemize}
\end{proposition}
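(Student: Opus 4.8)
The plan is to derive all four assertions from the explicit formulae \eqref{solutionrepresentation}, \eqref{usefulidentitiesresolvent}, \eqref{soperatoranotherform} together with the definition \eqref{defopp11} of the action of $\Gamma_1$. First I would prove the representation \eqref{representationmfunctionformula}. Fix $\vect g\in\mathcal{D}(\Lambda)$. By \eqref{soperatoranotherform}, $S(z)\vect g=\Pi\vect g+z(\mathcal{A}_0-zI)^{-1}\Pi\vect g$; the second summand lies in $\mathcal{D}(\mathcal{A}_0)$ and, using \eqref{usefulidentitiesresolvent}, equals $\mathcal{A}_0^{-1}\vect f$ with $\vect f=z\mathcal{A}_0(\mathcal{A}_0-zI)^{-1}\Pi\vect g=z(I-z\mathcal{A}_0^{-1})^{-1}\Pi\vect g$. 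Hence $S(z)\vect g\in\mathcal{D}(\mathcal{A}_0)\dot+\Pi\bigl(\mathcal{D}(\Lambda)\bigr)=\mathcal{D}(\Gamma_1)$, and \eqref{defopp11} gives $M(z)\vect g=\Gamma_1 S(z)\vect g=\Pi^*\vect f+\Lambda\vect g=z\Pi^*(I-z\mathcal{A}_0^{-1})^{-1}\Pi\vect g+\Lambda\vect g$, which is \eqref{representationmfunctionformula}.

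The second assertion is then immediate: since $\Pi,\Pi^*$ are bounded and $(I-z\mathcal{A}_0^{-1})^{-1}=I+z(\mathcal{A}_0-zI)^{-1}$ is bounded for every $z\in\rho(\mathcal{A}_0)$, the operator $M(z)-\Lambda=z\Pi^*(I-z\mathcal{A}_0^{-1})^{-1}\Pi$ is everywhere defined and bounded, so $M(z)$ is closed with ($z$-independent) domain $\mathcal{D}(\Lambda)$ as a bounded perturbation of the self-adjoint $\Lambda$; and $z\mapsto M(z)-\Lambda$ is analytic because $z\mapsto(\mathcal{A}_0-zI)^{-1}$ is operator-norm analytic on $\rho(\mathcal{A}_0)$.

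For the third assertion I would again start from \eqref{representationmfunctionformula}, writing $R_z:=(I-z\mathcal{A}_0^{-1})^{-1}$, so that $M(z)-M(\xi)=z\Pi^*R_z\Pi-\xi\Pi^*R_\xi\Pi$ is bounded. It then suffices to verify the resolvent-type identity $zR_z-\xi R_\xi=(z-\xi)R_zR_\xi$; multiplying by $R_z^{-1}=I-z\mathcal{A}_0^{-1}$ on the left and $R_\xi^{-1}=I-\xi\mathcal{A}_0^{-1}$ on the right, this reduces to the elementary relation $z(I-\xi\mathcal{A}_0^{-1})-\xi(I-z\mathcal{A}_0^{-1})=(z-\xi)I$. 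Since $\mathcal{A}_0$, hence $\mathcal{A}_0^{-1}$, is self-adjoint, one has $\bigl(S(\overline{z})\bigr)^*=\Pi^*\bigl((I-\overline{z}\mathcal{A}_0^{-1})^{-1}\bigr)^*=\Pi^*R_z$, and therefore $M(z)-M(\xi)=(z-\xi)\Pi^*R_zR_\xi\Pi=(z-\xi)\bigl(S(\overline{z})\bigr)^*S(\xi)$.

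Finally, \eqref{dtnmap} rests on the uniqueness of the solution of the abstract spectral boundary value problem defining $S(z)$ in Definition \ref{solutionmfunction}. Given $\vect u\in\ker(\mathcal{A}-zI)\cap\bigl(\mathcal{D}(\mathcal{A}_0)\dot+\Pi\mathcal{D}(\Lambda)\bigr)$, set $\vect g:=\Gamma_0\vect u\in\mathcal{D}(\Lambda)$; then $\vect u$ and $S(z)\vect g$ both solve $\mathcal{A}\vect w=z\vect w$, $\Gamma_0\vect w=\vect g$, and their difference has vanishing $\Pi$-component, hence lies in $\ker(\mathcal{A}_0-zI)=\{0\}$ because $z\in\rho(\mathcal{A}_0)$; thus $\vect u=S(z)\Gamma_0\vect u$ and $\Gamma_1\vect u=\Gamma_1S(z)\Gamma_0\vect u=M(z)\Gamma_0\vect u$. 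Throughout, the one point requiring genuine care --- and what I regard as the main (if modest) obstacle --- is the domain bookkeeping: checking at each step that the vectors on which $\Gamma_1$ (equivalently $\mathcal{A}_0^{-1}$ and $\Pi$) act actually lie in the direct sum $\mathcal{D}(\mathcal{A}_0)\dot+\Pi(\mathcal{D}(\Lambda))$ and that the decomposition there is unique, which is precisely where $\ker(\Pi)=\{0\}$, $\mathcal{D}(\mathcal{A}_0)\cap\mathcal{R}(\Pi)=\{0\}$ and $z\in\rho(\mathcal{A}_0)$ enter.
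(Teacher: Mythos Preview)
Your proof is correct. The paper does not give its own proof of this proposition; it is stated without argument and attributed to \cite{Ryzhov_spec}, so there is nothing to compare against beyond noting that your direct verification from \eqref{solutionrepresentation}--\eqref{soperatoranotherform} and the definition \eqref{defopp11} is exactly the kind of computation one finds in that reference.
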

\begin{remark}
    In the case when the operators $\Gamma_0$ and $\Gamma_1$ 
    represent the trace of a function and the trace of its co-normal derivative, the formula \eqref{dtnmap} clearly reveals the $M$-function $M(z)$ to be the DtN map associated with the resolvent problem.
\end{remark}
\begin{remark}
Due to the fact that $\mathcal{D}(M(z))=\mathcal{D}(M(z)^*)=\mathcal{D}(\Lambda)$ independently on $z \in \C$, one can define 
the operators
\begin{equation*}
    \Re M(z) =2^{-1}\bigl(M(z) + M(z)^*\bigr), \quad \Im M(z) =(2{\rm i})^{-1}\bigl(M(z) - M(z)^*)
\end{equation*}
on $\mathcal{D}(\Lambda)$. Note that by \eqref{representationmfunctionformula} and the fact that $\Lambda$ is self-adjoint, one has $M(z)^* = M(\overline{z}),$ and therefore 
\begin{equation}
	\label{nakk301} 
      \Im M(z) =(2{\rm i })^{-1}\bigl(M(z) - M(z)^*\bigr) = \Im z\left( S(\overline{z})\right)^* S(\overline{z}), \quad \Im M(z) = \Im (M(z) - \Lambda) = \Im (M(z) -  M(0)).
\end{equation}
\end{remark}

\begin{remark}
It is clear from \eqref{representationmfunctionformula} and \eqref{usefulidentitiesresolvent} that
\begin{equation}
    \label{identityforasymptotics}
    M(z) = \Lambda + z\Pi^* \Pi + z^2\Pi^* (\mathcal{A}_0 - zI)^{-1} \Pi.
\end{equation}
This formula will prove to be one of the key elements in deriving the asymptotics, as $\varepsilon\to0,$ of the resolvents $\left(\varepsilon^{-2}\mathcal{A}_{\chi,\varepsilon} -zI\right)^{-1}$ of the operators $\mathcal{A}_{\chi,\varepsilon}$ introduced in Section \ref{Gelf_sec}. 
\end{remark}

For a given $\mathcal{A}_0,$ we define $\mathcal{A}_{00}$ to be the restriction of $\mathcal{A}_0$ to the set $\mathcal{D}(\mathcal{A}_{00}):=\ker(\Gamma_0)\cap\ker(\Gamma_1)$. 
\begin{remark}
It was shown in \cite{Ryzhov_spec} that  $\mathcal{D}(\mathcal{A}_{00})$ does not actually depend on the choice of the operator $\Gamma_1$ (or $\Lambda$) and can be characterised as the subspace of $\mathcal{D}(\mathcal{A}_0)$ consisting of those elements $\vect u$ for which $\mathcal{A}_0 \vect u $ is orthogonal to the range of $\Pi$.
\end{remark}
One can characterise a wide class of densely defined closed extensions of $\mathcal{A}_{00}$ contained in $\mathcal{A}$ to be the operators $\mathcal{A}_{\beta_0,\beta_1}$ associated with the spectral boundary value problem
    $\mathcal{A}\vect u - z \vect u = \vect f$
subject to an abstract Robin-type condition
\begin{equation}
\label{robintypecondition}
    \left(\beta_0 \Gamma_0 + \beta_1 \Gamma_1\right) \vect u = 0,
\end{equation}
by varying over the choice of the operators $\beta_0$, $\beta_1$ on $\mathcal{E}$. The rigorous definition of the extension operators $\mathcal{A}_{\beta_0,\beta_1}$ is postponed to Theorem \ref{theoremkreinformula}. Note that $\mathcal{A}_0$ is then the self-adjoint extension of $\mathcal{A}_{00}$ corresponding to the choice $\beta_0 = I$, $\beta_1 = 0$.
However, in order to clarify the meaning of  \eqref{robintypecondition}, it is necessary to make additional assumptions on the operators $\beta_0, \beta_1$, e.g., as follows.
\begin{assumption}
\label{assumptionbeta}
The operators $\beta_0, \beta_1$ are linear in $\mathcal{E}$ and such that
 $\mathcal{D}(\beta_0) \supset \mathcal{D}(\Lambda)$ and $\beta_1$ is bounded  on $\mathcal{E}.$  The operator $\beta_0 + \beta_1 \Lambda$, defined on $\mathcal{D}(\Lambda),$ is closable in $\mathcal{E}$. We denote its closure by $\mathfrak{B}$.
\end{assumption}
 Under the above assumption, the operator $\beta_0 + \beta_1 M(z)$ is also closable. The condition $\eqref{robintypecondition}$ is shown to be well posed on a certain Hilbert space associated with the closure of $\beta_0 + \beta_1 \Lambda$ in $\mathcal{E}$.
\begin{definition}
Consider a separable Hilbert space $\mathcal{H},$  an auxiliary Hilbert space $\mathcal{E},$ and suppose that $\left(\mathcal{A}_0, \Pi, \Lambda \right)$ is a Ryzhov triple on $(\mathcal{H},\mathcal{E})$. Suppose also that $\beta_0, \beta_1$ are linear operators on $\mathcal{E}$ satisfying Assumption \ref{assumptionbeta}. Consider the space \begin{equation*}
    \mathcal{H}_{\beta_0,\beta_1}:= \mathcal{D}(\mathcal{A}_0) \dot + \Pi\bigl(\mathcal{D}(\mathfrak{B})\bigr)  \subset \bigl\{\mathcal{A}_0^{-1} \vect{f}+\Pi \vect{g} :\vect{f} \in \mathcal{H}, \ \vect{g} \in \mathcal{E}\bigr\},
\end{equation*}
equipped with the norm
\begin{equation*}
    \bigl\lVert  \mathcal{A}_0^{-1}\vect f + \Pi \vect g  \bigr\rVert_{\mathcal{H}_{\beta_0,\beta_1}}:= \left(\left\lVert \vect f \right\rVert^2_{\mathcal{H}} + \left\lVert \vect g \right\rVert^2_{\mathcal{E}} + \left\lVert \mathfrak{B}\vect g \right\rVert_{\mathcal{E}}^2 \right)^{1/2},
\end{equation*}
\end{definition}
The following lemma is proved in \cite{Ryzhov_spec}.
\begin{lemma}
The space $(\mathcal{H}_{\beta_0,\beta_1}, \left\lVert \cdot \right\rVert_{\beta_0,\beta_1})$ is a Hilbert space. The operator $\beta_0\Gamma_0 + \beta_1 \Gamma_1 : \mathcal{H}_{\beta_0,\beta_1} \to \mathcal{E}$ is bounded.
\end{lemma}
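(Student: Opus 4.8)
The plan is to treat the two assertions in turn. Throughout, recall that each $u\in\mathcal{H}_{\beta_0,\beta_1}$ has a \emph{unique} representation $u=\mathcal{A}_0^{-1}\vect f+\Pi\vect g$ with $\vect f\in\mathcal{H}$ and $\vect g\in\mathcal{D}(\mathfrak{B})$: indeed $\mathcal{A}_0^{-1}$ is injective since $0\in\rho(\mathcal{A}_0)$, $\Pi$ is injective since $\ker\Pi=\{0\}$, and $\mathcal{D}(\mathcal{A}_0)\cap\mathcal{R}(\Pi)=\{0\}$ by Definition~\ref{ryzhovtriple}. Consequently
\begin{equation*}
\bigl\langle \mathcal{A}_0^{-1}\vect f_1+\Pi\vect g_1,\ \mathcal{A}_0^{-1}\vect f_2+\Pi\vect g_2\bigr\rangle:=\langle\vect f_1,\vect f_2\rangle_{\mathcal{H}}+\langle\vect g_1,\vect g_2\rangle_{\mathcal{E}}+\langle\mathfrak{B}\vect g_1,\mathfrak{B}\vect g_2\rangle_{\mathcal{E}}
\end{equation*}
is a well-defined inner product inducing $\lVert\cdot\rVert_{\mathcal{H}_{\beta_0,\beta_1}}$, so the only substantive point for the first claim is completeness.

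For completeness I would take a Cauchy sequence $u_n=\mathcal{A}_0^{-1}\vect f_n+\Pi\vect g_n$. By the very form of the norm, $(\vect f_n)$ is Cauchy in $\mathcal{H}$, while $(\vect g_n)$ and $(\mathfrak{B}\vect g_n)$ are Cauchy in $\mathcal{E}$; call their limits $\vect f$, $\vect g$, $\vect h$. The decisive ingredient is that $\mathfrak{B}$ is a \emph{closed} operator, being by Assumption~\ref{assumptionbeta} the closure of $\beta_0+\beta_1\Lambda$; applying closedness to $\vect g_n\to\vect g$, $\mathfrak{B}\vect g_n\to\vect h$ yields $\vect g\in\mathcal{D}(\mathfrak{B})$ and $\mathfrak{B}\vect g=\vect h$. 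Then $u:=\mathcal{A}_0^{-1}\vect f+\Pi\vect g\in\mathcal{H}_{\beta_0,\beta_1}$ and $\lVert u_n-u\rVert_{\mathcal{H}_{\beta_0,\beta_1}}^2=\lVert\vect f_n-\vect f\rVert_{\mathcal{H}}^2+\lVert\vect g_n-\vect g\rVert_{\mathcal{E}}^2+\lVert\mathfrak{B}\vect g_n-\vect h\rVert_{\mathcal{E}}^2\to0$. This is essentially the whole argument: the norm is designed precisely so that closedness of $\mathfrak{B}$ furnishes completeness, and I anticipate no real obstacle here.

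For the boundedness claim I would first compute the Robin combination on the subspace $\mathcal{D}(\mathcal{A}_0)\dot+\Pi(\mathcal{D}(\Lambda))$, where $\Gamma_1$ is genuinely defined. Using \eqref{defopp11}, for $\vect g\in\mathcal{D}(\Lambda)$,
\begin{equation*}
(\beta_0\Gamma_0+\beta_1\Gamma_1)(\mathcal{A}_0^{-1}\vect f+\Pi\vect g)=\beta_0\vect g+\beta_1(\Pi^*\vect f+\Lambda\vect g)=\beta_1\Pi^*\vect f+(\beta_0+\beta_1\Lambda)\vect g=\beta_1\Pi^*\vect f+\mathfrak{B}\vect g.
\end{equation*}
Since $\beta_1$ is bounded on $\mathcal{E}$ by Assumption~\ref{assumptionbeta} and $\Pi$ (hence $\Pi^*$, with $\lVert\Pi^*\rVert=\lVert\Pi\rVert$) is bounded, the triangle inequality gives
\begin{equation*}
\lVert\beta_1\Pi^*\vect f+\mathfrak{B}\vect g\rVert_{\mathcal{E}}\le\lVert\beta_1\rVert\,\lVert\Pi\rVert\,\lVert\vect f\rVert_{\mathcal{H}}+\lVert\mathfrak{B}\vect g\rVert_{\mathcal{E}}\le\bigl(1+\lVert\beta_1\rVert\,\lVert\Pi\rVert\bigr)\lVert\mathcal{A}_0^{-1}\vect f+\Pi\vect g\rVert_{\mathcal{H}_{\beta_0,\beta_1}},
\end{equation*}
because both $\lVert\vect f\rVert_{\mathcal{H}}$ and $\lVert\mathfrak{B}\vect g\rVert_{\mathcal{E}}$ are dominated by the $\mathcal{H}_{\beta_0,\beta_1}$-norm. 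Finally, the map $\mathcal{A}_0^{-1}\vect f+\Pi\vect g\mapsto\beta_1\Pi^*\vect f+\mathfrak{B}\vect g$ is defined on all of $\mathcal{H}_{\beta_0,\beta_1}$, agrees with $\beta_0\Gamma_0+\beta_1\Gamma_1$ wherever the latter is defined, and is bounded by the above; since $\mathcal{D}(\Lambda)$ is a core for $\mathfrak{B}$ by construction, $\mathcal{D}(\mathcal{A}_0)\dot+\Pi(\mathcal{D}(\Lambda))$ is dense in $\mathcal{H}_{\beta_0,\beta_1}$, so this bounded map is the unique continuous extension through which $\beta_0\Gamma_0+\beta_1\Gamma_1$ is to be read on $\mathcal{H}_{\beta_0,\beta_1}$. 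The one point meriting care is thus notational rather than analytic: $\mathcal{D}(\Gamma_1)$ may be a proper subspace of $\mathcal{H}_{\beta_0,\beta_1}$, so ``$\beta_0\Gamma_0+\beta_1\Gamma_1:\mathcal{H}_{\beta_0,\beta_1}\to\mathcal{E}$'' must be understood via this extension, and the genuine mathematical content is the closedness of $\mathfrak{B}$ together with the boundedness of $\beta_1$ and $\Pi$.
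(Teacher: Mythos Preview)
Your proof is correct and is the natural argument; the paper does not give its own proof but simply cites \cite{Ryzhov_spec}, and your approach---using closedness of $\mathfrak{B}$ for completeness and the explicit formula $(\beta_0\Gamma_0+\beta_1\Gamma_1)(\mathcal{A}_0^{-1}\vect f+\Pi\vect g)=\beta_1\Pi^*\vect f+\mathfrak{B}\vect g$ for boundedness---is exactly the standard one. Your care in noting that the operator on $\mathcal{H}_{\beta_0,\beta_1}$ is to be read as the bounded extension is appropriate and matches how Ryzhov's framework interprets the Robin map.
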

We are now in a position to assign a meaning to 
the abstract spectral boundary value problem and establish its well-posedness.
\begin{theorem}
\label{theoremsolutionformularobin}
Suppose that $z \in \rho(\mathcal{A}_0)$ is such that the operator $\overline{\beta_0 + \beta_1 M(z)}$ is boundedly invertible in $\mathcal{E}$. Then, for given $\vect f \in \mathcal{H},$ $\vect g\in \mathcal{E},$ the unique solution $\vect u  \in \mathcal{H}_{\beta_0,\beta_1}$ to the  spectral boundary value problem
\begin{equation*}
         \mathcal{A}\vect u - z \vect u = \vect f,\qquad
         \left(\beta_0 \Gamma_0 + \beta_1 \Gamma_1\right) \vect u = \vect g,
\end{equation*}
is provided by the formula
\begin{equation}
\label{solutionabstractrobin}
    \vect u = (\mathcal{A}_0 -zI)^{-1}\vect f + \left(I-z \mathcal{A}_0^{-1}\right)^{-1}\Pi(\overline{\beta_0 + \beta_1 M(z)})^{-1}\bigl(\vect g - \beta_1 \Pi^*\left( I - z \mathcal{A}_0^{-1}\right)^{-1} \vect f \bigr).
\end{equation}
\end{theorem}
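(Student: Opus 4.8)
\textbf{Proof plan for Theorem \ref{theoremsolutionformularobin}.}

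The plan is to verify directly that the function $\vect u$ given by \eqref{solutionabstractrobin} belongs to the Hilbert space $\mathcal{H}_{\beta_0,\beta_1}$, solves the two equations of the abstract spectral boundary value problem, and is the only such solution. First I would decompose $\vect u$ into the two summands $\vect u_{\rm part}:=(\mathcal{A}_0-zI)^{-1}\vect f$ and $\vect u_{\rm hom}:=\left(I-z\mathcal{A}_0^{-1}\right)^{-1}\Pi \vect h$, where $\vect h:=(\overline{\beta_0+\beta_1 M(z)})^{-1}\bigl(\vect g-\beta_1\Pi^*(I-z\mathcal{A}_0^{-1})^{-1}\vect f\bigr)$. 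Membership of $\vect u_{\rm part}$ in $\mathcal{D}(\mathcal{A}_0)\subset\mathcal{H}_{\beta_0,\beta_1}$ is immediate since $z\in\rho(\mathcal{A}_0)$. For $\vect u_{\rm hom}$ I would use \eqref{solutionrepresentation} to recognise it as $S(z)\vect h$; when $\vect h\in\mathcal{D}(\mathfrak{B})$ this lies in $\mathcal{D}(\mathcal{A}_0)\dot+\Pi(\mathcal{D}(\mathfrak{B}))=\mathcal{H}_{\beta_0,\beta_1}$ by construction, and the definition of $\mathfrak B$ as the closure of $\beta_0+\beta_1\Lambda$ together with Assumption \ref{assumptionbeta} is exactly what guarantees that $(\overline{\beta_0+\beta_1 M(z)})^{-1}$ maps into $\mathcal{D}(\mathfrak{B})$ (the hypothesis that $\overline{\beta_0+\beta_1 M(z)}$ is boundedly invertible is to be read in the Hilbert space $\mathcal{H}_{\beta_0,\beta_1}$-compatible sense).

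Next I would check the equation $\mathcal{A}\vect u-z\vect u=\vect f$. For the particular part, $(\mathcal{A}-zI)\vect u_{\rm part}=(\mathcal{A}_0-zI)(\mathcal{A}_0-zI)^{-1}\vect f=\vect f$, using that $\mathcal{A}$ restricted to $\mathcal{D}(\mathcal{A}_0)$ coincides with $\mathcal{A}_0$ (clear from \eqref{defopp11}). For the homogeneous part, $S(z)\vect h$ solves $\mathcal{A}S(z)\vect h=zS(z)\vect h$ by Definition \ref{solutionmfunction}, so $(\mathcal{A}-zI)\vect u_{\rm hom}=0$; summing gives $(\mathcal{A}-zI)\vect u=\vect f$ as required. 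Then I would verify the Robin boundary condition. Apply $\beta_0\Gamma_0+\beta_1\Gamma_1$ to $\vect u$: on $\vect u_{\rm part}\in\mathcal{D}(\mathcal{A}_0)=\mathcal{A}_0^{-1}\mathcal{H}$ one has $\Gamma_0\vect u_{\rm part}=0$ by \eqref{defopp11} and $\Gamma_1\vect u_{\rm part}=\Pi^*(\mathcal{A}_0-zI)\vect u_{\rm part}=\Pi^*\vect f$ — more precisely, using \eqref{dirichlettoneumanndecomp} and \eqref{usefulidentitiesresolvent} one gets $\Gamma_1(\mathcal{A}_0-zI)^{-1}\vect f=\Pi^*(I-z\mathcal{A}_0^{-1})^{-1}\vect f$; on $\vect u_{\rm hom}=S(z)\vect h$ one has $\Gamma_0 S(z)\vect h=\vect h$ (Definition \ref{solutionmfunction}) and $\Gamma_1 S(z)\vect h=M(z)\vect h$ (Definition of the Weyl function). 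Hence $(\beta_0\Gamma_0+\beta_1\Gamma_1)\vect u=\beta_1\Pi^*(I-z\mathcal{A}_0^{-1})^{-1}\vect f+(\beta_0+\beta_1 M(z))\vect h$, and substituting the defining expression for $\vect h$ this collapses to $\vect g$. Uniqueness follows because the difference of two solutions $\vect w$ satisfies $(\mathcal{A}-zI)\vect w=0$ and $(\beta_0\Gamma_0+\beta_1\Gamma_1)\vect w=0$; writing $\vect w=\mathcal{A}_0^{-1}\vect\varphi+\Pi\vect\psi$ with $\vect\psi\in\mathcal{D}(\mathfrak B)$, the first equation forces $\vect\varphi=z\vect w$ and shows $\vect w=S(z)\vect\psi$ with $\Gamma_0\vect w=\vect\psi$, so the second equation reads $(\overline{\beta_0+\beta_1 M(z)})\vect\psi=0$, whence $\vect\psi=0$ by the bounded invertibility assumption and thus $\vect w=0$.

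The main obstacle I anticipate is not the formal algebra of $\Gamma_0,\Gamma_1,S(z),M(z)$ — that is essentially bookkeeping using \eqref{defopp11}, \eqref{dirichlettoneumanndecomp}, \eqref{solutionrepresentation} — but the functional-analytic care needed around the \emph{closures}. One must argue that $\vect h$ genuinely lies in $\mathcal{D}(\mathfrak B)=\mathcal{D}(\overline{\beta_0+\beta_1\Lambda})$ and that $\overline{\beta_0+\beta_1 M(z)}$ acts on it in a way compatible with the unclosed operators, so that the identities $(\beta_0+\beta_1 M(z))\vect h$ and $(\overline{\beta_0+\beta_1 M(z)})\vect h$ may be used interchangeably at the relevant points; this requires the closability statement in Assumption \ref{assumptionbeta}, the $z$-independence of $\mathcal{D}(\Lambda)=\mathcal{D}(M(z))$ noted after \eqref{identityforasymptotics}, and a continuity argument passing from $\vect\psi\in\mathcal{D}(\Lambda)$ to general elements of $\mathcal{D}(\mathfrak B)$ using boundedness of $\beta_0\Gamma_0+\beta_1\Gamma_1$ on $\mathcal{H}_{\beta_0,\beta_1}$. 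Since the statement attributes the result to \cite{Ryzhov_spec}, I would either cite that reference for these closure technicalities or reproduce the short density argument, keeping the PDE-independent parts abstract.
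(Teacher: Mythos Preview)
Your proposal is correct and follows the standard direct-verification route that one finds in \cite{Ryzhov_spec}; note that the paper itself does not supply a proof of this theorem but simply quotes it from Ryzhov's work, so there is no ``paper's own proof'' to compare against beyond the citation. Your identification of the closure issues around $\mathcal{D}(\mathfrak B)$ as the only genuinely delicate point is accurate, and citing \cite{Ryzhov_spec} for those technicalities is exactly what the paper does.
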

By setting $\vect g = 0,$ the formula \eqref{solutionabstractrobin} defines the resolvent of a closed, densely defined operator in $\mathcal{H}$ that is an extension of $\mathcal{A}_{00}$.
\begin{theorem}
\label{theoremkreinformula}
Suppose that $z \in \rho(\mathcal{A}_0)$ be such that the operator $\overline{\beta_0 + \beta_1 M(z)}$ defined on $\mathcal{D}(\mathfrak{B})$ is boundedly invertible in $\mathcal{E}$. Then the operator $\mathcal{R}_{\beta_0,\beta_1}(z)$ defined by
\begin{equation}
\label{kreinformula2}
 \begin{aligned}
   \mathcal{R}_{\beta_0,\beta_1}(z):=&(\mathcal{A}_0 -zI)^{-1} - \left(I-z \mathcal{A}_0^{-1}\right)^{-1}\Pi(\overline{\beta_0 + \beta_1 M(z)})^{-1}\beta_1\bigl(\Pi^*\left( I - z \mathcal{A}_0^{-1}\right)^{-1} \bigr)\\[0.3em]
   =&(\mathcal{A}_0 -zI)^{-1}-S(z)(\overline{\beta_0 + \beta_1 M(z)})^{-1}\beta_1S^*(\overline{z}).
\end{aligned}
\end{equation}
is the resolvent 
$\left( \mathcal{A}_{\beta_0, \beta_1} - zI\right)^{-1}$ of a closed densely defined operator $\mathcal{A}_{\beta_0,\beta_1}$ in $\mathcal{H}$
such that
\begin{equation*}
   \mathcal{A}_{00} \subset \mathcal{A}_{\beta_0,\beta_1} \subset \mathcal{A}, \quad  \mathcal{D}(\mathcal{A}_{\beta_0, \beta_1}) \subset \ker(\beta_0\Gamma_0 +\beta_1\Gamma_1).
\end{equation*}
\end{theorem}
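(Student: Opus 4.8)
The plan is to give $\mathcal{A}_{\beta_0,\beta_1}$ an \emph{intrinsic} definition and then deduce everything from Theorem \ref{theoremsolutionformularobin}. Concretely, I would set $\mathcal{A}_{\beta_0,\beta_1}$ to be the restriction of $\mathcal{A}$ to $\mathcal{D}(\mathcal{A}_{\beta_0,\beta_1}):=\ker(\beta_0\Gamma_0+\beta_1\Gamma_1)\subset\mathcal{H}_{\beta_0,\beta_1}$, which is meaningful precisely because, by the last Lemma stated before the theorem, $\beta_0\Gamma_0+\beta_1\Gamma_1$ is a bounded, everywhere-defined operator from $\mathcal{H}_{\beta_0,\beta_1}$ to $\mathcal{E}$. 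With this definition the inclusion $\mathcal{D}(\mathcal{A}_{\beta_0,\beta_1})\subset\ker(\beta_0\Gamma_0+\beta_1\Gamma_1)$ is automatic, $\mathcal{A}_{\beta_0,\beta_1}\subset\mathcal{A}$ holds by construction, and $\mathcal{A}_{00}\subset\mathcal{A}_{\beta_0,\beta_1}$ follows because any $\vect u\in\mathcal{D}(\mathcal{A}_{00})$ lies in $\mathcal{D}(\mathcal{A}_0)\subset\mathcal{H}_{\beta_0,\beta_1}$, satisfies $\Gamma_0\vect u=\Gamma_1\vect u=0$, hence $(\beta_0\Gamma_0+\beta_1\Gamma_1)\vect u=0$, and $\mathcal{A}_{\beta_0,\beta_1}\vect u=\mathcal{A}_0\vect u=\mathcal{A}_{00}\vect u$. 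Since this domain is specified without reference to $z$, it suffices to treat one admissible $z$ at a time.

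The core step is to observe that, upon inserting $S(z)=(I-z\mathcal{A}_0^{-1})^{-1}\Pi$ and — using the self-adjointness of $\mathcal{A}_0$, which gives $\bigl((I-\overline{z}\mathcal{A}_0^{-1})^{-1}\bigr)^{*}=(I-z\mathcal{A}_0^{-1})^{-1}$ and hence $S^{*}(\overline{z})=\Pi^{*}(I-z\mathcal{A}_0^{-1})^{-1}$ — the action of $\mathcal{R}_{\beta_0,\beta_1}(z)$ in \eqref{kreinformula2} on $\vect f\in\mathcal{H}$ is exactly the right-hand side of \eqref{solutionabstractrobin} with $\vect g=0$. Theorem \ref{theoremsolutionformularobin} then says that $\vect u:=\mathcal{R}_{\beta_0,\beta_1}(z)\vect f$ is the \emph{unique} element of $\mathcal{H}_{\beta_0,\beta_1}$ solving $\mathcal{A}\vect u-z\vect u=\vect f$ and $(\beta_0\Gamma_0+\beta_1\Gamma_1)\vect u=0$; in particular $\vect u\in\mathcal{D}(\mathcal{A}_{\beta_0,\beta_1})$ with $(\mathcal{A}_{\beta_0,\beta_1}-zI)\vect u=\vect f$, which gives surjectivity of $\mathcal{A}_{\beta_0,\beta_1}-zI$, while the same uniqueness with $\vect f=0$ gives injectivity. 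Thus $\mathcal{A}_{\beta_0,\beta_1}-zI$ is a bijection of $\mathcal{D}(\mathcal{A}_{\beta_0,\beta_1})$ onto $\mathcal{H}$ with two-sided inverse $\mathcal{R}_{\beta_0,\beta_1}(z)$, and $\mathcal{R}_{\beta_0,\beta_1}(z)$ is a bounded operator on $\mathcal{H}$ because it is a finite composition of bounded operators: $(\mathcal{A}_0-zI)^{-1}$ is bounded for $z\in\rho(\mathcal{A}_0)$, $S(z)$ and $S^{*}(\overline{z})$ are bounded by \eqref{solutionrepresentation}, $\beta_1$ is bounded by Assumption \ref{assumptionbeta}, and $(\overline{\beta_0+\beta_1 M(z)})^{-1}$ is bounded by hypothesis.

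Closedness then comes for free: the inverse of a bounded, everywhere-defined, injective operator is closed (its graph is the flip of a closed graph), so $\mathcal{A}_{\beta_0,\beta_1}-zI$, and hence $\mathcal{A}_{\beta_0,\beta_1}$, is closed; combined with the previous paragraph this yields $z\in\rho(\mathcal{A}_{\beta_0,\beta_1})$ and $(\mathcal{A}_{\beta_0,\beta_1}-zI)^{-1}=\mathcal{R}_{\beta_0,\beta_1}(z)$. For density of $\mathcal{D}(\mathcal{A}_{\beta_0,\beta_1})$ I would use $\mathcal{D}(\mathcal{A}_{\beta_0,\beta_1})\supset\mathcal{D}(\mathcal{A}_{00})$ together with the fact, established in \cite{Ryzhov_spec} and consistent with the characterisation of $\mathcal{D}(\mathcal{A}_{00})$ recorded in the Remark above, that the symmetric operator $\mathcal{A}_{00}$ is densely defined.

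I expect the genuinely delicate points to be bookkeeping rather than analytic: (i) the Robin expression $\beta_0\Gamma_0+\beta_1\Gamma_1$ must be applied only on $\mathcal{H}_{\beta_0,\beta_1}$, where the Lemma certifies it as a bona fide bounded map — this is why $\mathcal{D}(\mathcal{A}_{\beta_0,\beta_1})$ is taken inside $\mathcal{H}_{\beta_0,\beta_1}$ rather than inside all of $\mathcal{D}(\mathcal{A})$; (ii) the algebraic matching of \eqref{kreinformula2} with the $\vect g=0$ specialisation of \eqref{solutionabstractrobin}, which hinges on the adjoint identity $S^{*}(\overline{z})=\Pi^{*}(I-z\mathcal{A}_0^{-1})^{-1}$; and (iii) the density of $\mathcal{D}(\mathcal{A}_{00})$, which is invisible from the Krein formula itself and must be imported from the abstract theory of \cite{Ryzhov_spec}. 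The rest is a direct reading-off of Theorem \ref{theoremsolutionformularobin}.
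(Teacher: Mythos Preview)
Your approach is correct and is precisely the natural one: the paper itself does not prove Theorem~\ref{theoremkreinformula} but states it as a result of Ryzhov \cite{Ryzhov_spec}, and the sentence immediately preceding the theorem (``By setting $\vect g = 0,$ the formula \eqref{solutionabstractrobin} defines the resolvent of a closed, densely defined operator in $\mathcal{H}$ that is an extension of $\mathcal{A}_{00}$'') signals exactly the route you take. Your bookkeeping is sound --- the adjoint identity $S^*(\overline{z})=\Pi^*(I-z\mathcal{A}_0^{-1})^{-1}$, the intrinsic definition of $\mathcal{A}_{\beta_0,\beta_1}$ inside $\mathcal{H}_{\beta_0,\beta_1}$, and the closed-graph argument for closedness are all correct --- and the one genuinely external input, density of $\mathcal{D}(\mathcal{A}_{00})$, is indeed established in \cite{Ryzhov_spec} as you note.
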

\begin{remark}
    Notice that the pointwise nature of Theorem \ref{theoremsolutionformularobin} with respect to the parameter $z$ yields the same formula \eqref{solutionabstractrobin} under an even more general assumption that the operator $\beta_1$ depends on $z,$ see also \cite{Derkach}. 
\end{remark}

\begin{remark}
     Corollary 5.9 in \cite{Ryzhov_spec}  states that if $\Lambda$ is boundedly invertible then the map $M(z)$ is also boundedly invertible for all $z$ in the complement of the set $\sigma(\mathcal{A}_0) \cup \sigma(\mathcal{A}_{0,I})$. In the case that we will analyse below, the operator $\Lambda$ will be boundedly invertible for all non-zero values of quasimomentum $\chi.$
\end{remark}

\subsection{Operators associated with boundary value problems} \label{opapth2} 
In this section we introduce some operators required to reformulate the transmission value problem \eqref{transmissionboundaryproblem} in the context of the abstract theory of Ryzhov triples. First, we define the spaces
\begin{equation*}
    \mathcal{H}:=L^2(Y;\C^3),  \quad \mathcal{E}:=L^2(\Gamma;\C^3), \quad  \mathcal{H}^{\rm stiff}:= L^2(Y_{\rm stiff};\C^3), \quad \mathcal{H}^{\rm soft}:= L^2(Y_{\rm soft};\C^3).
\end{equation*}    
Note that we can identify $\mathcal{H}^{\rm stiff(soft)}$ with the following spaces:
\begin{equation*}
    \mathcal{H}^{\rm stiff} \equiv \bigl\{\vect u \in L^2(Y;\C^3), \quad \vect u = 0 \mbox{ on } Y_{\rm soft} \bigr\}, \quad \mathcal{H}^{\rm soft}\equiv\bigl\{\vect u \in L^2(Y;\C^3), \quad \vect u = 0 \mbox{ on } Y_{\rm stiff}\bigr\}.
\end{equation*} 
We also define the associated orthogonal projections
    $P_{\rm soft} :\mathcal{H} \mapsto \mathcal{H}^{\rm soft},$ $P_{\rm stiff} :\mathcal{H} \mapsto \mathcal{H}^{\rm stiff},$
so the following orthogonal decomposition holds:
\begin{equation}
\label{decompositionsoftstiff}
    \mathcal{H}= P_{\rm soft} \mathcal{H} \oplus P_{\rm stiff} \mathcal{H} = \mathcal{H}^{\rm soft} \oplus \mathcal{H}^{\rm stiff}.
\end{equation}
\subsubsection{Differential operators of linear elasticity}
Relative to the decomposition \eqref{decompositionsoftstiff}, we  define self-adjoint operators  $\mathcal{A}_{0, \chi}^{\rm stiff}$, $\mathcal{A}_{0, \chi}^{\rm soft}$ on the spaces $\mathcal{H}^{\rm stiff}$, $\mathcal{H}^{\rm soft}$, respectively, by the sesquilinear forms
\begin{equation}\label{nakk71} 
    \begin{aligned}
    	a_{0, \chi}^{\rm stiff} (\vect u,\vect v) &:= 	\int_{ Y_{\rm stiff}} \A_{\rm stiff}(\simgrad+{\rm i}X_\chi)\vect u: \overline{(\simgrad+{\rm i}X_\chi )\vect v}, \quad \vect u,\vect v\in \mathcal{D}(a_{0, \chi}^{\rm stiff}), \\[0.3em]
         a_{0, \chi}^{\rm soft} (\vect u,\vect v) &:= 	\int_{ Y_{\rm soft}} \A_{\rm soft}(\simgrad+{\rm i}X_\chi)\vect u: \overline{(\simgrad +{\rm i}X_\chi )\vect v}, \quad \vect u,\vect v\in \mathcal{D}(a_{0, \chi}^{\rm soft}), \\[0.3em]
                  \mathcal{D}(a_{0, \chi}^{\rm stiff})&:=\{ \vect u \in H_\#^1(Y_{\rm stiff};\C^3),\, \vect u|_{\Gamma} = 0\}, \qquad\mathcal{D}(a_{0, \chi}^{\rm soft}):= \{ \vect u \in H^1(Y_{\rm soft};\C^3),\, \vect u|_{\Gamma} = 0\}.
    \end{aligned}
\end{equation}
The following basic property is easy to prove. 
\begin{proposition}
The forms $a_{0,\chi}^{\rm stiff(soft)}$ are uniformly coercive, symmetric and closed.
\end{proposition}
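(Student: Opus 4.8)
The plan is to verify the three properties --- symmetry, closedness, and uniform coercivity --- separately, with the coercivity being the only part requiring genuine work, since the other two are essentially formal. For symmetry, I would observe that $\A_{\rm stiff(soft)}$ has the major symmetry $[\A]_{ijkl}=[\A]_{klij}$ from Assumption \ref{coffassumption}, so that $\A_{\rm stiff}\vect\zeta:\overline{\vect\eta}=\overline{\A_{\rm stiff}\vect\eta:\overline{\vect\zeta}}$ for symmetric matrices $\vect\zeta,\vect\eta$; applying this with $\vect\zeta=(\simgrad+{\rm i}X_\chi)\vect u$ and $\vect\eta=(\simgrad+{\rm i}X_\chi)\vect v$ gives $a_{0,\chi}^{\rm stiff}(\vect u,\vect v)=\overline{a_{0,\chi}^{\rm stiff}(\vect v,\vect u)}$, and likewise for the soft form.

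For coercivity, the key ingredient is a Korn-type inequality. On $\mathcal{D}(a_{0,\chi}^{\rm soft})=H^1_0(Y_{\rm soft};\C^3)$ there are no rigid-body motions to worry about (they do not vanish on $\Gamma$), so the standard Korn inequality gives $\|\simgrad\vect u\|_{L^2}^2\geq c\|\nabla\vect u\|_{L^2}^2\geq c'\|\vect u\|_{H^1}^2$; combined with $\A_{\rm soft}\vect\xi:\vect\xi\geq\nu|\vect\xi|^2$ this would already give coercivity of $a_{0,\chi}^{\rm soft}+\|X_\chi\cdot\|^2$ terms, but one must be careful because the form contains the full $(\simgrad+{\rm i}X_\chi)\vect u$ rather than $\simgrad\vect u$ alone. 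The cleanest route is: by positive-definiteness, $a_{0,\chi}^{\rm stiff(soft)}(\vect u,\vect u)\geq\nu\|(\simgrad+{\rm i}X_\chi)\vect u\|_{L^2}^2$; then use the triangle inequality together with the bound \eqref{Xchi_bounds} giving $\|X_\chi\vect u\|_{L^2}\leq C_2|\chi|\,\|\vect u\|_{L^2}$ to compare $\|(\simgrad+{\rm i}X_\chi)\vect u\|_{L^2}$ with $\|\simgrad\vect u\|_{L^2}$. Since $\chi\in Y'=[-\pi,\pi)^3$ is bounded, $|\chi|\leq\pi\sqrt3$, so all $\chi$-dependent constants can be taken uniform in $\chi$. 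A short argument --- e.g. splitting into the cases $\|X_\chi\vect u\|_{L^2}\leq\tfrac12\|(\simgrad+{\rm i}X_\chi)\vect u\|_{L^2}$ and its complement, or simply using $\|\simgrad\vect u\|^2\leq 2\|(\simgrad+{\rm i}X_\chi)\vect u\|^2+2\|X_\chi\vect u\|^2$ and a Korn--Poincar\'e estimate $\|\vect u\|_{H^1}^2\leq C\|\simgrad\vect u\|_{L^2}^2$ valid on functions vanishing on $\Gamma$ --- yields $a_{0,\chi}^{\rm stiff(soft)}(\vect u,\vect u)+C\|\vect u\|_{L^2}^2\geq c\|\vect u\|_{H^1}^2$ with $c,C$ independent of $\chi$. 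I would then note that the $L^2$ term can in fact be absorbed: on $Y_{\rm soft}$ functions vanish on the whole boundary so Poincar\'e gives $\|\vect u\|_{L^2}\leq C\|\nabla\vect u\|_{L^2}$ outright, hence $a_{0,\chi}^{\rm soft}$ is coercive over $H^1_0(Y_{\rm soft};\C^3)$ with no additive constant; for $Y_{\rm stiff}$, periodicity plus the vanishing trace on $\Gamma$ again eliminates rigid motions, so the analogous Korn--Poincar\'e inequality holds and $a_{0,\chi}^{\rm stiff}$ is likewise coercive uniformly in $\chi$.

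Closedness then follows immediately: uniform coercivity together with the obvious continuity bound $|a_{0,\chi}^{\rm stiff(soft)}(\vect u,\vect v)|\leq C\|\vect u\|_{H^1}\|\vect v\|_{H^1}$ (using boundedness of $\A$ and of $X_\chi$) shows that the form norm $(a_{0,\chi}^{\rm stiff(soft)}(\vect u,\vect u)+\|\vect u\|_{L^2}^2)^{1/2}$ is equivalent to the $H^1$ norm on the form domain, which is a closed subspace of $H^1$; hence the form is closed. The main --- and really the only --- obstacle is getting the Korn inequality with a constant uniform in $\chi$ while handling the mixed term $\simgrad\vect u+{\rm i}X_\chi\vect u$ and verifying that the form domains genuinely exclude rigid-body displacements; once the bound \eqref{Xchi_bounds} and the boundedness of $Y'$ are invoked, the uniformity is automatic, so I expect no serious difficulty, which is consistent with the paper's remark that the proof is ``easy''.
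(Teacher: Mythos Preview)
Your structure is right and the symmetry and closedness parts are fine. The G\r{a}rding-type estimate
\[
a_{0,\chi}^{\rm stiff(soft)}(\vect u,\vect u)+C\|\vect u\|_{L^2}^2\ge c\|\vect u\|_{H^1}^2
\]
also follows as you say. The gap is in the next step: you assert that ``the $L^2$ term can in fact be absorbed'' via Poincar\'e, but the argument you sketch does not close. From $\|\simgrad\vect u\|^2\le 2\|(\simgrad+{\rm i}X_\chi)\vect u\|^2+2\|X_\chi\vect u\|^2$ and Korn--Poincar\'e $\|\vect u\|_{H^1}^2\le C_{KP}\|\simgrad\vect u\|^2$ you get
\[
\|\vect u\|_{H^1}^2\le 2C_{KP}\|(\simgrad+{\rm i}X_\chi)\vect u\|^2+2C_{KP}C_2^2|\chi|^2\|\vect u\|_{H^1}^2,
\]
and the last term can only be absorbed when $2C_{KP}C_2^2|\chi|^2<1$, i.e.\ for small $|\chi|$; there is no reason this holds across all of $Y'$. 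Your proposed case split $\|X_\chi\vect u\|\le\tfrac12\|(\simgrad+{\rm i}X_\chi)\vect u\|$ handles one case but gives nothing in the complementary one. Since the paper needs genuine coercivity (so that $0\in\rho(\mathcal A_{0,\chi}^{\rm stiff(soft)})$ for the Ryzhov triple), the G\r{a}rding inequality alone is not enough.

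The paper's remedy (Proposition \ref{josipapp1}) is a one-line substitution that you missed: set $\vect w={\rm e}^{{\rm i}\chi y}\vect u$. Then $\simgrad\vect w={\rm e}^{{\rm i}\chi y}(\simgrad+{\rm i}X_\chi)\vect u$, so $\|\simgrad\vect w\|_{L^2}=\|(\simgrad+{\rm i}X_\chi)\vect u\|_{L^2}$ \emph{exactly}, with no cross term to absorb; moreover $\vect w|_\Gamma=0$. The $\chi$-free Korn inequality of Proposition \ref{appendixkorn2} applied to $\vect w$, together with the uniform norm equivalence $\|\vect u\|_{H^1}\asymp\|{\rm e}^{{\rm i}\chi y}\vect u\|_{H^1}$ (Lemma \ref{nak42}), then gives $\|\vect u\|_{H^1}\le C\|(\simgrad+{\rm i}X_\chi)\vect u\|_{L^2}$ with $C$ independent of $\chi$. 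This is the missing idea; once you have it, the triangle-inequality gymnastics become unnecessary.
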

\begin{proof}
Due to Assumption \ref{coffassumption}, there exist $\chi$-independent constants $C_1, C_2>0$ such that 
\begin{equation*}
	\begin{aligned}
    C_1 \left\lVert \left(\simgrad +{\rm i}X_\chi \right)\vect u\right\rVert_{L^2(Y_{\rm stiff(soft)};\C^{3 \times 3})}^2 \leq  a_{0, \chi}^{\rm (stiff)soft}(\vect u, \vect u)\leq C_2&\left\lVert \left(\simgrad+{\rm i}X_\chi \right)\vect u\right\rVert_{L^2(Y_{\rm stiff(soft)};\C^{3 \times 3})}^2\\[0.3em]
    &\forall\vect u \in \mathcal{D}(a_{0, \chi}^{\rm (stiff)soft}).
\end{aligned}
\end{equation*}

Furthermore, by Proposition \ref{josipapp1} (see the Appendix), there exists a $\chi$-independent $C>0$ such that
 \begin{equation*}
     a_{0, \chi}^{\rm (stiff)soft}(\vect u, \vect u)\geq C\lVert \vect u\rVert_{H^1(Y_{\rm stiff(soft)};\C^3)}^2\qquad\forall\vect u \in \mathcal{D}(a_{0, \chi}^{\rm (stiff)soft}),
 \end{equation*}
so both forms are uniformly coercive.
\end{proof}
Clearly, the operators $\mathcal{A}_{0, \chi}^{\rm stiff(soft)}$
correspond to the differential expressions
\begin{equation}
	\label{differentialoperators}
		\left(\simgrad+{\rm i}X_\chi\right)^* \A_{\rm stiff(soft)} \left(\simgrad +{\rm i}X_\chi\right)  \quad \mbox{ on $\mathcal{H}^{\rm stiff(soft)}$},
\end{equation}
%
subject to the zero boundary condition on $\Gamma$.

\subsubsection{Lift operators}
Here we introduce the lift operators required for the analysis of boundary value problems via the Kre\u\i n formula. First, we introduce classical lift operators $\widetilde{\Pi}_{\chi}^{\rm stiff(soft)}$ as the operators mapping $\vect g \in H^{1/2}(\Gamma;\C^3)$ to the weak solutions $\vect u \in H^{1}_\#(Y_{\rm stiff(soft)};\C^3)$ of the boundary value problems
\begin{equation}
\label{boundaryvalueproblemsoncomponents}
    \left\{ \begin{array}{ll}
         \left(\simgrad+{\rm i}X_\chi\right)^* \A_{\rm stiff(soft)}\left(\simgrad +{\rm i} X_\chi\right)\vect u = 0 \ \  \mbox{ on $Y_{\rm stiff(soft)}$},\\[0.4em]
         \vect u =\vect g  \ \  \mbox{ on $\Gamma,$ }\qquad 
         \mbox{ $\vect u$ is $Y$-periodic (in the case of $Y_{\rm stiff}$).}\end{array} \right.
\end{equation}
As a consequence of Proposition \ref{josipapp1}, the following statement holds. 
\begin{proposition}
\label{propositionpiestimate}
For every $\chi \in Y$ the problem \eqref{boundaryvalueproblemsoncomponents} has a unique weak solution. The associated operator $\widetilde{\Pi}_{\chi}^{\rm stiff(soft)}$ is bounded and satisfies the following bounds for all $\vect g\in H^{1/2}(\Gamma;\C^3):$ 
\begin{equation}
\label{piestimate2}
    \bigl\lVert \widetilde{\Pi}_\chi^{\rm stiff(soft)} \vect g \bigr\rVert_{H^1(Y_{\rm stiff(soft)};\C^3)} \leq C \left\lVert \vect g \right\rVert_{H^{1/2}(\Gamma;\C^3)},
\end{equation}
where $C>0$ is independent of $\chi$. 

\begin{proof} 
The proof uses the classical approach that involves rewriting the problem \eqref{boundaryvalueproblemsoncomponents} with zero boundary condition and non-zero right-hand side and using Proposition \ref{josipapp2} and Proposition \ref{josipapp1}.  	
\end{proof} 	
\end{proposition}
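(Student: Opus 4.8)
The plan is to reduce the boundary value problem \eqref{boundaryvalueproblemsoncomponents} to one with a zero Dirichlet condition and a non-zero right-hand side, then invoke the coercivity estimate of Proposition \ref{josipapp1} together with the trace/extension results of Proposition \ref{josipapp2}. First I would fix a bounded right-inverse of the trace map: for $\vect g\in H^{1/2}(\Gamma;\C^3)$ choose $\vect G\in H^1_\#(Y_{\rm stiff};\C^3)$ (respectively $\vect G\in H^1(Y_{\rm soft};\C^3)$) with $\vect G|_\Gamma=\vect g$, $Y$-periodic in the stiff case, and $\lVert\vect G\rVert_{H^1}\leq C\lVert\vect g\rVert_{H^{1/2}(\Gamma;\C^3)}$ with $C$ depending only on $\Gamma$ and the geometry (this is the content of Proposition \ref{josipapp2}); crucially this $\vect G$ can be chosen independently of $\chi$. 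Writing the sought solution as $\vect u=\vect G+\vect w$, the function $\vect w$ must lie in $\mathcal{D}(a^{\rm stiff(soft)}_{0,\chi})$ (i.e.\ vanish on $\Gamma$ and be periodic as appropriate) and satisfy
\begin{equation*}
a^{\rm stiff(soft)}_{0,\chi}(\vect w,\vect v)=-\int_{Y_{\rm stiff(soft)}}\A_{\rm stiff(soft)}(\simgrad+{\rm i}X_\chi)\vect G:\overline{(\simgrad+{\rm i}X_\chi)\vect v}\qquad\forall\vect v\in\mathcal{D}(a^{\rm stiff(soft)}_{0,\chi}).
\end{equation*}

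Next I would check that the right-hand side above defines a bounded antilinear functional on $\mathcal{D}(a^{\rm stiff(soft)}_{0,\chi})$ with the $H^1$ norm, with operator norm bounded by $C(|\chi|)\lVert\vect G\rVert_{H^1}$; using \eqref{Xchi_bounds} and the boundedness of $\A_{\rm stiff(soft)}$ from Assumption \ref{coffassumption}, and noting $\chi$ ranges over the bounded set $Y'$, one gets a $\chi$-independent bound $C\lVert\vect g\rVert_{H^{1/2}(\Gamma;\C^3)}$. Since the forms $a^{\rm stiff(soft)}_{0,\chi}$ are uniformly coercive and bounded on $\mathcal{D}(a^{\rm stiff(soft)}_{0,\chi})$ in the $H^1$ norm (by the preceding Proposition, whose coercivity half rests on Proposition \ref{josipapp1}), the Lax--Milgram lemma yields a unique $\vect w$ with $\lVert\vect w\rVert_{H^1(Y_{\rm stiff(soft)};\C^3)}\leq C\lVert\vect g\rVert_{H^{1/2}(\Gamma;\C^3)}$, with $C$ independent of $\chi$. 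Setting $\vect u:=\vect G+\vect w$ gives existence; uniqueness follows since the difference of two solutions lies in $\mathcal{D}(a^{\rm stiff(soft)}_{0,\chi})$ and is annihilated by the coercive form, hence is zero. The bound \eqref{piestimate2} then follows from the triangle inequality and the two $\chi$-independent estimates on $\vect G$ and $\vect w$.

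The one point requiring a little care --- and the place I would expect to spend the most effort --- is establishing the $\chi$-independence of all constants, in particular the uniform coercivity on the (non-closed, Dirichlet-type) subspaces $\mathcal{D}(a^{\rm stiff(soft)}_{0,\chi})$: one must ensure that the Korn-type/Poincar\'e-type inequality underlying Proposition \ref{josipapp1} is applied on the fixed domains $Y_{\rm stiff}$, $Y_{\rm soft}$ with the homogeneous boundary condition on $\Gamma$, so that the operator $\simgrad+{\rm i}X_\chi$ controls the full $H^1$ norm uniformly in $\chi\in Y'$ --- the term ${\rm i}X_\chi\vect u$ being a lower-order perturbation bounded via \eqref{Xchi_bounds}. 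Everything else --- the reduction to zero boundary data, the Lax--Milgram application, the triangle-inequality assembly of \eqref{piestimate2} --- is routine, which is consistent with the proof sketch already indicated in the statement.
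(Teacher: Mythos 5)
Your proposal is correct and follows exactly the route the paper sketches: extend the boundary datum via Proposition \ref{josipapp2}, reduce to a zero-Dirichlet problem with a right-hand side controlled uniformly in $\chi$ (using \eqref{Xchi_bounds} and the boundedness of $Y'$), and apply Lax--Milgram with the $\chi$-uniform coercivity supplied by Proposition \ref{josipapp1}. Your emphasis on the $\chi$-independence of the coercivity constant is precisely the point the paper delegates to Proposition \ref{josipapp1}, so nothing is missing.
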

We next introduce specific realisations $\Pi_{\chi}^{\rm stiff(soft)}$ of the abstract lift operators $\Pi$ (see Section \ref{opthap1}), natural for the problem at hand, by their adjoints $\Xi_\chi^{\rm stiff(soft)}$  with respect to the pair of inner products $\left\langle \cdot, \cdot\right\rangle_{\mathcal{H}^{\rm stiff(soft)}}$ and $\left\langle \cdot, \cdot\right\rangle_{\mathcal{E}}$, namely
\begin{equation*}
	\Pi_\chi^{\rm stiff(soft)}:\mathcal{E} \to \mathcal{H}^{\rm stiff(soft)}, \qquad \bigl\langle \Pi_\chi^{\rm stiff(soft)} \vect g, \vect f \bigr\rangle_{\mathcal{H}^{\rm stiff(soft)}} = \bigl\langle  \vect g , \Xi_\chi^{\rm stiff(soft)} \vect f \bigr\rangle_{\mathcal{E}}.
\end{equation*}
We use the following definition:
\begin{equation}
\label{xidefinition}
    -\partial^{\rm stiff(soft)}_\nu  \bigl(\mathcal{A}_{0,\chi}^{\rm stiff(soft)}\bigr)^{-1}=:\Xi_\chi^{\rm stiff(soft)}: \mathcal{H}^{\rm stiff(soft)}\to{\mathcal E}.
\end{equation}
Here $\partial^{\rm stiff(soft)}_\nu$ \BBB is the trace of the co-normal derivative
   $\partial^{\rm stiff(soft)}_\nu \vect u := \bigl(\A_{\rm stiff(soft)} (\simgrad+{\rm i}X_{\chi}) \vect u\bigr)\cdot {\vect n}_{\rm stiff(soft)} |_{\Gamma}.$
 The adjoints of $\Xi_\chi^{\rm stiff(soft)}$ coincide with the closures of  $\widetilde{\Pi}_{\chi}^{\rm stiff(soft)}$ in the space  ${\mathcal E}=L^2(\Gamma;{\mathbb C}^3).$ Indeed, we have the following theorem.

\begin{theorem}\label{nakk200} 
 The operators $\Xi_\chi^{\rm stiff(soft)}$ defined by \eqref{xidefinition} are compact. 
 Their adjoints  $\Pi_\chi^{\rm stiff(soft)}$ are (compact) closures in $\mathcal{E}$ of the classical lift operators $\widetilde{\Pi}_{\chi}^{\rm stiff(soft)},$ and (cf. Definition \ref{ryzhovtriple})
 \begin{equation}
 	\label{nakk60_bis} 
     \ker\bigl(\Pi_\chi^{\rm stiff(soft)}\bigr) =  \left\{ 0\right\}, \qquad \mathcal{D}\bigl(\mathcal{A}_{0,\chi}^{\rm stiff(soft)}\bigr)\cap \mathcal{R}\bigl(\Pi_\chi^{\rm stiff(soft)}\bigr) = \{0\}.
 \end{equation}
\end{theorem}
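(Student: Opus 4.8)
The plan is to prove the three assertions of Theorem \ref{nakk200} in sequence, working throughout with the operators $\mathcal{A}_{0,\chi}^{\rm stiff(soft)}$, whose inverses are bounded by the coercivity established above, and the classical lift operators $\widetilde{\Pi}_\chi^{\rm stiff(soft)}$ of Proposition \ref{propositionpiestimate}.

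\textbf{Compactness of $\Xi_\chi^{\rm stiff(soft)}$.} By definition \eqref{xidefinition}, $\Xi_\chi^{\rm stiff(soft)}=-\partial_\nu^{\rm stiff(soft)}\bigl(\mathcal{A}_{0,\chi}^{\rm stiff(soft)}\bigr)^{-1}$. First I would observe that $\bigl(\mathcal{A}_{0,\chi}^{\rm stiff(soft)}\bigr)^{-1}$ maps $\mathcal{H}^{\rm stiff(soft)}=L^2$ boundedly into $H^1_\#(Y_{\rm stiff(soft)};\C^3)$ (indeed into the domain of the form, with zero trace on $\Gamma$), by the coercivity estimate $a_{0,\chi}^{\rm stiff(soft)}(\vect u,\vect u)\ge C\|\vect u\|_{H^1}^2$. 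One then needs that the co-normal derivative map $\partial_\nu^{\rm stiff(soft)}$ sends the relevant solution space compactly into $\mathcal{E}=L^2(\Gamma;\C^3)$. The cleanest route is to use the extra regularity available for solutions of the homogeneous (up to $L^2$ right-hand side) elliptic system: elliptic regularity with the $C^{1,1}$ interface and Lipschitz coefficients gives $\bigl(\mathcal{A}_{0,\chi}^{\rm stiff(soft)}\bigr)^{-1}\vect f\in H^2_{\rm loc}$ near $\Gamma$ (or at least $H^{3/2}$ up to $\Gamma$, cf. Remark \ref{deinitiontransmission}), so that $\partial_\nu^{\rm stiff(soft)}$ of it lies in $H^{1/2}(\Gamma;\C^3)$ (or $H^s$, $s>0$), with norm controlled by $\|\vect f\|_{L^2}$; the compact embedding $H^{1/2}(\Gamma;\C^3)\hookrightarrow L^2(\Gamma;\C^3)$ then yields compactness of $\Xi_\chi^{\rm stiff(soft)}$. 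Alternatively, one avoids invoking global $H^2$ regularity by factoring through a bounded trace of the co-normal derivative into $H^{-1/2}(\Gamma;\C^3)$ composed with the resolvent viewed as a map into a space with slightly more regularity; but the $H^{1/2}$-trace argument is the simplest. This is the step I expect to be the main obstacle, precisely because of the low regularity assumed on $\A_{\rm stiff(soft)}$ and $\Gamma$: one must be careful that the elliptic estimates are uniform in $\chi\in Y'$, which follows since $X_\chi$ is a bounded zeroth-order perturbation with $\|X_\chi\|\le C|\chi|\le C$ on the compact $Y'$, and that the co-normal derivative is well-defined in the stated trace class.

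\textbf{Identification of $\Pi_\chi^{\rm stiff(soft)}$.} Next I would show $\Pi_\chi^{\rm stiff(soft)}=\bigl(\Xi_\chi^{\rm stiff(soft)}\bigr)^*$ is the closure in $\mathcal{E}$ of $\widetilde{\Pi}_\chi^{\rm stiff(soft)}$. Since $\Xi_\chi^{\rm stiff(soft)}$ is bounded (indeed compact), its adjoint is bounded on all of $\mathcal{E}$, so it suffices to verify that $\Pi_\chi^{\rm stiff(soft)}\vect g=\widetilde{\Pi}_\chi^{\rm stiff(soft)}\vect g$ for $\vect g\in H^{1/2}(\Gamma;\C^3)$, a dense subset of $\mathcal{E}$. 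This is a Green's-identity computation: for $\vect f\in\mathcal{H}^{\rm stiff(soft)}$ write $\vect w=\bigl(\mathcal{A}_{0,\chi}^{\rm stiff(soft)}\bigr)^{-1}\vect f$, which satisfies $\vect w|_\Gamma=0$, and let $\vect v=\widetilde{\Pi}_\chi^{\rm stiff(soft)}\vect g$, which is $(\simgrad+{\rm i}X_\chi)$-harmonic with $\vect v|_\Gamma=\vect g$. Integrating $a_{0,\chi}^{\rm stiff(soft)}(\vect v,\vect w)$ by parts two different ways, using that $\vect v$ solves the homogeneous equation (so the bulk term pairing $\vect v$ against $\mathcal{A}_{0,\chi}^{\rm stiff(soft)}\vect w=\vect f$ produces $\langle\vect v,\vect f\rangle_{\mathcal{H}^{\rm stiff(soft)}}$) and $\vect w|_\Gamma=0$ (so the boundary term involving the trace of $\vect w$ drops), yields $\langle\vect v,\vect f\rangle_{\mathcal{H}^{\rm stiff(soft)}}=\langle\vect g,-\partial_\nu^{\rm stiff(soft)}\vect w\rangle_{\mathcal{E}}=\langle\vect g,\Xi_\chi^{\rm stiff(soft)}\vect f\rangle_{\mathcal{E}}$, which by the defining relation for $\Pi_\chi^{\rm stiff(soft)}$ says exactly $\langle\Pi_\chi^{\rm stiff(soft)}\vect g,\vect f\rangle_{\mathcal{H}^{\rm stiff(soft)}}=\langle\vect v,\vect f\rangle_{\mathcal{H}^{\rm stiff(soft)}}$ for all $\vect f$; hence $\Pi_\chi^{\rm stiff(soft)}\vect g=\vect v=\widetilde{\Pi}_\chi^{\rm stiff(soft)}\vect g$. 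Compactness of $\Pi_\chi^{\rm stiff(soft)}$ is immediate as the adjoint of the compact $\Xi_\chi^{\rm stiff(soft)}$.

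\textbf{The two relations in \eqref{nakk60_bis}.} For $\ker\bigl(\Pi_\chi^{\rm stiff(soft)}\bigr)=\{0\}$: if $\Pi_\chi^{\rm stiff(soft)}\vect g=0$ for some $\vect g\in\mathcal{E}$, then by density pick $\vect g_n\in H^{1/2}(\Gamma;\C^3)$ with $\vect g_n\to\vect g$ in $\mathcal{E}$; the functions $\vect v_n=\widetilde{\Pi}_\chi^{\rm stiff(soft)}\vect g_n$ converge in $L^2(Y_{\rm stiff(soft)};\C^3)$ to $0$, and since each $\vect v_n$ is $(\simgrad+{\rm i}X_\chi)$-harmonic, interior elliptic estimates plus the uniqueness of the Dirichlet problem force the boundary traces $\vect g_n$ to converge to $0$ in a suitable sense — more directly, the map $\vect g\mapsto\vect v$ on $H^{1/2}$ has a bounded inverse from its range (the harmonic functions) back to $H^{1/2}(\Gamma)$ by the trace theorem and uniqueness, so $\vect g_n\to 0$ in $H^{1/2}$, whence $\vect g=0$; alternatively, unravel the adjoint relation with a well-chosen $\vect f$. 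For $\mathcal{D}\bigl(\mathcal{A}_{0,\chi}^{\rm stiff(soft)}\bigr)\cap\mathcal{R}\bigl(\Pi_\chi^{\rm stiff(soft)}\bigr)=\{0\}$: an element of the intersection is a function $\vect v$ that is both $(\simgrad+{\rm i}X_\chi)$-harmonic (being in the range of the lift, i.e. a limit of classical lifts) and has zero trace on $\Gamma$ (being in $\mathcal{D}(\mathcal{A}_{0,\chi}^{\rm stiff(soft)})$, equivalently in $\mathcal{D}(a_{0,\chi}^{\rm stiff(soft)})$, so $\vect v|_\Gamma=0$); testing the weak equation against $\vect v$ itself gives $a_{0,\chi}^{\rm stiff(soft)}(\vect v,\vect v)=0$, and coercivity forces $\vect v=0$. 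I would state these last two relations with brief proofs along these lines; no essential difficulty arises there once the compactness and the identification with $\widetilde{\Pi}_\chi^{\rm stiff(soft)}$ are in hand.
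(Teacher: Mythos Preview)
Your compactness argument and the identification of $\Pi_\chi^{\rm stiff(soft)}$ with the closure of $\widetilde{\Pi}_\chi^{\rm stiff(soft)}$ via Green's identity are essentially the paper's proof (the paper invokes Lemma~\ref{appendixregularityofdirichlet} for $H^2$ regularity and then the compact embedding $H^{1/2}(\Gamma)\hookrightarrow L^2(\Gamma)$, exactly as you outline). Your argument for $\mathcal{D}\bigl(\mathcal{A}_{0,\chi}^{\rm stiff(soft)}\bigr)\cap\mathcal{R}\bigl(\Pi_\chi^{\rm stiff(soft)}\bigr)=\{0\}$ is also correct and close in spirit to the paper's (which shows $\mathcal{A}_{0,\chi}^{\rm stiff(soft)}\Pi_\chi^{\rm stiff(soft)}\vect g=0$ by duality against $C_c^\infty$ test functions and uses invertibility).

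There is, however, a genuine gap in your proof of $\ker\bigl(\Pi_\chi^{\rm stiff(soft)}\bigr)=\{0\}$. From $\vect v_n\to 0$ in $L^2(Y_{\rm stiff(soft)})$ you cannot infer $\vect g_n\to 0$: the ``bounded inverse'' of $\vect g\mapsto\vect v$ you invoke is bounded from the range equipped with the $H^1$ norm back to $H^{1/2}(\Gamma)$, not from the $L^2$ topology, and you only have $L^2$ convergence of $\vect v_n$. In fact $\Pi_\chi^{\rm stiff(soft)}$ is compact, so it cannot be bounded below on any infinite-dimensional subspace; highly oscillatory boundary data $\vect g_n$ with $\|\vect g_n\|_{L^2(\Gamma)}=1$ can have $\|\Pi_\chi^{\rm stiff(soft)}\vect g_n\|_{L^2}\to 0$. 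Interior elliptic estimates give nothing about the boundary trace.

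Your throwaway ``alternatively, unravel the adjoint relation with a well-chosen $\vect f$'' is the right idea, and this is what the paper does, but the construction is nontrivial. The paper uses the standard relation $\ker\bigl(\Pi_\chi^{\rm stiff(soft)}\bigr)=\overline{\mathcal{R}\bigl(\Xi_\chi^{\rm stiff(soft)}\bigr)}^\perp$ and shows that the range of $\Xi_\chi^{\rm stiff(soft)}$ contains the dense subset $H^1(\Gamma;\C^3)$: given $\vect g\in H^1(\Gamma;\C^3)$, Lemma~\ref{traceextensionlemma} produces $\vect u\in H^2$ with $\vect u|_\Gamma=0$ and $\partial_\nu^{\rm stiff(soft)}\vect u=-\vect g$, and then $\vect f:=(\simgrad+{\rm i}X_\chi)^*\A_{\rm stiff(soft)}(\simgrad+{\rm i}X_\chi)\vect u$ satisfies $\Xi_\chi^{\rm stiff(soft)}\vect f=\vect g$. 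The trace extension lemma (solving the two-trace problem with prescribed Dirichlet and co-normal data) is the missing ingredient in your sketch.
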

\begin{proof}
Due to results on elliptic regularity, under Assumption \ref{coffassumption} the operators $\bigl(\mathcal{A}_{0,\chi}^{\rm stiff(soft)}\bigr)^{-1}$ are bounded from $\mathcal{H}^{\rm stiff(soft)}$ to $H^{2}( \Omega_{\rm stiff(soft)};\C^3)$ by Lemma \ref{appendixregularityofdirichlet}. Thus, using the trace theorem, we infer that $\Xi_\chi^{\rm stiff(soft)}$ is bounded as an operator to $H^{1/2}(\Gamma;\C^3).$
Due to the compactness of the embedding $H^{1/2}(\Gamma;\C^3)\hookrightarrow \mathcal{E}$, it follows that $\Xi_\chi^{\rm stiff(soft)}$ is compact.\footnote{By following the argument of \cite{Grubb}, it can be shown that not only $\Xi^{\rm stiff(soft)}$ are compact, but they also belong to certain weak Schatten -- von-Neumann classes.} Next, one can easily verify by integration by parts that for $\vect g \in H^{1/2}(\Gamma;\C^3)$, $\vect f \in \mathcal{H}^{\rm stiff(soft)}$ one has
\begin{equation*}
    \bigl\langle \widetilde{\Pi}_\chi^{\rm stiff(soft)} \vect g, \vect  f \bigr\rangle_{\mathcal{H}^{\rm stiff(soft)}} = \Bigl\langle \vect g , -\partial^{\rm stiff(soft)}_\nu\bigl(\mathcal{A}_{0,\chi}^{\rm stiff(soft)}\bigr)^{-1} \vect f \Bigr\rangle_{\mathcal{E}},
\end{equation*}
and hence
\begin{equation*}
    \Pi_\chi^{\rm stiff(soft)}\bigr|_{H^{1/2}(\Gamma;\C^3)} = \widetilde{ \Pi }_\chi^{\rm stiff(soft)}.
\end{equation*}

Proceeding to the proof of (\ref{nakk60_bis}), note that all $\vect g\in\mathcal{E}$ satisfy
\begin{equation*}
     \left(\simgrad+{\rm i}X_\chi\right)^* \A_{\rm stiff(soft)} \left(\simgrad + {\rm i}X_\chi\right)\Pi_\chi^{\rm stiff(soft)} \vect g = 0
\end{equation*}
in the sense of distributions.
Provided $\Pi_\chi^{\rm stiff(soft)} \vect g \in \mathcal{D}(\mathcal{A}_{0,\chi}^{\rm stiff(soft)})$, one then has, for all $\vect v \in \mathcal{C}_c^{\infty}(Y_{\rm stiff(soft)};\C^3),$
\begin{equation*}
	\begin{aligned}
	\int_{Y_{\rm stiff(soft)}}\mathcal{A}_{0,\chi}^{\rm stiff(soft)}\Pi_\chi^{\rm stiff(soft)}\vect g: \vect v
    &=\int_{Y_{\rm stiff(soft)}}\A_{\rm stiff(soft)}  \left(\simgrad+{\rm i} X_\chi\right)\Pi_\chi^{\rm stiff(soft)}\vect g: \left(\simgrad +{\rm i}X_\chi\right)\vect v\\[0.4em]
    &=\int_{Y_{\rm stiff(soft)}} \Pi_\chi^{\rm stiff(soft)} \vect g \cdot \left(\simgrad+{\rm i} X_\chi\right)^*\A_{\rm stiff(soft)}\left(\simgrad +{\rm i}X_\chi\right) \vect v
    =0,
    \end{aligned}
\end{equation*}
from which it follows immediately that $\Pi_\chi^{\rm stiff(soft)} \vect g = 0$.  This proves the second property in \eqref{nakk60_bis}. 

To prove the first property in \eqref{nakk60_bis}, choose an arbitrary $\vect g \in H^1(\Gamma;\C^3)$ and let $\vect u \in H^2(Y_{\rm stiff(soft)};\C^3)$ be such that \begin{equation*}
        \partial^{\rm stiff(soft)}_\nu \vect u\vert_\Gamma = -\vect g  
        \qquad 
          \vect u\vert_\Gamma = 0,\qquad 
          \mbox{$\vect u$ is $Y$-periodic. }
\end{equation*}
The existence of such $\vect u$ is guaranteed by the Lemma  \ref{traceextensionlemma}. Now, denoting
\begin{equation*}
    \vect f:= \left(\simgrad+{\rm i}X_\chi\right)^* \A_{\rm stiff(soft)}\left(\simgrad+{\rm i} X_\chi\right)\vect u \in \mathcal{H}^{\rm stiff(soft)},
\end{equation*}
it is clear that
    $\vect g = \Xi_\chi^{\rm stiff(soft)} \vect f,$
and, due to the density of $H^1(\Gamma;\C^3)$ in $\mathcal{E}$, we conclude that
\begin{equation*}
    \ker\bigl(\Pi_\chi^{\rm stiff(soft)}\bigr) =  \overline{\mathcal{R}\bigl(\Xi_\chi^{\rm stiff(soft)}\bigr)}^\perp = \left\{ 0\right\}.\qedhere\popQED
\end{equation*}
\end{proof}

Henceforth, we will be using the notation $\Pi_\chi^{\rm stiff(soft)}$ also for the operator $\widetilde{\Pi}_\chi^{\rm stiff(soft)},$ as it will always be clear from the context which one we are referring to. 

A more precise statement than that of Theorem \ref{nakk200} is available, although we do not use it in what follows -- as it is of an independent interest, we include it next.

\begin{theorem}
	The operators $\Xi_\chi^{\rm stiff(soft)}$ are bounded from ${\mathcal H}^{\rm stiff(soft)}$ to $H^{1/2}(\Gamma),$ and the their adjoints  $\Pi_\chi^{\rm stiff(soft)}$ are bounded from $L^2(\Gamma)$ to $H^{1/2}(Y_{\rm stiff(soft)}).$
\end{theorem}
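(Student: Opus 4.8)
The plan is to upgrade the mapping properties already obtained in Theorem \ref{nakk200} from the $\mathcal{E}$-based scale to the fractional Sobolev scale, which is essentially a matter of tracking regularity through the two constructions. First I would treat $\Xi_\chi^{\rm stiff(soft)}=-\partial_\nu^{\rm stiff(soft)}\bigl(\mathcal{A}_{0,\chi}^{\rm stiff(soft)}\bigr)^{-1}$: by Lemma \ref{appendixregularityofdirichlet} the operator $\bigl(\mathcal{A}_{0,\chi}^{\rm stiff(soft)}\bigr)^{-1}$ maps $\mathcal{H}^{\rm stiff(soft)}=L^2(Y_{\rm stiff(soft)};\C^3)$ boundedly into $H^2(Y_{\rm stiff(soft)};\C^3)$, with a norm bound uniform in $\chi$ (the $\chi$-dependence enters only through the lower-order term ${\rm i}X_\chi$, controlled by \eqref{Xchi_bounds} and the coercivity from Proposition \ref{josipapp1}). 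The co-normal derivative $\partial_\nu^{\rm stiff(soft)}\vect u=(\A_{\rm stiff(soft)}(\simgrad+{\rm i}X_\chi)\vect u)\cdot\vect n_{\rm stiff(soft)}|_\Gamma$ is, for $\vect u\in H^2$, an element of $H^{1/2}(\Gamma;\C^3)$ by the trace theorem (here one uses the $C^{0,1}$ regularity of $\A_{\rm stiff(soft)}$ and the $C^{1,1}$ regularity of $\Gamma$ to ensure the product $\A_{\rm stiff(soft)}(\simgrad+{\rm i}X_\chi)\vect u$ lies in $H^1$ and its normal component has a well-defined $H^{1/2}$ trace). Composing, $\Xi_\chi^{\rm stiff(soft)}\colon L^2(Y_{\rm stiff(soft)};\C^3)\to H^{1/2}(\Gamma;\C^3)$ is bounded, uniformly in $\chi$; this is in fact already noted inside the proof of Theorem \ref{nakk200}, so here it is only a matter of recording it as a standalone assertion.

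The second half is the adjoint statement: since $\Pi_\chi^{\rm stiff(soft)}=(\Xi_\chi^{\rm stiff(soft)})^*$ with respect to the pairings $\langle\cdot,\cdot\rangle_{\mathcal{H}^{\rm stiff(soft)}}$ and $\langle\cdot,\cdot\rangle_{\mathcal{E}}$, boundedness of $\Xi_\chi^{\rm stiff(soft)}\colon L^2(Y_{\rm stiff(soft)};\C^3)\to H^{1/2}(\Gamma;\C^3)$ dualises to boundedness of its adjoint from the dual of $H^{1/2}(\Gamma;\C^3)$ into $L^2(Y_{\rm stiff(soft)};\C^3)$ — but that is not quite the claim. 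Instead I would argue directly from the boundary-value-problem definition: on the dense subspace $H^{1/2}(\Gamma;\C^3)$ we have $\Pi_\chi^{\rm stiff(soft)}=\widetilde\Pi_\chi^{\rm stiff(soft)}$, and $\widetilde\Pi_\chi^{\rm stiff(soft)}\vect g$ solves the homogeneous elliptic system \eqref{boundaryvalueproblemsoncomponents} with Dirichlet data $\vect g$. For such a problem with $L^2(\Gamma)$ Dirichlet data, elliptic regularity in the sense of the non-variational ``very weak'' (transposition) solution theory (cf. \cite{Mitrea}, or the half-integer shift in the Lions–Magenes scale) gives $\widetilde\Pi_\chi^{\rm stiff(soft)}\vect g\in H^{1/2}(Y_{\rm stiff(soft)};\C^3)$ with $\|\widetilde\Pi_\chi^{\rm stiff(soft)}\vect g\|_{H^{1/2}}\le C\|\vect g\|_{L^2(\Gamma)}$, $C$ uniform in $\chi$. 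Passing to the $\mathcal{E}=L^2(\Gamma;\C^3)$-closure then yields $\Pi_\chi^{\rm stiff(soft)}\colon L^2(\Gamma;\C^3)\to H^{1/2}(Y_{\rm stiff(soft)};\C^3)$ bounded, which is the assertion. Alternatively, one can obtain the same conclusion by duality and interpolation: $\widetilde\Pi_\chi^{\rm stiff(soft)}$ is bounded $H^{1/2}(\Gamma)\to H^1(Y_{\rm stiff(soft)})$ by Proposition \ref{propositionpiestimate}, and its $\mathcal{E}$-adjoint is, up to the identification above, a co-normal derivative map, which by the $H^2$ interior-to-boundary regularity is bounded $L^2(Y_{\rm stiff(soft)})\to H^{1/2}(\Gamma)$; interpolating the ``$-1/2$'' shift across these endpoints places $\Pi_\chi^{\rm stiff(soft)}$ at the level $L^2(\Gamma)\to H^{1/2}(Y_{\rm stiff(soft)})$.

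The main obstacle is the low-regularity framework: the coefficients $\A_{\rm stiff(soft)}$ are only Lipschitz and $\Gamma$ is only $C^{1,1}$, so one cannot simply quote smooth elliptic theory. For the $\Xi$-side this is handled by Lemma \ref{appendixregularityofdirichlet} ($H^2$ regularity of the Dirichlet resolvent), which is explicitly tailored to these hypotheses; for the $\Pi$-side one must be careful that the transposition/very-weak solution theory for the Dirichlet problem with $L^2(\Gamma)$ data produces $H^{1/2}$ and not merely $H^{1/2-\delta}$ interior regularity under $C^{1,1}$ boundary and Lipschitz coefficients — this is where invoking \cite{Mitrea} (or an explicit duality argument pairing against the $H^2$-regular adjoint Dirichlet problem) is essential. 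A secondary point worth a line is the uniformity of all constants in $\chi\in Y'$: since $X_\chi$ is a bounded zero-order perturbation with $\|X_\chi\|\le C|\chi|\le C$ on the bounded cell $Y'$, and the coercivity constant of $a_{0,\chi}^{\rm stiff(soft)}$ is $\chi$-independent by the proposition above, none of the regularity constants degenerate, so the bounds hold with a single $C$ for all $\chi$. I expect the whole argument to occupy only a short paragraph once Lemma \ref{appendixregularityofdirichlet} and the transposition machinery are in place.
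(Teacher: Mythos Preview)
Your treatment of $\Xi_\chi^{\rm stiff(soft)}$ is correct and matches the paper exactly: it simply records the $L^2\to H^{1/2}(\Gamma)$ boundedness already established inside the proof of Theorem~\ref{nakk200}.

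For $\Pi_\chi^{\rm stiff(soft)}$, the paper takes precisely your ``alternative'' duality-and-interpolation route, but makes the step you dismissed as ``not quite the claim'' into the key ingredient. Concretely: from $\Xi_\chi^{\rm stiff(soft)}:L^2(Y_{\rm stiff(soft)})\to H^{1/2}(\Gamma)$ one obtains by Sobolev duality that its adjoint $\Pi_\chi^{\rm stiff(soft)}$ extends boundedly $H^{-1/2}(\Gamma)\to L^2(Y_{\rm stiff(soft)})$. This is then interpolated against the already-known bound $\Pi_\chi^{\rm stiff(soft)}:H^{1/2}(\Gamma)\to H^1(Y_{\rm stiff(soft)})$ from \eqref{piestimate2}, yielding $\Pi_\chi^{\rm stiff(soft)}:L^2(\Gamma)\to H^{1/2}(Y_{\rm stiff(soft)})$. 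That is the entire argument---two lines, no transposition theory needed. Your primary route via very-weak solutions \`a la Lions--Magenes/Mitrea would work too, but it imports substantially more machinery (and, as you note, one must be careful that $H^{1/2}$ rather than $H^{1/2-\delta}$ regularity actually holds under $C^{1,1}$ boundary and Lipschitz coefficients); the paper's interpolation argument sidesteps this entirely by leveraging only the two endpoint bounds already in hand.
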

\begin{proof}
	The claim pertaining to the operators $\Xi_\chi^{\rm stiff(soft)}$, which are the adjoints of $\Pi_\chi^{\rm stiff(soft)},$ is verified in the proof of Theorem \ref{nakk200} above.
	
	Passing over to the operators $\Pi_\chi^{\rm stiff(soft)}$, first notice that by Sobolev duality  $(\Xi_\chi^{\rm stiff(soft)})^*$ admit bounded extensions to operators acting from $H^{-1/2}(\Gamma)$ to ${\mathcal H}^{\rm stiff(soft)}$. The claim now follows by linear interpolation \cite{Triebel} between this fact and the boundedness of $\Pi_\chi^{\rm stiff(soft)}$ as operators from $H^{1/2}(\Gamma)$ to $H^1(Y_{\rm stiff(soft)})$, established in the estimate \eqref{piestimate2}.
\end{proof}

\subsubsection{Dirichlet-to-Neumann maps}
\label{DtN_sec} 
 
Here we introduce specific realisations  $\Lambda_\chi^{\rm stiff(soft)}$ of the operator $\Lambda$ (see Section \ref{opthap1}) and prove their self-adjointness. The main result of this section is Theorem \ref{dtnmapstheorem1}.

We begin by noting that, due to the elliptic regularity, under Assumption \ref{coffassumption} one can define the operators \cite{Agranovich}
\begin{equation*}
    \widetilde{\Lambda}_\chi^{\rm stiff(soft)}:H^{3/2}(\Gamma;\C^3) \to H^{1/2}(\Gamma;\C^3), \qquad \widetilde{\Lambda}_\chi^{\rm stiff(soft)}\vect g=-\partial_{\nu}^{\rm stiff(soft)} \vect{u},
\end{equation*}
where $\vect g\in H^{3/2}(\Gamma;\C^3)$ and $\vect u$ is the unique solution to \eqref{boundaryvalueproblemsoncomponents}. Notice that  as a consequence of Lemma \ref{appendixregularityofdirichlet} we have
\begin{equation} \label{nakk202} 
 \bigl\|\widetilde{\Lambda}_\chi^{\rm stiff(soft)} \vect{g}\|_{H^{1/2}(\Gamma;\C^3))} \leq C\bigr\| \vect{g} \|_{H^{3/2}(\Gamma;\C^3)}, 
\end{equation} 
where the constant $C$ is independent of $\chi.$ 
\begin{theorem}
\label{dtnmapstheorem1}
 The operators $\widetilde{\Lambda}_\chi^{\rm stiff(soft)}$ can be uniquely extended to unbounded non-positive self-adjoint operators $\Lambda_\chi^{\rm stiff(soft)}$ on $\mathcal{E}$ with domains $\mathcal{D}( \Lambda_\chi^{\rm stiff(soft)})=H^1(\Gamma;\C^3)$. Furthermore, $\Lambda_\chi^{\rm stiff(soft)}$ have compact resolvents.
\end{theorem}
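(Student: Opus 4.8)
The plan is to identify the bilinear form associated with the candidate operator $\Lambda_\chi^{\rm stiff(soft)}$, show that this form is symmetric, bounded below, and closed on $H^1(\Gamma;\C^3)$, and then invoke the first representation theorem to obtain a self-adjoint operator; finally I would reconcile this operator with the classical DtN map $\widetilde\Lambda_\chi^{\rm stiff(soft)}$ on the smooth core $H^{3/2}(\Gamma;\C^3)$ and establish compactness of the resolvent. Concretely, for $\vect g,\vect h\in H^{1/2}(\Gamma;\C^3)$ set
\begin{equation*}
\lambda_\chi^{\rm stiff(soft)}(\vect g,\vect h):=-\int_{Y_{\rm stiff(soft)}}\A_{\rm stiff(soft)}(\simgrad+{\rm i}X_\chi)\,\Pi_\chi^{\rm stiff(soft)}\vect g:\overline{(\simgrad+{\rm i}X_\chi)\,\Pi_\chi^{\rm stiff(soft)}\vect h},
\end{equation*}
using the classical lift $\Pi_\chi^{\rm stiff(soft)}=\widetilde\Pi_\chi^{\rm stiff(soft)}$ from \eqref{boundaryvalueproblemsoncomponents}. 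By Green's formula (an integration by parts justified, for $\vect g\in H^{3/2}$, by the $H^2$ regularity used in Theorem \ref{nakk200}), $\lambda_\chi^{\rm stiff(soft)}(\vect g,\vect h)=\langle\widetilde\Lambda_\chi^{\rm stiff(soft)}\vect g,\vect h\rangle_{\mathcal E}$ on the smooth core, so this form is the right object; it is manifestly symmetric (by the symmetry of $\A_{\rm stiff(soft)}$) and non-positive.

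The analytic heart of the argument is to show that $\lambda_\chi^{\rm stiff(soft)}$, with form domain $H^1(\Gamma;\C^3)$, is closed and that this domain is exactly $H^1(\Gamma;\C^3)$ — equivalently, that $\|\vect g\|_{\mathcal E}^2-\lambda_\chi^{\rm stiff(soft)}(\vect g,\vect g)$ is equivalent to $\|\vect g\|_{H^1(\Gamma)}^2$. The upper bound $|\lambda_\chi^{\rm stiff(soft)}(\vect g,\vect g)|\le C\|\vect g\|_{H^{1/2}(\Gamma)}^2\le C\|\vect g\|_{H^1(\Gamma)}\|\vect g\|_{L^2(\Gamma)}$ follows from the $H^1(Y_{\rm stiff(soft)})$ bound \eqref{piestimate2} on the lift together with the trace theorem. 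For the lower bound one needs $\lambda_\chi^{\rm stiff(soft)}(\vect g,\vect g)\le -c\|\vect g\|_{H^1(\Gamma)}^2+C\|\vect g\|_{L^2(\Gamma)}^2$; this is the standard coercivity of the single-layer/DtN form and is most cleanly obtained by noting that $-\lambda_\chi^{\rm stiff(soft)}(\vect g,\vect g)$ equals the Dirichlet energy of the harmonic (in the $\chi$-shifted elliptic sense) extension $\Pi_\chi^{\rm stiff(soft)}\vect g$, which by a trace/extension duality argument controls the $H^{1/2}(\Gamma)$ seminorm of $\vect g$ from below; combining with interpolation $\|\vect g\|_{H^{1/2}}^2\gtrsim\|\vect g\|_{H^1}\,\|\vect g\|_{L^2}^{-1}\cdot(\dots)$ — more precisely using that the DtN operator of an elliptic system is a first-order elliptic pseudodifferential operator, hence G{\aa}rding's inequality gives $-\lambda_\chi^{\rm stiff(soft)}(\vect g,\vect g)\ge c\|\vect g\|_{H^{1/2}(\Gamma)}^2-C\|\vect g\|_{L^2(\Gamma)}^2$. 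I expect the main obstacle to be carrying this lower bound out \emph{with constants independent of} $\chi$, which will require tracking the $\chi$-dependence through $X_\chi$ (using the bounds \eqref{Xchi_bounds} and the $\chi$-uniform estimate \eqref{piestimate2}), exactly as in the uniform coercivity proof for $a_{0,\chi}^{\rm stiff(soft)}$ above, possibly invoking Proposition \ref{josipapp1}.

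Granting the form bounds, the first representation theorem produces a self-adjoint operator $\Lambda_\chi^{\rm stiff(soft)}$ with $\mathcal D\big((1-\Lambda_\chi^{\rm stiff(soft)})^{1/2}\big)=H^1(\Gamma;\C^3)$ and $\lambda_\chi^{\rm stiff(soft)}(\vect g,\vect h)=\langle\Lambda_\chi^{\rm stiff(soft)}\vect g,\vect h\rangle$ for $\vect g\in\mathcal D(\Lambda_\chi^{\rm stiff(soft)})$; non-positivity is inherited from the form. That $\mathcal D(\Lambda_\chi^{\rm stiff(soft)})=H^1(\Gamma;\C^3)$ (not just the form domain) follows from elliptic regularity on $\Gamma$ for the first-order elliptic system $\Lambda_\chi^{\rm stiff(soft)}$, or alternatively from \eqref{nakk202} together with density of $H^{3/2}$ in $H^1$ and the closedness just proved. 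Uniqueness of the extension is immediate since $H^{3/2}(\Gamma;\C^3)$ is a core: any self-adjoint extension of $\widetilde\Lambda_\chi^{\rm stiff(soft)}$ agreeing with the form must coincide. Finally, compactness of the resolvent is a direct consequence of the compact embedding $\mathcal D(\Lambda_\chi^{\rm stiff(soft)})=H^1(\Gamma;\C^3)\hookrightarrow\hookrightarrow L^2(\Gamma;\C^3)=\mathcal E$ on the compact manifold $\Gamma$. I would present the stiff and soft cases in parallel throughout, since the only difference is the presence of the periodicity constraint in the stiff lift, which does not affect any of the estimates.
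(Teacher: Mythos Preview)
Your proposal contains a genuine confusion between the form domain and the operator domain. The sesquilinear form $\lambda_\chi^{\rm stiff(soft)}$ is \emph{not} closed on $H^1(\Gamma;\C^3)$: your own upper bound $|\lambda_\chi(\vect g,\vect g)|\le C\|\vect g\|_{H^{1/2}}^2$ and your G{\aa}rding lower bound $-\lambda_\chi(\vect g,\vect g)\ge c\|\vect g\|_{H^{1/2}}^2-C\|\vect g\|_{L^2}^2$ together show that the form norm is equivalent to the $H^{1/2}(\Gamma;\C^3)$ norm, not the $H^1$ norm. Kato's representation theorem then delivers a self-adjoint operator with form domain $H^{1/2}(\Gamma;\C^3)$, and there is no general mechanism by which the \emph{operator} domain automatically equals $H^1(\Gamma;\C^3)$; the paper in fact remarks immediately after the theorem that the form route ``might lose the knowledge of the domain of the operator''. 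Your fallback to ``elliptic regularity on $\Gamma$ for the first-order elliptic system'' would work in a $C^\infty$ setting via the pseudodifferential calculus, but under the paper's hypotheses ($C^{1,1}$ boundary, Lipschitz coefficients) this is not available off the shelf.

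The paper's proof avoids the form route altogether and uses interpolation twice. First, from Green's formula one gets boundedness $H^{1/2}\to H^{-1/2}$ (your \eqref{piestimate2}), while elliptic regularity of the Dirichlet problem gives boundedness $H^{3/2}\to H^{1/2}$ (this is \eqref{nakk202}); interpolating yields a bounded operator $\Lambda_\chi^{\rm stiff(soft)}:H^1\to L^2$, manifestly symmetric and non-positive. Self-adjointness is then obtained not via a form but by exhibiting a real resolvent point: one solves the Robin problem $(-\Lambda_\chi+I)\vect g=\vect h$ as a boundary value problem, and interpolates the regularity $H^{1/2}\to H^{3/2}$ (from Lemma \ref{appendixregularitiyofrobin}) against the variational bound $H^{-1/2}\to H^{1/2}$ to obtain $(-\Lambda_\chi+I)^{-1}:L^2\to H^1$ bounded. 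This simultaneously pins down the domain as exactly $H^1(\Gamma;\C^3)$ and gives compactness of the resolvent via $H^1\hookrightarrow\hookrightarrow L^2$.
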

\begin{proof}
The first step consists in obtaining a larger extension of the operator $\widetilde{\Lambda}_\chi^{\rm stiff(soft)}$ for which the desired operator $\Lambda_\chi^{\rm stiff(soft)}$ is simply a restriction onto $H^1(\Gamma;\C^3)$. The second step is to show that this restriction is, in fact, self-adjoint. For this, it suffices to show that the restriction is symmetric, and that the resolvent set of the restriction contains at least one real number, for example the unity. 

We begin by taking $\vect g \in H^{3/2}(\Gamma;\C^3),$ $\vect h \in H^{1/2}(\Gamma;\C^3)$ and considering the solutions $\vect u \in H^2(Y_{\rm stiff(soft)};\C^3),$ $\vect v \in H^1(Y_{\rm stiff(soft)};\C^3)$ to the problem \eqref{boundaryvalueproblemsoncomponents} such that
         $\vect u = \Pi_\chi^{\rm stiff(soft)}\vect g,$ $\vect v= \Pi_\chi^{\rm stiff(soft)}\vect h.$
Using Green's formula, we obtain
\begin{equation}
\label{bilinearformdtnmap}
\begin{aligned}
        \int_{\Gamma} \widetilde{\Lambda}_\chi^{\rm stiff(soft)} \vect g \cdot \vect h & = -\int_{Y_{\rm stiff(soft)}}\A_{\rm stiff(soft)}  \left(\simgrad+{\rm i} X_\chi\right)\vect u:  \left(\simgrad +{\rm i}X_\chi\right) \vect v \\
        & =-\int_{Y_{\rm stiff(soft)}}\A_{\rm stiff(soft)}  \left(\simgrad+{\rm i} X_\chi\right)\Pi_\chi^{\rm stiff(soft)}\vect g:  \left(\simgrad +{\rm i}X_\chi\right) \Pi_\chi^{\rm stiff(soft)}\vect h.
\end{aligned}
\end{equation}
It follows that $\widetilde{\Lambda}_\chi^{\rm stiff(soft)} \vect g$ defines an element of  $H^{-1/2}(\Gamma;\C^3)$. Due to \eqref{piestimate2}, the bound
\begin{equation}\label{eqnakk201} 
    \bigl\lVert \widetilde{\Lambda}_\chi^{\rm stiff(soft)} \vect g \bigr\rVert_{H^{-1/2}(\Gamma;\C^3)}\leq  C \left\lVert  \vect g \right\rVert_{H^{1/2}(\Gamma;\C^3)},
\end{equation}
holds with $C$ independent of $\chi\in Y'.$ In particular, we can define a unique bounded extension
\begin{equation*}
    \overline{\Lambda}_\chi^{\rm stiff(soft)}:H^{1/2}(\Gamma;\C^3) \to H^{-1/2}(\Gamma;\C^3),\qquad
    \overline{\Lambda}_\chi^{\rm stiff(soft)} |_{H^{3/2}(\Gamma;\C^3)} =  \widetilde{\Lambda}_\chi^{\rm stiff(soft)}.
\end{equation*}
Since $\left(H^1(\Gamma;\C^3),L^2(\Gamma;\C^3) \right)$ is an interpolation pair \cite{Triebel, KPS} with respect to the pairs $\left(H^{3/2}(\Gamma;\C^3),H^{1/2}(\Gamma;\C^3) \right)$ and  $\left(H^{1/2}(\Gamma;\C^3), H^{-1/2}(\Gamma;\C^3)\right)$, we conclude that $\overline{\Lambda}_\chi^{\rm stiff(soft)}$
is bounded as an operator from $H^1(\Gamma;\C^3)$ to $L^2(\Gamma;\C^3)$ and denote by $\Lambda_\chi^{\rm stiff(soft)}$ the restriction of $\overline{\Lambda}_\chi^{\rm stiff(soft)}$ to $H^1(\Gamma;\C^3)$. 

Proceeding to the second step, we prove that $\Lambda_\chi^{\rm stiff(soft)}$ is self-adjoint as an unbounded operator on $L^2(\Gamma;\C^3)$ with domain
    $\mathcal{D}(\Lambda_\chi^{\rm stiff(soft)}) = H^1(\Gamma;\C^3).$
Notice that $\Lambda_\chi^{\rm stiff(soft)}$ is non-positive and symmetric. Indeed,  as a consequence of \eqref{bilinearformdtnmap} holding for $\vect g \in H^{3/2}(\Gamma;\C^3),$
$\vect h \in H^{1/2}(\Gamma;\C^3),$ one has
\begin{equation*}
	\begin{aligned}
    \int_{\Gamma} \Lambda_\chi^{\rm stiff(soft)} \vect g \cdot \vect h  =-\int_{Y_{\rm stiff(soft)}}\A_{\rm stiff(soft)}  \left(\simgrad+{\rm i} X_\chi\right)\Pi_\chi^{\rm stiff(soft)}\vect g :&\left(\simgrad+{\rm i}X_\chi\right) \Pi_\chi^{\rm stiff(soft)}\vect h\\[0.3em] &\forall\vect g, \vect h \in H^1(\Gamma;\C^3).
    \end{aligned}
\end{equation*}
In order to prove self-adjointness of $\Lambda_\chi^{\rm stiff(soft)}$, it suffices to show that $\rho(\Lambda_\chi^{\rm stiff(soft)}) \cap \R \neq \emptyset$.   We claim that $(-\Lambda_\chi^{\rm stiff(soft)}+I)^{-1}:L^2(\Gamma;\C^3) \to H^1(\Gamma;\C^3)$ is bounded. To see this, first assume that $\vect h \in H^{1/2}(\Gamma;\C^3)$ and seek the solution $\vect g\in H^{3/2}(\Gamma;\C^3)$ to the problem
    $-\Lambda_\chi^{\rm stiff(soft)} \vect g + \vect g = \vect h.$ 
This is equivalent to seeking $\vect u$ 
such that
\begin{equation}
\label{robinproblem1}
        \left\{ \begin{array}{ll}
         \left(\simgrad+{\rm i}X_\chi\right)^* \A_{\rm stiff(soft)} \left(\simgrad + {\rm i}X_\chi\right) \vect u  = 0 \quad &\mbox{ on $Y_{\rm stiff(soft)}$},\\[0.35em]
         \partial_{\nu}^{\rm stiff(soft)} \vect u + \vect u = \vect h , \quad &\mbox{ on $\Gamma$} \end{array} \right.
\end{equation}
in the weak sense, and then setting $\vect{g}:=\vect{u}|_{\Gamma}.$ Invoking Lemma \ref{appendixregularitiyofrobin} for the existence and uniqueness of such a solution, we thus obtain an operator $(-\Lambda_\chi^{\rm stiff(soft)}+I)^{-1}: H^{1/2}(\Gamma;\C^3) \to H^{3/2}(\Gamma;\C^3).$

Second, consider the form $a_{\chi}^{\rm stiff(soft)}$ defined by the same expression as  $a_{0, \chi}^{\rm stiff (soft)},$ see \eqref{nakk71}, but on the domain $\mathcal{D}(a_\chi^{\rm stiff(soft)}):= H_\#^1(Y_{\rm stiff(soft)};\C^3).$
Applying Korn's inequality (see Proposition \ref{josipapp1}) to the weak form of \eqref{robinproblem1}, namely
\begin{equation*}
	a_\chi^{\rm stiff(soft)}( \vect u, \vect v) + \int_{\Gamma} \vect u \cdot \vect v = \int_{\Gamma} \vect h \cdot \vect v \qquad \forall \vect v \in H^1(Y_{\rm stiff(soft)};\C^3),
\end{equation*}
  where we set $\vect v=\vect u,$ we obtain the apriori estimate
    $\left\lVert \vect u \right\rVert_{H^1(Y_{\rm stiff(soft)};\C^3)} \leq C  \left\lVert \vect h\right\rVert_{H^{-1/2}(\Gamma;\C^3)}.$
Therefore, the resolvent $(-\Lambda_\chi^{\rm stiff(soft)} + I )^{-1}$ can be extended to a bounded operator
    from $H^{-1/2}(\Gamma;\C^3)$ to $H^{1/2}(\Gamma;\C^3).$
Using an interpolation argument once again, we conclude that
    $(-\Lambda_\chi^{\rm stiff(soft)}+ I)^{-1}$
    is bounded from $L^2(\Gamma;\C^3)$ to  $H^1(\Gamma;\C^3)$ and is therefore compact 
as an operator on $L^2(\Gamma;\C^3).$  Thus, unity is in the resolvent set of $\Lambda_\chi^{\rm stiff(soft)}$.
 \end{proof}
\begin{remark}
	\label{igor3} 
Notice that as a consequence of the linear interpolation theory (see, e.g., \cite{Triebel, KPS}) as well as the bounds \eqref{nakk202}, \eqref{eqnakk201}, we have  
\begin{equation} 
	\label{nakk203} 
\bigl\|\Lambda_\chi^{\rm stiff(soft)} \vect{g}\bigr\|_{L^2(\Gamma;\C^3)} \leq C\|\vect{g}\|_{H^1(\Gamma;\C^3)} \qquad \forall \vect{g} \in H^1(\Gamma;\C^3),
\end{equation} 
where the constant $C$ does not  depend on $\chi$. 
\end{remark} 

Applying the Ryzhov triple framework \eqref{defopp11} to the lift and DtN operators provided by Theorems \ref{nakk200}, \ref{dtnmapstheorem1}, we define the operators $\mathcal{A}_{\chi}^{\rm stiff(soft)}, \Gamma_{1,\chi}^{\rm stiff(soft)}, \Gamma_{0,\chi}^{\rm stiff(soft)}$. In particular, one has 
\begin{align}
	\label{identity1}
	\Gamma_{0,\chi}^{\rm stiff(soft)} \vect u &= \vect u|_{\Gamma} \qquad  \forall \vect u \in  H^2_{\#}(Y_{\rm stiff(soft)};\mathbb{C}^3) + \Pi_\chi^{\rm stiff(soft)}(H^{1/2}(\Gamma;\C^3)), \\[0.3em]
	\label{identity2}
	\Gamma_{1,\chi}^{\rm stiff(soft)} \vect u &=-\partial_{\nu}^{\rm stiff(soft)} \vect u\big\vert_\Gamma \qquad \forall \vect u \in  H^2_{\#}(Y_{\rm stiff(soft)};\mathbb{C}^3), \\[0.3em]
	\label{identity3}
	\mathcal{A}_{\chi}^{\rm stiff(soft)} \vect u & = \left(\simgrad+{\rm i}X_\chi\right)^* \A_{\rm stiff(soft)} \left(\simgrad + {\rm i}X_\chi\right) \vect u \qquad \forall \vect u \in  H^2_{\#}(Y_{\rm stiff(soft)};\mathbb{C}^3).
\end{align}
(For clarity, we note that the domains of the operators $\Gamma_{1,\chi}^{\rm stiff(soft)}$ and $\mathcal{A}_{\chi}^{\rm stiff(soft)}$ are wider than the indicated spaces.)

An alternative approach to defining the operators $-\Lambda_\chi^{\rm stiff(soft)}$ (and thus $\Lambda_\chi^{\rm stiff(soft)}$) can be developed under weaker assumptions on the regularity of the domain and the operator coefficients by considering  sesquilinear forms $\lambda_\chi^{\rm stiff(soft)}:H^{1/2}(\Gamma;\C^3)\times H^{1/2}(\Gamma;\C^3) \to \R,$ as follows:
\begin{equation}
    \lambda_\chi^{\rm stiff(soft)}( \vect  g, \vect h):=a_{\chi}^{\rm stiff(soft)}\bigl(\Pi_\chi^{\rm stiff(soft)} \vect g, \Pi_\chi^{\rm stiff(soft)} \vect h\bigr),\qquad \vect g, \vect h \in H^{1/2}(\Gamma;\C^3).
    \label{lambda_chi_form_def}
\end{equation}
The next proposition provides a coercivity estimate for the form $\lambda^{\rm stiff}_\chi,$ which implies an estimate for the lowest eigenvalue of the operator $\Lambda^{\rm stiff}_\chi,$ see Section \ref{Steklov_eigenvalue_sec}.
\begin{proposition}
\label{generaldtn}
Let the boundary $\Gamma$ be of class $C^{0,1},$ and suppose the entries of the elasticity tensor $\A_{\rm stiff(soft)}$ are in  $L^\infty(Y_{\rm stiff(soft)}).$ The forms $\lambda_\chi^{\rm stiff(soft)}$ are symmetric, non-negative, closed, and densely defined in $\mathcal{E}.$ Furthermore, there exists a $\chi$-independent constant $C>0$ such that
\begin{equation*}
    |\chi|^2\lVert \vect g\rVert^2_{H^{1/2}(\Gamma;\C^3)}\leq C \lambda_\chi^{\rm stiff}(\vect g, \vect g)\qquad  \forall\vect g\in H^{1/2}(\Gamma;\C^3).
\end{equation*}
\end{proposition}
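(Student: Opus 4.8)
The symmetry, non-negativity, closedness and dense definiteness of $\lambda_\chi^{\rm stiff(soft)}$ are essentially immediate: symmetry and non-negativity follow from the definition \eqref{lambda_chi_form_def} together with the symmetry and positive-definiteness of $\A_{\rm stiff(soft)}$ on symmetric matrices (Assumption \ref{coffassumption}); dense definiteness holds because $H^{1/2}(\Gamma;\C^3)$ is dense in $\mathcal{E}$; and closedness follows from the fact that $\Pi_\chi^{\rm stiff(soft)}$ is bounded from $H^{1/2}(\Gamma;\C^3)$ to $H^1(Y_{\rm stiff(soft)};\C^3)$ (estimate \eqref{piestimate2}) together with the coercivity of $a_\chi^{\rm stiff(soft)}$, which makes $\lambda_\chi^{\rm stiff(soft)}(\vect g,\vect g)^{1/2}$ comparable to a complete norm on a closed subspace. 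So the substance of the proposition is the coercivity estimate for $\lambda^{\rm stiff}_\chi$, which I would carry out as follows.

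The plan is to estimate $\lVert\vect g\rVert_{H^{1/2}(\Gamma;\C^3)}$ from above by the $H^1(Y_{\rm stiff};\C^3)$ norm of the harmonic lift $\vect u:=\Pi^{\rm stiff}_\chi\vect g$ (trace theorem: $\lVert\vect g\rVert_{H^{1/2}(\Gamma)}\le C\lVert\vect u\rVert_{H^1(Y_{\rm stiff})}$), and then to bound $\lVert\vect u\rVert_{H^1(Y_{\rm stiff})}$ by $\lambda^{\rm stiff}_\chi(\vect g,\vect g)^{1/2}$ with a constant gaining a factor $|\chi|^{-1}$. The point is that $\lambda^{\rm stiff}_\chi(\vect g,\vect g)$ controls $\lVert(\simgrad+{\rm i}X_\chi)\vect u\rVert^2_{L^2(Y_{\rm stiff})}$ up to the constant $\nu$, and — crucially — because we are on the \emph{stiff} (periodic, connected, perforated) cell $Y_{\rm stiff}$ with $\vect u$ being $Y$-periodic, one has a uniform Korn-type inequality on $H^1_\#(Y_{\rm stiff};\C^3)$ controlling $\lVert\vect u\rVert_{H^1}$ by $\lVert(\simgrad+{\rm i}X_\chi)\vect u\rVert_{L^2}$ with a $\chi$-independent constant, provided $|\chi|$ is bounded away from $0$ (this is exactly the content of Proposition \ref{josipapp1}, on which Proposition \ref{propositionpiestimate} already relies). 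For $|\chi|$ away from $0$ this gives $\lVert\vect u\rVert_{H^1(Y_{\rm stiff})}^2\le C\,\lambda^{\rm stiff}_\chi(\vect g,\vect g)$, and since there $|\chi|^2\le C$ the claimed inequality $|\chi|^2\lVert\vect g\rVert^2_{H^{1/2}(\Gamma)}\le C\lambda^{\rm stiff}_\chi(\vect g,\vect g)$ follows trivially. The real work is the small-$|\chi|$ regime.

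For small $|\chi|$ I would argue by a scaling/contradiction or direct spectral argument. Write $(\simgrad+{\rm i}X_\chi)\vect u=\simgrad\vect u+{\rm i}X_\chi\vect u$; using \eqref{Xchi_bounds} one gets $\lVert\simgrad\vect u\rVert_{L^2}\le \lVert(\simgrad+{\rm i}X_\chi)\vect u\rVert_{L^2}+C|\chi|\lVert\vect u\rVert_{L^2}$. The issue is to absorb the lower-order term: on $H^1_\#(Y_{\rm stiff};\C^3)$ the kernel of $\simgrad$ consists of the constants (the cell is connected and we impose periodicity, which kills rigid rotations), so by a Poincaré–Korn inequality $\lVert\vect u-\langle\vect u\rangle\rVert_{H^1(Y_{\rm stiff})}\le C\lVert\simgrad\vect u\rVert_{L^2}$, and it remains to control the mean $\langle\vect u\rangle$. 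Testing the identity $\lambda^{\rm stiff}_\chi(\vect g,\vect g)=\int_{Y_{\rm stiff}}\A_{\rm stiff}(\simgrad+{\rm i}X_\chi)\vect u:\overline{(\simgrad+{\rm i}X_\chi)\vect u}$ and using that $(\simgrad+{\rm i}X_\chi)\vect c={\rm i}X_\chi\vect c={\rm i}\sym(\vect c\odot\chi)$ for a constant vector $\vect c$, one sees that the coercive form detects the constant part at the rate $|\chi|^2$: writing $\vect u=\langle\vect u\rangle+\vect w$ with $\langle\vect w\rangle=0$, expanding, and using $|X_\chi\vect c|\ge C_1|\chi||\vect c|$ together with Cauchy–Schwarz to handle the cross term against $\vect w$ (which is small in $L^2$ by the Poincaré bound on $\vect w$), one extracts $|\chi|^2|\langle\vect u\rangle|^2\le C\lambda^{\rm stiff}_\chi(\vect g,\vect g)$. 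Combining the bound on $\vect w$ and on $\langle\vect u\rangle$ yields $|\chi|^2\lVert\vect u\rVert^2_{H^1(Y_{\rm stiff})}\le C\lambda^{\rm stiff}_\chi(\vect g,\vect g)$ uniformly for $|\chi|$ small, and then the trace theorem finishes the proof. The main obstacle is precisely this handling of the near-degenerate case $\chi\to0$ — one must show that the form, while losing coercivity as $\chi\to0$, does so exactly at rate $|\chi|^2$ and with a $\chi$-uniform constant; the cleanest route is to isolate the constant mode, on which $X_\chi$ acts by multiplication of order $|\chi|$, and apply the standard $\chi$-independent Poincaré–Korn inequality of Proposition \ref{josipapp1} to the mean-zero complement.
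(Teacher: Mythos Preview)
Your argument is correct in substance, but it takes a longer and more hands-on route than the paper, and a couple of citations are off. The paper's proof is two lines: pointwise coercivity of $\A_{\rm stiff}$ gives $\lVert(\simgrad+{\rm i}X_\chi)\Pi_\chi^{\rm stiff}\vect g\rVert_{L^2}^2\le C\lambda_\chi^{\rm stiff}(\vect g,\vect g)$, and then estimate \eqref{estimate2} (Proposition \ref{estimate0000}) --- which says $\lVert\vect u\rVert_{H^1(Y_{\rm stiff})}\le C|\chi|^{-1}\lVert(\simgrad+{\rm i}X_\chi)\vect u\rVert_{L^2(Y_{\rm stiff})}$ for \emph{all} $\chi\neq 0$ with a $\chi$-independent $C$ --- plus the trace theorem finishes it. That estimate is proved in the Appendix by extending from $Y_{\rm stiff}$ to the full torus $Y$ (Proposition \ref{extension}) and using Fourier series (Proposition \ref{Korn_est}). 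So there is no case split in the paper; the $|\chi|^{-1}$ factor is built into a single global inequality.

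What you do instead is effectively re-derive estimate \eqref{estimate2} by a different, more elementary method: decompose $\vect u$ into its mean $\vect c$ and a mean-zero remainder $\vect w$, control $\vect w$ in $H^1$ by a Poincar\'e--Korn inequality (this is \eqref{finalestimate1} of Proposition \ref{nakk112}, not Proposition \ref{josipapp1}, which carries an extra boundary term $\lVert\vect u\rVert_{L^2(B)}$ and does not give what you claim), and recover $|\chi||\vect c|$ from the lower bound $|X_\chi\vect c|\ge C_1|\chi||\vect c|$ of \eqref{Xchi_bounds}. This works, and in fact your ``small $|\chi|$'' argument already covers all $\chi$, so the case split is superfluous. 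The trade-off: your route avoids the extension-plus-Fourier machinery of the Appendix and is arguably more direct, but the paper's route is shorter because the heavy lifting is outsourced to the Appendix and reused elsewhere (e.g.\ in Proposition \ref{lemmascalar1}). Note also that your large-$|\chi|$ paragraph as written leans on a misreading of Proposition \ref{josipapp1}; since your second paragraph is self-contained and covers all $\chi$, you should simply drop the first and correct the citation to Proposition \ref{nakk112}.
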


\begin{proof}
For $\vect g \in H^{1/2}(\Gamma;\C^3)$, due to the pointwise coercivity of $\A_{\rm stiff},$ one has
\begin{equation*}
        \left\lVert \left(\simgrad+{\rm i}X_\chi \right)\Pi_{\chi}^{\rm stiff} \vect g\right\rVert_{L^2(Y_{\rm stiff};\C^3)}^2 \leq C \lambda_\chi^{\rm stiff}(\vect g, \vect g).
\end{equation*}
Furthermore, due to the estimate \eqref{estimate2} and the trace theorem, we obtain 
\begin{equation*}
    |\chi|^2\lVert \vect g\rVert^2_{H^{1/2}(\Gamma;\C^3)} \leq |\chi|^2\left\lVert \Pi_{\chi}^{\rm stiff} \vect g\right\rVert_{H^1(Y_{\rm stiff};\C^3)}^2 \leq C \lambda_\chi^{\rm stiff}(\vect g, \vect g).\qedhere\popQED
\end{equation*}
\end{proof}
A representation theorem by Kato \cite{Kato} yields the existence of (non-positive) self-adjoint
operators $\Lambda_\chi^{\rm stiff(soft)}:\mathcal{E}\supseteq\mathcal{D}(\Lambda_\chi^{\rm stiff(soft)}) \to \mathcal{E}$
such that
\begin{equation*}
  \prescript{}{H^{-1/2}(\Gamma;\C^3)}{\bigl\langle}  \Lambda_\chi^{\rm stiff(soft)} \vect g, \vect v \bigr\rangle_{ H^{1/2}(\Gamma;\C^3)} =-\lambda_\chi^{\rm stiff(soft)}( \vect g, \vect  v)\qquad \forall \vect g,  \vect v  \in H^{1/2}(\Gamma;\C^3). 
\end{equation*}
 Here, $\prescript{}{H^{-1/2}}{\bigl\langle}  \cdot, \cdot \bigr\rangle_{ H^{1/2}}$ denotes the duality pairing between the spaces $H^{-1/2}$ and $H^{1/2}$. 
This approach to defining DtN maps allows us to relax the regularity assumptions (on the boundary and the operator coefficients). However, as a result, we lose the information on the domains of these maps. In particular, we no longer have the equality
    $\mathcal{D}(\Lambda_\chi^{\rm stiff}) =  \mathcal{D}(\Lambda_\chi^{\rm soft}) = H^1(\Gamma;\C^3),$
as in the Theorem \ref{dtnmapstheorem1}. We will not pursue this approach, but we will use Proposition \ref{generaldtn}   for estimating the smallest eigenvalues of the operator  $\Lambda_\chi^{\rm stiff(soft)}$.  The eigenvalues and eigenfunctions of DtN maps are usually referred to as Steklov eigenvalues and eigenfunctions, respectively.

\subsection{Transmission problem: reformulation in terms of Ryzhov triple} \label{secnakkk1}

The decoupled operator  $\mathcal{A}_{0, \chi, \varepsilon}$ is defined by 
\begin{equation*}
\mathcal{A}_{0,\chi,\varepsilon} := \mathcal{A}_{0, \chi}^{\rm soft} \oplus \varepsilon^{-2}\mathcal{A}_{0, \chi}^{\rm stiff}, \qquad \mathcal{D}\bigl( \mathcal{A}_{0,\chi,\varepsilon}\bigr) = \mathcal{D}\bigl(\mathcal{A}_{0, \chi}^{\rm soft} \bigr) \oplus \mathcal{D}\bigl(\mathcal{A}_{0, \chi}^{\rm stiff}\bigr),
\end{equation*}
relative to the decomposition \eqref{decompositionsoftstiff}.
Equivalently, its form is given by
\begin{equation*}
\begin{aligned}
a_{0, \chi, \varepsilon} (\vect u,\vect v) := 	\int_{ Y} \A^\varepsilon(\simgrad +{\rm i}X_\chi)\vect u: \overline{(\simgrad+{\rm i}X_\chi )\vect v},\qquad\quad
\vect u, \vect  v\in\mathcal{D}(a_{0, \chi, \varepsilon}) = \bigl\{ \vect u \in H_\#^1(Y;\C^3),\, \vect u |_\Gamma = 0\bigr\}.
\end{aligned}
\end{equation*}
We also define the coupled lift operator by
\begin{equation*}
\Pi_\chi : \mathcal{E} \to \mathcal{H}^{\rm soft} \oplus \mathcal{H}^{\rm stiff}, \quad  \Pi_\chi \vect g:= \Pi_\chi^{\rm soft} \vect g \oplus \Pi_\chi^{\rm stiff} \vect g,\qquad \vect g\in \mathcal{E}.
\end{equation*}
From Theorem \ref{nakk200} we have 
$\ker (\Pi_\chi) = \{0\},$ $\mathcal{D}(\mathcal{A}_{0,\chi,\varepsilon})\cap \mathcal{R}(\Pi_\chi) = \{0\}.$
In order to describe the condition of continuity of normal derivatives of \eqref{transmissionboundaryproblem}, we introduce the operator
$\Lambda_{\chi,\varepsilon}:=\varepsilon^{-2}\Lambda_\chi^{\rm stiff} + \Lambda_\chi^{\rm soft}.$

Both $\Lambda_\chi^{\rm stiff}$ and $\Lambda_\chi^{\rm soft}$  are self-adjoint on the common domain $H^1(\Gamma;\C^3)$ and non-positive. Therefore, the operator $\Lambda_{\chi,\varepsilon}$ is self-adjoint on $\mathcal{D}(\Lambda_{\chi,\varepsilon}):=H^1(\Gamma;\C^3)$. This fact very probably belongs to the domain of folklore, still we include the proof of it as kindly shared with the authors by Dr\,V.\,Sloushch, to whom we express our deep gratitude.

\begin{theorem}\label{thm:Voffka}
	
	Assuming there are two self-adjoint operators ${\mathcal{A}}\geq 0$ and ${\mathcal{B}}\geq 0$ with the same domain $\mathcal{D}(\mathcal{A})=\mathcal{D}(\mathcal{B})$ in the Hilbert space $\mathcal{H},$ the sum ${\mathcal{A}}+{\mathcal{B}}$ is self-adjoint on the same domain.
	
\end{theorem}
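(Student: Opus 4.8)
The plan is to use the standard criterion that a symmetric operator is self-adjoint as soon as its range, shifted by a suitable real multiple of the identity, is all of $\mathcal{H}$; equivalently, it suffices to produce one point of the real axis in the resolvent set. Since $\mathcal{A}+\mathcal{B}$ is evidently symmetric on $\mathcal{D}(\mathcal{A})=\mathcal{D}(\mathcal{B})$ and non-negative, I would show that $\mathcal{A}+\mathcal{B}+I$ is boundedly invertible on $\mathcal{H}$, which gives $-1\in\rho(\mathcal{A}+\mathcal{B})$ and hence self-adjointness by the basic criterion (see, e.g., \cite{Kato}).

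First I would record that, since $\mathcal{A}\geq 0$ is self-adjoint, the operator $(\mathcal{A}+I)^{-1}$ is a bounded, everywhere-defined, self-adjoint operator with $\|(\mathcal{A}+I)^{-1}\|\leq 1$, and similarly its square root $(\mathcal{A}+I)^{-1/2}$ is bounded with norm at most $1$; the same holds for $\mathcal{B}$. The next step is to write, for $\vect f\in\mathcal{H}$, a candidate solution of $(\mathcal{A}+\mathcal{B}+I)\vect u=\vect f$ and verify it lies in $\mathcal{D}(\mathcal{A})=\mathcal{D}(\mathcal{B})$. The key structural observation is that because $\mathcal{A}$ and $\mathcal{B}$ share the \emph{same} domain, one has $\mathcal{D}(\mathcal{A})=\mathcal{D}(\mathcal{A}+I)=\mathcal{D}(\mathcal{B}+I)$, and by the closed graph theorem the operator $(\mathcal{B}+I)(\mathcal{A}+I)^{-1}$ is bounded on $\mathcal{H}$: indeed $(\mathcal{A}+I)^{-1}$ maps $\mathcal{H}$ onto $\mathcal{D}(\mathcal{A})=\mathcal{D}(\mathcal{B}+I)$, so the composition is everywhere defined, and it is closed as a product of a closed operator with a bounded one whose range is the domain in question. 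Hence there is a constant $c>0$ with $\|(\mathcal{B}+I)\vect v\|\leq c\,\|(\mathcal{A}+I)\vect v\|$ for all $\vect v\in\mathcal{D}(\mathcal{A})$, and symmetrically with the roles swapped.

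With this at hand I would argue as follows: for $\vect u\in\mathcal{D}(\mathcal{A})$ one has, using non-negativity and the symmetry of $\mathcal{A}$,
\begin{equation*}
\bigl\|(\mathcal{A}+\mathcal{B}+I)\vect u\bigr\|\,\|\vect u\|\ \geq\ \bigl\langle(\mathcal{A}+\mathcal{B}+I)\vect u,\vect u\bigr\rangle\ =\ \langle\mathcal{A}\vect u,\vect u\rangle+\langle\mathcal{B}\vect u,\vect u\rangle+\|\vect u\|^2\ \geq\ \|\vect u\|^2,
\end{equation*}
so $\|(\mathcal{A}+\mathcal{B}+I)\vect u\|\geq\|\vect u\|$, which shows $\mathcal{A}+\mathcal{B}+I$ is injective with bounded inverse on its range. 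It remains to prove the range is closed and dense, equivalently all of $\mathcal{H}$. For closedness, if $(\mathcal{A}+\mathcal{B}+I)\vect u_n\to\vect g$ then $\vect u_n$ is Cauchy by the lower bound, $\vect u_n\to\vect u$; moreover $(\mathcal{A}+I)\vect u_n=(\mathcal{A}+\mathcal{B}+I)\vect u_n-\mathcal{B}\vect u_n$, and the bound $\|\mathcal{B}\vect v\|\leq c'\|(\mathcal{A}+\mathcal{B}+I)\vect v\|$ — which follows by combining $\|(\mathcal{B}+I)\vect v\|\leq c\|(\mathcal{A}+I)\vect v\|$ with the elementary inequality $\|(\mathcal{A}+I)\vect v\|\leq\|(\mathcal{A}+\mathcal{B}+I)\vect v\|$ valid since $\langle\mathcal{B}\vect v,(\mathcal{A}+I)\vect v\rangle\geq0$ (this last uses that $\mathcal{A},\mathcal{B}\geq0$ commute with their own spectral projections; more carefully, one writes $\|(\mathcal{A}+\mathcal{B}+I)\vect v\|^2=\|(\mathcal{A}+I)\vect v\|^2+2\,\mathrm{Re}\langle(\mathcal{A}+I)\vect v,\mathcal{B}\vect v\rangle+\|\mathcal{B}\vect v\|^2$ and notes the cross term is $\geq0$ because $\langle\mathcal{A}\vect v,\mathcal{B}\vect v\rangle$ need not be real — so one instead uses the form inequality $\langle(\mathcal{A}+\mathcal{B}+I)\vect v,\vect v\rangle\geq\langle(\mathcal{A}+I)\vect v,\vect v\rangle$ together with the already-established comparability to conclude) — shows $\mathcal{B}\vect u_n$ converges, hence $(\mathcal{A}+I)\vect u_n$ converges, and closedness of $\mathcal{A}$ forces $\vect u\in\mathcal{D}(\mathcal{A})$ and $(\mathcal{A}+\mathcal{B}+I)\vect u=\vect g$. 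Density of the range follows because its orthogonal complement consists of $\vect w$ with $\langle\vect w,(\mathcal{A}+\mathcal{B}+I)\vect u\rangle=0$ for all $\vect u\in\mathcal{D}(\mathcal{A})$, i.e. $\vect w$ lies in the domain of the adjoint with $(\mathcal{A}+\mathcal{B}+I)^*\vect w=0$; but $\mathcal{A}+\mathcal{B}+I$ is symmetric and bounded below by $1$, so its adjoint is also bounded below by $1$ on its domain, forcing $\vect w=0$. Therefore $-1\in\rho(\mathcal{A}+\mathcal{B})$ and $\mathcal{A}+\mathcal{B}$ is self-adjoint on $\mathcal{D}(\mathcal{A})$.

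The main obstacle is the subtlety flagged above: the naive expansion of $\|(\mathcal{A}+\mathcal{B}+I)\vect v\|^2$ produces a cross term $\mathrm{Re}\langle\mathcal{A}\vect v,\mathcal{B}\vect v\rangle$ which is \emph{not} obviously non-negative when $\mathcal{A}$ and $\mathcal{B}$ do not commute, so one cannot directly deduce $\|(\mathcal{A}+I)\vect v\|\leq\|(\mathcal{A}+\mathcal{B}+I)\vect v\|$ by that route. The clean way around it is to work at the level of \emph{forms}: use the sesquilinear-form inequality $\langle(\mathcal{A}+\mathcal{B}+I)\vect v,\vect v\rangle\geq\langle(\mathcal{A}+I)\vect v,\vect v\rangle$ together with the boundedness of $(\mathcal{B}+I)(\mathcal{A}+I)^{-1}$ (from the closed graph theorem, exploiting the crucial equal-domains hypothesis), and then invoke a standard interpolation/Heinz-type inequality to pass from the operator comparison to the needed graph-norm comparison. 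I would present this form-theoretic argument as the core of the proof, with the closed-graph boundedness of $(\mathcal{B}+I)(\mathcal{A}+I)^{-1}$ as the one place where $\mathcal{D}(\mathcal{A})=\mathcal{D}(\mathcal{B})$ is genuinely used.
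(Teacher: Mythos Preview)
Your overall strategy---show that a real point lies in the resolvent set of the symmetric operator $\mathcal{A}+\mathcal{B}$---is the right one, and your use of the closed graph theorem to obtain $\|(\mathcal{B}+I)\vect v\|\leq c\|(\mathcal{A}+I)\vect v\|$ from the equal-domains hypothesis is exactly the first step the paper takes. But the proposal has two genuine gaps that are not closed by the final paragraph's gesture toward ``a Heinz-type inequality.''

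First, your density argument is circular. You claim that if $\vect w$ is orthogonal to the range of $\mathcal{A}+\mathcal{B}+I$ then $(\mathcal{A}+\mathcal{B}+I)^*\vect w=0$, and then assert that the adjoint is ``also bounded below by $1$ on its domain, forcing $\vect w=0$.'' But a symmetric, bounded-below operator need not have its adjoint bounded below; that property is essentially equivalent to the self-adjointness you are trying to prove. A non-negative symmetric operator can perfectly well have nontrivial deficiency indices. Second, your closedness argument hinges on $\|(\mathcal{A}+I)\vect v\|\leq\|(\mathcal{A}+\mathcal{B}+I)\vect v\|$, which you correctly identify as unproven once $\mathcal{A}$ and $\mathcal{B}$ fail to commute: the cross term $\mathrm{Re}\langle\mathcal{A}\vect v,\mathcal{B}\vect v\rangle$ has no sign. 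Your fallback---``use the form inequality together with a Heinz-type inequality''---names the right tool but does not say how it resolves either surjectivity or closedness.

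The paper's proof fills exactly this hole. After shifting to positive definite $\mathcal{A},\mathcal{B}$ and using the closed graph theorem as you do, it applies the Heinz inequality to pass from $\|\kappa\mathcal{B}\vect u\|\leq\|\mathcal{A}\vect u\|$ to the square-root estimate $\|\kappa^{1/2}\mathcal{B}^{1/2}\vect u\|\leq\|\mathcal{A}^{1/2}\vect u\|$, giving that $\mathcal{B}^{1/2}\mathcal{A}^{-1/2}$ and (by symmetry) $\mathcal{A}^{1/2}\mathcal{B}^{-1/2}$ are bounded. This makes $T:=\mathcal{B}^{-1/2}\mathcal{A}\mathcal{B}^{-1/2}$ a bounded non-negative operator, and the factorisation
\[
(\mathcal{A}+\mathcal{B})\vect u=\mathcal{B}^{1/2}(T+I)\mathcal{B}^{1/2}\vect u,\qquad \vect u\in\mathcal{D}(\mathcal{A}),
\]
exhibits $(\mathcal{A}+\mathcal{B})^{-1}=\mathcal{B}^{-1/2}(T+I)^{-1}\mathcal{B}^{-1/2}$ as a bounded operator defined on all of $\mathcal{H}$. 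This single factorisation delivers surjectivity, closedness and the bounded inverse simultaneously, avoiding any separate density or range-closure step. That is the missing idea in your proposal.
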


\begin{proof}
	
	Without loss of generality, assume that ${\mathcal{A}}$ and ${\mathcal{B}}$  are positive definite. By the closed graph theorem, ${\mathcal{A}}{\mathcal{B}}^{-1}$ and ${\mathcal{B}}{\mathcal{A}}^{-1}$ are bounded in $\mathcal{H}$. Pick a $\kappa$ such that $\|\kappa {\mathcal{B}}{\mathcal{A}}^{-1}\|\leq 1$. One clearly has $\mathcal{D}(\mathcal{A}) \subseteq \mathcal{D}(\kappa {\mathcal{B}})$ and $\|\kappa {\mathcal{B}} u\|\leq \|{\mathcal{A}}\vect u\|$ for $\vect u\in\mathcal{D}(\mathcal{A})$.

	By the Heinz inequality (see, e.g., \cite[Chapter 10, Section 4.2, Theorem. 3]{MR1192782}),
	$$
	\mathcal{D}(\mathcal{A})^{1/2}\subset \mathcal{D}(\kappa {\mathcal{B}})^{1/2},\qquad \|\kappa^{1/2}{\mathcal{B}}^{1/2}u\|\leq \|{\mathcal{A}}^{1/2}\vect u\|, \quad \vect u\in \mathcal{D}(\mathcal{A})^{1/2},
	$$
	hence $\mathcal{D}(\mathcal{A})^{1/2}\subset \mathcal{D}(\mathcal{B})^{1/2}$ and ${\mathcal{B}}^{1/2}{\mathcal{A}}^{-1/2}$ is bounded. Swapping the r\^{o}les of ${\mathcal{A}}$ and ${\mathcal{B}}$, $\mathcal{D}(\mathcal{B})^{1/2}\subset \mathcal{D}(\mathcal{A})^{1/2}$ and ${\mathcal{A}}^{1/2}{\mathcal{B}}^{-1/2}$ is bounded.

	It follows that ${\mathcal{B}}^{-1/2}{\mathcal{A}}{\mathcal{B}}^{-1/2}$ is bounded and non-negative. Since obviously
	$$
	({\mathcal{A}}+{\mathcal{B}})\vect u={\mathcal{B}}^{1/2}({\mathcal{B}}^{-1/2}{\mathcal{A}}{\mathcal{B}}^{-1/2}+I){\mathcal{B}}^{1/2}\vect u\qquad\forall\vect u\in \mathcal{D}({\mathcal{A}}),
	$$
	and ${\mathcal{B}}^{-1/2}{\mathcal{A}}{\mathcal{B}}^{-1/2}+I$ is boundedly invertible, $({\mathcal{A}}+{\mathcal{B}})^{-1}$ is bounded and defined on the whole $\mathcal{H}$, and therefore closed. Therefore, ${\mathcal{A}}+{\mathcal{B}}$ is self-adjoint on $\mathcal{D}(\mathcal{A})$, as required.	
\end{proof}

Thus we can define the ``coupled" Rhyzov triple
$\left(\mathcal{A}_{0,\chi,\varepsilon},  \Pi_\chi,  \Lambda_{\chi,\varepsilon} \right)$
associated with the transmission problem \eqref{transmissionboundaryproblem}. Note that, while the operators $\Lambda_{\chi}^{\rm stiff}$ and $\Lambda_{\chi}^{\rm soft}$ are used to calculate the normal derivative on the boundary $\Gamma$, the map $ \Lambda_{\chi,\varepsilon}$ instead yields the jump on $\Gamma$ from the conormal derivative on the soft component to the scaled conormal derivative on the stiff component. 
We can also introduce the transmission operators $\mathcal{A}_{\chi,\varepsilon},  \Gamma_{0,\chi},  \Gamma_{1,\chi,\varepsilon}$ associated with the above triple.
Clearly, we have
$\Gamma_{1,\chi,\varepsilon} =\varepsilon^{-2}\Gamma_{1,\chi}^{\rm stiff}P_{\rm stiff} + \Gamma_{1,\chi}^{\rm soft}P_{\rm soft}.$
Based on the above boundary triples, we use Definition \ref{solutionmfunction} to define the following pairs of solution operators and $M$-functions: 
\begin{equation*}
\bigl(S_{\chi}^{\rm stiff}(z),M_{\chi}^{\rm stiff}(z)\bigr), \ \  z \in \rho\bigl(\mathcal{A}_{0,\chi}^{\rm stiff}\bigr),    \quad \bigl(S_{\chi}^{\rm soft}(z),M_{\chi}^{\rm soft}(z)\bigr), \ \  z \in \rho\bigl(\mathcal{A}_{0,\chi}^{\rm soft}\bigr),\quad \bigl(S_{\chi, \varepsilon}(z),M_{\chi, \varepsilon}(z)\bigr), \ \  z \in \rho\bigl(\mathcal{A}_{0,\chi}\bigr).
\end{equation*}
Obviously, one has 
\begin{equation}
S_{\chi,\varepsilon}(z)= S_{\chi}^{\rm soft}(z) + S_{\chi}^{\rm stiff}(\varepsilon^2 z),
\qquad
M_{\chi,\varepsilon}(z)= M_{\chi}^{\rm soft}(z) +\varepsilon^{-2}M_{\chi}^{\rm stiff}(\varepsilon^2 z), \qquad z\in \rho(\mathcal{A}_{\chi,\varepsilon,0}).
\label{MMM} 
\end{equation}

In the context of introduced boundary triples, the transmission problem \eqref{transmissionboundaryproblem} can be formulated as finding $\vect u \in \mathcal{D}(\mathcal{A}_{\chi,\varepsilon})$ such that
$\mathcal{A}_{\chi,\varepsilon}\vect u - z \vect u = \vect f,$
$\Gamma_{1,\chi,\varepsilon}\vect u = 0.$
The corresponding solution operator is given by the ``Kre\u\i n formula"
\begin{equation}
\label{transmissionkrein}
\mathcal{R}_{\chi,\varepsilon}(z)=\left( \mathcal{A}_{0,\chi,\varepsilon} -z I  \right)^{-1} - S_{\chi, \varepsilon}(z) M_{\chi, \varepsilon}(z)^{-1} S_{\chi, \varepsilon}(\overline{z})^*,
\end{equation}
and we know it to be the resolvent of a closed extension $\left(\mathcal{A}_{\chi,\varepsilon}\right)_{0,I}$ of $\mathcal{A}_{0,\chi,\varepsilon}$.
\begin{remark}
	\label{tracescoincide}
	Notice that the continuity condition ${\vect u}_{\rm stiff}(y)= {\vect u}_{\rm soft}(y)$, $ y \in \Gamma$, is built into the domain of $\mathcal{A}_{\chi,\varepsilon}$. Namely, let $\vect u \in \mathcal{D}(\mathcal{A}_{\chi,\varepsilon}) = \mathcal{D}(\mathcal{A}_{0,\chi,\varepsilon})\dot{+} \Pi_\chi(\mathcal{E})$. Then
	\begin{equation*}
	\begin{aligned}
	\vect u = \mathcal{A}_{0,\chi,\varepsilon}^{-1} \vect f + \Pi_\chi \vect g&=\left(\bigl( \varepsilon^{-2}\mathcal{A}_{0,\chi,\varepsilon}^{\rm stiff}\bigr)^{-1}  P_{\rm stiff}\vect f + \Pi_\chi^{\rm stiff} \vect g \right)\oplus\left(\bigl( \mathcal{A}_{0,\chi,\varepsilon}^{\rm soft}\bigr)^{-1}  P_{\rm soft}\vect f + \Pi_\chi^{\rm soft} \vect g \right) 
	=: \vect u_{\rm stiff} \oplus \vect u_{\rm soft},
	\end{aligned}
	\end{equation*}
	and
	$\Gamma_{0,\chi}^{\rm stiff}(\vect u_{\rm stiff}) = \vect g = \Gamma_{0,\chi}^{\rm soft}(\vect u_{\rm soft}).$
	Thus, the domain $\mathcal{D}(\mathcal{A}_{\chi,\varepsilon})$ contains pairs of functions whose traces coincide on $\Gamma$, and the operator $\Gamma_{0,\chi}$ maps such pairs to this common trace.
\end{remark}

\begin{remark}
	In order to see that the abstract operator $\left(\mathcal{A}_{\chi,\varepsilon}\right)_{0,I} $ corresponds to the operator defined by the sesquilinear form $a_{\chi,\varepsilon}$  given by \eqref{formaachiepsilon} (cf.  Remark \ref{deinitiontransmission}), one can use the following observations: 
	\begin{itemize}
		\item Both operators are self-adjoint (to see that the first one is self adjoint, see \cite{Ryzhov_spec} Corollary 5.9 ). 
		\item The resolvent problem for the operator defined with the form $a_{\chi,\varepsilon}$ admits the following weak formulation: for $\vect f \in L^2(Y;\C^3)$, find $\vect u \in H^1_\#(Y;\C^3)$ such that
		$$\int_{ Y} \A^\varepsilon(\simgrad +{\rm i}X_\chi)\vect u: \overline{(\simgrad+{\rm i}X_\chi )\vect v} -z \int_{Y} \vect u \overline{\vect v} = \int_{Y} \vect f \overline{\vect v}, \qquad \forall \vect v \in H^2_\#(Y;\C^3). $$ Actually, the test functions can be taken to be in $H^1_\#(Y;\C^3)$.
		\item In order to conclude that the resolvent problem for the operator $\left(\mathcal{A}_{\chi,\varepsilon}\right)_{0,I} $ is also given by the above formula, one considers the problem of finding $\vect u \in \mathcal{D} (\left(\mathcal{A}_{\chi,\varepsilon}\right)_{0,I})$ such that
		$$ \bigl\langle  \left( (\mathcal{A}_{\chi,\varepsilon}\right)_{0,I} - z I)\vect u , \vect v\bigr\rangle_{L^2(Y;\C^3)} = \langle \vect f, \vect v\rangle_{L^2(Y;\C^3)} \qquad \forall \vect v \in H^2_\#(Y;\C^3).$$ One then uses Green's formula \eqref{Greenformula}, integration by parts, as well as the identities 
		\eqref{identity1}, \eqref{identity2}, and \eqref{identity3}.  
	\end{itemize}
\end{remark}

\section{Transmission problem: Ryzhov triple asymptotics}\label{section4} 
The goal of this section is to provide operator asymptotics with respect to the quasimomentum $\chi \in Y'$ for the operators of the Ryzhov triple associated with the stiff component that were introduced in the previous section. As we will see, the approximation on the stiff component pays a key r\^{o}le in the overall approximation, cf. Remark \ref{nakk2} below. 

We also show that the eigenspace of the DtN map $\Lambda_\chi^{\rm stiff}$ corresponding, for small $\chi$, to Steklov  eigenvalues of order $|\chi|^2$ is finite-dimensional. This fact is one of the key ingredients for providing resolvent asymptotics for the transmission problem \eqref{transmissionboundaryproblem}.

As we are about to see, the vector nature of the problem does not allow one to infer an asymptotic expansion for Steklov eigenvalues. However, it turns out to be sufficient for our purposes to obtain the asymptotics of the resolvents of DtN maps, and consequently, the asymptotics of  their spectral projections.

We conclude by providing simple approximations for the boundary operators on the soft component (Section \ref{soft_sec}).

\subsection{Lift operators: asymptotic properties}

We are interested in the asymptotics for the lift operator $\Pi_\chi^{\rm stiff}:H^{1/2}(\Gamma;\C^3) \to H_\#^1(Y_{\rm stiff};\C^3)$ defined by \eqref{boundaryvalueproblemsoncomponents}. (Recall that for each $\chi$ the operator $\Pi_\chi^{\rm stiff}$ is nothing but the closure of $\widetilde{\Pi}_\chi^{\rm stiff},$ which is defined by \eqref{boundaryvalueproblemsoncomponents}.) Naturally, the leading-order term is the operator $\Pi_0^{\rm stiff}:H^{1/2}(\Gamma;\C^3) \to H_\#^1(Y_{\rm stiff};\C^3),$ which does not depend on $\chi$. Note that for $\vect g \in H^{1/2}(\Gamma;\C^3)$ the function $\Pi_0^{\rm stiff}\vect g \in H_\#^1(Y_{\rm stiff};\C^3)$ satisfies the identity
\begin{equation}
    \label{pi0definition}
    \int_{Y_{\rm stiff}} \A_{\rm stiff} \left[ \simgrad \Pi_0^{\rm stiff} \vect g\right] : \overline{\simgrad \vect v}=0 \qquad \forall \vect v \in H_\#^1(Y_{\rm stiff};\C^3),\ \vect v\vert_\Gamma=0.
\end{equation}
The operator $\Pi_0^{\rm stiff}$ satisfies the estimate provided by the following lemma.
\begin{lemma}
There exist constants $C_1$, $C_2>0$ such that for all $\vect g \in H^{1/2}(\Gamma;\mathbb{C}^3)$ one has
\begin{equation*}
    C_1 \left\lVert \simgrad \Pi_0^{\rm stiff} \vect g \right\rVert_{L^2(Y;\C^{3 \times 3})} \leq \left\lVert \vect g - \frac{1}{|\Gamma|} \int_\Gamma \vect g \right\rVert_{H^{1/2}(\Gamma;\C^3)} \leq C_2 \left\lVert \simgrad \Pi_0^{\rm stiff} \vect g \right\rVert_{L^2(Y;\C^{3 \times 3})}.
\end{equation*}
\end{lemma}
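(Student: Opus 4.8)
The plan is to recognise the stated double inequality as an equivalence between two norms on the quotient space $H^{1/2}(\Gamma;\C^3)/\R_{\rm const}$ (or, equivalently, on the subspace of functions with zero mean over $\Gamma$), with the bridging object being the harmonic-type lift $\Pi_0^{\rm stiff}$ and its associated Dirichlet energy. Concretely, one works with the form $\lambda_0^{\rm stiff}(\vect g,\vect g)=a_0^{\rm stiff}(\Pi_0^{\rm stiff}\vect g,\Pi_0^{\rm stiff}\vect g)$ defined in \eqref{lambda_chi_form_def} at $\chi=0$; by the pointwise coercivity and boundedness of $\A_{\rm stiff}$ (Assumption \ref{coffassumption}), $\lambda_0^{\rm stiff}(\vect g,\vect g)$ is comparable to $\lVert\simgrad\Pi_0^{\rm stiff}\vect g\rVert_{L^2(Y_{\rm stiff})}^2$ (and note $\Pi_0^{\rm stiff}\vect g$ is supported on $Y_{\rm stiff}$, so the $L^2(Y;\C^{3\times3})$ norm in the statement equals the $L^2(Y_{\rm stiff})$ norm). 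So it suffices to prove
$$
c_1\bigl\lVert \vect g - \langle\vect g\rangle_\Gamma\bigr\rVert_{H^{1/2}(\Gamma)}^2 \le \lVert\simgrad\Pi_0^{\rm stiff}\vect g\rVert_{L^2(Y_{\rm stiff})}^2 \le c_2\bigl\lVert \vect g - \langle\vect g\rangle_\Gamma\bigr\rVert_{H^{1/2}(\Gamma)}^2,
$$
where $\langle\vect g\rangle_\Gamma:=|\Gamma|^{-1}\int_\Gamma\vect g$.

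For the \emph{upper bound on the seminorm} (the left inequality in the lemma, i.e. $\lVert\simgrad\Pi_0^{\rm stiff}\vect g\rVert_{L^2}\lesssim\lVert\vect g-\langle\vect g\rangle_\Gamma\rVert_{H^{1/2}}$): since constants are in the kernel of $\simgrad$, one has $\Pi_0^{\rm stiff}\vect g = \Pi_0^{\rm stiff}(\vect g-\langle\vect g\rangle_\Gamma)$ up to an additive constant, so $\simgrad\Pi_0^{\rm stiff}\vect g=\simgrad\Pi_0^{\rm stiff}(\vect g-\langle\vect g\rangle_\Gamma)$; now apply the estimate \eqref{piestimate2} from Proposition \ref{propositionpiestimate} with $\chi=0$ together with the obvious bound $\lVert\simgrad\vect u\rVert_{L^2}\le\lVert\vect u\rVert_{H^1}$. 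For the \emph{lower bound} (the right inequality, i.e. $\lVert\vect g-\langle\vect g\rangle_\Gamma\rVert_{H^{1/2}}\lesssim\lVert\simgrad\Pi_0^{\rm stiff}\vect g\rVert_{L^2}$): start from $\lVert\vect g-\langle\vect g\rangle_\Gamma\rVert_{H^{1/2}(\Gamma)}=\lVert(\Pi_0^{\rm stiff}\vect g-\langle\vect g\rangle_\Gamma)|_\Gamma\rVert_{H^{1/2}(\Gamma)}\le C\lVert\Pi_0^{\rm stiff}\vect g-\langle\vect g\rangle_\Gamma\rVert_{H^1(Y_{\rm stiff})}$ by the trace theorem (here using that $\Gamma$ is $C^{1,1}$, hence the trace operator from $H^1(Y_{\rm stiff})$ to $H^{1/2}(\Gamma)$ is bounded), and then control the full $H^1$ norm of $\Pi_0^{\rm stiff}\vect g-\langle\vect g\rangle_\Gamma$ by its gradient alone via a Poincaré–Korn–type inequality on $Y_{\rm stiff}$. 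This last step is where the mean-zero normalisation is essential: the function $\Pi_0^{\rm stiff}\vect g-\langle\vect g\rangle_\Gamma$ has boundary trace with zero mean over $\Gamma$, so one may invoke a Poincaré inequality of the form $\lVert\vect w\rVert_{L^2(Y_{\rm stiff})}\le C\lVert\nabla\vect w\rVert_{L^2(Y_{\rm stiff})}$ for $\vect w\in H^1_\#(Y_{\rm stiff};\C^3)$ with $\int_\Gamma\vect w=0$ (valid since $Y_{\rm stiff}$ is connected), and then bound $\lVert\nabla\vect w\rVert_{L^2}$ by $\lVert\simgrad\vect w\rVert_{L^2}$ using Korn's inequality (Proposition \ref{josipapp1}) — possibly after first quotienting out the finite-dimensional space of skew-symmetric-gradient fields (infinitesimal rigid motions) and checking these are pinned down by the mean-zero trace condition, or more simply by using the Korn inequality in the form already established in the excerpt for functions in the relevant domain.

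The main obstacle I expect is the bookkeeping at this last step: specifically, making sure the mean-zero-trace condition is exactly the right normalisation to simultaneously kill the constants (for Poincaré) and the rigid-body rotations (for the passage from $\lVert\nabla\vect w\rVert$ to $\lVert\simgrad\vect w\rVert$ via Korn), and confirming one has a Korn/Poincaré inequality on $Y_{\rm stiff}$ with the periodic-on-$\partial Y$, zero-mean-on-$\Gamma$ side conditions rather than the more standard zero-mean or clamped ones. Since $Y_{\rm stiff}$ is a connected Lipschitz (indeed $C^{1,1}$) domain containing the outer boundary $\partial Y$ on which periodicity is imposed, the rigid motions compatible with periodicity are already reduced to translations, and the mean-zero trace condition removes those; so the combined inequality $\lVert\vect w\rVert_{H^1(Y_{\rm stiff})}\le C\lVert\simgrad\vect w\rVert_{L^2(Y_{\rm stiff})}$ should follow from a standard compactness (Lions lemma) argument, which is presumably exactly the content of Proposition \ref{josipapp1} invoked elsewhere. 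Given the lemma is flagged as having a standard proof, I would present the argument in the three-sentence skeleton above — (i) reduce to mean-zero $\vect g$ by translation-invariance of $\simgrad\Pi_0^{\rm stiff}$; (ii) left inequality from \eqref{piestimate2}; (iii) right inequality from trace theorem plus Poincaré–Korn — and relegate the verification of the Poincaré–Korn inequality to the cited appendix result.
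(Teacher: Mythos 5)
Your proposal is correct and follows essentially the same route as the paper: the inequality $\lVert\simgrad\Pi_0^{\rm stiff}\vect g\rVert_{L^2}\le C\lVert\vect g-\langle\vect g\rangle_\Gamma\rVert_{H^{1/2}}$ is obtained exactly as you describe, from the translation invariance of $\simgrad\Pi_0^{\rm stiff}$ (Remark \ref{pi0const}) together with \eqref{piestimate2}, while the reverse inequality is precisely the content of Proposition \ref{nakk112} (estimate \eqref{finalestimate2} with $\chi=0$ and $\vect u=\Pi_0^{\rm stiff}\vect g$), whose appendix proof is the trace--Poincar\'e--Korn argument you sketch (with periodicity disposing of the rotations via the extension operator and a Fourier-series Korn inequality, and with the volume mean rather than the boundary mean as the intermediate normalisation). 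The only cosmetic difference is that the paper simply cites the appendix proposition rather than re-deriving it inline.
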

\begin{proof}
The right-hand inequality is proved in Proposition \ref{nakk112} (see the Appendix). To prove the left-hand one, we observe that
\begin{equation*}
    \begin{aligned}
        \left\lVert \simgrad \Pi_0^{\rm stiff} \vect g  \right\rVert_{L^2(Y_{\rm stiff};\C^{3 \times 3})}  & =   \left\lVert \simgrad \Pi_0^{\rm stiff} \left(\vect g - \frac{1}{|\Gamma|} \int_\Gamma \vect g \right) \right\rVert_{L^2(Y_{\rm stiff};\C^{3 \times 3})} \leq \left\lVert \Pi_0^{\rm stiff} \left( \vect g - \frac{1}{|\Gamma|} \int_\Gamma \vect g \right) \right\rVert_{H^{1}(Y_{\rm stiff};\C^3)} \\[0.4em]
        & \stackrel{\text{Estimate \eqref{piestimate2}}}{\leq} C \left\lVert \vect g - \frac{1}{|\Gamma|} \int_\Gamma \vect g \right\rVert_{H^{1/2}(\Gamma;\C^3)},
    \end{aligned}
\end{equation*}
which concludes the proof.
\end{proof}
\begin{remark}
\label{pi0const}
    Notice here that the operator $\Pi_0^{\rm stiff}$ lifts constant functions on the boundary $\vect g \equiv C \in \C^3 \hookrightarrow \mathcal{E}$ to constant functions  defined by the same constant:
        $\Pi_0^{\rm stiff} \vect g = \vect G,$ $\vect G \equiv C \in \C^3 \hookrightarrow \mathcal{H}^{\rm stiff}.$
\end{remark}

The following theorem is crucial for understanding the asymptotics of the DtN map. As its proof follows a standard asymptotic argument \cite{Selden, Friedlander_2002}, we provide it in the Appendix.
\begin{theorem}
\label{Piasymptheorem}
 For each $n\in{\mathbb N},$ the operator $\Pi_\chi^{\rm stiff}$ admits the asymptotic expansion
 \begin{equation*}
    \Pi_\chi^{\rm stiff} = \Pi_0^{\rm stiff} + \widetilde{\Pi}_{\chi,1} + \widetilde{\Pi}_{\chi,2} + \dots +  \widetilde{\Pi}_{\chi,n} + \widetilde{\Pi}_{\chi,n}^{\rm error}, 
\end{equation*}
where the operators
 $\widetilde{\Pi}_{\chi,k}, \widetilde{\Pi}_{\chi,n}^{\rm error} :H^{1/2}(\Gamma;\C^3) \to  H_\#^1(Y_{\rm stiff};\C^3)$, $ k = 1, \dots, n$ are bounded
and satisfy
\begin{equation}
    \bigl\lVert \widetilde{\Pi}_{\chi,k} \bigr\rVert_{H^{1/2}(\Gamma;\C^3)\to H^1(Y_{\rm stiff};\C^3) } \leq C |\chi|^k, \quad k = 1,\dots, n, \quad \bigl\lVert \widetilde{\Pi}_{\chi,n}^{\rm error} \bigr\rVert_{H^{1/2}(\Gamma;\C^3)\to H^1(Y_{\rm stiff};\C^3) } \leq C |\chi|^{n+1},
    \label{Pi_error}
\end{equation}
and the constant $C>0$ does not depend on $\chi \in Y'$. Furthermore, 
\begin{equation*}
    \widetilde{\Pi}_{\chi,k} = \Pi_k : \chi^{\otimes k}, \qquad \left[\Pi_k\right]_{i_1,i_2,...,i_k} \in\mathfrak{B}\bigl( H^{1/2}(\Gamma;\C^3), H_\#^1(Y_{\rm stiff};\C^3)\bigr),
\end{equation*}
and the operator-valued tensors $\Pi_k$ are symmetric, i.e.
    $\left[\Pi_k\right]_{\sigma(i_1,i_2,...,i_k)} = \left[\Pi_k\right]_{i_1,i_2,...,i_k}$
    for all  $\sigma \in S_n,$
where $S_n$ is the permutation group of order $n.$
\end{theorem}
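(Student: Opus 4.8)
The plan is to expand the function $\vect u_\chi := \Pi_\chi^{\rm stiff}\vect g$ in powers of $\chi$ by substituting a formal series into the defining boundary value problem \eqref{boundaryvalueproblemsoncomponents} and collecting terms of equal homogeneity in $\chi$. First I would rewrite the weak form of \eqref{boundaryvalueproblemsoncomponents}: for $\vect g\in H^{1/2}(\Gamma;\C^3)$, the function $\vect u_\chi=\Pi_\chi^{\rm stiff}\vect g\in H^1_\#(Y_{\rm stiff};\C^3)$ is characterised by $\vect u_\chi|_\Gamma=\vect g$ together with
\[
\int_{Y_{\rm stiff}}\A_{\rm stiff}(\simgrad+{\rm i}X_\chi)\vect u_\chi:(\simgrad+{\rm i}X_\chi)\overline{\vect v}=0\qquad\forall\vect v\in H^1_\#(Y_{\rm stiff};\C^3),\ \vect v|_\Gamma=0.
\]
Since $X_\chi$ is linear in $\chi$, the bilinear form $b_\chi(\vect u,\vect v):=\int_{Y_{\rm stiff}}\A_{\rm stiff}(\simgrad+{\rm i}X_\chi)\vect u:\overline{(\simgrad+{\rm i}X_\chi)\vect v}$ is a polynomial of degree $2$ in $\chi$, namely $b_\chi=b^{(0)}+{\rm i}\,b^{(1)}_\chi+b^{(2)}_\chi$ where $b^{(0)}$ is the $\chi$-independent part, $b^{(1)}_\chi$ is homogeneous of degree $1$ and $b^{(2)}_\chi$ homogeneous of degree $2$ in $\chi$. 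Plugging the ansatz $\vect u_\chi=\sum_{k\ge0}\widetilde\Pi_{\chi,k}\vect g$ with $\widetilde\Pi_{\chi,0}=\Pi_0^{\rm stiff}$ and $\widetilde\Pi_{\chi,k}$ homogeneous of degree $k$, and requiring $\widetilde\Pi_{\chi,0}\vect g|_\Gamma=\vect g$, $\widetilde\Pi_{\chi,k}\vect g|_\Gamma=0$ for $k\ge1$, one obtains a triangular recursion: $\widetilde\Pi_{\chi,k}\vect g$ is the unique element of $H^1_\#(Y_{\rm stiff};\C^3)$ with zero trace on $\Gamma$ solving
\[
b^{(0)}(\widetilde\Pi_{\chi,k}\vect g,\vect v)=-{\rm i}\,b^{(1)}_\chi(\widetilde\Pi_{\chi,k-1}\vect g,\vect v)-b^{(2)}_\chi(\widetilde\Pi_{\chi,k-2}\vect g,\vect v)\qquad\forall\vect v,\ \vect v|_\Gamma=0,
\]
with the convention $\widetilde\Pi_{\chi,-1}=0$. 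Existence, uniqueness and the bound $\|\widetilde\Pi_{\chi,k}\vect g\|_{H^1}\le C|\chi|^k\|\vect g\|_{H^{1/2}}$ follow inductively from Lax--Milgram applied to $b^{(0)}$ (coercive by Proposition \ref{josipapp1}, after first lifting the trace; here one uses that $\Pi_0^{\rm stiff}$ already carries the boundary data, so all correctors have zero trace and $b^{(0)}$ is coercive on $\{\vect v:\vect v|_\Gamma=0\}$) combined with the degree-$1$ and degree-$2$ homogeneity of $b^{(1)}_\chi,b^{(2)}_\chi$ and the estimate \eqref{Xchi_bounds} for $X_\chi$.

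The multilinear/tensorial structure follows by making the $\chi$-dependence explicit. Since $X_\chi$ depends linearly on $\chi=(\chi_1,\chi_2,\chi_3)$, each $b^{(1)}_\chi$ is a linear combination $\sum_\alpha\chi_\alpha b^{(1),\alpha}$ and $b^{(2)}_\chi=\sum_{\alpha,\beta}\chi_\alpha\chi_\beta b^{(2),\alpha\beta}$ with $\chi$-independent forms; feeding this into the recursion and reading off the coefficient of each monomial $\chi_{i_1}\cdots\chi_{i_k}$ produces the operator-valued tensors $[\Pi_k]_{i_1,\dots,i_k}\in\mathfrak B(H^{1/2}(\Gamma;\C^3),H^1_\#(Y_{\rm stiff};\C^3))$ with $\widetilde\Pi_{\chi,k}=\Pi_k:\chi^{\otimes k}$; boundedness of each component is immediate from the degree-$k$ bound above evaluated on coordinate directions. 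Symmetry of $\Pi_k$ under $S_k$ is automatic: $\widetilde\Pi_{\chi,k}$ is by construction a genuine (homogeneous degree-$k$) polynomial function of $\chi$ with values in operators, and for any such polynomial the associated tensor of coefficients can be (and, by the standard polarisation identity, is) taken symmetric — equivalently, $\Pi_k:\chi^{\otimes k}$ only sees the symmetric part of $\chi^{\otimes k}$, so we define $[\Pi_k]_{i_1,\dots,i_k}$ to be the symmetrised coefficient, which does not change the value $\widetilde\Pi_{\chi,k}$.

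It remains to control the remainder $\widetilde\Pi_{\chi,n}^{\rm error}:=\Pi_\chi^{\rm stiff}-\sum_{k=0}^n\widetilde\Pi_{\chi,k}$. Set $\vect r_\chi:=\widetilde\Pi_{\chi,n}^{\rm error}\vect g$; then $\vect r_\chi|_\Gamma=0$ and, subtracting the $k=0,\dots,n$ equations from the full equation $b_\chi(\vect u_\chi,\vect v)=0$, one finds that $\vect r_\chi$ solves $b^{(0)}(\vect r_\chi,\vect v)=$ (the sum of the "leftover" terms), each of which is either $-{\rm i}\,b^{(1)}_\chi$ or $-b^{(2)}_\chi$ applied to one of $\widetilde\Pi_{\chi,n-1}\vect g,\widetilde\Pi_{\chi,n}\vect g$ or to $\vect r_\chi$ itself. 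Moving the $\vect r_\chi$-terms to the left and using coercivity of $b^{(0)}$ (valid once $|\chi|$ is small, since the extra terms are $O(|\chi|)$ relative perturbations), together with the already-established bounds $\|\widetilde\Pi_{\chi,n-1}\vect g\|_{H^1}\le C|\chi|^{n-1}\|\vect g\|$, $\|\widetilde\Pi_{\chi,n}\vect g\|_{H^1}\le C|\chi|^{n}\|\vect g\|$ and $\|X_\chi\cdot\|_{L^2}\le C_2|\chi|\|\cdot\|_{L^2}$, yields $\|\vect r_\chi\|_{H^1}\le C|\chi|^{n+1}\|\vect g\|_{H^{1/2}}$, which is \eqref{Pi_error}; for $|\chi|$ bounded below away from $0$ in $Y'$ the bound is trivial from $\|\Pi_\chi^{\rm stiff}\|\le C$ (Proposition \ref{propositionpiestimate}) and $\|\widetilde\Pi_{\chi,k}\|\le C$. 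The main obstacle I anticipate is purely bookkeeping: keeping the coercivity of $b^{(0)}$ on the right test-function space correct when the boundary data is carried entirely by the zeroth term, and carefully tracking which mixed (degree-$1$ times $\widetilde\Pi_{\chi,n}$, etc.) terms survive in the remainder equation so that the power of $|\chi|$ comes out exactly $n+1$ and not smaller; none of this is conceptually hard, but the index-chasing must be done with care, which is presumably why the authors relegate it to the Appendix.
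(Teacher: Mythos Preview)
Your proposal is correct and follows the same strategy as the paper: reduce to a zero-trace problem, substitute a formal power series in $\chi$, solve the resulting triangular recursion by Lax--Milgram on the $\chi=0$ form, and estimate the remainder. The tensor/symmetry observation via polarisation is also how the paper (implicitly) treats that part.

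The one point where you take a slightly longer route is the remainder estimate. You write the error equation with $b^{(0)}$ on the left, then treat the $X_\chi$-terms in $\vect r_\chi$ as a perturbation (valid for small $|\chi|$) and handle large $|\chi|$ by a crude bound. The paper instead keeps the \emph{full} form $b_\chi$ on the left in the equation for $\vect u_{{\rm error},n}$, i.e.
\[
\int_{Y_{\rm stiff}}\A_{\rm stiff}(\simgrad+{\rm i}X_\chi)\vect u_{{\rm error},n}:\overline{(\simgrad+{\rm i}X_\chi)\vect v}=\text{(terms in }\vect u_{n-1},\vect u_n\text{ only)},
\]
and then invokes the $\chi$-\emph{uniform} coercivity of $b_\chi$ on $\{\vect v:\vect v|_\Gamma=0\}$ provided by Proposition~\ref{josipapp1}. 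This gives the bound $\|\vect u_{{\rm error},n}\|_{H^1}\le C|\chi|^{n+1}\|\vect g\|_{H^{1/2}}$ for all $\chi\in Y'$ in one stroke, with no case split. Your version works, but this is the cleaner bookkeeping you were anticipating.
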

\begin{remark}
Here we make  an observation that proves to be crucial in the understanding of the homogenisation properties of the effective operator (see Lemma \ref{josipnak1}). By following the proof of Theorem \ref{Piasymptheorem} and taking into account the equation \eqref{definitionofpichi1}, which actually serves as the definition of the operator $\widetilde{\Pi}_{\chi,1}$, one concludes that for $\vect g \equiv C \in \C^3 \hookrightarrow \mathcal{E}$ one has
\begin{equation}
    \label{pichi1formula}
    \int_{Y_{\rm stiff}} \A_{\rm stiff} \bigl(\simgrad \widetilde{\Pi}_{\chi,1} \vect g +{\rm i}X_\chi \Pi_0^{\rm stiff} \vect g \bigr) :\simgrad \vect v \,dy =0 \qquad \forall \vect v\in H_\#^1(Y_{\rm stiff};\C^3),\ \vect v\vert_\Gamma=0.
\end{equation}
\end{remark}
The following lemma yields estimates on the stiff component that are useful in the spectral analysis to follow.  We identify the spaces of constant functions on $Y_{\rm stiff}$ and $\Gamma$ with the space $\C^3$.
\begin{proposition}
\label{lemmascalar1}
There exists a constant $C>0$ such that for all $\chi \in Y'\setminus \{0\}:$
\begin{itemize}
\item For every $ \vect g  \in H^{1/2}(\Gamma;\C^3),$ one has
	\begin{equation}
	\label{estim1}
	\left\lVert \vect  g\right\rVert_{H^{1/2}(\Gamma;\C^3)} \leq C|\chi|^{-1} \left\lVert\left(\simgrad + {\rm i}X_\chi  \right) \Pi_\chi^{\rm stiff}  \vect g\right\rVert_{L^2(Y_{\rm stiff};\C^{3 \times 3})};
	\end{equation}
\item For every $ \vect g \in H^{1/2}(\Gamma;\C^3), $ $\vect g \perp \C^3$ (in $L^2(\Gamma;\C^3)$ inner product), one has
	\begin{equation}
	\label{estim2}
	\left\lVert \vect g \right\rVert_{H^{1/2}(\Gamma;\C^3)} \leq  C\left\lVert\left(\simgrad+{\rm i}X_\chi  \right) \Pi_\chi^{\rm stiff} \vect  g \right\rVert_{L^2(Y_{\rm stiff};\C^{3 \times 3})}.
	\end{equation}
\end{itemize}
\end{proposition}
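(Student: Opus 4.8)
The plan is to prove the two inequalities in sequence, with \eqref{estim1} being the more delicate one and the source of the main obstacle. For \eqref{estim1}, the key observation is that $\left(\simgrad + {\rm i}X_\chi\right)\Pi_\chi^{\rm stiff}\vect g$ contains \emph{both} the gradient-type information (through the symmetrised gradient part) and the zeroth-order information (through $X_\chi$, whose norm is comparable to $|\chi|$ by \eqref{Xchi_bounds}). Decompose $\vect g = \vect g_0 + \vect g_\perp$, where $\vect g_0\in\C^3$ is the mean of $\vect g$ over $\Gamma$ and $\vect g_\perp\perp\C^3$. For $\vect g_\perp$ I would use the lemma preceding Remark \ref{pi0const}, which controls $\Vert \vect g_\perp\Vert_{H^{1/2}(\Gamma;\C^3)}$ by $C\Vert\simgrad\Pi_0^{\rm stiff}\vect g_\perp\Vert_{L^2}$, together with Theorem \ref{Piasymptheorem} to pass from $\Pi_0^{\rm stiff}$ to $\Pi_\chi^{\rm stiff}$ at the cost of an $O(|\chi|)$ error which is absorbed for $|\chi|$ small (and handled directly for $|\chi|$ bounded away from $0$ using the nondegeneracy of $\simgrad + {\rm i}X_\chi$ there). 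For the constant part $\vect g_0$, by Remark \ref{pi0const} the leading term $\Pi_0^{\rm stiff}\vect g_0$ is the constant function $\vect g_0$, so $\simgrad\Pi_0^{\rm stiff}\vect g_0 = 0$ and hence, by \eqref{Xchi_bounds}, $\Vert\left(\simgrad+{\rm i}X_\chi\right)\Pi_\chi^{\rm stiff}\vect g_0\Vert_{L^2}$ is, to leading order, $\Vert X_\chi\vect g_0\Vert_{L^2}\gtrsim |\chi|\,|\vect g_0|$; the correction terms $\widetilde\Pi_{\chi,k}\vect g_0$ from Theorem \ref{Piasymptheorem} contribute at most $O(|\chi|^2)$ and are absorbed. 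Combining and choosing $|\chi|$ small enough, then treating $|\chi|$ bounded below separately via a compactness/nondegeneracy argument (using that $\ker(\simgrad+{\rm i}X_\chi)=\{0\}$ on the relevant space for $\chi\neq 0$, so $\Vert(\simgrad+{\rm i}X_\chi)\Pi_\chi^{\rm stiff}\vect g\Vert_{L^2}\geq c(\chi)\Vert\vect g\Vert_{H^{1/2}}$, with $c(\chi)$ continuous and positive on the compact set $\{\delta\le|\chi|\le\sqrt3\pi\}$), gives \eqref{estim1} with a uniform constant.

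For \eqref{estim2}, the argument is shorter: since $\vect g\perp\C^3$ we have $\vect g = \vect g_\perp$, so the mean-subtraction in the preceding lemma is vacuous and we get directly $\Vert\vect g\Vert_{H^{1/2}(\Gamma;\C^3)}\leq C\Vert\simgrad\Pi_0^{\rm stiff}\vect g\Vert_{L^2(Y_{\rm stiff};\C^{3\times3})}$. It then remains to replace $\simgrad\Pi_0^{\rm stiff}\vect g$ by $\left(\simgrad+{\rm i}X_\chi\right)\Pi_\chi^{\rm stiff}\vect g$. Write $\Pi_\chi^{\rm stiff}\vect g = \Pi_0^{\rm stiff}\vect g + (\Pi_\chi^{\rm stiff}-\Pi_0^{\rm stiff})\vect g$; by Theorem \ref{Piasymptheorem} the difference is $O(|\chi|)$ in $H^1(Y_{\rm stiff};\C^3)$, and $\Vert{\rm i}X_\chi\Pi_\chi^{\rm stiff}\vect g\Vert_{L^2}$ is also $O(|\chi|)\Vert\vect g\Vert_{H^{1/2}}$ by \eqref{Xchi_bounds} and \eqref{piestimate2}. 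Hence
\begin{equation*}
\bigl\Vert\simgrad\Pi_0^{\rm stiff}\vect g\bigr\Vert_{L^2}\leq\bigl\Vert\left(\simgrad+{\rm i}X_\chi\right)\Pi_\chi^{\rm stiff}\vect g\bigr\Vert_{L^2}+C|\chi|\,\Vert\vect g\Vert_{H^{1/2}(\Gamma;\C^3)},
\end{equation*}
and combining with the lemma yields $\Vert\vect g\Vert_{H^{1/2}}\leq C\Vert\left(\simgrad+{\rm i}X_\chi\right)\Pi_\chi^{\rm stiff}\vect g\Vert_{L^2}+C|\chi|\Vert\vect g\Vert_{H^{1/2}}$; for $|\chi|$ below a fixed threshold the last term is absorbed, and for $|\chi|$ above it the bound follows from \eqref{estim1} (which has an extra factor $|\chi|^{-1}\leq C$ in that range). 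This proves \eqref{estim2}.

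The main obstacle I anticipate is the uniformity of the constant in \eqref{estim1} across the full range of $\chi\in Y'\setminus\{0\}$: the small-$\chi$ regime needs the delicate split into constant and mean-zero parts plus absorption of higher-order terms from Theorem \ref{Piasymptheorem}, while the regime $|\chi|$ bounded below needs a separate nondegeneracy argument; one must check these two estimates patch together with a single constant. A secondary technical point is justifying that the error operators in Theorem \ref{Piasymptheorem}, which are stated as bounded from $H^{1/2}(\Gamma;\C^3)$ to $H^1(Y_{\rm stiff};\C^3)$, indeed control the relevant $L^2$ norms of $\left(\simgrad+{\rm i}X_\chi\right)$ applied to them — this is immediate from the trace/embedding estimates but should be stated cleanly.
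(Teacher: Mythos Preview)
Your plan for \eqref{estim1} contains a genuine error. You assert that for constant $\vect g_0$ the correction terms $\widetilde\Pi_{\chi,k}\vect g_0$ ``contribute at most $O(|\chi|^2)$'' to $(\simgrad+{\rm i}X_\chi)\Pi_\chi^{\rm stiff}\vect g_0$, so that the leading term is ${\rm i}X_\chi\vect g_0$. This is false: Theorem \ref{Piasymptheorem} only gives $\lVert\widetilde\Pi_{\chi,1}\vect g_0\rVert_{H^1}\le C|\chi|\,|\vect g_0|$, so $\simgrad\widetilde\Pi_{\chi,1}\vect g_0$ is $O(|\chi|)$ in $L^2$ --- the \emph{same} order as ${\rm i}X_\chi\vect g_0$. (Indeed $\widetilde\Pi_{\chi,1}\vect g_0$ is the elasticity corrector, linear in $\chi$ and generically nonzero.) Hence your absorption step fails: you cannot perturbatively conclude $\lVert(\simgrad+{\rm i}X_\chi)\Pi_\chi^{\rm stiff}\vect g_0\rVert_{L^2}\ge c|\chi|\,|\vect g_0|$ from $\lVert{\rm i}X_\chi\vect g_0\rVert$ alone. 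Proving this lower bound is essentially equivalent to the coercivity of the homogenised tensor --- a non-perturbative fact. The decomposition $\vect g=\vect g_0+\vect g_\perp$ also leaves unaddressed how the separate bounds on $\lVert A\vect g_0\rVert$ and $\lVert A\vect g_\perp\rVert$ combine to control $\lVert A\vect g\rVert$ from below.

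The paper's route bypasses all of this. In the appendix it establishes, via periodic extension to $Y$ and Fourier series, the $\chi$-uniform Korn inequality (Proposition \ref{estimate0000})
\[
\lVert\vect u\rVert_{H^1(Y_{\rm stiff};\C^3)}\le C|\chi|^{-1}\bigl\lVert(\simgrad+{\rm i}X_\chi)\vect u\bigr\rVert_{L^2(Y_{\rm stiff};\C^{3\times3})},\qquad\vect u\in H^1_\#(Y_{\rm stiff};\C^3),
\]
and \eqref{estim1} follows in one line by taking $\vect u=\Pi_\chi^{\rm stiff}\vect g$ and applying the trace theorem. No asymptotic expansion of $\Pi_\chi^{\rm stiff}$, no decomposition of $\vect g$, no small/large $|\chi|$ split, no compactness. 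For \eqref{estim2} the paper likewise has a $\chi$-uniform mean-subtracted trace bound (Proposition \ref{nakk112}): $\lVert\vect u|_\Gamma-|\Gamma|^{-1}\int_\Gamma\vect u\rVert_{H^{1/2}(\Gamma)}\le C\lVert(\simgrad+{\rm i}X_\chi)\vect u\rVert_{L^2(Y_{\rm stiff})}$; applied to $\vect u=\Pi_\chi^{\rm stiff}\vect g$ with $\int_\Gamma\vect g=0$ this gives \eqref{estim2} directly, again avoiding the regime split. Your approach to \eqref{estim2} is salvageable, but the cleaner path is to prove the inequalities with $(\simgrad+{\rm i}X_\chi)$ on $Y_{\rm stiff}$ once and for all (extension plus Fourier series on $Y$) rather than reducing to $\chi=0$ and perturbing.
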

\begin{proof}
The estimate \eqref{estim1} is a straightforward  consequence of
Proposition \ref{Korn_est} 
and the trace theorem.
In order to prove \eqref{estim2}, it suffices to show that for $\vect g \in H^{1/2}(\Gamma;\C^3)$, $\vect g \perp \C^3$ one has
	\begin{equation}
	\label{part1}
	\left\lVert \vect g \right\rVert_{L^2(\Gamma;\C^3)} \leq  C\left\lVert\left(\simgrad + {\rm i}X_\chi  \right) \Pi_\chi^{\rm stiff} \vect  g \right\rVert_{L^2(Y_{\rm stiff};\C^{3 \times 3})}.
	\end{equation}
Indeed, suppose \eqref{part1} holds. Next, employing 
Proposition \ref{josipapp1} and the trace theorem,
we obtain
\begin{equation*}
\begin{aligned}
    \lVert \vect g \rVert_{H^{1/2}(\Gamma;\C^3)} \leq C\bigl\lVert \Pi^{\rm stiff}_\chi \vect g\bigr\rVert_{H^1(Y_{\rm stiff};\C^3)} & \leq C\left\lVert\left(\simgrad +{\rm i}X_\chi  \right) \Pi_\chi^{\rm stiff} \vect  g \right\rVert_{L^2(Y_{\rm stiff};\C^{3 \times 3})} + C\left\lVert \vect g \right\rVert_{L^2(\Gamma;\C^3)} \\[0.25em]
    & \leq C\left\lVert\left(\simgrad +{\rm i}X_\chi  \right) \Pi_\chi^{\rm stiff} \vect  g \right\rVert_{L^2(Y_{\rm stiff};\C^{3 \times 3})}.
    \end{aligned}
\end{equation*}
Finally, \eqref{part1} is 
is obtained by plugging $\vect u = \Pi_\chi^{\rm stiff}{\vect g}$ into the second estimate of Proposition \ref{nakk112}. 
\end{proof}

\subsection{Smallest Steklov eigenvalues}

\label{Steklov_eigenvalue_sec}

The operator $\Lambda_\chi^{\rm stiff}$ on $L^2(\Gamma;\C^3)$ has discrete spectrum, due to the compactness of its resolvent established above. 
The eigenvalues $\nu^\chi$ of $\Lambda_\chi^{\rm stiff}$  are equivalently characterised as solutions to either of the following two problems:
\begin{equation*}
    \Lambda_\chi^{\rm stiff} \vect g = \nu^\chi \vect g, 
    \qquad \vect g \in \mathcal{D}(\Lambda_\chi^{\rm stiff})\setminus\{0\},\qquad\qquad 
 \left\{ \begin{array}{ll}
         \mathcal{A}_{\chi}^{\rm stiff} \vect u = 0, \\[0.4em]
         \Gamma_{1,\chi}^{\rm stiff}  \vect u = \nu^\chi \Gamma_{0,\chi}^{\rm stiff}  \vect u, 
         \qquad \vect u \in \Pi_\chi^{\rm stiff}\mathcal{D}(\Lambda_\chi^{\rm stiff})\setminus\{0\}. \end{array} \right.
\end{equation*}
Next, we define the Rayleigh quotient associated with $\Lambda_\chi^{\rm stiff}$, namely
\begin{equation*}
    \mathcal{R}_\chi(\vect g):= \frac{\lambda_\chi^{\rm stiff}(\vect g,  \vect g)}{\left\lVert \vect g \right\rVert_{L^2(\Gamma;\C^3)}^2}, \quad \vect g\in H^{1/2}(\Gamma;\C^3),
\end{equation*}
where $\lambda_\chi^{\rm stiff}$ is defined by \eqref{lambda_chi_form_def}. The sequence $(\nu_n^\chi)_{n \in \N}$ 
is characterised by the min-max principle
\begin{equation}
	\label{nakk100} 
    -\nu_n^\chi = \min_{\substack{\mathcal{G} \subset H^{1/2}(\Gamma;\C^3)\\ \dim \mathcal{G}=n}} \max_{\vect g\in \mathcal{ G}} \mathcal{R}_\chi(\vect g), \qquad n\in \N.
\end{equation}
The following lemma quantifies the behaviour of the smallest eigenvalues.
\begin{lemma}
\label{Rayleighestimscalar}
There exist constants $C_1 > C_2 > 0$ such that
\begin{itemize}
\item $\mathcal{R}_\chi(\vect g) \geq C_2|\chi|^2\qquad \forall \vect g \in H^{1/2}(\Gamma;\C^3),$
\item $\mathcal{R}_\chi(\vect g) \leq C_1|\chi|^2\qquad \forall \vect g \in \C^3,$
\item $\mathcal{R}_\chi(\vect g) \geq C_2\qquad \forall \vect g \in (\C^3)^\perp.$
\end{itemize}
\begin{proof} 
The proof of the first and third points is a direct consequence of Proposition \ref{lemmascalar1}. The second point is verified by a direct computation. 	
\end{proof} 	

\end{lemma}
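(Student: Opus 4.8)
The quantity to estimate is $\mathcal{R}_\chi(\vect g)=\lambda_\chi^{\rm stiff}(\vect g,\vect g)\big/\lVert\vect g\rVert_{L^2(\Gamma;\C^3)}^2$, where by \eqref{lambda_chi_form_def} one has $\lambda_\chi^{\rm stiff}(\vect g,\vect g)=a_\chi^{\rm stiff}(\Pi_\chi^{\rm stiff}\vect g,\Pi_\chi^{\rm stiff}\vect g)$. The pointwise positive-definiteness in Assumption \ref{coffassumption} (together with the material symmetry, which makes the integrand real) gives the comparison $\nu\bigl\lVert(\simgrad+{\rm i}X_\chi)\Pi_\chi^{\rm stiff}\vect g\bigr\rVert_{L^2(Y_{\rm stiff};\C^{3\times3})}^2\le\lambda_\chi^{\rm stiff}(\vect g,\vect g)\le\nu^{-1}\bigl\lVert(\simgrad+{\rm i}X_\chi)\Pi_\chi^{\rm stiff}\vect g\bigr\rVert_{L^2(Y_{\rm stiff};\C^{3\times3})}^2$, which is the bridge to Proposition \ref{lemmascalar1}; throughout I also use $\lVert\vect g\rVert_{L^2(\Gamma;\C^3)}\le\lVert\vect g\rVert_{H^{1/2}(\Gamma;\C^3)}$.

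\emph{Lower bounds (first and third points).} Both follow from Proposition \ref{lemmascalar1} together with the coercivity comparison. For the first point, estimate \eqref{estim1} gives $\lVert\vect g\rVert_{H^{1/2}(\Gamma;\C^3)}\le C|\chi|^{-1}\bigl\lVert(\simgrad+{\rm i}X_\chi)\Pi_\chi^{\rm stiff}\vect g\bigr\rVert_{L^2}$; squaring, inserting the coercivity bound, and using $\lVert\vect g\rVert_{L^2(\Gamma;\C^3)}\le\lVert\vect g\rVert_{H^{1/2}(\Gamma;\C^3)}$ yields $|\chi|^2\lVert\vect g\rVert_{L^2(\Gamma;\C^3)}^2\le C'\lambda_\chi^{\rm stiff}(\vect g,\vect g)$, i.e. $\mathcal{R}_\chi(\vect g)\ge(C')^{-1}|\chi|^2$. (Alternatively, Proposition \ref{generaldtn} gives this at once.) For the third point, for $\vect g\perp\C^3$ estimate \eqref{estim2} gives $\lVert\vect g\rVert_{H^{1/2}(\Gamma;\C^3)}\le C\bigl\lVert(\simgrad+{\rm i}X_\chi)\Pi_\chi^{\rm stiff}\vect g\bigr\rVert_{L^2}$, whence as before $\lVert\vect g\rVert_{L^2(\Gamma;\C^3)}^2\le C''\lambda_\chi^{\rm stiff}(\vect g,\vect g)$, i.e. $\mathcal{R}_\chi(\vect g)\ge(C'')^{-1}$. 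Taking $C_2$ to be the smaller of the two constants so obtained covers both assertions.

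\emph{Upper bound (second point).} The key observation is that $\Pi_\chi^{\rm stiff}\vect g$ is an energy minimiser: the weak form of \eqref{boundaryvalueproblemsoncomponents} is exactly the Euler--Lagrange equation for $\min\{a_\chi^{\rm stiff}(\vect w,\vect w):\vect w\in H^1_\#(Y_{\rm stiff};\C^3),\ \vect w|_\Gamma=\vect g\}$, a functional coercive on that affine set by Assumption \ref{coffassumption} and Korn's inequality (Proposition \ref{josipapp1}), so $\lambda_\chi^{\rm stiff}(\vect g,\vect g)$ equals that minimum. For $\vect g\equiv\vect c\in\C^3$ I would take the competitor $\vect w\equiv\vect c$ (the constant function, which is $Y$-periodic and equals $\vect g$ on $\Gamma$): then $\simgrad\vect w=0$, so
\begin{equation*}
\lambda_\chi^{\rm stiff}(\vect g,\vect g)\le a_\chi^{\rm stiff}(\vect w,\vect w)=\int_{Y_{\rm stiff}}\A_{\rm stiff}(X_\chi\vect c):\overline{(X_\chi\vect c)}\le\nu^{-1}\lVert X_\chi\vect c\rVert_{L^2(Y;\C^{3\times3})}^2\le C|\chi|^2|\vect c|^2,
\end{equation*}
the last inequality by the upper bound in \eqref{Xchi_bounds} applied to the constant function $\vect c\in L^2(Y;\C^3)$ (for which $\lVert\vect c\rVert_{L^2(Y;\C^3)}=|\vect c|$). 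Since $\lVert\vect g\rVert_{L^2(\Gamma;\C^3)}^2=|\Gamma|\,|\vect c|^2$, this gives $\mathcal{R}_\chi(\vect g)\le C_1|\chi|^2$ with $C_1=C/|\Gamma|$. Finally, replacing $C_2$ by $\min(C_2,C_1/2)$ if necessary ensures $C_1>C_2>0$.

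\emph{Main obstacle.} There is no genuine obstacle: all three estimates reduce to Proposition \ref{lemmascalar1} / Proposition \ref{generaldtn} plus the pointwise ellipticity bound and, for the upper bound, the trivial constant competitor. The only step that needs a moment's thought is recognising the energy-minimisation characterisation of $\Pi_\chi^{\rm stiff}$, after which the argument is elementary and manifestly $\chi$-uniform.
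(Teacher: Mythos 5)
Your argument is correct and follows the paper's own route: the first and third points are exactly the consequence of Proposition \ref{lemmascalar1} (estimates \eqref{estim1}, \eqref{estim2}) combined with the uniform ellipticity of $\A_{\rm stiff}$, which is all the paper says. For the second point the paper only states ``direct computation''; your energy-minimisation characterisation of $\Pi_\chi^{\rm stiff}$ with the constant competitor $\vect w\equiv\vect c$ and the upper bound in \eqref{Xchi_bounds} is precisely that computation, carried out correctly and uniformly in $\chi$.
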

The asymptotics of eigenvalues with respect to $|\chi|$ is given in the following lemma.
\begin{lemma}
\label{lemmasteklovorder}
The three smallest eigenvalues of $\Lambda_\chi^{\rm stiff}$ are of order $\mathcal{O}(|\chi|^2),$ and the remaining eigenvalues uniformly separated from zero. Namely, there exist constants $c_1, c_2>0$ that do not depend on $\chi$ such that
\begin{equation*}
        c_1 |\chi|^2 \leq -\nu_n^\chi \leq c_2 |\chi|^2,  \quad n = 1,2,3, \qquad 
        c_1 \leq -\nu_n^\chi,  \quad n\ge4.
\end{equation*}
\end{lemma}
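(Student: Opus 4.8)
The plan is to read off both assertions directly from the min-max formula \eqref{nakk100} together with the three pointwise bounds on the Rayleigh quotient $\mathcal{R}_\chi$ recorded in Lemma \ref{Rayleighestimscalar}; no further analysis of the boundary value problem is required. Consequently I do not anticipate a genuine obstacle --- the only points requiring a moment's care are the monotonicity of $n\mapsto-\nu_n^\chi$ and an elementary codimension count.

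First I would establish the universal lower bound. Since $\mathcal{R}_\chi(\vect g)\geq C_2|\chi|^2$ for \emph{every} $\vect g\in H^{1/2}(\Gamma;\C^3)$, the maximum of $\mathcal{R}_\chi$ over any finite-dimensional subspace is at least $C_2|\chi|^2$, whence \eqref{nakk100} gives $-\nu_n^\chi\geq C_2|\chi|^2$ for all $n\in\N$. This already yields the left-hand inequality for $n=1,2,3$ upon setting $c_1:=C_2$.

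Next, for the upper bound on the three smallest eigenvalues I would use the $3$-dimensional trial space $\mathcal{G}=\C^3$ of constant functions on $\Gamma$. The second bullet of Lemma \ref{Rayleighestimscalar} gives $\max_{\vect g\in\C^3}\mathcal{R}_\chi(\vect g)\leq C_1|\chi|^2$, so \eqref{nakk100} with $n=3$ produces $-\nu_3^\chi\leq C_1|\chi|^2$; since $n\mapsto-\nu_n^\chi$ is non-decreasing (these being the eigenvalues of the non-negative operator $-\Lambda_\chi^{\rm stiff}$ listed in increasing order), the same bound holds for $n=1,2$. Thus $-\nu_n^\chi\leq c_2|\chi|^2$ for $n=1,2,3$ with $c_2:=C_1$.

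Finally, for the uniform separation of the eigenvalues indexed $n\geq4$, I would exploit that $(\C^3)^\perp$, the $L^2(\Gamma;\C^3)$-orthogonal complement of the constants, has codimension $3$: the linear map sending $\vect g$ to $|\Gamma|^{-1}\int_\Gamma\vect g\in\C^3$, restricted to any $4$-dimensional subspace $\mathcal{G}\subset H^{1/2}(\Gamma;\C^3)$, has nontrivial kernel, so $\mathcal{G}$ contains a nonzero element $\vect g\perp\C^3$. For that $\vect g$ the third bullet of Lemma \ref{Rayleighestimscalar} gives $\mathcal{R}_\chi(\vect g)\geq C_2$, hence $\max_{\vect g\in\mathcal{G}}\mathcal{R}_\chi(\vect g)\geq C_2$; minimising over all such $\mathcal{G}$ in \eqref{nakk100} yields $-\nu_4^\chi\geq C_2$, and monotonicity in $n$ extends this to every $n\geq4$. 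Taking $c_1=C_2$ (consistent with the earlier choice, and note $c_1=C_2<C_1=c_2$ by Lemma \ref{Rayleighestimscalar}) completes the proof.
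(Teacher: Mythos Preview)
Your proof is correct and follows exactly the approach the paper indicates: the paper's own proof consists of the single sentence ``The proof is a direct consequence of \eqref{nakk100} and Lemma \ref{Rayleighestimscalar},'' and you have simply unpacked that implication carefully, including the codimension argument needed for $n\ge4$.
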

\begin{proof} 
The proof is a direct consequence of \eqref{nakk100} and Lemma \ref{Rayleighestimscalar}. 
\end{proof}	

In what follows, we refer to $\nu_n^\chi,$ $n=1,2,3,$
as $\mathcal{O}(|\chi|^2)$ eigenvalues and to $\nu_n^\chi,$ $n\ge4,$ as $\mathcal{O}(1)$ eigenvalues. Consider the decomposition
\begin{equation}
\label{steklovdecomposition}
    \mathcal{E} := \widehat{\mathcal{E}}_\chi  \oplus \widecheck{\mathcal{E}}_\chi =  \widehat{P}_\chi \mathcal{E} \oplus \widecheck{P}_\chi \mathcal{E},
\end{equation}
where $\widehat{P}_\chi$ is the orthogonal projection onto the
 three-dimensional space $\widehat{\mathcal{E}}_\chi < \mathcal{E}$ spanned by the eigenfunctions associated with order $\mathcal{O}(|\chi|^2)$ eigenvalues of $\Lambda^{\rm stiff}_\chi,$ and $\widecheck{P}_\chi$ is the orthogonal projection onto $\widecheck{\mathcal{E}}_\chi < \mathcal{E}$, the infinite-dimensional space spanned by the eigenfunctions associated with order $\mathcal{O}(1)$ eigenvalues of $\Lambda_\chi^{\rm stiff}$, so that
    $\widehat{P}_\chi = I - \widecheck{P}_\chi.$
Since the decomposition \eqref{steklovdecomposition} is spectral for $\Lambda_\chi^{\rm stiff}$, we have
\begin{equation*}
    \Lambda_\chi^{\rm stiff} = \begin{bmatrix}
    \widehat{\Lambda}_\chi^{\rm stiff} & 0 \\[0.3em]
    0 & \widecheck{\Lambda}_\chi^{\rm stiff} 
\end{bmatrix},
\end{equation*}
where
\begin{equation}
\label{steklovtruncation1}
    \widehat{\Lambda}_\chi^{\rm stiff}:= \widehat{P}_\chi\Lambda_\chi^{\rm stiff}|_{\widehat{\mathcal{E}}_\chi}, \quad \widecheck{\Lambda}_\chi^{\rm stiff}:= \widecheck{P}_\chi\Lambda_\chi^{\rm stiff}|_{\widecheck{\mathcal{E}}_\chi}.
\end{equation}
Both $\widehat{\Lambda}_\chi^{\rm stiff}$ and $\widecheck{\Lambda}_\chi^{\rm stiff}$ are self-adjoint operators on $\widehat{\mathcal{E}}_\chi$ and $\widecheck{\mathcal{E}}_\chi,$ respectively, since $\widehat{P}$ is a spectral projection for $\Lambda_\chi^{\rm stiff}$.   The operator $\widehat{\Lambda}_\chi^{\rm stiff}$ is clearly bounded since it is finite-rank. Note also that the domain of the second operator is a subset of $H^1(\Gamma;\mathbb{C}^3)$, see Section \ref{DtN_sec}.
Furthermore, due to Lemma \ref{lemmasteklovorder} we have the uniform bound
    $\lVert \widehat{\Lambda}_\chi^{\rm stiff}\rVert_{\mathcal{E} \to \mathcal{E}} \leq C|\chi|^2.$
On the other hand, the same lemma implies that the operator $\widecheck{\Lambda}_\chi^{\rm stiff}$, while unbounded, is uniformly bounded from below, where the estimate does not depend on $|\chi|$, namely
    $\lVert \bigl( \widecheck{\Lambda}_\chi^{\rm stiff} \bigr)^{-1}\rVert_{\mathcal{E} \to \mathcal{E}} \leq C.$
Moreover, $(\widecheck{\Lambda}_\chi^{\rm stiff})^{-1}$ is compact and 
 \begin{equation} 
 	\label{nakk204} 
 \bigl\lVert \bigl(\widecheck{\Lambda}_\chi^{\rm stiff} \bigr)^{-1} \vect{g} \bigr\rVert_{H^1(\Gamma;\C^3)} \leq C\|\vect g\|_{L^2(\Gamma;\C^3)} \qquad \forall \vect{g} \in L^2(\Gamma;\C^3),
\end{equation} 
where the constant $C$ is independent of $\chi.$

\subsection{Asymptotics of $\bigl(|\chi|^{-2}\Lambda_\chi^{\rm stiff} - I\bigr)^{-1}$} 
\label{AsymDtN}

While the results of this section are not  necessary for the proof of Theorem \ref{thmmain1}, they are essential in the proof of Theorem \ref{thmamin2}, see Sections \ref{sectionopeff}, \ref{sectionopeffstiff}.
The main tool for proving the latter theorem is finding an approximating homogenised operator by developing an asymptotics of the DtN map using its variational definition (see \eqref{homdefinitionshort}). Note that for that we cannot follow the approach of \cite{CherErKis}, since for PDE systems one cannot expand eigenfunctions or eigenvalues with respect to the  quasimomentum $\chi,$ see, e.g., \cite[Example 5.12]{Kato}. Instead, we construct an asymptotics for the resolvent of the DtN map; in Sections \ref{sectionopeff}, \ref{sectionopeffstiff} this suffices to prove Theorem \ref{thmamin2}. Note also that the variational definition of the approximating operator \eqref{homdefinitionshort} implies its characterisation in terms of $\mathbb{A}_{\rm macro},$ see Lemma \ref{josipnak1}.

We calculate the estimates on the distance between the resolvents of $-|\chi|^{-2}\Lambda_\chi^{\rm stiff}$ and  $-|\chi|^{-2}\Lambda_\chi^{\rm hom}$, where the latter plays the r\^{o}le of the effective DtN map and is introduced below. In Corollary \ref{epsilonnormestimates} we use this to obtain the approximation error with respect to the resolvents of $\varepsilon^2$-scaled operators. A similar approach (in a different context) was used in \cite{CV,CVZ}. 

Recall that the lift operator $\Pi_\chi^{\rm stiff}:H^{1/2}(\Gamma;\C^3) \to H_\#^1(Y_{\rm stiff};\C^3)$ admits the decomposition
\begin{equation*}
    \Pi_\chi^{\rm stiff} = \Pi_0^{\rm stiff} + \widetilde{\Pi}_{\chi,1} + \widetilde{\Pi}_{\chi,1}^{\rm error}
\end{equation*}
in the sense of Theorem \ref{Piasymptheorem}, where the dependence of the operator  $\widetilde{\Pi}_{\chi,1}$ 
on the parameter $\chi$ is linear. The error term $\widetilde{\Pi}_{\chi,1}^{\rm error}$ satisfies the bound (see \eqref{Pi_error})
    $\lVert \widetilde{\Pi}_{\chi,1}^{\rm error}\rVert_{H^{1/2}(\Gamma;\C^3) \to H_\#^1(Y_{\rm stiff};\C^3)} \leq C|\chi|^2.$
For each $\chi\in Y',$ consider the expression $\Psi_\chi:={\rm i}X_\chi  \Pi_0^{\rm stiff}  + \simgrad  \widetilde{\Pi}_{\chi,1}.$
We define the homogenised operator to be a constant matrix $\Lambda^{\rm hom}_\chi \in \C^{3 \times 3}$ such that
\begin{equation}
	\label{homdefinitionshort}
	\left\langle-\Lambda^{\rm hom}_\chi \vect c,\vect d \right\rangle_{\C^3} := \frac{1}{|\Gamma|}\int_{Y_{\rm stiff}} \A_{\rm stiff}\left[  \simgrad  \Pi_0^{\rm stiff} \vect c_{\rm corr}  +  \Psi_\chi \vect c\right] : \overline{ \Psi_\chi\vect d}\qquad \forall \vect c,\vect d \in \C^3,
\end{equation}
where $\vect c_{\rm corr} \in H_\#^1(Y_{\rm stiff};\C^3)$ is the unique solution with $\int_{Y_{\rm stiff}}{\vect c}_{\rm corr}=0$ to the cell problem
\begin{equation}
\label{focorr2}
    \int_{Y_{\rm stiff}} \A_{\rm stiff}\left[  \simgrad  \Pi_0^{\rm stiff} \vect c_{\rm corr}+\Psi_\chi\vect c
    \right]: \overline{ \simgrad   \Pi_0^{\rm stiff}\vect v} = 0 \qquad \forall\vect v \in H^{1/2}(\Gamma;\C^3).
\end{equation}
In next lemma we provide important properties of the matrix  $\Lambda_\chi^{\rm hom}$.    

\begin{lemma}\label{josipnak1} 
    The matrix $\Lambda_\chi^{\rm hom} $ is quadratic in $\chi,$ in particular 
    $\Lambda_\chi^{\rm hom}=-|\Gamma|^{-1}\BBB\left({\rm i}X_\chi \right)^*\A_{\rm macro}{\rm i}X_\chi,$
where $\A_{\rm macro}$ 
is the constant symmetric tensor defined by \eqref{macrodefinitionequation}. As a consequence, $|\chi|^{-2}\Lambda_\chi^{\rm hom}$ is positive and bounded uniformly in $\chi.$ 
\end{lemma}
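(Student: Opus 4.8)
The plan is to identify the matrix $\Lambda_\chi^{\rm hom}$ via a change of variables in the cell problem that reduces it to the classical corrector problem of \eqref{macrodefinitionequation}. First I would observe that $\Psi_\chi = {\rm i}X_\chi\Pi_0^{\rm stiff} + \simgrad\widetilde\Pi_{\chi,1}$ applied to a constant vector $\vect c \in \C^3$ has a convenient structure: by Remark \ref{pi0const}, $\Pi_0^{\rm stiff}\vect c$ is the constant function $\equiv\vect c$ on $Y_{\rm stiff}$, so ${\rm i}X_\chi\Pi_0^{\rm stiff}\vect c = {\rm i}X_\chi\vect c = {\rm i}\,\vect c\odot\chi =: \vect\xi$, a constant symmetric matrix depending linearly on $\chi$ (and linearly on $\vect c$). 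Meanwhile, by \eqref{pichi1formula}, the function $\vect w := \widetilde\Pi_{\chi,1}\vect c \in H^1_{\#}(Y_{\rm stiff};\C^3)$ (vanishing on $\Gamma$ after subtracting its trace — but for constant $\vect c$ it satisfies the homogeneous Dirichlet condition $\vect w|_\Gamma = 0$ up to the ambiguity absorbed into $\vect c_{\rm corr}$) solves $\int_{Y_{\rm stiff}}\A_{\rm stiff}(\simgrad\vect w + \vect\xi):\simgrad\vect v = 0$ for all periodic $\vect v$ vanishing on $\Gamma$.

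Next I would combine $\vect w$ with the corrector $\vect c_{\rm corr}$ from \eqref{focorr2}. The key point is that $\Pi_0^{\rm stiff}$ maps onto precisely the $\A_{\rm stiff}$-harmonic extensions (see \eqref{pi0definition}), so that the test functions $\simgrad\Pi_0^{\rm stiff}\vect v$ in \eqref{focorr2} range over the same space relevant to the corrector equation. Setting $\vect U := \vect c_{\rm corr} + \widetilde\Pi_{\chi,1}\vect c$ (a periodic $H^1$ function on $Y_{\rm stiff}$), equation \eqref{focorr2} together with \eqref{pichi1formula} shows that $\vect U$ solves
\begin{equation*}
\int_{Y_{\rm stiff}}\A_{\rm stiff}\bigl(\simgrad\vect U + \vect\xi\bigr):\simgrad\vect v = 0 \qquad \forall \vect v \in H^1_{\#}(Y_{\rm stiff};\C^3),
\end{equation*}
i.e. $\vect U = \vect u_{\vect\xi}$ is exactly the solution to the corrector problem defining $\A_{\rm macro}$ in \eqref{macrodefinitionequation} with data $\vect\xi = {\rm i}\,\vect c\odot\chi$. (Here one uses that both \eqref{pi0definition} and \eqref{pichi1formula} hold against the same class of test functions, and that the constant $\vect c_{\rm corr}$ has zero trace ambiguity; the normalization $\int \vect c_{\rm corr} = 0$ fixes $\vect U$ uniquely.) Substituting back into \eqref{homdefinitionshort}, and noting $\simgrad\Pi_0^{\rm stiff}\vect c_{\rm corr} + \Psi_\chi\vect c = \simgrad\vect U + \vect\xi$ and $\Psi_\chi\vect d = \simgrad\widetilde\Pi_{\chi,1}\vect d + {\rm i}X_\chi\vect d = \simgrad(\vect u_{\vect\eta} - \vect d_{\rm corr}) + \vect\eta$ with $\vect\eta = {\rm i}\,\vect d\odot\chi$, and using the Galerkin orthogonality (the term $\simgrad\vect d_{\rm corr}$ drops out since $\simgrad\Pi_0^{\rm stiff}$ test functions are orthogonal to $\simgrad\vect U + \vect\xi$ against $\A_{\rm stiff}$), I obtain
\begin{equation*}
\bigl\langle -\Lambda^{\rm hom}_\chi\vect c,\vect d\bigr\rangle_{\C^3} = \frac{1}{|\Gamma|}\int_{Y_{\rm stiff}}\A_{\rm stiff}\bigl(\simgrad\vect u_{\vect\xi} + \vect\xi\bigr):\bigl(\simgrad\vect u_{\vect\eta} + \vect\eta\bigr) = \frac{1}{|\Gamma|}\A_{\rm macro}\vect\xi:\vect\eta = \frac{1}{|\Gamma|}\A_{\rm macro}({\rm i}X_\chi\vect c):\overline{({\rm i}X_\chi\vect d)},
\end{equation*}
which is precisely the claimed identity $\Lambda_\chi^{\rm hom} = -|\Gamma|^{-1}({\rm i}X_\chi)^*\A_{\rm macro}\,{\rm i}X_\chi$. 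Since $X_\chi$ depends linearly on $\chi$, the right-hand side is quadratic in $\chi$; positivity of $\A_{\rm macro}$ (Lemma \ref{prop_lemma}) together with the two-sided bound \eqref{Xchi_bounds} on $\|X_\chi\vect c\|$ — more precisely, the fact that for constant $\vect c$ one has $|X_\chi\vect c|^2 \geq c|\chi|^2|\vect c|^2$ pointwise for a suitable $c>0$ — yields $\nu|\chi|^2|\vect c|^2 \leq \langle -\Lambda_\chi^{\rm hom}\vect c,\vect c\rangle \leq \nu^{-1}|\chi|^2|\vect c|^2$, so that $|\chi|^{-2}\Lambda_\chi^{\rm hom}$ is negative definite with uniform bounds, as asserted.

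The main obstacle I anticipate is the careful bookkeeping of trace conditions and the exact identification of which function plays the role of $\vect u_{\vect\xi}$: the operator $\widetilde\Pi_{\chi,1}\vect c$ takes values in $H^1_{\#}(Y_{\rm stiff})$ but \eqref{pichi1formula} is stated for test functions vanishing on $\Gamma$, whereas \eqref{focorr2} uses test functions of the form $\simgrad\Pi_0^{\rm stiff}\vect v$ which are $\A_{\rm stiff}$-harmonic — one must verify these two families of test functions together span $\simgrad H^1_{\#}(Y_{\rm stiff})$, so that $\vect c_{\rm corr} + \widetilde\Pi_{\chi,1}\vect c$ indeed satisfies the corrector equation against \emph{all} periodic test functions. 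This is a standard orthogonal-decomposition argument ($H^1_{\#}(Y_{\rm stiff}) = \{$functions vanishing on $\Gamma\} \oplus \{\A_{\rm stiff}$-harmonic extensions$\}$ in the appropriate inner product), but it needs to be spelled out; once that is in place the rest is a direct computation. I would also need to double-check the sign and conjugation conventions in passing between the sesquilinear pairing $\langle\cdot,\cdot\rangle_{\C^3}$ and the Frobenius pairing $:\overline{(\cdot)}$, but these are routine.
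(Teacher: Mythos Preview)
Your proposal is correct and follows essentially the same route as the paper: you identify $\Pi_0^{\rm stiff}\vect c_{\rm corr} + \widetilde\Pi_{\chi,1}\vect c$ with the corrector $\vect u_{\vect\xi}$ for $\vect\xi = {\rm i}\,\vect c\odot\chi$, and the decomposition of test functions in $H^1_\#(Y_{\rm stiff})$ into those vanishing on $\Gamma$ and $\A_{\rm stiff}$-harmonic lifts (your anticipated ``main obstacle'') is exactly what the paper invokes to pass from \eqref{focorr2} and \eqref{pichi1formula} to the full corrector equation. One small clean-up: write $\vect U = \Pi_0^{\rm stiff}\vect c_{\rm corr} + \widetilde\Pi_{\chi,1}\vect c$ rather than $\vect c_{\rm corr} + \widetilde\Pi_{\chi,1}\vect c$, since it is $\simgrad\Pi_0^{\rm stiff}\vect c_{\rm corr}$ that appears in \eqref{homdefinitionshort}; and your Galerkin-orthogonality remark is precisely what lets you replace $\Psi_\chi\vect d$ by the symmetric expression $\simgrad\vect u_{\vect\eta} + \vect\eta$ in the second slot.
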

\begin{proof}
    Note first that for $\vect c \in \C^3$ the solution $\vect c_{\rm corr}$ to \eqref{focorr2} satisfies
\begin{equation}
\label{expression1}
    \int_{Y_{\rm stiff}} \A_{\rm stiff}\left[\simgrad  \Pi_0^{\rm stiff} \vect c_{\rm corr}+\Psi_\chi\vect c
    \right]:\overline{ \simgrad   \vect v} = 0 \qquad \forall\vect v \in H_\#^1(Y_{\rm stiff};\C^3).
\end{equation}
This is seen by noting that for an arbitrary $\vect v \in  H_\#^1(Y_{\rm stiff};\C^3)$ one has the decomposition
\begin{equation*} 
    \vect v = \Pi_0^{\rm stiff} \vect h + \vect w,  \qquad \vect h \in H^{1/2}(\Gamma;\C^3), \qquad \vect w \in H_\#^1(Y_{\rm stiff};\C^3), \quad \vect w\vert_\Gamma= 0.
\end{equation*}
The identity \eqref{expression1} when  $\vect v=\Pi_0^{\rm stiff} \vect h$, $\vect h \in H^{1/2}(\Gamma;\C^3)$ is stated in \eqref{focorr2}, while for  $\vect v=\vect w \in H_\#^1(Y_{\rm stiff};\C^3),$ $\vect w\vert_\Gamma=0$ is covered by \eqref{pi0definition} and \eqref{pichi1formula}. 

For arbitrary $\vect c^1, \vect c^2 \in \C^3,$ define $\vect c^j_{\rm corr},$ $j=1,2,$ as in (\ref{focorr2}) and introduce the notation $\vect u_{\rm corr}^{j}:= \Pi_0^{\rm stiff} \vect c_{\rm corr}^j + \widetilde{\Pi}_{\chi,1} \vect c^j$, $j=1,2.$ Invoking \eqref{expression1}, we obtain
\begin{equation*}
    \int_{Y_{\rm stiff}} \A_{\rm stiff}\bigl[  \simgrad  \vect u_{\rm corr}^{j}  +{\rm i}X_\chi  \Pi_0^{\rm stiff} \vect c^j \bigr] : \overline{ \simgrad   \vect v} = 0 \quad \forall\vect v \in H_\#^1(Y_{\rm stiff};\C^3), \quad j=1,2,
\end{equation*}
as well as
\begin{equation*}
\begin{aligned}
        \left\langle-\Lambda^{\rm hom}_\chi \vect c^1,\vect c^2 \right\rangle_{\C^3} & := \frac{1}{|\Gamma|}\int_{Y_{\rm stiff}} \A_{\rm stiff}\left[  \simgrad  \vect u_{\rm corr}^1  +{\rm i}X_\chi  \Pi_0^{\rm stiff} \vect c^1 \right] : \overline{ \left[\simgrad  \vect u_{\rm corr}^2  +{\rm i}X_\chi  \Pi_0^{\rm stiff} \vect c^2 \right]} \\[0.3em]
                & = \frac{1}{|\Gamma|}\int_{Y_{\rm stiff}} \A_{\rm stiff}\left[  \simgrad  \vect u_{\rm corr}^1  +{\rm i}X_\chi   \vect c^1 \right] : \overline{ \left[\simgrad  \vect u_{\rm corr}^2  + {\rm i}X_\chi  \vect c^2 \right]} 
        = \frac{1}{|\Gamma|}\A_{\rm macro}{\rm i}X_\chi \vect c^1 : \overline{{\rm i}X_\chi \vect c^2},
\end{aligned}
\end{equation*} 
where we have employed the definition of the macroscopic tensor \eqref{macrodefinitionequation}. Using Lemma \ref{prop_lemma} completes the proof.
\end{proof}

Next, we state a theorem on the norm-resolvent estimates of DtN maps.
\begin{theorem}
\label{dirichletotneumannresolventasymptotics}
 There exists a constant $C>0$, which does not depend on $\chi$, such that the following norm-resolvent estimate holds:
 \begin{equation*}
    \left\lVert \left(|\chi|^{-2}\Lambda_\chi^{\rm stiff} - I \right)^{-1} - \left(|\chi|^{-2}\Lambda_\chi^{\rm hom}-I \right)^{-1}S  \right\rVert_{L^2(\Gamma;\C^3) \to H^{1/2}(\Gamma;\C^3)} \leq C |\chi|.
\end{equation*}
Here $S: \vect{h}\mapsto |\Gamma|^{-1}\int_\Gamma\vect{h}$ is the averaging operator on $\Gamma,$ 
and $\Lambda_\chi^{\rm hom}$ is the effective operator defined by \eqref{homdefinitionshort}.
\end{theorem}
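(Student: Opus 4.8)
Throughout, $\chi\in Y'\setminus\{0\}$. The plan is to establish the estimate by the energy method, comparing the two resolvents applied to a generic $\vect h\in L^2(\Gamma;\C^3)$. Set $\vect u:=\Pi_\chi^{\rm stiff}\bigl((|\chi|^{-2}\Lambda_\chi^{\rm stiff}-I)^{-1}\vect h\bigr)$; from the definition of $\Lambda_\chi^{\rm stiff}$ through the form $\lambda_\chi^{\rm stiff}$ (see \eqref{bilinearformdtnmap}, \eqref{lambda_chi_form_def}) and $(\simgrad+{\rm i}X_\chi)$-harmonicity of $\Pi_\chi^{\rm stiff}$, the function $\vect u\in H_\#^1(Y_{\rm stiff};\C^3)$ is the unique solution of the Robin-type problem
\begin{equation*}
B_\chi(\vect u,\vect v):=a_\chi^{\rm stiff}(\vect u,\vect v)+|\chi|^2\langle\vect u|_\Gamma,\vect v|_\Gamma\rangle_{L^2(\Gamma)}=-|\chi|^2\langle\vect h,\vect v|_\Gamma\rangle_{L^2(\Gamma)}\qquad\forall\vect v\in H_\#^1(Y_{\rm stiff};\C^3).
\end{equation*}
On the homogenised side, let $\vect c^*:=(|\chi|^{-2}\Lambda_\chi^{\rm hom}-I)^{-1}S\vect h\in\C^3$ ($|\vect c^*|\le C\|\vect h\|_{L^2}$ by Lemma \ref{josipnak1}), let $\vect c^*_{\rm corr}$ be the corrector of \eqref{focorr2} with $\vect c=\vect c^*$, and put
\begin{equation*}
\vect u^{\rm app}:=\vect c^*+\Pi_0^{\rm stiff}\vect c^*_{\rm corr}+\widetilde\Pi_{\chi,1}\vect c^*\in H_\#^1(Y_{\rm stiff};\C^3),
\end{equation*}
with $\widetilde\Pi_{\chi,1}$ as in Theorem \ref{Piasymptheorem}. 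Using $\Pi_0^{\rm stiff}\vect c^*=\vect c^*$ (Remark \ref{pi0const}), $\widetilde\Pi_{\chi,1}\vect c^*|_\Gamma=0$, the coercivity of the cell problem, \eqref{Xchi_bounds}, \eqref{Pi_error}, and a Poincaré--Korn inequality, one gets $\|\Pi_0^{\rm stiff}\vect c^*_{\rm corr}\|_{H^1}+\|\widetilde\Pi_{\chi,1}\vect c^*\|_{H^1}\le C|\chi|\|\vect h\|_{L^2}$; hence $\vect u^{\rm app}|_\Gamma=\vect c^*+\Pi_0^{\rm stiff}\vect c^*_{\rm corr}|_\Gamma$ and $\|\vect u^{\rm app}|_\Gamma-\vect c^*\|_{H^{1/2}(\Gamma)}\le C|\chi|\|\vect h\|_{L^2}$.

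Next I would estimate the residual $\mathcal R_\chi(\vect v):=B_\chi(\vect u^{\rm app},\vect v)+|\chi|^2\langle\vect h,\vect v|_\Gamma\rangle_{L^2(\Gamma)}$. Writing $(\simgrad+{\rm i}X_\chi)\vect u^{\rm app}=\vect\Xi+\vect r$ with $\vect\Xi:=\simgrad\Pi_0^{\rm stiff}\vect c^*_{\rm corr}+\Psi_\chi\vect c^*$ and $\vect r:={\rm i}X_\chi\bigl(\Pi_0^{\rm stiff}\vect c^*_{\rm corr}+\widetilde\Pi_{\chi,1}\vect c^*\bigr)$, one has $\|\vect\Xi\|_{L^2}\le C|\chi|\|\vect h\|_{L^2}$ (from \eqref{homdefinitionshort}, \eqref{expression1}, \eqref{Xchi_bounds}) and $\|\vect r\|_{L^2}\le C|\chi|^2\|\vect h\|_{L^2}$ (by \eqref{Xchi_bounds}). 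The $\vect r$-term of $\mathcal R_\chi$ is $\le C|\chi|^2\|\vect h\|_{L^2}\|\vect v\|_{B_\chi}$, where $\|\vect v\|_{B_\chi}^2:=B_\chi(\vect v,\vect v)$. The cell identity \eqref{expression1} annihilates $\int_{Y_{\rm stiff}}\A_{\rm stiff}\vect\Xi:\overline{\simgrad\vect v}$, leaving $\int_{Y_{\rm stiff}}\A_{\rm stiff}\vect\Xi:\overline{{\rm i}X_\chi\vect v}$; splitting $\vect v=\vect v_0+\vect v^\perp$ into its mean value $\vect v_0$ over $Y_{\rm stiff}$ and the mean-free remainder, the mean contribution $\int_{Y_{\rm stiff}}\A_{\rm stiff}\vect\Xi:\overline{{\rm i}X_\chi\vect v_0}=\langle-\Lambda_\chi^{\rm hom}\vect c^*,\vect v_0\rangle_{L^2(\Gamma)}$ (by \eqref{homdefinitionshort} and \eqref{expression1}) combines with the $|\chi|^2\langle\vect u^{\rm app}|_\Gamma,\vect v|_\Gamma\rangle$ and $|\chi|^2\langle\vect h,\vect v|_\Gamma\rangle$ terms and, via the homogenised equation $\Lambda_\chi^{\rm hom}\vect c^*-|\chi|^2\vect c^*=|\chi|^2 S\vect h$ and the definition of $S$, collapses to $|\chi|^2\langle\vect c^*+\vect h,\vect v^\perp|_\Gamma\rangle+|\chi|^2\langle\Pi_0^{\rm stiff}\vect c^*_{\rm corr}|_\Gamma,\vect v|_\Gamma\rangle$, each summand being $\le C|\chi|^2\|\vect h\|_{L^2}\|\vect v\|_{B_\chi}$; meanwhile $\int_{Y_{\rm stiff}}\A_{\rm stiff}\vect\Xi:\overline{{\rm i}X_\chi\vect v^\perp}\le C|\chi|\|\vect\Xi\|_{L^2}\|\vect v^\perp\|_{L^2}\le C|\chi|^2\|\vect h\|_{L^2}\|\simgrad\vect v^\perp\|_{L^2}$ by \eqref{Xchi_bounds} and Poincaré's inequality on the (connected) perforated cell. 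Since $\|\simgrad\vect v^\perp\|_{L^2},\ \|\vect v^\perp|_\Gamma\|_{L^2}\le C\|\vect v\|_{B_\chi}$ (a consequence of the degenerate coercivity below), one obtains $|\mathcal R_\chi(\vect v)|\le C|\chi|^2\|\vect h\|_{L^2}\|\vect v\|_{B_\chi}$.

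Finally, $B_\chi(\vect u-\vect u^{\rm app},\vect v)=-\mathcal R_\chi(\vect v)$ for all $\vect v$, so $\|\vect u-\vect u^{\rm app}\|_{B_\chi}\le C|\chi|^2\|\vect h\|_{L^2}$. The decisive ingredient is the $\chi$-uniform \emph{degenerate} coercivity
\begin{equation*}
B_\chi(\vect v,\vect v)\ge c\,|\chi|^2\,\|\vect v\|_{H^1(Y_{\rm stiff};\C^3)}^2\qquad\forall\vect v\in H_\#^1(Y_{\rm stiff};\C^3),
\end{equation*}
which follows from \eqref{estim1}, Proposition \ref{propositionpiestimate}, the trace theorem and the Korn inequalities of Propositions \ref{josipapp1}, \ref{nakk112}. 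It turns the energy bound into $\|\vect u-\vect u^{\rm app}\|_{H^1}\le C|\chi|\|\vect h\|_{L^2}$, hence $\|(\vect u-\vect u^{\rm app})|_\Gamma\|_{H^{1/2}(\Gamma)}\le C|\chi|\|\vect h\|_{L^2}$ by the trace theorem; combined with the first-paragraph bound on $\|\vect u^{\rm app}|_\Gamma-\vect c^*\|_{H^{1/2}(\Gamma)}$ and the identities $\vect u|_\Gamma=(|\chi|^{-2}\Lambda_\chi^{\rm stiff}-I)^{-1}\vect h$, $\vect c^*=(|\chi|^{-2}\Lambda_\chi^{\rm hom}-I)^{-1}S\vect h$, this is precisely the asserted estimate. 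The loss of one power of $|\chi|$ in passing from $\|\cdot\|_{B_\chi}$ to $\|\cdot\|_{H^1}$ is what makes the rate $O(|\chi|)$ rather than $O(|\chi|^2)$. I expect the residual estimate of the second paragraph to be the main obstacle: one must verify that the terms which are a priori only $O(|\chi|)$ relative to $\|\vect v\|_{B_\chi}$ — namely those proportional to $\|\vect v\|_{L^2}$, which for nearly constant $\vect v$ can be as large as $C|\chi|^{-1}\|\vect v\|_{B_\chi}$ — are genuinely absorbed by the homogenised equation (in the constant direction of $\vect v$) and by Poincaré's inequality (on the mean-free part), and that the degenerate coercivity holds with a constant independent of $\chi$.
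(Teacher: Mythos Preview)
Your argument is correct and is, in substance, the same asymptotic-expansion strategy as the paper's, recast from the boundary space $H^{1/2}(\Gamma;\C^3)$ into the bulk space $H_\#^1(Y_{\rm stiff};\C^3)$. The paper writes the resolvent equation on $\Gamma$ (equation \eqref{chiproblem}) with test functions $\Pi_\chi^{\rm stiff}\vect v$, expands the boundary unknown as $\vect g=\vect g_0+\vect g_1+\vect g_2+\vect g_{\rm err}$, uses the Fredholm alternative for the $\vect g_2$-equation to \emph{derive} the homogenised equation \eqref{leadingorderterm}, and closes via \eqref{estim1}. You instead reformulate as a Robin problem on $Y_{\rm stiff}$, insert the bulk ansatz $\vect u^{\rm app}=\vect c^*+\Pi_0^{\rm stiff}\vect c^*_{\rm corr}+\widetilde\Pi_{\chi,1}\vect c^*$ (which is precisely $\Pi_\chi^{\rm stiff}(\vect g_0+\vect g_1)$ expanded to the relevant order), and estimate the residual directly by splitting the test function into its $Y_{\rm stiff}$-mean and mean-free parts, thereby bypassing the explicit corrector $\vect g_2$. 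The cancellation you observe in the constant direction of $\vect v$ is the same mechanism as the paper's Fredholm condition. Your ``degenerate coercivity'' is exactly \eqref{estimate2} (Proposition \ref{estimate0000}); the auxiliary bound $\|\vect v^\perp\|_{H^1}\le C\|\vect v\|_{B_\chi}$ you invoke is not a consequence of that degenerate coercivity alone but of the sharper estimate \eqref{finalestimate1} (Proposition \ref{nakk112}), which you should cite there. With that caveat, every step goes through and the two proofs are equivalent; yours is marginally more streamlined in not introducing $\vect g_2$.
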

\begin{proof}
The proof is by construction, following a version of the standard asymptotic expansion approach. We start with the weak formulation of the resolvent problem for the operator  $|\chi|^{-2}\Lambda_\chi^{\rm stiff}$. For $\vect h \in L^2(\Gamma;\C^3),$ our goal is to expand the solution $\vect g$ of the problem 
\begin{equation}
\label{chiproblem}
    \frac{1}{|\chi|^2} \int_{Y_{\rm stiff}} \A_{\rm stiff} \left( \simgrad + {\rm i}X_\chi \right)\Pi_\chi^{\rm stiff} \vect g  : \overline{\left( \simgrad + {\rm i}X_\chi \right) \Pi_\chi^{\rm stiff} \vect v} + \int_\Gamma \vect g \cdot \overline{\vect v} = \int_\Gamma \vect h \cdot \overline{\vect v} \qquad \forall \vect v\in H^{1/2}(\Gamma;\C^3)
\end{equation}
in the form
   $\vect g = \vect g_0 + \vect g_1 + \vect g_2 + \vect g_{\rm err},$
where  the terms satisfy the bounds
$$\vect g_0 = \mathcal{O}(1), \quad \vect g_1 = \mathcal{O}(|\chi|), \quad \vect g_2 = \mathcal{O}(|\chi|^2), \quad \vect g_{\rm err} = \mathcal{O}(|\chi|^3),$$  with respect to the $H^{1/2}(\Gamma;\C^3)$ norm, and are calculated from a sequence of boundary value problems obtained from \eqref{chiproblem}, the first few of which are introduced below, see \eqref{correctoru2}. By equating the terms in \eqref{chiproblem} which are of order $\mathcal{O}(1)$, we obtain the following identity for the leading-order term $\vect g_0:$
\begin{equation*}
    \int_{Y_{\rm stiff}} \A_{\rm stiff}  \simgrad  \Pi_0^{\rm stiff} \vect g_0  : \overline{ \simgrad   \Pi_0^{\rm stiff}\vect v} = 0 \qquad \forall\vect v \in H^{1/2}(\Gamma;\C^3),
\end{equation*}
hence $\vect g_0\in\C^3.$ 
Furthermore, by combining the terms of order $\mathcal{O}(|\chi|)$, we obtain the identity
\begin{equation}
\label{corr2}
\begin{split}
    \int_{Y_{\rm stiff}} \A_{\rm stiff}  \simgrad  \Pi_0^{\rm stiff} \vect g_1  : \overline{ \simgrad   \Pi_0^{\rm stiff}\vect v} = - \int_{Y_{\rm stiff}} \A_{\rm stiff}  \Psi_\chi \vect g_0  : \overline{ \simgrad   \Pi_0^{\rm stiff}\vect v}\qquad  \forall\vect v \in H^{1/2}(\Gamma;\C^3),
\end{split}
\end{equation}
which has a unique solution satisfying $\int_\Gamma \vect g_1 = 0$. Next, comparing the terms of order $\mathcal{O}(|\chi|^2)$, we define $\vect g_2$ as the solution to the identity
\begin{equation}
\label{correctoru2}
\begin{aligned}
     \int_{Y_{\rm stiff}} &\A_{\rm stiff} \simgrad  \Pi_0^{\rm stiff} \vect g_2 : \overline{ \simgrad   \Pi_0^{\rm stiff}\vect v}=\\[0em]
    &-\int_{Y_{\rm stiff}} \A_{\rm stiff}  \bigl(\Psi_\chi  \vect g_1+{\rm i}X_\chi  \widetilde{\Pi}_{\chi,1} \vect g_0+ \sym \nabla\widetilde{\Pi}_{\chi,1}^{\rm error}\vect{g}_0):\overline{ \simgrad  \Pi_0^{\rm stiff}\vect v}\\[0.2em]
     &-\int_{Y_{\rm stiff}} \A_{\rm stiff} \bigl(\simgrad  \Pi_0^{\rm stiff} \vect g_1+\Psi_\chi  \vect g_0\bigr):\overline{  \Psi_\chi  \vect v}  
    -|\chi|^2\int_\Gamma \vect g_0 \cdot\overline{\vect v} + |\chi|^2\int_\Gamma \vect h \cdot \overline{\vect v}\qquad \forall\vect v \in H^{1/2}(\Gamma;\C^3).
\end{aligned}
\end{equation}
The existence and uniqueness of solution 
to (\ref{correctoru2}) can be established under two additional constraints. We require $\vect g_2$ to have zero mean, $\int_\Gamma \vect g_2 = 0$, and the right-hand side of (\ref{correctoru2}) to vanish when tested against constants.\footnote{This is in fact the Fredholm alternative.} The second part of this requirement is satisfied by choosing an appropriate $\vect g_0 \in \C^3.$ Namely, 
setting $\vect v  = \vect v_0 \in \C^3$ in \eqref{correctoru2} yields
\begin{equation*}
     0 =
     - \int_{Y_{\rm stiff}} \A_{\rm stiff} (\simgrad  \Pi_0^{\rm stiff} \vect g_1+\Psi_\chi  \vect g_0 ) : \overline{  \Psi_\chi \vect v_0}
    - |\chi|^2\int_\Gamma \vect g_0 \cdot \overline{\vect v_0} + |\chi|^2\int_\Gamma \vect h \cdot \overline{\vect v_0} \qquad \forall\vect v_0 \in \C^3.
\end{equation*}
By virtue of \eqref{homdefinitionshort} and \eqref{corr2}, it follows that
\begin{equation*}
    -\frac{1}{|\chi|^2}\left\langle \Lambda_\chi^{\rm hom} \vect g_0,\vect v_0 \right\rangle_{\C^3} + \frac{1}{|\Gamma|}\int_\Gamma \vect g_0 \cdot \overline{\vect v_0} = \frac{1}{|\Gamma|}\int_\Gamma \vect h \cdot \overline{\vect v_0}\qquad \forall\vect v_0 \in \C^3,
\end{equation*} 
or, equivalently,
\begin{equation}
\label{leadingorderterm}
    -\frac{1}{|\chi|^2}\Lambda_\chi^{\rm hom} \vect g_0 + \vect g_0 = \frac{1}{|\Gamma|}\int_\Gamma \vect h.
\end{equation}
Thus, by defining the leading-order term as the solution to the above resolvent problem, we have ensured the solvability of \eqref{correctoru2}.

Next, we prove the estimates for the correctors and the final error estimate.
It is clear from \eqref{leadingorderterm} and the coercivity of $\Lambda_\chi^{\rm hom}$ that
    $\left\lVert \vect g_0 \right\rVert_{H^{1/2}(\Gamma;\C^3)} \leq C \left\lVert \vect h \right\rVert_{L^2(\Gamma;\C^3)}.$
Furthermore, it follows from \eqref{corr2} that
\begin{equation*}
    \left\lVert \simgrad \Pi_0^{\rm stiff} \vect g_1 \right\rVert_{L^2(Y; {\mathbb C}^{3\times3})} \leq \left\lVert {\rm i}X_\chi \Pi_0^{\rm stiff} \vect g_0 \right\rVert_{L^2(Y; {\mathbb C}^{3\times3})} + \bigl\lVert \simgrad \widetilde{\Pi}_{\chi,1} \vect g_0 \bigr\rVert_{L^2(Y; {\mathbb C}^{3\times3})}.
\end{equation*} By virtue of Theorem \ref{Piasymptheorem} and Proposition \ref{nakk112}, we obtain 
\begin{equation}
	\label{nakk113} 
    \left\lVert \vect g_1 \right\rVert_{H^{1/2}(\Gamma;\C^3)} \leq C|\chi| \left\lVert \vect h \right\rVert_{L^2(\Gamma;\C^3)}.
\end{equation}
In a similar manner, it can be seen from \eqref{correctoru2} that
\begin{equation}
	\label{nakk114} 
    \left\lVert \vect g_2 \right\rVert_{H^{1/2}(\Gamma;\C^3)} \leq C|\chi|^2 \left\lVert \vect h \right\rVert_{L^2(\Gamma;\C^3)}.
\end{equation}
Next, we formulate the equation for the error term $\vect g_{\rm err}:=\vect  g - \vect g_0 - \vect g_1 -\vect g_2$, where
$\vect g$ is the solution to the full problem \eqref{chiproblem}. Using the above identities for $\vect g_0,$ $\vect g_1,$ $\vect g_2,$ it is straighforward to infer that  
\begin{equation}
    \label{firsterrorequation}
        \frac{1}{|\chi|^2} \int_{Y_{\rm stiff}} \A_{\rm stiff}\left( \simgrad +{\rm i}X_\chi \right)\Pi_\chi^{\rm stiff} \vect g_{\rm err}  \cdot \overline{\left( \simgrad +{\rm i}X_\chi \right) \Pi_\chi^{\rm stiff}\vect v}  = \frac{1}{|\chi|^2}\mathcal{R}_{\rm err}(\vect v)\quad \forall\vect v\in H^{1/2}(\Gamma;\C^3),
\end{equation}
where the functional $\mathcal{R}_{\rm err}$ satisfies the bound
    $\bigl|\mathcal{R}_{\rm err}(\vect v)\bigr| \leq C|\chi|^3 \left\lVert\vect v \right\rVert_{H^{1/2}(\Gamma;\C^3)} \left\lVert \vect h \right\rVert_{L^2(\Gamma;\C^3)}.$
Thus, by testing \eqref{firsterrorequation} with $\vect g_{\rm err}$ and invoking \eqref{estim1}, we obtain
\begin{equation*}
	\begin{aligned}
    \left\lVert \vect g_{\rm err} \right\rVert_{H^{1/2}(\Gamma;\C^3)}^2&\leq C|\chi|^{-2}\left\lVert \left(\simgrad+{\rm i}X_\chi \right) \Pi_\chi^{\rm stiff} \vect g_{\rm err} \right\rVert_{L^2(Y; {\mathbb C}^{3\times3})}^2
    \leq C|\chi|^{-2}\bigl|\mathcal{R}_{\rm err}(\vect g_{\rm err})\bigr| \leq C|\chi| \lVert \vect g_{\rm err} \rVert_{H^{1/2}(\Gamma;\C^3)}\left\lVert \vect h \right\rVert_{L^2(\Gamma;\C^3)}.
    \end{aligned}
\end{equation*}
Therefore, one has  
    $\lVert \vect g - \vect g_0 - \vect g_1 - \vect g_2\rVert_{H^{1/2}(\Gamma;\C^3)} \leq C |\chi|\left\lVert \vect h \right\rVert_{L^2(\Gamma;\C^3)}.$
It now follows from \eqref{nakk113} and \eqref{nakk114} that
    $\lVert \vect g - \vect g_0  \rVert_{H^{1/2}(\Gamma;\C^3)} \leq C |\chi|\left\lVert \vect h \right\rVert_{L^2(\Gamma;\C^3)},$
as required.
\end{proof}

\begin{remark}
    The averaging operator $S: \mathcal{E} \to \mathcal{E}$ coincides with the projection operator $\widehat{P}_0= \widehat{P}_\chi |_{\chi = 0}$ .
\end{remark}
\begin{remark} \label{nakk120} 
    The norm-resolvent estimate provided in Theorem \ref{dirichletotneumannresolventasymptotics} also yields 
 $$ \left\lVert \left(|\chi|^{-2}\Lambda_\chi^{\rm stiff} -zI \right)^{-1} - \left(|\chi|^{-2}\Lambda_\chi^{\rm hom} -zI \right)^{-1}S  \right\rVert_{L^2(\Gamma;\C^3) \to H^{1/2}(\Gamma;\C^3)} \leq C(z) |\chi|, $$
    where the constant $C(z)$ depends on the distance of $z$ to the spectrum of $|\chi|^{-2}\Lambda_\chi^{\rm stiff(hom)}.$ (It is bounded uniformly in $z$ if $z$ belongs to a set for which both $|z|$ and $\{{\rm dist}\,(z, \sigma(|\chi|^{-2}\Lambda_\chi^{\rm stiff(hom)}))\}^{-1}$ are bounded.) This can be seen by using resolvent identities or by revisiting the proof of Theorem \ref{dirichletotneumannresolventasymptotics}. 
\end{remark}

Next, we discuss the resolvent asymptotics with respect to $\varepsilon$ of the DtN maps on the stiff component.

\begin{corollary}
\label{epsilonnormestimates}
There exists a constant $C>0$, independent of $\chi \in Y'$, $\varepsilon>0$, such that the following norm-resolvent estimate holds:
  \begin{eqnarray*}
\left\Vert \left(\varepsilon^{-2}\Lambda_\chi^{\rm stiff} - I\right)^{-1} -\left( \varepsilon^{-2}\Lambda_\chi^{\rm hom} - I\right)^{-1} S\right\Vert_{L^2(\Gamma;\C^3) \to H^{1/2}(\Gamma;\C^3)}
 \leq C \varepsilon,
\end{eqnarray*}
where the homogenised operator $\Lambda_\chi^{\rm hom}$ is defined by the formula \eqref{homdefinitionshort}.
\end{corollary}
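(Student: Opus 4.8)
The plan is to reduce Corollary~\ref{epsilonnormestimates} to Theorem~\ref{dirichletotneumannresolventasymptotics} (and its extension in Remark~\ref{nakk120}) by a rescaling, distinguishing the regimes $|\chi|\geq\varepsilon$ and $|\chi|<\varepsilon$. For $\chi\neq0$ put $z:=\varepsilon^2/|\chi|^2>0$; since $\varepsilon^{-2}\Lambda_\chi^{\rm stiff}-I=z^{-1}\bigl(|\chi|^{-2}\Lambda_\chi^{\rm stiff}-zI\bigr)$ and the same identity holds for $\Lambda_\chi^{\rm hom}$ on $\mathcal{R}(S)$, the operator whose norm is estimated in Corollary~\ref{epsilonnormestimates} equals $z\,E(z)$, where
\[
E(z):=\bigl(|\chi|^{-2}\Lambda_\chi^{\rm stiff}-zI\bigr)^{-1}-\bigl(|\chi|^{-2}\Lambda_\chi^{\rm hom}-zI\bigr)^{-1}S
\]
is exactly the quantity controlled in Remark~\ref{nakk120}. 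By Lemmas~\ref{lemmasteklovorder} and~\ref{josipnak1} there is a $\chi$-independent $c>0$ with $\sigma(|\chi|^{-2}\Lambda_\chi^{\rm stiff})\cup\sigma(|\chi|^{-2}\Lambda_\chi^{\rm hom})\subset(-\infty,-c]$, so ${\rm dist}\bigl(z,\sigma(|\chi|^{-2}\Lambda_\chi^{\rm stiff(hom)})\bigr)\geq c$ for every $z>0$. When $|\chi|\geq\varepsilon$ we have $z\leq1$, so both $|z|$ and $\{{\rm dist}(z,\sigma(\cdot))\}^{-1}$ are bounded uniformly; Remark~\ref{nakk120} then applies with a uniform constant, and $\|z\,E(z)\|_{L^2(\Gamma;\C^3)\to H^{1/2}(\Gamma;\C^3)}\leq z\,C|\chi|=C\varepsilon^2/|\chi|\leq C\varepsilon$, as required.

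For $0<|\chi|<\varepsilon$ the parameter $z>1$ is large and Remark~\ref{nakk120} no longer yields a uniform constant, so I would argue directly at scale $\varepsilon^{-2}$ via the Steklov spectral decomposition \eqref{steklovdecomposition}--\eqref{steklovtruncation1}. On $\widecheck{\mathcal{E}}_\chi$ one has $(\varepsilon^{-2}\widecheck{\Lambda}_\chi^{\rm stiff}-I)^{-1}=\varepsilon^2(\widecheck{\Lambda}_\chi^{\rm stiff}-\varepsilon^2I)^{-1}$, and since $\sigma(\widecheck{\Lambda}_\chi^{\rm stiff})\subset(-\infty,-c]$ and $\|(\widecheck{\Lambda}_\chi^{\rm stiff})^{-1}\|_{L^2(\Gamma;\C^3)\to H^1(\Gamma;\C^3)}\leq C$ uniformly (cf.\ \eqref{nakk204}), a resolvent identity gives $\|(\varepsilon^{-2}\widecheck{\Lambda}_\chi^{\rm stiff}-I)^{-1}\|_{L^2(\Gamma;\C^3)\to H^{1/2}(\Gamma;\C^3)}\leq C\varepsilon^2$, while the cross term $\widecheck{P}_\chi(\varepsilon^{-2}\Lambda_\chi^{\rm hom}-I)^{-1}S=(\widecheck{P}_\chi-\widecheck{P}_0)(\varepsilon^{-2}\Lambda_\chi^{\rm hom}-I)^{-1}S$ (since $\widecheck{P}_0$ annihilates the constants) is $\mathcal{O}(|\chi|)$ once $\|\widehat{P}_\chi-\widehat{P}_0\|\leq C|\chi|$ is available. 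On the three-dimensional space $\widehat{\mathcal{E}}_\chi$ all operators are bounded, $\|\varepsilon^{-2}\widehat{\Lambda}_\chi^{\rm stiff}\|+\|\varepsilon^{-2}\Lambda_\chi^{\rm hom}\|\leq C|\chi|^2/\varepsilon^2<C$; identifying $\widehat{\mathcal{E}}_\chi$ with $\C^3$ through the restriction of $\widehat{P}_0$ (a near-isometry, up to $\mathcal{O}(|\chi|)$), the comparison of $(\varepsilon^{-2}\widehat{\Lambda}_\chi^{\rm stiff}-\widehat{P}_\chi)^{-1}\widehat{P}_\chi$ with $(\varepsilon^{-2}\Lambda_\chi^{\rm hom}-I)^{-1}S$ becomes a finite-dimensional resolvent perturbation in which the generators differ by $\mathcal{O}(|\chi|^3/\varepsilon^2)\leq\mathcal{O}(\varepsilon)$ --- this uses Theorem~\ref{dirichletotneumannresolventasymptotics} at parameter $1$, which in finite dimensions forces $\widehat{\Lambda}_\chi^{\rm stiff}=\Lambda_\chi^{\rm hom}+\mathcal{O}(|\chi|^3)$ after the identification --- and the ``identity parts'' $\widehat{P}_\chi$ and $S=\widehat{P}_0$ differ by $\mathcal{O}(|\chi|)\leq\mathcal{O}(\varepsilon)$. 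Since all norms on these subspaces are uniformly comparable to the $L^2(\Gamma;\C^3)$ norm (by \eqref{estim1} and the trace theorem), the $\widehat{\mathcal{E}}_\chi$-contribution is $\leq C\varepsilon$, and altogether the left-hand side of Corollary~\ref{epsilonnormestimates} is $\leq C(\varepsilon^2+|\chi|)\leq C\varepsilon$.

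The one ingredient not already at hand --- and the step I expect to be the main obstacle --- is the spectral-perturbation estimate $\|\widehat{P}_\chi-\widehat{P}_0\|\leq C|\chi|$, both in $\mathfrak{B}(L^2(\Gamma;\C^3))$ and, the range being finite-dimensional with an $H^1$ basis of uniformly bounded norm, in $\mathfrak{B}(L^2(\Gamma;\C^3),H^{1/2}(\Gamma;\C^3))$. I would obtain it from the Riesz-projection contour-integral representation of $\widehat{P}_\chi$, using the uniform gap separating the $\mathcal{O}(|\chi|^2)$ Steklov eigenvalues from the $\mathcal{O}(1)$ ones (Lemma~\ref{lemmasteklovorder}) together with the Lipschitz dependence $\Lambda_\chi^{\rm stiff}-\Lambda_0^{\rm stiff}=\mathcal{O}(|\chi|)$ in $\mathfrak{B}(H^{1/2}(\Gamma;\C^3),H^{-1/2}(\Gamma;\C^3))$, itself a consequence of the expansion $\Pi_\chi^{\rm stiff}=\Pi_0^{\rm stiff}+\widetilde{\Pi}_{\chi,1}+\widetilde{\Pi}_{\chi,1}^{\rm error}$ of Theorem~\ref{Piasymptheorem}. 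The remaining manipulations are routine bookkeeping with the uniform resolvent and trace bounds \eqref{nakk204}, \eqref{nakk203}, \eqref{estim1} already established.
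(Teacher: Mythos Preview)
Your argument is correct in substance, but it takes a markedly different route from the paper. You split into the two regimes $|\chi|\gtrless\varepsilon$: for $|\chi|\geq\varepsilon$ you rescale to hit Remark~\ref{nakk120} directly at a bounded parameter; for $|\chi|<\varepsilon$ you work at scale $\varepsilon^{-2}$, peel off the $\widecheck{\mathcal E}_\chi$-piece as $O(\varepsilon^2)$, and compare the three-dimensional pieces via a finite-rank resolvent identity, for which you need both the projector estimate $\|\widehat P_\chi-\widehat P_0\|=O(|\chi|)$ and the ``generator'' estimate $\widehat\Lambda_\chi^{\rm stiff}=\Lambda_\chi^{\rm hom}+O(|\chi|^3)$ after identification. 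Both of these ingredients are correct and indeed appear later in the paper (Corollary~\ref{pipasymptotics_stiff} and, implicitly, Lemma~\ref{nakk1004}), and you sketch the right way to obtain them; but note that your proof now \emph{depends} on them, while in the paper's logical order they are derived afterwards by the same contour technique. A small caveat: your claim that the $L^2$ and $H^{1/2}$ norms on $\widehat{\mathcal E}_\chi$ are uniformly comparable is true, but the justification via \eqref{estim1} is off --- the clean way is to write $\psi=(\Lambda_\chi^{\rm stiff}-I)^{-1}(\Lambda_\chi^{\rm stiff}-I)\psi$ and use the uniform $L^2\to H^1$ bound for the Robin resolvent together with $\|\widehat\Lambda_\chi^{\rm stiff}\|\leq c_2|\chi|^2$.

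The paper's proof avoids the case split entirely by a Cauchy-integral trick: one writes
\[
\widehat P_\chi\bigl(\varepsilon^{-2}\Lambda_\chi^{\rm stiff}-I\bigr)^{-1}\widehat P_\chi=\frac{1}{2\pi{\rm i}}\oint_\gamma g_{\varepsilon,\chi}(z)\Bigl(zI-|\chi|^{-2}\Lambda_\chi^{\rm stiff}\Bigr)^{-1}dz,\qquad g_{\varepsilon,\chi}(z):=\bigl(\varepsilon^{-2}|\chi|^2 z-1\bigr)^{-1},
\]
and the analogous formula for $\Lambda_\chi^{\rm hom}$, with $\gamma$ the fixed contour of Lemma~\ref{lemmacontour}. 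The integrands differ by $C(z)|\chi|$ (Remark~\ref{nakk120}), while the weight satisfies $|g_{\varepsilon,\chi}(z)|\leq C\bigl(\max\{\varepsilon^{-2}|\chi|^2,1\}\bigr)^{-1}$ on $\gamma$; the product $|\chi|\cdot\min\{\varepsilon^2|\chi|^{-2},1\}\leq\varepsilon$ is then exactly your two-regime bound, obtained in one stroke. The $\widecheck{\mathcal E}_\chi$-remainder is handled by the same $O(\varepsilon^2)$ estimate you use. So the paper's approach is shorter and self-contained at this point in the argument, whereas yours front-loads the spectral-projection perturbation; on the other hand, your route makes the mechanism behind the two regimes completely explicit, which has expository value.
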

Before proceeding to the proof, we provide some auxiliary results.
There are several important points to make that allow us to rewrite norm-resolvent estimates in terms of the parameter $\varepsilon$.
Both operators $|\chi|^{-2}\Lambda_\chi^{\rm stiff}$ and $|\chi|^{-2}\Lambda_\chi^{\rm hom}$ (where the latter is in fact a multiplication by a constant matrix depending on $\chi$) have exactly $3$ eigenvalues of order $\mathcal{O}(1)$, and the set of these eigenvalues can be enclosed with a fixed contour $\gamma \subset \C$ uniformly in $|\chi|$ (for small enough $|\chi|$). These properties are summarised in the following lemma. 
\begin{lemma}
\label{lemmacontour}
    There exist $\zeta,\eta>0$ and a contour $\gamma \subset \{z\in{\mathbb C}: \Re(z)<-\eta\}$ such that, for all $0<|\chi| \leq \zeta:$
    \begin{itemize}
    \item All eigenvalues of the operators $|\chi|^{-2}\Lambda_\chi^{\rm stiff}$ and $|\chi|^{-2}\Lambda_\chi^{\rm hom}$ that admit an $\mathcal{O}(1)$  estimate in $|\chi|$ are enclosed by $\gamma;$
    \item No other eigenvalue of $|\chi|^{-2}\Lambda_\chi^{\rm stiff}$ and $|\chi|^{-2}\Lambda_\chi^{\rm hom}$ is enclosed by $\gamma;$
\item There exists $\rho_0>0$ such that for all eigenvalues $\nu^\chi$ of
$|\chi|^{-2}\Lambda_\chi^{\rm stiff}$ and $|\chi|^{-2}\Lambda_\chi^{\rm hom}$ one has $\inf_{z\in \gamma}|z-\nu^\chi|\geq \rho_0.$
\end{itemize}
\end{lemma}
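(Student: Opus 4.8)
The plan is to reduce the statement to the two uniform spectral estimates already established and then exhibit the contour by hand. First I would record what Lemma~\ref{lemmasteklovorder} and Lemma~\ref{josipnak1} give after the $|\chi|^{-2}$-rescaling. Since $\Lambda_\chi^{\rm stiff}$ has compact resolvent, the spectrum of $|\chi|^{-2}\Lambda_\chi^{\rm stiff}$ is the discrete set $\{|\chi|^{-2}\nu_n^\chi\}_{n\in\N}$, and Lemma~\ref{lemmasteklovorder} supplies $\chi$-independent constants $0<c_1\le c_2$ with $|\chi|^{-2}\nu_n^\chi\in[-c_2,-c_1]$ for $n=1,2,3$ (these are precisely the eigenvalues admitting an $\mathcal{O}(1)$ estimate) and $|\chi|^{-2}\nu_n^\chi\le -c_1|\chi|^{-2}$ for $n\ge4$ (those that do not --- they run off to $-\infty$ as $|\chi|\to0$). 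On the other side $|\chi|^{-2}\Lambda_\chi^{\rm hom}$ is a Hermitian $3\times3$ matrix, and by Lemma~\ref{josipnak1} --- equivalently, from $\Lambda_\chi^{\rm hom}=-|\Gamma|^{-1}({\rm i}X_\chi)^*\A_{\rm macro}({\rm i}X_\chi)$ together with the two-sided bound $C_1|\chi|\,|\vect c|\le|X_\chi\vect c|\le C_2|\chi|\,|\vect c|$ and the positive-definiteness and boundedness of $\A_{\rm macro}$ (Lemma~\ref{prop_lemma}) --- all three of its eigenvalues lie in $[-c_2',-c_1']$ for some $\chi$-independent $0<c_1'\le c_2'$.

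Next I would set $\alpha:=\min\{c_1,c_1'\}>0$ and $\beta:=\max\{c_2,c_2'\}$, so that every eigenvalue of either operator admitting an $\mathcal{O}(1)$ estimate lies in $[-\beta,-\alpha]$ uniformly in $\chi\neq0$; fix any $\eta\in(0,\alpha)$; and choose $\zeta>0$ small enough that $c_1\zeta^{-2}>\beta+1$, which forces every non-$\mathcal{O}(1)$ eigenvalue of $|\chi|^{-2}\Lambda_\chi^{\rm stiff}$ to satisfy $|\chi|^{-2}\nu_n^\chi<-(\beta+1)$ whenever $0<|\chi|\le\zeta$. I would then take $\gamma$ to be the positively oriented boundary of the rectangle $[-\beta-\tfrac12,\,-\tfrac12(\eta+\alpha)]\times[-\tfrac12,\tfrac12]$. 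A direct check shows that $\gamma\subset\{z\in\C:\Re z\le-\tfrac12(\eta+\alpha)<-\eta\}$, that the open rectangle contains $[-\beta,-\alpha]$ (since $-\beta>-\beta-\tfrac12$ and $-\alpha<-\tfrac12(\eta+\alpha)$ because $\eta<\alpha$), and that the left edge of $\gamma$ sits at $\Re z=-\beta-\tfrac12>-(\beta+1)$. Consequently: (i) all $\mathcal{O}(1)$ eigenvalues of both operators, lying in $[-\beta,-\alpha]$, are enclosed by $\gamma$; (ii) $|\chi|^{-2}\Lambda_\chi^{\rm hom}$ has no further eigenvalues, while every further (real) eigenvalue of $|\chi|^{-2}\Lambda_\chi^{\rm stiff}$ is $<-(\beta+1)$ and therefore lies strictly to the left of $\gamma$, hence is not enclosed; (iii) ${\rm dist}(\gamma,[-\beta,-\alpha])\ge\min\{\tfrac12,\tfrac12(\alpha-\eta)\}$ and ${\rm dist}(\gamma,(-\infty,-\beta-1])\ge\tfrac12$, so $\rho_0:=\min\{\tfrac12,\tfrac12(\alpha-\eta)\}>0$ yields the required $\inf_{z\in\gamma}|z-\nu^\chi|\ge\rho_0$ for every eigenvalue $\nu^\chi$ of either operator.

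The argument is essentially bookkeeping once Lemmas~\ref{lemmasteklovorder} and~\ref{josipnak1} are in place, and I do not anticipate a genuine obstacle. The only subtlety --- and the sole reason the statement carries the restriction $0<|\chi|\le\zeta$ --- is that after the $|\chi|^{-2}$-rescaling the non-$\mathcal{O}(1)$ eigenvalues of $\Lambda_\chi^{\rm stiff}$ escape to $-\infty$, so $\zeta$ must be permitted to depend on $c_1$ and $\beta$ as above; once this is arranged, the $\chi$-independence of $c_1,c_2,c_1',c_2'$ makes the contour $\gamma$ and the separation constant $\rho_0$ automatically uniform in $\chi$.
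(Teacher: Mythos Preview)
Your proof is correct and follows the same route as the paper's, which simply invokes Lemma~\ref{lemmasteklovorder} to observe that the $\mathcal{O}(1)$ eigenvalues lie in a fixed $\chi$-independent interval bounded away from zero and are, for small $|\chi|$, uniformly separated from the remaining spectrum. You carry out the bookkeeping in full --- explicitly bringing in Lemma~\ref{josipnak1} for the $|\chi|^{-2}\Lambda_\chi^{\rm hom}$ side and exhibiting the contour and separation constant concretely --- whereas the paper leaves these details to the reader.
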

\begin{proof}
    Note that for small enough $|\chi|$ the spectrum of order $\mathcal{O}(1)$ is uniformly separated from the remaining spectrum -- this is guaranteed by the estimates in Lemma \ref{lemmasteklovorder}. The same estimates also show that this part of the spectrum lies in a fixed $\chi$-independent interval that does not contain zero.
\end{proof}

For every fixed $\varepsilon>0$, $\chi \neq 0$ we consider the function
   $g_{\varepsilon,\chi}(z) := \left(\varepsilon^{-2}|\chi|^2z - 1 \right)^{-1},$
   $\Re(z) < 0,$
 which satisfies the following lemma.
 \begin{lemma}
 For every fixed $\eta > 0$, the function   $g_{\varepsilon,\chi}$ is bounded in the half-plane $\{z \in \C, \Re(z) < -\eta\}$: 
 \begin{equation*}
     \bigl|g_{\varepsilon,\chi}(z)\bigr| \leq C(\eta) \left(\max\left\{\varepsilon^{-2}|\chi|^2, 1\right\}\right)^{-1}.
 \end{equation*}
 \end{lemma}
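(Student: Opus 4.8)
The plan is to reduce the claim to an elementary lower bound on the modulus $|tz-1|$, where for brevity I set $t:=\varepsilon^{-2}|\chi|^2>0$ and regard it as a fixed positive parameter. Since $g_{\varepsilon,\chi}(z)=(tz-1)^{-1}$, it suffices to show that
\begin{equation*}
|tz-1|\geq \min\{1,\eta\}\,\max\{1,t\}\qquad\text{whenever }\Re(z)<-\eta,
\end{equation*}
from which the assertion follows immediately with $C(\eta)=\max\{1,\eta^{-1}\}=(\min\{1,\eta\})^{-1}$, upon recalling that $\max\{1,t\}=\max\{\varepsilon^{-2}|\chi|^2,1\}$.

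The first step is to compare real parts: for $z$ with $\Re(z)<-\eta$ one has $\Re(tz-1)=t\Re(z)-1<-t\eta-1<0$, hence
\begin{equation*}
|tz-1|\geq|\Re(tz-1)|=1-t\Re(z)>1+t\eta.
\end{equation*}
The second step is the elementary inequality $1+t\eta\geq\min\{1,\eta\}\,\max\{1,t\}$, which I would check by cases: if $t\geq 1$ then $1+t\eta\geq t\eta\geq \min\{1,\eta\}\,t=\min\{1,\eta\}\max\{1,t\}$ since $\eta\geq\min\{1,\eta\}$; if $t<1$ then $1+t\eta\geq 1\geq\min\{1,\eta\}=\min\{1,\eta\}\max\{1,t\}$ since $\min\{1,\eta\}\leq 1$ and $\max\{1,t\}=1$. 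Combining the two steps and inverting yields the stated bound.

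There is essentially no obstacle here; the only mild care needed is the max/min bookkeeping ensuring the constant depends on $\eta$ alone and that the factor $(\max\{\varepsilon^{-2}|\chi|^2,1\})^{-1}$ appears, rather than a worse power. As an alternative presentation avoiding the single unified inequality, one may split into the two regimes $\varepsilon^{-2}|\chi|^2\leq 1$ and $\varepsilon^{-2}|\chi|^2>1$ from the outset, using $|tz-1|\geq 1$ in the former and $|tz-1|\geq t\eta$ in the latter; both routes give the same conclusion.
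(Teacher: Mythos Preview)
Your proof is correct and follows essentially the same approach as the paper: both bound $|tz-1|$ from below by $1+t\eta$ via the real part, then observe that $1+t\eta\geq C(\eta)\max\{1,t\}$. You simply make the constant $C(\eta)=(\min\{1,\eta\})^{-1}$ and the case split explicit, which the paper leaves implicit.
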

 \begin{proof} The required bound follows from the estimate 
 \begin{equation*}
 \bigl|g_{\varepsilon,\chi}(z)\bigr|^{-1} =  \bigl|\varepsilon^{-2}|\chi|^2z-1\bigr| \geq \varepsilon^{-2}|\chi|^2\eta+ 1 \geq
C(\eta)\max\left\{\varepsilon^{-2}|\chi|^2, 1\right\}. 
 \qedhere\popQED
 \end{equation*}
 \end{proof}

\begin{proof}[Proof of Corollary \ref{epsilonnormestimates}]
    Applying the integral Cauchy formula, we obtain
\begin{equation*}
    \begin{aligned}
        \widehat{P}_\chi\left(\frac{1}{\varepsilon^{2}}\Lambda_\chi^{\rm stiff}- I\right)^{-1}\widehat{P}_\chi&= \frac{1}{2\pi{\rm i}} \oint_\gamma g_{\chi,\varepsilon}(z)\left(zI-\frac{1}{|\chi|^2}\Lambda_\chi^{\rm stiff}\right)^{-1}  dz, \\
        \left(\frac{1}{\varepsilon^{2}}\Lambda^{\rm hom}_\chi-I\right)^{-1}S&= \frac{1}{2\pi{\rm i}} \oint_\gamma g_{\chi,\varepsilon}(z)\left(zI-\frac{1}{|\chi|^2}\Lambda_\chi^{\rm hom}\right)^{-1}Sdz,
    \end{aligned}
\end{equation*}
where $\widehat{P}_\chi$ is the operator of projection onto the $3$-dimensional span of the eigenfunctions of $\Lambda_\chi^{\rm stiff}$ associated with eigenvalues of order $|\chi|^2.$ Furthermore, since $\widehat{P}_\chi$ is a spectral projection for $\Lambda_\chi^{\rm stiff}$, using the estimates from Lemma \ref{lemmasteklovorder} yields
\begin{equation}\label{nakk121} 
    \begin{aligned}
        &\left\Vert
        \left( \varepsilon^{-2}\Lambda_\chi^{\rm stiff}-I\right)^{-1}
        -\widehat{P}_\chi\left( \varepsilon^{-2}\Lambda_\chi^{\rm stiff}-I\right)^{-1}\widehat{P}_\chi
        \right\Vert_{L^2(\Gamma;\C^3) \to H^{1/2}(\Gamma;\C^3)}  \\[0.4em]
        &\hspace{17ex}=\left\Vert
        (I-\widehat{P}_\chi)\left( \varepsilon^{-2}\Lambda_\chi^{\rm stiff}-I\right)^{-1}
        (I-\widehat{P}_\chi)
        \right\Vert_{L^2(\Gamma;\C^3) \to H^{1/2}(\Gamma;\C^3)} \leq C\varepsilon^{2},
    \end{aligned}
\end{equation}
On the other hand,
\begin{equation}
	\begin{aligned}
&\left\Vert \widehat{P}_\chi\left(\varepsilon^{-2}\Lambda_\chi^{\rm stiff}- I\right)^{-1}\widehat{P}_\chi -\left(\varepsilon^{-2}\Lambda_\chi^{\rm hom}-I\right)^{-1}S \right\Vert_{L^2(\Gamma;\C^3) \to H^{1/2}(\Gamma;\C^3)}
\\[0.4em]
&\leq \frac{1}{2\pi} \oint_{\gamma} 
\bigl|g_{\chi,\varepsilon}(z)\bigr|\left\Vert \left(  z I - \frac{1}{|\chi|^{2}}\Lambda_\chi^{\rm stiff} \right)^{-1} -\left(  zI - \frac{1}{|\chi|^{2}}\Lambda_\chi^{\rm hom} \right)^{-1} S\right\Vert_{L^2(\Gamma;\C^3) \to H^{1/2}(\Gamma;\C^3)} dz
\\[0.4em]
&\leq C|\chi| \left(\max\left\{\varepsilon^{-2}|\chi|^{2}, 1\right\}\right)^{-1} \leq C \varepsilon,
\end{aligned}
\label{nakk122}
\end{equation}
where we have used Remark \ref{nakk120}. 
The proof is concluded by combining \eqref{nakk121} and \eqref{nakk122}. 
\end{proof}
 Theorem \ref{dirichletotneumannresolventasymptotics} has another direct consequence, namely the asymptotics of the spectral projections $\widehat{P}_\chi$ and the truncated lift operators $ \Pi_\chi^{\rm stiff} \widehat{P}_\chi$ with respect to the quasimomentum $\chi$.
\begin{corollary}
\label{pipasymptotics_stiff}
The operators $\widehat{P}_\chi$ and $\Pi_\chi^{\rm stiff} \widehat{P}_\chi$ satisfy the asymptotics
\begin{align}
\label{pasimp}
       &\left\lVert \widehat{P}_\chi - \widehat{P}_0  \right\rVert_{L^2(\Gamma;\C^3) \to H^1(\Gamma;\C^3)} \leq C|\chi|,\\[0.4em]
\label{piasimp}
     &\left\lVert\Pi_\chi^{\rm stiff} \widehat{P}_\chi - \Pi_0^{\rm stiff} \widehat{P}_0 \right\rVert_{L^2(\Gamma;\C^3) \to H^1(Y_{\rm stiff};\C^3)} \leq C|\chi|,\\[0.4em]
\label{pistarasymp}
	&\left\lVert\widehat{P}_\chi \left(\Pi_\chi^{\rm stiff}\right)^* - \widehat{P}_0 \left(\Pi_0^{\rm stiff}\right)^* \right\rVert_{L^2(Y_{\rm stiff};\C^3) \to L^2(\Gamma;\C^3)} \leq C|\chi|.
\end{align}
where the constant $C>0$ does not depend on $\chi \in Y'$.
\end{corollary}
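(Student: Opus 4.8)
The plan is to deduce all three estimates from the norm-resolvent asymptotics of the DtN maps (Theorem~\ref{dirichletotneumannresolventasymptotics}, used in the $z$-dependent form of Remark~\ref{nakk120}) by a Riesz integral over the contour $\gamma$ of Lemma~\ref{lemmacontour}, and to treat \eqref{piasimp} and \eqref{pistarasymp} as consequences of \eqref{pasimp}. In fact, for \eqref{piasimp} and \eqref{pistarasymp} only the weaker bound $\bigl\lVert \widehat{P}_\chi - \widehat{P}_0 \bigr\rVert_{L^2(\Gamma;\C^3)\to H^{1/2}(\Gamma;\C^3)} \le C|\chi|$ is needed: writing $\Pi_\chi^{\rm stiff}\widehat{P}_\chi-\Pi_0^{\rm stiff}\widehat{P}_0 = \Pi_\chi^{\rm stiff}\bigl(\widehat{P}_\chi-\widehat{P}_0\bigr) + \bigl(\Pi_\chi^{\rm stiff}-\Pi_0^{\rm stiff}\bigr)\widehat{P}_0$, the first summand is estimated by the $\chi$-uniform bound \eqref{piestimate2} for $\Pi_\chi^{\rm stiff}\colon H^{1/2}(\Gamma;\C^3)\to H^1(Y_{\rm stiff};\C^3)$, and the second by Theorem~\ref{Piasymptheorem} (which gives $\lVert\Pi_\chi^{\rm stiff}-\Pi_0^{\rm stiff}\rVert_{H^{1/2}(\Gamma;\C^3)\to H^1(Y_{\rm stiff};\C^3)}\le C|\chi|$) composed with the bounded projection $\widehat{P}_0=S$; this yields \eqref{piasimp}. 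Estimate \eqref{pistarasymp} then follows by passing to Hilbert-space adjoints: since $\widehat{P}_\chi$ is self-adjoint, $\bigl(\Pi_\chi^{\rm stiff}\widehat{P}_\chi-\Pi_0^{\rm stiff}\widehat{P}_0\bigr)^* = \widehat{P}_\chi\bigl(\Pi_\chi^{\rm stiff}\bigr)^*-\widehat{P}_0\bigl(\Pi_0^{\rm stiff}\bigr)^*$, whose $L^2(Y_{\rm stiff};\C^3)\to L^2(\Gamma;\C^3)$ norm is dominated by the $L^2(\Gamma;\C^3)\to H^1(Y_{\rm stiff};\C^3)$ norm in \eqref{piasimp}.

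For the $H^{1/2}(\Gamma)$ bound on $\widehat{P}_\chi-\widehat{P}_0$, I recall that $\widehat{P}_0=S$ (the remark following Theorem~\ref{dirichletotneumannresolventasymptotics}), that $|\chi|^{-2}\Lambda_\chi^{\rm stiff}$ has exactly three eigenvalues of order $\mathcal{O}(1)$ (Lemma~\ref{lemmasteklovorder}), and that, by Lemma~\ref{josipnak1}, all three eigenvalues of the $3\times3$ matrix $|\chi|^{-2}\Lambda_\chi^{\rm hom}$ are of order $\mathcal{O}(1)$, uniformly bounded and uniformly bounded away from $0$. Hence the contour $\gamma$ of Lemma~\ref{lemmacontour} encloses all of $\sigma\bigl(|\chi|^{-2}\Lambda_\chi^{\rm hom}\bigr)$ and precisely the three $\mathcal{O}(1)$ eigenvalues of $|\chi|^{-2}\Lambda_\chi^{\rm stiff}$, so that
\begin{equation*}
\widehat{P}_\chi = \frac{1}{2\pi{\rm i}}\oint_\gamma\bigl(zI-|\chi|^{-2}\Lambda_\chi^{\rm stiff}\bigr)^{-1}dz, \qquad \widehat{P}_0 = S = \frac{1}{2\pi{\rm i}}\oint_\gamma\bigl(zI-|\chi|^{-2}\Lambda_\chi^{\rm hom}\bigr)^{-1}S\,dz.
\end{equation*}
Subtracting these representations and invoking Remark~\ref{nakk120} uniformly for $z\in\gamma$ (a compact set on which $|z|$ and the reciprocals of the distances to the two spectra are bounded, by Lemma~\ref{lemmacontour}) yields the claimed estimate.

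It remains to upgrade this to the $H^1(\Gamma)$ bound \eqref{pasimp}, which is the delicate point. Setting $\widecheck{P}_0:=I-S$, one has the algebraic identity $\widehat{P}_\chi-\widehat{P}_0 = \widehat{P}_\chi\widecheck{P}_0 - \widecheck{P}_\chi S$. The first term, $\widehat{P}_\chi\widecheck{P}_0 = \bigl(\widehat{P}_\chi-\widehat{P}_0\bigr)\widecheck{P}_0$, maps into $\widehat{\mathcal{E}}_\chi$ with $L^2(\Gamma;\C^3)\to L^2(\Gamma;\C^3)$ norm $\le C|\chi|$ by the previous paragraph, so it suffices to show that on $\widehat{\mathcal{E}}_\chi$ the norms $\lVert\cdot\rVert_{H^1(\Gamma;\C^3)}$ and $\lVert\cdot\rVert_{L^2(\Gamma;\C^3)}$ are equivalent uniformly in small $\chi$. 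For this I would argue by bootstrap: for $\vect g\in\widehat{\mathcal{E}}_\chi$ the lift $\vect u=\Pi_\chi^{\rm stiff}\vect g$ solves the homogeneous elasticity system in $Y_{\rm stiff}$ with $\vect u|_\Gamma=\vect g$ and $\partial_\nu^{\rm stiff}\vect u|_\Gamma=-\widehat{\Lambda}_\chi^{\rm stiff}\vect g$, i.e.\ with Robin datum $\vect g-\widehat{\Lambda}_\chi^{\rm stiff}\vect g$, whose $L^2(\Gamma;\C^3)$-norm is $\le C\lVert\vect g\rVert_{L^2(\Gamma;\C^3)}$ because $\lVert\widehat{\Lambda}_\chi^{\rm stiff}\rVert_{\mathcal{E}\to\mathcal{E}}\le C|\chi|^2$; the a priori bound for the weak Robin problem (Korn's inequality, as in the proof of Theorem~\ref{dtnmapstheorem1}) gives $\lVert\vect u\rVert_{H^1(Y_{\rm stiff};\C^3)}\le C\lVert\vect g\rVert_{L^2(\Gamma;\C^3)}$, hence $\lVert\vect g\rVert_{H^{1/2}(\Gamma;\C^3)}\le C\lVert\vect g\rVert_{L^2(\Gamma;\C^3)}$ by the trace theorem; feeding this back, so that the Robin datum is $\le C\lVert\vect g\rVert_{H^{1/2}(\Gamma;\C^3)}$ in $H^{1/2}(\Gamma;\C^3)$, and using the $H^2$-regularity of the Robin problem (Lemma~\ref{appendixregularitiyofrobin}, with $\chi$-independent constants) gives $\lVert\vect g\rVert_{H^{3/2}(\Gamma;\C^3)}\le C\lVert\vect u\rVert_{H^2(Y_{\rm stiff};\C^3)}\le C\lVert\vect g\rVert_{L^2(\Gamma;\C^3)}$, in particular the desired equivalence. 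For the second term, $\widecheck{P}_\chi S$ maps into $\widecheck{\mathcal{E}}_\chi$; since $\widecheck{P}_\chi S\vect h = \bigl(\widecheck{\Lambda}_\chi^{\rm stiff}\bigr)^{-1}\widecheck{P}_\chi\Lambda_\chi^{\rm stiff}(S\vect h)$ and $\bigl(\widecheck{\Lambda}_\chi^{\rm stiff}\bigr)^{-1}\colon L^2(\Gamma;\C^3)\to H^1(\Gamma;\C^3)$ is bounded uniformly in $\chi$ by \eqref{nakk204}, it is enough to know $\lVert\Lambda_\chi^{\rm stiff}(S\vect h)\rVert_{L^2(\Gamma;\C^3)}\le C|\chi|\lVert\vect h\rVert_{L^2}$; this last bound follows by applying the $H^2$ elliptic regularity of Lemma~\ref{appendixregularityofdirichlet} to $\vect w:=\Pi_\chi^{\rm stiff}(S\vect h)-S\vect h$, which has zero trace on $\Gamma$ and solves the elasticity system with right-hand side $-(\simgrad+{\rm i}X_\chi)^*\bigl(\A_{\rm stiff}\,{\rm i}X_\chi\,S\vect h\bigr)$ — of size $\mathcal{O}(|\chi|)$ in $L^2(Y_{\rm stiff};\C^3)$ since $S\vect h\in\C^3$ and therefore $\simgrad(S\vect h)=0$ — whence $\vect w=\mathcal{O}(|\chi|)$ in $H^2(Y_{\rm stiff};\C^3)$ and the conormal derivative of $\Pi_\chi^{\rm stiff}(S\vect h)$ on $\Gamma$ is $\mathcal{O}(|\chi|)$ in $H^{1/2}(\Gamma;\C^3)$. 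Combining the two terms gives \eqref{pasimp}.

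The main obstacle is precisely this last half-derivative improvement: the $H^{1/2}(\Gamma)$-estimate is immediate from the Riesz integral and Remark~\ref{nakk120}, but gaining the extra half a derivative in a $\chi$-uniform way forces one to exploit the uniform elliptic regularity of the auxiliary Robin and Dirichlet problems on $Y_{\rm stiff}$ for the lifts of the three lowest Steklov modes, together with the $\mathcal{O}(|\chi|^2)$ size of the corresponding Steklov eigenvalues (Lemma~\ref{lemmasteklovorder}). One should also check that the constants in Lemmas~\ref{appendixregularitiyofrobin} and~\ref{appendixregularityofdirichlet}, as well as in \eqref{nakk204}, may be taken independent of $\chi\in Y'$; this is routine given the smooth and bounded dependence of $X_\chi$ on $\chi$.
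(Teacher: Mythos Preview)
Your proposal is correct and follows the paper's strategy: the $L^2(\Gamma)\to H^{1/2}(\Gamma)$ bound on $\widehat{P}_\chi-\widehat{P}_0$ via the Riesz integral over $\gamma$ is exactly the paper's first step, and your splitting $\widehat{P}_\chi-\widehat{P}_0=\widehat{P}_\chi\widecheck{P}_0-\widecheck{P}_\chi\widehat{P}_0$ is algebraically identical to the paper's $\widehat{P}_\chi(\widehat{P}_\chi-\widehat{P}_0)+(\widehat{P}_\chi-\widehat{P}_0)\widehat{P}_0$ (just expand). Your arguments for \eqref{piasimp} and \eqref{pistarasymp}, together with the observation that only the $H^{1/2}$ bound is needed for them, coincide with the paper's.

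The only substantive difference is in how much work goes into the two pieces of the $H^1$ upgrade. The paper handles the first piece with a one-line ``$\widehat{P}_\chi:L^2\to H^1$ bounded'' and the second with ``by \eqref{igor1} and the fact that $\widehat{P}_0$ is finite-rank'', whereas you supply explicit elliptic-regularity proofs. Your Robin-problem bootstrap for the first piece works but is longer than necessary: you can instead use directly that $(I-\Lambda_\chi^{\rm stiff})^{-1}:L^2(\Gamma)\to H^1(\Gamma)$ is bounded with a $\chi$-uniform constant (this is established in the proof of Theorem~\ref{dtnmapstheorem1} by interpolation), and then for $\vect g\in\widehat{\mathcal{E}}_\chi$ write $\vect g=(I-\Lambda_\chi^{\rm stiff})^{-1}(I-\widehat{\Lambda}_\chi^{\rm stiff})\vect g$. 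For the second piece, your route via \eqref{nakk204} together with the bound $\lVert\Lambda_\chi^{\rm stiff}\vect c\rVert_{L^2(\Gamma)}\le C|\chi|\,|\vect c|$ for constants $\vect c$ (which you obtain from $H^2$ regularity of the Dirichlet problem for $\vect w=\Pi_\chi^{\rm stiff}\vect c-\vect c$) is a clean and self-contained justification, considerably more explicit than the paper's terse sentence. So your approach is not different in kind but is more fully argued; what it buys is a transparent, $\chi$-uniform control of each term.
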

\begin{proof}
By choosing a contour $\gamma$ as above, and applying the Cauchy formula to the constant function $g(z)=1$, we obtain the asymptotics of projections $\widehat{P}_\chi$ defined by \eqref{steklovdecomposition}:
\begin{equation*}
\begin{aligned}
    \widehat{P}_\chi&=\frac{1}{2\pi i} \oint_\gamma \left(zI-\frac{1}{|\chi|^2}\Lambda_\chi^{\rm stiff}\right)^{-1}  dz & = \frac{1}{2\pi i} \oint_\gamma \left(zI-\frac{1}{|\chi|^2}\Lambda_\chi^{\rm hom}\right)^{-1}Sdz +  \oint_\gamma \widetilde{R}_{\chi}^{\rm corr}(z) dz 
    =\widehat{P}_0 +  \oint_\gamma \widetilde{R}_{\chi}^{\rm corr}(z) dz,
\end{aligned}
\end{equation*}
\begin{equation*}
	\widetilde{R}_{\chi}^{\rm corr}(z):= \left(|\chi|^{-2}\Lambda_\chi^{\rm stiff} - zI \right)^{-1} - \left(|\chi|^{-2}\Lambda_\chi^{\rm hom} - zI \right)^{-1}S, \qquad\ \  
	\biggl\lVert\oint_\gamma \widetilde{R}_{\chi}^{\rm corr}(z) dz\biggr\rVert_{L^2(\Gamma;\C^3) \to H^{1/2}(\Gamma;\C^3)} \leq C|\chi|,
\end{equation*}
as a consequence of Remark \ref{nakk120}.
This proves the estimate 
\begin{equation}
	\label{igor1}
	\left\lVert \widehat{P}_\chi - \widehat{P}_0  \right\rVert_{L^2(\Gamma;\C^3) \to H^{1/2}(\Gamma;\C^3)} \leq C|\chi|.
\end{equation}
To upgrade it to an $H^1$ estimate, we write
\begin{equation}
	\label{igor2}
	\widehat{P}_\chi - \widehat{P}_0 = \widehat{P}_\chi(\widehat{P}_\chi-\widehat{P}_0) + (\widehat{P}_\chi-\widehat{P}_0)\widehat{P}_0.
\end{equation}
Now note that $\widehat{P}_\chi:L^2(\Gamma;\C^3) \to H^1(\Gamma;\C^3)$ is bounded, owing to the facts that  $\mathcal{D}\left(\Lambda_\chi^{\rm stiff}\right)= H^1(\Gamma;\C^3)$ and that $\widehat{P}_\chi$ is the projection onto the eigenspace corresponding to ${\mathcal O}(|\chi|^2)$ eigenvalues. The first term in \eqref{igor2} admits an ${\mathcal O}(|\chi|)$ estimate in the $L^2(\Gamma;\C^3) \to H^1(\Gamma;\C^3)$ norm, due to \eqref{igor1} and the boundedness of $\widehat{P}_\chi$. The second term in \eqref{igor2} admits the same bound with respect to the same norm, by virtue of \eqref{igor1} and the fact that $\widehat{P}_0$ is finite-rank.
 Together with the assertion of Theorem \ref{Piasymptheorem} this yields
\begin{equation*}
       \Pi_\chi^{\rm stiff} \widehat{P}_\chi = \bigl( \Pi_0^{\rm stiff} + \widetilde{\Pi}_{\chi,1}^{\rm error}\bigr)\bigl(\widehat{P}_0 +  \widetilde{P}_{\chi}^{\rm error}\bigr) = \Pi_0^{\rm stiff} \widehat{P}_0 + \mathcal{O}\bigl(|\chi|\bigr), 
\end{equation*}
where $O(|\chi|)$ is understood in the sense of the $L^2(\Gamma;\C^3) \to H^1(Y_{\rm stiff};\C^3)$ norm. Similarly,
\begin{equation*}
   \widehat{P}_\chi \left(\Pi_\chi^{\rm stiff}\right)^*  =  \widehat{P}_0 \left(\Pi_0^{\rm stiff}\right)^* + \mathcal{O}(|\chi|), 
\end{equation*}
where $O(|\chi|)$ is understood in the sense of the $L^2(\Gamma;\C^3) \to L^2(Y_{\rm stiff};\C^3)$ norm.
\end{proof}

\subsection{Soft component asymptotics}
\label{soft_sec}
Next, we state some simpler asymptotic properties of the boundary operators for the soft component. These results will be used in Section \ref{sectionopeff} for proving Theorem \ref{thmamin2}~(a). 
\begin{lemma}
For the boundary operators on the soft component, one has 
\begin{equation}
\label{softidentities}
    \Pi_\chi^{\rm soft} = {\rm e}^{-{\rm i}\chi y} \Pi_0^{\rm soft} {\rm e}^{{\rm i}\chi y}, \qquad \Lambda_\chi^{\rm soft} = {\rm e}^{-{\rm i}\chi y} \Lambda_0^{\rm soft} {\rm e}^{{\rm i}\chi y}, \qquad \Gamma_{\chi,1}^{\rm soft} = {\rm e}^{-{\rm i}\chi y} \Gamma_{0,1}^{\rm soft} {\rm e}^{{\rm i}\chi y}.
\end{equation}
Moreover, the eigenvalues of the operator $\mathcal{A}_{0,\chi}^{\rm soft}$ are independent of $\chi,$ and 
\begin{equation}
\label{softidentites2}
    {\rm e}^{{\rm i}\chi y} \bigl(\mathcal{A}_{0,\chi}^{\rm soft} - zI\bigr)^{-1} {\rm e}^{-{\rm i}\chi y} = \bigl(\mathcal{A}_{0,0}^{\rm soft} - zI\bigr)^{-1},
\end{equation}
where the operator $\mathcal{A}_{0,0}^{\rm soft}$ is defined by \eqref{differentialoperators} with $\chi = 0$ (and so coincides with ${\mathcal A}_{\rm Bloch}$ introduced in Section \ref{elast_op_sec}.) 
Furthermore, let $\{\eta_k\}$ be a sequence of eigenvalues of the operator $\mathcal{A}_{0,0}^{\rm soft}$ and $\{\varphi_k\}$ the associated sequence of eigenfunctions. Then the corresponding sequence of eigenfunctions of $\mathcal{A}_{0,\chi}^{\rm soft}$ is given by $\{\varphi_k^\chi\}= \{{\rm e}^{-{\rm i}\chi y} \varphi_k\}$.
\end{lemma}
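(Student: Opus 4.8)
The plan is to realise all the claimed identities as instances of a single \emph{gauge (covariant-translation) transformation}. The only computational input is the product-rule identity
\begin{equation*}
(\simgrad + {\rm i}X_\chi)\vect u = {\rm e}^{-{\rm i}\chi y}\,\simgrad\bigl({\rm e}^{{\rm i}\chi y}\vect u\bigr),\qquad \vect u\in H^1(Y_{\rm soft};\C^3),
\end{equation*}
which follows at once from $\simgrad({\rm e}^{{\rm i}\chi y}\vect u) = {\rm e}^{{\rm i}\chi y}\bigl(\simgrad\vect u + {\rm i}\sym(\vect u\otimes\chi)\bigr) = {\rm e}^{{\rm i}\chi y}(\simgrad + {\rm i}X_\chi)\vect u$. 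Taking formal adjoints (equivalently, integrating by parts and using $|{\rm e}^{{\rm i}\chi y}|=1$) gives the dual version $(\simgrad + {\rm i}X_\chi)^*\bigl({\rm e}^{-{\rm i}\chi y}\vect S\bigr) = {\rm e}^{-{\rm i}\chi y}\,\simgrad^*\vect S$ for matrix fields $\vect S$. The crucial point, and the only place where the geometry enters, is that multiplication by ${\rm e}^{\pm{\rm i}\chi y}$ is a \emph{unitary} operator $U_\chi\colon\vect u\mapsto {\rm e}^{{\rm i}\chi y}\vect u$ on $L^2(Y_{\rm soft};\C^3)$ which maps $H^1(Y_{\rm soft};\C^3)$ onto itself and preserves vanishing of the trace on $\Gamma$. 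Unlike on the stiff component, there is no $Y$-periodicity constraint to worry about, precisely because $\overline{Y_{\rm soft}}$ lies in the interior of $Y$; this is why the argument is available on the soft part only.

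First I would treat the operators $\mathcal{A}_{0,\chi}^{\rm soft}$. Since $\A_{\rm soft}$ is a real, $\chi$-independent multiplication operator that commutes with scalar phases, the gauge identity shows that the form \eqref{nakk71} satisfies $a_{0,\chi}^{\rm soft}(\vect u,\vect v) = a_{0,0}^{\rm soft}(U_\chi\vect u, U_\chi\vect v)$ for all $\vect u,\vect v$ in the ($\chi$-independent) form domain $\{\vect u\in H^1(Y_{\rm soft};\C^3):\vect u|_\Gamma=0\}$, which $U_\chi$ maps onto itself. Consequently the associated self-adjoint operators are unitarily equivalent, $\mathcal{A}_{0,\chi}^{\rm soft} = U_\chi^{-1}\mathcal{A}_{0,0}^{\rm soft}U_\chi = {\rm e}^{-{\rm i}\chi y}\mathcal{A}_{0,0}^{\rm soft}{\rm e}^{{\rm i}\chi y}$, with $\mathcal{A}_{0,0}^{\rm soft} = \mathcal{A}_{\rm Bloch}$. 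Because $U_\chi^{-1}(zI)U_\chi = zI$, this yields $(\mathcal{A}_{0,\chi}^{\rm soft}-zI)^{-1} = {\rm e}^{-{\rm i}\chi y}(\mathcal{A}_{0,0}^{\rm soft}-zI)^{-1}{\rm e}^{{\rm i}\chi y}$, which is \eqref{softidentites2}; the $\chi$-independence of the spectrum and the fact that $\{{\rm e}^{-{\rm i}\chi y}\varphi_k\}$ is an orthonormal eigenbasis of $\mathcal{A}_{0,\chi}^{\rm soft}$ follow immediately.

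Next I would pass to the boundary operators. If $\vect u = \Pi_\chi^{\rm soft}\vect g$ solves \eqref{boundaryvalueproblemsoncomponents} (with no periodicity in the soft case) for a given $\vect g\in H^{1/2}(\Gamma;\C^3)$, then by the two gauge identities and the commutation of $\A_{\rm soft}$ with scalar phases, $U_\chi\vect u$ solves the same boundary value problem at $\chi=0$ with boundary datum ${\rm e}^{{\rm i}\chi y}\vect g$; uniqueness then gives $U_\chi\Pi_\chi^{\rm soft}\vect g = \Pi_0^{\rm soft}({\rm e}^{{\rm i}\chi y}\vect g)$, i.e. the first identity of \eqref{softidentities}, where the trailing ${\rm e}^{{\rm i}\chi y}$ is multiplication on $\mathcal{E}=L^2(\Gamma;\C^3)$ and the leading ${\rm e}^{-{\rm i}\chi y}$ is multiplication on $\mathcal{H}^{\rm soft}$. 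Moreover, by the gauge identity the co-normal derivative satisfies $\partial_\nu^{\rm soft}\vect u = \bigl(\A_{\rm soft}(\simgrad+{\rm i}X_\chi)\vect u\bigr)\cdot\vect n_{\rm soft}\big|_\Gamma = {\rm e}^{-{\rm i}\chi y}\bigl(\A_{\rm soft}\simgrad(U_\chi\vect u)\bigr)\cdot\vect n_{\rm soft}\big|_\Gamma = {\rm e}^{-{\rm i}\chi y}\,\partial_\nu^{{\rm soft},0}(U_\chi\vect u)$, where $\partial_\nu^{{\rm soft},0}$ denotes the $\chi=0$ co-normal derivative. Composing this with the intertwining relation just proved for $\Pi_\chi^{\rm soft}$ gives $\Lambda_\chi^{\rm soft} = -\partial_\nu^{\rm soft}\Pi_\chi^{\rm soft} = {\rm e}^{-{\rm i}\chi y}\Lambda_0^{\rm soft}{\rm e}^{{\rm i}\chi y}$ (note that the domain $H^1(\Gamma;\C^3)$ is invariant under multiplication by ${\rm e}^{\pm{\rm i}\chi y}$), and composing it with the identities \eqref{identity1}--\eqref{identity2} gives the corresponding relation for $\Gamma_{1,\chi}^{\rm soft}$. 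There is no genuine obstacle here: the only care needed is bookkeeping of which space ($Y_{\rm soft}$ or $\Gamma$) each phase factor multiplies on, together with the observation recorded above that the gauge transformation is legitimate on the soft component precisely because periodicity plays no role there.
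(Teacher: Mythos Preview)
Your proof is correct and follows essentially the same approach as the paper: both arguments rest on the conjugation identity $(\simgrad + {\rm i}X_\chi)\vect u = {\rm e}^{-{\rm i}\chi y}\simgrad({\rm e}^{{\rm i}\chi y}\vect u)$ and trace the boundary operators through it. Your presentation is somewhat more streamlined---you package the computation as unitary equivalence of the sesquilinear forms via $U_\chi$, whereas the paper writes out the boundary value problems for $\Pi_0^{\rm soft}$ and $\Lambda_0^{\rm soft}$ explicitly and transforms them step by step---but the mathematical content is the same.
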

\begin{proof}
 By the definition of $\Pi_0^{\rm soft},$ one has
\begin{equation*}
    \Pi_0^{\rm soft}\vect g = \vect u \quad \iff \left\{ \begin{array}{ll}
        - \div\left(\A_{\rm soft}\simgrad \vect u \right) = 0  & \mbox{ on } Y_{\rm soft},  \\[0.25em]
        \vect u = \vect g & \mbox{ on } \Gamma.
    \end{array} \right.
\end{equation*}
Writing $\vect u = {\rm e}^{{\rm i}\chi y} \vect w \in H^1(Y_{\rm soft};\C^3)$, we obtain
\begin{equation*}
\begin{aligned}
    \Pi_0^{\rm soft}\vect g = \vect u \quad & \iff \left\{ \begin{array}{ll}
        - \div\left(\A_{\rm soft}\simgrad \left({\rm e}^{{\rm i}\chi y} \vect w \right)  \right) = 0  & \mbox{ on } Y_{\rm soft},  \\[0.25em]
        {\rm e}^{{\rm i}\chi y} \vect w = \vect g &\mbox{ on } \Gamma
    \end{array} \right.  
    \\[0.6em]
    & 
    \iff \left\{\begin{array}{ll}
        - \div \left({\rm e}^{{\rm i}\chi y}\A_{\rm soft}\left( \simgrad +{\rm i}X_\chi  \right) \vect w\right) = 0   &\mbox{ on } Y_{\rm soft},  \\[0.25em]
        \vect w = {\rm e}^{-{\rm i}\chi y} \vect g &\mbox{ on } \Gamma
    \end{array} \right. \\[0.6em]
    & \iff \vect w = \Pi_\chi^{\rm soft} {\rm e}^{-{\rm i}\chi y} \vect g \quad \iff \quad \vect u = {\rm e}^{{\rm i}\chi y}\Pi_\chi^{\rm soft}{\rm e}^{-{\rm i}\chi y}\vect g.
\end{aligned}
\end{equation*}
Similarly, one has
\begin{equation*}
\begin{aligned}
    \Lambda_0^{\rm soft}\vect g = \vect h \quad & \iff \left\{ \begin{array}{lll}
        - \div\left(\A_{\rm soft}\simgrad \vect  u  \right) = 0   &\mbox{ on } Y_{\rm soft},\\[0.25em]
        \vect u = \vect g &\mbox{ on } \Gamma, \\[0.25em]
        \A_{\rm soft}\simgrad \vect u \cdot {\vect n}_{\rm soft} = \vect h  &\mbox{ on } \Gamma
    \end{array} \right.
   \iff \left\{ \begin{array}{lll}
        - \div\left(\A_{\rm soft} \simgrad \left({\rm e}^{{\rm i}\chi y} \vect w \right)  \right) = 0  &\mbox{ on } Y_{\rm soft},  \\[0.25em]
        {\rm e}^{{\rm i}\chi y} \vect w = \vect g & \mbox{ on } \Gamma, \\[0.25em]
        \A_{\rm soft}\simgrad \left({\rm e}^{{\rm i}\chi y} \vect w \right) \cdot {\vect n}_{\rm soft} = \vect h & \mbox{ on } \Gamma
    \end{array} \right. 
\\[0.7em]
    & 
    \iff \left\{\begin{array}{lll}
        - \div \left({\rm e}^{{\rm i}\chi y}\A_{\rm soft}\left( \simgrad + {\rm i}X_\chi  \right) \vect w\right) = 0  & \mbox{ on } Y_{\rm soft},  \\[0.25em]
        \vect w = {\rm e}^{-{\rm i}\chi y} \vect g & \mbox{ on } \Gamma, \\[0.25em]
        \A_{\rm soft}\left( \simgrad+{\rm i}X_\chi  \right) \vect w  \cdot {\vect n}_{\rm soft} = {\rm e}^{-{\rm i}\chi y}\vect h & \mbox{ on } \Gamma
    \end{array} \right. \\[0.6em]
    & \iff  {\rm e}^{-{\rm i}\chi y}\vect h = \Lambda_\chi^{\rm soft} {\rm e}^{-{\rm i}\chi y}\vect g \quad \iff \quad \vect h ={\rm e}^{{\rm i}\chi y} \Lambda_\chi^{\rm soft}{\rm e}^{-{\rm i}\chi y}\vect g.
\end{aligned}
\end{equation*}
\end{proof}
\begin{corollary}
\label{msoftasymptotics}
For the $M$-function $M_\chi^{\rm soft}(z)$ of the soft component, one has
\begin{equation*}
    M_\chi^{\rm soft}(z) = {\rm e}^{- i\chi y}M_0^{\rm soft}(z) {\rm e}^{i \chi y}, \qquad M_0^{\rm soft}(z) = \Lambda_0^{\rm soft} + z \left( \Pi_0^{\rm soft}\right)^*\Pi_0^{\rm soft} + z^2 \left( \Pi_0^{\rm soft}\right)^* \left( \mathcal{A}_{0,0}^{\rm soft} - zI\right)^{-1}\Pi_0^{\rm soft}.
\end{equation*}
Furthermore, there exists a $\chi$-independent constant $C=C(z)>0$ such that
\begin{equation*}
    \left\lVert\widehat{P}_\chi M_\chi^{\rm soft}(z)\widehat{P}_\chi -\widehat{P}_0 M_0^{\rm soft}(z)\widehat{P}_0\right\rVert_{\mathcal{E} \to \mathcal{E}} \leq C |\chi|.
\end{equation*}
\end{corollary}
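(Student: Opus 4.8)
The plan is to obtain the representation formula directly from the abstract identity \eqref{identityforasymptotics} combined with the conjugation relations in \eqref{softidentities} and \eqref{softidentites2}, and then to derive the $\mathcal{O}(|\chi|)$ bound by a routine three-term splitting that feeds on Corollary \ref{pipasymptotics_stiff} and on the elementary perturbation estimates for the gauge factor ${\rm e}^{{\rm i}\chi y}$.

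For the representation I would first specialise \eqref{identityforasymptotics} to the Ryzhov triple $\bigl(\mathcal{A}_{0,\chi}^{\rm soft},\Pi_\chi^{\rm soft},\Lambda_\chi^{\rm soft}\bigr)$, which gives
\[
M_\chi^{\rm soft}(z)=\Lambda_\chi^{\rm soft}+z\bigl(\Pi_\chi^{\rm soft}\bigr)^*\Pi_\chi^{\rm soft}+z^2\bigl(\Pi_\chi^{\rm soft}\bigr)^*\bigl(\mathcal{A}_{0,\chi}^{\rm soft}-zI\bigr)^{-1}\Pi_\chi^{\rm soft}.
\]
Since multiplication by ${\rm e}^{{\rm i}\chi y}$ is unitary on both $L^2(Y_{\rm soft};\C^3)$ and $\mathcal{E}$, the first identity in \eqref{softidentities} yields $\bigl(\Pi_\chi^{\rm soft}\bigr)^*={\rm e}^{-{\rm i}\chi y}\bigl(\Pi_0^{\rm soft}\bigr)^*{\rm e}^{{\rm i}\chi y}$; inserting this together with \eqref{softidentities} and \eqref{softidentites2} and cancelling the interior factors ${\rm e}^{\pm{\rm i}\chi y}$ on $Y_{\rm soft}$ produces $M_\chi^{\rm soft}(z)={\rm e}^{-{\rm i}\chi y}M_0^{\rm soft}(z){\rm e}^{{\rm i}\chi y}$, and the stated formula for $M_0^{\rm soft}(z)$ is the case $\chi=0$. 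I would also record that $M_0^{\rm soft}(z)$ is bounded as an operator $H^1(\Gamma;\C^3)\to L^2(\Gamma;\C^3)$: the summand $\Lambda_0^{\rm soft}$ has this property by \eqref{nakk203}, while the remaining two summands are bounded on $\mathcal{E}$ because $\Pi_0^{\rm soft}$ is bounded from $\mathcal{E}$ into $\mathcal{H}^{\rm soft}$.

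For the estimate, writing $A_\chi:=M_\chi^{\rm soft}(z)$, I would use the identity
\[
\widehat{P}_\chi A_\chi\widehat{P}_\chi-\widehat{P}_0 A_0\widehat{P}_0=(\widehat{P}_\chi-\widehat{P}_0)A_\chi\widehat{P}_\chi+\widehat{P}_0(A_\chi-A_0)\widehat{P}_\chi+\widehat{P}_0 A_0(\widehat{P}_\chi-\widehat{P}_0).
\]
The two outer terms are $\mathcal{O}(|\chi|)$ in $\mathcal{E}\to\mathcal{E}$ by \eqref{pasimp}, once one records that $\widehat{P}_\chi:\mathcal{E}\to H^1(\Gamma;\C^3)$ is bounded uniformly in $\chi$ (a consequence of \eqref{pasimp}), that $A_0:H^1(\Gamma;\C^3)\to L^2(\Gamma;\C^3)$ is bounded, and that $A_\chi\widehat{P}_\chi:\mathcal{E}\to\mathcal{E}$ is bounded uniformly in $\chi$ (combine $M_\chi^{\rm soft}(z)={\rm e}^{-{\rm i}\chi y}M_0^{\rm soft}(z){\rm e}^{{\rm i}\chi y}$ with the uniform $\mathcal{E}\to H^1$ bound on $\widehat{P}_\chi$, the boundedness of ${\rm e}^{{\rm i}\chi y}$ on $H^1(\Gamma;\C^3)$, and the $H^1\to L^2$ bound on $M_0^{\rm soft}(z)$). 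For the middle term I would decompose $A_\chi-A_0={\rm e}^{-{\rm i}\chi y}M_0^{\rm soft}(z)\bigl({\rm e}^{{\rm i}\chi y}-I\bigr)+\bigl({\rm e}^{-{\rm i}\chi y}-I\bigr)M_0^{\rm soft}(z)$: the second summand applied to $\widehat{P}_\chi$ is $\mathcal{O}(|\chi|)$ because $M_0^{\rm soft}(z)\widehat{P}_\chi:\mathcal{E}\to\mathcal{E}$ is uniformly bounded and $\|{\rm e}^{-{\rm i}\chi y}-I\|_{\mathcal{E}\to\mathcal{E}}\le C|\chi|$; the first summand applied to $\widehat{P}_\chi$ is $\mathcal{O}(|\chi|)$ because multiplication by ${\rm e}^{{\rm i}\chi y}-I$ maps $H^1(\Gamma;\C^3)$ to itself with norm $\mathcal{O}(|\chi|)$ and $\widehat{P}_\chi:\mathcal{E}\to H^1(\Gamma;\C^3)$ is uniformly bounded, so $\bigl({\rm e}^{{\rm i}\chi y}-I\bigr)\widehat{P}_\chi:\mathcal{E}\to H^1(\Gamma;\C^3)$ has norm $\mathcal{O}(|\chi|)$, after which $M_0^{\rm soft}(z)$ and the unitary ${\rm e}^{-{\rm i}\chi y}$ are applied.

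The only point requiring genuine care is the bookkeeping of the two Sobolev scales: every occurrence of $M_0^{\rm soft}(z)$ must receive an $H^1(\Gamma;\C^3)$ argument, so each estimate has to be routed through the uniform $\mathcal{E}\to H^1(\Gamma;\C^3)$ bound on $\widehat{P}_\chi$ from Corollary \ref{pipasymptotics_stiff} and through the elementary fact that multiplication by ${\rm e}^{{\rm i}\chi y}-I$ is $\mathcal{O}(|\chi|)$ as an operator on $H^1(\Gamma;\C^3)$, which itself follows by differentiating the product and using $|{\rm e}^{{\rm i}\chi y}-1|\le C|\chi|$ pointwise on $\overline{Y}$. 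Beyond Corollary \ref{pipasymptotics_stiff}, no new analytic input is needed.
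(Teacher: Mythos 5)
Your proof is correct and follows essentially the same route as the paper's own (very terse) argument: the representation formula from \eqref{identityforasymptotics}, the gauge-conjugation identities \eqref{softidentities}--\eqref{softidentites2}, the $H^1(\Gamma)\to L^2(\Gamma)$ bound on $\Lambda_0^{\rm soft}$ from Remark \ref{igor3}, and the projection asymptotics \eqref{pasimp}. You have simply made explicit the telescoping decomposition and the Sobolev-scale bookkeeping that the paper leaves to the reader.
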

\begin{proof}
    Recalling the identity \eqref{identityforasymptotics}, we have the  representation formula
    \begin{equation*}
        M_\chi^{\rm soft}(z) = \Lambda_\chi^{\rm soft} + z \left( \Pi_\chi^{\rm soft}\right)^*\Pi_\chi^{\rm soft} + z^2 \left( \Pi_\chi^{\rm soft}\right)^* \bigl(\mathcal{A}_{0,\chi}^{\rm soft}-zI\bigr)^{-1}\Pi_\chi^{\rm soft}.
    \end{equation*} 
Employing the identities \eqref{softidentities}, \eqref{softidentites2}, Remark \ref{igor3}, and the estimate \eqref{pasimp} yields the claim.
\end{proof}
\begin{remark}
	\label{nakk1000}
    Notice that, due to the fact that 
    $\Lambda_0^{\rm soft} |_{\widehat{\mathcal{E}}_0} = 0,$ one also has
    \begin{equation}
    \label{msoftzeroformula}
       M_0^{\rm soft}(z) |_{\widehat{\mathcal{E}}_0} =  z  \left( \Pi_0^{\rm soft}\right)^*\Pi_0^{\rm soft}|_{\widehat{\mathcal{E}}_0} + z^2  \left( \Pi_0^{\rm soft}\right)^* \left( \mathcal{A}_{0,0}^{\rm soft} - zI\right)^{-1}\Pi_0^{\rm soft}|_{\widehat{\mathcal{E}}_0}
    \end{equation}
\end{remark}

Combining the above lemma with  \eqref{pasimp} yields the following corollary, which we use in the proof of Theorem \ref{nakk1}.
\begin{corollary}
\label{pipasymptotics_soft}
The operator $\Pi_\chi^{\rm soft} \widehat{P}_\chi$ satisfies the estimates
\begin{align}
\label{pisoftasimp}
     &\bigl\lVert\Pi_\chi^{\rm soft} \widehat{P}_\chi - \Pi_0^{\rm soft} \widehat{P}_0\bigr\rVert_{L^2(\Gamma;\C^3) \to L^2(Y_{\rm soft};\C^3)} \leq C|\chi|,
     \\[0.4em]
	&
	\bigl\lVert\widehat{P}_\chi \left(\Pi_\chi^{\rm soft}\right)^* - \widehat{P}_0 \left(\Pi_0^{\rm soft}\right)^* \bigr\rVert_{L^2(Y_{\rm soft};\C^3) \to L^2(\Gamma;\C^3)} \leq C|\chi|.\nonumber
\end{align}
where the constant $C>0$ does not depend on $\chi \in Y'$.
\end{corollary}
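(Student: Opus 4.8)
The plan is to reduce both bounds to the projection asymptotics \eqref{pasimp} already established, combined with the explicit conjugation formula $\Pi_\chi^{\rm soft} = {\rm e}^{-{\rm i}\chi y}\Pi_0^{\rm soft}{\rm e}^{{\rm i}\chi y}$ from \eqref{softidentities}. First I would record two elementary facts: by Theorem \ref{nakk200} the lift operator $\Pi_0^{\rm soft}$ (taken as the closure in $\mathcal{E}$) is bounded from $L^2(\Gamma;\C^3)$ to $L^2(Y_{\rm soft};\C^3)$; and multiplication by ${\rm e}^{\pm{\rm i}\chi y} - I$ has $L^\infty$-norm $\le C|\chi|$ on the bounded sets $\Gamma$ and $Y_{\rm soft}$. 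Writing
\[
\Pi_\chi^{\rm soft} - \Pi_0^{\rm soft} = {\rm e}^{-{\rm i}\chi y}\Pi_0^{\rm soft}\bigl({\rm e}^{{\rm i}\chi y} - I\bigr) + \bigl({\rm e}^{-{\rm i}\chi y} - I\bigr)\Pi_0^{\rm soft}
\]
and estimating each summand with these two facts gives the (stronger) bound $\lVert\Pi_\chi^{\rm soft} - \Pi_0^{\rm soft}\rVert_{L^2(\Gamma;\C^3)\to L^2(Y_{\rm soft};\C^3)} \le C|\chi|$.

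Next I would split
\[
\Pi_\chi^{\rm soft}\widehat{P}_\chi - \Pi_0^{\rm soft}\widehat{P}_0 = \bigl(\Pi_\chi^{\rm soft} - \Pi_0^{\rm soft}\bigr)\widehat{P}_\chi + \Pi_0^{\rm soft}\bigl(\widehat{P}_\chi - \widehat{P}_0\bigr).
\]
The first term is $O(|\chi|)$ in the $L^2(\Gamma;\C^3)\to L^2(Y_{\rm soft};\C^3)$ norm by the preceding step together with $\lVert\widehat{P}_\chi\rVert = 1$; the second is $O(|\chi|)$ because $\Pi_0^{\rm soft}$ is bounded on $L^2$ and $\widehat{P}_\chi - \widehat{P}_0$ is $O(|\chi|)$ even in the stronger $L^2(\Gamma;\C^3)\to H^1(\Gamma;\C^3)$ norm by \eqref{pasimp}. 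This yields \eqref{pisoftasimp}. The second estimate then follows by duality: since $\widehat{P}_\chi$ is an orthogonal projection, $\widehat{P}_\chi(\Pi_\chi^{\rm soft})^* = \bigl(\Pi_\chi^{\rm soft}\widehat{P}_\chi\bigr)^*$ as an operator $L^2(Y_{\rm soft};\C^3)\to L^2(\Gamma;\C^3)$ (and similarly for $\chi=0$), and taking adjoints preserves the operator norm, so $\lVert\widehat{P}_\chi(\Pi_\chi^{\rm soft})^* - \widehat{P}_0(\Pi_0^{\rm soft})^*\rVert = \lVert\Pi_\chi^{\rm soft}\widehat{P}_\chi - \Pi_0^{\rm soft}\widehat{P}_0\rVert \le C|\chi|$.

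There is no genuine obstacle in this argument; the only point requiring a little care is the bookkeeping of function spaces — namely, invoking $\Pi_0^{\rm soft}$ only where it acts boundedly (between $L^2$-spaces, as guaranteed by Theorem \ref{nakk200}, so that no $H^{1/2}(\Gamma)$ control of $\widehat{P}_\chi g$ is needed), and keeping the conjugating exponentials on the bounded domains $\Gamma$ and $Y_{\rm soft}$ so that their deviation from the identity is uniformly $O(|\chi|)$ in the norms used. Everything else is a routine triangle-inequality estimate.
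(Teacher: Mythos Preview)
Your proof is correct and follows exactly the route the paper indicates: the paper offers no explicit argument beyond the remark that the corollary follows by combining the conjugation identity \eqref{softidentities} with the projection estimate \eqref{pasimp}, and you have simply spelled out that combination carefully. The only minor stylistic difference is that you first prove the stronger bound $\lVert\Pi_\chi^{\rm soft} - \Pi_0^{\rm soft}\rVert_{L^2\to L^2}\le C|\chi|$ and then split, whereas one could equally split first and handle each piece directly; either way the ingredients are identical.
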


\section{Transmission problem: $O(\varepsilon^2)$ resolvent asymptotics} \label{section5} 

 In this section we aim at proving Theorem \ref{thmmain1}. The starting point is the Kre\u\i n formula \eqref{transmissionkrein}. The approximation is carried out in two steps. The first step (see Section \ref{steklovtruncation}, Theorem \ref{firstapproximationtheorem}) is to use the Schur-Frobenius inversion formula by restricting traces to the space $\widehat{\mathcal{E}}_{\chi}$ and by imposing the equality of projections onto the same space of the traces of co-normal derivatives. The second step (Section \ref{refinement}, Theorem \ref{thmrefinement}) is to approximate the $M$-function on the stiff component. 
Recalling \eqref{usefulidentitiesresolvent}, we write
\begin{equation}
\label{asymptoticsforsformula}
     S_{\chi}^{\rm stiff}(\varepsilon^2 z) = \Bigl(I + \varepsilon^2 z\bigl( \mathcal{A}_{0,\chi}^{\rm stiff} - \varepsilon^2 zI\bigr)^{-1}\Bigr)\Pi^{\rm stiff}_\chi = \Pi^{\rm stiff}_\chi +\mathcal{O}(\varepsilon^2), 
\end{equation}
where $\mathcal{O}(\varepsilon^2)$ is understood in the sense of the $\mathcal{E}\mapsto \mathcal{H}^{\rm stiff}$ norm. The formula  \eqref{identityforasymptotics} yields
\begin{equation}
\label{Mfunctionasymptotics}
\begin{aligned}
        M_{\chi}^{\rm stiff}(\varepsilon^2 z)
        & = \Lambda_{\chi}^{\rm stiff} + \varepsilon^2 z \left(\Pi_\chi^{\rm stiff}\right)^*\Bigl(I-\varepsilon^2 z \bigr(\mathcal{A}_{0,\chi}^{\rm stiff}\bigr)^{-1}\Bigr)^{-1} \Pi_\chi^{\rm stiff}\\[0.3em]
        &= \Lambda_{\chi}^{\rm stiff} + \varepsilon^2 z \bigl(\Pi_\chi^{\rm stiff}\bigr)^*\Pi_\chi^{\rm stiff} + \varepsilon^4 z^2 \Bigl(\left(\Pi_\chi^{\rm stiff}\right)^* \bigl(\mathcal{A}_{0,\chi}^{\rm stiff} - \varepsilon^2 zI\bigr)^{-1} \Pi_\chi^{\rm stiff}\Bigr).
\end{aligned}
\end{equation}

\subsection{Steklov truncation}\label{steklovtruncation} 
 Similarly to \eqref{steklovtruncation1}, we define the following truncated DtN maps:
\begin{equation*}
        \widehat{\Lambda}_\chi^{\rm soft}:= \widehat{P}_\chi\Lambda_\chi^{\rm soft} |_{\widehat{\mathcal{E}}_\chi}, 
        \quad \widecheck{\Lambda}_\chi^{\rm soft}:= \widecheck{P}_\chi\Lambda_\chi^{\rm soft} |_{\widecheck{\mathcal{E}}_\chi},\qquad 
        \widehat{\Lambda}_{\chi, \varepsilon}:= \widehat{P}_\chi\Lambda_{\chi,\varepsilon}|_{\widehat{\mathcal{E}}_\chi}, 
        \quad \widecheck{\Lambda}_{\chi,\varepsilon}:= \widecheck{P}_\chi\Lambda_{\chi,\varepsilon}|_{\widecheck{\mathcal{E}}_\chi},
\end{equation*}
so one obviously has
\begin{equation}
    \widehat{\Lambda}_{\chi, \varepsilon}=\varepsilon^{-2}  \widehat{\Lambda}_\chi^{\rm stiff} + \widehat{\Lambda}_\chi^{\rm soft}, \qquad \widecheck{\Lambda}_{\chi, \varepsilon}=\varepsilon^{-2}  \widecheck{\Lambda}_\chi^{\rm stiff} + \widecheck{\Lambda}_\chi^{\rm soft}.
    \label{operator_sums}
\end{equation}
The first operator sum in (\ref{operator_sums}) is self-adjoint due to the fact that its terms are finite-rank self-adjoint (Hermitian) operators. The second sum in (\ref{operator_sums}) also defines a self-adjoint operator (noting that $\widecheck{\Lambda}_\chi^{\rm soft}$ is also self-adjoint, which can be checked by considering the associated sesquilinear form), by an argument similar to that of Theorem \ref{thm:Voffka}, where the operator domains are given by 
\begin{equation*}
    \mathcal{D}(\widecheck{\Lambda}_{\chi, \varepsilon}) = \mathcal{D}(\widecheck{\Lambda}_\chi^{\rm stiff}) = \mathcal{D}(\widecheck{\Lambda}_\chi^{\rm soft}) = \widecheck{P}_\chi \bigl(\mathcal{D}(\Lambda_{\chi,\varepsilon})\bigr).
\end{equation*}
Additionally, we denote the truncated lift operators by
\begin{equation*}
        \widehat{\Pi}_\chi^{\rm stiff(soft)}:=\Pi_\chi^{\rm stiff(soft)}|_{\widehat{\mathcal{E}}_\chi},  \quad \widecheck{\Pi}_\chi^{\rm stiff(soft)}:=\Pi_\chi^{\rm stiff(soft)}|_{\widecheck{\mathcal{E}}_\chi}, \quad \widehat{\Pi}_\chi:=  \widehat{\Pi}_\chi^{\rm stiff}\oplus  \widehat{\Pi}_\chi^{\rm soft},\quad \widecheck{\Pi}_\chi:=  \widecheck{\Pi}_\chi^{\rm stiff}\oplus  \widecheck{\Pi}_\chi^{\rm soft}.
\end{equation*}
Thus, we have defined the following ``Steklov-truncated'' Ryzhov triples
\begin{equation*}
        \bigl(\mathcal{A}_{0,\chi}^{\rm stiff (soft)}, \widehat{\Pi}_\chi^{\rm stiff(soft)},\widehat{\Lambda}_\chi^{\rm stiff (soft)} \bigr) \mbox{ on } \bigl(\mathcal{H}^{\rm stiff(soft)}, \widehat{\mathcal{E}}_\chi \bigr),\quad  
        \bigl(\mathcal{A}_{0,\chi}^{\rm stiff (soft)}, \widecheck{\Pi}_\chi^{\rm stiff(soft)},\widecheck{\Lambda}_\chi^{\rm stiff (soft)} \bigr) \mbox{ on } \bigl(\mathcal{H}^{\rm stiff(soft)}, \widecheck{\mathcal{E}}_\chi\bigr),
\end{equation*}
as well as the coupled Steklov-truncated triples
        $(\mathcal{A}_{0,\chi,\varepsilon}, \widehat{\Pi}_\chi,\widehat{\Lambda}_{\chi,\varepsilon}),$ $(\mathcal{A}_{0,\chi,\varepsilon}, \widecheck{\Pi}_\chi,\widecheck{\Lambda}_{\chi,\varepsilon})$ on $(\mathcal{H},  \widehat{\mathcal{E}}_\chi),$  $(\mathcal{H},  \widecheck{\mathcal{E}}_\chi),$ respectively.
The triple properties as stated in the Definition \ref{ryzhovtriple} are easily checked. In particular, one has
\begin{equation*}
    \begin{aligned}
        \mathcal{D}(\mathcal{A}_{0,\chi,\varepsilon})\cap \mathcal{R}(\widehat{\Pi}_\chi)&= \mathcal{D}(\mathcal{A}_{0,\chi,\varepsilon})\cap \Pi_\chi(\widehat{P}_\chi(\mathcal{E})) \subset  \mathcal{D}(\mathcal{A}_{0,\chi,\varepsilon})\cap \Pi_\chi(\mathcal{E}) = \{0\}, \\[0.3em]
        \ker(\widehat{\Pi}_\chi)&=\ker(\Pi_\chi \widehat{P}_\chi) \subset \ker(\Pi_\chi) = \{0\}.
    \end{aligned}
\end{equation*}
One can define the boundary triples and $M$-functions associated with the Ryzhov triples introduced above, denoted in the same fashion. Notice that, for example, the domain of the operator $\widehat{\mathcal{A}}_{\chi}^{\rm soft}$ coincides with $\mathcal{D}(\mathcal{A}_{0,\chi}^{\rm soft}) \dot{+} \widehat{\Pi}_\chi(\widehat{\mathcal{E}}_\chi)$, so the trace operator $\widehat{\Gamma}_{0,\chi}$ takes values in $\widehat{\mathcal{E}}_\chi$.
By recalling that representation formula \eqref{representationmfunctionformula} decomposes the $M$-function into the sum of a self-adjoint operator (DtN map) and a bounded operator, we know that its domain actually coincides with the domain of the associated ``$\Lambda$-operator", and thus one has
\begin{equation*}
\begin{aligned}
        \widehat{M}_\chi(z)^{\rm stiff(soft)} &= \widehat{\Lambda}_\chi^{\rm stiff(soft)} + z \bigl(\widehat{\Pi}_\chi^{\rm stiff(soft)}\bigr)^*\Bigl(I - z\bigl(\mathcal{A}_{0,\chi}^{\rm stiff(soft)}\bigr)^{-1} \Bigr)^{-1} \widehat{\Pi}^{\rm stiff(soft)}_\chi \\[0.25em]
        &=\widehat{P}_\chi \Bigl(\Lambda_\chi^{\rm stiff(soft)} + z \bigl(\Pi_\chi^{\rm stiff(soft)}\bigr)^*\Bigl(I - z\bigl(\mathcal{A}_{0,\chi}^{\rm stiff(soft)}\bigr)^{-1} \Bigr)^{-1} \Pi_\chi^{\rm stiff(soft)} \Bigr)\Big|_{\widehat{\mathcal{E}}_\chi} 
         =\widehat{P}_\chi M_\chi(z)^{\rm stiff(soft)}\big|_{\widehat{\mathcal{E}}_\chi},
\end{aligned}
\end{equation*}
and similar claims hold for  $\widecheck{M}_\chi(z)^{\rm stiff(soft)}$, $\widecheck{M}_{\chi,\varepsilon}(z),$ and $\widehat{M}_{\chi,\varepsilon}(z)$.

We introduce the notation 
$\widehat{\mathcal{H}}_\chi^{\rm stiff(soft)}:= \Pi_\chi^{\rm stiff(soft)} \widehat{\mathcal{E}}_\chi,$
together with the notation $P_{\widehat{\mathcal{H}}_\chi^{\rm stiff(soft)}}$ for the respective orthogonal projections (with respect to the $\mathcal{H}$ inner product). We also define $\Theta_\chi:\mathcal{H}\to \mathcal{H}$ as the orthogonal projection
\begin{equation}
	\label{nakk1011}
	\Theta_\chi\left( {\vect u}_{\rm soft} \oplus {\vect u}_{\rm stiff}\right) = {\vect u}_{\rm soft} \oplus  P_{\widehat{\mathcal{H}}_\chi^{\rm stiff}} \vect u_{\rm stiff}.
\end{equation}
Before stating our approximation result, we need one helpful lemma, whose proof is found in the Appendix. It establishes the equivalence of the $H^1$ and $L^2$ norms on $\widehat{\mathcal{H}}_\chi^{\rm stiff(soft)}$ uniformly in the quasimomentum $\chi.$ The proof of this lemma can be found in the Appendix. 
\begin{lemma}
	\label{lemmanormequivalence_f}
	There exists a $\chi$-independent constant $C>0$ such that
	\begin{equation*}
	\lVert \vect f \rVert_{L^2(Y_{\rm stiff(soft)};\C^3)} \geq C \lVert  \vect f\rVert_{H^1(Y_{\rm stiff(soft)};\C^3)} \qquad \forall \vect f \in \widehat{\mathcal{H}}_\chi^{\rm stiff(soft)}.
	\end{equation*}
\end{lemma}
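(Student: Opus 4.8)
The plan is to establish the uniform (in $\chi$) equivalence of the $H^1$ and $L^2$ norms on the finite-codimensional‐type subspaces $\widehat{\mathcal H}_\chi^{\rm stiff(soft)}=\Pi_\chi^{\rm stiff(soft)}\widehat{\mathcal E}_\chi$, with the inequality only running one way (the reverse bound $\|\vect f\|_{L^2}\le\|\vect f\|_{H^1}$ being trivial). Every $\vect f\in\widehat{\mathcal H}_\chi^{\rm stiff(soft)}$ is of the form $\vect f=\Pi_\chi^{\rm stiff(soft)}\vect g$ for some $\vect g\in\widehat{\mathcal E}_\chi=\widehat P_\chi\mathcal E$, and such $\vect f$ solves the homogeneous problem \eqref{boundaryvalueproblemsoncomponents}. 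The key identity to exploit is that, because $\vect g$ lies in the span of the Steklov eigenfunctions associated with the $\mathcal O(|\chi|^2)$ eigenvalues, one has the two-sided control
\begin{equation*}
	\bigl\|\bigl(\simgrad+{\rm i}X_\chi\bigr)\Pi_\chi^{\rm stiff(soft)}\vect g\bigr\rVert_{L^2(Y_{\rm stiff(soft)};\C^{3\times3})}^2
	\;\asymp\;\lambda_\chi^{\rm stiff(soft)}(\vect g,\vect g)\;=\;-\langle\Lambda_\chi^{\rm stiff(soft)}\vect g,\vect g\rangle\;\le\;C|\chi|^2\|\vect g\|_{L^2(\Gamma;\C^3)}^2,
\end{equation*}
by the uniform coercivity/boundedness of $\A_{\rm stiff(soft)}$ (giving the $\asymp$) and the bound $\lVert\widehat\Lambda_\chi^{\rm stiff}\rVert_{\mathcal E\to\mathcal E}\le C|\chi|^2$ from Lemma~\ref{lemmasteklovorder} together with $\widehat\Lambda_\chi^{\rm soft}=\widehat P_\chi\Lambda_\chi^{\rm soft}|_{\widehat{\mathcal E}_\chi}$ being bounded by $C|\chi|^2$ as well (it is bounded by $C$ uniformly, and on $\widehat{\mathcal E}_\chi$ one can in fact extract a factor $|\chi|^2$, but even the crude bound $C\le C|\chi|^{-2}\cdot|\chi|^2$ is not quite enough — one has to be slightly careful here, see below).

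Given this, the plan splits the argument by component. \textbf{Soft component.} Here the lift and DtN operators are conjugates of the $\chi=0$ ones by ${\rm e}^{{\rm i}\chi y}$, see \eqref{softidentities}, so $\Pi_\chi^{\rm soft}\vect g={\rm e}^{-{\rm i}\chi y}\Pi_0^{\rm soft}{\rm e}^{{\rm i}\chi y}\vect g$ and $\vect f={\rm e}^{-{\rm i}\chi y}\vect f_0$ with $\vect f_0:=\Pi_0^{\rm soft}{\rm e}^{{\rm i}\chi y}\vect g$; since ${\rm e}^{-{\rm i}\chi y}$ is a unitary multiplier with $|\chi|\le\pi\sqrt3$ bounded, $\lVert\vect f\rVert_{H^1}\asymp\lVert\vect f_0\rVert_{H^1}$ uniformly in $\chi$, reducing matters to the fixed operator $\Pi_0^{\rm soft}$, for which $H^1$–$L^2$ equivalence on its range over any bounded set of boundary data is classical elliptic regularity plus the fact that the relevant eigenspace for $\Lambda_0^{\rm soft}$ consists of constants (on which $\Pi_0^{\rm soft}$ acts by constants, up to the connectedness of $Y_{\rm soft}$). \textbf{Stiff component.} Here one uses Korn's inequality in the form of Proposition~\ref{josipapp1} / the bound \eqref{estimate2} and Proposition~\ref{nakk112}: for $\vect f=\Pi_\chi^{\rm stiff}\vect g$ one has $\lVert\vect f\rVert_{H^1(Y_{\rm stiff};\C^3)}\le C\bigl(\lVert(\simgrad+{\rm i}X_\chi)\vect f\rVert_{L^2}+\lVert\vect g-\langle\vect g\rangle_\Gamma\rVert_{H^{1/2}(\Gamma;\C^3)}+|\langle\vect g\rangle_\Gamma|\bigr)$, roughly speaking; the oscillatory part $\vect g-\langle\vect g\rangle_\Gamma$ is controlled by $\lVert\simgrad\Pi_0^{\rm stiff}\vect g\rVert_{L^2}$ hence by the energy, via the lemma preceding Remark~\ref{pi0const} together with Proposition~\ref{lemmascalar1}\eqref{estim2} (applicable since the $\mathcal O(1)$ Steklov space is $(\C^3)^\perp$ by Lemma~\ref{Rayleighestimscalar}, so $\widehat{\mathcal E}_\chi$ is close to $\C^3$ and its complement in $\widehat{\mathcal E}_\chi$ is $\perp\C^3$), while the constant part $\langle\vect g\rangle_\Gamma\in\C^3$ is lifted by $\Pi_0^{\rm stiff}$ to a genuine constant (Remark~\ref{pi0const}), which automatically has equal $L^2$ and $H^1$ norms on $Y_{\rm stiff}$. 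Finally one argues that $|\langle\vect g\rangle_\Gamma|\le C\lVert\vect f\rVert_{L^2(Y_{\rm stiff};\C^3)}$: since $\widehat{\mathcal E}_\chi\to\C^3$ as $\chi\to0$ in the sense of \eqref{pasimp}, $\Pi_0^{\rm stiff}\widehat P_\chi$ is close to $\Pi_0^{\rm stiff}\widehat P_0$, whose range contains the constants, so the constant part of $\vect f$ is comparable to $\vect f$ itself in $L^2$, the off-constant remainder being $\mathcal O(|\chi|)$ and absorbed.

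The main obstacle I expect is the bookkeeping that makes all the above uniform in $\chi\in Y'$ — in particular handling the regime $|\chi|$ not small (where $\widehat P_\chi$ need not be close to $\widehat P_0$ and $\widehat{\mathcal E}_\chi$ need not be near $\C^3$), versus $|\chi|$ small. For $|\chi|$ bounded below, say $|\chi|\ge\zeta$ with $\zeta$ from Lemma~\ref{lemmacontour}, one can be cruder: the Steklov decomposition is still defined, and on $\widehat{\mathcal E}_\chi$ the energy is $\le C|\chi|^2\|\vect g\|_{L^2}^2\le C\|\vect g\|_{L^2}^2$ while $\lVert\vect f\rVert_{H^1}\le C(1+|\chi|)\lVert(\simgrad+{\rm i}X_\chi)\vect f\rVert_{L^2}$ plus lower-order trace terms, and $\|\vect g\|_{L^2(\Gamma;\C^3)}\le C|\chi|^{-1}\lVert(\simgrad+{\rm i}X_\chi)\vect f\rVert_{L^2}\le C\lVert(\simgrad+{\rm i}X_\chi)\vect f\rVert_{L^2}$ by \eqref{estim1}, so the $H^1$ norm of $\vect f$ is controlled by the energy, and then the energy is controlled by $\lVert\vect f\rVert_{L^2}$ because $\Lambda_\chi^{\rm stiff}$ restricted to $\widehat{\mathcal E}_\chi$ has norm $\le C|\chi|^2$ and hence the Rayleigh quotient of any $\vect g\in\widehat{\mathcal E}_\chi$ is $\le C|\chi|^2$, giving $\lVert(\simgrad+{\rm i}X_\chi)\vect f\rVert_{L^2}^2\le C|\chi|^2\|\vect g\|_{L^2}^2\le C\lVert\vect f\rVert_{L^2}^2$ (the last step using $\|\vect g\|_{L^2(\Gamma)}\le C\lVert\vect f\rVert_{L^2(Y_{\rm stiff})}$, which follows from the first estimate of Proposition~\ref{nakk112}). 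For $|\chi|\le\zeta$ one uses the perturbative argument with \eqref{pasimp} as sketched above. Splicing the two regimes and collecting constants completes the proof; the soft component is handled identically via the conjugation trick, which makes it essentially $\chi$-independent.
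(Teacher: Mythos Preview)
Your direct approach has a genuine gap in the large-$|\chi|$ regime. The crucial step you invoke, ``$\|\vect g\|_{L^2(\Gamma)}\le C\|\vect f\|_{L^2(Y_{\rm stiff})}$, which follows from the first estimate of Proposition~\ref{nakk112}'', is not justified by that proposition. What \eqref{finalestimate1} yields (after a trace) is
\[
\|\vect g\|_{L^2(\Gamma)}\le C\bigl(\|(\simgrad+{\rm i}X_\chi)\vect f\|_{L^2}+\|\vect f\|_{L^2(Y_{\rm stiff})}\bigr)\le C\bigl(|\chi|\,\|\vect g\|_{L^2(\Gamma)}+\|\vect f\|_{L^2(Y_{\rm stiff})}\bigr),
\]
using the Steklov bound $\|(\simgrad+{\rm i}X_\chi)\vect f\|_{L^2}^2\asymp\lambda_\chi^{\rm stiff}(\vect g,\vect g)\le C|\chi|^2\|\vect g\|_{L^2}^2$. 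This absorbs only for $|\chi|$ \emph{small}, not for $|\chi|\ge\zeta$. Your alternative chain via \eqref{estim1} is circular: it gives $\|\vect g\|_{L^2}\le C|\chi|^{-1}E$ and $E\le C|\chi|\|\vect g\|_{L^2}$, which combine to nothing. So as written, your argument closes for small $|\chi|$ (the perturbative part via \eqref{pasimp}/\eqref{piasimp} is fine) but not for $|\chi|$ bounded away from zero.

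The paper's proof bypasses this difficulty entirely with a short contradiction argument, uniform in $\chi$: assume $\|\vect f_n\|_{H^1}=1$, $\|\vect f_n\|_{L^2}\to0$, so $\vect f_n\rightharpoonup 0$ in $H^1$, and by \emph{compactness of the trace} $H^1(Y_{\rm stiff(soft)})\hookrightarrow L^2(\Gamma)$ one gets $\vect g_n\to0$ in $L^2(\Gamma)$. Since $\vect g_n\in\widehat{\mathcal E}_{\chi_n}$, the Steklov bound gives $\lambda_{\chi_n}^{\rm stiff}(\vect g_n,\vect g_n)\le C|\chi_n|^2\|\vect g_n\|_{L^2}^2\le C\|\vect g_n\|_{L^2}^2$, and Proposition~\ref{josipapp1} then yields $\|\vect g_n\|_{H^{1/2}}\le C\|\vect g_n\|_{L^2}\to0$, contradicting the lift lower bound $\|\vect g_n\|_{H^{1/2}}\ge C^{-1}\|\vect f_n\|_{H^1}=C^{-1}$ from Proposition~\ref{propositionpiestimate}. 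The trace-compactness step is exactly what replaces your missing bound $\|\vect g\|_{L^2(\Gamma)}\le C\|\vect f\|_{L^2}$, and it works for all $\chi$ simultaneously with no regime splitting.
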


The next theorem provides the basis for Theorem \ref{thmmain1} .

\begin{theorem}
\label{firstapproximationtheorem}
 There exists $C>0$
 such that for the resolvent of the transmission problem \eqref{transmissionboundaryproblem} one has 
 \begin{equation*}
     \Bigl\lVert\bigl((\mathcal{A}_{\chi,\varepsilon})_{0,I} -zI \bigr)^{-1} - \bigl( \left(\mathcal{A}_{\chi,\varepsilon}\right)_{\widecheck{P}_\chi,\widehat{P}_\chi} -zI \bigr)^{-1} \Bigr\rVert_{\mathcal{E} \to \mathcal{E}} \leq C \varepsilon^2\qquad  \forall\chi \in Y',\ z \in K_\sigma.
 \end{equation*}
\end{theorem}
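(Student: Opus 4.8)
The plan is to compare two resolvents arising from the same unperturbed operator $\mathcal{A}_{0,\chi,\varepsilon}$ via the Kre\u\i n-type formula \eqref{transmissionkrein}. For the left-hand operator $(\mathcal{A}_{\chi,\varepsilon})_{0,I}$ the resolvent is $(\mathcal{A}_{0,\chi,\varepsilon}-zI)^{-1} - S_{\chi,\varepsilon}(z) M_{\chi,\varepsilon}(z)^{-1} S_{\chi,\varepsilon}(\bar z)^*$; for the operator $(\mathcal{A}_{\chi,\varepsilon})_{\widecheck{P}_\chi,\widehat{P}_\chi}$, which imposes the Robin-type condition $\widecheck{P}_\chi \Gamma_{0,\chi}\vect u + \widehat{P}_\chi \Gamma_{1,\chi,\varepsilon}\vect u = 0$, Theorem \ref{theoremkreinformula} gives a resolvent of the form $(\mathcal{A}_{0,\chi,\varepsilon}-zI)^{-1} - S_{\chi,\varepsilon}(z)\overline{(\widecheck{P}_\chi + \widehat{P}_\chi M_{\chi,\varepsilon}(z))}^{-1}\widehat{P}_\chi S_{\chi,\varepsilon}(\bar z)^*$. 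Since the free-resolvent terms cancel, the difference is entirely a difference of two ``rank-correction'' terms sandwiched between $S_{\chi,\varepsilon}(z)$ on the left and $S_{\chi,\varepsilon}(\bar z)^*$ on the right, so the task reduces to estimating $\bigl\lVert M_{\chi,\varepsilon}(z)^{-1} - \overline{(\widecheck{P}_\chi + \widehat{P}_\chi M_{\chi,\varepsilon}(z))}^{-1}\widehat{P}_\chi\bigr\rVert$ in an appropriate operator norm, using the uniform boundedness of $S_{\chi,\varepsilon}(z),S_{\chi,\varepsilon}(\bar z)^*$ (which follows from \eqref{soperatoranotherform}, \eqref{MMM}, Theorem \ref{nakk200}, and the scaling of $S_\chi^{\rm stiff}(\varepsilon^2 z)$ in \eqref{asymptoticsforsformula}, together with Lemma \ref{lemmanormequivalence_f} to control $H^1$-to-$L^2$ norms).

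First I would decompose $\mathcal{E}$ according to the spectral splitting \eqref{steklovdecomposition}, $\mathcal{E} = \widehat{\mathcal{E}}_\chi \oplus \widecheck{\mathcal{E}}_\chi$, and write $M_{\chi,\varepsilon}(z)$ as a $2\times 2$ block operator matrix with blocks $\widehat{P}_\chi M_{\chi,\varepsilon}(z)\widehat{P}_\chi$, $\widehat{P}_\chi M_{\chi,\varepsilon}(z)\widecheck{P}_\chi$, etc. Using \eqref{MMM}, \eqref{Mfunctionasymptotics}, the decomposition $\Lambda_\chi^{\rm stiff} = \mathrm{diag}(\widehat{\Lambda}_\chi^{\rm stiff}, \widecheck{\Lambda}_\chi^{\rm stiff})$, Lemma \ref{lemmasteklovorder}, and the bound \eqref{nakk204} together with \eqref{nakk203}, the ``checked-checked'' block $\widecheck{P}_\chi M_{\chi,\varepsilon}(z)\widecheck{P}_\chi = \varepsilon^{-2}\widecheck{\Lambda}_\chi^{\rm stiff} + \widecheck{\Lambda}_\chi^{\rm soft} + \widecheck{P}_\chi M_\chi^{\rm soft}(z)\widecheck{P}_\chi + O(\varepsilon^2)$ is uniformly boundedly invertible with inverse of order $O(\varepsilon^2)$, since $\varepsilon^{-2}\widecheck{\Lambda}_\chi^{\rm stiff}$ dominates (it is negative with $\|(\widecheck{\Lambda}_\chi^{\rm stiff})^{-1}\| \le C$, hence $\|(\varepsilon^{-2}\widecheck{\Lambda}_\chi^{\rm stiff} - zI + \text{bounded})^{-1}\| \le C\varepsilon^2$ for $z \in K_\sigma$). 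This is exactly the setting in which a Schur--Frobenius (Feshbach) block-inversion applies: $M_{\chi,\varepsilon}(z)^{-1}$ can be written in terms of the Schur complement $\widehat{M}^{\rm Schur} := \widehat{P}_\chi M_{\chi,\varepsilon}(z)\widehat{P}_\chi - \widehat{P}_\chi M_{\chi,\varepsilon}(z)\widecheck{P}_\chi (\widecheck{P}_\chi M_{\chi,\varepsilon}(z)\widecheck{P}_\chi)^{-1}\widecheck{P}_\chi M_{\chi,\varepsilon}(z)\widehat{P}_\chi$ on the finite-dimensional space $\widehat{\mathcal{E}}_\chi$, and the off-diagonal and checked-checked pieces of $M_{\chi,\varepsilon}(z)^{-1}$ are all $O(\varepsilon^2)$ in norm because each carries at least one factor of $(\widecheck{P}_\chi M_{\chi,\varepsilon}(z)\widecheck{P}_\chi)^{-1} = O(\varepsilon^2)$.

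Next I would identify the operator $\overline{(\widecheck{P}_\chi + \widehat{P}_\chi M_{\chi,\varepsilon}(z))}^{-1}\widehat{P}_\chi$ with the corresponding block expression: on $\widecheck{\mathcal{E}}_\chi$ the operator $\widecheck{P}_\chi + \widehat{P}_\chi M_{\chi,\varepsilon}(z)$ acts (after closure) essentially as the identity composed with the projection structure, so that applying it to $\widehat{P}_\chi$ and inverting produces precisely the Schur complement $\widehat{M}^{\rm Schur}$ up to the same $O(\varepsilon^2)$ off-diagonal corrections — concretely, $\overline{(\widecheck{P}_\chi + \widehat{P}_\chi M_{\chi,\varepsilon}(z))}^{-1}\widehat{P}_\chi$ equals $(\widehat{M}^{\rm Schur})^{-1}$ plus terms of order $\varepsilon^2$, and $\widehat{M}^{\rm Schur}$ is uniformly invertible because its leading part $\widehat{P}_\chi M_{\chi,\varepsilon}(z)\widehat{P}_\chi = \varepsilon^{-2}\widehat{\Lambda}_\chi^{\rm stiff} + \widehat{M}_\chi^{\rm soft}(z) + O(\varepsilon^2)$ has a nonzero imaginary part bounded below by $\sigma$ (because $\Im z \ge \sigma$ and $\Im M_{\chi,\varepsilon}(z) = \Im z\,(S_{\chi,\varepsilon}(\bar z))^* S_{\chi,\varepsilon}(\bar z)$ by \eqref{nakk301}, combined with the fact that $S_{\chi,\varepsilon}(\bar z)$ restricted to $\widehat{\mathcal{E}}_\chi$ is uniformly bounded below — the latter being where Lemma \ref{lemmanormequivalence_f} and \eqref{estim1} enter). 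Matching the finite-rank (``hatted'') pieces of $M_{\chi,\varepsilon}(z)^{-1}$ and of $\overline{(\widecheck{P}_\chi + \widehat{P}_\chi M_{\chi,\varepsilon}(z))}^{-1}\widehat{P}_\chi$ — both equal $(\widehat{M}^{\rm Schur})^{-1}$ — and noting that all the remaining (off-diagonal and checked) pieces are $O(\varepsilon^2)$, the difference of the two correction operators is $O(\varepsilon^2)$ in the $\mathcal{E}\to\mathcal{E}$ norm, and conjugating by the uniformly bounded $S_{\chi,\varepsilon}$ operators preserves this bound uniformly in $\chi \in Y'$ and $z \in K_\sigma$.

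The main obstacle, I expect, is the careful bookkeeping of \emph{which} norm each factor is estimated in: $S_\chi^{\rm stiff}$ naturally maps into $H^1(Y_{\rm stiff};\C^3)$ and $\Xi_\chi^{\rm stiff}$ maps into $H^{1/2}(\Gamma;\C^3)$, so the $O(\varepsilon^2)$ smallness of $(\widecheck{\Lambda}_{\chi,\varepsilon} + \text{bounded} - zI)^{-1}$ in the $L^2(\Gamma)\to L^2(\Gamma)$ norm must be upgraded — via \eqref{nakk204} and interpolation — to control the composition $S_{\chi,\varepsilon}(z)(\cdots)\widecheck{P}_\chi S_{\chi,\varepsilon}(\bar z)^*$ as an operator on $L^2$, and one must check that the factors $\varepsilon^{-2}$ hidden inside $\Lambda_{\chi,\varepsilon}$ and $M_{\chi,\varepsilon}$ (and inside the scaling $S_\chi^{\rm stiff}(\varepsilon^2 z)$) all combine to give a net $O(\varepsilon^2)$ rather than accidentally an $O(1)$ or $O(\varepsilon^{-2})$. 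A secondary subtlety is the precise meaning of the closure $\overline{\widecheck{P}_\chi + \widehat{P}_\chi M_{\chi,\varepsilon}(z)}$ in Theorem \ref{theoremkreinformula} and verifying that Assumption \ref{assumptionbeta} holds with $\beta_0 = \widecheck{P}_\chi$, $\beta_1 = \widehat{P}_\chi$ (both bounded, and $\widecheck{P}_\chi + \widehat{P}_\chi\Lambda_{\chi,\varepsilon}$ closable on $\mathcal{D}(\Lambda_{\chi,\varepsilon}) = H^1(\Gamma;\C^3)$, which follows from the structure of $\Lambda_{\chi,\varepsilon}$ and the spectral nature of $\widehat{P}_\chi$), so that the Kre\u\i n formula is legitimately available for this choice of boundary condition.
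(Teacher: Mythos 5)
Your proposal is correct and follows essentially the same route as the paper's proof: the Kre\u\i n formula, the block decomposition of $M_{\chi,\varepsilon}(z)$ relative to $\widehat{\mathcal{E}}_\chi\oplus\widecheck{\mathcal{E}}_\chi$, the $O(\varepsilon^2)$ bound on $\widecheck{M}_{\chi,\varepsilon}(z)^{-1}$ coming from the uniform lower bound on $\widecheck{\Lambda}_\chi^{\rm stiff}$, the uniform invertibility of $\widehat{M}_{\chi,\varepsilon}(z)$ via its imaginary part together with Lemma \ref{lemmanormequivalence_f}, the Schur--Frobenius inversion, and the identification of the truncated expression with the resolvent of $(\mathcal{A}_{\chi,\varepsilon})_{\widecheck{P}_\chi,\widehat{P}_\chi}$ through Theorem \ref{theoremkreinformula} with $\beta_0=\widecheck{P}_\chi$, $\beta_1=\widehat{P}_\chi$. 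The only cosmetic difference is that you organise the comparison around the Schur complement on the finite-dimensional block $\widehat{\mathcal{E}}_\chi$, whereas the paper takes the Schur complement on $\widecheck{\mathcal{E}}_\chi$ and reads off the triangular block inverse of $\widecheck{P}_\chi+\widehat{P}_\chi M_{\chi,\varepsilon}(z)$ directly; the two bookkeepings agree to $O(\varepsilon^2)$.
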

\begin{proof}
Notice that, as a consequence of \eqref{identityforasymptotics} and the second equality in \eqref{MMM}, we can write
\begin{equation}
\label{formulamboundedb}
    M_{\chi, \varepsilon}(z) = \Lambda_{\chi,\varepsilon} + \mathcal{B}_{\chi,\varepsilon}(z)=\varepsilon^{-2}\Lambda_{\chi}^{\rm stiff} +\Lambda_{\chi}^{\rm soft}+ \mathcal{B}_{\chi,\varepsilon}(z),
\end{equation}
where the operator $\mathcal{B}_{\chi,\varepsilon}$ is bounded uniformly in $\chi,\eps$.  So, the question of boundedness of a certain truncation of $M_{\chi,\varepsilon} $ actually comes down to the boundedness of associated truncation of DtN map. But, since $\widehat{\mathcal{E}}_\chi \subset \mathcal{D}(\Lambda_{\chi,\varepsilon}) = \mathcal{D}(\Lambda_{\chi}^{\rm stiff}) = \mathcal{D}(\Lambda_{\chi}^{\rm soft})$ and $\widehat{\mathcal{E}}_\chi$ is finite-dimensional, one has
\begin{equation*}
        \left\lVert \widehat{P}_\chi \Lambda_{\chi,\varepsilon}\widecheck{P}_\chi\right\rVert_{\mathcal{E} \to \mathcal{E}} \leq \left\lVert \Lambda^{\rm soft}_{\chi,\varepsilon}\big|_{\widehat{\mathcal{E}}_\chi} \right\rVert_{\mathcal{E} \to \mathcal{E}}, \qquad \left\lVert\widecheck{P}_\chi \Lambda_{\chi,\varepsilon} \widehat{P}_\chi \right\rVert_{\mathcal{E} \to \mathcal{E}} \leq \left\lVert \Lambda^{\rm soft}_{\chi,\varepsilon}\big|_{\widehat{\mathcal{E}}_\chi} \right\rVert_{\mathcal{E} \to \mathcal{E}},
\end{equation*}
and thus the operators $ \widehat{P}_\chi M_{\chi, \varepsilon}(z)\widecheck{P}_\chi$, $\widecheck{P}_\chi M_{\chi, \varepsilon}(z) \widehat{P}_\chi $,  are bounded as well,  uniformly in $\chi,\eps$.  We next show that the operator $\widecheck{P}_\chi M_{\chi, \varepsilon}(z) \widecheck{P}_\chi $ is boundedly invertible with a bound depending on $\varepsilon$. This is the point where we stress the importance of Steklov truncations and the bound \eqref{nakk204}.

To prove the mentioned boundedness, our first observation is that the operator $\widecheck{\Lambda}_\chi^{\rm soft}(\widecheck{\Lambda}_\chi^{\rm stiff})^{-1}$ is bounded independently of $\chi$, as a consequence of \eqref{nakk203} and \eqref{nakk204}. Using the formula \eqref{formulamboundedb}, we have
\begin{equation}
\label{formula11}
    \widecheck{M}_{\chi,\varepsilon}(z) = \Bigl( I + \varepsilon^2 \widecheck{\Lambda}_\chi^{\rm soft}\bigl(\widecheck{\Lambda}_\chi^{\rm stiff} \bigr)^{-1} + \varepsilon^2 \widecheck{P}_{\chi}\mathcal{B}_{\chi,\varepsilon}(z)\bigl(\widecheck{\Lambda}_\chi^{\rm stiff} \bigr)^{-1} \Bigr)\varepsilon^{-2}\widecheck{\Lambda}_\chi^{\rm stiff}.
\end{equation}
Since the operators $\widecheck{\Lambda}_\chi^{\rm soft}(\widecheck{\Lambda}_\chi^{\rm stiff})^{-1}$ and $\mathcal{B}_{\chi,\varepsilon}(\widecheck{\Lambda}_\chi^{\rm stiff})^{-1}$ are bounded uniformly in $\chi$ and $\eps$, we can choose $\varepsilon$ small enough so that  \eqref{formula11} is invertible and 
\begin{equation}
   \Bigl\lVert \Bigl( I + \varepsilon^2 \widecheck{\Lambda}_\chi^{\rm soft}\bigl(\widecheck{\Lambda}_\chi^{\rm stiff} \bigr)^{-1} + \varepsilon^2 \mathcal{B}_{\chi,\varepsilon}(z)\bigl(\widecheck{\Lambda}_\chi^{\rm stiff} \bigr)^{-1} \Bigr)^{-1} \Bigr\rVert_{\mathcal{E} \to \mathcal{E}} \leq \frac{1}{2}.
   \label{combin2}
\end{equation}
Combining \eqref{formula11} and \eqref{combin2} yields
    $\lVert\widecheck{M}_{\chi,\varepsilon}(z)^{-1}\rVert_{\mathcal{E} \to \mathcal{E}} \leq C \varepsilon^2.$
We write $M_{\chi,\varepsilon}$ as a block operator matrix relative to the decomposition \eqref{steklovdecomposition}:
\begin{equation*}
    M_{\chi,\varepsilon} = \begin{bmatrix} \mathbb{A} & \mathbb{B} \\ \mathbb{E} & \mathbb{F} \end{bmatrix},
\end{equation*}
where
\begin{equation*}
        \mathbb{A}:= \widehat{M}_{\chi,\eps}(z),\quad \mathbb{B}:= \widehat{P}_\chi M_{\chi,\eps}(z) \widecheck{P}_\chi, \quad \mathbb{E}:= \widecheck{P}_\chi M_{\chi,\eps}(z) \widehat{P}_\chi, \quad
        \mathbb{F}:=\widecheck{M}_{\chi,\eps}(z).
\end{equation*}
We have shown that  $\mathbb{A}$, $\mathbb{B}$, $\mathbb{E}$ are bounded (where the bound of $\mathbb{A}$ depends on $\eps$), and $\mathbb{F}$ is boundedly invertible: 
    $\lVert \mathbb{F}^{-1} \rVert_{\mathcal{E} \to \mathcal{E}}  \leq C\varepsilon^2,$
where $C$ does not depend on $\eps$. Our next objective is to show that $\mathbb{A}$ is boundedly invertible with a $\chi$-independent bound. To this end, notice that  \eqref{Mfunctionasymptotics} implies
\begin{equation} 
	\label{nakk300} 
\eps^{-2}\widehat{M}_{\chi}^{\rm stiff} (\eps^2 z)=\eps^{-2} \widehat{\Lambda}_{\chi}^{\rm stiff} +z\bigl(\widehat{\Pi}_\chi^{\rm stiff}\bigr)^* \widehat{\Pi}_\chi^{\rm stiff}+O(\eps^2),
\end{equation} 
where $O(\eps^2)$ is understood in the sense of the $L^2 \to L^2$ operator norm, uniformly in $\chi.$ Using \eqref{nakk301}, \eqref{nakk300}, Lemma \ref{lemmanormequivalence_f}, the trace inequality, and the fact that $z \in K_{\sigma},$ we infer the existence of a $\chi$-independent constant $C>0$ such that, for all $\vect f \in L^2(\Gamma;\C^3)$ and $\eps$ small enough, one has  
\begin{align*} 
\Bigl\vert\bigl\langle \Im \widehat{M}_{\chi,\eps} (z)\vect f ,\vect f&\bigr\rangle_{\mathcal{E}}\Bigr\vert = \Bigl\vert\bigl\langle\eps^{-2}\Im \widehat{M}^{\rm stiff}_{\chi} (\eps^2 z) \vect f ,\vect f\bigr\rangle_{\mathcal{E}}\Bigr\vert+ \Bigl\vert\bigl\langle \Im \widehat{M}^{\rm soft}_{\chi} (z) \vect f ,\vect f\bigr\rangle_{\mathcal{E}}\Bigr\vert
\geq \Bigl\vert\bigl\langle\eps^{-2}\Im \widehat{M}^{\rm stiff}_{\chi} (\eps^2 z) \vect f ,\vect f \bigr\rangle_{\mathcal{E}}\Bigr\vert\\[0.4em] 
&=\vert\Im z\vert\|\widehat{\Pi}^{\rm stiff}_{\chi}\vect f \|_{L^2(Y_{\rm stiff};\C^3)}+O(\eps^2)\|\vect f\|_{L^2(Y_{\rm stiff};\C^3)}
\geq C\|\widehat{\Pi}^{\rm stiff}_{\chi} \vect f\|_{H^1(Y_{\rm stiff};\C^3)}+O(\eps^2)\|\vect f\|_{L^2(Y_{\rm stiff};\C^3)}\\[0.4em]
& \geq C \|\widehat{P}_{\chi}\vect f\|_{L^2(Y_{\rm stiff};\C^3)}.
\end{align*} 
By virtue of Corollary \ref{boundfrombelowappendix}, it now follows that
\begin{equation} \label{nakk400} 
\left\lVert \mathbb{A}^{-1} \right\rVert_{\mathcal{E} \to \mathcal{E}}  \leq C,
\end{equation}
where $C>0$ does not depend on $\chi.$ 
Using the Schur-Frobenius inversion formula, see \cite{tretter2008}, we have
\begin{equation*}
    M_{\chi,\varepsilon}^{-1}(z) = \overline{\begin{bmatrix} \mathbb{A} & \mathbb{B} \\ \mathbb{E} & \mathbb{F} \end{bmatrix}}^{-1} = \begin{bmatrix} \mathbb{A}^{-1} & 0 \\ 0 & 0 \end{bmatrix} + \begin{bmatrix} \overline{\mathbb{A}^{-1}\mathbb{B}}\overline{\mathbb{S}}^{-1}\mathbb{E}\mathbb{A}^{-1} & -\overline{\mathbb{A}^{-1}\mathbb{B}}\overline{\mathbb{S}}^{-1} \\ \overline{\mathbb{S}}^{-1}\mathbb{E}\mathbb{A}^{-1} & \overline{\mathbb{S}}^{-1} \end{bmatrix},
\end{equation*} 
where  $\mathbb{S}:=\mathbb{F}-\mathbb{E}\mathbb{A}^{-1}\mathbb{B}$. Furthermore, since
    $\lVert \mathbb{S}^{-1}\rVert_{\mathcal{E} \to \mathcal{E}} \leq\lVert(I - \mathbb{F}^{-1}\mathbb{E}\mathbb{A}^{-1}\mathbb{B})^{-1}\mathbb{F}^{-1} \rVert_{\mathcal{E} \to \mathcal{E}} \leq C \varepsilon^2,$
we write
\begin{equation*}
    M_{\chi,\varepsilon}^{-1}(z) =\begin{bmatrix} \bigl( \widehat{P}_\chi M_{\chi,\varepsilon}(z)\widehat{P}_\chi \bigr)^{-1} & 0 \\ 0 & 0 \end{bmatrix} + \mathcal{O}(\varepsilon^2)  = \widehat{P}_\chi\bigl( \widehat{P}_\chi M_{\chi,\varepsilon}(z)\widehat{P}_\chi \bigr)^{-1}\widehat{P}_\chi   + \mathcal{O}(\varepsilon^2).
\end{equation*}
On the other hand, the Schur-Frobenius formula implies
\begin{equation*}
        \bigl(\widecheck{P}_\chi+\widehat{P}_\chi M_{\varepsilon,\chi}(z)\bigr)^{-1} = \begin{bmatrix}  \widehat{P}_\chi M_{\chi,\varepsilon}(z)\widehat{P}_\chi  & \widehat{P}_\chi M_{\chi,\varepsilon}(z)\widecheck{P}_\chi\\ 0 & I \end{bmatrix}^{-1} 
        = \begin{bmatrix} \bigl( \widehat{P}_\chi M_{\chi,\varepsilon}(z)\widehat{P}_\chi \bigr)^{-1} & -\bigl( \widehat{P}_\chi M_{\chi,\varepsilon}(z)\widehat{P}_\chi\bigr)^{-1}\widehat{P}_\chi M_{\chi,\varepsilon}(z)\widecheck{P}_\chi \\ 0 & I \end{bmatrix}.
\end{equation*}
Now, clearly
     $M_{\chi,\varepsilon}^{-1}(z) = \bigl(\widecheck{P}_\chi+\widehat{P}_\chi M_{\varepsilon,\chi}(z)\bigr)^{-1}\widehat{P}_\chi + \mathcal{O}(\varepsilon^2).$
Thus, by putting $\beta_0 = \widecheck{P}_\chi $, $\beta_1 = \widehat{P}_\chi$ and recalling the Theorem \ref{theoremkreinformula} we  recognise that the operator-valued function
\begin{equation*}
    z \mapsto \left( \mathcal{A}_{0,\chi,\varepsilon} -z I  \right)^{-1} - S_{\chi, \varepsilon}(z) \bigl(\widecheck{P}_\chi+\widehat{P}_\chi M_{\varepsilon,\chi}(z)  \bigr)^{-1}\widehat{P}_\chi S_{\chi, \varepsilon}(\overline{z})^*
\end{equation*}
is the resolvent of the closed extension of $\mathcal{A}_{0,\chi,\varepsilon}$ associated, in the sense of Theorem \ref{theoremkreinformula}, with the boundary condition
    $(\widecheck{P}_\chi \Gamma_{0,\chi} + \widehat{P}_\chi \Gamma_{1,\chi}) \vect u = 0,$
and the result follows.
\end{proof}
\begin{remark}
The resolvent $(\left(\mathcal{A}_{\chi,\varepsilon}\right)_{\widecheck{P}_\chi,\widehat{P}_\chi} -zI)^{-1}$ is the solution operator of the boundary value problem ($\vect f\in\mathcal{H}$)
\begin{equation*}
         \mathcal{A}_{\chi,\varepsilon}\vect u - z \vect u = \vect f,\qquad 
          \bigl(\widecheck{P}_\chi \Gamma_{0,\chi} + \widehat{P}_\chi \Gamma_{1,\chi}\bigr) \vect u = 0.
\end{equation*}
The solution $\vect u \in \mathcal{H}$ satisfies the following constraints:
\begin{itemize}
    \item The $3$-dimensional projections of the traces on $\Gamma$ of conormal derivatives of $\vect u$ (from inside $Y_{\rm soft}$, $Y_{\rm stiff}$) onto the space $\widehat{\mathcal{E}}_\chi$ coincide.
    \item The traces of $\vect u$ from both  $Y_{\rm soft}$ and $Y_{\rm stiff}$  belong to the $3$-dimensional space $\widehat{\mathcal{E}}_\chi$ (they clearly coincide, as per Remark \ref{tracescoincide}). \BBB
\end{itemize}
Thus, by approximating the resolvent $((\mathcal{A}_{\chi,\varepsilon})_{0,I} -zI)^{-1}$ associated with the transmission problem \eqref{transmissionboundaryproblem} by the resolvent  $((\mathcal{A}_{\chi,\varepsilon})_{\widecheck{P}_\chi,\widehat{P}_\chi} -zI)^{-1}$, one relaxes the condition on the continuity of co-normal derivatives and tightens the constraint on the traces, which leads to an error of order $\varepsilon^2.$

Intuitively, the homogenisation procedure should replace the solution on the stiff component with a $3$-dimensional constant vector. However, at this point only the trace of the solution is finite-dimensional.
\end{remark}

\subsection{Approximation refinement: truncation of $\widehat{M}_\chi^{\rm stiff}$} \label{refinement} 
We introduce the following notation for the truncated solution operator $\widehat{S}_{\chi,\varepsilon}(z):=S_{\chi,\varepsilon}(z)|_{\widehat{\mathcal{E}}_\chi}$.
Using  \eqref{soperatoranotherform}, we have
\begin{equation}
\label{soperatorhatformula}
\begin{aligned}
        \widehat{S}_{\chi,\varepsilon}(z):=S_{\chi,\varepsilon}(z)\big|_{\widehat{\mathcal{E}}_\chi} &= \bigl(\Pi_\chi + z \bigl(\mathcal{A}_{0,\chi,\varepsilon} -z I  \bigr)^{-1} \Pi_\chi \bigr)\big|_{\widehat{\mathcal{E}}_\chi} 
        =\widehat{\Pi}_\chi + z\bigl( \mathcal{A}_{0,\chi,\varepsilon} -z I\bigr)^{-1} \widehat{\Pi}_\chi,
\end{aligned}
\end{equation}
which is precisely the solution operator associated with the triple $(\mathcal{A}_{0,\chi,\varepsilon},\widehat{\Pi}_\chi,\widehat{\Lambda}_{\chi,\varepsilon})$ in the sense of Definition \ref{solutionmfunction}. Similar representation formulae are obtained for the operators $\widehat{S}^{\rm soft(stiff)}_{\chi}(z),$ which are defined in an obvious way.

\begin{remark}
	Notice that 
\begin{equation}
\label{Sadjointformulae}
	\bigl(\widehat{S}_\chi^{\rm stiff(soft)}(z)\bigr)^* = \widehat{P}_\chi \bigl(S_\chi^{\rm stiff(soft)} \bigr)^* , \qquad \bigl(\widehat{\Pi}_\chi^{\rm stiff(soft)} \bigr)^* = \widehat{P}_\chi\bigl(\Pi_\chi^{\rm stiff(soft)} \bigr)^*.
\end{equation}
 Also, one has
	\begin{equation}
		\label{pi_projections}			 P_{\widehat{\mathcal{H}}_\chi^{\rm stiff(soft)}} \widehat{\Pi}_\chi^{\rm stiff(soft)} = \widehat{\Pi}_\chi^{\rm stiff(soft)}, \quad  \bigl(\widehat{\Pi}_\chi^{\rm stiff(soft)} \bigr)^* P_{\widehat{\mathcal{H}}_\chi^{\rm stiff(soft)}} =   \bigl(\widehat{\Pi}_\chi^{\rm stiff(soft)}\bigr)^*,
	\end{equation}
which follows directly from the definition of the operators involved.
\end{remark}

The operator $\widehat{\Gamma}_{0,\chi}$ is the left inverse of the operator $\widehat{\Pi}_\chi$ in the sense of Definition \ref{boundary operators}, so by virtue of \eqref{soperatorhatformula} it is the left inverse of  $\widehat{S}_{\chi,\varepsilon}(z)$ as well. A similar claim applies to the operators $\widehat{S}_{\chi}^{\rm stiff(soft)}(z)$. In particular, we have
\begin{equation*}
    \widehat{\Gamma}_{0,\chi}\widehat{S}_{\chi,\varepsilon}(z) = \widehat{\Gamma}_{0,\chi}^{\rm stiff(soft)}\widehat{S}_{\chi}^{\rm stiff(soft)}(z) = I|_{\widehat{\mathcal{E}}_\chi}.
\end{equation*}
In Theorem \ref{firstapproximationtheorem} we have obtained an approximation of the original resolvent in terms of the resolvent of another operator, where the relative simplification is not immediately evident.  However, by doing simple additional approximations the result becomes much more transparent. We carry out these additional approximations by analyzing the block components of the resolvent (see \eqref{Sadjointformulae}) separately.
\begin{equation*}
\begin{aligned}
     \bigl( \left(\mathcal{A}_{\chi,\varepsilon}\right)_{\widecheck{P}_\chi,\widehat{P}_\chi} -zI \bigr)^{-1} &= \left( \mathcal{A}_{0,\chi,\varepsilon} -z I  \right)^{-1} - S_{\chi, \varepsilon}(z)\widehat{P}_\chi \bigl(\widehat{P}_\chi M_{\varepsilon,\chi}(z) \widehat{P}_\chi \bigr)^{-1}\widehat{P}_\chi S_{\chi, \varepsilon}(\overline{z})^* \\[0em]
     &= \left( \mathcal{A}_{0,\chi,\varepsilon} -z I  \right)^{-1} - \widehat{S}_{\chi, \varepsilon}(z) {\widehat{M}_{\varepsilon,\chi}(z)}^{-1} \widehat{S}_{\chi, \varepsilon}(\overline{z})^*,
\end{aligned}
\end{equation*}
relative to the decomposition \eqref{decompositionsoftstiff}.

It follows from \eqref{nakk300} and \eqref{nakk400} that
\begin{equation}
\label{Masymptoticformula}
    {\widehat{M}_{\varepsilon,\chi}(z)}^{-1} = \bigl(\varepsilon^{-2}\widehat{M}_{\chi}^{\rm stiff}(\varepsilon^2 z) + \widehat{M}_{\chi}^{\rm soft}(z) \bigr)^{-1}= \bigl(\varepsilon^{-2} \widehat{\Lambda}_\chi^{\rm stiff} + z \bigl(\widehat{\Pi}_\chi^{\rm stiff}\bigr)^* \widehat{\Pi}_{\chi}^{\rm stiff} + \widehat{M}_{\chi}^{\rm soft}(z) \bigr)^{-1}+ \mathcal{O}(\varepsilon^2),
\end{equation}
which we use in the proof of Theorem \ref{thmrefinement} below. We introduce the following operator-valued function featuring prominently in our homogenisation results.
\begin{definition}
	We refer to the operator-valued function 
	$\widehat{Q}^{\rm app}_{\chi,\varepsilon}(z): \widehat{\mathcal{E}}_\chi \to \widehat{\mathcal{E}}_\chi$ given by  
\begin{equation}
\label{qoperator}
    \widehat{Q}^{\rm app}_{\chi,\varepsilon}(z):= \varepsilon^{-2}\widehat{\Lambda}_\chi^{\rm stiff} + z \bigl(\widehat{\Pi}_\chi^{\rm stiff}\bigr)^* \widehat{\Pi}_{\chi}^{\rm stiff} + \widehat{M}_{\chi}^{\rm soft}(z)
\end{equation}
as the \emph{transmission function}.
\end{definition}
We next prove a result on  resolvent asymptotics that simplifies the solution on the stiff component.
\begin{theorem}\label{thmrefinement} 
There exists $C>0$, which depends only on $\sigma$ and ${\rm diam}(K_\sigma),$ such that for the resolvent of the transmission problem \eqref{transmissionboundaryproblem}  one has
 \begin{equation*}
     \bigl\lVert \bigl( \left(\mathcal{A}_{\chi,\varepsilon}\right)_{0,I}-zI\bigr)^{-1} - \mathcal{R}_{\chi,\varepsilon}^{\rm app}(z) \bigr\rVert_{\mathcal{H} \to \mathcal{H}} \leq C \varepsilon^2\qquad \forall\chi \in Y',
 \end{equation*}
where the operator-valued function $\mathcal{R}_{\chi,\varepsilon}^{\rm app}(z)$ is defined by 
 \begin{equation}
 \label{resolventblockmatrix}
     \mathcal{R}_{\chi,\varepsilon}^{\rm app}(z):=
     \begin{bmatrix}\bigl( \mathcal{A}_{0,\chi}^{\rm soft} -z I  \bigr)^{-1} - \widehat{S}^{\rm soft}_{\chi}(z){\widehat{Q}^{\rm app}_{\chi,\varepsilon}(z)}^{-1} \widehat{S}_{\chi}^{\rm soft}(\overline{z})^* & -\widehat{S}^{\rm soft}_{\chi}(z){\widehat{Q}^{\rm app}_{\chi,\varepsilon}(z)}^{-1} \bigl(\widehat{\Pi}_\chi^{\rm stiff}\bigr)^*\\[0.5em]
     -\widehat{\Pi}_\chi^{\rm stiff}{\widehat{Q}^{\rm app}_{\chi,\varepsilon}(z)}^{-1} \widehat{S}_{\chi}^{\rm soft}(\overline{z})^* & - \widehat{\Pi}_\chi^{\rm stiff}{\widehat{Q}^{\rm app}_{\chi,\varepsilon}(z)}^{-1} \bigl(\widehat{\Pi}_\chi^{\rm stiff}\bigr)^*\end{bmatrix},
 \end{equation}
and the block-operator matrix is understood relative to the decomposition $\mathcal{H}^{\rm soft} \oplus \mathcal{H}^{\rm stiff}$.
\end{theorem}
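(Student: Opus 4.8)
The plan is to start from the result of Theorem \ref{firstapproximationtheorem}, which already gives
$$\bigl((\mathcal{A}_{\chi,\varepsilon})_{0,I}-zI\bigr)^{-1} = \bigl(\mathcal{A}_{0,\chi,\varepsilon}-zI\bigr)^{-1} - \widehat{S}_{\chi,\varepsilon}(z)\,\widehat{M}_{\chi,\varepsilon}(z)^{-1}\,\widehat{S}_{\chi,\varepsilon}(\overline{z})^* + \mathcal{O}(\varepsilon^2)$$
in the $\mathcal{H}\to\mathcal{H}$ norm, uniformly in $\chi$. So it suffices to show that the second term on the right is $\varepsilon^2$-close to $-\bigl(\mathcal{R}_{\chi,\varepsilon}^{\rm app}(z) - \mathcal{R}_{\chi,\varepsilon}^{\rm app}(z)|_{\text{diagonal part of }(\mathcal{A}_{0,\chi,\varepsilon}-zI)^{-1}}\bigr)$; more precisely, that $\widehat{S}_{\chi,\varepsilon}(z)\widehat{M}_{\chi,\varepsilon}(z)^{-1}\widehat{S}_{\chi,\varepsilon}(\overline{z})^*$ differs by $\mathcal{O}(\varepsilon^2)$ from the corresponding term built out of $\widehat{Q}^{\rm app}_{\chi,\varepsilon}(z)$, $\widehat{S}^{\rm soft}_\chi(z)$ and $\widehat{\Pi}^{\rm stiff}_\chi$, and that $\bigl(\mathcal{A}_{0,\chi,\varepsilon}-zI\bigr)^{-1}$ restricted to the stiff component, as well as its stiff–soft cross terms, is itself $\mathcal{O}(\varepsilon^2)$. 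The latter is immediate: on $\mathcal{H}^{\rm stiff}$ the decoupled operator is $\varepsilon^{-2}\mathcal{A}_{0,\chi}^{\rm stiff}$, which is bounded below by $C\varepsilon^{-2}$ (coercivity of $a_{0,\chi}^{\rm stiff}$, Proposition after \eqref{nakk71}), so $\bigl(\varepsilon^{-2}\mathcal{A}_{0,\chi}^{\rm stiff}-zI\bigr)^{-1}=\mathcal{O}(\varepsilon^2)$ in operator norm uniformly in $\chi$ and $z\in K_\sigma$; and $\bigl(\mathcal{A}_{0,\chi,\varepsilon}-zI\bigr)^{-1}$ is block-diagonal with respect to $\mathcal{H}^{\rm soft}\oplus\mathcal{H}^{\rm stiff}$, so its cross terms vanish identically. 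This accounts for the fact that in \eqref{resolventblockmatrix} only the soft–soft block carries a $\bigl(\mathcal{A}_{0,\chi}^{\rm soft}-zI\bigr)^{-1}$ summand.

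Next I would replace $\widehat{M}_{\chi,\varepsilon}(z)^{-1}$ by $\widehat{Q}^{\rm app}_{\chi,\varepsilon}(z)^{-1}$. By \eqref{Masymptoticformula} — itself a consequence of \eqref{nakk300} (the expansion $\varepsilon^{-2}\widehat{M}_\chi^{\rm stiff}(\varepsilon^2 z)=\varepsilon^{-2}\widehat{\Lambda}_\chi^{\rm stiff}+z(\widehat{\Pi}_\chi^{\rm stiff})^*\widehat{\Pi}_\chi^{\rm stiff}+\mathcal{O}(\varepsilon^2)$) and the uniform bound \eqref{nakk400} on $\mathbb{A}^{-1}=\widehat{M}_{\chi,\varepsilon}(z)^{-1}$ — we have $\widehat{M}_{\chi,\varepsilon}(z)^{-1}=\widehat{Q}^{\rm app}_{\chi,\varepsilon}(z)^{-1}+\mathcal{O}(\varepsilon^2)$, provided we also check that $\widehat{Q}^{\rm app}_{\chi,\varepsilon}(z)^{-1}$ is itself uniformly bounded; the latter follows by the same imaginary-part/coercivity argument used for \eqref{nakk400} (the extra term $z(\widehat{\Pi}_\chi^{\rm stiff})^*\widehat{\Pi}_\chi^{\rm stiff}-\varepsilon^2z(\cdots)$ contributes an imaginary part $\Im z\,\|\widehat{\Pi}_\chi^{\rm stiff}\vect f\|^2$, which by Lemma \ref{lemmanormequivalence_f} and the trace inequality dominates $C\|\widehat{P}_\chi\vect f\|_{\mathcal{E}}^2$, and Corollary \ref{boundfrombelowappendix} then gives the bound). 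With this in hand, and using that $\widehat{S}_{\chi,\varepsilon}(z)$ and $\widehat{S}_{\chi,\varepsilon}(\overline{z})^*$ are uniformly bounded (which follows from \eqref{soperatorhatformula}, \eqref{asymptoticsforsformula} and the uniform boundedness of $\widehat{\Pi}_\chi$), a telescoping estimate of the form $\|ABC - A'B'C'\|\le\|A-A'\|\|B\|\|C\|+\cdots$ reduces everything to replacing $\widehat{S}_{\chi,\varepsilon}(z)$ and $\widehat{S}_{\chi,\varepsilon}(\overline{z})^*$ by their leading-order constituents on each component.

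Finally I would unpack the leading-order forms of the solution operators. By \eqref{MMM}, $S_{\chi,\varepsilon}(z)=S_\chi^{\rm soft}(z)+S_\chi^{\rm stiff}(\varepsilon^2 z)$, hence $\widehat{S}_{\chi,\varepsilon}(z)=\widehat{S}_\chi^{\rm soft}(z)\oplus\widehat{S}_\chi^{\rm stiff}(\varepsilon^2 z)$ relative to $\mathcal{H}^{\rm soft}\oplus\mathcal{H}^{\rm stiff}$. On the soft component we keep $\widehat{S}_\chi^{\rm soft}(z)$ as is; on the stiff component, \eqref{asymptoticsforsformula} gives $\widehat{S}_\chi^{\rm stiff}(\varepsilon^2 z)=\widehat{\Pi}_\chi^{\rm stiff}+\mathcal{O}(\varepsilon^2)$ in the $\mathcal{E}\to\mathcal{H}^{\rm stiff}$ norm. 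Likewise $\widehat{S}_{\chi,\varepsilon}(\overline{z})^*=\widehat{S}_\chi^{\rm soft}(\overline{z})^*\widehat{P}_\chi P_{\rm soft}$ plus $\bigl(\widehat{\Pi}_\chi^{\rm stiff}\bigr)^*\widehat{P}_\chi P_{\rm stiff}+\mathcal{O}(\varepsilon^2)$, using \eqref{Sadjointformulae}. Substituting these into the four block entries of $-\widehat{S}_{\chi,\varepsilon}(z)\widehat{Q}^{\rm app}_{\chi,\varepsilon}(z)^{-1}\widehat{S}_{\chi,\varepsilon}(\overline{z})^*$ and adding the (diagonal, soft-only) decoupled resolvent yields exactly the block matrix \eqref{resolventblockmatrix}; the accumulated errors are $\mathcal{O}(\varepsilon^2)$ because each of $\widehat{Q}^{\rm app}_{\chi,\varepsilon}(z)^{-1}$, the solution operators, and the lift operators is uniformly bounded. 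I expect the main obstacle to be bookkeeping rather than conceptual: one must be careful that the $\mathcal{O}(\varepsilon^2)$ remainders in \eqref{asymptoticsforsformula} and \eqref{Masymptoticformula} are genuinely uniform in both $\chi\in Y'$ and $z\in K_\sigma$ (the $\chi$-uniformity of all the lift and DtN bounds has been established in Section \ref{section3}, and the $z$-uniformity comes from $z\in K_\sigma$ being compact and separated from $\mathbb{R}$), and that the uniform invertibility of $\widehat{Q}^{\rm app}_{\chi,\varepsilon}(z)$ — which is what allows the whole argument to close — is proved with a constant independent of $\chi$; this last point is the genuine technical heart and mirrors the derivation of \eqref{nakk400}.
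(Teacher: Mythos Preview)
Your proposal is correct and follows essentially the same route as the paper: start from Theorem \ref{firstapproximationtheorem}, replace $\widehat{M}_{\chi,\varepsilon}(z)^{-1}$ by $\widehat{Q}^{\rm app}_{\chi,\varepsilon}(z)^{-1}$ via \eqref{Masymptoticformula}, replace $\widehat{S}^{\rm stiff}_\chi(\varepsilon^2 z)$ by $\widehat{\Pi}^{\rm stiff}_\chi$ via \eqref{asymptoticsforsformula}, and discard the stiff block of the decoupled resolvent as $\mathcal{O}(\varepsilon^2)$. The only minor difference is that the paper organises the computation block by block rather than via a single telescoping estimate, and obtains the uniform bound on $\widehat{Q}^{\rm app}_{\chi,\varepsilon}(z)^{-1}$ implicitly from \eqref{nakk400} and a Neumann-series argument rather than reproving the imaginary-part estimate directly.
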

\begin{proof}
The proof consists in applying the formula \eqref{Masymptoticformula} to the individual blocks of the resolvent $( \left(\mathcal{A}_{\chi,\varepsilon}\right)_{\widecheck{P}_\chi,\widehat{P}_\chi} -zI)^{-1}$. We have 
\begin{equation*}
\begin{aligned}
    P_{\rm soft}\bigl( (\mathcal{A}_{\chi,\varepsilon})_{\widecheck{P}_\chi,\widehat{P}_\chi} -zI \bigr)^{-1}P_{\rm soft} = & \bigl( \mathcal{A}_{0,\chi}^{\rm soft} -z I  \bigr)^{-1} - \widehat{S}^{\rm soft}_{\chi}(z) {\widehat{M}_{\varepsilon,\chi}(z)}^{-1} \widehat{S}_{\chi}^{\rm soft}(\overline{z})^* \\[0.3em]
     = & \bigl( \mathcal{A}_{0,\chi}^{\rm soft} -z I  \bigr)^{-1} - \widehat{S}^{\rm soft}_{\chi}(z){\widehat{Q}^{\rm app}_{\chi,\varepsilon}(z)}^{-1} \widehat{S}_{\chi}^{\rm soft}(\overline{z})^* + \mathcal{O}(\varepsilon^2).
\end{aligned}
\end{equation*}
Next, we use the asymptotic formula \eqref{asymptoticsforsformula} for $S_\chi^{\rm stiff}$:
\begin{equation*}
\begin{aligned}
    P_{\rm stiff}\bigl( (\mathcal{A}_{\chi,\varepsilon})_{\widecheck{P}_\chi,\widehat{P}_\chi} -zI \bigr)^{-1}P_{\rm soft} = &  - \widehat{S}^{\rm stiff}_{\chi}( \varepsilon^2 z) {\widehat{M}_{\varepsilon,\chi}(z)}^{-1} \widehat{S}_{\chi}^{\rm soft}(\overline{z})^* 
    = -\widehat{\Pi}_\chi^{\rm stiff}{\widehat{Q}^{\rm app}_{\chi,\varepsilon}(z) }^{-1} \widehat{S}_{\chi}^{\rm soft}(\overline{z})^* + \mathcal{O}(\varepsilon^2),\\[0.35em]
    P_{\rm soft}\bigl( (\mathcal{A}_{\chi,\varepsilon})_{\widecheck{P}_\chi,\widehat{P}_\chi} -zI \bigr)^{-1}P_{\rm stiff} = &  - \widehat{S}^{\rm soft}_{\chi}( z) {\widehat{M}_{\varepsilon,\chi}(z)}^{-1} \widehat{S}_{\chi}^{\rm stiff}(\varepsilon^2 \overline{z})^* 
    =-\widehat{S}^{\rm soft}_{\chi}( z){\widehat{Q}^{\rm app}_{\chi,\varepsilon}(z)}^{-1} \bigl(\widehat{\Pi}_\chi^{\rm stiff}\bigr)^* + \mathcal{O}(\varepsilon^2).
\end{aligned}
\end{equation*}
For calculating the remaining block, we use the fact that
\begin{equation*}
    P_{\rm stiff}\left( \mathcal{A}_{0,\chi,\varepsilon} -z I  \right)^{-1}P_{\rm stiff} = \bigl( \varepsilon^{-2}\mathcal{A}_{0,\chi}^{\rm stiff}-zI\bigr)^{-1} = \mathcal{O}(\varepsilon^2)
\end{equation*}
in the $\mathcal{H} \to \mathcal{H}$ operator norm. Finally, we have
\begin{equation*}
    \begin{aligned}
    P_{\rm stiff}\bigl( (\mathcal{A}_{\chi,\varepsilon})_{\widecheck{P}_\chi,\widehat{P}_\chi} -zI \bigr)^{-1}P_{\rm stiff} =  \bigl( \varepsilon^{-2}\mathcal{A}_{0,\chi}^{\rm stiff} -z I  \bigr)^{-1} - \widehat{S}^{\rm stiff}_{\chi}(z) {\widehat{M}_{\varepsilon,\chi}(z)}^{-1} \widehat{S}_{\chi}^{\rm stiff}(\overline{z})^* \\[0.3em]
     = &  
     -\widehat{\Pi}_\chi^{\rm stiff} {\widehat{Q}^{\rm app}_{\chi,\varepsilon}(z)}^{-1} \bigl(\widehat{\Pi}_\chi^{\rm stiff}\bigr)^* + \mathcal{O}(\varepsilon^2),
\end{aligned}
\end{equation*}
which, combined with Theorem \ref{firstapproximationtheorem}, completes the proof. 
\end{proof}
\begin{remark}
Note that one can rewrite \eqref{resolventblockmatrix} as follows:
\begin{equation}
 \label{resolventcondensedform}
     \mathcal{R}_{\chi,\varepsilon}^{\rm app}(z):= \bigl(\mathcal{A}_{0,\chi}^{\rm soft} -z I  \bigr)^{-1} P_{\mathcal{H}^{\rm soft}} - \begin{bmatrix}
     \widehat{S}^{\rm soft}_{\chi}(z) & \widehat{\Pi}_\chi^{\rm stiff}
     \end{bmatrix}{\widehat{Q}^{\rm app}_{\chi,\varepsilon}(z)}^{-1} \begin{bmatrix}
     \widehat{S}^{\rm soft}_{\chi}(z) & \widehat{\Pi}_\chi^{\rm stiff}
     \end{bmatrix}^*.
 \end{equation}
\end{remark}

\subsection{Fiberwise approximating operator} \label{Sectionopapp} 
It remains to provide an explicit description of the selfadjoint operator whose resolvent is given by \eqref{resolventblockmatrix}.
To this end, we consider the Hilbert space $\mathcal{H}^{\rm soft} \oplus \widehat{\mathcal{H}}_\chi^{\rm stiff}$ and define the 
operator $\mathcal{A}_{\chi,\varepsilon}^{\rm app}$ as follows:\PPP \footnote{Since the operators $\widehat{\Pi}_\chi^{\rm stiff}$ are acting from 3-dimensional space to a 3-dimensional space they themselves and their inverses can be represented by the appropriate matrices.} \BBB
\begin{equation}
\begin{aligned}
    \mathcal{D}\bigl(\mathcal{A}_{\chi,\varepsilon}^{\rm app}\bigr)&:=\bigl\{ (\vect u,\widehat{\vect u})^\top\in \mathcal{H}^{\rm soft} \oplus \widehat{\mathcal{H}}_\chi^{\rm stiff}, \quad \vect u \in \mathcal{D}(\widehat{\mathcal{A}}_\chi^{\rm soft}), \quad  \widehat{\vect u} = \widehat{\Pi}_\chi^{\rm stiff}\widehat{\Gamma}_{0,\chi}^{\rm soft} \vect u  \bigr\} ,\\[0.3em]
    \mathcal{A}_{\chi,\varepsilon}^{\rm app} \begin{bmatrix}
    \vect u \\ \widehat{\vect u}
    \end{bmatrix}&:= \begin{bmatrix}
    \widehat{\mathcal{A}}_\chi^{\rm soft} & 0 \\[0.3em]
     - \bigl( \bigl(\widehat{\Pi}_\chi^{\rm stiff} \bigr)^* \bigr)^{-1} \widehat{\Gamma}_{1,\chi}^{\rm soft} &     -\varepsilon^{-2}\bigl( \bigl(\widehat{\Pi}_\chi^{\rm stiff} \bigr)^* \bigr)^{-1} \widehat{\Gamma}_{1,\chi}^{\rm stiff}
    \end{bmatrix}\begin{bmatrix}
    \vect u \\ \widehat{\vect u}
    \end{bmatrix}. 
\end{aligned}
\label{homoperator}
\end{equation}
The following theorem links $\mathcal{R}_{\chi,\varepsilon}^{\rm app}(z),$ see
\eqref{resolventblockmatrix}, to the resolvent of $\mathcal{A}_{\chi,\varepsilon}^{\rm app}.$
\begin{theorem}
\label{theoremappoperator}
 For every $\chi \in Y'$, the operator $\mathcal{A}_{\chi,\varepsilon}^{\rm app}$ is self-adjoint and its resolvent for all $z\in \rho(\mathcal{A}_{\chi,\varepsilon}^{\rm app})$ is given by the formula \eqref{resolventblockmatrix}, relative to the decomposition $\mathcal{H}^{\rm soft} \oplus \widehat{\mathcal{H}}_\chi^{\rm stiff}.$
\end{theorem}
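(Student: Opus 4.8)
The plan is to verify the theorem by exploiting the abstract Ryzhov-triple / Kre\u\i n-formula machinery assembled in Section \ref{section3}, applied to a suitably chosen triple on the space $\mathcal{H}^{\rm soft}\oplus\widehat{\mathcal H}_\chi^{\rm stiff}$, and then to recognise that the resolvent of that triple's extension is precisely \eqref{resolventblockmatrix}. The two things to establish are (i) self-adjointness of $\mathcal{A}_{\chi,\varepsilon}^{\rm app}$ and (ii) the resolvent identity. These are naturally done together, since the cleanest route to (i) is via the Kre\u\i n/Ryzhov formula, which produces a \emph{self-adjoint} operator whenever $\beta_0=I$, $\beta_1=0$ (cf.\ the remark after Theorem \ref{theoremkreinformula} and \cite[Corollary 5.9]{Ryzhov_spec}).

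First I would set up the right abstract data. On the ``reduced'' soft side, consider the Ryzhov triple $(\mathcal{A}_{0,\chi}^{\rm soft},\widehat{\Pi}_\chi^{\rm soft},\widehat{\Lambda}_\chi^{\rm soft})$ on $(\mathcal{H}^{\rm soft},\widehat{\mathcal E}_\chi)$, whose associated boundary operators are $\widehat{\Gamma}_{0,\chi}^{\rm soft}$, $\widehat{\Gamma}_{1,\chi}^{\rm soft}$ and whose solution operator/$M$-function are $\widehat{S}_\chi^{\rm soft}(z)$, $\widehat M_\chi^{\rm soft}(z)$, as in Section \ref{steklovtruncation}. The key algebraic observation is that on the domain $\mathcal{D}(\mathcal{A}_{\chi,\varepsilon}^{\rm app})$ the component $\widehat{\vect u}$ is slaved to $\vect u$ via $\widehat{\vect u}=\widehat{\Pi}_\chi^{\rm stiff}\widehat{\Gamma}_{0,\chi}^{\rm soft}\vect u$; since $\widehat{\Pi}_\chi^{\rm stiff}$ and $\widehat{\Gamma}_{0,\chi}^{\rm soft}$ are mutually inverse on $\widehat{\mathcal E}_\chi$ (as $\widehat{\Gamma}_{0,\chi}^{\rm soft}\widehat{\Pi}_\chi^{\rm soft}=I|_{\widehat{\mathcal E}_\chi}$ etc.), the map $\vect u\mapsto(\vect u,\widehat{\vect u})^\top$ is a linear isomorphism of $\mathcal{D}(\widehat{\mathcal A}_\chi^{\rm soft})$ onto $\mathcal{D}(\mathcal{A}_{\chi,\varepsilon}^{\rm app})$. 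Thus $\mathcal{A}_{\chi,\varepsilon}^{\rm app}$ is unitarily/similarly conjugate to an operator on $\mathcal{H}^{\rm soft}$ alone, obtained by eliminating $\widehat{\vect u}$; the second row of \eqref{homoperator} then reads, after substituting $\widehat{\vect u}$ and using $\widehat{\Gamma}_{1,\chi}^{\rm stiff}\widehat{\Pi}_\chi^{\rm stiff}=\widehat{\Lambda}_\chi^{\rm stiff}$, as an effective ``transmission'' boundary condition. I expect that the operator obtained by this elimination is exactly the Ryzhov-triple extension $(\mathcal{A}_{0,\chi}^{\rm soft},\widehat{\Pi}_\chi^{\rm soft},\widehat{Q}^{\rm app}_{\chi,\varepsilon}(z))$-type object, i.e.\ the self-adjoint operator $(\widehat{\mathcal A}^{\rm soft}_\chi)_{0,I}$ whose ``$\Lambda$''-operator has been replaced by the $\chi$-dependent Hermitian matrix $\varepsilon^{-2}\widehat\Lambda_\chi^{\rm stiff}+z(\widehat\Pi_\chi^{\rm stiff})^*\widehat\Pi_\chi^{\rm stiff}+\widehat M_\chi^{\rm soft}(z)$ minus its $z$-dependent part, i.e.\ $\varepsilon^{-2}\widehat\Lambda_\chi^{\rm stiff}+\widehat\Lambda_\chi^{\rm soft}$ together with the bounded perturbation $(\widehat\Pi_\chi^{\rm stiff})^*\widehat\Pi_\chi^{\rm stiff}$. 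This is a finite-rank self-adjoint perturbation of a self-adjoint operator on a finite-dimensional space glued to $\mathcal{A}_{0,\chi}^{\rm soft}$, so self-adjointness follows either directly from Theorem \ref{theoremkreinformula} with $\beta_0=I,\beta_1=0$ applied to the modified triple, or from a Kato--Rellich argument as in Theorem \ref{thm:Voffka}.

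Once self-adjointness is in hand, the resolvent identity is a direct computation. I would apply the Kre\u\i n formula \eqref{kreinformula2} (equivalently Theorem \ref{theoremsolutionformularobin} with $\vect g=0$) to the modified triple: solving $(\mathcal{A}_{\chi,\varepsilon}^{\rm app}-zI)(\vect u,\widehat{\vect u})^\top=(\vect f_{\rm soft},\vect f_{\rm stiff})^\top$ reduces, after eliminating $\widehat{\vect u}$, to a boundary-value problem on the soft component whose solution operator is $(\mathcal{A}_{0,\chi}^{\rm soft}-zI)^{-1}$ corrected by a rank-$3$ term built from $\widehat S_\chi^{\rm soft}(z)$, $\widehat\Pi_\chi^{\rm stiff}$ and the inverse of the Schur complement of the block system; that Schur complement is exactly $\widehat Q^{\rm app}_{\chi,\varepsilon}(z)=\varepsilon^{-2}\widehat\Lambda_\chi^{\rm stiff}+z(\widehat\Pi_\chi^{\rm stiff})^*\widehat\Pi_\chi^{\rm stiff}+\widehat M_\chi^{\rm soft}(z)$, cf.\ \eqref{qoperator}. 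Writing out the four blocks (soft--soft, soft--stiff, stiff--soft, stiff--stiff) and using the adjoint identities \eqref{Sadjointformulae}, \eqref{pi_projections} and $\widehat S_\chi^{\rm soft}(\overline z)^*=\widehat P_\chi(S_\chi^{\rm soft})^*$ gives precisely the four entries of \eqref{resolventblockmatrix}. The main obstacle, and the step that needs the most care, is the bookkeeping in the block elimination: one must check that the constraint $\widehat{\vect u}=\widehat\Pi_\chi^{\rm stiff}\widehat\Gamma_{0,\chi}^{\rm soft}\vect u$ is compatible with the second equation $(\mathcal{A}_{\chi,\varepsilon}^{\rm app}-zI)$-row applied to the pair, and that the effective boundary condition it produces matches the ``$\widehat Q^{\rm app}$'' Schur complement rather than, say, $\widehat M_{\varepsilon,\chi}(z)$ — the distinction being exactly the $\mathcal{O}(\varepsilon^2)$ term already discarded in \eqref{Masymptoticformula}, so here the identification must be \emph{exact}. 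Concretely this amounts to verifying the two operator identities $\widehat\Gamma_{1,\chi}^{\rm soft}\widehat S_\chi^{\rm soft}(z)=\widehat M_\chi^{\rm soft}(z)$ and $\widehat\Gamma_{1,\chi}^{\rm stiff}\widehat\Pi_\chi^{\rm stiff}=\widehat\Lambda_\chi^{\rm stiff}$ together with $(\widehat\Pi_\chi^{\rm stiff})^*\widehat\Pi_\chi^{\rm stiff}$ being the genuine ``$z$-coefficient'' of the modified $M$-function (cf.\ \eqref{soperatoranotherform}, \eqref{identityforasymptotics} with the stiff resolvent term absent), and then reading off \eqref{resolventblockmatrix} from \eqref{resolventcondensedform}. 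I would conclude by noting that $z\in\rho(\mathcal{A}_{\chi,\varepsilon}^{\rm app})$ is equivalent to $\widehat Q^{\rm app}_{\chi,\varepsilon}(z)$ being boundedly invertible, which holds whenever $z$ is off the real axis since the operator is a value of an operator-valued Herglotz-type function on a finite-dimensional space.
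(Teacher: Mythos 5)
Your computation of the resolvent is essentially the paper's own argument: eliminate $\widehat{\vect u}$ through the domain constraint $\widehat{\vect u}=\widehat{\Pi}_\chi^{\rm stiff}\widehat{\Gamma}_{0,\chi}^{\rm soft}\vect u$, use $\widehat{\Gamma}_{1,\chi}^{\rm stiff}\widehat{\Pi}_\chi^{\rm stiff}=\widehat{\Lambda}_\chi^{\rm stiff}$ to turn the second row into the impedance condition $\bigl(\varepsilon^{-2}\widehat{\Lambda}_\chi^{\rm stiff}+z(\widehat{\Pi}_\chi^{\rm stiff})^*\widehat{\Pi}_\chi^{\rm stiff}\bigr)\widehat{\Gamma}_{0,\chi}^{\rm soft}\vect u+\widehat{\Gamma}_{1,\chi}^{\rm soft}\vect u=-(\widehat{\Pi}_\chi^{\rm stiff})^*\,\widehat{\!\vect f}$, recognise $\widehat{Q}^{\rm app}_{\chi,\varepsilon}(z)=\beta_{0,\chi,\varepsilon}(z)+\widehat{M}_\chi^{\rm soft}(z)$ as the relevant Schur complement, and invoke Theorem \ref{theoremsolutionformularobin}. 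That part is fine and the identification is indeed exact, as you stress.

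The gap is in your route to self-adjointness. The map $\vect u\mapsto(\vect u,\widehat{\Pi}_\chi^{\rm stiff}\widehat{\Gamma}_{0,\chi}^{\rm soft}\vect u)^\top$ is only a similarity, not a unitary, between $\mathcal{D}(\widehat{\mathcal{A}}_\chi^{\rm soft})$ and $\mathcal{D}(\mathcal{A}_{\chi,\varepsilon}^{\rm app})$; the inner product on $\mathcal{H}^{\rm soft}\oplus\widehat{\mathcal{H}}_\chi^{\rm stiff}$ restricted to the graph is not the $\mathcal{H}^{\rm soft}$ inner product, so self-adjointness cannot be transported this way. Worse, the operator obtained by elimination is governed by a \emph{$z$-dependent} boundary condition, so it is not a single operator on $\mathcal{H}^{\rm soft}$ at all but a generalised resolvent $\mathcal{R}^{\rm app,soft}_{\chi,\varepsilon}(z)=P|_{\mathcal{H}^{\rm soft}}(\mathcal{A}^{\rm app}_{\chi,\varepsilon}-zI)^{-1}|_{\mathcal{H}^{\rm soft}}$, whose underlying $z$-dependent operators are maximal (anti)dissipative rather than self-adjoint — this is precisely the point the paper makes in its ``general outlook'' discussion, where $\mathcal{A}^{\rm app}_{\chi,\varepsilon}$ appears as the self-adjoint \emph{dilation} of that family. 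Applying Theorem \ref{theoremkreinformula} with $\beta_0=I$, $\beta_1=0$ would simply return the decoupled Dirichlet operator, not $\mathcal{A}^{\rm app}_{\chi,\varepsilon}$. The fix, which is what the paper does, is to verify symmetry of $\mathcal{A}^{\rm app}_{\chi,\varepsilon}$ directly on the product space: for $(\vect u,\widehat{\vect u})^\top,(\vect v,\widehat{\vect v})^\top$ in the domain one computes $\langle\mathcal{A}^{\rm app}_{\chi,\varepsilon}(\vect u,\widehat{\vect u})^\top,(\vect v,\widehat{\vect v})^\top\rangle$, the factors $\bigl((\widehat{\Pi}_\chi^{\rm stiff})^*\bigr)^{-1}$ in the second row cancel against $\widehat{\Pi}_\chi^{\rm stiff}\widehat{\Gamma}_{0,\chi}^{\rm soft}\vect v$, and the abstract Green formula \eqref{Greenformula} together with the self-adjointness of $\widehat{\Lambda}_\chi^{\rm stiff}$ yields symmetry; self-adjointness then follows since your resolvent computation exhibits a bounded everywhere-defined inverse of $\mathcal{A}^{\rm app}_{\chi,\varepsilon}-zI$ for all non-real $z$ (as $\widehat{Q}^{\rm app}_{\chi,\varepsilon}(z)$ is invertible there by the imaginary-part argument and Corollary \ref{boundfrombelowappendix}).
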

\begin{proof}
First we show that the operator $\mathcal{A}_{\chi,\varepsilon}^{\rm app}$ is symmetric. For $(\vect u, \widehat{\vect u})^\top, (\vect v, \widehat{\vect v})^\top\in \mathcal{D}(\mathcal{A}_{\chi,\varepsilon}^{\rm app})$ one has
\begin{equation*}
\begin{aligned}
        \biggl\langle \mathcal{A}_{\chi,\varepsilon}^{\rm app} \begin{bmatrix}
    \vect u \\ \widehat{\vect u}
    \end{bmatrix},\begin{bmatrix}
    \vect v \\ \widehat{\vect v}
    \end{bmatrix} \biggr\rangle 
& = \bigl\langle \widehat{\mathcal{A}}_\chi^{\rm soft}\vect u , \vect v\bigr\rangle_{\mathcal{H}^{\rm soft}}
-\Bigl\langle \bigl( \bigl(\widehat{\Pi}_\chi^{\rm stiff} \bigr)^* \bigr)^{-1} \widehat{\Gamma}_{1,\chi}^{\rm soft}\vect u, \widehat{\vect v} \Bigr\rangle_{\widehat{\mathcal{H}}_\chi^{\rm stiff}} 
-\varepsilon^{-2}
    \Bigl\langle
      \bigl(\bigl(\widehat{\Pi}_\chi^{\rm stiff} \bigr)^* \bigr)^{-1} \widehat{\Gamma}_{1,\chi}^{\rm stiff} \widehat{\vect u},\widehat{\vect v}\Bigr\rangle_{\widehat{\mathcal{H}}_\chi^{\rm stiff}} \\[0.3em]      
      & = \bigl\langle \widehat{\mathcal{A}}_\chi^{\rm soft}\vect u , \vect v \bigr\rangle_{\mathcal{H}^{\rm soft}}  - \Bigl\langle \bigl( \bigl(\widehat{\Pi}_\chi^{\rm stiff} \bigr)^* \bigr)^{-1} \widehat{\Gamma}_{1,\chi}^{\rm soft}\vect u, \widehat{\Pi}_\chi^{\rm stiff}\widehat{\Gamma}_{0,\chi}^{\rm soft} \vect v \Bigr\rangle_{\widehat{\mathcal{H}}_\chi^{\rm stiff}} \\[0.3em] 
      &\hspace{20ex}-\varepsilon^{-2}
    \Bigl\langle
      \bigl( \bigl(\widehat{\Pi}_\chi^{\rm stiff} \bigr)^* \bigr)^{-1} \widehat{\Gamma}_{1,\chi}^{\rm stiff}\widehat{\Pi}_\chi^{\rm stiff}\widehat{\Gamma}_{0,\chi}^{\rm soft} \vect u,\widehat{\Pi}_\chi^{\rm stiff}\widehat{\Gamma}_{0,\chi}^{\rm soft} \vect v\Bigr\rangle_{\widehat{\mathcal{H}}_\chi^{\rm stiff}} \\[0.3em]
      & = \bigl\langle \widehat{\mathcal{A}}_\chi^{\rm soft}\vect u , \vect v \bigr\rangle_{\mathcal{H}^{\rm soft}}  - \bigl\langle  \widehat{\Gamma}_{1,\chi}^{\rm soft}\vect u, \widehat{\Gamma}_{0,\chi}^{\rm soft} \vect v \bigr\rangle_{\widehat{\mathcal{E}}_\chi}-\varepsilon^{-2}
    \bigl\langle   \widehat{\Lambda}_\chi^{\rm stiff}\widehat{\Gamma}_{0,\chi}^{\rm soft} \vect u,\widehat{\Gamma}_{0,\chi}^{\rm soft} \vect v\bigr\rangle_{\widehat{\mathcal{E}}_\chi}.
\end{aligned}
\end{equation*}
By Green's formula (see \eqref{Greenformula})  and the self-adjointness of $\widehat{\Lambda}_\chi^{\rm stiff},$ one has
\begin{equation*}
    \left\langle \mathcal{A}_{\chi,\varepsilon}^{\rm app} \begin{bmatrix}
    \vect u \\ \widehat{\vect u}
    \end{bmatrix},\begin{bmatrix}
    \vect v \\ \widehat{\vect v}
    \end{bmatrix} \right\rangle = \left\langle  \begin{bmatrix}
    \vect u \\ \widehat{\vect u}
    \end{bmatrix},\mathcal{A}_{\chi,\varepsilon}^{\rm app}\begin{bmatrix}
    \vect v \\ \widehat{\vect v}
    \end{bmatrix} \right\rangle.
\end{equation*}
Next, we fix $\vect f \in \mathcal{H}^{\rm soft}$, $\,\;\widehat{\!\!\vect f} \in \widehat{\mathcal{H}}_\chi^{\rm stiff}$. For every $z \in \rho(\mathcal{A}_{\chi,\varepsilon}^{\rm app})$ we consider the problem
\begin{equation*}
    \left(\mathcal{A}_{\chi,\varepsilon}^{\rm app} - zI \right)\begin{bmatrix}
    \vect u\\ \widehat{\vect u}
    \end{bmatrix} = \begin{bmatrix}
    \vect f\\ \,\;\widehat{\!\!\vect f}
    \end{bmatrix}, \quad \begin{bmatrix}
    \vect u\\ \widehat{\vect u}
    \end{bmatrix} \in \mathcal{D}\left( \mathcal{A}_{\chi,\varepsilon}^{\rm app}\right).
\end{equation*}
Component-wise, we have
\begin{equation}
\label{blockresolvent2}
        \left\{ \begin{array}{ll}
         \widehat{\mathcal{A}}_{\chi}^{\rm soft}\vect u - z \vect u = \vect f,\\[0.4em]
         - \bigl( \bigl(\widehat{\Pi}_\chi^{\rm stiff} \bigr)^* \bigr)^{-1} \widehat{\Gamma}_{1,\chi}^{\rm soft}  \vect u  -\varepsilon^{-2}\bigl( \bigl(\widehat{\Pi}_\chi^{\rm stiff}\bigr)^* \bigr)^{-1} \widehat{\Gamma}_{1,\chi}^{\rm stiff}\widehat{\vect u} - z \widehat{\vect u} = \,\;\widehat{\!\!\vect f},\end{array} \right.\quad \begin{bmatrix}
    \vect u\\ \widehat{\vect u}
    \end{bmatrix} \in \mathcal{D}\left( \mathcal{A}_{\chi,\varepsilon}^{\rm app}\right),
\end{equation}
which is equivalent to
\begin{equation}
\label{blockresolvent3}
        \left\{ \begin{array}{ll}
         \widehat{\mathcal{A}}_{\chi}^{\rm soft}\vect u - z \vect u = \vect f,\\[0.4em]
         -  \widehat{\Gamma}_{1,\chi}^{\rm soft}  \vect u -\varepsilon^{-2}   \widehat{\Gamma}_{1,\chi}^{\rm stiff}\widehat{\vect u} - z \bigl( \widehat{\Pi}_\chi^{\rm stiff} \bigr)^*\widehat{\vect u} = \bigl( \widehat{\Pi}_\chi^{\rm stiff} \bigr)^*\,\;\widehat{\!\!\vect f},\end{array} \right.\quad \begin{bmatrix}
    \vect u\\ \widehat{\vect u}
    \end{bmatrix} \in \mathcal{D}\left( \mathcal{A}_{\chi,\varepsilon}^{\rm app}\right).
\end{equation}
Due to the fact that $\widehat{\vect u} = \widehat{\Pi}_\chi^{\rm stiff}\widehat{\Gamma}_{0,\chi}^{\rm soft} \vect u$, the problem \eqref{blockresolvent3} is equivalent to finding a vector $\vect u \in \mathcal{D}(\widehat{\mathcal{A}}_{\chi}^{\rm soft})$ such that
\begin{equation*}
        \left\{ \begin{array}{ll}
         \widehat{\mathcal{A}}_{\chi}^{\rm soft}\vect u - z \vect u = \vect f,\\[0.3em]
         -  \widehat{\Gamma}_{1,\chi}^{\rm soft}  \vect u -\varepsilon^{-2} \widehat{\Gamma}_{1,\chi}^{\rm stiff}\widehat{\Pi}_\chi^{\rm stiff}\widehat{\Gamma}_{0,\chi}^{\rm soft} \vect u - z \bigl(\widehat{\Pi}_\chi^{\rm stiff} \bigr)^*\widehat{\Pi}_\chi^{\rm stiff}\widehat{\Gamma}_{0,\chi}^{\rm soft} \vect u = \bigl(\widehat{\Pi}_\chi^{\rm stiff} \bigr)^*\,\;\widehat{\!\!\vect f}\end{array} \right.
\end{equation*}
and then setting $\widehat{\vect u} =\widehat{\Pi}_\chi^{\rm stiff}\widehat{\Gamma}_{0,\chi}^{\rm soft} \vect u.$ By recalling \eqref{dirichlettoneumanndecomp}, one has $\widehat{\Gamma}_{1,\chi}^{\rm stiff}\widehat{\Pi}_\chi^{\rm stiff} = \widehat{\Lambda}_{\chi}^{\rm stiff},$ and therefore the above is equivalent to finding $\vect u \in \mathcal{D}( \widehat{\mathcal{A}}_{\chi}^{\rm soft})$ such that  
\begin{equation*}
         \widehat{\mathcal{A}}_{\chi}^{\rm soft}\vect u - z \vect u = \vect f,\qquad\quad
          \widehat{\Gamma}_{1,\chi}^{\rm soft}  \vect u  +\bigl(\varepsilon^{-2}\widehat{\Lambda}_{\chi}^{\rm stiff} + z\bigl( \widehat{\Pi}_\chi^{\rm stiff} \bigr)^*\widehat{\Pi}_\chi^{\rm stiff}\bigr)\widehat{\Gamma}_{0,\chi}^{\rm soft}  \vect u = -\bigl( \widehat{\Pi}_\chi^{\rm stiff} \bigr)^*\,\;\widehat{\!\!\vect f}.
\end{equation*}

We next define the operators $\beta_{0,\chi,\varepsilon}(z):=\varepsilon^{-2}\widehat{\Lambda}_{\chi}^{\rm stiff}+z (\widehat{\Pi}_\chi^{\rm stiff})^*\widehat{\Pi}_\chi^{\rm stiff}$, $\beta_1 = I,$ so the  transmission function \eqref{qoperator} can be written as 
\begin{equation}
    \widehat{Q}^{\rm app}_{\chi,\varepsilon}(z)= \beta_{0,\chi,\varepsilon}(z) + \beta_1\widehat{M}_{\chi}^{\rm soft}(z).
    \label{Qeps_ref}
\end{equation}
The operator $\widehat{Q}^{\rm app}_{\chi,\varepsilon}(z)$ is boundedly invertible (as can be seen by considering its imaginary part and using Corollary \ref{boundfrombelowappendix}) and satisfies the assumptions of Theorem \ref{theoremsolutionformularobin}. The solution $\vect u$ is then given by  \eqref{solutionabstractrobin} with $\vect g=-(\widehat{\Pi}_\chi^{\rm stiff})^*\,\;\widehat{\!\!\vect f}$:
\begin{equation*}
\begin{aligned}
    \vect u &= (\mathcal{A}_{0,\chi}^{\rm soft} -zI)^{-1}\vect f - \widehat{S}_{\chi}^{\rm soft}(z)\widehat{Q}^{\rm app}_{\chi,\varepsilon}(z)^{-1}\bigl( (\widehat{S}_{\chi}^{\rm soft}(\overline{z}))^* \vect f +\bigl( \widehat{\Pi}_\chi^{\rm stiff}\bigr)^*\,\;\widehat{\!\!\vect f} \bigr) \\[0.3em]
    & = \begin{bmatrix}
    (\mathcal{A}_{0,\chi}^{\rm soft} -zI)^{-1} - \widehat{S}_{\chi}^{\rm soft}(z) \widehat{Q}^{\rm app}_{\chi,\varepsilon}(z)^{-1} (\widehat{S}_{\chi}^{\rm soft}(\overline{z}))^*  & -\widehat{S}_{\chi}^{\rm soft}(z) \widehat{Q}^{\rm app}_{\chi,\varepsilon}(z)^{-1}\bigl(\widehat{\Pi}_\chi^{\rm stiff}\bigr)^*
    \end{bmatrix}\begin{bmatrix}
    \vect f \\\,\;\widehat{\!\!\vect f}
    \end{bmatrix} .
\end{aligned}
\end{equation*}
Now, one has
\begin{equation*}
\begin{aligned}
    \widehat{\vect u} &= \widehat{\Pi}_\chi^{\rm stiff}\widehat{\Gamma}_{0,\chi}^{\rm soft}\vect u =  -\widehat{\Pi}_\chi^{\rm stiff}\widehat{\Gamma}_{0,\chi}^{\rm soft} \,\widehat{S}_{\chi}^{\rm soft}(z) \widehat{Q}^{\rm app}_{\chi,\varepsilon}(z)^{-1}\bigl((\widehat{S}_{\chi}^{\rm soft}(\overline{z}))^* \vect f +\bigl(\widehat{\Pi}_\chi^{\rm stiff} \bigr)^*\,\;\widehat{\!\!\vect f}\bigr) 
    \\[0.3em]
    & = \begin{bmatrix}
   -\widehat{\Pi}_\chi^{\rm stiff} \widehat{Q}^{\rm app}_{\chi,\varepsilon}(z)^{-1} (\widehat{S}_{\chi}^{\rm soft}(\overline{z}))^*  & -\widehat{\Pi}_\chi^{\rm stiff} \widehat{Q}^{\rm app}_{\chi,\varepsilon}(z)^{-1}\bigl(\widehat{\Pi}_\chi^{\rm stiff} \bigr)^*
    \end{bmatrix}\begin{bmatrix}
    \vect f \\\,\;\widehat{\!\!\vect f}
    \end{bmatrix} .
\end{aligned}
\end{equation*}
\end{proof}
Another insight into the operator 
$\mathcal{A}_{\chi,\varepsilon}^{\rm app}$ is obtained by considering its sesquilinear form.
\begin{lemma}
The sesquilinear form $a_{\chi,\varepsilon}^{\rm app}$ on $\mathcal{H}\times \mathcal{H}$ associated with $\mathcal{A}_{\chi,\varepsilon}^{\rm app}$
is given by 
\begin{equation*}
	\begin{aligned}
    \mathcal{D}\left(a_{\chi,\varepsilon}^{\rm app}\right)&:=\bigl\{ (\vect u,\widehat{\vect u})^\top\in \mathcal{H}^{\rm soft} \oplus \widehat{\mathcal{H}}_\chi^{\rm stiff}, \quad \vect u \in \mathcal{D}(a_{0,\chi}^{\rm soft})\dot +  \widehat{\Pi}_\chi^{\rm soft} \widehat{\mathcal{E}}_\chi, \quad  \widehat{\Gamma}_{0,\chi}^{\rm stiff}\widehat{\vect u} = \widehat{\Gamma}_{0,\chi}^{\rm soft} \vect u \bigr\},\\[0.4em]
	 a_{\chi,\varepsilon}^{\rm app}\left( \begin{bmatrix}
    \vect u \\ \widehat{\vect u}
    \end{bmatrix} ,\begin{bmatrix}
    \vect v \\ \widehat{\vect v}
    \end{bmatrix}\right)&:=\int_{Y_{\rm soft}} \A_{\rm soft} \left(\simgrad +{\rm i}X_\chi \right)\vect u : \overline{ \left(\simgrad +{\rm i}X_\chi \right)\vect v}\\[0.4em]
&+\frac{1}{\varepsilon^2}\int_{Y_{\rm stiff}} \A_{\rm stiff} \left(\simgrad +{\rm i}X_\chi \right)\widehat{\vect u} : \overline{\left(\simgrad +{\rm i}X_\chi \right)\widehat{\vect v}}\qquad \forall \begin{bmatrix}
    \vect u \\ \widehat{\vect u}
\end{bmatrix} ,\begin{bmatrix}
\vect v \\ \widehat{\vect v}
\end{bmatrix} \in \mathcal{D}( a_{\chi,\varepsilon}^{\rm app}).
\end{aligned}
\end{equation*}
\end{lemma}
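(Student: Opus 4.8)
The plan is to verify that the first representation theorem identity $\mathcal{A} = \mathcal{A}^* $ strategy works here: compute the sesquilinear form of $\mathcal{A}_{\chi,\varepsilon}^{\rm app}$ directly from its definition \eqref{homoperator} and show it agrees with the claimed expression, then appeal to the uniqueness of the operator associated with a densely defined, closed, symmetric form. Since $\mathcal{A}_{\chi,\varepsilon}^{\rm app}$ was already shown to be self-adjoint in Theorem \ref{theoremappoperator}, it suffices to check that the form $a_{\chi,\varepsilon}^{\rm app}$ defined in the statement is closed, bounded below, and that its associated operator has the correct action on $\mathcal{D}(\mathcal{A}_{\chi,\varepsilon}^{\rm app})$; equivalently, I will verify directly that for all $(\vect u,\widehat{\vect u})^\top\in\mathcal{D}(\mathcal{A}_{\chi,\varepsilon}^{\rm app})$ and all $(\vect v,\widehat{\vect v})^\top\in\mathcal{D}(a_{\chi,\varepsilon}^{\rm app})$ one has
\begin{equation*}
\left\langle \mathcal{A}_{\chi,\varepsilon}^{\rm app}\begin{bmatrix}\vect u\\\widehat{\vect u}\end{bmatrix},\begin{bmatrix}\vect v\\\widehat{\vect v}\end{bmatrix}\right\rangle = a_{\chi,\varepsilon}^{\rm app}\left(\begin{bmatrix}\vect u\\\widehat{\vect u}\end{bmatrix},\begin{bmatrix}\vect v\\\widehat{\vect v}\end{bmatrix}\right),
\end{equation*}
which, combined with self-adjointness of $\mathcal{A}_{\chi,\varepsilon}^{\rm app}$ and the First Representation Theorem, pins down the form uniquely.

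First I would unwind the left-hand side using the block structure in \eqref{homoperator}: the top component contributes $\langle \widehat{\mathcal{A}}_\chi^{\rm soft}\vect u,\vect v\rangle_{\mathcal{H}^{\rm soft}}$, and the bottom component contributes $-\langle((\widehat{\Pi}_\chi^{\rm stiff})^*)^{-1}\widehat{\Gamma}_{1,\chi}^{\rm soft}\vect u,\widehat{\vect v}\rangle_{\widehat{\mathcal{H}}_\chi^{\rm stiff}} - \varepsilon^{-2}\langle((\widehat{\Pi}_\chi^{\rm stiff})^*)^{-1}\widehat{\Gamma}_{1,\chi}^{\rm stiff}\widehat{\vect u},\widehat{\vect v}\rangle_{\widehat{\mathcal{H}}_\chi^{\rm stiff}}$. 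Using $\widehat{\vect v}=\widehat{\Pi}_\chi^{\rm stiff}\widehat{\Gamma}_{0,\chi}^{\rm soft}\vect v$ (valid since $(\vect v,\widehat{\vect v})^\top$ lies in the form domain and the constraint $\widehat{\Gamma}_{0,\chi}^{\rm stiff}\widehat{\vect v}=\widehat{\Gamma}_{0,\chi}^{\rm soft}\vect v$ together with $\widehat{\vect v}\in\widehat{\mathcal H}_\chi^{\rm stiff}=\widehat{\Pi}_\chi^{\rm stiff}\widehat{\mathcal E}_\chi$ forces this), the adjoint $((\widehat{\Pi}_\chi^{\rm stiff})^*)^{-1}$ cancels against $\widehat{\Pi}_\chi^{\rm stiff}$ in the inner product, reducing these terms to $-\langle\widehat{\Gamma}_{1,\chi}^{\rm soft}\vect u,\widehat{\Gamma}_{0,\chi}^{\rm soft}\vect v\rangle_{\widehat{\mathcal E}_\chi} - \varepsilon^{-2}\langle\widehat{\Gamma}_{1,\chi}^{\rm stiff}\widehat{\Pi}_\chi^{\rm stiff}\widehat{\Gamma}_{0,\chi}^{\rm soft}\vect u,\widehat{\Gamma}_{0,\chi}^{\rm soft}\vect v\rangle_{\widehat{\mathcal E}_\chi}$. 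Here I use $\widehat{\Gamma}_{1,\chi}^{\rm stiff}\widehat{\Pi}_\chi^{\rm stiff}=\widehat{\Lambda}_\chi^{\rm stiff}$ from \eqref{dirichlettoneumanndecomp}, and then rewrite $\langle\widehat{\Lambda}_\chi^{\rm stiff}\widehat{\Gamma}_{0,\chi}^{\rm soft}\vect u,\widehat{\Gamma}_{0,\chi}^{\rm soft}\vect v\rangle_{\widehat{\mathcal E}_\chi}$ via \eqref{bilinearformdtnmap} (truncated to $\widehat{\mathcal E}_\chi$) as $-\int_{Y_{\rm stiff}}\A_{\rm stiff}(\simgrad+{\rm i}X_\chi)\widehat{\Pi}_\chi^{\rm stiff}\widehat{\Gamma}_{0,\chi}^{\rm soft}\vect u:\overline{(\simgrad+{\rm i}X_\chi)\widehat{\Pi}_\chi^{\rm stiff}\widehat{\Gamma}_{0,\chi}^{\rm soft}\vect v}$, which is exactly $-\int_{Y_{\rm stiff}}\A_{\rm stiff}(\simgrad+{\rm i}X_\chi)\widehat{\vect u}:\overline{(\simgrad+{\rm i}X_\chi)\widehat{\vect v}}$ up to the sign that flips the $-\varepsilon^{-2}$ into a $+\varepsilon^{-2}$ as desired. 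For the soft term, Green's formula \eqref{Greenformula} for the triple $(\mathcal{A}_{0,\chi}^{\rm soft},\Pi_\chi^{\rm soft},\Lambda_\chi^{\rm soft})$ (or rather integration by parts against \eqref{identity3}) gives $\langle\widehat{\mathcal{A}}_\chi^{\rm soft}\vect u,\vect v\rangle_{\mathcal{H}^{\rm soft}} - \langle\widehat{\Gamma}_{1,\chi}^{\rm soft}\vect u,\widehat{\Gamma}_{0,\chi}^{\rm soft}\vect v\rangle_{\widehat{\mathcal E}_\chi} = \int_{Y_{\rm soft}}\A_{\rm soft}(\simgrad+{\rm i}X_\chi)\vect u:\overline{(\simgrad+{\rm i}X_\chi)\vect v}$, completing the identification.

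I would then add the routine verifications that $\mathcal{D}(a_{\chi,\varepsilon}^{\rm app})$ is dense in $\mathcal{H}^{\rm soft}\oplus\widehat{\mathcal H}_\chi^{\rm stiff}$, that $a_{\chi,\varepsilon}^{\rm app}$ is closed (this follows from the closedness of $a_{0,\chi}^{\rm soft}$ together with the boundedness of $\widehat{\Pi}_\chi^{\rm soft}$ and $\widehat{\Pi}_\chi^{\rm stiff}$ on the finite-dimensional $\widehat{\mathcal E}_\chi$, and the norm equivalence on $\widehat{\mathcal H}_\chi^{\rm stiff}$ from Lemma \ref{lemmanormequivalence_f}), and that it is bounded below — all of which are inherited from the coercivity properties established in the forms $a_{0,\chi}^{\rm stiff(soft)}$. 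The main obstacle I anticipate is bookkeeping around the inverse $((\widehat{\Pi}_\chi^{\rm stiff})^*)^{-1}$ and making sure that the constraint defining $\mathcal{D}(a_{\chi,\varepsilon}^{\rm app})$ (namely $\widehat{\Gamma}_{0,\chi}^{\rm stiff}\widehat{\vect v}=\widehat{\Gamma}_{0,\chi}^{\rm soft}\vect v$) is precisely what is needed to turn the operator-domain constraint $\widehat{\vect u}=\widehat{\Pi}_\chi^{\rm stiff}\widehat{\Gamma}_{0,\chi}^{\rm soft}\vect u$ into the symmetric form constraint — i.e. checking that the form domain is the form-closure of the operator domain and no larger. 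This is where the three-dimensionality of $\widehat{\mathcal E}_\chi$ and the identity $\widehat{\Gamma}_{0,\chi}^{\rm stiff}\widehat{\Pi}_\chi^{\rm stiff}=I|_{\widehat{\mathcal E}_\chi}$ (so that $\widehat{\Pi}_\chi^{\rm stiff}$ is boundedly invertible as a map between three-dimensional spaces, as noted in the footnote to \eqref{homoperator}) are used in an essential way.
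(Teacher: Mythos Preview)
Your proposal is correct and follows exactly the route the paper has in mind: the paper's proof reads in its entirety ``The proof is obtained by a direct computation,'' and the computation you outline---unwinding the block action of $\mathcal{A}_{\chi,\varepsilon}^{\rm app}$, cancelling $((\widehat{\Pi}_\chi^{\rm stiff})^*)^{-1}$ against $\widehat{\Pi}_\chi^{\rm stiff}$, invoking $\widehat{\Gamma}_{1,\chi}^{\rm stiff}\widehat{\Pi}_\chi^{\rm stiff}=\widehat{\Lambda}_\chi^{\rm stiff}$, and then identifying the resulting boundary pairings with the integral expressions via \eqref{bilinearformdtnmap} and the second Green identity on the soft component---is precisely the calculation already carried out in the symmetry step of the proof of Theorem~\ref{theoremappoperator}, pushed one line further. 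Your added remarks on closedness and the form-domain/operator-domain relationship are sound and rely, as you note, on the finite-dimensionality of $\widehat{\mathcal{E}}_\chi$ and the invertibility of $\widehat{\Pi}_\chi^{\rm stiff}$ between three-dimensional spaces.
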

The proof is obtained by a direct computation.
\begin{remark}
	\label{nakk1020}
For $(\vect u, \widehat{\vect u})^\top, (\vect v, \widehat{\vect v})^\top\in \mathcal{D}( a_{\chi,\varepsilon}^{\rm app})$, one has  
    \begin{equation*}
    \vect u = \mathring{\vect u} + \widehat{\Pi}_\chi^{\rm soft} \vect g_{\vect u}, \quad \vect v = \mathring{\vect v} + \widehat{\Pi}_\chi^{\rm soft} \vect g_{\vect v}, \quad \widehat{\vect u} = \widehat{\Pi}_\chi^{\rm stiff}\vect g_{\vect u}, \quad \widehat{\vect v} = \widehat{\Pi}_\chi^{\rm stiff} \vect g_{\vect v},
    \end{equation*}
where $ \mathring{ \vect u}, \mathring{ \vect v} \in \mathcal{D}(a_{0,\chi}^{\rm soft})$ and $  \vect g_{\vect u},\vect g_{\vect v} \in \widehat{\mathcal{E}}_\chi$.
    With this notation at hand, the form $a_{\chi,\varepsilon}^{\rm app}$ can be written as
    \begin{equation*}
    	\begin{aligned}
			    a_{\chi,\varepsilon}^{\rm app}\left( \begin{bmatrix}
    \vect u \\ \widehat{\vect u}
    \end{bmatrix} ,\begin{bmatrix}
    \vect v \\ \widehat{\vect v}
    \end{bmatrix}\right)&=\int_{Y_{\rm soft}} \A_{\rm soft} \left(\simgrad +{\rm i}X_\chi \right)\mathring{\vect u} : \overline{ \left(\simgrad +{\rm i}X_\chi \right)\mathring{\vect v}}\\[0.4em]
&\hspace{5ex}+ \int_{Y_{\rm stiff}} \A_{\rm stiff} \left(\simgrad + {\rm i}X_\chi \right)\widehat{\Pi}_\chi^{\rm soft} \vect g_{\vect u} : \overline{\left(\simgrad +{\rm i}X_\chi \right)\widehat{\Pi}_\chi^{\rm soft} \vect g_{\vect v}} \\[0.4em] 
&\hspace{10ex}+\frac{1}{\varepsilon^2}\int_{Y_{\rm stiff}} \A_{\rm stiff} \left(\simgrad +{\rm i}X_\chi \right)\widehat{\Pi}_\chi^{\rm stiff} \vect g_{\vect u} : \overline{\left(\simgrad +{\rm i}X_\chi \right)\widehat{\Pi}_\chi^{\rm stiff} \vect g_{\vect v}} \\[0.4em]
   &=a_{0,\chi}^{\rm soft}(\mathring{\vect u},\mathring{\vect v}) + \lambda_\chi^{\rm soft}(\vect g_{\vect u},\vect g_{\vect v}) + \frac{1}{\varepsilon^2} \lambda_\chi^{\rm stiff}(\vect g_{\vect u},\vect g_{\vect v}) \qquad \forall \begin{bmatrix}
    \vect u \\ \widehat{\vect u}
    \end{bmatrix} ,\begin{bmatrix}
    \vect v \\ \widehat{\vect v}
    \end{bmatrix} \in \mathcal{D}\bigl(a_{\chi,\varepsilon}^{\rm app}\bigr).
    	\end{aligned}
    \end{equation*}
\end{remark}

The following theorem contains the main result of this section. 

\begin{theorem}\label{thmpremain1} 
 There exists $C>0$, which depends only on $\sigma$ and ${\rm diam}(K_\sigma),$ such that for the resolvent of the transmission problem \eqref{transmissionboundaryproblem} one has
 \begin{equation*}
     \bigl\lVert \bigl((\mathcal{A}_{\chi,\varepsilon})_{0,I} -zI \bigr)^{-1} - \Theta_\chi\left(\mathcal{A}_{\chi,\varepsilon}^{\rm app} - zI \right)^{-1}\Theta_\chi \bigr\rVert_{\mathcal{H} \to \mathcal{H}} \leq C \varepsilon^2\qquad \forall\chi \in Y',
 \end{equation*}
 where the operator $\mathcal{A}_{\chi,\varepsilon}^{\rm app}$ is defined by 
  \eqref{homoperator}, 
 and
   $\Theta_\chi : \mathcal{H} = \mathcal{H}^{\rm soft} \oplus \mathcal{H}^{\rm stiff} \to   \mathcal{H}^{\rm soft} \oplus \widehat{\mathcal{H}}_\chi^{\rm stiff}$
 is an orthogonal projection defined by
     $\Theta_\chi\left( {\vect u}_{\rm soft} \oplus {\vect u}_{\rm stiff}\right) := {\vect u}_{\rm soft} \oplus  P_{\widehat{\mathcal{H}}_\chi^{\rm stiff}} \vect u_{\rm stiff},$
  with respect to the $L^2(Y;\C^3)$ inner product.
\end{theorem}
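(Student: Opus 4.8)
The strategy is to combine Theorem~\ref{thmrefinement} with Theorem~\ref{theoremappoperator} and verify that the projection $\Theta_\chi$ introduced here coincides (on the relevant subspace) with the orthogonal projection implicit in $\mathcal{R}_{\chi,\varepsilon}^{\rm app}(z)$. Theorem~\ref{thmrefinement} already gives
\[
\bigl\lVert \bigl((\mathcal{A}_{\chi,\varepsilon})_{0,I}-zI\bigr)^{-1} - \mathcal{R}_{\chi,\varepsilon}^{\rm app}(z)\bigr\rVert_{\mathcal{H}\to\mathcal{H}} \leq C\varepsilon^2,
\]
with $C$ depending only on $\sigma$ and ${\rm diam}(K_\sigma)$, and Theorem~\ref{theoremappoperator} identifies $\mathcal{R}_{\chi,\varepsilon}^{\rm app}(z)$, relative to the decomposition $\mathcal{H}^{\rm soft}\oplus\widehat{\mathcal{H}}_\chi^{\rm stiff}$, as the resolvent $(\mathcal{A}_{\chi,\varepsilon}^{\rm app}-zI)^{-1}$ of the self-adjoint operator defined in~\eqref{homoperator}. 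So the only gap to close is the passage from the identity on $\mathcal{H}^{\rm soft}\oplus\widehat{\mathcal{H}}_\chi^{\rm stiff}$ to the formula $\Theta_\chi(\mathcal{A}_{\chi,\varepsilon}^{\rm app}-zI)^{-1}\Theta_\chi$ on all of $\mathcal{H}$.

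First I would observe that $\widehat{\mathcal{H}}_\chi^{\rm stiff}=\Pi_\chi^{\rm stiff}\widehat{\mathcal{E}}_\chi$ is a closed (indeed three-dimensional) subspace of $\mathcal{H}^{\rm stiff}$, so that $\mathcal{H}^{\rm soft}\oplus\widehat{\mathcal{H}}_\chi^{\rm stiff}$ is a closed subspace of $\mathcal{H}=\mathcal{H}^{\rm soft}\oplus\mathcal{H}^{\rm stiff}$, and $\Theta_\chi$ as defined in~\eqref{nakk1011} is exactly the orthogonal projection onto it (with respect to the $L^2(Y;\C^3)$ inner product). Next I would note that every block appearing in the expression~\eqref{resolventblockmatrix}--\eqref{resolventcondensedform} for $\mathcal{R}_{\chi,\varepsilon}^{\rm app}(z)$ has range contained in, and is unaffected by restriction to, this subspace: the operators $\widehat{S}_\chi^{\rm soft}(z)$ map into $\mathcal{H}^{\rm soft}$, the operators $\widehat{\Pi}_\chi^{\rm stiff}=\Pi_\chi^{\rm stiff}|_{\widehat{\mathcal{E}}_\chi}$ map into $\widehat{\mathcal{H}}_\chi^{\rm stiff}$, and by~\eqref{Sadjointformulae} and~\eqref{pi_projections} their adjoints satisfy $(\widehat{\Pi}_\chi^{\rm stiff})^* = (\widehat{\Pi}_\chi^{\rm stiff})^* P_{\widehat{\mathcal{H}}_\chi^{\rm stiff}}$; moreover $(\mathcal{A}_{0,\chi}^{\rm soft}-zI)^{-1}P_{\mathcal{H}^{\rm soft}}$ visibly has range in $\mathcal{H}^{\rm soft}$ and annihilates $\mathcal{H}^{\rm stiff}$. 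Consequently, viewed as an operator on all of $\mathcal{H}$, the right-hand side of~\eqref{resolventcondensedform} equals $\Theta_\chi\,\mathcal{R}_{\chi,\varepsilon}^{\rm app}(z)\,\Theta_\chi$ with $\mathcal{R}_{\chi,\varepsilon}^{\rm app}(z)$ now read as the resolvent $(\mathcal{A}_{\chi,\varepsilon}^{\rm app}-zI)^{-1}$ on $\mathcal{H}^{\rm soft}\oplus\widehat{\mathcal{H}}_\chi^{\rm stiff}$ granted by Theorem~\ref{theoremappoperator}. (One should also check $z\in\rho(\mathcal{A}_{\chi,\varepsilon}^{\rm app})$ for $z\in K_\sigma$: since $\mathcal{A}_{\chi,\varepsilon}^{\rm app}$ is self-adjoint, its spectrum is real, and $K_\sigma$ is uniformly separated from $\R$, so this is automatic.)

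Combining the two displays via the triangle inequality yields the claimed bound with a constant depending only on $\sigma$ and ${\rm diam}(K_\sigma)$, uniformly in $\chi\in Y'$. I do not expect any of these steps to present a genuine obstacle: the substantive analytic work — the Steklov truncation estimate of Theorem~\ref{firstapproximationtheorem}, the $M$-function approximation~\eqref{Masymptoticformula} used in Theorem~\ref{thmrefinement}, and the self-adjointness/resolvent identification of Theorem~\ref{theoremappoperator} — has already been carried out. The mild point requiring care is purely bookkeeping: making sure that the block-matrix formula~\eqref{resolventblockmatrix} is interpreted consistently as an operator on the \emph{ambient} space $\mathcal{H}$ (so that the $\mathcal{H}^{\rm stiff}$-to-$\mathcal{H}^{\rm stiff}$ component acts as zero on the orthogonal complement of $\widehat{\mathcal{H}}_\chi^{\rm stiff}$), which is precisely what conjugation by $\Theta_\chi$ encodes; the identities~\eqref{Sadjointformulae} and~\eqref{pi_projections} are exactly what guarantees that this reinterpretation does not alter the operator and hence does not spoil the $O(\varepsilon^2)$ estimate.
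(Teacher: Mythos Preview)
Your proposal is correct and follows essentially the same route as the paper: combine Theorem~\ref{thmrefinement} with Theorem~\ref{theoremappoperator}, and use the identities~\eqref{pi_projections} (and~\eqref{Sadjointformulae}) together with~\eqref{resolventcondensedform} to recognise $\mathcal{R}_{\chi,\varepsilon}^{\rm app}(z)=\Theta_\chi(\mathcal{A}_{\chi,\varepsilon}^{\rm app}-zI)^{-1}\Theta_\chi$, noting that $\Theta_\chi$ is obviously orthogonal. Your additional remark that $z\in K_\sigma\subset\rho(\mathcal{A}_{\chi,\varepsilon}^{\rm app})$ by self-adjointness is a welcome sanity check that the paper leaves implicit.
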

\begin{proof}
The proof consists in combining \eqref{pi_projections} and \eqref{resolventcondensedform} with Theorem \ref{theoremappoperator}, to infer that
 	$\mathcal{R}_{\chi,\varepsilon}^{\rm app}(z) = \Theta_\chi\left(\mathcal{A}_{\chi,\varepsilon}^{\rm app} - zI \right)^{-1}\Theta_\chi.$ The orthogonality of $\Theta_\chi$ is obvious.
\end{proof}
\begin{proof}[Proof of Theorem \ref{thmmain1}]  
The proof is a direct consequence of Theorem \ref{thmpremain1}, using the fact that the (scaled) Gelfand transform is an isometry and defining
$\Theta^{\rm app}_\eps:=\mathcal{G}_{\eps}^{-1} \Theta_{\chi} \mathcal{G}_{\eps},$ 
$\mathcal{A}^{\rm app}_{\eps}:=\mathcal{G}_{\eps}^{-1} \mathcal{A}_{\chi,\eps}^{\rm app} \mathcal{G}_{\eps}.$ 
\end{proof} 

\subsection{General outlook on the approach}
An alternative way to rewrite \eqref{resolventblockmatrix} is as follows:
 \begin{equation*}
     \mathcal{R}_{\chi,\varepsilon}^{\rm app}(z):=
     \begin{bmatrix}\mathcal{R}_{\chi,\varepsilon}^{\rm app, soft}(z) &  \Bigl(\mathcal{R}_{\chi,\varepsilon}^{\rm app, soft}(z) - \bigl(\mathcal{A}_{0,\chi}^{\rm soft} - zI \bigr)^{-1}\Bigr)\bigl( \widehat{\Pi}_\chi^{\rm stiff}\widehat{\Gamma}_{0,\chi}^{\rm soft}\bigr)^*\\[0.5em]
     \widehat{\Pi}_\chi^{\rm stiff}\widehat{\Gamma}_{0,\chi}^{\rm soft}\Bigl(\mathcal{R}_{\chi,\varepsilon}^{\rm app, soft}(z) - \bigl(\mathcal{A}_{0,\chi}^{\rm soft} - zI \bigr)^{-1}\Bigr) &\widehat{\Pi}_\chi^{\rm stiff}\widehat{\Gamma}_{0,\chi}^{\rm soft}\Bigl(\mathcal{R}_{\chi,\varepsilon}^{\rm app, soft}(z) - \bigl(\mathcal{A}_{0,\chi}^{\rm soft} - zI \bigr)^{-1}\Bigr)\bigl( \widehat{\Pi}_\chi^{\rm stiff}\widehat{\Gamma}_{0,\chi}^{\rm soft}\bigr)^*\end{bmatrix},
 \end{equation*}
relative to the decomposition $\mathcal{H}^{\rm soft} \oplus \widehat{\mathcal{H}}_\chi^{\rm stiff}$, where
\begin{equation*}
\begin{aligned}
        \mathcal{R}_{\chi,\varepsilon}^{\rm app, soft}(z) &:= \bigl( \mathcal{A}_{0,\chi}^{\rm soft} -z I \bigr)^{-1} - \widehat{S}^{\rm soft}_{\chi}(z){\widehat{Q}^{\rm app}_{\chi,\varepsilon}(z)}^{-1} \widehat{S}_{\chi}^{\rm soft}(\overline{z})^* \\[0.3em]
        &= \bigl(\mathcal{A}_{0,\chi}^{\rm soft} -z I  \bigr)^{-1} - \widehat{S}^{\rm soft}_{\chi}(z)\bigl(\widehat{M}_{\chi}^{\rm soft}(z)+\varepsilon^{-2} \widehat{\Lambda}_\chi^{\rm stiff} + z \bigl(\widehat{\Pi}_\chi^{\rm stiff}\bigr)^* \widehat{\Pi}_{\chi}^{\rm stiff} \bigr)^{-1}\widehat{S}_{\chi}^{\rm soft}(\overline{z})^*.
\end{aligned}
\end{equation*}
The operator-valued function $\mathcal{R}_{\chi,\varepsilon}^{\rm app, soft}(z)$ is the solution operator (in the sense of Theorem \ref{theoremkreinformula}) to the problem of finding $\vect u \in \mathcal{D}\left( \mathcal{A}_{\chi}^{\rm soft}\right)$ such that
\begin{equation*}
         \widehat{\mathcal{A}}_{\chi}^{\rm soft}\vect u - z \vect u = \vect f,\qquad\quad
          \beta_{0,\chi,\varepsilon}(z)\widehat{\Gamma}_{0,\chi}^{\rm soft}  \vect u +
          \beta_1\widehat{\Gamma}_{1,\chi}^{\rm soft}  \vect u  = 0,
\end{equation*}
where transmission operators are given by
  $\beta_{0,\chi,\varepsilon}(z)=\varepsilon^{-2}\widehat{\Lambda}_{\chi}^{\rm stiff} + z(\widehat{\Pi}_\chi^{\rm stiff})^*\widehat{\Pi}_\chi^{\rm stiff},$ $\beta_1 = I.$ 
Problems like this, namely those where the boundary conditions depend on the spectral parameter, are often referred to as impedance boundary value problems. One should note that the ``impedance" here is linear in the spectral parameter. 

On the other hand, one can consider the ``sandwiched resolvent''  
\begin{equation*}
    \mathcal{R}_{\chi,\varepsilon}^{\rm soft}(z):= P_{\rm soft}\bigl( (\mathcal{A}_{\chi,\varepsilon})_{0,I} -zI \bigr)^{-1} P_{\rm soft},
\end{equation*}
and use \eqref{transmissionkrein} to infer that
\begin{equation*}
    \begin{aligned}
        \mathcal{R}_{\chi,\varepsilon}^{\rm soft}(z) &= P_{\rm soft}\bigl(( \mathcal{A}_{0,\chi,\varepsilon} -z I)^{-1} - S_{\chi, \varepsilon}(z) M_{\chi, \varepsilon}(z)^{-1} S_{\chi, \varepsilon}(\overline{z})^* \bigr) P_{\rm soft} \\[0.3em]
        &= \bigl( \mathcal{A}_{0,\chi}^{\rm soft} -z I \bigr)^{-1} - S^{\rm soft}_{\chi}(z)\bigl(M_{\chi}^{\rm soft}(z) + \varepsilon^{-2}M_{\chi}^{\rm stiff}(\varepsilon^2 z) \bigr)^{-1}S_{\chi}^{\rm soft}(\overline{z})^*.
    \end{aligned}
\end{equation*}
Comparing this to \eqref{kreinformula2} yields the following proposition.
\begin{proposition}
The generalised resolvent $\mathcal{R}_{\chi,\varepsilon}^{\rm soft}(z)$ is the solution operator of the ``impedance'' boundary value problem on $\mathcal{H}^{\rm soft}$ that consists in finding $\vect u \in \mathcal{D}\left( \mathcal{A}_{\chi}^{\rm soft}\right)$ such that
\begin{equation}
\label{bvp2}
         \mathcal{A}_{\chi}^{\rm soft}\vect u - z \vect u = \vect f,\qquad\quad
          \widetilde{\beta}_{0,\chi,\varepsilon}(z)\Gamma_{0,\chi}^{\rm soft}  \vect u +
          \widetilde{\beta}_1 \Gamma_{1,\chi}^{\rm soft}  \vect u  = 0,
\end{equation}
where the transmission operators are given by
    $\widetilde{\beta}_{0,\chi,\varepsilon}(z):=\varepsilon^{-2}M_\chi^{\rm stiff}(\varepsilon^2 z),$ $\widetilde{\beta}_1 = I.$
\end{proposition}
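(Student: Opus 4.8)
The plan is to read off the formula for the ``sandwiched'' resolvent $\mathcal{R}_{\chi,\varepsilon}^{\rm soft}(z)$ directly from the Kre\u\i n formula \eqref{transmissionkrein} and then to identify it, via \eqref{kreinformula2}, with the solution operator of an abstract Robin problem whose impedance depends on the spectral parameter. Since $\mathcal{A}_{0,\chi,\varepsilon}=\mathcal{A}_{0,\chi}^{\rm soft}\oplus\varepsilon^{-2}\mathcal{A}_{0,\chi}^{\rm stiff}$ is block-diagonal with respect to the decomposition \eqref{decompositionsoftstiff}, one has $P_{\rm soft}\bigl(\mathcal{A}_{0,\chi,\varepsilon}-zI\bigr)^{-1}P_{\rm soft}=\bigl(\mathcal{A}_{0,\chi}^{\rm soft}-zI\bigr)^{-1}$. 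Moreover, by the first identity in \eqref{MMM}, $S_{\chi,\varepsilon}(z)=S_{\chi}^{\rm soft}(z)+S_{\chi}^{\rm stiff}(\varepsilon^2 z)$, whose second summand maps $\mathcal{E}$ into $\mathcal{H}^{\rm stiff}$; hence $P_{\rm soft}S_{\chi,\varepsilon}(z)=S_{\chi}^{\rm soft}(z)$ and, passing to adjoints, $S_{\chi,\varepsilon}(\overline z)^*P_{\rm soft}=\bigl(S_{\chi}^{\rm soft}(\overline z)\bigr)^*$. Inserting these, together with the second identity in \eqref{MMM}, namely $M_{\chi,\varepsilon}(z)=M_{\chi}^{\rm soft}(z)+\varepsilon^{-2}M_{\chi}^{\rm stiff}(\varepsilon^2 z)$, into \eqref{transmissionkrein} and compressing by $P_{\rm soft}$ produces exactly the displayed formula for $\mathcal{R}_{\chi,\varepsilon}^{\rm soft}(z)$.

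I would then match the result with the Kre\u\i n formula \eqref{kreinformula2} applied to the Ryzhov triple $(\mathcal{A}_{0,\chi}^{\rm soft},\Pi_\chi^{\rm soft},\Lambda_\chi^{\rm soft})$ and to the choices $\widetilde\beta_1=I$, $\widetilde\beta_{0,\chi,\varepsilon}(z)=\varepsilon^{-2}M_\chi^{\rm stiff}(\varepsilon^2 z)$. One first checks that Assumption \ref{assumptionbeta} holds: by \eqref{identityforasymptotics} and Theorem \ref{dtnmapstheorem1} the operator $M_\chi^{\rm stiff}(\varepsilon^2 z)=\Lambda_\chi^{\rm stiff}+(\text{bounded})$ has domain $H^1(\Gamma;\C^3)=\mathcal{D}(\Lambda_\chi^{\rm soft})$, $\widetilde\beta_1=I$ is bounded, and $\widetilde\beta_{0,\chi,\varepsilon}(z)+\widetilde\beta_1\Lambda_\chi^{\rm soft}$ differs by a bounded operator from the self-adjoint $\Lambda_{\chi,\varepsilon}=\varepsilon^{-2}\Lambda_\chi^{\rm stiff}+\Lambda_\chi^{\rm soft}$, hence is closed, with closure $\mathfrak{B}$ of the same domain. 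With these choices $\overline{\widetilde\beta_{0,\chi,\varepsilon}(z)+\widetilde\beta_1 M_\chi^{\rm soft}(z)}=\varepsilon^{-2}M_\chi^{\rm stiff}(\varepsilon^2 z)+M_\chi^{\rm soft}(z)=M_{\chi,\varepsilon}(z)$, which is boundedly invertible precisely for those $z$ for which \eqref{transmissionkrein} is meaningful, i.e.\ $z\in\rho\bigl((\mathcal{A}_{\chi,\varepsilon})_{0,I}\bigr)\cap\rho(\mathcal{A}_{0,\chi,\varepsilon})$. Therefore the right-hand side of \eqref{kreinformula2} with $(\mathcal{A}_0,\Pi,\Lambda,\beta_0,\beta_1)=(\mathcal{A}_{0,\chi}^{\rm soft},\Pi_\chi^{\rm soft},\Lambda_\chi^{\rm soft},\widetilde\beta_{0,\chi,\varepsilon}(z),I)$ coincides with $\mathcal{R}_{\chi,\varepsilon}^{\rm soft}(z)$, and Theorem \ref{theoremsolutionformularobin} (with $\vect g=0$) realises this operator as the map $\vect f\mapsto\vect u$, where $\vect u\in\mathcal{D}(\mathcal{A}_\chi^{\rm soft})$ solves \eqref{bvp2}.

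The single point that demands genuine care is that here the impedance $\widetilde\beta_{0,\chi,\varepsilon}(z)$ depends on $z$, whereas Theorems \ref{theoremsolutionformularobin}--\ref{theoremkreinformula} are stated for fixed $\beta_0,\beta_1$; consequently \eqref{kreinformula2} at a fixed $z$ is not the resolvent of a single closed operator but a \emph{generalised} resolvent --- which is exactly what $\mathcal{R}_{\chi,\varepsilon}^{\rm soft}(z)=P_{\rm soft}\bigl((\mathcal{A}_{\chi,\varepsilon})_{0,I}-zI\bigr)^{-1}P_{\rm soft}$ is. This is legitimate because, as noted in the Remark following Theorem \ref{theoremkreinformula}, the formula \eqref{solutionabstractrobin} is pointwise in $z$ and hence stays valid when $\beta_0$ (and not only $\beta_1$) is allowed to depend on $z$; the only hypothesis to verify at each such $z$ is the bounded invertibility of $\overline{\beta_0(z)+M_\chi^{\rm soft}(z)}=M_{\chi,\varepsilon}(z)$, which was established above. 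Thus the main, and essentially the only, obstacle is the bookkeeping required to legitimise a $z$-dependent Robin condition inside the Ryzhov-triple machinery; with that in hand the proof reduces to the elementary algebra of the previous two paragraphs.
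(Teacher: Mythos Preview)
Your proposal is correct and follows exactly the paper's approach: the paper derives the displayed formula for $\mathcal{R}_{\chi,\varepsilon}^{\rm soft}(z)$ by compressing the Kre\u\i n formula \eqref{transmissionkrein} with $P_{\rm soft}$ (using the block-diagonal structure and \eqref{MMM}) in the passage immediately preceding the proposition, and then simply states that ``comparing this to \eqref{kreinformula2} yields the following proposition.'' Your write-up is in fact considerably more careful than the paper's one-line justification---you explicitly verify Assumption \ref{assumptionbeta} and address the $z$-dependence of $\widetilde\beta_{0,\chi,\varepsilon}$ via the pointwise nature of \eqref{solutionabstractrobin} (the paper's Remark after Theorem \ref{theoremkreinformula} literally mentions only $\beta_1$, but as you observe the same reasoning applies to $\beta_0$).
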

The ``impedance'' of the boundary value problem \eqref{bvp2} is highly nonlinear, due to the structure of the $M$-function $M_\chi^{\rm stiff}(z)$.
On the abstract level, both solution operators $\mathcal{R}^{\rm app, soft}_{\chi,\e}$ and $\mathcal{R}^{\rm soft}_{\chi,\e}$ are \emph{generalised resolvents} \cite{Neumark1940,Neumark1943,Strauss,Strauss_ext,Strauss_survey}. A generalised resolvent can be equivalently characterised as either an operator of the form $P(\mathcal{A}-zI)^{-1}|_{P}$ for a self-adjoint $\mathcal{A}$ in a Hilbert space $\mathcal{H}$ and an orthogonal projection $P$, or a solution operator of an abstract spectral boundary value problem
\begin{equation}\label{eq:gen_res}
\mathcal{A} \vect u = z \vect u,\quad \Gamma_1 \vect u = \mathcal{B}(z) \Gamma_0 \vect u,
\end{equation}
where $\mathcal{A}$ is a densely defined linear operator on $P \mathcal{H}$ and $(\mathcal{E},\Gamma_0,\Gamma_1)$ is an abstract boundary triple of $\mathcal{A}$, while $-\mathcal{B}(z)$ is an analytic in the upper half-plane operator-valued function with positive imaginary part (i.e., an operator $R$-function) on $\mathcal{E},$ extended into the region $\Im z<0$ by the identity $\mathcal{B} (z)=\mathcal{B}^* (\bar z)$.

The system $\eqref{eq:gen_res}$ can be thus re-cast in the form of the operator equation
$
\mathcal{A}_z u = zu,
$
where $\mathcal{A}_z$ is a closed densely defined linear operator on $P \mathcal{H}$ with domain
$
\mathcal D (\mathcal{A}_z)=\{\vect u\in \mathcal D(\mathcal{A})\subset P \mathcal{H}: \Gamma_1 \vect u = \mathcal{B}(z) \Gamma_0 \vect u\}.
$
The operator $\mathcal{A}_z$ is shown to be maximal dissipative for $z\in\mathbb{C}_-$ and maximal antidissipative for $z\in\mathbb{C}_+$. 

From the point of view of generalised resolvents, one can therefore view the homogenisation procedure we have performed above as obtaining the main order term in the asymptotic expansion of the generalised resolvent $\mathcal{R}^{\rm soft}_{\chi,\e}$ for every fixed $\chi\in Y'$ as $\e\to 0$.

Moreover, we point out that in order to determine the main order term of $((\mathcal A_{\chi,\e})_{0,I}-zI)^{-1}$ as $\e\to 0$, or in other words to recover the operator describing the homogenised medium, it is in fact necessary and sufficient to construct an asymptotic expansion of the generalised resolvent described above. This follows from the fact that under a natural and non-restrictive ``minimality'' condition the operator $\mathcal{A}$ giving rise to the generalised resolvent $P(\mathcal{A}-z I)^{-1}|_{P \mathcal{H}}$ is in fact uniquely determined based on the latter up to a unitary gauge transform $\Phi$ such that $\Phi|_{P \mathcal{H}}=I_{P\mathcal{H}}$.

This can be viewed in the homogenisation problem at hand as taking the ``down, right, up'' detour in the commutative diagram
\begin{equation}\label{diagram}
\begin{CD}
\bigl((\mathcal A_{\chi,\e})_{0,I}-zI\bigr)^{-1}@>\text{(1)}>> (\mathcal A^{\text{app}}_{\chi,\e}-zI)^{-1}\\
@V\text{(2)}VV @|\\
\mathcal{R}^{\rm soft}_{\chi,\e}@>\text{(3)}>>\mathcal{A}^{\text{app}}_{\chi,\e}=
(\mathcal{A}^{\text{app}}_{\chi,\e})^*\text{ in } \mathfrak H\supset \mathcal H^{\rm soft}:\\
@. \mathcal{R}^{\rm app, soft}_{\chi,\e}=
P|_{\mathcal H^{\rm soft}} (\mathcal{A}^{\text{app}}_{\chi,\e}-zI)^{-1}|_{\mathcal H^{\rm soft}},
\end{CD}
\end{equation}
where the double solid line represents the unitary gauge.

As far as the asymptotic analysis of the generalised resolvent $\mathcal{R}^{\rm soft}_{\chi,\e}$ is concerned, the required analysis is essentially reduced to the derivation of the asymptotics of the operator $\widetilde{\beta}_{0,\chi,\e}(z)$ which governs its impedance boundary conditions. This, due to \eqref{Qeps_ref}, in turn reduces to a well-understood problem of perturbation theory for the DtN map pertaining to the stiff component of the medium and thus presents no complications.

Having said that, we point out that however appealing this argument appears, it meets two significant difficulties. Firstly, at present we don't have an explicit way to construct the operator $\mathcal{A}^{\text{app}}_{\chi,\e}$ in \eqref{diagram} for arbitrary impedance boundary conditions parameterised by a generic $(-R)$-operator function $\mathcal{B}(z)$ in \eqref{eq:gen_res}. In the problem at hand, this presents no challenge as the main order term \eqref{blockresolvent2} of the boundary operator is in fact linear in $z$. Generalised resolvents of this form have already appeared in problems of dimension reduction, most notably in the works concerned with the convergence of PDEs defined on ``thin'' networks to ODEs on limiting metric graphs, see, e.g., \cite{Post,KuchmentZeng,KuchmentZeng2004,Exner}, and in particular our recent paper \cite{CEK_networks} where an approach akin to the one utilised in the present work is extended to the context of thin networks. In the area of linear elasticity in particular this analysis is thought to be applicable to the analysis of pentamodes \cite{pentamodes}, which will be further discussed elsewhere.

Secondly and crucially, once the asymptotics of the family of generalised resolvents is obtained in some desired strong topology, the same type of convergence for the family of resolvents $((\mathcal A_{\chi,\e})_{0,I}-zI)^{-1}$ cannot be inferred from the general operator theory. In fact, one can argue that norm-resolvent convergence of $\mathcal{R}^{\rm soft}_{\chi,\e}$ only yields \emph{strong} convergence of $((\mathcal A_{\chi,\e})_{0,I}-zI)^{-1}$. It is here that the specifics of the problem at hand must play a crucial r\^{o}le in the analysis, leading to a result of the type formulated in Theorem \ref{thmrefinement} above.

Despite the deficiencies of the general operator-theoretic outlook based on generalised resolvents explained above, we point out that this way of considering the dimension reduction problem at hand is very natural in that it presents one with a physically motivated understanding of the problem.

As the argument of \cite{Figotin_Schenker_2005,Figotin_Schenker_2007b}, see also references therein, demonstrates, generalised resolvents appear naturally in physical setups where one forcefully removes certain degrees of freedom from consideration in an otherwise conservative setting in view of simplifying the latter. Conversely, the procedure of reconstructing the self-adjoint generator of conservative dynamics must be viewed as adding those ``hidden'', or concealed, degrees of freedom back in a proper way. In doing so, one frequently faces a situation (and in particular, in the setup of linear elasticity discussed in the present paper) where the resulting model is drastically simplified owing to only a certain limited number of concealed degrees of freedom appearing in it in a handily transparent way. The procedure of the diagram \eqref{diagram} can be therefore seen as a non-trivial generalisation of the seminal idea of Lax and Phillips \cite{Lax}, with a dissipative generator expressing the scattering properties of the system being replaced by a more general one, corresponding to an $R$-function which non-trivially depends on the spectral parameter $z$.

At the same time, as explained in \cite{CEKRS2022} (see also references therein), the concept of \emph{dilating} (in the sense of \eqref{eq:gen_res}) a generalised resolvent to a resolvent of a self-adjoint generator gives rise to the understanding of homogenisation limits in the setup of double porosity models as essentially operators on soft component of the media with singular surface potentials possessing internal structure, see also \cite{CK} where a similar argument applied to high-contrast ODEs has led to a Kronig-Penney-type model.  In the problem considered in the present paper, the mentioned singular surface can be shown to be the periodic lattice of the original composite.

Moreover, the argument of \cite{CEN} can be immediately invoked for the homogenised family \eqref{homoperator} to obtain its functional model in an explicit form in certain explicitly constructed Hilbert space of complex-analytic functions, giving rise to a Clark-Alexandrov measure serving as the spectral measure of the family. This latter program will be pursued elsewhere, together with the study of effective scattering problems of the high-contrast composite which can be considered naturally on this basis.

\section{Transmission problem: $O(\varepsilon)$ resolvent asymptotics } \label{sectionopeff} 
The goal of this section is to further approximate the resolvent related to the transmission problem and prove Theorem \ref{thmamin2}, (a).
In doing so, we will worsen the order in $\varepsilon$ of the estimate but will obtain more familiar objects in the asymptotics. 

Our aim is to provide a further approximation to the operator 
		$\bigl[\widehat{S}^{\rm soft}_{\chi}(z)\ 
		\widehat{\Pi}_\chi^{\rm stiff}\bigr]
	{\widehat{Q}^{\rm app}_{\chi,\varepsilon}(z)}^{-1} 
		\bigl[\widehat{S}^{\rm soft}_{\chi}(z)\ \  
		\widehat{\Pi}_\chi^{\rm stiff}\bigr]^*$
entering the resolvent \eqref{resolventcondensedform} so the associated error is not worse than $O(\varepsilon)$. 
For this, the following estimate on the inverse of the transmission function is crucial.
\begin{lemma}
	\label{lemmaestimateforq}
	There exists $C>0$ which does not depend on $\varepsilon>0$, $z \in K_\sigma$, $\chi \in Y'$, such that
	\begin{equation}
		\label{estimateq}
		\bigl\lVert{\widehat{Q}^{\rm app}_{\chi,\varepsilon}(z)}^{-1} \widehat{P}_\chi \bigr\rVert_{\mathcal{E}\to \mathcal{E}} \leq C \min \left\{\vert\chi\vert^{-2}\varepsilon^2,1 \right\}.
	\end{equation}
\end{lemma}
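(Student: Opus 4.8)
The plan is to estimate $\bigl\|{\widehat{Q}^{\rm app}_{\chi,\varepsilon}(z)}^{-1}\bigr\|_{\widehat{\mathcal{E}}_\chi\to\widehat{\mathcal{E}}_\chi}$ directly, which is enough because ${\widehat{Q}^{\rm app}_{\chi,\varepsilon}(z)}^{-1}\widehat{P}_\chi$ factors through $\widehat{\mathcal{E}}_\chi$ and $\|\widehat{P}_\chi\|_{\mathcal{E}\to\mathcal{E}}=1$. Following \eqref{qoperator}, write
$$\widehat{Q}^{\rm app}_{\chi,\varepsilon}(z)=\varepsilon^{-2}\widehat{\Lambda}_\chi^{\rm stiff}+\widehat{B}_\chi(z),\qquad \widehat{B}_\chi(z):=z\bigl(\widehat{\Pi}_\chi^{\rm stiff}\bigr)^{*}\widehat{\Pi}_\chi^{\rm stiff}+\widehat{M}_\chi^{\rm soft}(z).$$
Since $\dim\widehat{\mathcal{E}}_\chi=3$, it suffices to produce a bound $\bigl|\langle\widehat{Q}^{\rm app}_{\chi,\varepsilon}(z)\vect g,\vect g\rangle\bigr|\ge c\|\vect g\|^{2}$ for $\vect g\in\widehat{\mathcal{E}}_\chi$ (then $\|{\widehat{Q}^{\rm app}_{\chi,\varepsilon}(z)}^{-1}\|\le c^{-1}$ by injectivity on a finite-dimensional space; cf.\ Corollary \ref{boundfrombelowappendix}). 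I will obtain one such bound from the real part, effective when $\varepsilon^{-2}|\chi|^{2}$ is large, and one from the imaginary part, effective always, and then interpolate via the minimum.

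The ingredients, all uniform in $\chi\in Y'$ and $z\in K_\sigma$, are the following. On $\widehat{\mathcal{E}}_\chi$ one has the norm comparison $\|\vect g\|_{H^{1/2}(\Gamma;\C^{3})}\le C\|\vect g\|_{L^{2}(\Gamma;\C^{3})}$, which follows from Proposition \ref{generaldtn} together with the eigenvalue bound $\lambda_\chi^{\rm stiff}(\vect g,\vect g)=\langle-\widehat{\Lambda}_\chi^{\rm stiff}\vect g,\vect g\rangle\le c_{2}|\chi|^{2}\|\vect g\|_{L^{2}(\Gamma;\C^{3})}^{2}$ from Lemma \ref{lemmasteklovorder}. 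Plugging this into \eqref{piestimate2}, using $\Pi_\chi^{\rm soft}={\rm e}^{-{\rm i}\chi y}\Pi_0^{\rm soft}{\rm e}^{{\rm i}\chi y}$ from \eqref{softidentities} (so $\|\Pi_\chi^{\rm soft}\|$ is $\chi$-independent) and the $\chi$-independence of $\sigma(\mathcal{A}_{0,\chi}^{\rm soft})\subset\R$ (whence $\|(\mathcal{A}_{0,\chi}^{\rm soft}-zI)^{-1}\|\le\sigma^{-1}$), the representation \eqref{identityforasymptotics} shows $\|\widehat{B}_\chi(z)\|_{\mathcal{E}\to\mathcal{E}}\le C_{0}$ with $C_{0}=C_{0}(\sigma,{\rm diam}\,K_\sigma)$. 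Next, Lemma \ref{lemmasteklovorder} gives $\langle-\widehat{\Lambda}_\chi^{\rm stiff}\vect g,\vect g\rangle\ge c_{1}|\chi|^{2}\|\vect g\|^{2}$ on $\widehat{\mathcal{E}}_\chi$. Finally, $\widehat{\Pi}_\chi^{\rm stiff}$ is bounded below on $\widehat{\mathcal{E}}_\chi$, i.e.\ $\|\widehat{\Pi}_\chi^{\rm stiff}\vect g\|_{\mathcal{H}^{\rm stiff}}\ge c_\Pi\|\vect g\|_{\mathcal{E}}$ uniformly in $\chi$: for small $\chi$ this follows from \eqref{piasimp}, \eqref{pasimp} and Remark \ref{pi0const} (whereby $\Pi_0^{\rm stiff}\widehat{P}_0$ maps $\widehat{\mathcal{E}}_0$, the constants, onto itself as $(|Y_{\rm stiff}|/|\Gamma|)^{1/2}$ times an isometry), and for $\chi$ away from $0$ from injectivity of $\Pi_\chi^{\rm stiff}$ (Theorem \ref{nakk200}) and compactness.

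The two bounds are then immediate. For the real part, since $\varepsilon^{-2}\widehat{\Lambda}_\chi^{\rm stiff}$ is self-adjoint, $-\Re\langle\widehat{Q}^{\rm app}_{\chi,\varepsilon}(z)\vect g,\vect g\rangle\ge(c_{1}\varepsilon^{-2}|\chi|^{2}-C_{0})\|\vect g\|^{2}$, so, setting $M:=\max\{2C_{0}/c_{1},1\}$, whenever $\varepsilon^{-2}|\chi|^{2}\ge M$ one gets $\bigl|\langle\widehat{Q}^{\rm app}_{\chi,\varepsilon}(z)\vect g,\vect g\rangle\bigr|\ge\tfrac{c_{1}}{2}\varepsilon^{-2}|\chi|^{2}\|\vect g\|^{2}$, hence $\|{\widehat{Q}^{\rm app}_{\chi,\varepsilon}(z)}^{-1}\|\le 2c_{1}^{-1}\varepsilon^{2}|\chi|^{-2}$. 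For the imaginary part, $(\widehat{\Pi}_\chi^{\rm stiff})^{*}\widehat{\Pi}_\chi^{\rm stiff}\ge0$ is self-adjoint and, by \eqref{nakk301} applied to the soft boundary triple together with \eqref{Sadjointformulae}, $\Im\widehat{M}_\chi^{\rm soft}(z)=\Im z\,\bigl(\widehat{S}_\chi^{\rm soft}(\overline z)\bigr)^{*}\widehat{S}_\chi^{\rm soft}(\overline z)$, so
$$\Im\widehat{Q}^{\rm app}_{\chi,\varepsilon}(z)=\Im z\Bigl[\bigl(\widehat{\Pi}_\chi^{\rm stiff}\bigr)^{*}\widehat{\Pi}_\chi^{\rm stiff}+\bigl(\widehat{S}_\chi^{\rm soft}(\overline z)\bigr)^{*}\widehat{S}_\chi^{\rm soft}(\overline z)\Bigr],$$
whence $\bigl|\langle\Im\widehat{Q}^{\rm app}_{\chi,\varepsilon}(z)\vect g,\vect g\rangle\bigr|\ge|\Im z|\,\|\widehat{\Pi}_\chi^{\rm stiff}\vect g\|^{2}\ge\sigma c_\Pi^{2}\|\vect g\|^{2}$ (using $z\in K_\sigma$), giving $\|{\widehat{Q}^{\rm app}_{\chi,\varepsilon}(z)}^{-1}\|\le(\sigma c_\Pi^{2})^{-1}$ for every $\chi,\varepsilon,z$ (note this is $\varepsilon$-independent, as $\varepsilon^{-2}\widehat{\Lambda}_\chi^{\rm stiff}$ contributes nothing to the imaginary part). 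Combining: if $\varepsilon^{-2}|\chi|^{2}\ge M$ then $\min\{|\chi|^{-2}\varepsilon^{2},1\}=|\chi|^{-2}\varepsilon^{2}$ (as $M\ge1$) and the real-part bound yields \eqref{estimateq}; if $\varepsilon^{-2}|\chi|^{2}<M$ then $\min\{|\chi|^{-2}\varepsilon^{2},1\}\ge M^{-1}$ and the imaginary-part bound yields $\|{\widehat{Q}^{\rm app}_{\chi,\varepsilon}(z)}^{-1}\|\le M(\sigma c_\Pi^{2})^{-1}\min\{|\chi|^{-2}\varepsilon^{2},1\}$. Since $\|{\widehat{Q}^{\rm app}_{\chi,\varepsilon}(z)}^{-1}\widehat{P}_\chi\|_{\mathcal{E}\to\mathcal{E}}\le\|{\widehat{Q}^{\rm app}_{\chi,\varepsilon}(z)}^{-1}\|$, this gives \eqref{estimateq} with $C=\max\{2c_{1}^{-1},M(\sigma c_\Pi^{2})^{-1}\}$.

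The conceptual content is modest; the main obstacle is bookkeeping, namely verifying that the constants $C_{0}$, $c_\Pi$ and the norm comparison on $\widehat{\mathcal{E}}_\chi$ are genuinely independent of $\chi\in Y'$. This is delicate precisely because $\widehat{\mathcal{E}}_\chi$ collapses onto the three-dimensional space of constant boundary data as $\chi\to0$, which is exactly the regime handled by the asymptotic analysis of Section \ref{section4} (Theorem \ref{Piasymptheorem}, Corollaries \ref{pipasymptotics_stiff}, \ref{msoftasymptotics}); on any fixed annulus $\{0<\zeta\le|\chi|\le\pi\sqrt3\}$ the corresponding bounds are uniform by continuity and compactness. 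The one genuinely structural input is the positivity, up to the sign of $\Im z$, of $\Im\widehat{M}_\chi^{\rm soft}(z)$ coming from \eqref{nakk301}, which is what makes the imaginary-part estimate — and hence the ``$1$'' in $\min\{|\chi|^{-2}\varepsilon^{2},1\}$ — hold with an $\varepsilon$-independent constant.
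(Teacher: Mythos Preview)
Your argument is correct and follows essentially the same route as the paper: split $\widehat{Q}^{\rm app}_{\chi,\varepsilon}(z)$ into real and imaginary parts, use the real part (dominated by $\varepsilon^{-2}\widehat\Lambda_\chi^{\rm stiff}$) to get the $\varepsilon^{2}|\chi|^{-2}$ bound when $|\chi|\gg\varepsilon$, use the imaginary part (controlled by $(\widehat\Pi_\chi^{\rm stiff})^{*}\widehat\Pi_\chi^{\rm stiff}$) to get the uniform bound $1$, and appeal to Corollary~\ref{boundfrombelowappendix}. The only differences are in how the auxiliary uniform constants are justified: the paper obtains the lower bound on $\widehat\Pi_\chi^{\rm stiff}$ directly from Lemma~\ref{lemmanormequivalence_f} and the trace inequality, whereas you argue via the asymptotics \eqref{piasimp} near $\chi=0$ plus a continuity--compactness argument away from $0$; similarly, the paper handles the $\widehat\Lambda_\chi^{\rm soft}$ contribution inside $\widehat M_\chi^{\rm soft}(z)$ by invoking Corollary~\ref{msoftasymptotics} directly, while your main paragraph only treats the $\Pi_\chi^{\rm soft}$ terms of \eqref{identityforasymptotics} explicitly (though you do cite Corollary~\ref{msoftasymptotics} in the closing discussion, which is precisely what is needed).
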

\begin{proof}
	First, note that
	\begin{align*}
		\Im\bigl(\widehat{Q}^{\rm app}_{\chi,\varepsilon}(z) \bigr)
		= \Im z \bigl( \widehat{S}^{\rm soft}_{\chi}(\overline{z})\bigr)^* \widehat{S}^{\rm soft}_{\chi}(\overline{z})+\Im z\bigl(\widehat{\Pi}_\chi^{\rm stiff} \bigr)^*  \widehat{\Pi}_\chi^{\rm stiff},\quad 
		\Re\bigl(\widehat{Q}^{\rm app}_{\chi,\varepsilon}(z) \bigr) 
		=\varepsilon^{-2}\widehat{\Lambda}_{\chi}^{\rm stiff}  + \Re\widehat{M}_{\chi}^{\rm soft}(z) + \Re z \bigl(\widehat{\Pi}_\chi^{\rm stiff} \bigr)^*  \widehat{\Pi}_\chi^{\rm stiff}.
	\end{align*}
	For $\vect u \in \widehat{\mathcal{E}}_\chi$  using Lemma \ref{lemmanormequivalence_f} and the trace inequality, we write
	\begin{equation*}
		\begin{aligned}
			\bigl\lvert\bigl\langle  \Im\bigl( \widehat{Q}^{\rm app}_{\chi,\varepsilon}(z)\bigr)\vect u , \vect u\bigr\rangle_{\mathcal{E}} \bigr\rvert & = |\Im z|\Bigl( \bigl\langle \widehat{\Pi}_\chi^{\rm stiff}\vect u, \widehat{\Pi}_\chi^{\rm stiff} \vect u\bigr\rangle_{\mathcal{H}^{\rm stiff}} + \bigr\langle \widehat{S}^{\rm soft}_{\chi}(\overline{z}) \vect u, \widehat{S}^{\rm soft}_{\chi}(\overline{z}) \vect u\bigr\rangle_{\mathcal{H}^{\rm soft}} \Bigr) \\[0.3em]
			& \geq |\Im z| \bigl\lVert  \widehat{\Pi}_\chi^{\rm stiff} \vect u\bigr\rVert_{\mathcal{H}^{\rm stiff}}^2 \geq C |\Im z| \bigl\lVert  \widehat{\Pi}_\chi^{\rm stiff} \vect u\bigr\rVert^2_{H^1(Y_{\rm stiff}; \C^3)} 
			\geq C |\Im z|\left\lVert   \vect u\right\rVert_{\mathcal{E}}^2,
		\end{aligned}
	\end{equation*}
	where $C>0$ depends only on $K_\sigma$. Thus, due to Corollary \ref{boundfrombelowappendix}, one has 
	\begin{equation}\label{nakk1001} 
		\bigl\lVert {\widehat{Q}^{\rm app}_{\chi,\varepsilon}(z)}^{-1} \widehat{P}_\chi\bigr\rVert_{\mathcal{E} \to\mathcal{E}} \leq C,
	\end{equation}
	where $C>0$ is independent of $\chi$ and $z$. Furthermore, by Corollary \ref{msoftasymptotics} and Remark \ref{nakk1000}, we infer the existence of $\widetilde{C}>0$, which depends on $|z|$ and $\sigma,$ such that
	\begin{equation*}
		\left\lVert \widehat{M}_{\chi}^{\rm soft}(z) \right\rVert_{L^2(\Gamma;\C^3) \to L^2(\Gamma;\C^3)} \leq \widetilde{C}.
	\end{equation*}
	Using Lemma \ref{lemmasteklovorder}  and the fact that $\widehat{\Pi}_\chi^{\rm stiff}$ is uniformly bounded, we infer the existence of constants $D, C_2$,  independent of $\chi$, $\varepsilon$, such that for  $|\chi| \geq D \varepsilon$, one has
	\begin{equation*}
			\bigl\lvert \bigl\langle \Re\bigl( \widehat{Q}^{\rm app}_{\chi,\varepsilon}(z) \bigr) \vect u , \vect u \bigr\rangle_{\mathcal{E}} \bigr\rvert  \geq    C_2 \varepsilon^{-2}|\chi|^2\left \lVert  \vect u \right\rVert_{\mathcal{E}}^2 \qquad \forall \vect u \in \widehat{\mathcal{E}}_\chi.
	\end{equation*}
	For such $|\chi|$, by applying Corollary \ref{boundfrombelowappendix}, we obtain
	\begin{equation*}
		\bigl\lVert {\widehat{Q}^{\rm app}_{\chi,\varepsilon}(z)}^{-1} \widehat{P}_\chi\bigr\rVert_{L^2(Y_{\rm stiff};\C^3)\to L^2(Y_{\rm stiff};\C^3)} \leq C_2|\chi|^{-2}\varepsilon^2,
	\end{equation*}
	which, combined with \eqref{nakk1001},  concludes the proof.
\end{proof}

Next we introduce the version of the transmission function that will appear in the final homogenisation result. The above two lemmata allow us to replace the subscript $\chi$ by zero everywhere except  $\widehat{\Lambda}^{\rm hom}_\chi,$ which leads to a more transparent result involving a differential, rather than a pseudodifferential, form of the operator asymptotics.
\begin{definition}
	We refer to the operator valued function $ \widehat{Q}_{\varepsilon,\chi}^{\rm eff}(z): \widehat{\mathcal{E}}_0 \to \widehat{\mathcal{E}}_0$ given by 
	\begin{equation}
		\label{qoperatoreff}
		\widehat{Q}_{\varepsilon,\chi}^{\rm eff}(z):= \varepsilon^{-2} \widehat{\Lambda}_\chi^{\rm hom}+ z \bigl(\widehat{\Pi}_0^{\rm stiff}\bigr)^* \widehat{\Pi}_0^{\rm stiff} + \widehat{M}_{0}^{\rm soft}(z)
	\end{equation}
	as the \emph{effective transmission function}. We introduce the following associated operator-valued function on $\mathcal{H}:$
	\begin{equation*}
		\mathcal{R}_{\chi,\varepsilon}^{\rm eff}(z):= \bigl( \mathcal{A}_{0,\chi}^{\rm soft} -z I  \bigr)^{-1} P_{\mathcal{H}^{\rm soft}} - \begin{bmatrix}
			\widehat{S}^{\rm soft, eff}_{\chi}(z) & \widehat{\Pi}_0^{\rm stiff}
		\end{bmatrix} {\widehat{Q}^{\rm eff}_{\chi,\varepsilon}(z)}^{-1} \begin{bmatrix}
			\widehat{S}^{\rm soft, eff}_{\chi}(z) & \widehat{\Pi}_0^{\rm stiff}
		\end{bmatrix}^*,
	\end{equation*}
	where the \emph{effective solution operator} $\widehat{S}^{\rm soft, eff}_{\chi}(z):\widehat{\mathcal{E}}_0 \to \mathcal{H}_{\rm soft}$ is defined by 
	\begin{equation*}
		\widehat{S}^{\rm soft, eff}_{\chi}(z) := \widehat{\Pi}_0^{\rm soft} + z\bigl( \mathcal{A}_{0,\chi}^{\rm soft} -z I  \bigr)^{-1}\widehat{\Pi}_0^{\rm soft}.
	\end{equation*}
\end{definition}
\begin{remark}
\label{remark_estimates_igorpetak}
	Due to the estimate $\eqref{pisoftasimp}$ and the boundedness of the resolvent $(\mathcal{A}_{0,\chi}^{\rm soft} -z I)^{-1},$ we have 
	\begin{equation*}
		\left\lVert \widehat{S}^{\rm soft}_{\chi}(z) \widehat{P}_\chi - \widehat{S}^{\rm soft, eff}_{\chi}(z) \widehat{P}_0 \right\rVert_{L^2(\Gamma;\C^3) \to L^2(Y_{\rm stiff};\C^3)} \leq C|\chi|,
	\end{equation*}
	where the constant $C>0$ is independent of $\chi$ and $z$. Using the estimate \eqref{pistarasymp} and the identity \eqref{pi_projections} yields
	\begin{equation*}
		\left\lVert \bigl(\widehat{\Pi}_\chi^{\rm stiff}\bigr)^*  -  \bigl(\widehat{\Pi}_0^{\rm stiff}\bigr)^*   \right\rVert_{L^2(Y_{\rm stiff};\C^3) \to L^2(\Gamma;\C^3)} \leq C|\chi|.
	\end{equation*}
\end{remark}
For the inverse of the effective transmission function we have an estimate similar to \eqref{estimateq}. 
\begin{lemma}\label{nakk1003} 
	There exists $C>0$ which does not depend on $\varepsilon>0$, $z \in K_\sigma$, $\chi \in Y'$, such that 
	\begin{equation*}
		\bigl\lVert{\widehat{Q}_{\varepsilon,\chi}^{\rm eff}(z)}^{-1} \widehat{P}_0 \bigr\rVert_{\mathcal{E}\to \mathcal{E}} \leq C \min \left\{|\chi|^{-2}\varepsilon^2,1 \right\}.
	\end{equation*}
\end{lemma}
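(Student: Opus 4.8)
The strategy mirrors exactly the proof of Lemma \ref{lemmaestimateforq}, the only difference being that $\widehat{Q}_{\varepsilon,\chi}^{\rm eff}(z)$ is obtained from $\widehat{Q}^{\rm app}_{\chi,\varepsilon}(z)$ by replacing $\widehat{\Lambda}_\chi^{\rm stiff}$ with $\widehat{\Lambda}_\chi^{\rm hom}$, $\widehat{\Pi}_\chi^{\rm stiff}$ with $\widehat{\Pi}_0^{\rm stiff}$, $\widehat{S}_\chi^{\rm soft}(\overline z)$ with $\widehat{S}_\chi^{\rm soft,eff}(\overline z)$, and $\widehat{M}_\chi^{\rm soft}(z)$ with $\widehat{M}_0^{\rm soft}(z)$; all these replacements are uniformly controlled by the estimates collected in Remark \ref{remark_estimates_igorpetak}, Lemma \ref{josipnak1}, Lemma \ref{lemmasteklovorder}, and Corollary \ref{msoftasymptotics}. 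First I would split the operator into real and imaginary parts on the finite-dimensional space $\widehat{\mathcal{E}}_0$,
\begin{align*}
\Im\bigl(\widehat{Q}_{\varepsilon,\chi}^{\rm eff}(z)\bigr) &= \Im z\,\bigl(\widehat{S}^{\rm soft,eff}_{\chi}(\overline{z})\bigr)^* \widehat{S}^{\rm soft,eff}_{\chi}(\overline{z}) + \Im z\,\bigl(\widehat{\Pi}_0^{\rm stiff}\bigr)^*\widehat{\Pi}_0^{\rm stiff},\\[0.3em]
\Re\bigl(\widehat{Q}_{\varepsilon,\chi}^{\rm eff}(z)\bigr) &= \varepsilon^{-2}\widehat{\Lambda}_\chi^{\rm hom} + \Re\widehat{M}_0^{\rm soft}(z) + \Re z\,\bigl(\widehat{\Pi}_0^{\rm stiff}\bigr)^*\widehat{\Pi}_0^{\rm stiff},
\end{align*}
which follows from \eqref{representationmfunctionformula}, \eqref{nakk301}, the self-adjointness of $\widehat{\Lambda}_\chi^{\rm hom}$ (it is a Hermitian matrix by Lemma \ref{josipnak1}), and the self-adjointness of $\widehat{\Pi}_0^{\rm stiff}$-type terms.

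The first bound, namely $\lVert \widehat{Q}_{\varepsilon,\chi}^{\rm eff}(z)^{-1}\widehat{P}_0\rVert_{\mathcal{E}\to\mathcal{E}}\leq C$ uniformly in $\chi,\varepsilon,z$, is obtained from the imaginary part alone: for $\vect u\in\widehat{\mathcal{E}}_0$ I would estimate
$\bigl|\bigl\langle\Im(\widehat{Q}_{\varepsilon,\chi}^{\rm eff}(z))\vect u,\vect u\bigr\rangle_{\mathcal{E}}\bigr| \geq |\Im z|\,\lVert\widehat{\Pi}_0^{\rm stiff}\vect u\rVert_{\mathcal{H}^{\rm stiff}}^2 \geq C|\Im z|\,\lVert\widehat{\Pi}_0^{\rm stiff}\vect u\rVert_{H^1(Y_{\rm stiff};\C^3)}^2 \geq C|\Im z|\,\lVert\vect u\rVert_{\mathcal{E}}^2$,
where the middle inequality uses Lemma \ref{lemmanormequivalence_f} with $\chi=0$ (so $\widehat{\Pi}_0^{\rm stiff}\vect u\in\widehat{\mathcal{H}}_0^{\rm stiff}$) and the last one uses the trace inequality together with $\Pi_0^{\rm stiff}$ being a bounded lift; since $z\in K_\sigma$ one has $|\Im z|\geq\sigma$. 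Corollary \ref{boundfrombelowappendix} then converts this quadratic-form lower bound into the operator-norm bound on the inverse, giving the $\min\{\cdots,1\}$ alternative's trivial branch.

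For the second, $\varepsilon$-gaining branch I would use the real part together with Lemma \ref{josipnak1}, which gives $\widehat{\Lambda}_\chi^{\rm hom} = -|\Gamma|^{-1}(\mathrm{i}X_\chi)^*\A_{\rm macro}\,\mathrm{i}X_\chi$, hence $-\widehat{\Lambda}_\chi^{\rm hom}\geq c|\chi|^2 I$ on $\widehat{\mathcal{E}}_0$ for a $\chi$-independent $c>0$ (using positive-definiteness of $\A_{\rm macro}$ from Lemma \ref{prop_lemma} and the bounds \eqref{Xchi_bounds}, noting that $\widehat{\mathcal{E}}_0$ is the span of constants so $X_\chi$ restricted there is controlled as in Proposition \ref{generaldtn}). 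Since $\Re\widehat{M}_0^{\rm soft}(z)$ and $\Re z(\widehat{\Pi}_0^{\rm stiff})^*\widehat{\Pi}_0^{\rm stiff}$ are bounded uniformly in $\chi$ (Corollary \ref{msoftasymptotics}, Remark \ref{nakk1000}, boundedness of $\widehat{\Pi}_0^{\rm stiff}$), for $|\chi|\geq D\varepsilon$ with $D$ large enough the term $\varepsilon^{-2}\widehat{\Lambda}_\chi^{\rm hom}$ dominates and $|\langle\Re(\widehat{Q}_{\varepsilon,\chi}^{\rm eff}(z))\vect u,\vect u\rangle_{\mathcal{E}}|\geq C_2\varepsilon^{-2}|\chi|^2\lVert\vect u\rVert_{\mathcal{E}}^2$; Corollary \ref{boundfrombelowappendix} again yields $\lVert\widehat{Q}_{\varepsilon,\chi}^{\rm eff}(z)^{-1}\widehat{P}_0\rVert\leq C_2|\chi|^{-2}\varepsilon^2$ in that regime. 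Combining the two regimes via $|\chi|\leq D\varepsilon$ versus $|\chi|\geq D\varepsilon$ produces the claimed $\min\{|\chi|^{-2}\varepsilon^2,1\}$ bound. \textbf{The main obstacle} is purely bookkeeping: one must make sure that the coercivity constants hidden in "$\widehat{\Pi}_0^{\rm stiff}$ bounded below on $\widehat{\mathcal{E}}_0$" and in the lower bound for $-\widehat{\Lambda}_\chi^{\rm hom}$ are genuinely $\chi$-independent, which is guaranteed because $\widehat{\mathcal{E}}_0$ is a fixed (three-dimensional) space of constants and $\widehat{\Pi}_0^{\rm stiff}$ does not depend on $\chi$ at all — so unlike in Lemma \ref{lemmaestimateforq} there is not even a spectral-projection asymptotics to invoke here, and the proof is in fact slightly simpler.
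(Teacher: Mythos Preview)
Your proposal is correct and follows exactly the route indicated by the paper, which simply says the proof ``follows the steps of the proof of Lemma \ref{lemmaestimateforq} while utilising Lemma \ref{josipnak1}.'' One cosmetic slip: in the imaginary-part formula the soft contribution coming from $\widehat{M}_0^{\rm soft}(z)$ is $\Im z\,(\widehat{S}_0^{\rm soft}(\overline z))^*\widehat{S}_0^{\rm soft}(\overline z)$ (built from $\mathcal{A}_{0,0}^{\rm soft}$), not $(\widehat{S}^{\rm soft,eff}_{\chi}(\overline z))^*\widehat{S}^{\rm soft,eff}_{\chi}(\overline z)$, but since you discard this non-negative term and keep only the $\widehat{\Pi}_0^{\rm stiff}$ contribution, the argument is unaffected.
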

\begin{proof}
	The proof follows the steps of the proof of Lemma \ref{lemmaestimateforq} while utilising Lemma \ref{josipnak1}. 
\end{proof}

The following lemma provides an estimate on the distance between the two transmission functions.
\begin{lemma}\label{nakk1004} 
	There exists a constant $C>0,$ independent of $\varepsilon>0$, $z \in K_\sigma$, $\chi \in Y',$ such that 
	\begin{equation*}
		\left\lVert\widehat{Q}^{\rm app}_{\chi,\varepsilon}(z)\widehat{P}_\chi - \widehat{Q}_{\varepsilon,\chi}^{\rm eff}(z) \widehat{P}_0 \right\rVert_{\mathcal{E}\to \mathcal{E}} \leq C \max \left\{\varepsilon^{-2}|\chi|^3, |\chi|\right\}.
	\end{equation*}
\end{lemma}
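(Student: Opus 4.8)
The plan is to compare $\widehat{Q}^{\rm app}_{\chi,\varepsilon}(z)\widehat{P}_\chi$ and $\widehat{Q}_{\varepsilon,\chi}^{\rm eff}(z)\widehat{P}_0$ term by term, using the definitions \eqref{qoperator} and \eqref{qoperatoreff}. Writing out the difference we have three contributions to control: the ``stiff DtN'' terms $\varepsilon^{-2}\widehat{\Lambda}_\chi^{\rm stiff}\widehat{P}_\chi - \varepsilon^{-2}\widehat{\Lambda}_\chi^{\rm hom}\widehat{P}_0$, the ``stiff lift'' terms $z(\widehat{\Pi}_\chi^{\rm stiff})^*\widehat{\Pi}_\chi^{\rm stiff}\widehat{P}_\chi - z(\widehat{\Pi}_0^{\rm stiff})^*\widehat{\Pi}_0^{\rm stiff}\widehat{P}_0$, and the ``soft $M$-function'' terms $\widehat{M}_\chi^{\rm soft}(z)\widehat{P}_\chi - \widehat{M}_0^{\rm soft}(z)\widehat{P}_0$. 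First I would dispatch the last two: the soft $M$-function difference is $\mathcal{O}(|\chi|)$ in the $\mathcal{E}\to\mathcal{E}$ norm by Corollary \ref{msoftasymptotics}, and the stiff lift difference is also $\mathcal{O}(|\chi|)$ by the estimates \eqref{piasimp}, \eqref{pistarasymp} of Corollary \ref{pipasymptotics_stiff} together with \eqref{pi_projections} (one adds and subtracts $(\widehat{\Pi}_0^{\rm stiff})^*\widehat{\Pi}_\chi^{\rm stiff}\widehat{P}_\chi$ and uses uniform boundedness of the lift operators). Since $|\chi|\le C\max\{\varepsilon^{-2}|\chi|^3,|\chi|\}$ (the bound $|\chi|$ being attained when $|\chi|\lesssim\varepsilon$), these two terms are absorbed into the claimed right-hand side.

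The crux is the stiff DtN term, which carries the singular prefactor $\varepsilon^{-2}$, so here one cannot afford an $\mathcal{O}(|\chi|)$ bound on $\widehat{\Lambda}_\chi^{\rm stiff}\widehat{P}_\chi - \widehat{\Lambda}_\chi^{\rm hom}\widehat{P}_0$ — that would only give $\mathcal{O}(\varepsilon^{-2}|\chi|)$, which is not controlled by $\max\{\varepsilon^{-2}|\chi|^3,|\chi|\}$ for intermediate $\chi$. Instead one needs the genuinely quadratic smallness: $\widehat{\Lambda}_\chi^{\rm stiff}\widehat{P}_\chi$ is $\mathcal{O}(|\chi|^2)$ and $\widehat{\Lambda}_\chi^{\rm hom}\widehat{P}_0$ is also $\mathcal{O}(|\chi|^2)$ (Lemma \ref{lemmasteklovorder}, Lemma \ref{josipnak1}), and one must show their difference is $\mathcal{O}(|\chi|^3)$. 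The natural way to get this is to revisit Theorem \ref{dirichletotneumannresolventasymptotics}: that theorem says $(|\chi|^{-2}\Lambda_\chi^{\rm stiff}-I)^{-1} - (|\chi|^{-2}\Lambda_\chi^{\rm hom}-I)^{-1}S = \mathcal{O}(|\chi|)$ from $L^2(\Gamma;\C^3)$ to $H^{1/2}(\Gamma;\C^3)$. Multiplying out the resolvent identity, or equivalently using $\Lambda = I - (|\chi|^{-2}\Lambda - I)^{-1}\circ(\text{stuff})$-type manipulations on the three-dimensional range of $\widehat{P}_\chi$ (where $|\chi|^{-2}\widehat{\Lambda}_\chi^{\rm stiff}$ and $|\chi|^{-2}\widehat{\Lambda}_\chi^{\rm hom}$ are both uniformly bounded and bounded below by Lemmas \ref{lemmasteklovorder}, \ref{josipnak1}), one upgrades the $\mathcal{O}(|\chi|)$ resolvent estimate to $\big\| |\chi|^{-2}\widehat{\Lambda}_\chi^{\rm stiff}\widehat{P}_\chi - |\chi|^{-2}\widehat{\Lambda}_\chi^{\rm hom}\widehat{P}_0\big\|_{\mathcal{E}\to\mathcal{E}}\le C|\chi|$; recalling also $\widehat{P}_\chi - \widehat{P}_0 = \mathcal{O}(|\chi|)$ by \eqref{pasimp}, one then multiplies through by $|\chi|^2$ to obtain $\big\|\widehat{\Lambda}_\chi^{\rm stiff}\widehat{P}_\chi - \widehat{\Lambda}_\chi^{\rm hom}\widehat{P}_0\big\|_{\mathcal{E}\to\mathcal{E}}\le C|\chi|^3$, hence $\varepsilon^{-2}\times$ this is $\le C\varepsilon^{-2}|\chi|^3 \le C\max\{\varepsilon^{-2}|\chi|^3,|\chi|\}$.

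The main obstacle I anticipate is precisely this conversion of a norm-resolvent estimate for the DtN maps (Theorem \ref{dirichletotneumannresolventasymptotics}) into an estimate on the operators themselves restricted to the low-Steklov eigenspaces, with the sharp cubic rate rather than the naive linear one. The key structural facts that make it work are that $\widehat{\mathcal{E}}_\chi$ is three-dimensional with spectral data of size $\asymp|\chi|^2$ uniformly separated from the rest (Lemma \ref{lemmacontour}, Lemma \ref{lemmasteklovorder}), so that the $\widehat{P}_\chi$-truncated operators $|\chi|^{-2}\widehat{\Lambda}_\chi^{\rm stiff}$ behave like uniformly bounded, uniformly invertible matrices; one can then pass between resolvents and operators by the algebraic identity $A^{-1}-B^{-1}S = A^{-1}(B-A)B^{-1}S + A^{-1}(SB - BS)B^{-1}S$-type expansions without losing powers of $|\chi|$. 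Once these pieces are in place, collecting the three bounds and using $\min$/$\max$ arithmetic against the stated right-hand side finishes the proof; I would conclude by noting that the $\mathcal{O}(|\chi|)$ contributions from the soft and lift parts are always dominated since $|\chi|\le\max\{\varepsilon^{-2}|\chi|^3,|\chi|\}$ trivially.
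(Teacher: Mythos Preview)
Your three-term decomposition and your treatment of the soft $M$-function and stiff-lift terms via Corollaries \ref{msoftasymptotics} and \ref{pipasymptotics_stiff} match the paper exactly, and you correctly isolate the stiff DtN term as the crux requiring the cubic rate $|\chi|^3$.

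Where you and the paper differ is in the mechanism for converting the resolvent estimate of Theorem \ref{dirichletotneumannresolventasymptotics} into the operator estimate $\|\,|\chi|^{-2}\widehat{\Lambda}_\chi^{\rm stiff}\widehat{P}_\chi - |\chi|^{-2}\Lambda_\chi^{\rm hom}\widehat{P}_0\|\le C|\chi|$. You sketch an ``invert the resolvent back'' approach via algebraic identities on the three-dimensional truncations. This can be made to work, but it runs into the nuisance that $\widehat{P}_\chi$ and $\widehat{P}_0$ project onto \emph{different} three-dimensional subspaces of $\mathcal{E}$, so the inversion step needs an extra $\widehat{P}_\chi-\widehat{P}_0$ correction and some bookkeeping. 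The paper sidesteps this entirely with a Cauchy integral: using the contour $\gamma$ of Lemma \ref{lemmacontour} and the functional calculus with $g(z)=z$, one writes
\[
|\chi|^{-2}\widehat{\Lambda}_\chi^{\rm stiff}\widehat{P}_\chi - |\chi|^{-2}\Lambda_\chi^{\rm hom}\widehat{P}_0
= \frac{1}{2\pi{\rm i}}\oint_\gamma z\Bigl[\bigl(zI-|\chi|^{-2}\Lambda_\chi^{\rm stiff}\bigr)^{-1} - \bigl(zI-|\chi|^{-2}\Lambda_\chi^{\rm hom}\bigr)^{-1}S\Bigr]\,dz,
\]
and then the $\mathcal{O}(|\chi|)$ bound on the integrand (Remark \ref{nakk120}) gives the result in one stroke. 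This is cleaner because the contour integral automatically produces the correct projections on both sides without any subspace-matching argument. Your route would ultimately yield the same estimate, but the paper's device is the more efficient one-liner.
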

\begin{proof}
	The case of $\chi = 0$ is trivial. It is clear that for all $\chi \in Y' \setminus\{0\}$ we have
	\begin{equation*}
		\frac{1}{|\chi|^2}\widehat{\Lambda}_\chi^{\rm stiff}\widehat{P}_\chi -\frac{1}{|\chi|^2}\Lambda_\chi^{\rm hom}\widehat{P}_0 = \frac{1}{2\pi{\rm i}} \oint_\gamma  z\left(\left(zI-\frac{1}{|\chi|^2}\Lambda_\chi^{\rm stiff}\right)^{-1}- \left(zI-\frac{1}{|\chi|^2}\Lambda_\chi^{\rm hom}\right)^{-1} \right) dz,
	\end{equation*}
	where $\gamma$ is the contour provided by Lemma \ref{lemmacontour}.
	Therefore, by applying the Theorem \ref{dirichletotneumannresolventasymptotics} (cf. Remark \ref{nakk120}), we obtain
	\begin{equation*}
		\bigl\lVert\varepsilon^{-2} \widehat{\Lambda}_\chi^{\rm stiff}\widehat{P}_\chi -\varepsilon^{-2} \Lambda_\chi^{\rm hom} \widehat{P}_0\bigr\rVert_{\mathcal{E} - \mathcal{E}} \leq C \varepsilon^{-2}|\chi|^3.
	\end{equation*}
	The claim now follows from \eqref{qoperator} and \eqref{qoperatoreff}, by invoking Corollary \ref{msoftasymptotics} and Corollary  \ref{pipasymptotics_stiff}.
\end{proof}

The following lemma is crucial for  obtaining $\varepsilon$-order asymptotics of the resolvent $((\mathcal{A}_{\chi,\varepsilon})_{0,I} -zI)^{-1}$ and relating it to an object that incorporates the effective transmission function.
\begin{lemma}
	\label{lemmaqasymptotics}
	There exists a constant $C>0$ which does not depend on $\varepsilon>0$, $z \in K_\sigma$, $\chi \in Y'$, such that
	\begin{equation*}
		\bigl\lVert {\widehat{Q}^{\rm app}_{\chi,\varepsilon}(z)}^{-1}\widehat{P}_\chi - {\widehat{Q}^{\rm eff}_{\chi,\varepsilon}(z)}^{-1} \widehat{P}_0 \bigr\rVert_{\mathcal{E}\to \mathcal{E}} \leq C \varepsilon.
	\end{equation*}
\end{lemma}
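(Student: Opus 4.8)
The natural strategy is to write the difference of the two inverses using the second resolvent identity in the form
\[
{\widehat{Q}^{\rm app}_{\chi,\varepsilon}(z)}^{-1}\widehat{P}_\chi - {\widehat{Q}^{\rm eff}_{\chi,\varepsilon}(z)}^{-1} \widehat{P}_0
= {\widehat{Q}^{\rm app}_{\chi,\varepsilon}(z)}^{-1}\widehat{P}_\chi\bigl(\widehat{Q}^{\rm eff}_{\chi,\varepsilon}(z)\widehat{P}_0 - \widehat{Q}^{\rm app}_{\chi,\varepsilon}(z)\widehat{P}_\chi\bigr){\widehat{Q}^{\rm eff}_{\chi,\varepsilon}(z)}^{-1}\widehat{P}_0 + {\rm (projection\ terms)},
\]
where the projection terms account for the fact that $\widehat{P}_\chi \neq \widehat{P}_0$; these are controlled using the estimate $\|\widehat{P}_\chi - \widehat{P}_0\| \leq C|\chi|$ from Corollary \ref{pipasymptotics_stiff}, together with the uniform bounds $\|{\widehat{Q}^{\rm app}_{\chi,\varepsilon}(z)}^{-1}\widehat{P}_\chi\| \leq C$ and $\|{\widehat{Q}^{\rm eff}_{\chi,\varepsilon}(z)}^{-1}\widehat{P}_0\| \leq C$ from Lemma \ref{lemmaestimateforq} and Lemma \ref{nakk1003}. (Here one uses that $\widehat{Q}^{\rm app}_{\chi,\varepsilon}$ acts on $\widehat{\mathcal{E}}_\chi$ while $\widehat{Q}^{\rm eff}_{\chi,\varepsilon}$ acts on $\widehat{\mathcal{E}}_0$, so the insertion of the appropriate spectral projections must be handled with care; all the relevant operators are finite-rank here, acting on three-dimensional spaces, which makes this bookkeeping harmless.)

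The key point is that a crude application of the above identity with the bounds $\|{\widehat{Q}^{\rm app}_{\chi,\varepsilon}(z)}^{-1}\widehat{P}_\chi\|\leq C$, $\|{\widehat{Q}^{\rm eff}_{\chi,\varepsilon}(z)}^{-1}\widehat{P}_0\|\leq C$ and the difference estimate $\|\widehat{Q}^{\rm app}_{\chi,\varepsilon}\widehat{P}_\chi - \widehat{Q}^{\rm eff}_{\chi,\varepsilon}\widehat{P}_0\|\leq C\max\{\varepsilon^{-2}|\chi|^3,|\chi|\}$ from Lemma \ref{nakk1004} yields only a bound of order $\max\{\varepsilon^{-2}|\chi|^3,|\chi|\}$, which is \emph{not} uniformly $O(\varepsilon)$ in the regime $|\chi|\sim 1$, $\varepsilon\to 0$ --- indeed $\varepsilon^{-2}|\chi|^3$ blows up. The remedy is to split into two regimes according to the size of $|\chi|$ relative to $\varepsilon$, exactly as in the proof of Lemma \ref{lemmaestimateforq}. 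Fix the threshold constant $D$ from that proof and consider separately $|\chi| < D\varepsilon$ and $|\chi| \geq D\varepsilon$.

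\emph{Regime $|\chi| < D\varepsilon$:} here $\max\{\varepsilon^{-2}|\chi|^3,|\chi|\} = \varepsilon^{-2}|\chi|^3 \leq D^2|\chi| \leq D^3\varepsilon$ (the maximum is $\varepsilon^{-2}|\chi|^3$ precisely when $|\chi|<\varepsilon$, and $|\chi|<D\varepsilon$ forces this up to adjusting constants), so one uses the uniform-in-$\chi$ bounds \eqref{nakk1001} for both inverses and Lemma \ref{nakk1004} directly to get the $O(\varepsilon)$ estimate. \emph{Regime $|\chi| \geq D\varepsilon$:} here $\max\{\varepsilon^{-2}|\chi|^3,|\chi|\} = \varepsilon^{-2}|\chi|^3$, but now one invokes the \emph{sharper} decay bounds $\|{\widehat{Q}^{\rm app}_{\chi,\varepsilon}(z)}^{-1}\widehat{P}_\chi\| \leq C|\chi|^{-2}\varepsilon^2$ and $\|{\widehat{Q}^{\rm eff}_{\chi,\varepsilon}(z)}^{-1}\widehat{P}_0\| \leq C|\chi|^{-2}\varepsilon^2$ from Lemma \ref{lemmaestimateforq} and Lemma \ref{nakk1003}. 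Feeding these into the resolvent identity gives a bound of order $(|\chi|^{-2}\varepsilon^2)\cdot(\varepsilon^{-2}|\chi|^3)\cdot(|\chi|^{-2}\varepsilon^2) = |\chi|^{-1}\varepsilon^2 \leq D^{-1}\varepsilon$, again $O(\varepsilon)$. (For the projection-correction terms one similarly combines the $|\chi|^{-2}\varepsilon^2$ decay of one resolvent, the uniform bound on the other, and the $C|\chi|$ estimate on $\widehat{P}_\chi - \widehat{P}_0$, obtaining $|\chi|^{-2}\varepsilon^2\cdot 1 \cdot |\chi| = |\chi|^{-1}\varepsilon^2 \leq D^{-1}\varepsilon$ in the second regime, and $1\cdot 1\cdot|\chi| < D\varepsilon$ in the first.) Combining the two regimes gives the claim with a constant depending only on $\sigma$, $\mathrm{diam}(K_\sigma)$ and the fixed geometric data.

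\emph{Main obstacle.} The only genuine subtlety is the regime-splitting and the realization that one must \emph{not} use the uniform bound on both inverses in the regime $|\chi|\geq D\varepsilon$ --- one has to exploit the quadratic-in-$\varepsilon$ decay of \emph{both} $\widehat{Q}$-inverses there to beat the $\varepsilon^{-2}|\chi|^3$ growth coming from Lemma \ref{nakk1004}. Everything else (the algebra of the resolvent identity, the finite-dimensional projection bookkeeping between $\widehat{\mathcal{E}}_\chi$ and $\widehat{\mathcal{E}}_0$, and verifying that all constants are $\chi$- and $\varepsilon$-independent) is routine given the lemmata already established.
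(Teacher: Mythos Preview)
Your proposal is correct and follows essentially the same approach as the paper: the paper decomposes the difference into three terms I (the resolvent-identity main term) and II, III (the two projection-correction terms), then bounds each by combining the $\min\{\varepsilon^2|\chi|^{-2},1\}$ estimates from Lemmata \ref{lemmaestimateforq}, \ref{nakk1003} with the $\max\{\varepsilon^{-2}|\chi|^3,|\chi|\}$ estimate from Lemma \ref{nakk1004} and the $O(|\chi|)$ bound on $\widehat{P}_\chi-\widehat{P}_0$, splitting into the cases $|\chi|\le\varepsilon$ and $|\chi|\ge\varepsilon$ exactly as you outline. One small slip: you state that $\max\{\varepsilon^{-2}|\chi|^3,|\chi|\}=\varepsilon^{-2}|\chi|^3$ ``precisely when $|\chi|<\varepsilon$'', but in fact the opposite holds (the cubic term dominates when $|\chi|>\varepsilon$); this does not affect your argument since you correctly bound the maximum by $D^3\varepsilon$ in the first regime regardless.
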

\begin{proof}
	By a direct calculation, we see that
		\begin{equation*}
		{\widehat{Q}^{\rm app}_{\chi,\varepsilon}\BBB(z)}^{-1}\widehat{P}_\chi - {\widehat{Q}^{\rm eff}_{\chi,\varepsilon}(z)}^{-1} \widehat{P}_0 = \rm I + II + III,
	\end{equation*}
	where
	\begin{equation*}
		\begin{aligned}
			& \text{I} :=  \bigl(\widehat{Q}^{\rm app}_{\chi,\varepsilon}(z)\bigr)^{-1}\widehat{P}_\chi\bigl(\widehat{Q}_{\varepsilon,\chi}^{\rm eff}(z) \widehat{P}_0-\widehat{Q}^{\rm app}_{\chi,\varepsilon}(z)\widehat{P}_\chi\bigr){\widehat{Q}^{\rm eff}_{\chi,\varepsilon}(z)}^{-1} \widehat{P}_0, \\[0.3em]
			& \text{II} := \bigl( \widehat{Q}^{\rm app}_{\chi,\varepsilon}(z)\bigr)^{-1}\widehat{P}_\chi \bigl(\widehat{P}_\chi - \widehat{P}_0 \bigr),\qquad 
			\text{III} := \bigl( \widehat{P}_0-\widehat{P}_\chi  \bigr){\widehat{Q}^{\rm eff}_{\chi,\varepsilon}(z)}^{-1} \widehat{P}_0. 
		\end{aligned}
	\end{equation*}
	 Next, using  Lemma \ref{lemmaestimateforq}, Lemma \ref{nakk1003} and Lemma \ref{nakk1004}, we obtain
	\begin{equation*}
			\left\lVert \text{I} \right\rVert_{\mathcal{E} \to \mathcal{E}}
			\leq C \min \left\{ \frac{\varepsilon^2}{|\chi|^2},1 \right\}\max\left\{|\chi|, \frac{|\chi|^3}{\varepsilon^2} \right\}\min \left\{ \frac{\varepsilon^2}{|\chi|^2},1 \right\}
			\leq \left\{\begin{array}{ll}
				1 \cdot \varepsilon \cdot 1 \leq \varepsilon  \textrm{ if } |\chi| \leq \varepsilon,\\[0.5em]
				\dfrac{\varepsilon^2}{|\chi|^2}\cdot \dfrac{|\chi|^3}{\varepsilon^2} \cdot  \dfrac{\varepsilon^2}{|\chi|^2} = \dfrac{\varepsilon^2}{|\chi|}    \leq \varepsilon & \textrm{ if }  |\chi| \geq \varepsilon.
			\end{array}\right.
	\end{equation*}
	Furthermore, by employing Corollary \ref{pipasymptotics_stiff} and Lemma \ref{lemmaestimateforq}, one easily estimates
	\begin{equation}
		\label{nakk1005} 
			\left\lVert \text{II} \right\rVert_{\mathcal{E} \to \mathcal{E}}
			\leq C |\chi |\min \left\{ \frac{\varepsilon^2}{|\chi|^2},1 \right\} 
			\leq \left\{\begin{array}{ll}
				|\chi| \cdot 1  \leq \varepsilon & \textrm{ if }  |\chi| \leq \varepsilon, \\[0.4em]				
				|\chi| \cdot \dfrac{\varepsilon^2}{|\chi|^2} = \dfrac{\varepsilon^2}{|\chi|}    \leq \varepsilon & \textrm{ if }   |\chi| \geq \varepsilon.
			\end{array}\right.
	\end{equation}
	Similarly, using again Corollary \ref{pipasymptotics_stiff} and Lemma \ref{nakk1003} we estimate
	\begin{equation*}
			\left\lVert \text{III} \right\rVert_{\mathcal{E} \to \mathcal{E}}
			\leq C |\chi| \min \left\{ \vert\chi\vert^{-2}\varepsilon^2,1 \right\}  
			\leq C \varepsilon,
	\end{equation*}
	which concludes the proof. 
\end{proof}

Finally, we can summarise these results as the following theorem. 
\begin{theorem} \label{nakk1} 
	There exists a constant $C>0$ which does not depend on $\varepsilon>0$, $z \in K_\sigma$, $\chi \in Y'$, such that 
	\begin{equation*}
		\left\lVert \mathcal{R}_{\chi,\varepsilon}^{\rm app}(z) - \mathcal{R}_{\chi,\varepsilon}^{\rm eff}(z) \right\rVert_{\mathcal{H} \to \mathcal{H}} \leq C \varepsilon,
	\end{equation*}
	where
	\begin{equation}
		\label{effresolventblockmatrix}
		\mathcal{R}_{\chi,\varepsilon}^{\rm eff}(z):=
		\begin{bmatrix}\bigl( \mathcal{A}_{0,\chi}^{\rm soft} -z I  \bigr)^{-1} - \widehat{S}^{\rm soft, eff}_{\chi}(z)
			{\widehat{Q}_{\varepsilon,\chi}^{\rm eff}(z)}^{-1} \widehat{S}_{\chi}^{\rm soft, eff}(\overline{z})^* & -\widehat{S}^{\rm soft, eff}_{\chi}( z)
			{\widehat{Q}_{\varepsilon,\chi}^{\rm eff}(z)}^{-1} \bigl(\widehat{\Pi}_0^{\rm stiff}\bigr)^*\\[0.4em]
			-\widehat{\Pi}_0^{\rm stiff}
			{\widehat{Q}_{\varepsilon,\chi}^{\rm eff}(z)}^{-1} \widehat{S}_{\chi}^{\rm soft, eff}(\overline{z})^* & - \widehat{\Pi}_0^{\rm stiff} 
			{\widehat{Q}_{\varepsilon,\chi}^{\rm eff}(z)}^{-1} \bigl(\widehat{\Pi}_0^{\rm stiff}\bigr)^*\end{bmatrix},
	\end{equation}
	and the block decomposition is relative to the decomposition $\mathcal{H}^{\rm soft} \oplus \mathcal{H}^{\rm stiff}$.
\end{theorem}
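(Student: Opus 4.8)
The plan is to prove the theorem by comparing the two block-operator matrices entry by entry, using the asymptotic estimates already collected in Remark \ref{remark_estimates_igorpetak}, Lemma \ref{lemmaestimateforq}, Lemma \ref{nakk1003}, and crucially Lemma \ref{lemmaqasymptotics}. Recall that by \eqref{resolventcondensedform} the operator $\mathcal{R}_{\chi,\varepsilon}^{\rm app}(z)$ can be written in the condensed form
\begin{equation*}
	\mathcal{R}_{\chi,\varepsilon}^{\rm app}(z) = \bigl( \mathcal{A}_{0,\chi}^{\rm soft} -z I  \bigr)^{-1} P_{\mathcal{H}^{\rm soft}} - \begin{bmatrix}
	\widehat{S}^{\rm soft}_{\chi}(z) & \widehat{\Pi}_\chi^{\rm stiff}
	\end{bmatrix}{\widehat{Q}^{\rm app}_{\chi,\varepsilon}(z)}^{-1} \begin{bmatrix}
	\widehat{S}^{\rm soft}_{\chi}(z) & \widehat{\Pi}_\chi^{\rm stiff}
	\end{bmatrix}^*,
\end{equation*}
while $\mathcal{R}_{\chi,\varepsilon}^{\rm eff}(z)$ has the same structure with $\widehat{S}^{\rm soft}_{\chi}(z)$ replaced by $\widehat{S}^{\rm soft, eff}_{\chi}(z)$, $\widehat{\Pi}_\chi^{\rm stiff}$ by $\widehat{\Pi}_0^{\rm stiff}$, and ${\widehat{Q}^{\rm app}_{\chi,\varepsilon}(z)}^{-1}\widehat P_\chi$ by ${\widehat{Q}^{\rm eff}_{\chi,\varepsilon}(z)}^{-1}\widehat P_0$. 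The diagonal soft-soft term $\bigl( \mathcal{A}_{0,\chi}^{\rm soft} -z I  \bigr)^{-1} P_{\mathcal{H}^{\rm soft}}$ is common to both and cancels, so it remains to estimate the difference of the two rank-type correction terms.

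First I would insert and remove intermediate operators so that the difference splits into three groups of terms: (i) the difference coming from replacing the outer factor $\bigl[\widehat{S}^{\rm soft}_{\chi}(z)\ \widehat{\Pi}_\chi^{\rm stiff}\bigr]$ by $\bigl[\widehat{S}^{\rm soft, eff}_{\chi}(z)\ \widehat{\Pi}_0^{\rm stiff}\bigr]$; (ii) the symmetric difference on the other outer factor (the adjoint); and (iii) the difference coming from replacing ${\widehat{Q}^{\rm app}_{\chi,\varepsilon}(z)}^{-1}\widehat P_\chi$ by ${\widehat{Q}^{\rm eff}_{\chi,\varepsilon}(z)}^{-1}\widehat P_0$ in the middle. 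For group (iii) one uses Lemma \ref{lemmaqasymptotics} directly, which gives an $O(\varepsilon)$ bound in the $\mathcal{E}\to\mathcal{E}$ norm, together with the uniform boundedness of the outer factors (the operators $\widehat S^{\rm soft}_\chi(z)$, $\widehat\Pi_\chi^{\rm stiff}$, $\widehat S^{\rm soft,eff}_\chi(z)$, $\widehat\Pi_0^{\rm stiff}$ are all uniformly bounded in $\chi$, $z\in K_\sigma$, the first by \eqref{soperatorhatformula} and the uniform resolvent bound, the lifts by Theorem \ref{Piasymptheorem}). For groups (i) and (ii) one uses the two estimates in Remark \ref{remark_estimates_igorpetak}, namely $\lVert \widehat{S}^{\rm soft}_{\chi}(z) \widehat{P}_\chi - \widehat{S}^{\rm soft, eff}_{\chi}(z) \widehat{P}_0 \rVert \leq C|\chi|$ and $\lVert (\widehat{\Pi}_\chi^{\rm stiff})^*  -  (\widehat{\Pi}_0^{\rm stiff})^* \rVert \leq C|\chi|$, combined with the estimate $\lVert{\widehat{Q}^{\rm app}_{\chi,\varepsilon}(z)}^{-1} \widehat{P}_\chi \rVert \leq C \min\{|\chi|^{-2}\varepsilon^2,1\}$ from Lemma \ref{lemmaestimateforq} (respectively Lemma \ref{nakk1003} for the effective version). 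The point is that $|\chi|\cdot\min\{|\chi|^{-2}\varepsilon^2,1\}\leq\varepsilon$: for $|\chi|\leq\varepsilon$ it is $\leq|\chi|\leq\varepsilon$, and for $|\chi|\geq\varepsilon$ it is $\leq \varepsilon^2/|\chi|\leq\varepsilon$, exactly the dichotomy used throughout Section \ref{sectionopeff}. One must be slightly careful that $\widehat P_\chi$ and $\widehat P_0$ are inserted in the right places so that these lemmata apply (recall $\widehat\Pi_\chi^{\rm stiff}\widehat P_\chi=\widehat\Pi_\chi^{\rm stiff}$ by \eqref{pi_projections}); this is routine bookkeeping.

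Assembling, each of the finitely many terms is $O(\varepsilon)$ in the relevant operator norm, and since the outer factors map between $\mathcal{E}$ and the appropriate $L^2$ spaces, the products are $O(\varepsilon)$ in the $\mathcal{H}\to\mathcal{H}$ norm; summing gives the claimed bound with a constant depending only on $\sigma$ and ${\rm diam}(K_\sigma)$. I expect the main (though still modest) obstacle to be purely organisational: choosing the telescoping decomposition so that every individual difference matches the hypotheses of one of the cited lemmata, and checking that the boundedness factors multiplying each small difference are uniform in $\chi$ and $z$ — in particular that the effective solution operator $\widehat S^{\rm soft,eff}_\chi(z)$ and the projection-composed lift $\widehat\Pi_0^{\rm stiff}$ are uniformly bounded, which follows from the uniform resolvent bound on $(\mathcal{A}_{0,\chi}^{\rm soft}-zI)^{-1}$ for $z\in K_\sigma$ and from Theorem \ref{Piasymptheorem}. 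No genuinely new analytic input is needed beyond the lemmata already established.
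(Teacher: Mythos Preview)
Your proposal is correct and follows essentially the same approach as the paper's proof: both reduce to telescoping the difference of the rank-type correction terms and then combine the $O(|\chi|)$ estimates on the outer factors (Remark \ref{remark_estimates_igorpetak}, Corollary \ref{pipasymptotics_soft}) with the $\min\{|\chi|^{-2}\varepsilon^2,1\}$ bound on the $Q$-inverses (Lemmata \ref{lemmaestimateforq}, \ref{nakk1003}) and Lemma \ref{lemmaqasymptotics} for the middle term, invoking the dichotomy $|\chi|\cdot\min\{|\chi|^{-2}\varepsilon^2,1\}\le\varepsilon$. The only organisational difference is that the paper treats the four blocks separately (exhibiting the stiff--stiff block in detail) whereas you telescope the whole condensed form at once; the analytic content is identical.
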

\begin{proof}
	The proof of this fact consists of estimating the four blocks of the matrix 
	\begin{equation*}
		\begin{bmatrix}
			\widehat{S}^{\rm soft}_{\chi}(z) & \widehat{\Pi}_\chi^{\rm stiff}
		\end{bmatrix} {\widehat{Q}^{\rm app}_{\chi,\varepsilon}(z)}^{-1} \begin{bmatrix}
			\widehat{S}^{\rm soft}_{\chi}(z) & \widehat{\Pi}_\chi^{\rm stiff}
		\end{bmatrix}^* - \begin{bmatrix}
			\widehat{S}^{\rm soft, eff}_{\chi}(z) & \widehat{\Pi}_0^{\rm stiff}
		\end{bmatrix}{\widehat{Q}^{\rm eff}_{\chi,\varepsilon}(z)}^{-1} \begin{bmatrix}
			\widehat{S}^{\rm soft,eff}_{\chi}(z) & \widehat{\Pi}_0^{\rm stiff}
		\end{bmatrix}^* 
	\end{equation*}
	for which all the arguments go analogously. Thus, we prove the estimate only for one of the blocks. Combining the triangle inequality, Corollary \ref{pipasymptotics_soft},  identity \eqref{Sadjointformulae}, Lemma \ref{lemmaestimateforq}, and Remark \ref{remark_estimates_igorpetak},  we obtain, similarly to \eqref{nakk1005}: 
	\begin{equation*}
			\widehat{\Pi}_\chi^{\rm stiff}  {\widehat{Q}^{\rm app}_{\chi,\varepsilon}(z)}^{-1} \bigl( \widehat{\Pi}_\chi^{\rm stiff}\bigr)^* - \widehat{\Pi}_0^{\rm stiff}  {\widehat{Q}^{\rm eff}_{\chi,\varepsilon}(z)}^{-1}\bigl( \widehat{\Pi}_0^{\rm stiff}\bigr)^* 
			= \widehat{\Pi}_0^{\rm stiff}  \Bigl(\bigl(\widehat{Q}^{\rm app}_{\chi,\varepsilon}(z) \bigr)^{-1} \widehat{P}_\chi-{\widehat{Q}^{\rm eff}_{\chi,\varepsilon}(z)}^{-1} \widehat{P}_0 \Bigr) \bigl( \widehat{\Pi}_0^{\rm stiff}\bigr)^* + \mathcal{O}(\varepsilon),
	\end{equation*}
	where $O(\varepsilon)$ is of order $\varepsilon$ with respect to the  $L^2 \to L^2$ norm.	Using Lemma \ref{lemmaqasymptotics} concludes the estimate for this block. 
\end{proof}
\begin{definition} We define the effective operator $\mathcal{A}_{\chi,\varepsilon}^{\rm eff}$ as follows:
	\begin{equation}
	\begin{aligned}
		\mathcal{D}\left(\mathcal{A}_{\chi,\varepsilon}^{\rm eff}\right)&:=\bigl\{ (\vect u,\widehat{\vect u})^\top \in \mathcal{H}^{\rm soft} \oplus \widehat{\mathcal{H}}_0^{\rm stiff}, \quad \vect u \in \mathcal{D}(\mathcal{A}_{0,\chi}^{\rm soft}) \dot +  \widehat{\Pi}_0^{\rm soft} \widehat{\mathcal{E}}_0, \quad  \widehat{\Gamma}_{0,0}^{\rm stiff}\widehat{\vect u} = \widehat{\Gamma}_{0,0}^{\rm soft} \vect u  \bigr\},
		\\[0.4em]
		\mathcal{A}_{\chi,\varepsilon}^{\rm eff} \begin{bmatrix}
			\vect u \\ \widehat{\vect u}
		\end{bmatrix}&:= \begin{bmatrix}
			\mathring{\mathcal{A}}_\chi^{\rm soft} & 0 \\[0.2em]
			- \bigl( \bigl(\widehat{\Pi}_0^{\rm stiff} \bigr)^* \bigr)^{-1} \mathring{\Gamma}_{1,0}^{\rm soft} & -\varepsilon^{-2}\bigl( \bigl(\widehat{\Pi}_0^{\rm stiff} \bigr)^* \bigr)^{-1} \Lambda_{\chi}^{\rm hom}\bigl(\widehat{\Pi}_0^{\rm stiff} \bigr)^{-1}
		\end{bmatrix}\begin{bmatrix}
			\vect u \\ \widehat{\vect u}
		\end{bmatrix},
	\end{aligned}
\label{effoperator}
\end{equation}
	where 
	\begin{equation*}
		\begin{aligned}
			\mathcal{D}(\mathring{\mathcal{A}}_\chi^{\rm soft})&:=\mathcal{D}(\mathcal{A}_{0,\chi}^{\rm soft}) \dot + \widehat{\Pi}_0^{\rm soft}(\widehat{\mathcal{E}}_0), \quad \mathring{\mathcal{A}}_\chi^{\rm soft}:\bigl(\mathcal{A}_{0,\chi}^{\rm soft}\bigr)^{-1} \vect f + \widehat{\Pi}_0^{\rm soft} \vect g \to \vect f, \quad \vect f \in \mathcal{H}, \vect g \in \widehat{\mathcal{E}}_0, \\[0.3em]
			\mathcal{D}(\mathring{\Gamma}_{1,0}^{\rm soft})&:=\mathcal{D}(\mathcal{A}_{0,\chi}^{\rm soft})  \dot +\widehat{\Pi}_0^{\rm soft}(\widehat{\mathcal{E}}_0),  \quad \mathring{\Gamma}_{1,0}^{\rm soft}:\bigl(\mathcal{A}_{0,\chi}^{\rm soft}\bigr)^{-1} \vect f + \widehat{\Pi}_0^{\rm soft} \vect g \to \bigl( \widehat{\Pi}_0^{\rm soft}\bigr)^* \vect f + \widehat{\Lambda}_0^{\rm soft} \vect g, \quad \vect f \in \mathcal{H}, \vect g \in \widehat{\mathcal{E}}_0.
		\end{aligned}
	\end{equation*}
\end{definition}

\begin{remark} \label{ante31} 
	It is straightforward to check that $(\widehat{\Pi}_0^{\rm stiff(soft)})^{*}=|Y_{\rm stiff(soft)}||\Gamma|^{-1}\widehat{\Gamma}_{0,\chi}^{\rm stiff(soft)}P_{\mathcal{\widehat{H}}^{\rm stiff(soft)}_{0}}$. 
 	\end{remark} 

 Recall that $\widehat{\Lambda}_0^{\rm soft} \vect g=0$ for every $\vect g\in \widehat{\mathcal{E}}_0,$ as $\widehat{\mathcal{E}}_0$ consists of constant functions. Similarly to Theorem \ref{theoremappoperator}, one can establish the following statement, whose proof we omit. 
\begin{theorem}
	For every $\chi \in Y'$, the operator $\mathcal{A}_{\chi,\varepsilon}^{\rm eff}$ is self-adjoint and its resolvent is given, for all $z \in \rho(\mathcal{A}_{\chi,\varepsilon}^{\rm eff}),$ by the formula \eqref{effresolventblockmatrix} relative to the decomposition $\mathcal{H}^{\rm soft} \oplus \widehat{\mathcal{H}}_0^{\rm stiff}.$
\end{theorem}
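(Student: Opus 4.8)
The plan is to mirror the proof of Theorem~\ref{theoremappoperator}, which is the analogue for $\mathcal{A}_{\chi,\varepsilon}^{\rm app}$, working through the same two moves: first verify that $\mathcal{A}_{\chi,\varepsilon}^{\rm eff}$ defined by \eqref{effoperator} is symmetric, and then show that solving the resolvent problem component-wise reproduces exactly the block-matrix formula \eqref{effresolventblockmatrix}. Symmetry is a direct computation along the lines of the opening of the proof of Theorem~\ref{theoremappoperator}: for $(\vect u,\widehat{\vect u})^\top,(\vect v,\widehat{\vect v})^\top \in \mathcal{D}(\mathcal{A}_{\chi,\varepsilon}^{\rm eff})$ one expands the inner product, uses the relation $\widehat{\vect u}=\widehat{\Pi}_0^{\rm stiff}\widehat{\Gamma}_{0,0}^{\rm soft}\vect u$ built into the domain, rewrites the coupling term with $(\bigl(\widehat{\Pi}_0^{\rm stiff}\bigr)^*)^{-1}$ against $\widehat{\Pi}_0^{\rm stiff}\widehat{\Gamma}_{0,0}^{\rm soft}\vect v$ to cancel the lift operators, and invokes Green's formula \eqref{Greenformula} for the soft triple together with the fact that $\Lambda_\chi^{\rm hom}$ is a Hermitian matrix (Lemma~\ref{josipnak1}) and that $\widehat{\Lambda}_0^{\rm soft}=0$ on $\widehat{\mathcal{E}}_0$. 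One must also note $\mathring{\mathcal{A}}_\chi^{\rm soft}$ is self-adjoint (it is the ``$\mathcal{A}_{0,I}$-type'' extension of the soft Ryzhov triple restricted to $\widehat{\mathcal{E}}_0$), so symmetry upgrades to self-adjointness once we exhibit a resolvent.

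Next I would fix $\vect f\in\mathcal{H}^{\rm soft}$, $\,\;\widehat{\!\!\vect f}\in\widehat{\mathcal{H}}_0^{\rm stiff}$ and solve $(\mathcal{A}_{\chi,\varepsilon}^{\rm eff}-zI)(\vect u,\widehat{\vect u})^\top=(\vect f,\,\;\widehat{\!\!\vect f})^\top$ component-wise, exactly as in \eqref{blockresolvent2}--\eqref{blockresolvent3}. Using $\widehat{\vect u}=\widehat{\Pi}_0^{\rm stiff}\widehat{\Gamma}_{0,0}^{\rm soft}\vect u$ and $\widehat{\Gamma}_{1,0}^{\rm stiff}\widehat{\Pi}_0^{\rm stiff}=\widehat{\Lambda}_0^{\rm stiff}$ (the $\chi=0$ instance of \eqref{dirichlettoneumanndecomp})—but here with $\widehat{\Lambda}_0^{\rm stiff}$ \emph{replaced by} the homogenised matrix $\widehat{\Lambda}_\chi^{\rm hom}$, which is precisely how \eqref{effoperator} was built—the problem collapses to a spectral Robin problem for the soft component: find $\vect u\in\mathcal{D}(\mathring{\mathcal{A}}_\chi^{\rm soft})$ with $\mathring{\mathcal{A}}_\chi^{\rm soft}\vect u-z\vect u=\vect f$ and $\widehat{\Gamma}_{1,0}^{\rm soft}\vect u+\bigl(\varepsilon^{-2}\widehat{\Lambda}_\chi^{\rm hom}+z(\widehat{\Pi}_0^{\rm stiff})^*\widehat{\Pi}_0^{\rm stiff}\bigr)\widehat{\Gamma}_{0,0}^{\rm soft}\vect u=-(\widehat{\Pi}_0^{\rm stiff})^*\,\;\widehat{\!\!\vect f}$. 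Setting $\beta_{0}=\varepsilon^{-2}\widehat{\Lambda}_\chi^{\rm hom}+z(\widehat{\Pi}_0^{\rm stiff})^*\widehat{\Pi}_0^{\rm stiff}$ and $\beta_1=I$, this is the abstract Robin problem of Theorem~\ref{theoremsolutionformularobin} for the soft Ryzhov triple with $\widehat{\mathcal{E}}_0$, and $\beta_0+\beta_1\widehat{M}_0^{\rm soft}(z)$ is exactly the effective transmission function $\widehat{Q}_{\varepsilon,\chi}^{\rm eff}(z)$ of \eqref{qoperatoreff}. Its bounded invertibility follows as in the proof of Theorem~\ref{theoremappoperator} by examining its imaginary part and applying Corollary~\ref{boundfrombelowappendix} (this is also recorded implicitly in Lemma~\ref{nakk1003}). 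Substituting into formula \eqref{solutionabstractrobin} with $\vect g=-(\widehat{\Pi}_0^{\rm stiff})^*\,\;\widehat{\!\!\vect f}$ and then setting $\widehat{\vect u}=\widehat{\Pi}_0^{\rm stiff}\widehat{\Gamma}_{0,0}^{\rm soft}\vect u$ yields the four blocks of \eqref{effresolventblockmatrix}, using $\widehat{S}_{\chi}^{\rm soft,eff}(z)=\widehat{\Pi}_0^{\rm soft}+z(\mathcal{A}_{0,\chi}^{\rm soft}-zI)^{-1}\widehat{\Pi}_0^{\rm soft}$ and $\widehat{\Gamma}_{0,0}^{\rm soft}\widehat{S}_{\chi}^{\rm soft,eff}(z)=I|_{\widehat{\mathcal{E}}_0}$.

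The only genuinely new point compared with Theorem~\ref{theoremappoperator} is bookkeeping: one must check that $\widehat{\Lambda}_\chi^{\rm hom}$ can legitimately stand in for the operator $\widehat{\Gamma}_{1,0}^{\rm stiff}\widehat{\Pi}_0^{\rm stiff}$ in the abstract machinery, i.e.\ that the triple $(\mathcal{A}_{0,0}^{\rm stiff},\widehat{\Pi}_0^{\rm stiff},\widehat{\Lambda}_\chi^{\rm hom})$ is still a (finite-dimensional) Ryzhov triple. Since $\widehat{\mathcal{H}}_0^{\rm stiff}$, $\widehat{\mathcal{E}}_0$ are three-dimensional and $\widehat{\Pi}_0^{\rm stiff}$ is the bijective lift of constants to constants (Remark~\ref{pi0const}), this reduces to $\widehat{\Lambda}_\chi^{\rm hom}$ being a self-adjoint operator on the finite-dimensional space $\widehat{\mathcal{E}}_0\cong\C^3$, which is immediate from Lemma~\ref{josipnak1}; and the required triple conditions ($\ker\widehat{\Pi}_0^{\rm stiff}=\{0\}$, $\mathcal{D}(\mathcal{A}_{0,0}^{\rm stiff})\cap\mathcal{R}(\widehat{\Pi}_0^{\rm stiff})=\{0\}$) follow from Theorem~\ref{nakk200}. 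Thus I do not expect a real obstacle; the main care needed is in matching the co-normal trace identities \eqref{identity1}--\eqref{identity3} at $\chi=0$ and in correctly tracking the factors $(\bigl(\widehat{\Pi}_0^{\rm stiff}\bigr)^*)^{-1}$ and $\bigl(\widehat{\Pi}_0^{\rm stiff}\bigr)^{-1}$ that appear in \eqref{effoperator}, which is exactly the computation already carried out (for $\mathcal{A}_{\chi,\varepsilon}^{\rm app}$) in the proof of Theorem~\ref{theoremappoperator}; hence the authors' decision to omit it.
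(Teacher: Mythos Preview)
Your proposal is correct and takes essentially the same approach as the paper: the paper explicitly omits the proof, saying only ``Similarly to Theorem \ref{theoremappoperator}, one can establish the following statement,'' and your plan is precisely to rerun that argument with the hybrid triple $(\mathcal{A}_{0,\chi}^{\rm soft},\widehat{\Pi}_0^{\rm soft},\widehat{\Lambda}_0^{\rm soft})$ and the homogenised matrix $\widehat{\Lambda}_\chi^{\rm hom}$ in place of $\widehat{\Lambda}_\chi^{\rm stiff}$. The bookkeeping you flag (finite-dimensionality of $\widehat{\mathcal{E}}_0$, Hermiticity of $\Lambda_\chi^{\rm hom}$ via Lemma~\ref{josipnak1}, invertibility of $\widehat{Q}_{\varepsilon,\chi}^{\rm eff}(z)$ via Corollary~\ref{boundfrombelowappendix}) is exactly what is needed.
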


\begin{remark}
	\label{latter}
	The sesquilinear form $a_{\chi,\varepsilon}^{\rm eff}$ on $\mathcal{H}\times \mathcal{H}$ associated with the operator \eqref{effoperator} is given by
	\begin{equation*}
	\begin{aligned}
		&\mathcal{D}\left(a_{\chi,\varepsilon}^{\rm eff}\right):=\bigl\{ (\vect u,\widehat{\vect u})^\top \in \mathcal{H}^{\rm soft} \oplus \widehat{\mathcal{H}}_0^{\rm stiff}, \quad \vect u \in \mathcal{D}(a_{0,\chi}^{\rm soft})\dot +  \widehat{\Pi}_0^{\rm soft} \widehat{\mathcal{E}}_0, \quad  \widehat{\Gamma}_{0,0}^{\rm stiff}\widehat{\vect u} = \widehat{\Gamma}_{0,0}^{\rm soft} \vect u  \bigr\},\\[0.3em]
		&\begin{aligned}
		a_{\chi,\varepsilon}^{\rm eff}\left( \begin{bmatrix}
			\vect u \\ \widehat{\vect u}
		\end{bmatrix} ,\begin{bmatrix}
			\vect v \\ \widehat{\vect v}
		\end{bmatrix}\right)&:= \int_{Y_{\rm soft}} \A_{\rm soft} \left(\simgrad +{\rm i}X_\chi \right)\vect u : \overline{ \left(\simgrad + {\rm i}X_\chi \right)\vect v} + \frac{1}{\varepsilon^2}\Lambda_{\chi}^{\rm hom}\widehat{\Gamma}_{0,0}^{\rm stiff}\widehat{\vect u} \cdot \widehat{\Gamma}_{0,0}^{\rm stiff}\widehat{\vect v},
		\quad \begin{bmatrix}
		\vect u \\ \widehat{\vect u}
	\end{bmatrix} ,\begin{bmatrix}
		\vect v \\ \widehat{\vect v}
	\end{bmatrix} \in \mathcal{D}\bigl(a_{\chi,\varepsilon}^{\rm eff}\bigr).
	\end{aligned}
\end{aligned}
\end{equation*}
	Recalling Lemma \ref{josipnak1}, one can see that a similar form was obtained in \cite{CherCoop} as an 
	$O(\varepsilon)$-approximation in the case of a scalar equation by using a different technique.
\end{remark}

By a slight abuse of notation, Remark \ref{latter}  allows us to identify the operator ${\mathcal A}^{\rm eff}_{\chi.\varepsilon}$ with an operator acting 
in a subspace of ${\mathcal H}^{\rm soft}\oplus{\mathcal H}^{\rm stiff}\equiv{\mathcal H}.$ We then extend it by zero to the whole $\mathcal H,$ while still keeping the same notation for the extension, hoping that it does not lead to any confusion. 

The following theorem provides 
norm-resolvent asymptotics of order $\varepsilon$ in the form of a ``sandwiched" resolvent of the effective operator $\mathcal{A}_{\chi,\varepsilon}^{\rm eff}$. It is a direct consequence of Theorem \ref{nakk1}, cf. the proof of Theorem \ref{thmpremain1}. 
\begin{theorem} \label{thmpremain2} 
	There exists $C>0$, independent of $z \in K_{\sigma}$ and $\varepsilon$, such that for the resolvent of the transmission problem \eqref{transmissionboundaryproblem} one has
	\begin{equation}
		\label{asymptoticformularesolventepseffective}
		\bigl\lVert \bigl((\mathcal{A}_{\chi,\varepsilon})_{0,I} -zI \bigr)^{-1} - \Theta_0\left(\mathcal{A}_{\chi,\varepsilon}^{\rm eff} - zI \right)^{-1}\Theta_0 \bigr\rVert_{\mathcal{H} \to \mathcal{H}} \leq C \varepsilon\qquad \forall\chi \in Y',
	\end{equation}
	where the operator $\mathcal{A}_{\chi,\varepsilon}^{\rm eff} $ is defined by 
	\eqref{effoperator}, and
	$\Theta_0$ is the orthogonal projection \eqref{nakk1011}.
\end{theorem}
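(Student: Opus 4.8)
\textbf{Proof plan for Theorem \ref{thmpremain2}.}
The plan is to extract the statement directly from Theorem \ref{nakk1}, which already provides the $O(\varepsilon)$ bound between the two block-operator-valued functions $\mathcal{R}_{\chi,\varepsilon}^{\rm app}(z)$ and $\mathcal{R}_{\chi,\varepsilon}^{\rm eff}(z)$ in the $\mathcal{H}\to\mathcal{H}$ norm, uniformly in $\chi\in Y'$ and $z\in K_\sigma$. First I would recall Theorem \ref{thmpremain1}, which gives
$$\bigl\lVert \bigl((\mathcal{A}_{\chi,\varepsilon})_{0,I} -zI \bigr)^{-1} - \Theta_\chi\left(\mathcal{A}_{\chi,\varepsilon}^{\rm app} - zI \right)^{-1}\Theta_\chi \bigr\rVert_{\mathcal{H} \to \mathcal{H}} \leq C \varepsilon^2\qquad \forall\chi \in Y',$$
and in its proof the identification $\mathcal{R}_{\chi,\varepsilon}^{\rm app}(z) = \Theta_\chi\left(\mathcal{A}_{\chi,\varepsilon}^{\rm app} - zI \right)^{-1}\Theta_\chi$. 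The next step is to establish the analogous identification $\mathcal{R}_{\chi,\varepsilon}^{\rm eff}(z) = \Theta_0\left(\mathcal{A}_{\chi,\varepsilon}^{\rm eff} - zI \right)^{-1}\Theta_0$, which is exactly the content of the (stated, proof-omitted) theorem preceding Remark \ref{latter}: the resolvent of $\mathcal{A}_{\chi,\varepsilon}^{\rm eff}$ is given by \eqref{effresolventblockmatrix} relative to the decomposition $\mathcal{H}^{\rm soft} \oplus \widehat{\mathcal{H}}_0^{\rm stiff}$, and the embedding of that decomposition into $\mathcal{H}^{\rm soft}\oplus\mathcal{H}^{\rm stiff}$ via the orthogonal projection $\Theta_0$ defined by \eqref{nakk1011} turns \eqref{effresolventblockmatrix} into the sandwiched form $\Theta_0(\mathcal{A}_{\chi,\varepsilon}^{\rm eff}-zI)^{-1}\Theta_0$.

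With both identifications in hand, the estimate \eqref{asymptoticformularesolventepseffective} follows by the triangle inequality:
$$\bigl\lVert \bigl((\mathcal{A}_{\chi,\varepsilon})_{0,I} -zI \bigr)^{-1} - \Theta_0\left(\mathcal{A}_{\chi,\varepsilon}^{\rm eff} - zI \right)^{-1}\Theta_0 \bigr\rVert_{\mathcal{H} \to \mathcal{H}}$$
is bounded by the $O(\varepsilon^2)$ term from Theorem \ref{thmpremain1} (after rewriting $\Theta_\chi(\mathcal{A}_{\chi,\varepsilon}^{\rm app}-zI)^{-1}\Theta_\chi=\mathcal{R}_{\chi,\varepsilon}^{\rm app}(z)$) plus the $O(\varepsilon)$ term $\lVert\mathcal{R}_{\chi,\varepsilon}^{\rm app}(z)-\mathcal{R}_{\chi,\varepsilon}^{\rm eff}(z)\rVert_{\mathcal{H}\to\mathcal{H}}$ from Theorem \ref{nakk1}, and the sum is $O(\varepsilon)$ with a constant depending only on $\sigma$ and $\mathrm{diam}(K_\sigma)$. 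Since all three intermediate constants ($C$ in Theorem \ref{thmpremain1}, $C$ in Theorem \ref{nakk1}) are uniform in $\chi\in Y'$ and $z\in K_\sigma$, so is the final one.

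The only genuine content beyond bookkeeping is verifying the self-adjointness of $\mathcal{A}_{\chi,\varepsilon}^{\rm eff}$ and the resolvent formula \eqref{effresolventblockmatrix} — i.e. the theorem stated just before Remark \ref{latter} — which mirrors the proof of Theorem \ref{theoremappoperator}: one checks symmetry of $\mathcal{A}_{\chi,\varepsilon}^{\rm eff}$ using Green's formula \eqref{Greenformula} for the soft-component triple and the self-adjointness of the constant matrix $\Lambda_\chi^{\rm hom}$ (Lemma \ref{josipnak1}), then solves the component-wise resolvent system exactly as in \eqref{blockresolvent2}--\eqref{blockresolvent3} but with $\widehat{Q}^{\rm app}_{\chi,\varepsilon}(z)$ replaced by the effective transmission function $\widehat{Q}^{\rm eff}_{\chi,\varepsilon}(z)$ of \eqref{qoperatoreff}, invoking Theorem \ref{theoremsolutionformularobin} with $\beta_{0,\chi,\varepsilon}(z)=\varepsilon^{-2}\widehat{\Lambda}_\chi^{\rm hom}+z(\widehat{\Pi}_0^{\rm stiff})^*\widehat{\Pi}_0^{\rm stiff}$ and $\beta_1=I$, together with the bounded invertibility of $\widehat{Q}^{\rm eff}_{\chi,\varepsilon}(z)$ guaranteed by Lemma \ref{nakk1003} and Corollary \ref{boundfrombelowappendix}. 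The main obstacle, if any, is purely notational: keeping the block decomposition $\mathcal{H}^{\rm soft}\oplus\widehat{\mathcal{H}}_0^{\rm stiff}$ versus $\mathcal{H}^{\rm soft}\oplus\mathcal{H}^{\rm stiff}$ consistent when passing between $\mathcal{R}_{\chi,\varepsilon}^{\rm eff}(z)$ and the sandwiched resolvent $\Theta_0(\mathcal{A}_{\chi,\varepsilon}^{\rm eff}-zI)^{-1}\Theta_0$; this is handled by the identities \eqref{pi_projections} (with $\chi=0$) exactly as in the proof of Theorem \ref{thmpremain1}.
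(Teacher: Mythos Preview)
Your proposal is correct and follows essentially the same route as the paper: the paper states that Theorem \ref{thmpremain2} is a direct consequence of Theorem \ref{nakk1}, with the identification $\mathcal{R}_{\chi,\varepsilon}^{\rm eff}(z)=\Theta_0(\mathcal{A}_{\chi,\varepsilon}^{\rm eff}-zI)^{-1}\Theta_0$ supplied by the (proof-omitted) theorem preceding Remark \ref{latter}, and the passage from $((\mathcal{A}_{\chi,\varepsilon})_{0,I}-zI)^{-1}$ to $\mathcal{R}_{\chi,\varepsilon}^{\rm app}(z)$ handled exactly as in the proof of Theorem \ref{thmpremain1}. Your elaboration of the omitted self-adjointness/resolvent-formula argument for $\mathcal{A}_{\chi,\varepsilon}^{\rm eff}$, mirroring Theorem \ref{theoremappoperator}, is also what the paper intends.
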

\begin{proof}[Proof of Theorem \ref{thmamin2} (a)] 
	This is a direct consequence of Theorem \ref{thmpremain2}, based on the fact that the (scaled) Gelfand transform is an isometry, by setting
	$\Theta^{\rm eff}:=\mathcal{G}_{\eps}^{-1} \Theta_0 \mathcal{G}_{\eps},$ 
	$\mathcal{A}^{\rm eff}_{\eps}:=\mathcal{G}_{\eps}^{-1} \mathcal{A}_{\chi,\eps}^{\rm eff} \mathcal{G}_{\eps}.$
\end{proof}


\begin{remark} \label{comparison} 
	In \cite{CherErKis}, a version of Theorem \ref{thmpremain2} is proved by expanding the least eigenvalue and the corresponding eigenfunction of the operator $\Lambda_{\chi}^{\rm stiff}$ with respect to the quasimomentum $\chi$.  As explained in the introduction to  Section \ref{AsymDtN}, this is not possible  when dealing with systems.  Thus we expand  the resolvent of an appropriately scaled operator $\Lambda_{\chi}^{\rm stiff},$ which as we have shown, suffices to prove Theorem \ref{thmpremain2}. We also improve the error estimate $O(\varepsilon^{2/3})$ obtained in \cite{CherErKis} to $O(\varepsilon).$
\end{remark}  
\begin{remark} \label{josipnak10} 
	For every $\varepsilon>0$ we define the space $\mathcal{S}_{\varepsilon}^{\rm stiff} \subset H^1(\mathbb{R}^3;\mathbb{R}^3)$ as the space of functions whose scaled Gelfand transform is constant for every $\chi \in Y'$. It is easy to see that this space consists of functions $\vect u$ such that $\mathcal{F}(\vect{u})(\xi)=0$ when $|\xi|_\infty>(2\varepsilon)^{-1}$, where $|\xi|_\infty=\max\{|\xi_1|,|\xi_2|,|\xi_3|\}$ (cf. \eqref{inversegelfand}). Here  $\mathcal{F}(\cdot)$ stands for the Fourier transform and $\xi \in \C^3$ is the Fourier variable.  We also introduce the space 
	 $\mathcal{S}_{\varepsilon}^{\rm soft}:=H^1(\mathbb{R}^3;\mathbb{R}^3) \cap L_{\varepsilon}^{\rm soft}$.
	Define a bilinear form $a^{\rm eff}_{\eps}$ by
	\begin{equation*}
	\begin{aligned}
		&\mathcal{D}\left(a^{\rm eff}_{\eps}\right):=\left\{ \widehat{\vect u}+\vect{u}: (\widehat{\vect u},\vect{u}) \in \mathcal{S}_{\varepsilon}^{\rm stiff} \times \mathcal{S}_{\varepsilon}^{\rm soft}  \right\},\\[0.4em]
		&\begin{aligned}
		a^{\rm eff}_{\eps}\left(
		\vect u+ \widehat{\vect u},
		\vect v + \widehat{\vect v}
		\right):= \varepsilon^2\int_{\Omega^{\eps}_{\rm soft}} \A_{\rm soft} \simgrad  (\vect u+\widehat{\vect u}): \simgrad (\vect v+\widehat{\vect v})+ \int_{\Omega}\mathbb{A}_{\rm macro}&\simgrad  \widehat{\vect u} : \simgrad \widehat{\vect v,}\\[0.1em] 
		&(\vect u+\widehat{\vect u}),  (\vect v+\widehat{\vect v}) \in \mathcal{D}\left(a^{\rm eff}_{\eps}\right).
		\end{aligned}
	\end{aligned}
\end{equation*}
	It is easily seen that the scaled Gelfand transform of the form $a^{\rm eff}_{\eps}$ equals $a^{\rm eff}_{\chi,\eps}$. By an appropriate modification of the definition of $\mathcal{S}_{\varepsilon}^{\rm stiff}$ we can also treat the form from Remark \ref{nakk1020} that defines the operator $\mathcal{A}_{\chi,\varepsilon}^{\rm app}$. 
\end{remark}

\section{Stiff component analysis} \label{sectionopeffstiff} 
In this section we study implications  of the estimates of the previous section. Our goal here is to prove Theorem \ref{thmamin2}\,(b). 
We are interested in the properties of the effective operator
\eqref{effoperator} when restricted to the stiff component.
A representation formula for this operator will be obtained that will bring to focus some known features of high-contrast homogenisation. To this end, we define the following  operators that unitarily identify the spaces $\widehat{\mathcal{E}}_0$ and $\widehat{\mathcal{H}}_0^{\rm stiff}$ (spanned by constant functions) with $\C^3$:
\begin{equation*}
   \begin{aligned}
        &\iota_{\Gamma}: \widehat{\mathcal{E}}_0 \to \C^3, \quad \iota_{\Gamma} \vect c = |\Gamma|^{1/2} \vect c, \quad \vect c \in  \widehat{\mathcal{E}}_0,\qquad\qquad 
        \iota_{\rm stiff}: \widehat{\mathcal{H}}_0^{\rm stiff} \to \C^3, \quad \iota_{\rm stiff} \vect c = |Y_{\rm stiff}|^{1/2} \vect c, \quad \vect c \in  \widehat{\mathcal{H}}_0^{\rm stiff}.
    \end{aligned}
\end{equation*}
Notice that (cf. Remark \ref{ante31}) 
\begin{equation*}
    \widehat{\Pi}_0^{\rm stiff} = |Y_{\rm stiff}|^{1/2}|\Gamma|^{-1/2}\iota_{\rm stiff}^* \iota_{\Gamma}.
\end{equation*}
With these operators at hand, we obtain the following representation formula (recall Lemma \ref{josipnak1} and \eqref{msoftzeroformula}):
\begin{equation*}
    \begin{aligned}
         P_{\widehat{\mathcal{H}}_0^{\rm stiff}}\left(\mathcal{A}_{\chi,\varepsilon}^{\rm eff} - zI \right)^{-1}|_{\widehat{\mathcal{H}}_0^{\rm stiff}} &= - \widehat{\Pi}_0^{\rm stiff} 
			{\widehat{Q}_{\varepsilon,\chi}^{\rm eff}(z)}^{-1} \bigl(\widehat{\Pi}_0^{\rm stiff}\bigr)^* = - \widehat{\Pi}_0^{\rm stiff} \left(\varepsilon^{-2} \widehat{\Lambda}_\chi^{\rm hom}+ z \bigl(\widehat{\Pi}_0^{\rm stiff}\bigr)^* \widehat{\Pi}_0^{\rm stiff} + \widehat{M}_{0}^{\rm soft}(z) \right)^{-1} \bigl(\widehat{\Pi}_0^{\rm stiff}\bigr)^* \\[0.4em]
   & = - |Y_{\rm stiff}||\Gamma|^{-1}\iota_{\rm stiff}^* \iota_{\Gamma} \left(\varepsilon^{-2} \widehat{\Lambda}_\chi^{\rm hom}+ z |Y_{\rm stiff}||\Gamma|^{-1} I_{\widehat{\mathcal{E}}_0} + \widehat{M}_{0}^{\rm soft}(z) \right)^{-1} \iota_{\Gamma}^* \iota_{\rm stiff} \\[0.4em]
   &= - |Y_{\rm stiff}|\iota_{\rm stiff}^*  \left(\varepsilon^{-2} |\Gamma|\iota_{\Gamma}\widehat{\Lambda}_\chi^{\rm hom}\iota_{\Gamma}^*+ z |Y_{\rm stiff}| I_{\C^3} + |\Gamma|\iota_{\Gamma}\widehat{M}_{0}^{\rm soft}(z)\iota_{\Gamma}^* \right)^{-1}  \iota_{\rm stiff} \\[0.4em]
   &= |Y_{\rm stiff}|\iota_{\rm stiff}^*  \left(\varepsilon^{-2} \left({\rm i}X_\chi \right)^*\A_{\rm macro}{\rm i}X_\chi - \mathcal{B}(z) \right)^{-1}  \iota_{\rm stiff}, 
    \end{aligned}
\end{equation*}
where the matrix-valued function $\mathcal{B}(z)$ is defined by 
\begin{equation*}
	\mathcal{B} (z):= z |Y_{\rm stiff}| I_{\C^3} + |\Gamma|\iota_{\Gamma}\widehat{M}_{0}^{\rm soft}(z)\iota_{\Gamma}^*.
\end{equation*}
In what follows, we show (see \eqref{silly_label}, \eqref{Keff_final}) that, owing to \eqref{msoftzeroformula}, a natural matrix representation of $\mathcal{B}$ is given by \eqref{ante1}.  
\begin{remark}
	Regarding the estimates from above and below for the operator $\mathcal{B}(z)$, note that for $\vect c \in \mathbb{C}^3$ one has
	\begin{equation}
		\label{marin1} 
			\bigl|\left\langle \Im(\mathcal{B}(z)) \vect c, \vect c\right\rangle_{\mathbb{C}^3}\bigr|=\bigl|\Im(z)\bigr|\Bigl\lvert\left|Y_{\rm stiff}|\langle  \vect c, \vect c\right\rangle_{\mathbb{C}^3} + |\Gamma|\bigl\langle \widehat{S}_0^{\rm soft}(\overline{z}) \iota_{\Gamma}^* \vect c, \widehat{S}_0^{\rm soft}(\overline{z}) \iota_{\Gamma}^*\vect c\bigr\rangle_{\mathcal{H}^{\rm soft}}\Bigr\rvert\geq C |\vect c |^2,
	\end{equation}
	and thus, by Corollary, \ref{boundfrombelowappendix} one has 
		$\lVert\mathcal{B}(z)^{-1}\rVert_{\C^3 \to \C^3} \leq C_1,$
	where $C_1>0$ depends on the set $K_\sigma$. Also, clearly
	\begin{equation}
		\label{estimateonB2}
		\bigl\lVert \mathcal{B}(z)\bigr\rVert_{\C^3 \to \C^3} \leq C_2,
	\end{equation}
	where $C_2>0$ depends only on the $\max_{z \in K_\sigma} |z|$.

The above is an explicit proof of the property for a Herglotz matrix function to be bounded together with its inverse away from the real line. 
\end{remark}

\begin{remark}
	We will keep the same notation $\mathcal{B}(z)$ for the operator of (pointwise) multiplication by the said matrix.
\end{remark}

In order to pass to the real domain, what remains is to apply the inverse Gelfand transform. Before doing so, we introduce a smoothing operator: 
$\Xi_\varepsilon :L^2(\R^3;\C^3) \to L^2(\R^3;\C^3)$ defined by\footnote{The operator $\mathcal{G}_\varepsilon^{-1}$ is here applied to a function depending only on $\chi$ i.e. for fixed $\chi$ constant in $y$.}
\begin{equation*}
	{\Xi}_\varepsilon \vect u := \mathcal{G}_\varepsilon^{-1}\int_{Y}  (\mathcal{G}_\varepsilon \vect u)(y,\cdot)dy.    
\end{equation*} 
Next, we note that the projection operator $P_{\rm stiff}$ is simply a multiplication with an indicator function associated with $Y_{\rm stiff}$, namely
	$P_{\rm stiff} \vect u = \mathbbm{1}_{Y_{\rm stiff}}(y) \vect u, \quad \vect u \in \mathcal{H}.$
Similarly, for the operator $P_\varepsilon^{\rm stiff}$ i.e. the orthogonal projector from $L^2(\R^3;\C^3) $ onto $ L_\varepsilon^{\rm stiff}$, which is defined by \eqref{L2stiffepsilon}, we have
\begin{equation*}
	P_\varepsilon^{\rm stiff} \vect u =  \mathbbm{1}_{\Omega_{\rm stiff}^\varepsilon}(x) \vect u, \quad \vect u \in L^2(\R^3;\C^3).
\end{equation*}
Also, for $\vect u \in \mathcal{H}_{\rm stiff}$, we have
\begin{equation*}
	P_{\widehat{\mathcal{H}}_0^{\rm stiff}} \vect u ={\mathbbm 1}_{Y_{\rm stiff}}(y)\frac{1}{|Y_{\rm stiff}|} \int_{Y_{\rm stiff}} \vect u(y) dy.
\end{equation*}
Note that \footnote{Again, the inverse Gelfand transform is applied to a function depending only on $\chi$ i.e. for fixed $\chi$ constant in $y$.}
\begin{equation}
	\label{xiformula}
	\begin{aligned}
		\Xi_\varepsilon P_{\varepsilon}^{\rm stiff} \vect u &=  \Xi_\varepsilon \left(\mathbbm{1}_{\Omega_\varepsilon^{\rm stiff}} \vect u \right) = \mathcal{G}_\varepsilon^{-1} \left(\int_{Y}  (\mathbbm{1}_{Y_{\rm stiff}}\mathcal{G}_\varepsilon \vect u)(y,\cdot) dy \right) \\[0.3em]
		& = |Y_{\rm stiff}|\mathcal{G}_\varepsilon^{-1} \left(\frac{1}{|Y_{\rm stiff}|}
		\int_{Y_{\rm stiff}}(\mathcal{G}_\varepsilon \vect u)(y,\cdot) dy \right) = |Y_{\rm stiff}|^{1/2}\mathcal{G}_\varepsilon^{-1} \bigl(\iota_{\rm stiff}P_{\widehat{\mathcal{H}}_0^{\rm stiff}} \mathcal{G}_\varepsilon \vect u \bigr).
	\end{aligned}
\end{equation}
We have the following lemma. 
\begin{lemma} 
	The following formula holds:
	\begin{equation}
		\label{gelfandpullbackformula}
		\mathcal{G}_{\eps }^{-1}\left(\int_{Y'}^{\oplus}\left(\frac{1}{\varepsilon^2}\left({\rm i}X_\chi \right)^*\A_{\rm macro}{\rm i}X_\chi  - \mathcal{B}(z) \right)^{-1} 
	 \iota_{\rm stiff}P_{\widehat{\mathcal{H}}_0^{\rm stiff}}\,d \chi\right)\mathcal{G}_\varepsilon =\frac{1}{\sqrt{|Y_{\rm stiff}|}} \bigl(\mathcal{A}_{\rm macro} - \mathcal{B}(z) \bigr)^{-1}\Xi_\varepsilon P_{\eps}^{\rm stiff},
	\end{equation} 
	where the operator $\mathcal{A}_{\rm macro}$ is defined by the form \eqref{macrooperatordef_form}.
\end{lemma}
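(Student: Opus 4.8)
The plan is to reduce the identity to a direct‑integral statement about the constant‑coefficient operator $\mathcal{A}_{\rm macro}$ and then exploit that the (three‑dimensional) subspace of functions constant in $y$ reduces every fibre operator. First I would rewrite the right‑hand side: by \eqref{xiformula} one has $\Xi_\varepsilon P_\varepsilon^{\rm stiff}=|Y_{\rm stiff}|^{1/2}\mathcal{G}_\varepsilon^{-1}\bigl(\iota_{\rm stiff}P_{\widehat{\mathcal{H}}_0^{\rm stiff}}\mathcal{G}_\varepsilon\bigr)$, so the right‑hand side of \eqref{gelfandpullbackformula} equals $\bigl(\mathcal{A}_{\rm macro}-\mathcal{B}(z)\bigr)^{-1}\mathcal{G}_\varepsilon^{-1}\iota_{\rm stiff}P_{\widehat{\mathcal{H}}_0^{\rm stiff}}\mathcal{G}_\varepsilon$, the factors $|Y_{\rm stiff}|^{\pm1/2}$ cancelling.

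Second, I would record the direct‑integral decomposition of $\mathcal{A}_{\rm macro}-\mathcal{B}(z)$ under the scaled Gelfand transform, in exact analogy with \eqref{vonneumannformula} for the constant‑coefficient operator defined by the form \eqref{macrooperatordef_form}:
\begin{equation*}
\bigl(\mathcal{A}_{\rm macro}-\mathcal{B}(z)\bigr)^{-1}=\mathcal{G}_\varepsilon^{-1}\Bigl(\int_{Y'}^\oplus\bigl(\varepsilon^{-2}\mathcal{A}^{\rm macro}_\chi-\mathcal{B}(z)\bigr)^{-1}d\chi\Bigr)\mathcal{G}_\varepsilon,\qquad \mathcal{A}^{\rm macro}_\chi:=(\simgrad+{\rm i}X_\chi)^*\A_{\rm macro}(\simgrad+{\rm i}X_\chi)\ \text{ on }\ H^1_\#(Y;\C^3).
\end{equation*}
Here two routine points must be checked: that multiplication by the constant matrix $\mathcal{B}(z)$ commutes with $\mathcal{G}_\varepsilon$ and decomposes into the constant fibre $\mathcal{B}(z)$, which is immediate; and that each fibre operator $\varepsilon^{-2}\mathcal{A}^{\rm macro}_\chi-\mathcal{B}(z)$ is boundedly invertible uniformly in $\chi$, which follows since $\varepsilon^{-2}\mathcal{A}^{\rm macro}_\chi\ge0$ is self‑adjoint while $\Im\mathcal{B}(z)$ is sign‑definite (cf. \eqref{marin1}); in particular $\mathcal{B}(z)\notin\sigma(\mathcal{A}_{\rm macro})$, so the left‑hand operator is well defined.

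Third — the crux — I would show that the subspace $\mathcal{C}\subset L^2(Y;\C^3)$ of functions constant in $y$, identified with $\C^3$, reduces $\mathcal{A}^{\rm macro}_\chi$, with restriction equal to multiplication by the $3\times3$ matrix $({\rm i}X_\chi)^*\A_{\rm macro}{\rm i}X_\chi$. Indeed $\simgrad$ annihilates $\mathcal{C}$, ${\rm i}X_\chi$ sends a constant vector to a constant symmetric matrix field, $\A_{\rm macro}$ preserves constant matrix fields, and $(\simgrad+{\rm i}X_\chi)^*=-\div+({\rm i}X_\chi)^*$ sends a constant matrix field to a constant vector (the $\div$‑term vanishing); composing gives $\mathcal{A}^{\rm macro}_\chi\big|_{\mathcal{C}}=({\rm i}X_\chi)^*\A_{\rm macro}{\rm i}X_\chi$ as in \eqref{macrodefinitionequation}. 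Hence the fibre resolvent preserves $\mathcal{C}$ and on it coincides with $\bigl(\varepsilon^{-2}({\rm i}X_\chi)^*\A_{\rm macro}{\rm i}X_\chi-\mathcal{B}(z)\bigr)^{-1}$. Since the range of $\iota_{\rm stiff}P_{\widehat{\mathcal{H}}_0^{\rm stiff}}$ lies, after the identifications $\widehat{\mathcal{H}}_0^{\rm stiff}\cong\C^3\cong\mathcal{C}$ fixed by the $y$‑constancy convention of the footnotes, inside $\mathcal{C}$, I would compose the decomposition of step two with $\mathcal{G}_\varepsilon^{-1}\iota_{\rm stiff}P_{\widehat{\mathcal{H}}_0^{\rm stiff}}\mathcal{G}_\varepsilon$, cancel the intermediate $\mathcal{G}_\varepsilon\mathcal{G}_\varepsilon^{-1}$, and replace each fibre resolvent by its restriction to $\mathcal{C}$; this produces precisely the left‑hand side of \eqref{gelfandpullbackformula}.

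I expect the only real obstacle to be bookkeeping: keeping the chain of identifications $\widehat{\mathcal{H}}_0^{\rm stiff}\cong\C^3\cong\mathcal{C}$ and the associated volume normalisations ($|Y_{\rm stiff}|$, $|\Gamma|$, and $|Y|=1$) consistent with \eqref{xiformula} and Remark \ref{ante31}, so that no spurious constant survives; the functional‑analytic content (the direct‑integral decomposition and the reducing‑subspace argument) is standard once these conventions are pinned down.
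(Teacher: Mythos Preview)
Your proposal is correct and follows essentially the same route as the paper: both use \eqref{xiformula} to absorb $\Xi_\varepsilon P_\varepsilon^{\rm stiff}$, invoke the direct-integral decomposition of $\mathcal{A}_{\rm macro}$ via $\mathcal{G}_\varepsilon$, and then reduce to showing that on inputs constant in $y$ the fibre resolvent $\bigl(\varepsilon^{-2}\mathcal{A}^{\rm macro}_\chi-\mathcal{B}(z)\bigr)^{-1}$ agrees with the $3\times3$ resolvent $\bigl(\varepsilon^{-2}({\rm i}X_\chi)^*\A_{\rm macro}{\rm i}X_\chi-\mathcal{B}(z)\bigr)^{-1}$. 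The only cosmetic difference is that the paper verifies this last point by checking directly that the constant solving the $3\times3$ system also solves the full weak formulation on $H^1_\#(Y;\C^3)$, whereas you phrase it as a reducing-subspace argument; your version is fine provided you note (as you implicitly do) that $\mathcal{B}(z)$, being constant in $y$, also preserves $\mathcal{C}$, and that self-adjointness of $\mathcal{A}^{\rm macro}_\chi$ together with $\mathcal{C}\subset\mathcal{D}(\mathcal{A}^{\rm macro}_\chi)$ and $\mathcal{A}^{\rm macro}_\chi\mathcal{C}\subset\mathcal{C}$ genuinely yields reduction.
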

\begin{proof}
	To see this, we consider the operator $\mathcal{A}_{\chi,\rm macro}$ on $\mathcal{H}$ with the sesquilinear form 
	\begin{equation*}
		a_{\chi,\rm macro}(\vect u,\vect v) = \int_Y \mathbb{A}_{\rm macro} \left(\simgrad + {\rm i}X_\chi \right)\vect u : \overline{\left(\simgrad + {\rm i}X_\chi \right)\vect v}, \quad \vect u, \vect v \in H^1_\#(Y;\C^3). 
	\end{equation*}
	By invoking the properties of the Gelfand transform \eqref{gelfandvsderivatives}, it is clear that 
	\begin{equation*}
		\mathcal{G}_\varepsilon^{-1} \left( \int_{Y'}^{\oplus} \frac{1}{\varepsilon^2}\mathcal{A}_{\chi,\rm macro} \, d\chi \right) \mathcal{G}_\varepsilon = \mathcal{A}_{\rm macro}.
	\end{equation*}
	By virtue of \eqref{xiformula}, it remains to show that 
	\begin{equation}
		\label{resolventequivalentformula}
		\bigl(\varepsilon^{-2}\mathcal{A}_{\chi,\rm macro} -\mathcal{B}(z) \bigr)^{-1} \iota_{\rm stiff}P_{\widehat{\mathcal{H}}_0^{\rm stiff}}= \bigl(\varepsilon^{-2}\left({\rm i}X_\chi \right)^*\A_{\rm macro}{\rm i}X_\chi  - \mathcal{B}(z) \bigr)^{-1} 
		 \iota_{\rm stiff}P_{\widehat{\mathcal{H}}_0^{\rm stiff}}.
	\end{equation}
	First conclusion is that for $z \in K_\sigma$, $\vect u  \in \mathcal{H}$, we have 
	\begin{equation*}
		\Im \bigl(\varepsilon^{-2}\left({\rm i}X_\chi \right)^*\A_{\rm macro}{\rm i}X_\chi  - \mathcal{B}(z) \bigr)  =  \Im \bigl( \varepsilon^{-2}\mathcal{A}_{\chi,\rm macro} -\mathcal{B}(z) \bigr) = -\Im\bigl(\mathcal{B}(z)\bigr),
	\end{equation*}
	so the operators are invertible by taking into account Corollary \ref{boundfrombelowappendix} and the estimates \eqref{marin1}.
	In order to show \eqref{resolventequivalentformula}, we take $\vect f \in \mathcal{H}$ and consider the unique solution $\vect u \in \C^3$ 
	to the resolvent problem
	\begin{equation*}
		\varepsilon^{-2}\left({\rm i}X_\chi \right)^*\A_{\rm macro}{\rm i} X_\chi \vect u  - \mathcal{B}(z) \vect u =  \iota_{\rm stiff}P_{\widehat{\mathcal{H}}_0^{\rm stiff}}\vect f.
	\end{equation*}
	Multiplying the above equation with arbitrary $\vect v \in \C^3$, and integrating over $Y$ one obtains 
	\begin{equation*}
		\frac{1}{\varepsilon^2}\int_Y \A_{\rm macro}{\rm i}X_\chi \vect u : \overline{{\rm i} X_\chi \vect v} - \int_Y \mathcal{B}(z) \vect u \cdot \overline{\vect v} = \int_Y \iota_{\rm stiff}P_{\widehat{\mathcal{H}}_0^{\rm stiff}} \vect f \cdot \overline{\vect v}.
	\end{equation*} 
	Furthermore, it is checked that, as an element of $H^1_\#(Y;\C^3),$ the constant function $\vect u$ solves the problem
	\begin{equation*}
		\frac{1}{\varepsilon^2}\int_Y \A_{\rm macro}\left( \simgrad + {\rm i} X_\chi \right) \vect u : \overline{ \left(\simgrad + {\rm i}X_\chi \right) \vect v} - \int_Y \mathcal{B}(z) \vect u \cdot \overline{\vect v} = \int_Y \iota_{\rm stiff}P_{\widehat{\mathcal{H}}_0^{\rm stiff}}\vect f \cdot \overline{\vect v} \qquad \forall \vect v \in H^1_\#(Y;\C^3),
	\end{equation*} 
	which is unique. The formula \eqref{gelfandpullbackformula} now follows from \eqref{resolventequivalentformula}.
\end{proof}
The following lemma allows us to drop the smoothing operator $\Xi_\varepsilon$ from the resolvent asymptotics while not making the error of higher order then $\varepsilon^2$.
\begin{lemma}
	\label{dropxi}
	Let $z \in K_\sigma$. There exists a constant $C>0$ such that
	\begin{eqnarray} \label{prva} 
		\bigl\lVert \bigl(\mathcal{A}_{\rm macro} - \mathcal{B}(z) \bigr)^{-1} \left(I- \Xi_{\varepsilon} \right) \bigr\rVert_{L^2(\R^3;\C^3) \to L^2(\R^3;\C^3) } &\leq&  C \varepsilon^2,
	\end{eqnarray}
	where $\mathcal{A}_{\rm macro}$ is a differential operator of linear elasticity with constant coefficients defined by the form \eqref{macrooperatordef_form}.    
\end{lemma}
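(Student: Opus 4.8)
The key observation is that the operator $I-\Xi_\varepsilon$ acts, after applying the scaled Gelfand transform, fibrewise as $\chi\mapsto(I - \langle\,\cdot\,\rangle_Y)$, i.e.\ it projects onto the orthogonal complement in $L^2(Y;\C^3)$ of the constant functions. Meanwhile, the resolvent $(\mathcal{A}_{\rm macro}-\mathcal{B}(z))^{-1}$ pulls back under $\mathcal{G}_\varepsilon$ to the direct integral $\int_{Y'}^\oplus\bigl(\varepsilon^{-2}\mathcal{A}_{\chi,\rm macro}-\mathcal{B}(z)\bigr)^{-1}d\chi$, as established in the proof of the preceding lemma (with $\mathcal{A}_{\chi,\rm macro}$ the constant-coefficient elasticity operator on $H^1_\#(Y;\C^3)$ with form $a_{\chi,\rm macro}$). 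Since the Gelfand transform is unitary, \eqref{prva} is equivalent to the uniform-in-$\chi$ fibre estimate
\begin{equation*}
	\bigl\lVert \bigl(\varepsilon^{-2}\mathcal{A}_{\chi,\rm macro}-\mathcal{B}(z)\bigr)^{-1}\bigl(I - \langle\,\cdot\,\rangle_Y\bigr)\bigr\rVert_{L^2(Y;\C^3)\to L^2(Y;\C^3)}\le C\varepsilon^2\qquad\forall\chi\in Y'.
\end{equation*}
So the whole matter reduces to a single self-adjoint resolvent bound on the unit cell.

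\textbf{Main steps.} First I would record that $\mathcal{A}_{\chi,\rm macro}$ is self-adjoint and non-negative, with $\mathcal{A}_{\chi,\rm macro}\ge0$, and—crucially—that on the orthogonal complement of the constants it is bounded below by $c\,\varepsilon^{-2}$ uniformly after the $\varepsilon^{-2}$ scaling. More precisely: for $\vect u\in H^1_\#(Y;\C^3)$ with $\langle\vect u\rangle_Y=0$, the coercivity of $\A_{\rm macro}$ (Lemma \ref{prop_lemma}), the bound \eqref{Xchi_bounds} on $X_\chi$, and Korn's/Poincar\'e's inequality on the torus give
$a_{\chi,\rm macro}(\vect u,\vect u)\ge c\lVert(\simgrad+{\rm i}X_\chi)\vect u\rVert_{L^2}^2\ge c\lVert\vect u\rVert_{H^1}^2\ge c\lVert\vect u\rVert_{L^2}^2$, with $c$ independent of $\chi$ (the zero-mean constraint removes the rigid-body/constant kernel). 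Hence on $\{\langle\cdot\rangle_Y=0\}$ one has $\varepsilon^{-2}\mathcal{A}_{\chi,\rm macro}\ge c\varepsilon^{-2}$. Second, since $\mathcal{B}(z)$ is bounded by \eqref{estimateonB2} (say $\lVert\mathcal{B}(z)\rVert\le C_2$), for $\varepsilon$ small enough $\varepsilon^{-2}\mathcal{A}_{\chi,\rm macro}-\mathcal{B}(z)$ restricted to the complement of constants is invertible with norm of the inverse $\le 2\varepsilon^2/c$; this is where $I-\langle\cdot\rangle_Y$ is essential, because on the constants $\varepsilon^{-2}\mathcal{A}_{\chi,\rm macro}$ vanishes when $\chi=0$ and the bound fails. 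Third, I would check that $I-\langle\cdot\rangle_Y$ commutes with $\varepsilon^{-2}\mathcal{A}_{\chi,\rm macro}-\mathcal{B}(z)$ on the complement of the constants—$\mathcal{A}_{\chi,\rm macro}$ leaves this subspace invariant because its form does, and $\mathcal{B}(z)$ acts by a constant matrix hence preserves both the constants and their complement—so the resolvent maps the range of $I-\langle\cdot\rangle_Y$ into itself, and the composed operator equals the resolvent on that invariant subspace, whose norm is $\le C\varepsilon^2$ by the second step. Putting these together and noting $\lVert I-\langle\cdot\rangle_Y\rVert\le 1$ yields the fibre bound, hence \eqref{prva} after applying $\mathcal{G}_\varepsilon^{-1}$.

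\textbf{The expected obstacle.} The only genuine subtlety is the uniform-in-$\chi$ lower bound $a_{\chi,\rm macro}(\vect u,\vect u)\ge c\lVert\vect u\rVert_{L^2}^2$ on zero-mean functions: for small $\chi$ the term ${\rm i}X_\chi\vect u$ is small, so one must use the elastic strain $\simgrad\vect u$ to control $\vect u$, and on the torus $\ker(\simgrad)$ consists of the constants (there are no nontrivial periodic rigid-body rotations), which are precisely excluded by $\langle\vect u\rangle_Y=0$. Thus one needs the periodic Korn inequality $\lVert\nabla\vect u\rVert_{L^2}\le C\lVert\simgrad\vect u\rVert_{L^2}$ for zero-mean $\vect u$ together with Poincar\'e; both are standard, and a clean reference is the Korn-type estimate already invoked in the paper (Proposition \ref{josipapp1} and its relatives). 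One must also take care that the constant $c$ does not degenerate as $\chi$ ranges over the compact $Y'$: for $\chi$ bounded away from zero one can instead simply use $\lVert X_\chi\vect u\rVert\ge C_1|\chi|\lVert\vect u\rVert$ from \eqref{Xchi_bounds}, and a compactness/continuity argument patches the two regimes, giving a single $\chi$-independent $c>0$. Everything else is routine functional calculus for self-adjoint operators.
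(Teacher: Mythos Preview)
Your argument is correct, but the paper takes a more direct route. Instead of the scaled Gelfand transform, the paper uses the ordinary Fourier transform on $\R^3$: since $\mathcal{A}_{\rm macro}$ has constant coefficients, $(\mathcal{A}_{\rm macro}-\mathcal{B}(z))^{-1}$ becomes multiplication by $\bigl((iX_\xi)^*\A_{\rm macro}\,iX_\xi-\mathcal{B}(z)\bigr)^{-1}$, and $\Xi_\varepsilon$ is identified as the sharp frequency cutoff $\mathcal{F}(\Xi_\varepsilon\vect f)(\xi)=\mathbbm{1}_{[-1/(2\varepsilon),1/(2\varepsilon)]^3}(\xi)\,\mathcal{F}\vect f(\xi)$. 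Then $I-\Xi_\varepsilon$ is supported on $|\xi|_\infty>1/(2\varepsilon)$, where the symbol $(iX_\xi)^*\A_{\rm macro}\,iX_\xi$ is $\gtrsim|\xi|^2\gtrsim\varepsilon^{-2}$ and dominates $\|\mathcal{B}(z)\|\le C_2$, giving the $C\varepsilon^2$ bound in one line. Your Gelfand decomposition followed by Fourier series on the torus is ultimately the same mechanism (the composite of $\mathcal{G}_\varepsilon$ and the torus Fourier expansion \emph{is} the full Fourier transform), but it packages the estimate as a fibrewise spectral gap on mean-zero functions rather than a pointwise symbol bound. The paper's version avoids the invariance discussion and the patching of ``small $\chi$'' vs.\ ``$\chi$ away from zero'' regimes you flag as the obstacle; in fact no patching is needed in your approach either, since for $k\in\Z^3\setminus\{0\}$ and $\chi\in[-\pi,\pi)^3$ one has $|2\pi k+\chi|\ge\pi$ directly, which gives the uniform lower bound via the Fourier-series computation of Proposition~\ref{Korn_est} (a cleaner reference than Proposition~\ref{josipapp1}, which involves a boundary term rather than a mean-zero constraint).
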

\begin{proof}
	We start with the identity 
	\begin{equation*}
		\mathcal{F}({\Xi}_{\eps}\vect f)(\xi)=\mathbbm{1}_{[-1/(2\eps), 1/(2\eps)]^3}(\xi)\mathcal{F}(\vect f)(\xi)\qquad \forall\vect f \in L^2(\R^3;\C^3),	\end{equation*}
	where $\mathcal{F}$ denotes, as before, the Fourier transform, and $\xi\in\C^3$ is the Fourier variable (see, e.g. \cite[Section 2.5.3]{CVZ}).
	The estimate \eqref{prva} follows from the fact that for 
	$\vect f \in L^2(\R^3;\C^3)$ one has
	\begin{equation*}
			\mathcal{F}\bigl(\bigl(\mathcal{A}_{\rm macro} - \mathcal{B}(z) \bigr)^{-1}  \vect f \bigr)(\xi)  = \left(\left({\rm i}X_\xi \right)^*\A_{\rm macro}{\rm i} X_\xi  - \mathcal{B}(z)\right)^{-1}\mathcal{F}(\vect f)(\xi).
	\end{equation*} 
	Namely, introducing  ${\vect u}(\xi):= \left(\left({\rm i}X_\xi \right)^*\A_{\rm macro}{\rm i}X_\xi  - \mathcal{B}(z)\right)^{-1}\mathcal{F}(\vect f)(\xi),$ one has
	\begin{equation*}
		\bigl|{\vect u}(\xi)\bigr| \leq(1 - \varepsilon^2 C_2)^{-1}\varepsilon^2\bigl|\mathcal{F}(\vect f)(\xi)\bigr|\leq C \varepsilon^2\bigl|\mathcal{F}(\vect f)(\xi)\bigr|,\qquad |\xi|>(2\varepsilon)^{-1},\quad  \varepsilon\in(0,C_2^{-1/2}),
	\end{equation*}
	where $C_2$ is given by \eqref{estimateonB2}. 
\end{proof}
Finally, we proceed to the proof of Theorem \ref{thmamin2} (b). 
\begin{proof}[Proof of Theorem \ref{thmamin2} (b)]
 The asymptotic estimate \eqref{asymptoticformularesolventepseffective} immediately yields 
	\begin{equation}
		\label{ante10}
		\begin{aligned} 
		\left\lVert P_{\rm stiff}\bigl((\mathcal{A}_{\chi,\varepsilon})_{0,I} -zI \bigr)^{-1}P_{\rm stiff} -|Y_{\rm stiff}|^{1/2}P_{\rm stiff} \bigl(\varepsilon^{-2}\left({\rm i}X_\chi \right)^*\A_{\rm macro}{\rm i}X_\chi  - \mathcal{B}(z) \bigr)^{-1}\iota_{\rm stiff}P_{\widehat{\mathcal{H}}_0^{\rm stiff}} \right\rVert_{\mathcal{H} \to \mathcal{H}} \leq C \varepsilon\\[0.3em]
		\hspace{50ex}\forall \chi \in Y'.
		\end{aligned}
	\end{equation}
	Invoking \eqref{gelfandpullbackformula}, we obtain  
	\begin{equation*}
		\begin{aligned}
			\mathcal{G}_\varepsilon^{-1}\int_{Y'}^{\oplus}&\left(
		P_{\rm stiff}\bigl((\mathcal{A}_{\chi,\varepsilon})_{0,I} -zI \bigr)^{-1}P_{\rm stiff} -|Y_{\rm stiff}|^{1/2}P_{\rm stiff} \left(\frac{1}{\varepsilon^2}({\rm i}X_\chi)^*\A_{\rm macro}{\rm i}X_\chi  - \mathcal{B}(z) \right)^{-1}\iota_{\rm stiff}P_{\widehat{\mathcal{H}}_0^{\rm stiff}}
		\right)\,d\chi\,\mathcal{G}_\varepsilon \\[0.3em]			
			&\hspace{27ex}=P_\varepsilon^{\rm stiff}\left(\mathcal{A}_\varepsilon - z I \right)^{-1} P_\varepsilon^{\rm stiff}-P_\varepsilon^{\rm stiff} \bigl(\mathcal{A}_{\rm macro} - \mathcal{B}(z)\bigr)^{-1} \Xi_\varepsilon  P_\varepsilon^{\rm stiff}.
		\end{aligned}
	\end{equation*}
	Combining this with \eqref{ante10} and using the fact that Gelfand transform is a unitary operator, we obtain 
	\begin{equation*}
		\bigl\lVert P_\varepsilon^{\rm stiff}\left(\mathcal{A}_\varepsilon - z I \right)^{-1} P_\varepsilon^{\rm stiff} - P_\varepsilon^{\rm stiff} \bigl(\mathcal{A}_{\rm macro} - \mathcal{B}(z) \bigr)^{-1}  \Xi_\varepsilon P_\varepsilon^{\rm stiff}\bigr\rVert_{L^2(\R^3;\C^3) \to L^2(\R^3;\C^3) } \leq C \varepsilon,
	\end{equation*}
	The last step is to drop the smoothing operator for which we use Lemma \ref{dropxi}.
\end{proof}
\begin{remark}
	The operator $\mathcal{A}_{\rm macro} - \mathcal{B}(z),$ which plays the r\^{o}le of the leading-order term in the resolvent asymptotics of Theorem \ref{thmamin2} (b), is clearly a second-order differential operator with constant coefficients. 
\end{remark}

\begin{remark} \label{nakk2}  
	The operator $\mathcal{A}^{\rm eff}_{\eps}$ has a more concise form than the operator  $\mathcal{A}^{\rm app}_{\eps}$ and admits no further simplification. 
	 On the other hand, in what applies to the operator $\mathcal{A}^{\rm app}_{\eps},$ one can still obtain a simpler approximation, by going further in the expansion of DtN map in Section \ref{AsymDtN}. The error bound thus obtained can be seen as  $O(\eps^2),$ i.e., the same as for the $\mathcal{A}^{\rm app}_{\eps}.$ It can be further seen the thus obtained operator will non-local in the spatial variable, i.e. non-differential. For brevity, we refrain from discussing this in detail. 
\end{remark}

\subsection{Dispersion relation}

The assertion of Theorem \ref{thmpremain2}, as well as a more precise (at the cost of being more involved) statement of Theorem \ref{thmpremain1}, pertains to the asymptotic behaviour of the resolvent in the whole space. In applications however, and in particular in applications to periodic problems, it is often desirable to relate the spectrum of the problem to the so-called wave vector, or equivalently quasimomentum $\chi,$ of the problem at hand. Indeed, the dispersion relation, which expresses the mentioned relationship, becomes of a paramount importance when the question of which monochromatic waves are supported by the medium at hand, as well as when the group velocity of wave packets spreading in the medium is brought to the forefront of investigation. The latter question is to arise naturally in the context of metamaterials, to which the model considered in the present paper is thought to be intimately related (the precise mathematical formulation of this relationship will however be discussed elsewhere; see also \cite{CEKRS2022} and references therein for a related discussion), as in these both the phase and group velocity need to negate, \cite{Veselago}.

\begin{definition}
	We refer to the operator valued function defined by 
	\begin{equation}
		\label{ante30}
		\begin{aligned} 
		\mathcal{K}^{\rm app} _{\chi,\varepsilon}(z):= -\bigr(\bigl(\widehat{\Pi}_\chi^{\rm stiff} \bigr)^*\bigr)^{-1} \widehat{Q}^{\rm app}_{\chi,\varepsilon}(z) &\bigl(\widehat{\Pi}_\chi^{\rm stiff} \bigr)^{-1}+ z I
		=-\bigl( \bigl( \widehat{\Pi}_\chi^{\rm stiff} \bigr)^*\bigr)^{-1}\bigl(\varepsilon^{-2}\widehat{\Lambda}_{\chi}^{\rm stiff}  + \widehat{M}_{\chi}^{\rm soft}(z)\bigr)\bigl(\widehat{\Pi}_\chi^{\rm stiff}\bigr)^{-1}
		\end{aligned}
	\end{equation}
	as the \emph{dispersion function} associated with the operator $\mathcal{A}_{\chi,\varepsilon}^{\rm app}$. Similarly, we define the \emph{effective dispersion function} associated with the operator $\mathcal{A}_{\chi,\varepsilon}^{\rm eff}$:
	\begin{equation}
		\label{ante32}
		\begin{aligned} 
		\mathcal{K}_{\chi,\varepsilon}^{\rm eff}(z):= -\bigl( \bigl( \widehat{\Pi}_0^{\rm stiff} \bigr)^*\bigr)^{-1}\widehat{Q}_{\chi,\varepsilon}^{\rm app}(z)&\bigl( \widehat{\Pi}_0^{\rm stiff} \bigr)^{-1} + z I
		=-\bigl( \bigl( \widehat{\Pi}_0^{\rm stiff} \bigr)^*\bigr)^{-1}\bigl(\varepsilon^{-2}\Lambda_{\chi}^{\rm hom}  + \widehat{M}_{0}^{\rm soft}(z)\bigr)\bigl( \widehat{\Pi}_0^{\rm stiff} \bigr)^{-1}.
		\end{aligned}
	\end{equation}
\end{definition}

\begin{remark}
	Notice that  
	\begin{equation*}
		\bigl( \mathcal{K}^{\rm app} _{\chi,\varepsilon}(z) - zI \bigr)^{-1} = - \widehat{\Pi}_\chi^{\rm stiff} \bigl(\widehat{Q}^{\rm app}_{\chi,\varepsilon}(z)\bigr)^{-1} \bigl( \widehat{\Pi}_\chi^{\rm stiff} \bigr)^{*}, \qquad \bigl( \mathcal{K}_{\chi,\varepsilon}^{\rm eff}(z) - zI \bigr)^{-1} = - \widehat{\Pi}_0^{\rm stiff} \bigl( \widehat{Q}_{\chi,\varepsilon}^{\rm eff}(z)\bigr)^{-1} \bigl( \widehat{\Pi}_0^{\rm stiff}\bigr)^{*}.
	\end{equation*}
	By comparing this with resolvent formulas: \eqref{resolventblockmatrix} and \eqref{effresolventblockmatrix}, one can see that these operators are, in fact, resolvents of the appropriate operators sandwiched with projections onto the stiff component:
	\begin{equation*}
		P_{\widehat{\mathcal{H}}_\chi^{\rm stiff}}\bigl(\mathcal{A}_{\chi,\varepsilon}^{\rm app} - zI \bigr)^{-1}|_{\widehat{\mathcal{H}}_\chi^{\rm stiff}} =  \bigl(\mathcal{K}^{\rm app} _{\chi,\varepsilon}(z) -z I\bigr)^{-1}, \qquad P_{\widehat{\mathcal{H}}_0^{\rm stiff}}\bigl(\mathcal{A}_{\chi,\varepsilon}^{\rm eff} - zI \bigr)^{-1}|_{\widehat{\mathcal{H}}_0^{\rm stiff}} =  \bigl(\mathcal{K}^{\rm eff}_{\chi,\varepsilon}(z) -z I\bigr)^{-1}.
	\end{equation*}
\end{remark}

Denote by $\{\psi_j^\chi\}_{j=1}^3 \subset \widehat{\mathcal{E}}_\chi$
an orthonormal basis of eigenfunctions of $\widehat{\Lambda}_\chi^{\rm stiff}$ and by $\{\nu_j^{\chi}\}_{j=1}^3$
the associated set of eigenvalues.
Note that  the  $\{ \widehat{\Pi}_\chi^{\rm stiff} \psi_j^\chi\}_{j=1}^3$
is a basis in $\widehat{\mathcal{H}}_\chi^{\rm stiff}.$ The function $\widehat{\vect u} \in \widehat{\mathcal{H}}_\chi^{\rm stiff}$ is represented in this basis with a vector $\vect \alpha = (\alpha_1,\alpha_2,\alpha_3) \in \C^3$ as 
\begin{equation}
	\label{uhatinbasis}
	\widehat{\vect u} = \sum_{i = 1}^3\alpha_i \widehat{\Pi}_\chi^{\rm stiff}\psi_i^\chi.
\end{equation}
Furthermore, we denote by $\{\Psi_j^\chi\}_{j=1}^3\subset \widehat{\mathcal{H}}_\chi^{\rm stiff}$
the contravariant dual for the basis $\{ \widehat{\Pi}_\chi^{\rm stiff} \psi_j^\chi\}_{j=1}^3$,
namely, the set of functions such that
\begin{equation*}
	\bigl\langle  \widehat{\Pi}_\chi^{\rm stiff} \psi_i^\chi, \Psi_j^\chi\bigr\rangle_{\mathcal{H}^{\rm stiff}} = \delta_{ij}, \quad i,j = 1,2,3.
\end{equation*}
One can easily check that
	$\Psi_j^\chi=((\widehat{\Pi}_\chi^{\rm stiff})^*)^{-1} \psi_j^\chi,$ $j=1,2,3.$
Thus we find the following expressions for the coefficients 
in \eqref{uhatinbasis}:
\begin{equation*}
	\alpha_j = \langle \widehat{\Gamma}_{0,\chi} \widehat{\vect u},\psi_j^\chi\rangle_{\widehat{\mathcal{E}}_\chi}, \quad j=1,2,3.
\end{equation*}
Next, we calculate the matrix $\mathbb{K}^{\rm app }_{\chi,\varepsilon}(z)$ of the operator $\mathcal{K}^{\rm app}_{\chi,\varepsilon}(z)$ in the basis $\{\widehat{\Pi}_\chi^{\rm stiff} \psi_i^\chi\}_{i=1}^3:$
\begin{equation*}
		\mathbb{K}^{\rm app}_{\chi,\varepsilon}(z)_{ij}= \Bigl\langle \mathcal{K}^{\rm app} _{\chi,\varepsilon}(z) \widehat{\Pi}_\chi^{\rm stiff}  \psi_j^{\chi}, \bigl( \bigl(\widehat{\Pi}_\chi^{\rm stiff}\bigr)^*\bigr)^{-1} \psi_i^\chi \Bigr\rangle_{\widehat{\mathcal{H}}_\chi^{\rm stiff}}.
\end{equation*}
By invoking the formulas \eqref{usefulidentitiesresolvent}, \eqref{solutionrepresentation} and \eqref{ante30}, the dispersion function can be expressed as
\begin{equation*}
	\mathcal{K}^{\rm app}_{\chi,\varepsilon}(z):= -\bigl( \bigl( \widehat{\Pi}_\chi^{\rm stiff} \bigr)^*\bigr)^{-1}\bigl(\varepsilon^{-2}\widehat{\Lambda}_{\chi}^{\rm stiff}  + \widehat{\Lambda}_\chi^{\rm soft} + z \bigl( \widehat{\Pi}_\chi^{\rm soft}\bigr)^*\widehat{\Pi}_\chi^{\rm soft} + z^2 \bigl(\widehat{\Pi}_\chi^{\rm soft}\bigr)^* \bigl(\mathcal{A}_{0,\chi}^{\rm soft} - zI\bigr)^{-1}\widehat{\Pi}_\chi^{\rm soft}  \bigr)\bigl(\widehat{\Pi}_\chi^{\rm stiff} \bigr)^{-1}.
\end{equation*}
Denote now by $(\eta_k)_{k \in \mathbb{N}} \subset \mathbb{R}^+$, $(\varphi_k^\chi)_{k \in \mathbb{N}} \subset \mathcal{H}^{\rm soft}$ the eigenvalues and associated eigenfunctions of the operator $\mathcal{A}_{0,\chi}^{\rm soft}$. The resolvent of $\mathcal{A}_{0,\chi}^{\rm soft}$ admits the expansion
\begin{equation*}
	\bigl(\mathcal{A}_{0,\chi}^{\rm soft} - zI \bigr)^{-1} = \sum_{k = 1}^{\infty} (\eta_k - z)^{-1}\mathcal{P}_k^\chi, \qquad \mathcal{P}_k^\chi:= \left\langle \cdot, \varphi_k^\chi \right\rangle_{\mathcal{H}^{\rm soft}} \varphi_k^\chi.
\end{equation*}
Upon a straightforward computation, we obtain
\begin{equation}
	\label{dispersionfunctionmatrixform}
\begin{aligned}
	\mathbb{K}^{\rm app} _{\chi,\varepsilon}(z)_{ij} &= -\varepsilon^{-2}\nu^{j}_{\chi}\bigl\langle \psi_j^{\chi},\mathbb{H}_\chi\psi_i^\chi\bigr\rangle_{\widehat{\mathcal{E}}^{\rm stiff}_{\chi}}-\bigl\langle\widehat{\Lambda}_\chi^{\rm soft }\psi_j^{\chi}, \mathbb{H}_\chi\psi_i^\chi\bigr\rangle_{\widehat{\mathcal{E}}^{\rm stiff}_{\chi}}\\[0.3em] 
	&-z\bigl\langle \widehat{\Pi}_\chi^{\rm soft}\psi_j^{\chi}, \widehat{\Pi}_\chi^{\rm soft}\mathbb{H}_\chi\psi_i^{\chi} \bigr\rangle_{\mathcal{H}^{\rm soft}}  - \sum_{k = 1}^{\infty} \frac{z^2}{\eta_k - z} \bigl\langle \widehat{\Pi}_\chi^{\rm soft}\psi_j^{\chi}, \varphi_k^\chi \bigr\rangle_{\mathcal{H}^{\rm soft}}\bigl\langle \varphi_k^\chi,\widehat{\Pi}_\chi^{\rm soft}\mathbb{H}_\chi\psi_i^{\chi}  \bigr\rangle_{\mathcal{H}^{\rm soft}}.
	\end{aligned}
\end{equation}
where  $\mathbb{H}_\chi:=\bigl(( \widehat{\Pi}_\chi^{\rm stiff})^* \widehat{\Pi}_\chi^{\rm stiff}\bigr)^{-1}:\widehat{\mathcal{E}}_\chi \to \widehat{\mathcal{E}}_\chi$ .

Similarly, by using \eqref{ante32}, one establishes the matrix representation $\mathbb{K}_{\chi,\varepsilon}^{\rm{eff}}(z)$ of the effective dispersion function $\mathcal{K}_{\chi,\varepsilon}^{\rm eff}(z)$ in the canonical basis $\{e_j\}_{j=1}^3$ 
of $\mathbb{C}^3:$
\begin{equation*}
	\mathbb{K}_{\chi,\varepsilon}^{\rm{eff}}(z)_{ij} := \left\langle \mathcal{K}_{\chi,\varepsilon}^{\rm eff}(z) e_j, e_i \right\rangle_{\widehat{\mathcal{H}}_0^{\rm stiff}}=-\varepsilon^{-2}\bigl\langle \Lambda_{\chi}^{\rm hom} e_j, e_i\bigr\rangle_{\widehat{\mathcal{E}}_0} - \bigl\langle \widehat{M}_{0}^{\rm soft}(z) e_j, e_i \bigr\rangle_{\widehat{\mathcal{E}}_0} . 
\end{equation*}
Recalling \eqref{msoftzeroformula}, we obtain
\begin{equation}
	\begin{aligned}
		&\bigl\langle \widehat{M}_{0}^{\rm soft}(z) e_j, e_i \bigr\rangle_{\widehat{\mathcal{E}}_0^{\rm stiff}} = z  \left\langle \Pi_0^{\rm soft} e_j,\Pi_0^{\rm soft}e_i\right\rangle_{\widehat{\mathcal{H}}_0^{\rm soft}} + z^2 \left\langle \left( \mathcal{A}_{0,0}^{\rm soft} - zI\right)^{-1}\Pi_0^{\rm soft} e_j,\Pi_0^{\rm soft}e_i\right\rangle_{\widehat{\mathcal{H}}_0^{\rm soft}} \\[0.1em]
		& 
		= z  \left\langle \Pi_0^{\rm soft} e_j,\Pi_0^{\rm soft}e_i\right\rangle_{\widehat{\mathcal{H}}_0^{\rm soft}} +    z^2\sum_{k=1}^\infty \frac{\left\langle \Pi_0^{\rm soft} e_j, \varphi_k\right\rangle_{\widehat{\mathcal{H}}_0^{\rm soft}}\left\langle \varphi_k,\Pi_0^{\rm soft}e_i\right\rangle_{\widehat{\mathcal{H}}_0^{\rm soft}}}{\eta_k - z} 
		= z |Y_{\rm soft}|\delta_{ij} + \sum_{k=1}^\infty \frac{z^2}{\eta_k - z}\langle\varphi_k\rangle_j\langle {\varphi_k}\rangle_i. 
	\end{aligned}
\label{silly_label}
\end{equation}
where $(\eta_k,\varphi_k)_{k\in \N}$ are the eigenpairs of the operator $\mathcal{A}_{0,0}^{\rm soft}$.
Finally, we have 
\begin{equation}
	\label{Keff_final}
	\left\langle \left(\mathcal{K}_{\chi,\varepsilon}^{\rm eff}(z) - zI \right) e_j, e_i \right\rangle_{\widehat{\mathcal{H}}_0^{\rm stiff}} = - \frac{1}{\varepsilon^2}\left\langle \Lambda_{\chi}^{\rm hom} e_j, e_i \right\rangle_{\widehat{\mathcal{E}}_0} -  z \delta_{ij} -\sum_{k=1}^\infty \frac{z^2}{\eta_k - z}\langle\varphi_k\rangle_j\langle {\varphi_k}\rangle_i.
\end{equation}
The calculations above prove the following theorem.
\begin{theorem}
The dispersion relation  for the operator $\mathcal{A}_{\chi,\varepsilon}^{\rm app}$ is given by
	\begin{equation*}
		\det \left( \mathbb{K}^{\rm app}_{\chi,\varepsilon}(z) - z |Y_{\rm stiff}| \mathbb{I}\right) = 0,
	\end{equation*}
	It links the parameters $z$ and $\chi$, where the matrix $\mathbb{K}_{\chi,\varepsilon}(z) \in \C^{3 \times 3}$ is given by  \eqref{dispersionfunctionmatrixform}. The \emph{effective} dispersion relation (see also \cite{Smyshlyaev_Mech_Mater, CC_Maxwell}) is given by 
	\begin{equation}
		\det\bigl( \A_{\chi,\varepsilon}^{\rm eff} - \mathcal{B}(z)\bigr) = 0,
		\label{drelation}
	\end{equation}
	where (cf. Lemma \ref{josipnak1})
	\begin{equation*}
		\A_{\chi,\varepsilon}^{\rm eff}:=\varepsilon^{-2}X_\chi^*\A_{\rm macro}X_\chi, \qquad \mathcal{B}(z)_{ij}= z\delta_{ij} +  \sum_{k=1}^\infty \frac{z^2}{\eta_k - z}\langle\varphi_k\rangle_i\langle {\varphi_k}\rangle_j.  
	\end{equation*}
\end{theorem}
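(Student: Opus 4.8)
The plan is to reduce the claimed dispersion relations to the eigenvalue problems for the approximating and effective operators $\mathcal{A}_{\chi,\varepsilon}^{\rm app}$ and $\mathcal{A}_{\chi,\varepsilon}^{\rm eff}$ respectively, exploiting the representation of their ``stiff-component sandwiched resolvents'' via the resolvents of the dispersion functions $\mathcal{K}^{\rm app}_{\chi,\varepsilon}(z)$ and $\mathcal{K}^{\rm eff}_{\chi,\varepsilon}(z)$. First I would observe that, since $z\in\rho(\mathcal{A}_{\chi,\varepsilon}^{\rm app})$ precisely when the resolvent $(\mathcal{A}_{\chi,\varepsilon}^{\rm app}-zI)^{-1}$ exists, and since by the remark preceding the theorem one has $P_{\widehat{\mathcal{H}}_\chi^{\rm stiff}}(\mathcal{A}_{\chi,\varepsilon}^{\rm app}-zI)^{-1}|_{\widehat{\mathcal{H}}_\chi^{\rm stiff}}=(\mathcal{K}^{\rm app}_{\chi,\varepsilon}(z)-zI)^{-1}$, the finite-dimensional operator $\mathcal{K}^{\rm app}_{\chi,\varepsilon}(z)-zI$ fails to be invertible exactly when $z$ is an eigenvalue of $\mathcal{A}_{\chi,\varepsilon}^{\rm app}$ (here one should be slightly careful: the equivalence follows from the block structure of the resolvent $\mathcal{R}_{\chi,\varepsilon}^{\rm app}(z)$ in \eqref{resolventblockmatrix} together with the fact that $\widehat{Q}^{\rm app}_{\chi,\varepsilon}(z)$ is boundedly invertible whenever $(\mathcal{A}_{0,\chi}^{\rm soft}-zI)^{-1}$ exists and $z\notin\sigma(\mathcal{A}_{\chi,\varepsilon}^{\rm app})$; the ``extra'' non-invertibility is entirely captured by the stiff block). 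Thus the dispersion relation for $\mathcal{A}_{\chi,\varepsilon}^{\rm app}$ reads $\det\bigl(\mathbb{K}^{\rm app}_{\chi,\varepsilon}(z)-z\,\mathbb{I}\bigr)=0$ in any chosen basis; using the basis $\{\widehat{\Pi}_\chi^{\rm stiff}\psi_j^\chi\}$ and the Gram matrix relation $\langle\widehat{\Pi}_\chi^{\rm stiff}\psi_i^\chi,\widehat{\Pi}_\chi^{\rm stiff}\psi_j^\chi\rangle=(\mathbb{H}_\chi^{-1})_{ij}$ one rewrites this as $\det\bigl(\mathbb{K}^{\rm app}_{\chi,\varepsilon}(z)-z\,\mathbb{H}_\chi^{-1}\bigr)=0$; I would then note that the factor $|Y_{\rm stiff}|$ in the statement arises from the particular normalisation convention (recall $\widehat{\Pi}_0^{\rm stiff}=|Y_{\rm stiff}|^{1/2}|\Gamma|^{-1/2}\iota_{\rm stiff}^*\iota_{\Gamma}$ and Remark \ref{ante31}), and that $\mathbb{K}^{\rm app}_{\chi,\varepsilon}(z)$ is the matrix computed explicitly in \eqref{dispersionfunctionmatrixform}.

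For the effective dispersion relation, the argument runs in parallel but lands on a cleaner expression. One has $P_{\widehat{\mathcal{H}}_0^{\rm stiff}}(\mathcal{A}_{\chi,\varepsilon}^{\rm eff}-zI)^{-1}|_{\widehat{\mathcal{H}}_0^{\rm stiff}}=(\mathcal{K}^{\rm eff}_{\chi,\varepsilon}(z)-zI)^{-1}$, and the matrix of $\mathcal{K}^{\rm eff}_{\chi,\varepsilon}(z)-zI$ in the canonical basis of $\mathbb{C}^3$ was computed in \eqref{Keff_final}. After multiplying through by $|Y_{\rm stiff}|$ (absorbing the normalisation, exactly as in the computation of the stiff-component resolvent that precedes Lemma \ref{dropxi}, where the operator $\mathcal{A}_{\rm macro}-\mathcal{B}(z)$ with $\mathcal{B}(z)=z|Y_{\rm stiff}|I_{\mathbb{C}^3}+|\Gamma|\iota_\Gamma\widehat{M}_0^{\rm soft}(z)\iota_\Gamma^*$ appears), one identifies the matrix $|\Gamma|\iota_\Gamma\widehat{\Lambda}_\chi^{\rm hom}\iota_\Gamma^*$ with $X_\chi^*\A_{\rm macro}X_\chi$ by Lemma \ref{josipnak1}, and $|\Gamma|\iota_\Gamma\widehat{M}_0^{\rm soft}(z)\iota_\Gamma^*$ with the Zhikov matrix $z\delta_{ij}+\sum_k z^2(\eta_k-z)^{-1}\langle\varphi_k\rangle_i\langle\varphi_k\rangle_j$ by the calculation \eqref{silly_label}. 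This yields $\det\bigl(\varepsilon^{-2}X_\chi^*\A_{\rm macro}X_\chi-\mathcal{B}(z)\bigr)=0$, which is \eqref{drelation}. The series in the definition of $\mathcal{B}(z)$ converges because $\eta_k\to\infty$ and $\sum_k|\langle\varphi_k\rangle_i|^2<\infty$ (as $\langle\varphi_k\rangle_i=\langle\varphi_k,\mathbbm{1}\,\vect e_i\rangle$ are Fourier coefficients of a fixed $L^2$ function), so $\mathcal{B}(z)$ is a well-defined analytic matrix-valued function off the spectrum of $\mathcal{A}_{0,0}^{\rm soft}$.

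The only genuine subtlety — the step I expect to require the most care — is justifying the equivalence ``$z\in\sigma(\mathcal{A}_{\chi,\varepsilon}^{\rm app/eff})$ iff the relevant finite-dimensional determinant vanishes'' rigorously, rather than just formally. The point is that $\mathcal{A}_{\chi,\varepsilon}^{\rm app}$ (and $\mathcal{A}_{\chi,\varepsilon}^{\rm eff}$) is a self-adjoint operator with compact resolvent whose only ``truly unbounded'' ingredient is the soft operator $\mathcal{A}_{0,\chi}^{\rm soft}$; from the block form \eqref{resolventblockmatrix}–\eqref{resolventcondensedform} one sees that $(\mathcal{A}_{\chi,\varepsilon}^{\rm app}-zI)^{-1}$ exists if and only if $z\notin\sigma(\mathcal{A}_{0,\chi}^{\rm soft})$ and $\widehat{Q}^{\rm app}_{\chi,\varepsilon}(z)$ is invertible, while for $z\in\sigma(\mathcal{A}_{0,\chi}^{\rm soft})$ one must argue separately (using that these isolated soft eigenvalues persist, in general, as eigenvalues of the coupled operator, or are resolved by the structure of $\widehat{M}_0^{\rm soft}$). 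Since the theorem statement implicitly restricts attention to the region where the dispersion function is defined (i.e. away from the soft spectrum), I would state the equivalence for $z\notin\sigma(\mathcal{A}_{0,\chi}^{\rm soft})$, where $\det\bigl(\mathcal{K}^{\rm app/eff}_{\chi,\varepsilon}(z)-zI\bigr)=0 \iff \widehat{Q}^{\rm app/eff}_{\chi,\varepsilon}(z)$ singular $\iff z\in\sigma(\mathcal{A}_{\chi,\varepsilon}^{\rm app/eff})$, the last equivalence being Theorem \ref{theoremappoperator} (resp. its effective analogue) combined with the bounded invertibility of $\widehat{\Pi}_\chi^{\rm stiff}$ as a $3\times3$ matrix. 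With this in place, the remaining content of the theorem is precisely the explicit matrix computations \eqref{dispersionfunctionmatrixform}, \eqref{silly_label}, \eqref{Keff_final}, which have already been carried out in the text, so the proof reduces to assembling them.
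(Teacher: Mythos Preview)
Your proposal is correct and takes essentially the same approach as the paper: the paper's own ``proof'' is simply the sentence ``The calculations above prove the following theorem,'' meaning the content is exactly the assembly of the preceding matrix computations \eqref{dispersionfunctionmatrixform}, \eqref{silly_label}, \eqref{Keff_final}, which you correctly identify as the core. Your additional discussion of the equivalence $z\in\sigma(\mathcal{A}_{\chi,\varepsilon}^{\rm app/eff})\iff\det(\cdot)=0$ away from $\sigma(\mathcal{A}_{0,\chi}^{\rm soft})$ is extra rigor the paper leaves implicit, and your handling of the normalisation factor $|Y_{\rm stiff}|$ via Remark~\ref{ante31} is appropriate given that the paper does not spell out this step either.
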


Denoting $\theta:=|\chi|^{-1}\chi,$ one has
$	\A_{\chi,\varepsilon}^{\rm eff} = |\chi|^2\A_{{\theta},\varepsilon}^{\rm eff} = \varepsilon^{-2}|\chi|^2X_{\theta}^*\A_{\rm macro}X_{\theta}.$
The matrix $X_{\theta}^*\A_{\rm macro}X_{\theta}$ is positive definite with the lower bound uniform with respect to ${\theta},$ with $\eta, C_1>0$:
\begin{equation*}
	\left\langle X_{\theta}^*\A_{\rm macro}X_{\theta}\, \vect\zeta, \vect\zeta \right\rangle = \left\langle\A_{\rm macro}X_{\theta}\, \vect\zeta, X_{\theta}\, \vect\zeta \right\rangle \geq \eta \left\lvert X_{\theta}\, \vect\zeta\right\rvert^2 \geq \eta C_1\vert {\theta}\vert^2  \vert\vect\zeta\vert^2 =\eta C_1 \vert\vect\zeta\vert^2\qquad \forall \vect\zeta\in{\mathbb R}^2,
\end{equation*}
where we use the coercivity estimate of Lemma \ref{prop_lemma} and the lower bound in \eqref{Xchi_bounds}. Denote by $\A_{{\theta}}^{1/2}$ the positive definite (symmetric) square root of $X_{{\theta}}^*\A_{\rm macro}X_{{\theta}}$. Clearly, the dispersion relation \eqref{drelation} is equivalent to 
\begin{equation}
	\label{drelation2}
	\det\bigl( |\chi|^2 I - \varepsilon^2\A_{{\theta}}^{-1/2}\mathcal{B}(z)\A_{{\theta}}^{-1/2}\bigr) = 0.
\end{equation}
Note that for fixed  $z > 0$ and ${\theta},$ the matrices $\mathcal{B}(z)$ and $\A_{{\theta}}^{-1/2}\mathcal{B}(z)\A_{{\theta}}^{-1/2}$ have the same number of 
non-negative 
eigenvalues. The eigenvalues $\beta(z)$ of $\mathcal{B}(z)$ are shown to be strictly increasing in $z$ in every interval of analyticity of $\mathcal{B}$ on the real line. This follows from the Herglotz property of $\mathcal{B}(z),$ since the latter implies the positive-definiteness of the derivative $\mathcal{B}'(z)$ on the real line. It follows that the same holds for the eigenvalues of $\A_{{\theta}}^{-1/2}\mathcal{B}(z)\A_{{\theta}}^{-1/2},$ and therefore all the branches of the multivalued function $z \mapsto |\chi(z)|$ defined implicitly by \eqref{drelation2} are strictly increasing. This, in turn, implies that the gradient of $z$ is parallel to $\chi$: 
$$
\nabla_{\chi}z=z'(|\chi|)\frac{\chi}{|\chi|}=
\biggl(\frac{d|\chi(z)|}{dz}\biggr)^{-1}
\frac{\chi}{|\chi|}.
$$
We have thus proved the following statement. 
	\begin{theorem}
		For every direction ${\theta}$, the number of solutions $|\chi|$ to \eqref{drelation2} is equal to the number of non-negative eigenvalues of $\mathcal{B}(z).$
		 Furthermore, 
		the corresponding group velocities \cite{Brillouin, Milonni} are positive \cite{Capolino} on every interval of analyticity of ${\mathcal B}$.
	\end{theorem}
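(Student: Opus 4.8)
The plan is to establish the two assertions of the theorem separately, both following from the structure of the dispersion relation \eqref{drelation2} and the Herglotz property of $\mathcal{B}(z)$. For the first assertion, I would fix a direction $\theta$ and a value $z>0$ in an interval of analyticity of $\mathcal{B}$, and count the number of real solutions $|\chi|$ of $\det(|\chi|^2 I - \varepsilon^2 \A_\theta^{-1/2}\mathcal{B}(z)\A_\theta^{-1/2}) = 0$. The key observation is that $|\chi|^2$ must equal $\varepsilon^2 \beta$ for some eigenvalue $\beta$ of the Hermitian matrix $\A_\theta^{-1/2}\mathcal{B}(z)\A_\theta^{-1/2}$; since $|\chi|^2 \geq 0$, only the non-negative eigenvalues of this matrix contribute. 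Because $\A_\theta^{-1/2}$ is positive-definite and symmetric, Sylvester's law of inertia guarantees that $\A_\theta^{-1/2}\mathcal{B}(z)\A_\theta^{-1/2}$ has the same number of non-negative eigenvalues as $\mathcal{B}(z)$ itself. This yields the count (taking multiplicities into account as appropriate).

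For the second assertion, concerning positivity of group velocities, I would first recall that the group velocity is $\nabla_\chi z$, where $z = z(\chi)$ is the frequency as a function of the quasimomentum, defined implicitly by \eqref{drelation2}. The crucial input is the Herglotz (Nevanlinna) property of the matrix-valued function $\mathcal{B}(z)$, which was established in the excerpt (via the representation of $M_0^{\rm soft}$ through its imaginary part, see the estimate \eqref{marin1} and the surrounding discussion, and structurally because $\mathcal{B}$ is built from the $M$-function of the soft component). A matrix Herglotz function has, on any interval of the real axis where it is analytic, a positive-definite derivative $\mathcal{B}'(z)$. I would then invoke the standard fact that the eigenvalues of a self-adjoint analytic family with positive-definite derivative are themselves strictly increasing functions of the parameter; hence each eigenvalue $\beta(z)$ of $\mathcal{B}(z)$ is strictly increasing on each interval of analyticity, and by the congruence argument above the same holds for the eigenvalues of $\A_\theta^{-1/2}\mathcal{B}(z)\A_\theta^{-1/2}$.

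From this monotonicity it follows that, along each branch, the relation $|\chi|^2 = \varepsilon^2 \beta(z)$ can be inverted to give $z$ as a strictly increasing function of $|\chi|$ on the relevant range; therefore $d|\chi(z)|/dz > 0$, and
\[
\nabla_\chi z = z'(|\chi|)\,\frac{\chi}{|\chi|} = \left(\frac{d|\chi(z)|}{dz}\right)^{-1}\frac{\chi}{|\chi|},
\]
which is a positive multiple of $\chi$, in particular non-zero and pointing in the direction $\theta$. Combined with the fact that the phase velocity is also parallel to $\chi$, this gives the positivity of the group velocity on every interval of analyticity of $\mathcal{B}$, as claimed (and, as noted, contrasts with the negation of both velocities expected for genuine metamaterial behaviour, cf. \cite{Veselago}).

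The main obstacle I anticipate is making the monotonicity-of-eigenvalues step fully rigorous in the presence of possible eigenvalue crossings: when eigenvalues of $\mathcal{B}(z)$ collide, one cannot choose globally analytic eigenvalue branches, but one can still order the eigenvalues continuously and use the min-max characterisation together with $\mathcal{B}'(z)>0$ to conclude that each ordered eigenvalue is strictly increasing; alternatively one restricts attention to subintervals where the eigenvalues are simple and analytic, which suffices since the statement is about "every interval of analyticity". The congruence (Sylvester) argument transferring the count and the monotonicity from $\mathcal{B}(z)$ to $\A_\theta^{-1/2}\mathcal{B}(z)\A_\theta^{-1/2}$ is routine, as is the verification that $\A_\theta^{1/2}$ is uniformly positive-definite in $\theta$ (already carried out just above the theorem statement using Lemma \ref{prop_lemma} and \eqref{Xchi_bounds}).
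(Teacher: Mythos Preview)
Your proposal is correct and follows essentially the same approach as the paper: the first assertion is obtained via Sylvester's law of inertia (the paper phrases this as the matrices $\mathcal{B}(z)$ and $\A_\theta^{-1/2}\mathcal{B}(z)\A_\theta^{-1/2}$ having the same number of non-negative eigenvalues), and the second via the Herglotz property giving $\mathcal{B}'(z)>0$, hence strict monotonicity of the eigenvalue branches and of $|\chi(z)|$, from which the formula $\nabla_\chi z = (d|\chi(z)|/dz)^{-1}\chi/|\chi|$ yields positivity of the group velocity. Your additional remarks on handling eigenvalue crossings via min--max are a useful elaboration but do not depart from the paper's line of argument.
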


\subsection{Numerical example}

In this section we present a numerical example in which we calculate the solutions to the dispersion relation (\ref{drelation}).  
The problem solved here is in 2D, so as to allow visualising its solutions. We consider the unit cell $\overline{Y}= [0,1]^2 =  \overline{Y_{\rm stiff}} \cup \overline{Y_{\rm soft}}$ with an elliptical soft inclusion $Y_{\rm soft}$ with axes $0.04$ and $0.045$ in the stiff matrix $Y_{\rm stiff}$. The elastic material considered is isotropic, with the  material coefficients constant on each of the two components $Y_{\rm soft},$ $Y_{\rm stiff}.$ The contrast between the components is assumed to be $\varepsilon^{-2},$ where $\varepsilon>0$ is small. By fixing the Lam\'e constants $\lambda= 1$, $\mu = 0.1$, we define the tensor of material coefficients
\begin{equation*}
	\A^{\varepsilon}_{ijkl}(y)=
	\left\{\begin{array}{ll} \lambda \, \delta_{ij} \delta_{kl} + 2 \mu \left(\delta_{ik} \delta_{jl} + \delta_{il} \delta_{kj} \right)=:\A^{\rm stiff}_{ijkl}, & y \in Y_{\rm stiff}, \\[0.35em]
		\varepsilon^2\left( \lambda \, \delta_{ij} \delta_{kl} + 2 \mu \left(\delta_{ik} \delta_{jl} + \delta_{il} \delta_{kj} \right)\right)=:\A^{\rm soft}_{ijkl}, & y \in Y_{\rm soft}.
	\end{array} \right.
\end{equation*}
The macroscopic tensor $\mathbb{A}_{\rm macro},$ which constitutes the fiberwise effective operator $\A_{\chi,\varepsilon}^{\rm eff},$ is represented by its action on the elements of an orthonormal basis $\{E_i\}_{i=1}^3$ of $\R^{2 \times 2}_{\rm sym}$: 
\begin{equation*}
	\mathbb{A}_{\rm macro} E_j : E_i := \int_{Y_{\rm stiff}} \A^{\rm stiff} \left(\simgrad \vect u_{E_i} + E_i \right): E_j,
	\quad i,j = 1,2,3.
\end{equation*}
where for $E\in\R^{2 \times 2}_{\rm sym}$ the displacement $\vect u_{E} \in H^1_\#(Y_{\rm stiff};\R^2)$, $\int_{Y_{\rm stiff}} \vect u_{E} = 0$  is calculated by solving the cell problem for the perforated domain $Y_{\rm stiff}$:
\begin{equation*}
	\int_{Y_{\rm stiff}} \A^{\rm stiff} \left(\simgrad \vect u_{E} + E \right): \simgrad \vect v =0\quad \forall \vect v \in H^1_\#(Y_{\rm stiff};\R^2), \int_{Y_{\rm stiff}} \vect v = 0.
\end{equation*}
This results in an explicit construction of the matrix $\A_{\chi,\varepsilon}^{\rm eff}.$ The function $\mathcal{B}(z)$ is approximated by a finite sum  
\begin{equation}
\mathcal{B}^n(z)_{i,j} = z\delta_{i,j} +  \sum_{k=1}^n \frac{z^2}{\eta_k - z}\langle \varphi_k\rangle_j\langle\varphi_k\rangle_i,
\label{Btruncation}
\end{equation}
where $\eta_k,\, k=1,\dots,n,$ are the $n$ smallest eigenvalues of $\mathcal{A}_{0,0}^{\rm soft}$, see Fig.~\ref{figureinclusions}.The graphs of the real eigenvalues $\beta_1(z)$, $\beta_2(z)$ (ordered by the relation $\beta_1(z)\le\beta_2(z)$) of the symmetric matrix-valued function $\mathcal{B}^n(z), z>0,$ are shown in
Fig.~\ref{figurebetafunction}. The dispersion surfaces for the effective problem, which are determined by the relation \eqref{drelation} are shown in Fig.\,\ref{figuredispersion}. 

\begin{figure}[t]
	\centering
	\begin{subfigure}{0.49\linewidth}
		\includegraphics[width=0.46\linewidth]{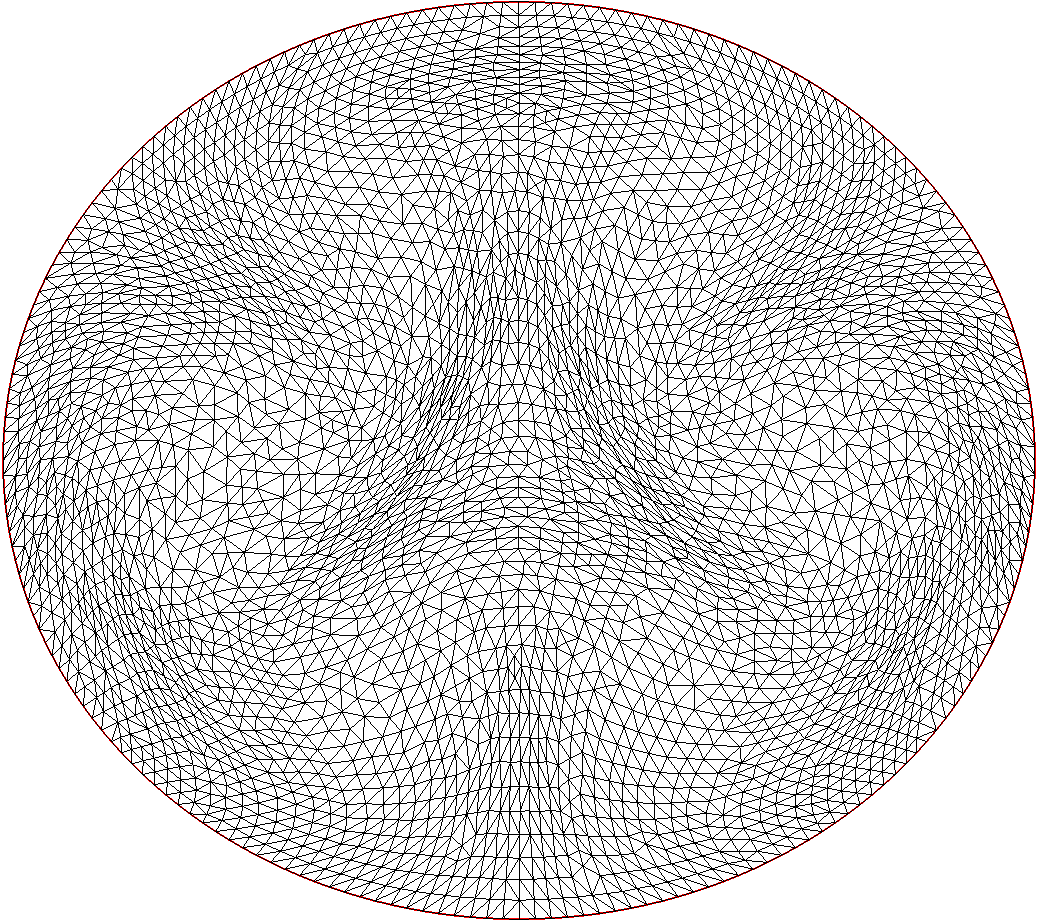}
		\includegraphics[width=0.46\linewidth]{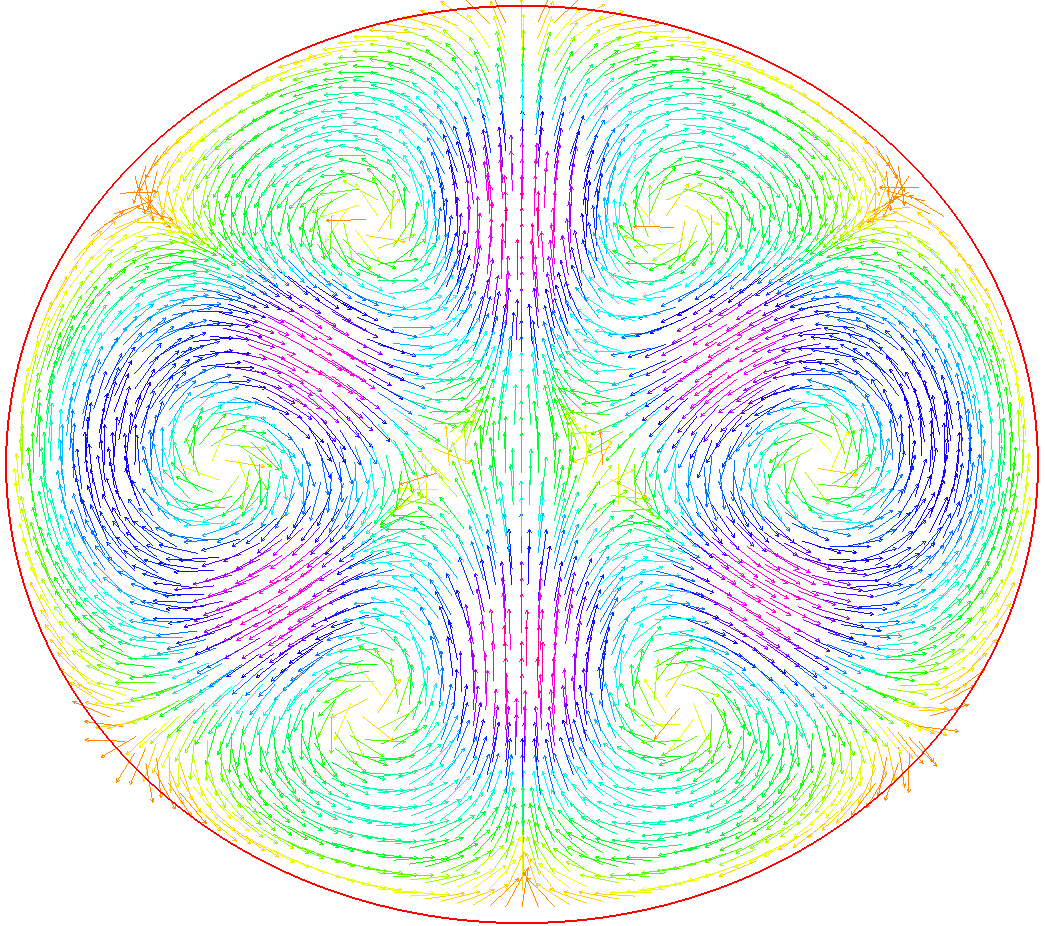}
		\caption*{$\eta = 41.4271$}
	\end{subfigure}
	\begin{subfigure}{0.49\linewidth}
		\includegraphics[width=0.46\linewidth]{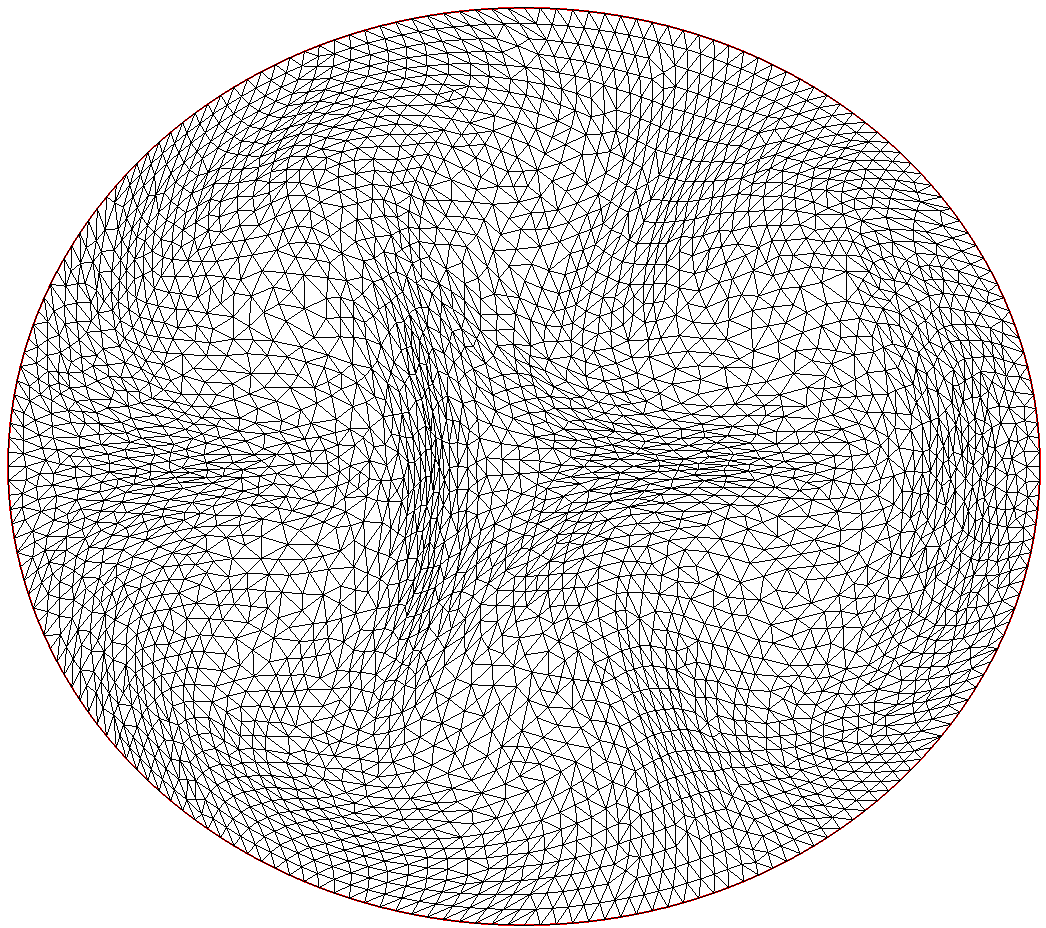}
		\includegraphics[width=0.46\linewidth]{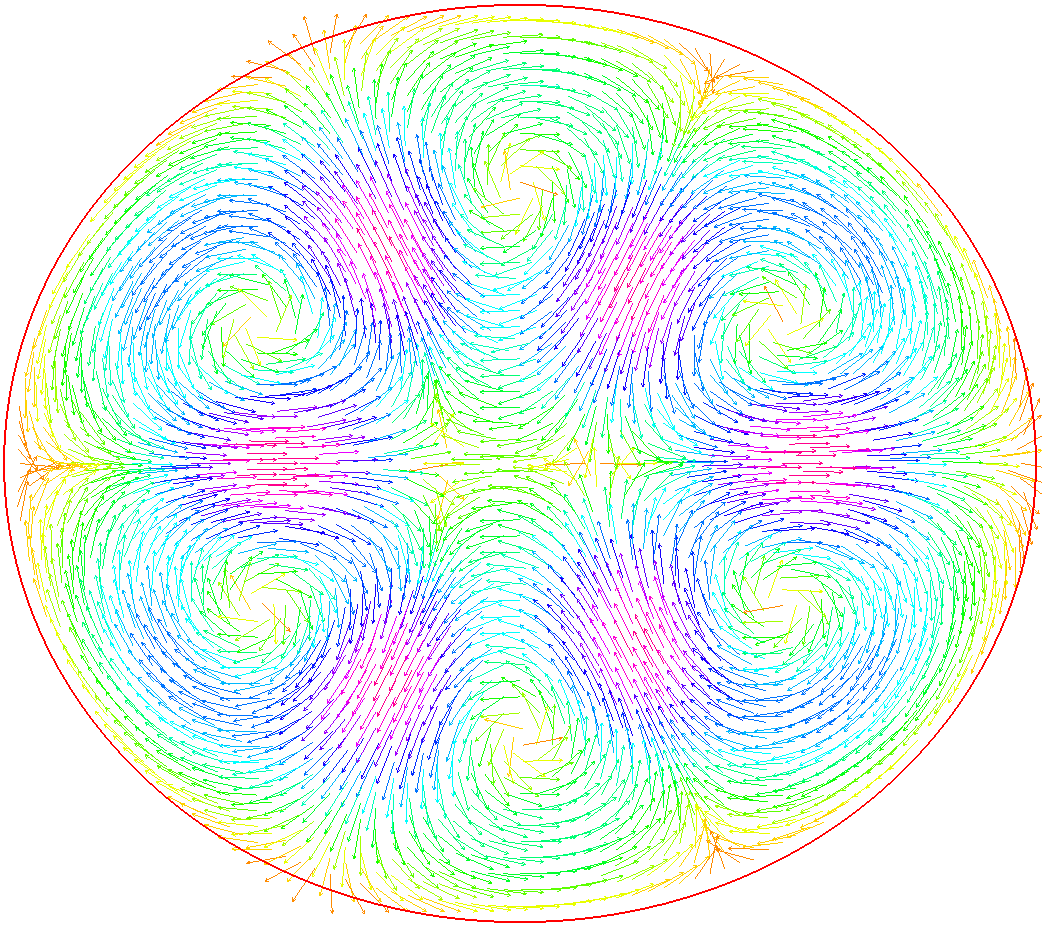}
		\caption*{$\eta = 41.555$}
	\end{subfigure}
	\vfill
	\begin{subfigure}{0.49\linewidth}
		\includegraphics[width=0.46\linewidth]{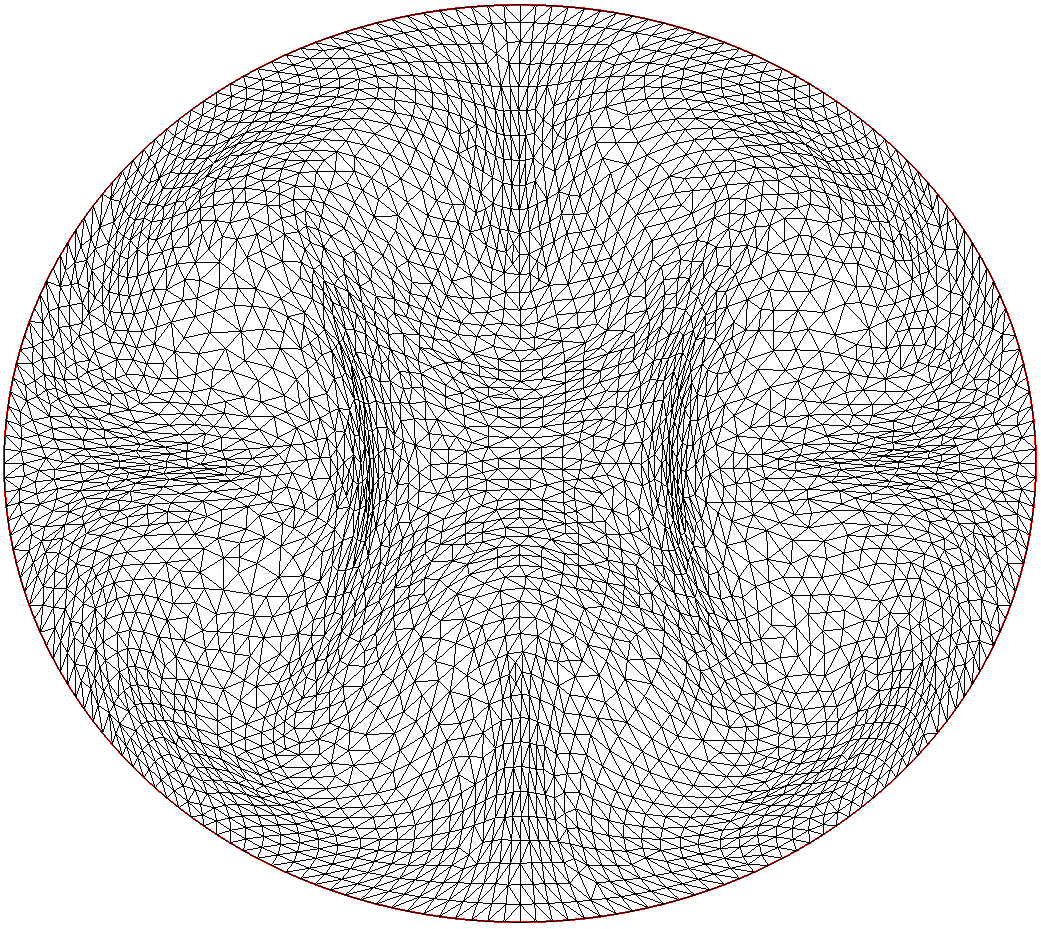}
		\includegraphics[width=0.46\linewidth]{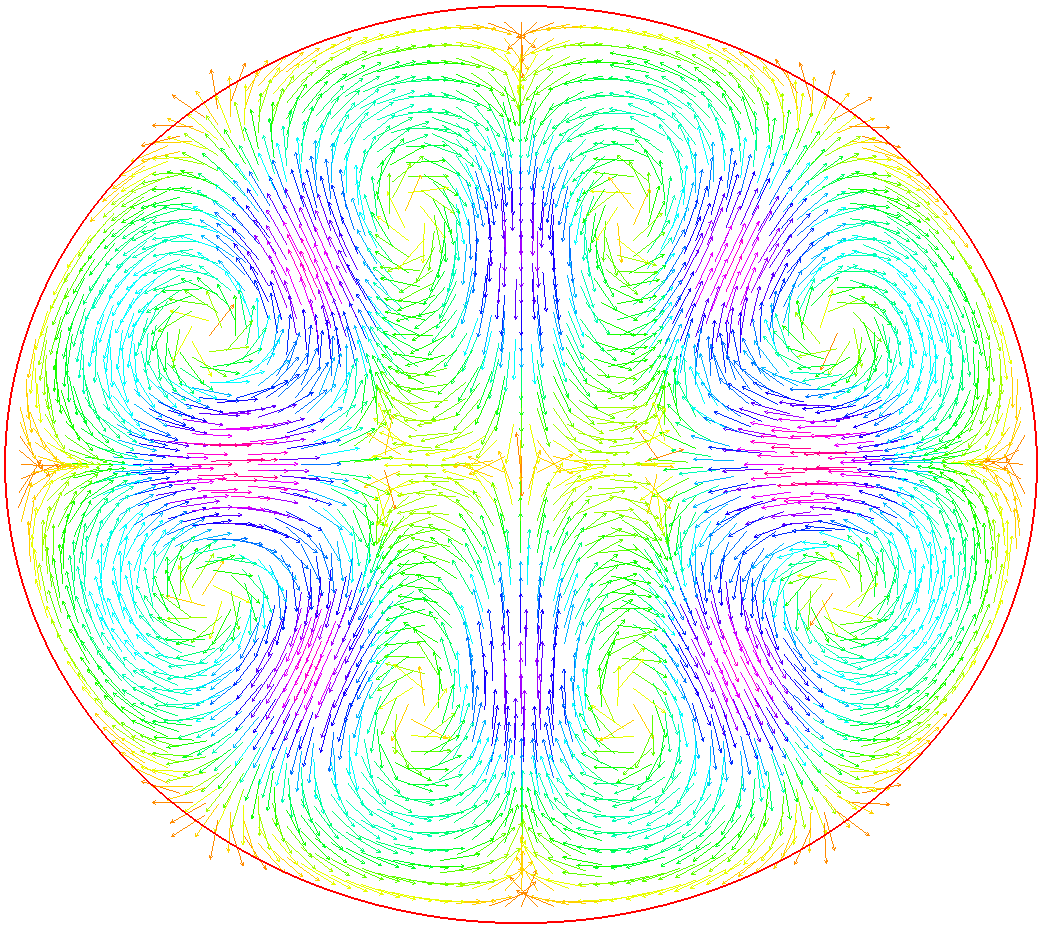}
		\caption*{$\eta = 52.2137$}
	\end{subfigure}
	\begin{subfigure}{0.49\linewidth}
		\includegraphics[width=0.45\linewidth]{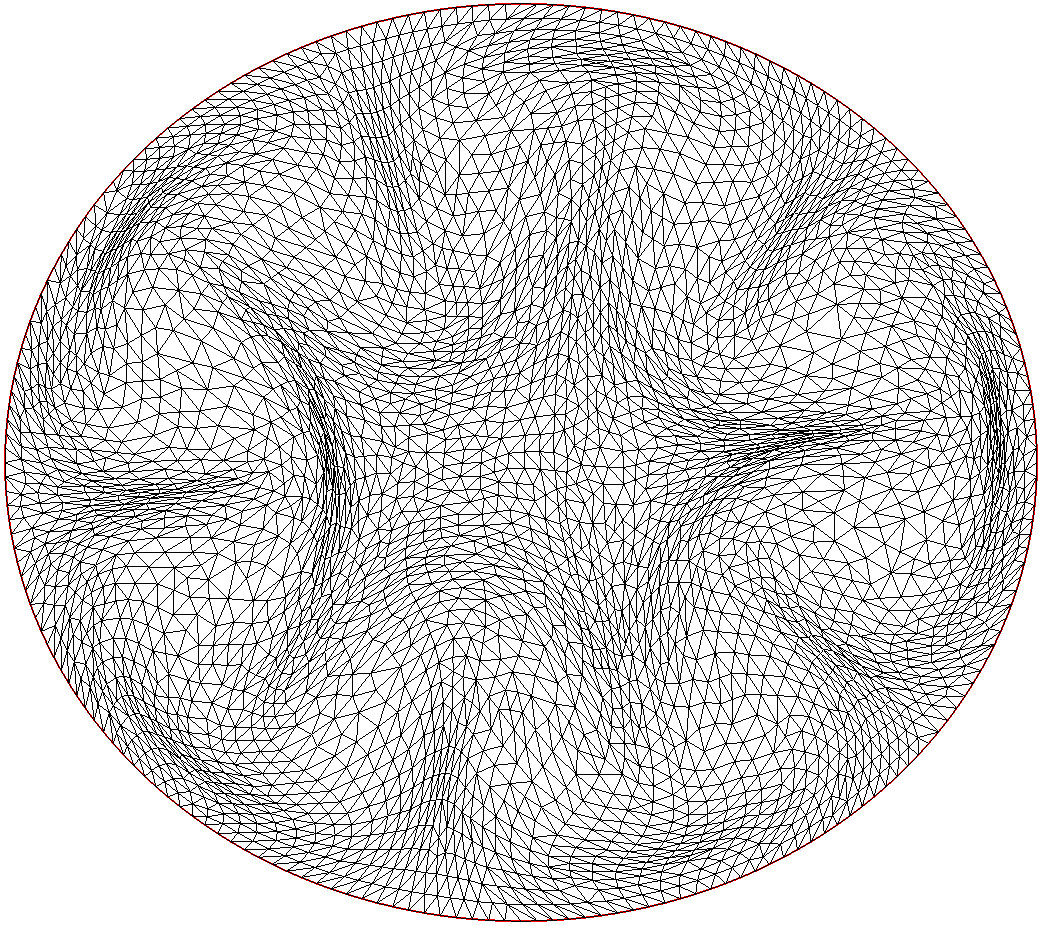}
		\includegraphics[width=0.45\linewidth]{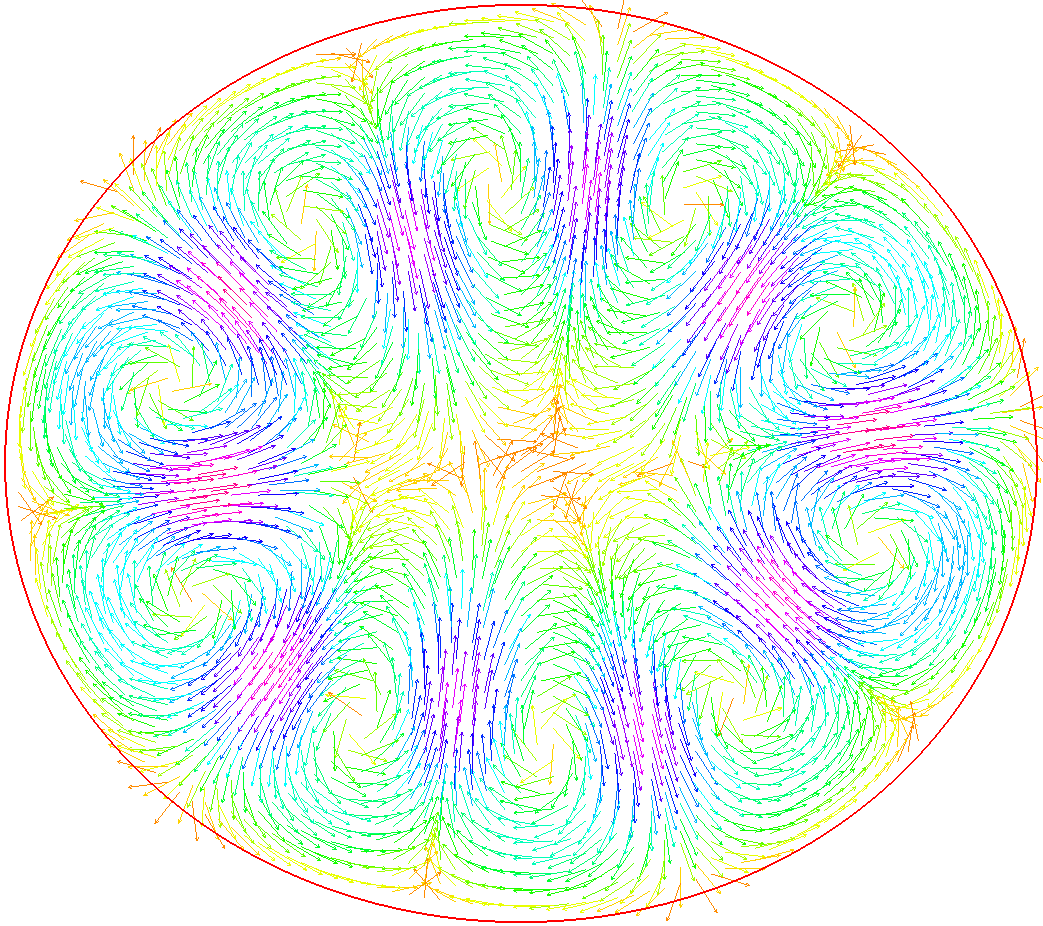}
		\caption*{$\eta = 64.7445$}
	\end{subfigure}
	\caption{Examples of eigenfunctions $\varphi$ of $\mathcal{A}_{0,0}^{\rm soft}$ with $\langle \varphi \rangle\neq 0$ and the associated eigenvalues $\eta$.  For each case, the first picture shows the deformation $x \mapsto x + \varphi(x)$ while the second shows the displacement vector field $x \mapsto \varphi(x)$.}
 \label{figureinclusions}
\end{figure}

\begin{figure}[b]
	\centering
	\includegraphics[trim={0.45cm 0.5cm 5.5cm 0.45cm},clip,width=1\linewidth]{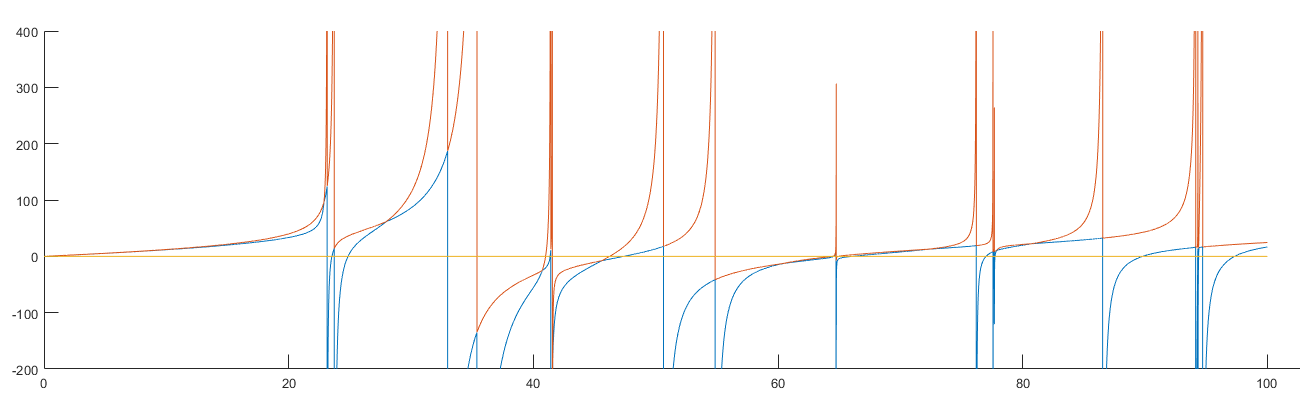}
	\caption{Plots of the eigenvalues $\beta_1(z)$, $\beta_2(z)$ of the ``truncation" $\mathcal{B}^n(z)$, see \eqref{Btruncation}, for $n=11.$ The $x$-axis represents the frequency $z$, the blue curve is the function $\beta_1(z)$ and the red curve is $\beta_2(z)$. The regions of $z$ for which both $\beta_1(z)$ and $\beta_2(z)$ are negative yield no solutions to \eqref{drelation}. }
    \label{figurebetafunction}
\end{figure}

\begin{figure}[t]
	\centering
	\begin{subfigure}{\linewidth}
		\centering
		\begin{subfigure}{0.49\linewidth}
			\centering
			\includegraphics[trim={6cm 2.5cm 5.5cm 1.5cm},clip,width=0.95\linewidth]{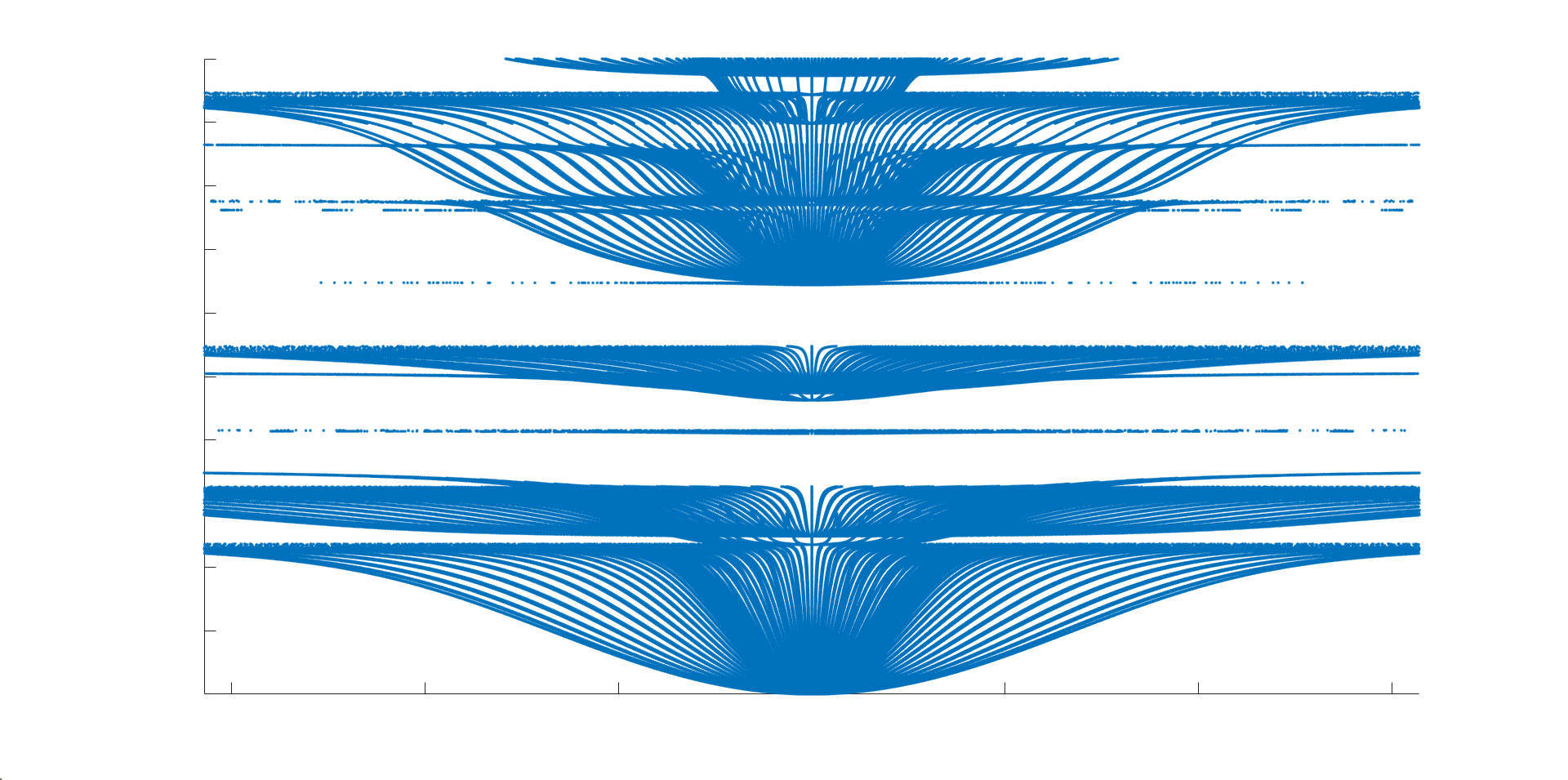}
		\end{subfigure}
		\begin{subfigure}{0.49\linewidth}
			\centering
			\includegraphics[trim={6cm 2.5cm 5.5cm 1.5cm},clip,width=0.95\linewidth]{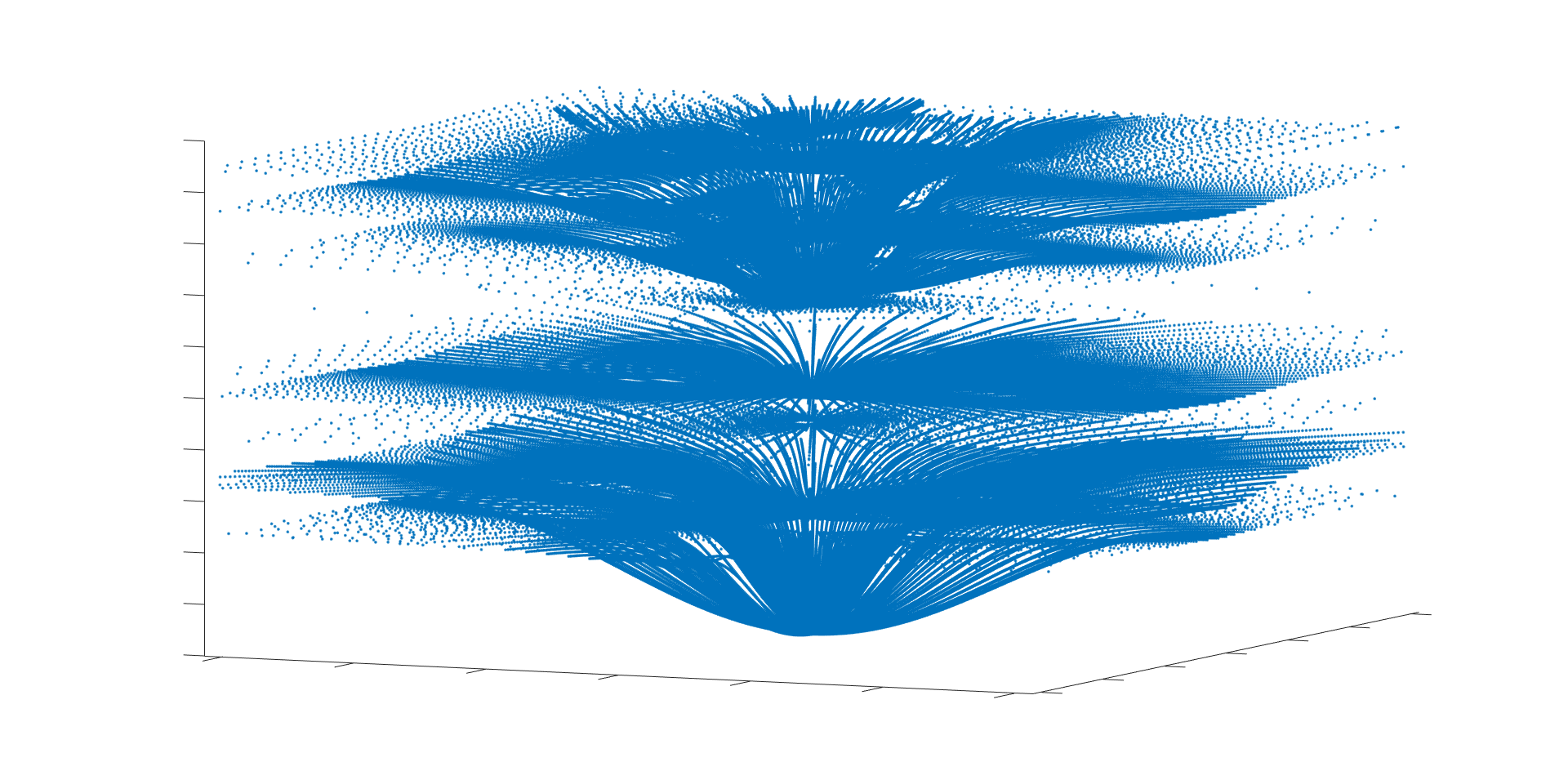}
		\end{subfigure}
		\caption*{$\varepsilon = 0.001$}
	\end{subfigure}
	\vfill
	\begin{subfigure}{\linewidth}
		\centering
		\begin{subfigure}{0.49\linewidth}\centering
			\includegraphics[trim={6cm 2.5cm 5.5cm 1.5cm},clip,width=0.95\linewidth]{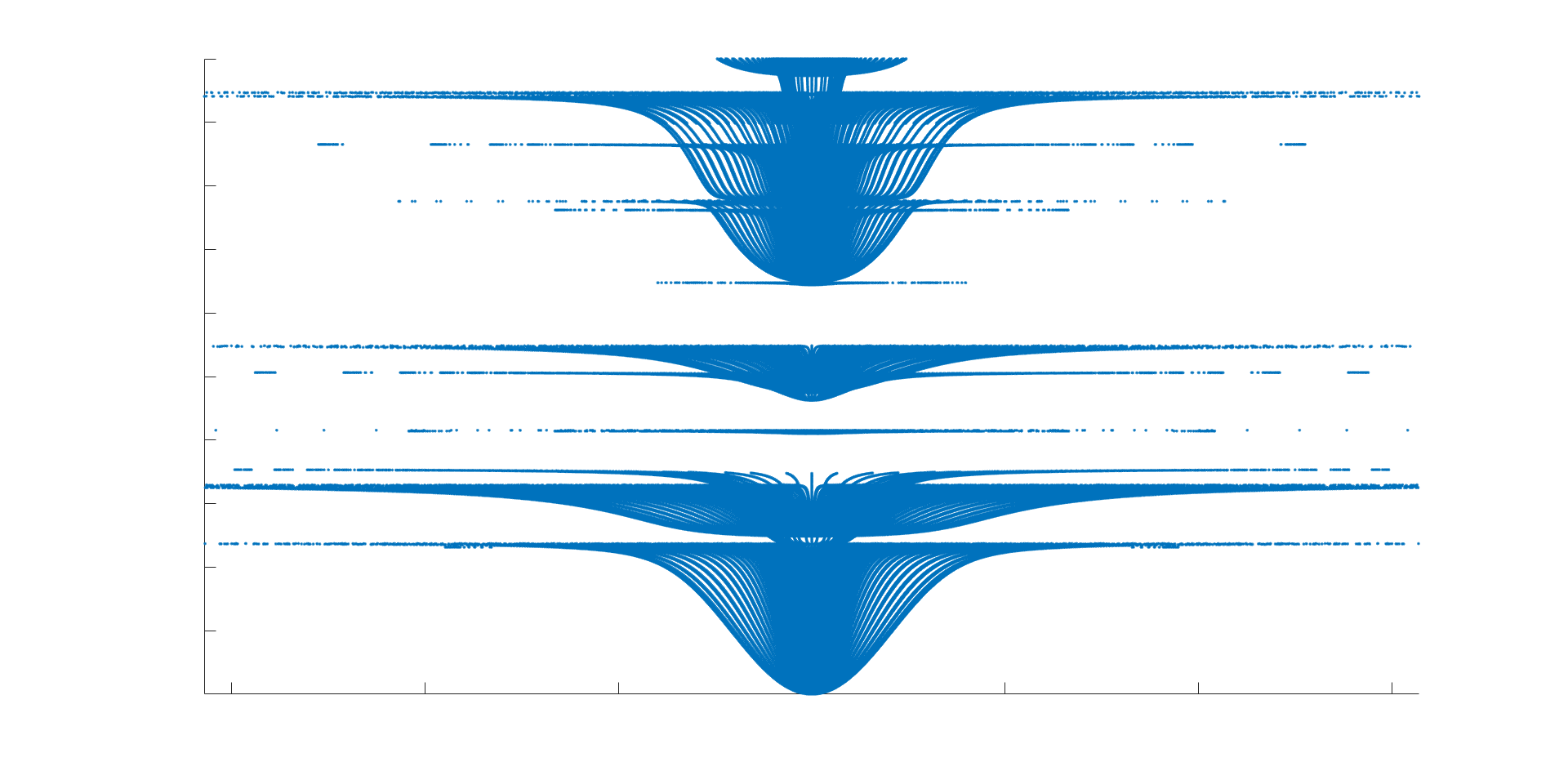}
		\end{subfigure}
		\begin{subfigure}{0.49\linewidth}\centering
			\includegraphics[trim={6cm 2.5cm 5.5cm 1.5cm},clip,width=0.95\linewidth]{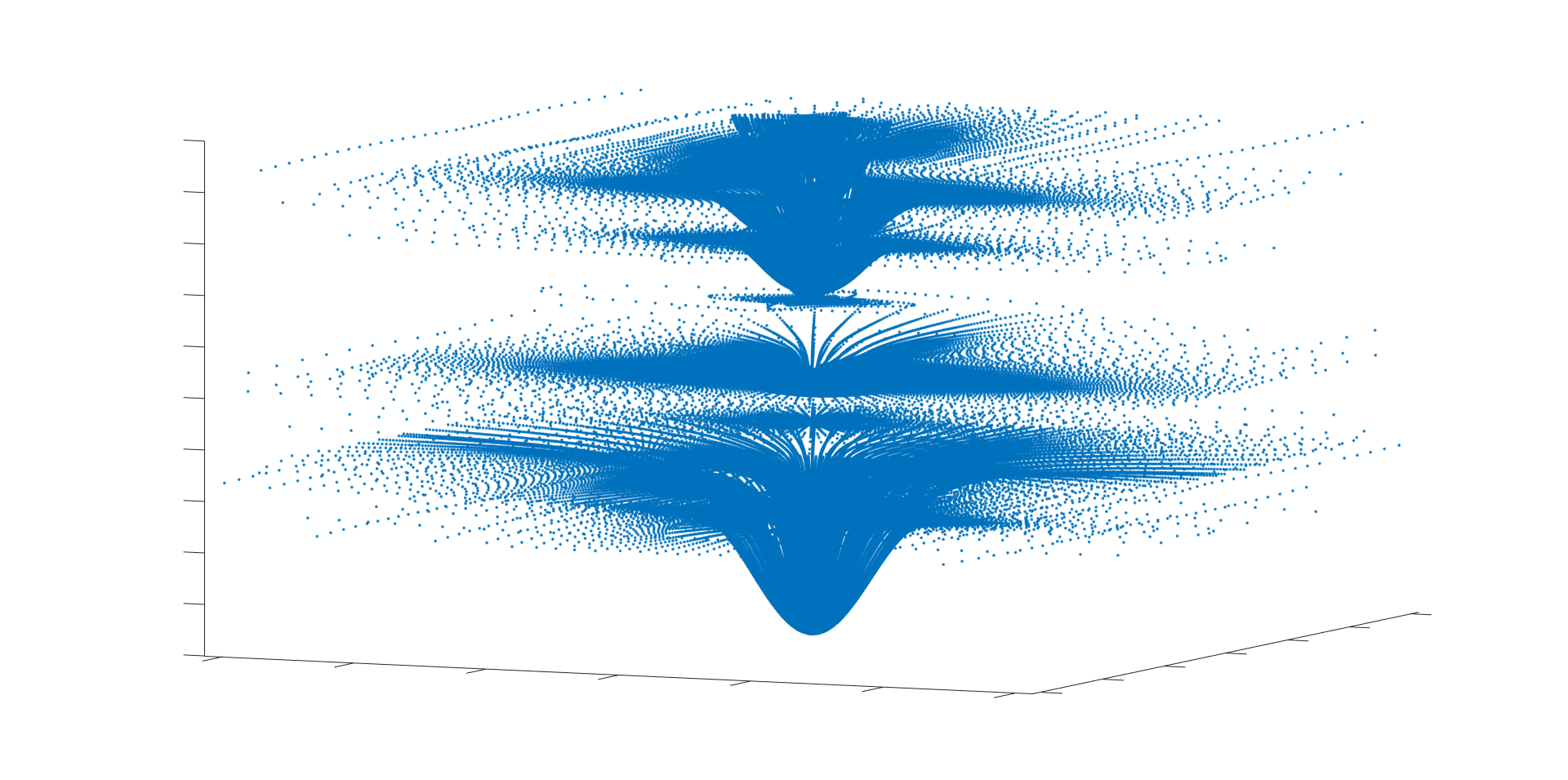}
		\end{subfigure}
		\caption*{$\varepsilon = 0.0001$}
	\end{subfigure}
	\vfill
	\begin{subfigure}{\linewidth}\centering
		\begin{subfigure}{0.49\linewidth}\centering
			\includegraphics[trim={6cm 2.5cm 5.5cm 1.5cm},clip,width=0.95\linewidth]{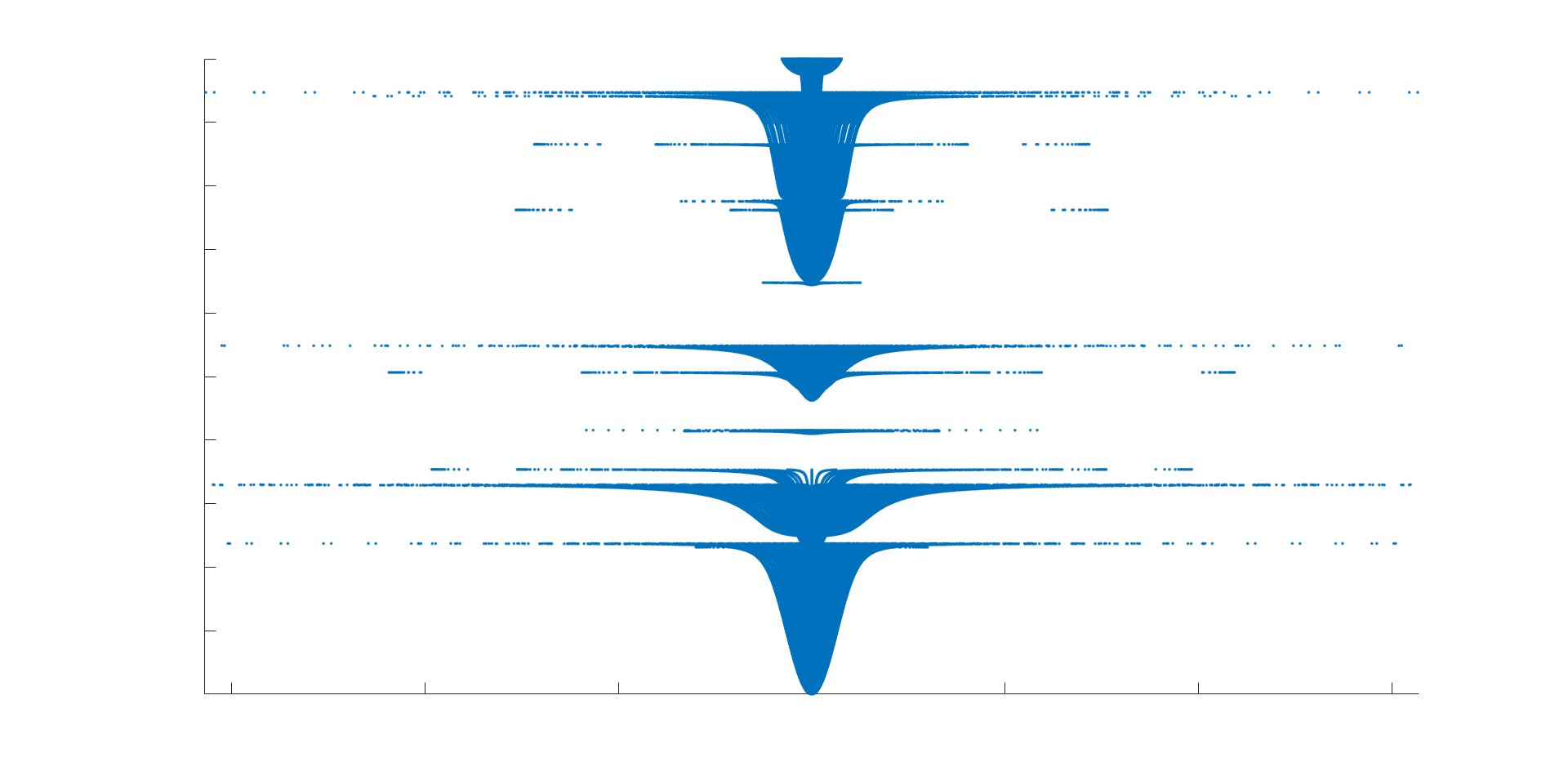}
		\end{subfigure}
		\begin{subfigure}{0.49\linewidth}\centering
			\includegraphics[trim={6cm 2.5cm 5.5cm 1.5cm},clip,width=0.95\linewidth]{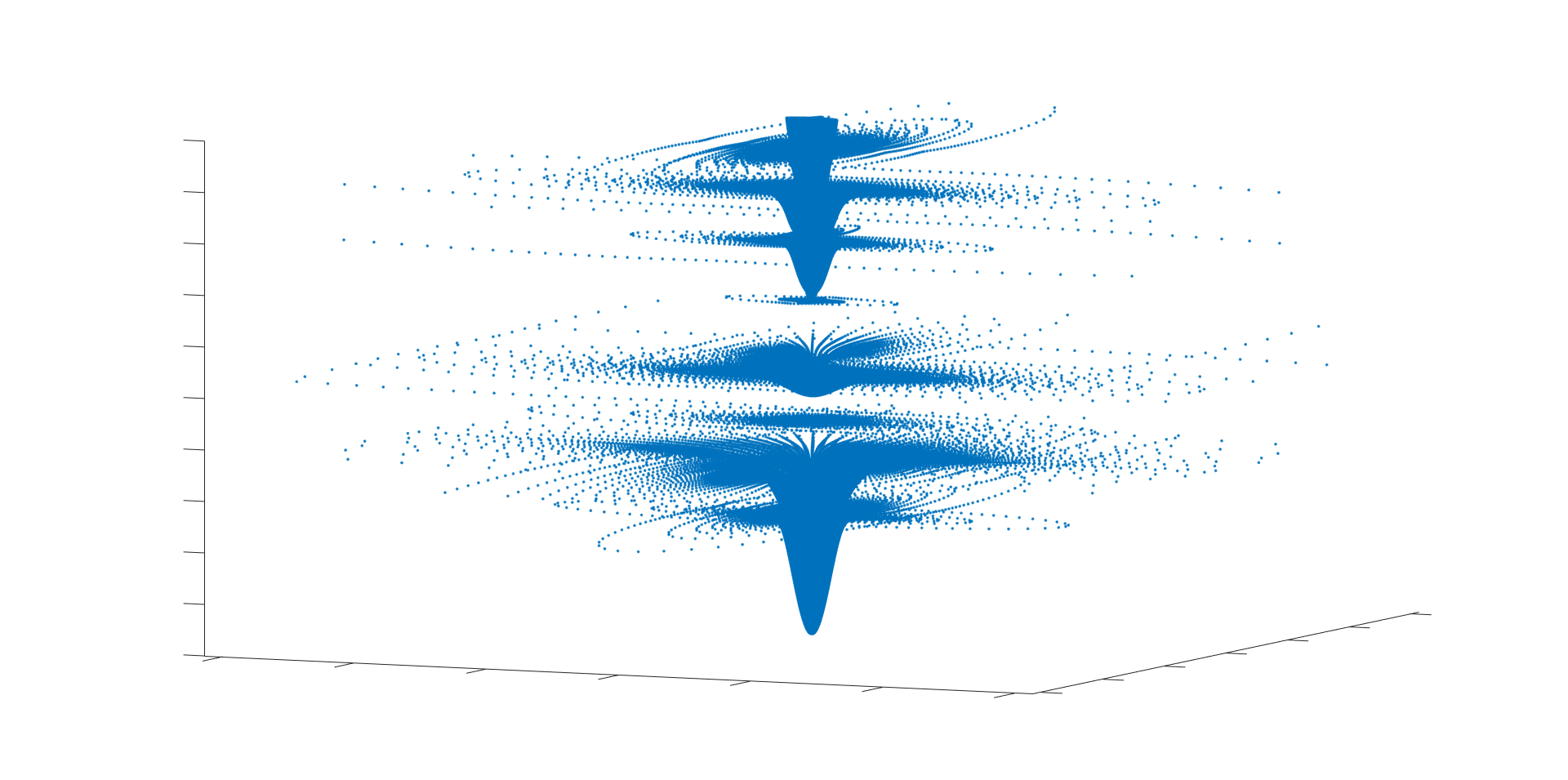}
		\end{subfigure}
		\caption*{$\varepsilon = 0.00001$}
	\end{subfigure}
	\caption{Solutions $(\chi,z)$ to the dispersion relation \eqref{drelation}: $\chi$ is horizontal, $z$ is vertical. Left panel: side view in the direction of $\chi_2;$ right panel: 3D view. 
		The number of dispersion surfaces at every $z$ is the number of non-negative eigenvalues of $\mathcal{B}(z)$. As $\varepsilon\to0,$ the gaps between the surfaces converge to the regions in which $\mathcal{B}(z)$ is negative-definite.}
	\label{figuredispersion}
\end{figure}


\renewcommand{\theequation}{A.\arabic{equation}}
\renewcommand{\thesubsection}{A.\arabic{subsection}}
\renewcommand{\thetheorem}{A.\arabic{theorem}}
\setcounter{theorem}{0}
\setcounter{subsection}{0}
\setcounter{equation}{0}



\nsection{Appendix}



\subsection{Operator theory}

\begin{lemma}
\label{lemmaforinvertibility}
Let $\mathcal{A}$ be a closed, densely defined linear operator on a complex Hilbert space $\mathcal H$, such that
    $\lvert \left\langle \mathcal{A}x,x\right\rangle\rvert \geq C \lVert x\rVert^2, \quad \forall x\in \mathcal{D}(\mathcal{A})$ and $\ker(\mathcal{A}^*) \subset \mathcal{D}(\mathcal{A}).$
Then the inverse $\mathcal{A}^{-1}$ exists and
    $\lVert \mathcal{A}^{-1}\rVert \leq C^{-1}.$
\end{lemma}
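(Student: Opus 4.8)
The plan is to show first that $\mathcal{A}$ is injective with a bounded-below estimate, then that it is surjective, and finally to read off the norm bound on $\mathcal{A}^{-1}$ from the lower bound. Injectivity with quantitative control is immediate from the hypothesis: for $x\in\mathcal{D}(\mathcal{A})$, the Cauchy--Schwarz inequality gives
\begin{equation*}
C\lVert x\rVert^2\le\bigl\lvert\langle\mathcal{A}x,x\rangle\bigr\rvert\le\lVert\mathcal{A}x\rVert\,\lVert x\rVert,
\end{equation*}
hence $\lVert\mathcal{A}x\rVert\ge C\lVert x\rVert$ for all $x\in\mathcal{D}(\mathcal{A})$. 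In particular $\ker(\mathcal{A})=\{0\}$, and since $\mathcal{A}$ is closed this estimate also shows that $\mathcal{R}(\mathcal{A})$ is closed: if $\mathcal{A}x_n\to y$, then $(x_n)$ is Cauchy by the bound, so $x_n\to x$ for some $x$, and closedness of $\mathcal{A}$ gives $x\in\mathcal{D}(\mathcal{A})$, $\mathcal{A}x=y$.

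The main point is surjectivity. Since $\mathcal{R}(\mathcal{A})$ is closed and $\mathcal{A}$ is densely defined, the standard orthogonal decomposition yields $\mathcal{H}=\mathcal{R}(\mathcal{A})\oplus\ker(\mathcal{A}^*)$, so it suffices to show $\ker(\mathcal{A}^*)=\{0\}$. Let $y\in\ker(\mathcal{A}^*)$. By the hypothesis $\ker(\mathcal{A}^*)\subset\mathcal{D}(\mathcal{A})$, so $y\in\mathcal{D}(\mathcal{A})$ and we may legitimately form $\langle\mathcal{A}y,y\rangle$. Now
\begin{equation*}
\langle\mathcal{A}y,y\rangle=\overline{\langle y,\mathcal{A}y\rangle}=\overline{\langle\mathcal{A}^*y,y\rangle}=0,
\end{equation*}
where the middle equality uses $y\in\mathcal{D}(\mathcal{A})\subset\mathcal{D}((\mathcal{A}^*)^*)$ together with the definition of the adjoint (more carefully: $\langle\mathcal{A}y,y\rangle=\langle y,\mathcal{A}^{*}y\rangle$ holds because $y\in\mathcal{D}(\mathcal{A})$ pairs with $y\in\mathcal{D}(\mathcal{A}^*)$ via the adjoint identity). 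Then the coercivity bound forces $C\lVert y\rVert^2\le\lvert\langle\mathcal{A}y,y\rangle\rvert=0$, so $y=0$. Hence $\ker(\mathcal{A}^*)=\{0\}$, $\mathcal{R}(\mathcal{A})=\mathcal{H}$, and $\mathcal{A}$ is a bijection from $\mathcal{D}(\mathcal{A})$ onto $\mathcal{H}$.

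Finally, $\mathcal{A}^{-1}:\mathcal{H}\to\mathcal{D}(\mathcal{A})$ is well defined and linear; applying the bound $\lVert\mathcal{A}x\rVert\ge C\lVert x\rVert$ with $x=\mathcal{A}^{-1}z$ gives $\lVert z\rVert\ge C\lVert\mathcal{A}^{-1}z\rVert$ for every $z\in\mathcal{H}$, i.e. $\lVert\mathcal{A}^{-1}\rVert\le C^{-1}$, as claimed. The one step requiring care is the manipulation $\langle\mathcal{A}y,y\rangle=\overline{\langle\mathcal{A}^*y,y\rangle}$: it relies crucially on the assumption $\ker(\mathcal{A}^*)\subset\mathcal{D}(\mathcal{A})$, without which the expression $\langle\mathcal{A}y,y\rangle$ is not even defined, and this is exactly why that hypothesis is imposed. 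Everything else is routine closed-range/closed-graph bookkeeping.
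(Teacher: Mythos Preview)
Your proof is correct and follows essentially the same approach as the paper: both use the coercivity hypothesis with Cauchy--Schwarz to get injectivity and the lower bound, and both show $\ker(\mathcal{A}^*)=\{0\}$ via the same key observation that $y\in\ker(\mathcal{A}^*)\cap\mathcal{D}(\mathcal{A})$ forces $\langle\mathcal{A}y,y\rangle=0$. The only cosmetic difference is ordering: you establish closed range explicitly first and then invoke the orthogonal decomposition, whereas the paper first shows dense range via $\ker(\mathcal{A}^*)=\{0\}$ and leaves the closed-range step implicit in the final norm estimate.
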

\begin{proof}
First, we show that $\overline{\mathcal{R}(\mathcal{A})}=\mathcal{H}=\mathcal{R}(\mathcal{A})$. To this end, take $x \in \ker(\mathcal{A}^*)$. Since $x \in \mathcal{D}(\mathcal{A}),$ one has
\begin{equation*}
    0 = \bigl\lvert \left\langle x,\mathcal{A}^* x\right\rangle\bigr\rvert =\bigl\lvert \left\langle \mathcal{A}x,x\right\rangle\bigr\rvert \geq C \lVert x\rVert^2,
\end{equation*}
and therefore $x = 0$. Therefore,
    $\overline{\mathcal{R}(\mathcal{A})} = \bigl(\ker(\mathcal{A}^*) \bigr)^\perp = \{0\}^\perp = \mathcal{H}.$
Clearly, $\mathcal{A}$ is an injection. Thus $\mathcal{A}^{-1}$ exists. For $y \in \mathcal{R}(\mathcal{A})$ we put $x = \mathcal{A}^{-1}y$. Thus, one has
\begin{equation*}
    \lVert x \rVert^2  = \lVert \mathcal{A}^{-1} y \rVert^2 \leq C^{-1}\bigl\lvert \left\langle y,\mathcal{A}^{-1}y\right\rangle\bigr\rvert \leq C^{-1}\lVert y \rVert \lVert \mathcal{A}^{-1}y \rVert,
\end{equation*}
and the claim follows. 
\end{proof}
\begin{corollary}
\label{boundfrombelowappendix}
Let $\mathcal{A}$ be a closed, densely defined linear operator on a complex Hilbert space $\mathcal{H}$ such that $\mathcal{D}(\mathcal{A}) = \mathcal{D}(\mathcal{A}^*)$. Denote by $\Re \mathcal{A}$ and $\Im \mathcal{A}$ the real and imaginary part of $\mathcal{A}.$  Assume that $\max\bigl\{\bigl\lvert  \langle \Im\mathcal{A}x, x\rangle\bigr\rvert, \bigl\lvert\langle \Re\mathcal{A}x, x\rangle\bigr\rvert\bigr\}\ge C\lVert x\rVert^2$ for all $x\in\mathcal{D}(\mathcal{A})$ for some $C>0.$
Then the inverse $\mathcal{A}^{-1}$ exists and
    $\lVert \mathcal{A}^{-1}\rVert \leq C^{-1}.$
\end{corollary}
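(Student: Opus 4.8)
\textbf{Proof proposal for Corollary \ref{boundfrombelowappendix}.}

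The plan is to reduce the claim to Lemma \ref{lemmaforinvertibility} by observing that, at each fixed $x \in \mathcal{D}(\mathcal{A})$, the hypothesis hands us control of at least one of $|\langle \Re\mathcal{A}\,x, x\rangle|$ and $|\langle \Im\mathcal{A}\,x, x\rangle|$, and these are exactly $|\Re\langle \mathcal{A}x, x\rangle|$ and $|\Im\langle \mathcal{A}x, x\rangle|$ since $\Re\mathcal{A}$ and $\Im\mathcal{A}$ are symmetric (self-adjoint on their common domain $\mathcal{D}(\mathcal{A})=\mathcal{D}(\mathcal{A}^*)$). First I would note that for any complex number $w = \langle \mathcal{A}x, x\rangle$ one has $|w| \geq \max\{|\Re w|, |\Im w|\}$, so the assumption $\max\{|\langle \Im\mathcal{A}x,x\rangle|, |\langle\Re\mathcal{A}x,x\rangle|\} \geq C\lVert x\rVert^2$ immediately yields
\begin{equation*}
	\bigl|\langle \mathcal{A}x, x\rangle\bigr| \geq C\lVert x\rVert^2 \qquad \forall x \in \mathcal{D}(\mathcal{A}).
\end{equation*}
This is the first hypothesis required by Lemma \ref{lemmaforinvertibility}.

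Next I would verify the second hypothesis of Lemma \ref{lemmaforinvertibility}, namely $\ker(\mathcal{A}^*) \subset \mathcal{D}(\mathcal{A})$. But this is built into the standing assumption $\mathcal{D}(\mathcal{A}) = \mathcal{D}(\mathcal{A}^*)$: any $x \in \ker(\mathcal{A}^*)$ lies in $\mathcal{D}(\mathcal{A}^*) = \mathcal{D}(\mathcal{A})$, so the inclusion holds trivially. Since $\mathcal{A}$ is assumed closed and densely defined, all the hypotheses of Lemma \ref{lemmaforinvertibility} are met, and we conclude that $\mathcal{A}^{-1}$ exists and $\lVert \mathcal{A}^{-1}\rVert \leq C^{-1}$, as claimed.

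There is essentially no obstacle here; the corollary is a packaging of Lemma \ref{lemmaforinvertibility} tailored to the way the real and imaginary parts of the operators $\widehat{Q}^{\rm app}_{\chi,\varepsilon}(z)$, $\widehat{Q}^{\rm eff}_{\chi,\varepsilon}(z)$, $\mathcal{B}(z)$, and $\mathbb{A}$ are estimated elsewhere in the paper. The only point worth a sentence of care is the elementary inequality $|w| \geq \max\{|\Re w|,|\Im w|\}$ for $w \in \mathbb{C}$, and the identification $\Re\langle \mathcal{A}x,x\rangle = \langle \Re\mathcal{A}\,x,x\rangle$, $\Im\langle \mathcal{A}x,x\rangle = \langle \Im\mathcal{A}\,x,x\rangle$, both of which are immediate from $\langle \mathcal{A}x,x\rangle = \langle \Re\mathcal{A}\,x,x\rangle + {\rm i}\langle \Im\mathcal{A}\,x,x\rangle$ together with the self-adjointness of $\Re\mathcal{A}$ and $\Im\mathcal{A}$ (which makes the two pairings real). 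One could alternatively avoid invoking self-adjointness of the parts and simply use $\langle \Re\mathcal{A}\,x,x\rangle = \tfrac12(\langle \mathcal{A}x,x\rangle + \langle x,\mathcal{A}x\rangle) = \Re\langle \mathcal{A}x,x\rangle$ and likewise for $\Im\mathcal{A}$, which needs only $\mathcal{D}(\mathcal{A}) = \mathcal{D}(\mathcal{A}^*)$.
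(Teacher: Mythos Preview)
Your proof is correct and follows the same approach as the paper: reduce to Lemma \ref{lemmaforinvertibility} via the elementary inequality $|\langle \mathcal{A}x,x\rangle| \geq \max\{|\langle \Re\mathcal{A}x,x\rangle|,|\langle \Im\mathcal{A}x,x\rangle|\}$, using that the real and imaginary parts give real quadratic forms. You are slightly more explicit than the paper in checking the hypothesis $\ker(\mathcal{A}^*)\subset\mathcal{D}(\mathcal{A})$, but this is immediate from $\mathcal{D}(\mathcal{A})=\mathcal{D}(\mathcal{A}^*)$ and the paper simply omits it.
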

\begin{proof}
	The claim follows immediately from Lemma \ref{lemmaforinvertibility}, since
\begin{equation*}
    \bigl\lvert\langle \mathcal{A}x, x\rangle\bigr\rvert = \sqrt{\bigl\lvert\langle \Re \mathcal{A}x, x\rangle \bigr\rvert^2+\bigl\lvert\langle\Im\mathcal{A}x, x\rangle\bigr\rvert^2} \geq \max\bigl\{\bigl\lvert  \langle \Im\mathcal{A}x, x\rangle\bigr\rvert, \bigl\lvert\langle \Re\mathcal{A}x, x\rangle\bigr\rvert\bigr\}\qquad \forall x \in \mathcal{D}(\mathcal{A}).\qedhere\popQED
\end{equation*}
\end{proof}

\subsection{Auxiliary estimates}
Here we state some of the results which we use in the main text. In this part we state various versions of Korn's and trace inequalities. First, we state a special case of the classical trace theorem. 
\begin{proposition}\label{josipapp2} 
There exists $C>0$ such that for every $\vect g \in H^{1/2}(\Gamma;\C^3)$ there is an extension $\vect G \in H_\#^1(Y_{\rm stiff(soft)};\C^3)$ satisfying
   $ \left\lVert \vect G \right\rVert_{H^1(Y_{\rm stiff(soft)};\C^3)} \leq C \left\lVert \vect g \right\rVert_{H^{1/2}(\Gamma;\C^3)}.$
\end{proposition}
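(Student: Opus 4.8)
The statement to be proved (Proposition \ref{josipapp2}) is the classical trace-extension theorem, namely the existence of a bounded right inverse of the trace operator from $H^1_\#(Y_{\rm stiff(soft)};\C^3)$ onto $H^{1/2}(\Gamma;\C^3)$, with a constant that does not depend on the component chosen. The plan is to reduce to the scalar case (each of the three components of $\vect g$ is handled independently and identically, so one may as well work with scalar functions) and then to invoke the standard theory of Sobolev spaces on Lipschitz --- in fact, by our Assumption \ref{coffassumption}, $C^{1,1}$ --- domains.

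First I would recall that, since $\Gamma$ is a compact $C^{1,1}$ hypersurface bounding the bounded domain $Y_{\rm soft}$ with $\overline{Y_{\rm soft}}\subset \mathrm{int}\,Y$, the trace operator $\mathrm{Tr}: H^1(Y_{\rm soft};\C) \to H^{1/2}(\Gamma;\C)$ is bounded and surjective, and admits a bounded linear right inverse $\mathcal{L}_{\rm soft}: H^{1/2}(\Gamma;\C)\to H^1(Y_{\rm soft};\C)$; this is, e.g., \cite[Theorem 1.5.1.3]{Gri04}-type material or can be quoted from \cite{Triebel}. Periodicity is vacuous on the soft component since $\Gamma$ lies strictly inside $Y$. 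For the stiff component, the subtlety is the periodicity constraint: one cannot merely restrict a soft extension, because the result need not be $Y$-periodic and need not even be defined on all of $Y_{\rm stiff}$. Here I would proceed as follows. Take the soft extension $\vect G_0 := \mathcal{L}_{\rm soft}\vect g \in H^1(Y_{\rm soft};\C^3)$, which satisfies $\mathrm{Tr}\,\vect G_0 = \vect g$ on $\Gamma$ and $\|\vect G_0\|_{H^1(Y_{\rm soft})} \le C\|\vect g\|_{H^{1/2}(\Gamma)}$. Now extend $\vect G_0$ from $Y_{\rm soft}$ to $\R^3$ by a bounded Sobolev extension operator $E: H^1(Y_{\rm soft};\C^3)\to H^1(\R^3;\C^3)$ (available since $Y_{\rm soft}$ is a bounded $C^{1,1}$, hence Lipschitz, domain --- Stein's extension theorem, see \cite{Triebel}), obtaining $\vect G_1 := E\vect G_0 \in H^1(\R^3;\C^3)$ with $\|\vect G_1\|_{H^1(\R^3)}\le C\|\vect g\|_{H^{1/2}(\Gamma)}$ and $\vect G_1|_{Y_{\rm soft}} = \vect G_0$, so in particular $\mathrm{Tr}\,\vect G_1 = \vect g$ on $\Gamma$ from the $Y_{\rm soft}$ side, and by continuity of the trace also from the $Y_{\rm stiff}$ side since $\vect G_1$ is a single $H^1(\R^3)$ function. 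Finally, periodise: fix a cut-off $\varphi\in C_c^\infty(\mathrm{int}\,Y)$ with $\varphi\equiv 1$ on a neighbourhood of $\overline{Y_{\rm soft}}$, set $\widetilde{\vect G} := \varphi\,\vect G_1$, extend $\widetilde{\vect G}$ by $Y$-periodicity to $\R^3$ (this is well defined since $\mathrm{supp}\,\widetilde{\vect G}\cap Y \Subset \mathrm{int}\,Y$, so no compatibility condition on $\partial Y$ is violated), and let $\vect G := \widetilde{\vect G}|_{Y_{\rm stiff}}$. Then $\vect G\in H^1_\#(Y_{\rm stiff};\C^3)$, $\mathrm{Tr}\,\vect G = \varphi\,\vect g = \vect g$ on $\Gamma$ (as $\varphi\equiv 1$ near $\overline{Y_{\rm soft}}\supset\Gamma$), and the multiplication by $\varphi$ together with the boundedness of $E$ and $\mathcal{L}_{\rm soft}$ gives $\|\vect G\|_{H^1(Y_{\rm stiff})}\le C\|\varphi\,\vect G_1\|_{H^1(\R^3)} \le C\|\vect g\|_{H^{1/2}(\Gamma)}$, with $C$ independent of $\vect g$ and of the component.

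The main obstacle --- though it is more a bookkeeping point than a genuine difficulty --- is ensuring that the periodisation step does not destroy the $H^1$ regularity or introduce a dependence of the constant on anything uncontrolled; this is handled cleanly by the compact support of $\varphi$ inside $\mathrm{int}\,Y$, which is possible precisely because of the geometric hypothesis $\overline{Y_{\rm soft}}\subset \mathrm{int}\,Y$ made in Section \ref{elast_op_sec}. (In the 2D ``plane-strain'' setting of Remark \ref{2Dremark} the same argument applies verbatim to the cross-section and is then trivially independent of the axial variable.) Since all the operators involved --- $\mathcal{L}_{\rm soft}$, the Stein extension $E$, multiplication by the fixed $\varphi$, restriction, and periodic continuation --- are bounded linear maps with component-independent norms, composing them yields the claimed bound. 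I would therefore present the proof as this short chain of reductions, citing \cite{Triebel} (or \cite{Gri04}) for the trace right inverse and the Stein extension, and leave the elementary verification of the norm estimates to the reader.
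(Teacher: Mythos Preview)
Your proof is correct. Note, however, that the paper does not actually prove this proposition: it is simply stated in the Appendix as ``a special case of the classical trace theorem'' and left without argument. So there is no paper proof to compare against; you have supplied the details the authors chose to omit.

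Your construction --- trace right inverse on the soft side, Stein extension to $\R^3$, multiplication by a cutoff $\varphi\in C_c^\infty(\mathrm{int}\,Y)$ identically one near $\overline{Y_{\rm soft}}$, then periodisation and restriction to $Y_{\rm stiff}$ --- is the standard way to handle the periodicity constraint on the stiff component, and it works precisely because of the geometric hypothesis $\overline{Y_{\rm soft}}\subset\mathrm{int}\,Y$ made in Section~\ref{elast_op_sec}, as you correctly identify. The observation that $H^1_\#(Y_{\rm soft};\C^3)=H^1(Y_{\rm soft};\C^3)$ (since $Y_{\rm soft}$ does not touch $\partial Y$) is also correct and disposes of the soft case immediately.

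One minor bibliographic slip: in this paper \cite{Gri04} is Griso's 2004 homogenisation error-estimate paper, not Grisvard's \emph{Elliptic Problems in Nonsmooth Domains} (which is where Theorem~1.5.1.3 lives). Your fallback citation to \cite{Triebel} is safe.
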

Next, we recall a well-known Korn's inequality (see, e.g., \cite{Oleinik}). 
\begin{proposition}
\label{appendixkorn1}
Let $\Omega\subset \R^3$ be a bounded open set with Lipschitz boundary. There exists a constant $C>0$ such that for every $\vect u \in H^1(\Omega;\C^3)$, we have 
\begin{equation}
  \label{korninequality1}
    \lVert \vect u \rVert_{H^1(\Omega;\C^3)} \leq C \left( \Vert\simgrad \vect u\Vert_{L^2(\Omega;\C^{3\times 3})} +\lVert \vect u \rVert_{L^2(\Omega;\C^3)}\right),
\end{equation}
where the constant $C$ depends only on the domain $\Omega$.
\end{proposition}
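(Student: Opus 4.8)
\textbf{Proof proposal for Proposition \ref{appendixkorn1} (the second Korn inequality).}

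The plan is to prove the Korn inequality \eqref{korninequality1} by the standard compactness--contradiction argument, which reduces the claim to the well-known fact that the kernel of the symmetrized gradient $\simgrad$ consists precisely of the infinitesimal rigid-body motions. First I would recall the relevant version of Korn's first inequality on $H^1_0$ and, more importantly, the \emph{Korn--Friedrichs} type estimate: for $\vect u\in H^1(\Omega;\C^3)$ one has $\|\vect u\|_{H^1(\Omega;\C^3)}\le C\bigl(\|\simgrad\vect u\|_{L^2(\Omega;\C^{3\times3})}+\|\vect u\|_{H^{-1}(\Omega;\C^3)}\bigr)$, or alternatively argue directly from the distributional identity $\partial_i\partial_j u_k=\partial_i(\simgrad\vect u)_{jk}+\partial_j(\simgrad\vect u)_{ik}-\partial_k(\simgrad\vect u)_{ij}$, which shows that control of $\simgrad\vect u$ in $L^2$ together with control of $\vect u$ in $L^2$ yields control of all second derivatives in $H^{-1}$, hence by the Lions lemma (the $H^{-1}$-regularity criterion, using that $\Omega$ is Lipschitz) control of $\vect u$ in $H^1$.

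The cleanest route avoiding the Lions lemma machinery is the compactness argument, which I would carry out as follows. Suppose \eqref{korninequality1} fails; then there is a sequence $(\vect u_n)\subset H^1(\Omega;\C^3)$ with $\|\vect u_n\|_{H^1(\Omega;\C^3)}=1$ and $\|\simgrad\vect u_n\|_{L^2}+\|\vect u_n\|_{L^2}\to 0$. Since $(\vect u_n)$ is bounded in $H^1$, by reflexivity and the Rellich--Kondrachov theorem (valid for Lipschitz $\Omega$) we may pass to a subsequence with $\vect u_n\weak\vect u$ weakly in $H^1(\Omega;\C^3)$ and $\vect u_n\to\vect u$ strongly in $L^2(\Omega;\C^3)$. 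From $\|\vect u_n\|_{L^2}\to0$ we get $\vect u=0$. On the other hand, $\simgrad\vect u_n\to0$ in $L^2$ and $\simgrad\vect u_n\weak\simgrad\vect u=0$, so in fact $\simgrad\vect u_n\to0$ strongly in $L^2$; combined with $\vect u_n\to0$ in $L^2$ this gives, via the already-established inequality $\|\vect u_n-\vect v_n\|_{H^1}\le C(\|\simgrad\vect u_n\|_{L^2}+\|\vect u_n-\vect v_n\|_{L^2})$ where $\vect v_n$ is the $L^2$-projection of $\vect u_n$ onto the finite-dimensional space of rigid motions --- or more directly via the $H^{-1}$ second-derivative identity --- that $\|\nabla\vect u_n\|_{L^2}\to0$. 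Hence $\|\vect u_n\|_{H^1}\to0$, contradicting $\|\vect u_n\|_{H^1}=1$. This proves the inequality with \emph{some} constant $C=C(\Omega)$.

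I expect the main obstacle to be making rigorous the step that passes from ``$\simgrad\vect u_n\to0$ and $\vect u_n\to0$ in $L^2$'' to ``$\nabla\vect u_n\to0$ in $L^2$'', i.e.\ the genuine analytic content of Korn's inequality. On a Lipschitz domain this is not elementary: one invokes the result that if $\vect w\in L^2(\Omega;\C^3)$ has $\simgrad\vect w\in L^2(\Omega;\C^{3\times3})$ then $\vect w\in H^1(\Omega;\C^3)$ with the corresponding norm bound, a statement equivalent to Korn's second inequality itself. The honest way to present this is to cite the classical reference --- the monograph of Ole\u\i nik, Shamaev and Yosifian \cite{Oleinik}, already in the bibliography --- for the basic Korn inequality on Lipschitz domains, and to use the compactness argument only to \emph{organize} the proof and to confirm that rigid motions are the only obstruction. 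Everything else (Rellich compactness, weak/strong convergence manipulations, lower semicontinuity of the $L^2$ norm under weak $H^1$ convergence) is routine. Since the statement is explicitly labelled as well known and references \cite{Oleinik}, a one-line proof ``See, e.g., \cite{Oleinik}'' is in fact the appropriate level of detail for the paper; the sketch above records the argument behind it.
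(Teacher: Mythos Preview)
The paper gives no proof at all for this proposition: it is stated as a well-known result with the reference \cite{Oleinik}, and that is the entirety of the paper's treatment. Your proposal ultimately lands in the same place, correctly noting that ``See, e.g., \cite{Oleinik}'' is the appropriate level of detail, so in that sense you match the paper.

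Your extended sketch is honest about its own limitation: the compactness--contradiction argument you outline is circular at the crucial step, since passing from $\simgrad\vect u_n\to 0$ and $\vect u_n\to 0$ in $L^2$ to $\nabla\vect u_n\to 0$ in $L^2$ is precisely the content of Korn's second inequality. You flag this yourself and defer to \cite{Oleinik}, which is correct. The distributional identity $\partial_i\partial_j u_k=\partial_i(\simgrad\vect u)_{jk}+\partial_j(\simgrad\vect u)_{ik}-\partial_k(\simgrad\vect u)_{ij}$ together with the Lions lemma (or Ne\v{c}as' inequality) on Lipschitz domains is indeed the genuine analytic input, and that is what the cited reference supplies. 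So your discussion is a fair commentary on why the result is nontrivial, but the paper simply cites it and moves on --- and you agree that this is what should be done.
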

We use the following version of Korn's inequality, proved by contradiction from Proposition \ref{appendixkorn1}.  
\begin{proposition}
\label{appendixkorn2}
Let $\Omega\subset \R^3$ be a bounded open connected set with Lipschitz boundary and $B \subset \partial \Omega$, $|B|>0$. 
Then there exists a constant $C>0$ such that for every $\vect u \in H^1(\Omega;\C^3)$ we have
\begin{equation}
\label{korninequality2}
    \lVert \vect u \rVert_{H^1(\Omega;\C^3)} \leq C\left( \lVert \simgrad \vect u \rVert_{L^2(\Omega;\C^{3 \times 3})} + \lVert  \vect u \rVert_{L^2(B;\C^3)} \right),
\end{equation}
where the constant $C$ depends only on $\Omega$ and $B$.  
\end{proposition}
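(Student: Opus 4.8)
The statement to prove is Proposition \ref{appendixkorn2}, the Korn inequality with a boundary term replacing the $L^2(\Omega)$ term, derived from the classical Korn inequality \eqref{korninequality1}.

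\textbf{Plan.} The standard approach is a compactness-contradiction argument. Suppose the claimed inequality \eqref{korninequality2} fails. Then for each $n\in\N$ there exists $\vect u_n\in H^1(\Omega;\C^3)$ with
\[
\lVert \vect u_n\rVert_{H^1(\Omega;\C^3)}=1,\qquad \lVert \simgrad \vect u_n\rVert_{L^2(\Omega;\C^{3\times3})}+\lVert \vect u_n\rVert_{L^2(B;\C^3)}<\frac1n.
\]
Since $(\vect u_n)$ is bounded in $H^1(\Omega;\C^3)$, after passing to a subsequence (not relabelled) we have $\vect u_n\weak \vect u$ weakly in $H^1(\Omega;\C^3)$ and, by the Rellich--Kondrachov theorem, $\vect u_n\to \vect u$ strongly in $L^2(\Omega;\C^3)$. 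Moreover $\simgrad \vect u_n\to 0$ strongly in $L^2(\Omega;\C^{3\times3})$, so $\simgrad \vect u=0$; this forces $\vect u$ to be an infinitesimal rigid displacement, i.e. $\vect u(x)=\vect a+\vect B x$ with $\vect a\in\C^3$ and $\vect B\in\C^{3\times3}$ skew-symmetric. In particular $\vect u\in C^\infty(\overline\Omega;\C^3)$, and the convergence $\vect u_n\to\vect u$ is in fact strong in $H^1(\Omega;\C^3)$: indeed $\simgrad\vect u_n\to\simgrad\vect u=0$ in $L^2$, and by the classical Korn inequality \eqref{korninequality1} applied to $\vect u_n-\vect u$ one gets $\lVert\vect u_n-\vect u\rVert_{H^1}\le C(\lVert\simgrad(\vect u_n-\vect u)\rVert_{L^2}+\lVert\vect u_n-\vect u\rVert_{L^2})\to0$. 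Hence $\lVert\vect u\rVert_{H^1(\Omega;\C^3)}=1$, so $\vect u\ne0$.

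\textbf{Using the boundary term.} By the continuity of the trace operator $H^1(\Omega;\C^3)\to L^2(\partial\Omega;\C^3)$ (valid since $\partial\Omega$ is Lipschitz), the strong convergence $\vect u_n\to\vect u$ in $H^1(\Omega;\C^3)$ yields $\vect u_n|_{\partial\Omega}\to\vect u|_{\partial\Omega}$ in $L^2(\partial\Omega;\C^3)$, and in particular $\lVert\vect u_n\rVert_{L^2(B;\C^3)}\to\lVert\vect u\rVert_{L^2(B;\C^3)}$. But $\lVert\vect u_n\rVert_{L^2(B;\C^3)}<1/n\to0$, so $\vect u=0$ a.e.\ on $B$. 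Since $\vect u$ is a rigid displacement and $|B|>0$, this is incompatible with $\vect u\ne 0$: a nonzero affine map $x\mapsto\vect a+\vect B x$ vanishes only on a proper affine subspace of $\R^3$, which has zero surface measure on $\partial\Omega$ (here one uses that $\partial\Omega$, being Lipschitz, cannot contain a set of positive surface measure lying in a single plane unless... ) — more carefully, the zero set of a nonzero rigid displacement is contained in a line or plane, hence has $\mathcal{H}^2$-measure zero when intersected with the Lipschitz hypersurface $\partial\Omega$ on most of it, but to be safe one argues: the set $\{x\in\R^3: \vect a+\vect B x=0\}$ is either empty or an affine subspace of dimension $\le 1$ (since for skew $\vect B\ne0$ the solution set is a line, and for $\vect B=0$, $\vect a\ne0$ it is empty), so its $2$-dimensional Hausdorff measure is $0$, contradicting $\vect u=0$ on $B\subset\partial\Omega$ with $|B|>0$. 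If instead $\vect u$ is a constant $\vect a\ne0$, it cannot vanish on $B$ at all. This contradiction with $\lVert\vect u\rVert_{H^1}=1$ completes the argument.

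\textbf{Main obstacle.} The delicate point is upgrading weak $H^1$ convergence to strong $H^1$ convergence, which is precisely where the classical Korn inequality \eqref{korninequality1} from Proposition \ref{appendixkorn1} is invoked — without it, one only controls $\vect u_n$ in $L^2$ and cannot pass the boundary term to the limit via trace continuity. A secondary subtlety is the connectedness hypothesis on $\Omega$, which guarantees that the kernel of $\simgrad$ on $H^1(\Omega;\C^3)$ consists \emph{exactly} of the (globally defined) infinitesimal rigid displacements; on a disconnected domain one could have different rigid motions on different components, and the zero-set argument would need to be applied componentwise. With these two ingredients in place the rest is routine.
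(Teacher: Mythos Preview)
Your proof is correct and follows essentially the same compactness--contradiction strategy as the paper. One minor simplification worth noting: the paper normalises $\lVert\vect u_k\rVert_{L^2(\Omega)}=1$ (after first reducing \eqref{korninequality2} to an $L^2$ bound via \eqref{korninequality1}) and then invokes \emph{compactness} of the trace $H^1(\Omega)\hookrightarrow L^2(\partial\Omega)$ to pass directly from weak $H^1$ convergence to $\vect u|_B=0$, thereby avoiding your detour through strong $H^1$ convergence; what you identify as the ``main obstacle'' is thus bypassed entirely. Your treatment of the zero set of a rigid displacement is more explicit than the paper's (which simply asserts $A=0$, $\vect c=0$), though the hesitant phrasing around planar pieces of $\partial\Omega$ could be tightened --- a nonzero rigid displacement vanishing on an entire plane forces two columns of the skew matrix to vanish, hence the whole matrix, so the zero set is indeed at most a line.
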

\begin{proof}
By the inequality \eqref{korninequality1} we see that it suffices to show that
\begin{equation*}
    \lVert \vect u \rVert_{L^2(\Omega;\C^3)} \leq C\left( \lVert \simgrad \vect u \rVert_{L^2(\Omega;\C^{3 \times 3})} + \lVert  \vect u \rVert_{L^2(B;\C^3)} \right).
\end{equation*}
Suppose the contrary, namely that there is a sequence $(\vect u_k)_{k\in\N}\subset H^1(\Omega; {\mathbb C}^3)$, $\lVert \vect u_k \rVert_{L^2(\Omega;\C^3)}= 1$ such that
\begin{equation*}
    \left( \lVert \simgrad \vect u_k \rVert_{L^2(\Omega;\C^{3 \times 3})} + \lVert  \vect u_k \rVert_{L^2(B;\C^3)} \right)<1/k \quad \forall k\in \N.
\end{equation*}
By \eqref{korninequality1}, the sequence $(\vect u_k)_{k\in \N}$ is bounded in $H^1(\Omega;\C^3)$ and therefore converges (up to a subsequence) weakly in $H^1(\Omega;\C^3)$ and strongly in $L^2(\Omega;\C^3)$ to some $\vect u \in H^1(\Omega;\C^3)$, $\lVert \vect u \rVert_{L^2(\Omega;\C^3)}= 1.$ By lower semicontinuity of the norm, we have
\begin{equation}
\label{simgradzero}
    \lVert \simgrad \vect u \rVert_{L^2(\Omega;\C^{3 \times 3})} \leq \lim_{k \to \infty}\lVert \simgrad \vect u_k \rVert_{L^2(\Omega;\C^{3 \times 3})} = 0.
\end{equation}
Also, since $\vect u_k \weak \vect u$ in $H^1(\Omega;\C^3),$ by trace compactness we infer that  ${\vect u}\vert_B=0.$ By \eqref{simgradzero} one has $\vect u(x) = \vect{A}x + \vect c$ for some skew-symmetric $\vect{A} \in \C^{3 \times 3}$ and $\vect{c} \in \C^3$ (see, e.g., \cite{Oleinik}) Since $|B|>0,$ this implies that $\vect{A}=0$, $\vect{c}=0$. 
\end{proof}
Another useful version of Korn's inequality is also well known and is a direct consequence of \cite[Theorem 2.5]{Oleinik}. 
\begin{proposition}
Let $\Omega$ be a bounded open set with Lipschitz boundary. There exists $C>0$ dependent only on $\Omega$ such that for every $\vect u \in H^1(\Omega;\C^3)$ the estimate
    $\lVert \vect u -  \vect w\rVert_{H^1(\Omega;\C^3)} \leq C ||\simgrad \vect u||_{L^2(\Omega;\C^{3\times 3})}$
holds with $\vect w = Ax + \vect c$, where
\begin{equation*}
\begin{aligned}
    A&= \begin{bmatrix}
    0 & d & a \\
    -d & 0 & b \\
    -a & -b & 0 \\
\end{bmatrix}, \quad \vect c = \begin{bmatrix}
    c_1 \\
    c_2 \\
    c_3
\end{bmatrix}, \quad c_j = \int_{\Omega}(\vect u_j), \quad j=1,2,3, \\[0.6em]
    a&= \int_{\Omega}(\partial_3 \vect u_1 - \partial_1 \vect u_3), \quad  b = \int_{\Omega}(\partial_3 \vect u_2 - \partial_2 \vect u_3), \quad d = \int_{\Omega}(\partial_2 \vect u_1 - \partial_1 \vect u_2),
\end{aligned}
\end{equation*}
\end{proposition}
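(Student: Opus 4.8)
The statement is a quantitative \emph{second Korn inequality} together with an explicit description of the rigid motion that must be subtracted; the plan is to deduce it from \cite[Theorem 2.5]{Oleinik}, which is already used in the preceding proposition. Write $\mathcal R:=\{\vect w(x)=Ax+\vect c:\ A\in\C^{3\times3},\ A^\top=-A,\ \vect c\in\C^3\}$ for the finite-dimensional (hence closed) subspace of $H^1(\Omega;\C^3)$ of infinitesimal rigid motions; recall that on a connected $\Omega$ — which is the relevant situation (e.g. $\Omega=Y_{\rm stiff}$ or $Y_{\rm soft}$) — one has $\simgrad\vect u=0$ in $H^1(\Omega;\C^3)$ precisely when $\vect u\in\mathcal R$, and $\simgrad$ annihilates $\mathcal R$. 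The first step is to invoke \cite[Theorem 2.5]{Oleinik} in the form
\begin{equation*}
\inf_{\tilde{\vect w}\in\mathcal R}\lVert\vect u-\tilde{\vect w}\rVert_{H^1(\Omega;\C^3)}\le C(\Omega)\,\lVert\simgrad\vect u\rVert_{L^2(\Omega;\C^{3\times3})}\qquad\forall\,\vect u\in H^1(\Omega;\C^3),
\end{equation*}
where one uses $\simgrad(\vect u-\tilde{\vect w})=\simgrad\vect u$.

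Next I would observe that the map $P:\vect u\mapsto\vect w=Ax+\vect c$ given by the formulas in the statement is a bounded linear \emph{projection} of $H^1(\Omega;\C^3)$ onto $\mathcal R$. Indeed, $\vect c$ and the entries $a,b,d$ of the skew-symmetric matrix $A$ are, by their definitions, bounded linear functionals of $\vect u$ (the former through $\lVert\vect u\rVert_{L^2(\Omega)}$, the latter through $\lVert\simgrad\vect u\rVert_{L^2(\Omega)}$, hence through $\lVert\vect u\rVert_{H^1(\Omega)}$), and multiplication by the bounded coordinate functions $x\mapsto x_j$ on $\overline\Omega$ is bounded on $H^1$; since $A$ is skew by construction, $P\vect u\in\mathcal R$. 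A direct computation — substituting $\vect u=Ax+\vect c\in\mathcal R$ into the defining averages, using $\partial_k\vect w_i=A_{ik}$ — shows that $P$ restricts to the identity on $\mathcal R$; this is exactly what the particular combination of averages of $\vect u$ and of its first derivatives is designed to accomplish.

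The two ingredients now combine at once: for every $\vect u\in H^1(\Omega;\C^3)$ and every $\tilde{\vect w}\in\mathcal R$ one has $P\tilde{\vect w}=\tilde{\vect w}$, so $\vect u-P\vect u=(I-P)(\vect u-\tilde{\vect w})$ and
\begin{equation*}
\lVert\vect u-\vect w\rVert_{H^1(\Omega;\C^3)}=\lVert(I-P)(\vect u-\tilde{\vect w})\rVert_{H^1(\Omega;\C^3)}\le\lVert I-P\rVert_{H^1\to H^1}\,\lVert\vect u-\tilde{\vect w}\rVert_{H^1(\Omega;\C^3)};
\end{equation*}
taking the infimum over $\tilde{\vect w}\in\mathcal R$ and inserting the second Korn inequality of the first step yields $\lVert\vect u-\vect w\rVert_{H^1(\Omega;\C^3)}\le C(\Omega)\lVert\simgrad\vect u\rVert_{L^2(\Omega;\C^{3\times3})}$, as required. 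The only slightly delicate point — and really the only thing beyond citing \cite{Oleinik} and using finite-dimensionality of $\mathcal R$ — is the bookkeeping that confirms $P$ is genuinely idempotent, i.e. that the chosen averaged quantities $a,b,d,c_1,c_2,c_3$ reconstruct a given rigid motion exactly; everything else is routine.
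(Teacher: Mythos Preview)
Your projection strategy is the right template, but the claim ``a direct computation shows that $P$ restricts to the identity on $\mathcal R$'' is where it breaks. For $\vect w(x)=Ax+\vect c\in\mathcal R$ with the displayed skew matrix one has $\partial_3 w_1-\partial_1 w_3=A_{13}-A_{31}=2a$, so the formula for the new $a$ returns $\int_\Omega 2a\,dx=2a\,|\Omega|$, not $a$; likewise the formula for the new $c_j$ returns $c_j|\Omega|+\sum_k A_{jk}\int_\Omega x_k$, not $c_j$. Hence $P|_{\mathcal R}\neq I$, the identity $\vect u-P\vect u=(I-P)(\vect u-\tilde{\vect w})$ fails, and the residual $(I-P)\tilde{\vect w}$ carries no bound in terms of $\lVert\simgrad\vect u\rVert_{L^2}$. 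In fact this shows the inequality \emph{as literally written} cannot hold: take any nonzero $\vect u\in\mathcal R$, so that $\simgrad\vect u=0$ while $\vect u-P\vect u=(I-P|_{\mathcal R})\vect u\neq0$.

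What you have therefore detected is a missing normalisation in the proposition's formulas (one needs factors such as $\tfrac{1}{2|\Omega|}$ on $a,b,d$ and a corresponding adjustment of $c_j$ so that the six averaged quantities genuinely reconstruct a given rigid motion). With the corrected constants your argument goes through verbatim; the paper itself offers no proof beyond recording the statement as a direct consequence of \cite[Theorem~2.5]{Oleinik}, where the result appears with the proper normalisation. The ``only slightly delicate point'' you flag is thus not a formality but the decisive step --- carrying it out carefully both fixes the constants and validates the projection identity your argument needs.
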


We proceed with the following simple assertion. 
\begin{lemma} \label{nak42} 
Let $\Omega \subset \R^3$ be a bounded open set. There exist constants $C_0,C_1>0$, independent of $\chi \in Y'$, such that
\begin{equation}
\label{equivalenceofnorms}
    C_0 \left\lVert \vect u \right\rVert_{H^1(\Omega;\C^3)} \leq  \bigl\lVert {\rm e}^{{\rm i}\chi y}\vect u \bigr\rVert_{H^1(\Omega;\C^3)} \leq  C_1 \left\lVert \vect u \right\rVert_{H^1(\Omega;\C^3)} \qquad \forall \chi \in Y', \quad \vect u \in H^1(\Omega;\C^3).
\end{equation}
\end{lemma}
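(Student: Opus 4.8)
The statement is an elementary norm-equivalence, with the only subtlety being the uniformity of the constants in $\chi \in Y'$, which is a bounded set. The plan is to expand the $H^1$ norm of ${\rm e}^{{\rm i}\chi y}\vect u$ using the product rule for the (distributional) gradient, namely $\nabla({\rm e}^{{\rm i}\chi y}\vect u) = {\rm e}^{{\rm i}\chi y}(\nabla\vect u + {\rm i}\,\vect u\otimes\chi)$, and then control each piece by $\|\vect u\|_{H^1(\Omega;\C^3)}$ with a constant depending only on $\sup_{\chi\in Y'}|\chi| = \pi\sqrt{3}$.

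\textbf{Upper bound.} First I would write, for $\vect u \in H^1(\Omega;\C^3)$,
\begin{equation*}
\bigl\|{\rm e}^{{\rm i}\chi y}\vect u\bigr\|_{L^2(\Omega;\C^3)}^2 = \int_\Omega |{\rm e}^{{\rm i}\chi y}|^2|\vect u|^2 = \|\vect u\|_{L^2(\Omega;\C^3)}^2,
\end{equation*}
since $|{\rm e}^{{\rm i}\chi y}| = 1$ pointwise (here $\chi y := \chi\cdot y$ is real). For the gradient, using $\partial_{y_\alpha}({\rm e}^{{\rm i}\chi y}\vect u) = {\rm e}^{{\rm i}\chi y}(\partial_{y_\alpha}\vect u + {\rm i}\chi_\alpha\vect u)$, the triangle inequality in $L^2$ gives
\begin{equation*}
\bigl\|\nabla({\rm e}^{{\rm i}\chi y}\vect u)\bigr\|_{L^2} \leq \|\nabla\vect u\|_{L^2} + |\chi|\,\|\vect u\|_{L^2} \leq (1 + \pi\sqrt{3})\,\|\vect u\|_{H^1(\Omega;\C^3)},
\end{equation*}
so combining the two yields $\|{\rm e}^{{\rm i}\chi y}\vect u\|_{H^1} \leq C_1\|\vect u\|_{H^1}$ with $C_1 = \sqrt{1 + (1+\pi\sqrt{3})^2}$ (or any convenient larger absolute constant), independent of $\chi$.

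\textbf{Lower bound.} This is immediate from the upper bound by symmetry: apply the already-established inequality to the function $\vect v := {\rm e}^{{\rm i}\chi y}\vect u$ and the quasimomentum $-\chi \in \overline{Y'}$ (or note that $Y' = [-\pi,\pi)^3$ is symmetric up to its boundary, which is null), obtaining $\|\vect u\|_{H^1} = \|{\rm e}^{-{\rm i}\chi y}\vect v\|_{H^1} \leq C_1\|\vect v\|_{H^1} = C_1\|{\rm e}^{{\rm i}\chi y}\vect u\|_{H^1}$; hence $C_0 := C_1^{-1}$ works. Alternatively one repeats the computation directly with $\nabla\vect u = {\rm e}^{-{\rm i}\chi y}\nabla({\rm e}^{{\rm i}\chi y}\vect u) - {\rm i}(\vect u\otimes\chi)$.

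\textbf{Main obstacle.} There is no real obstacle here; the proof is routine. The only point requiring a word of care is the justification that ${\rm e}^{{\rm i}\chi y}\vect u \in H^1(\Omega;\C^3)$ in the first place with the stated weak derivative — this follows because ${\rm e}^{{\rm i}\chi y}$ is smooth with bounded derivatives on the bounded set $\Omega$, so multiplication by it maps $H^1(\Omega;\C^3)$ to itself and the Leibniz rule holds in the distributional sense. I would state this briefly and then carry out the two estimates above.

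\begin{proof}
For $\chi\in Y'$ the function $y\mapsto {\rm e}^{{\rm i}\chi\cdot y}$ is smooth with all derivatives bounded on the bounded set $\Omega$; hence for $\vect u\in H^1(\Omega;\C^3)$ one has ${\rm e}^{{\rm i}\chi y}\vect u\in H^1(\Omega;\C^3)$, with
\begin{equation*}
\partial_{y_\alpha}\bigl({\rm e}^{{\rm i}\chi y}\vect u\bigr) = {\rm e}^{{\rm i}\chi y}\bigl(\partial_{y_\alpha}\vect u + {\rm i}\chi_\alpha\vect u\bigr),\qquad \alpha=1,2,3,
\end{equation*}
in the sense of distributions. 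Since $\vert{\rm e}^{{\rm i}\chi y}\vert=1$ pointwise, $\bigl\|{\rm e}^{{\rm i}\chi y}\vect u\bigr\|_{L^2(\Omega;\C^3)}=\|\vect u\|_{L^2(\Omega;\C^3)}$, and using the triangle inequality together with $|\chi|\le\pi\sqrt3$ for all $\chi\in Y'$,
\begin{equation*}
\bigl\|\nabla\bigl({\rm e}^{{\rm i}\chi y}\vect u\bigr)\bigr\|_{L^2(\Omega;\C^{3\times 3})} \le \|\nabla\vect u\|_{L^2(\Omega;\C^{3\times 3})} + |\chi|\,\|\vect u\|_{L^2(\Omega;\C^3)} \le \bigl(1+\pi\sqrt3\bigr)\|\vect u\|_{H^1(\Omega;\C^3)}.
\end{equation*}
Combining the two bounds gives the right-hand inequality in \eqref{equivalenceofnorms} with $C_1:=\bigl(1+(1+\pi\sqrt3)^2\bigr)^{1/2}$, which is independent of $\chi$.

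For the left-hand inequality, write $\vect u={\rm e}^{-{\rm i}\chi y}\bigl({\rm e}^{{\rm i}\chi y}\vect u\bigr)$ and repeat the same computation: $\partial_{y_\alpha}\vect u={\rm e}^{-{\rm i}\chi y}\bigl(\partial_{y_\alpha}({\rm e}^{{\rm i}\chi y}\vect u)-{\rm i}\chi_\alpha\,{\rm e}^{{\rm i}\chi y}\vect u\bigr)$, whence $\|\vect u\|_{H^1(\Omega;\C^3)}\le C_1\bigl\|{\rm e}^{{\rm i}\chi y}\vect u\bigr\|_{H^1(\Omega;\C^3)}$ by the estimate above applied with $-\chi$ in place of $\chi$ and ${\rm e}^{{\rm i}\chi y}\vect u$ in place of $\vect u$. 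Setting $C_0:=C_1^{-1}$ completes the proof.
\end{proof}
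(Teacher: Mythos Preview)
Your proof is correct and follows essentially the same approach as the paper: both use the product rule $\nabla({\rm e}^{{\rm i}\chi y}\vect u)={\rm e}^{{\rm i}\chi y}(\nabla\vect u+{\rm i}\,\vect u\otimes\chi)$, the fact that $|{\rm e}^{{\rm i}\chi y}|=1$, and the boundedness of $|\chi|$ on $Y'$. The only cosmetic difference is that the paper repeats the direct computation for the lower bound, whereas you obtain it by the symmetry substitution $\chi\mapsto-\chi$, $\vect u\mapsto{\rm e}^{{\rm i}\chi y}\vect u$; this is a neat shortcut but not a different argument.
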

\begin{proof}
We clearly have (due to $|{\rm e}^{i\chi y}|=1$)
\begin{equation*}
    \left\lVert {\rm e}^{{\rm i}\chi y}\vect u\right\rVert_{L^2(\Omega;\C^3)} = \left\lVert \vect u\right\rVert_{L^2(\Omega;\C^3)}, \qquad \left\lVert\nabla({\rm e}^{{\rm i}\chi y}\vect u )\right\rVert_{L^2(\Omega;\C^3)} = \bigl\lVert  \nabla \vect u +  \vect u \otimes {\rm i}\chi\bigr\rVert_{L^2(\Omega;\C^3)}.
\end{equation*}
Now, we calculate
\begin{equation*}
         \bigl\lVert {\rm e}^{{\rm i}\chi y}\vect u\bigr\rVert_{H^1(\Omega;\C^3)}^2 = \left\lVert \vect u\right\rVert_{L^2(\Omega;\C^3)}^2 + \bigl\lVert  \nabla \vect u +  \vect u \otimes {\rm i}\chi\bigr\rVert_{L^2(\Omega;\C^3)}^2 \leq \left(1 + |\chi|^2 \right) \left\lVert \vect u\right\rVert_{L^2(\Omega;\C^3)}^2 + \left\lVert  \nabla  \vect u\right\rVert_{L^2(\Omega;\C^3)}^2 
         \leq C\left\lVert \vect u \right\rVert_{H^1(\Omega;\C^3)}^2.
\end{equation*}
Conversely:
\begin{equation*}
\begin{aligned}
        \left\lVert \vect u \right\rVert_{H^1(\Omega;\C^3)}^2 &= \left\lVert \vect u\right\rVert_{L^2(\Omega;\C^3)}^2 + \left\lVert  \nabla  \vect u\right\rVert_{L^2(\Omega;\C^3)}^2  \leq \left\lVert \vect u\right\rVert_{L^2(\Omega;\C^3)}^2 + \bigl\lVert  \nabla \vect u+  \vect u \otimes {\rm i}\chi\bigr\rVert_{L^2(\Omega;\C^3)}^2 + |\chi|^2\left\lVert \vect u\right\rVert_{L^2(\Omega;\C^3)}^2 \\[0.4em]
        & = \left(1 + |\chi|^2 \right)\left\lVert \vect u\right\rVert_{L^2(\Omega;\C^3)}^2 + \bigl\lVert\nabla \vect u+  \vect u \otimes {\rm i}\chi\bigr\rVert_{L^2(\Omega;\C^3)}^2 \leq C\lVert {\rm e}^{{\rm i}\chi y}\vect u \rVert_{H^1(\Omega;\C^3)}.
\end{aligned}
\end{equation*}
\end{proof}
Using the above lemma as a starting point, we prove five propositions.   
\begin{proposition}\label{josipapp1} 
Let $\Omega$ be a bounded open set with Lipschitz boundary and $B \subset \partial \Omega$, $|B|>0$.  There exists a constant $C,$ depending only on the domain $\Omega,$ such that for every $\vect u \in H^1(\Omega;\C^3)$ and $|\chi|\in Y'$ one has
\begin{equation*}
    \lVert \vect u \rVert_{H^1(\Omega;\C^3)} \leq C\left( \left\lVert \left(\simgrad + {\rm i}X_{\chi}\right) \vect u \right\rVert_{L^2(\Omega;\C^{3 \times 3})} + \lVert  \vect u \rVert_{L^2(B;\C^3)} \right).
\end{equation*}
\end{proposition}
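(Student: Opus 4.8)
The plan is to reduce this $\chi$-dependent Korn inequality to the known $\chi$-independent Korn inequality of Proposition~\ref{appendixkorn2} via the gauge transformation $\vect u \mapsto {\rm e}^{{\rm i}\chi y}\vect u,$ exactly as the surrounding propositions in the Appendix are structured. The first observation is the pointwise identity
\begin{equation*}
\simgrad\bigl({\rm e}^{{\rm i}\chi y}\vect u\bigr) = {\rm e}^{{\rm i}\chi y}\bigl(\simgrad \vect u + {\rm i}\,\sym(\vect u\otimes\chi)\bigr) = {\rm e}^{{\rm i}\chi y}\bigl(\simgrad + {\rm i}X_\chi\bigr)\vect u,
\end{equation*}
which is the $\sym$-analogue of the gradient identity used in Lemma~\ref{nak42} (and is precisely \eqref{gelfandvsderivatives} with $\varepsilon=1$). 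Since $\lvert{\rm e}^{{\rm i}\chi y}\rvert=1,$ this immediately gives
\begin{equation*}
\bigl\lVert \simgrad\bigl({\rm e}^{{\rm i}\chi y}\vect u\bigr) \bigr\rVert_{L^2(\Omega;\C^{3\times 3})} = \bigl\lVert \bigl(\simgrad + {\rm i}X_\chi\bigr)\vect u \bigr\rVert_{L^2(\Omega;\C^{3\times 3})}, \qquad \bigl\lVert {\rm e}^{{\rm i}\chi y}\vect u\bigr\rVert_{L^2(B;\C^3)} = \lVert \vect u\rVert_{L^2(B;\C^3)}.
\end{equation*}

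Next I would apply Proposition~\ref{appendixkorn2} to the function $\vect v := {\rm e}^{{\rm i}\chi y}\vect u \in H^1(\Omega;\C^3)$ (note $\vect v$ is still $H^1$ with the same regularity as $\vect u$, the multiplier ${\rm e}^{{\rm i}\chi y}$ being smooth), obtaining
\begin{equation*}
\bigl\lVert {\rm e}^{{\rm i}\chi y}\vect u\bigr\rVert_{H^1(\Omega;\C^3)} \leq C\Bigl( \bigl\lVert \bigl(\simgrad + {\rm i}X_\chi\bigr)\vect u \bigr\rVert_{L^2(\Omega;\C^{3\times 3})} + \lVert \vect u\rVert_{L^2(B;\C^3)} \Bigr),
\end{equation*}
with $C$ depending only on $\Omega$ and $B$ (and in particular independent of $\chi$). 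To conclude, I would invoke the lower bound in Lemma~\ref{nak42}, namely $C_0\lVert \vect u\rVert_{H^1(\Omega;\C^3)} \leq \lVert {\rm e}^{{\rm i}\chi y}\vect u\rVert_{H^1(\Omega;\C^3)}$ with $C_0>0$ independent of $\chi \in Y'$, which yields the claimed estimate after renaming constants. One small point to record: Lemma~\ref{nak42} is stated for $\Omega\subset\R^3$ a bounded open set, which covers the present hypotheses, and its proof does not use any boundary regularity, so there is no circularity.

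\textbf{Main obstacle.} There is essentially no serious obstacle here; the only thing requiring a little care is the uniformity of all constants in $\chi\in Y'$. This is already guaranteed by Lemma~\ref{nak42} (whose constants $C_0,C_1$ are explicitly $\chi$-independent since $Y'$ is bounded, so $|\chi|$ is bounded) together with the fact that Proposition~\ref{appendixkorn2} involves no $\chi$ at all. Thus the composition of the two estimates with the exact gradient identity produces a bound whose constant depends only on $\Omega$ and $B$, as required. The remaining propositions in this block (Propositions~A.8--A.12, i.e.\ the further $\chi$-dependent Korn and trace inequalities, Proposition~\ref{nakk112}, etc.) are then proved by the same gauge-transformation device, so this argument is the template.
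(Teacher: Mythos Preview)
Your proposal is correct and matches the paper's proof essentially line for line: the paper also sets $\vect w = {\rm e}^{{\rm i}\chi y}\vect u$, applies Proposition~\ref{appendixkorn2} (the inequality \eqref{korninequality2}) to $\vect w$, uses the identity $\simgrad \vect w = {\rm e}^{{\rm i}\chi y}(\simgrad + {\rm i}X_\chi)\vect u$ together with $|{\rm e}^{{\rm i}\chi y}|=1$, and then invokes Lemma~\ref{nak42} to bound $\lVert \vect u\rVert_{H^1}$ by $\lVert \vect w\rVert_{H^1}$.
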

\begin{proof}
Plugging $\vect w = {\rm e}^{i \chi y} \vect u$, $\vect u\in H^1(\Omega;\C^3)$ into the inequality \eqref{korninequality2}, we obtain, by virtue of Lemma \ref{nak42}, 
\begin{equation*}
        \left\lVert \vect u\right\rVert_{H^1(\Omega;\C^3)} \leq \left\lVert \vect w\right\rVert_{H^1(\Omega;\C^3)}
         \leq  C\left( \lVert \simgrad \vect w \rVert_{L^2(\Omega;\C^{3 \times 3})} + \lVert  \vect w \rVert_{L^2(B;\C^3)} \right) 
        =  C\left( \left\lVert \left(\simgrad +{\rm i} X_\chi \right) \vect u \right\rVert_{L^2(\Omega;\C^{3 \times 3})} + \lVert  \vect u \rVert_{L^2(B\BBB;\C^3)} \right),
\end{equation*}
where we have also used \eqref{equivalenceofnorms}.
\end{proof}
\begin{proposition}
	\label{Korn_est}
There exists a constant $C>0$ such that for all $\vect u \in H_\#^1(Y;\C^3)$, $\chi \in Y'\setminus\{0\}$ one has
\begin{align}
\label{estimate1}
    \left\lVert \vect u \right\rVert_{H^1(Y;\C^3)}&\leq \frac{C}{|\chi|}\left\lVert \left(\simgrad +{\rm i}X_{\chi}\right) \vect u \right\rVert_{L^2(Y;\C^{3 \times 3})},\\[0.4em]
    \nonumber
    \left\lVert \nabla \vect u \right\rVert_{L^2(Y;\C^{3 \times 3})}&\leq C\left\lVert \left(\simgrad +{\rm i}X_{\chi}\right) \vect u \right\rVert_{L^2(Y;\C^{3 \times 3})},\\[0.4em]
\label{estimate12}
  \left\lVert \vect u - \int_Y \vect u\right\rVert_{H^1(Y;\C^3)}&\leq C\left\lVert \left(\simgrad +{\rm i}X_{\chi}\right) \vect u \right\rVert_{L^2(Y;\C^{3 \times 3})}.
\end{align}
\end{proposition}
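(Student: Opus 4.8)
The plan is to derive all three inequalities \eqref{estimate1}, the middle one, and \eqref{estimate12} from the already-established Proposition \ref{josipapp1} together with Lemma \ref{nak42}, exploiting periodicity of $\vect u$ to replace the boundary term by a full-cell $L^2$-term and, where needed, a Poincar\'e-type argument. The key point throughout is that $(\simgrad+{\rm i}X_\chi)\vect u = {\rm e}^{-{\rm i}\chi y}\simgrad({\rm e}^{{\rm i}\chi y}\vect u)$ in the sense of symmetrised gradients (up to the obvious identity $\simgrad({\rm e}^{{\rm i}\chi y}\vect u)={\rm e}^{{\rm i}\chi y}(\simgrad\vect u + \vect u\odot{\rm i}\chi)={\rm e}^{{\rm i}\chi y}(\simgrad+{\rm i}X_\chi)\vect u$), so that $\vect w:={\rm e}^{{\rm i}\chi y}\vect u$ satisfies $\simgrad\vect w = {\rm e}^{{\rm i}\chi y}(\simgrad+{\rm i}X_\chi)\vect u$, and by Lemma \ref{nak42} the $H^1$ norms of $\vect u$ and $\vect w$ are equivalent uniformly in $\chi$.

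First I would prove \eqref{estimate1}. Fix a connected bounded Lipschitz domain $\Omega\supset\overline Y$ obtained by translating copies of $Y$ (for instance $\Omega=(-1/2,3/2)^3$), and extend $\vect u\in H^1_\#(Y;\C^3)$ periodically to $\Omega$; call the extension $\vect u$ again. Applying Proposition \ref{josipapp1} on $\Omega$ with any fixed piece $B\subset\partial\Omega$ of positive measure, and then using the periodicity to bound $\lVert\vect u\rVert_{L^2(B;\C^3)}$ and $\lVert(\simgrad+{\rm i}X_\chi)\vect u\rVert_{L^2(\Omega;\C^{3\times3})}$ by $C\lVert\vect u\rVert_{L^2(Y;\C^3)}$ and $C\lVert(\simgrad+{\rm i}X_\chi)\vect u\rVert_{L^2(Y;\C^{3\times3})}$ respectively (translation invariance of the operator $\simgrad+{\rm i}X_\chi$ up to the scalar factor ${\rm e}^{{\rm i}\chi n}$, $n\in\Z^3$, which has modulus one), we arrive at
\begin{equation*}
\lVert\vect u\rVert_{H^1(Y;\C^3)}\leq C\bigl(\lVert(\simgrad+{\rm i}X_\chi)\vect u\rVert_{L^2(Y;\C^{3\times3})}+\lVert\vect u\rVert_{L^2(Y;\C^3)}\bigr).
\end{equation*}
To absorb the last term, note that for $\chi\neq0$ one has the pointwise bound $|\chi|^2|\vect u|^2\leq C|X_\chi\vect u|^2$ from \eqref{Xchi_bounds}, while $X_\chi\vect u = -{\rm i}\bigl((\simgrad+{\rm i}X_\chi)\vect u-\simgrad\vect u\bigr)$, so a Cauchy--Schwarz/Young splitting gives $|\chi|^2\lVert\vect u\rVert_{L^2(Y)}^2\leq C\lVert(\simgrad+{\rm i}X_\chi)\vect u\rVert_{L^2(Y)}^2+C\lVert\simgrad\vect u\rVert_{L^2(Y)}^2\leq C\lVert(\simgrad+{\rm i}X_\chi)\vect u\rVert_{L^2(Y)}^2+C\lVert\vect u\rVert_{H^1(Y)}^2$. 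Dividing by $|\chi|^2$ and, for $|\chi|$ small, reabsorbing $C|\chi|^{-2}\cdot$(nothing problematic) — actually the cleanest route is to go back to $\vect w={\rm e}^{{\rm i}\chi y}\vect u$ and apply the standard periodic Poincar\'e inequality $\lVert\vect w-\langle\vect w\rangle\rVert_{L^2(Y)}\leq C\lVert\nabla\vect w\rVert_{L^2(Y)}$ combined with $\langle\vect w\rangle$ being controlled, again via $X_\chi$, by $|\chi|^{-1}$ times the right-hand side: this delivers $\lVert\vect u\rVert_{L^2(Y)}=\lVert\vect w\rVert_{L^2(Y)}\leq C|\chi|^{-1}\lVert(\simgrad+{\rm i}X_\chi)\vect u\rVert_{L^2(Y)}$, whence \eqref{estimate1}.

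The middle inequality then follows immediately: $\lVert\nabla\vect u\rVert_{L^2(Y)}\leq\lVert(\simgrad+{\rm i}X_\chi)\vect u\rVert_{L^2(Y)}+|\chi|\lVert\vect u\rVert_{L^2(Y)}$ by the triangle inequality (and equivalence of $\nabla$ and $\simgrad$ norms on $H^1_\#(Y;\C^3)$ via Korn, Proposition \ref{appendixkorn1}, applied to the $Y$-periodic extension on $\Omega$), and the second term is $\leq C\lVert(\simgrad+{\rm i}X_\chi)\vect u\rVert_{L^2(Y)}$ by \eqref{estimate1}. For \eqref{estimate12}, write $\vect u-\int_Y\vect u = (\vect u-\int_Y\vect u)$ and use the periodic Poincar\'e inequality to bound its $L^2$ norm by $C\lVert\nabla\vect u\rVert_{L^2(Y)}$, then invoke the middle inequality just proved; combined with Korn once more this controls the full $H^1$ norm of $\vect u-\int_Y\vect u$ by $C\lVert\nabla\vect u\rVert_{L^2(Y)}\leq C\lVert(\simgrad+{\rm i}X_\chi)\vect u\rVert_{L^2(Y)}$.

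\textbf{Main obstacle.} The only genuinely delicate point is making the constants \emph{uniform in $\chi$}, in particular near $\chi=0$ where $\eqref{estimate1}$ degenerates like $|\chi|^{-1}$ — one must be careful that the $|\chi|^{-1}$ really only multiplies the data and not a hidden $H^1$ term, which is why I route the zero-mean estimate through $\vect w={\rm e}^{{\rm i}\chi y}\vect u$ and the ordinary periodic Poincar\'e inequality rather than trying to reabsorb terms directly. The passage from the Lipschitz-domain statement of Proposition \ref{josipapp1} to the periodic cell (choosing $\Omega$, handling the boundary piece $B$, and checking that translating the differential operator $\simgrad+{\rm i}X_\chi$ only introduces unimodular phase factors) is routine but should be written out carefully to keep the $\chi$-independence transparent.
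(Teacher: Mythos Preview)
Your overall strategy --- pass to $\vect w={\rm e}^{{\rm i}\chi y}\vect u$, use Korn and Poincar\'e in real space --- is a legitimate alternative to the paper's approach, but as written the argument for \eqref{estimate1} has a genuine circularity, and the derivation of the middle inequality from \eqref{estimate1} does not give a $\chi$-uniform constant.

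\textbf{The circularity in \eqref{estimate1}.} After Poincar\'e--Wirtinger you have $\lVert\vect w-\langle\vect w\rangle\rVert_{L^2}\le C\lVert\nabla\vect w\rVert_{L^2}$, but to bound $\lVert\nabla\vect w\rVert$ by $\lVert\simgrad\vect w\rVert=\lVert(\simgrad+{\rm i}X_\chi)\vect u\rVert$ you invoke Korn, which on a bounded domain (Proposition~\ref{appendixkorn1}) carries an $\lVert\vect w\rVert_{L^2}=\lVert\vect u\rVert_{L^2}$ term --- exactly what you are trying to bound. The assertion that $\langle\vect w\rangle=\int_Y{\rm e}^{{\rm i}\chi y}\vect u$ is ``controlled via $X_\chi$ by $|\chi|^{-1}$ times the right-hand side'' is also not substantiated; there is no obvious identity for $\langle\vect w\rangle$. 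The quantity that \emph{is} cleanly controlled is $\langle\vect u\rangle$: by periodicity $\int_Y\simgrad\vect u=0$, hence $\int_Y(\simgrad+{\rm i}X_\chi)\vect u={\rm i}\,\chi\odot\langle\vect u\rangle$, and the algebraic inequality $|\vect a\odot\vect b|\ge|\vect a||\vect b|/\sqrt2$ yields $|\langle\vect u\rangle|\le\sqrt2\,|\chi|^{-1}\lVert(\simgrad+{\rm i}X_\chi)\vect u\rVert_{L^2}$. Working with $\vect u$ (not $\vect w$) and combining this with the \emph{periodic} Korn inequality $\lVert\nabla\vect u\rVert\le C\lVert\simgrad\vect u\rVert$ (no lower-order term, valid on $H^1_\#$) and Poincar\'e, you can then run an absorption argument for small $|\chi|$; large $|\chi|$ needs a separate compactness step.

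\textbf{The middle inequality.} Using Proposition~\ref{appendixkorn1} (Korn with $L^2$ term) and \eqref{estimate1} gives $\lVert\nabla\vect u\rVert\le C\lVert(\simgrad+{\rm i}X_\chi)\vect u\rVert+C\lVert\vect u\rVert_{L^2}\le C(1+|\chi|^{-1})\lVert(\simgrad+{\rm i}X_\chi)\vect u\rVert$, which blows up as $\chi\to0$. You need periodic Korn without the $L^2$ term to get a uniform constant.

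\textbf{Comparison with the paper.} The paper avoids all of this by expanding $\vect u\in H^1_\#$ in Fourier series $\vect u=\sum_{k\in\Z^3}a_k{\rm e}^{2\pi{\rm i}k\cdot y}$, computing $\lVert(\simgrad+{\rm i}X_\chi)\vect u\rVert^2=\sum_k|a_k\odot(2\pi k+\chi)|^2$, and reading off all three inequalities from $|\vect a\odot\vect b|\ge|\vect a||\vect b|/\sqrt2$ together with the elementary lower bounds $|2\pi k+\chi|\ge|\chi|$ (for $k=0$) and $|2\pi k+\chi|\ge c|k|$ (for $k\neq0$, since $|\chi|\le\pi\sqrt3$). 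This diagonalises the problem and delivers all three estimates in a few lines, with no absorption, no case splitting, and transparent $\chi$-uniformity.
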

\begin{proof}
For a function $\vect u \in H_\#^1(Y;\C^3)$ we have the Fourier series decomposition
\begin{equation*}
    \vect u = \sum_{k \in \Z^3} a_k {\rm e}^{2\pi{\rm i}k \cdot y}, \quad  \nabla \vect u = \sum_{k\in \Z^3} {\rm e}^{2\pi{\rm i}k \cdot y} a_k \otimes \left(2 \pi{\rm i}k \right),
\end{equation*}
from which, by Parseval's identity,
\begin{equation*}
    \left\lVert \vect u\right\rVert_{L^2(Y;\C^3)}^2 = \sum_{k\in \Z^3} |a_k|^2, \quad \left\lVert \nabla \vect u\right\rVert_{L^2(Y;\C^3)}^2 = \sum_{k\in \Z^3} |2 \pi |^2|a_k \otimes k|^2.
\end{equation*}
It follows that
\begin{equation*}
    \nabla \vect u + \vect u \otimes{\rm i}\chi= \sum_{k\in \Z^3} {\rm e}^{2\pi{\rm i}k \cdot y} a_k \otimes \left(2 \pi{\rm i}k +{\rm i}\chi \right), \quad \left\lVert \nabla \vect u + \vect u \otimes ({\rm i}\chi)\right\rVert_{L^2(Y;\C^3)}^2 = \sum_{k\in \Z^3} |a_k \otimes \left(2 \pi{\rm i}k + {\rm i}\chi \right)|^2,
\end{equation*}
and therefore
\begin{equation*} 
\left\lVert (\sym \nabla+{\rm i}X_{\chi})\vect{u}\right\rVert_{L^2(Y;\C^3)}^2 = \sum_{k\in \Z^3} |a_k \odot \left(2 \pi{\rm i}k + {\rm i}\chi \right)|^2.
\end{equation*} 
Combining this with the inequality $ |\vect{a} \odot \vect{b}|\geq|\vect{a}||\vect{b}|/\sqrt{2},$ we infer \eqref{estimate1}--\eqref{estimate12}.
\end{proof}
\begin{proposition}\label{extension} 
There is an extension operator mapping $\vect u\in H_\#^1(Y_{\rm stiff};\C^3)$ to an element of $H_\#^1(Y;\C^3)$ (for which we keep the same notation $\vect u$) such that
\begin{equation}
\label{extension1}
        \left\lVert \vect u\right\rVert_{H^1(Y;\C^3)} \leq C\left\lVert \vect u\right\rVert_{H^1(Y_{\rm stiff};\C^3)},\qquad\quad 
        \bigl\lVert \bigl(\simgrad +{\rm i}X_{\chi}\bigr) \vect u \bigr\rVert_{L^2(Y;\C^{3 \times 3})} \leq C\bigl\lVert \bigr(\simgrad + {\rm i}X_{\chi}\bigr) \vect u \bigr\rVert_{L^2(Y_{\rm stiff};\C^{3 \times 3})},
\end{equation}
for all $\chi \in Y$, where the constant $C$ depends only on $Y_{\rm stiff}$.
\end{proposition}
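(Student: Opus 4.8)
The plan is to build the extension by the standard device of reflecting across the Lipschitz interface $\Gamma$ via a Stein-type (or Calder\'on-type) extension operator, and then to bootstrap the $H^1$-bound into the bound for the twisted gradient $\simgrad + {\rm i}X_\chi$ by a triangle-inequality argument combined with the Korn-type estimates of Proposition \ref{Korn_est}. First I would invoke the classical extension theorem for Sobolev spaces on domains with Lipschitz boundary: since $Y_{\rm stiff}$ has $C^{1,1}$ (hence Lipschitz) boundary $\Gamma$ lying strictly inside $Y$, there is a bounded linear operator $E:H^1(Y_{\rm stiff};\C^3)\to H^1(\R^3;\C^3)$ with $E\vect u|_{Y_{\rm stiff}}=\vect u$ and $\|E\vect u\|_{H^1(\R^3;\C^3)}\le C\|\vect u\|_{H^1(Y_{\rm stiff};\C^3)}$, where $C$ depends only on $Y_{\rm stiff}$. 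To land inside $H^1_\#(Y;\C^3)$ rather than $H^1(\R^3;\C^3)$, I would compose $E$ with the natural periodisation/restriction: one can first extend $\vect u$ periodically (it is $Y$-periodic on $Y_{\rm stiff}$, and $\overline{Y_{\rm stiff}}\cap\partial Y=\emptyset$ guarantees the periodic extension is genuinely in $H^1_{\rm loc}$ of a periodic structure away from $\Gamma+\Z^3$), apply $E$ locally near each translated copy of $\Gamma$ using a partition of unity, and then restrict back to $Y$; the resulting function is $Y$-periodic, agrees with $\vect u$ on $Y_{\rm stiff}$, and satisfies the first inequality in \eqref{extension1}. Crucially the constant is $\chi$-independent because $E$ does not see $\chi$ at all.

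Next I would derive the second inequality in \eqref{extension1}. For the right-hand side: by the triangle inequality and the bound $\|X_\chi\vect u\|_{L^2(Y_{\rm stiff})}\le C_2|\chi|\|\vect u\|_{L^2(Y_{\rm stiff})}$ from \eqref{Xchi_bounds}, we trivially have
\begin{equation*}
\bigl\|(\simgrad+{\rm i}X_\chi)\vect u\bigr\|_{L^2(Y;\C^{3\times3})}\le \|\simgrad\vect u\|_{L^2(Y;\C^{3\times3})}+C_2|\chi|\|\vect u\|_{L^2(Y;\C^3)}\le C\|\vect u\|_{H^1(Y;\C^3)},
\end{equation*}
since $|\chi|$ is bounded on $Y'$; then the already-established first inequality gives
\begin{equation*}
\bigl\|(\simgrad+{\rm i}X_\chi)\vect u\bigr\|_{L^2(Y;\C^{3\times3})}\le C\|\vect u\|_{H^1(Y_{\rm stiff};\C^3)}.
\end{equation*}
It remains to control $\|\vect u\|_{H^1(Y_{\rm stiff};\C^3)}$ by $\|(\simgrad+{\rm i}X_\chi)\vect u\|_{L^2(Y_{\rm stiff};\C^{3\times3})}$ with a $\chi$-independent constant; but this is exactly (a $Y_{\rm stiff}$-version of) estimate \eqref{estimate1} of Proposition \ref{Korn_est}, valid for $\chi\in Y'\setminus\{0\}$. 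For $\chi=0$ the claim is simply the standard extension inequality together with Korn's inequality on $Y$ (Proposition \ref{appendixkorn1}) modulo rigid motions — or, more simply, one notes that the statement for $\chi=0$ follows by the same twisting trick using $\vect w={\rm e}^{{\rm i}\chi y}\vect u$ and Lemma \ref{nak42} in a neighbourhood of $0$, so there is no genuine discontinuity. Combining these chains of inequalities yields the second estimate in \eqref{extension1} uniformly in $\chi\in Y'$.

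The main obstacle I anticipate is not the analysis per se but the careful bookkeeping needed to produce an extension that is simultaneously (i) $Y$-periodic, (ii) bounded with a constant depending \emph{only} on $Y_{\rm stiff}$ (not on $\chi$, and not degenerating as one approaches $\Gamma$), and (iii) compatible with the fact that $Y_{\rm soft}$ is compactly contained in $Y$ so that $\Gamma$ never meets $\partial Y$. The cleanest route is probably to work on the periodic cell as a flat torus: the perforated torus $\mathbb{T}^3\setminus\bigcup_{z}(Y_{\rm soft}+z)$ is a smooth (indeed $C^{1,1}$) manifold with boundary, and the Stein extension theorem applies on it directly, delivering periodicity for free. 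One should also record that the twisted-gradient estimate transfers between $Y$ and $Y_{\rm stiff}$ only through the $\chi$-independence of the Korn constant in Proposition \ref{Korn_est}, which is why that proposition was stated with explicit uniformity; I would emphasise this dependence in the write-up. Everything else is routine once the extension operator is in hand.
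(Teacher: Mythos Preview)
Your argument for the second inequality in \eqref{extension1} has a genuine gap. You write that ``it remains to control $\lVert\vect u\rVert_{H^1(Y_{\rm stiff};\C^3)}$ by $\lVert(\simgrad+{\rm i}X_\chi)\vect u\rVert_{L^2(Y_{\rm stiff};\C^{3\times3})}$ with a $\chi$-independent constant,'' and you identify this with a $Y_{\rm stiff}$-version of \eqref{estimate1}. But \eqref{estimate1} carries the constant $C/|\chi|$, not $C$: the bound you need is simply false uniformly in $\chi$, since for $\chi$ near $0$ the right-hand side vanishes on (near-)rigid motions while the left-hand side does not. Your chain therefore produces
\[
\bigl\lVert(\simgrad+{\rm i}X_\chi)E\vect u\bigr\rVert_{L^2(Y)}\le C\lVert\vect u\rVert_{H^1(Y_{\rm stiff})}\le \frac{C}{|\chi|}\bigl\lVert(\simgrad+{\rm i}X_\chi)\vect u\bigr\rVert_{L^2(Y_{\rm stiff})},
\]
which is not the claimed uniform estimate. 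Moreover, the only $Y_{\rm stiff}$-analogue available in the paper is Proposition~\ref{estimate0000}, whose proof \emph{uses} Proposition~\ref{extension}; invoking it here would be circular. Your suggested patch at $\chi=0$ does not help either: at $\chi=0$ the second inequality reads $\lVert\simgrad E\vect u\rVert_{L^2(Y)}\le C\lVert\simgrad\vect u\rVert_{L^2(Y_{\rm stiff})}$, and this is precisely the non-trivial property that a generic Stein/Calder\'on extension does \emph{not} enjoy --- it is the substantive content of the Oleinik-type construction, not a corollary of the $H^1$-bound plus Korn.

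The paper's route (which it only sketches, citing \cite{Oleinik}) is to start from an extension operator $E$ that already satisfies the symmetrised-gradient bound $\lVert\simgrad E\vect w\rVert_{L^2(Y)}\le C\lVert\simgrad\vect w\rVert_{L^2(Y_{\rm stiff})}$; such an operator is built in \cite{Oleinik} by matching rigid motions on each inclusion. One then applies it to the quasiperiodic function $\vect w={\rm e}^{{\rm i}\chi y}\vect u$ and untwists: since $(\simgrad+{\rm i}X_\chi)\vect u={\rm e}^{-{\rm i}\chi y}\simgrad({\rm e}^{{\rm i}\chi y}\vect u)$, the twisted-gradient estimate follows directly from the $\simgrad$-bound on $E$, with no passage through $\lVert\vect u\rVert_{H^1}$ and hence no loss of uniformity in $\chi$. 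The missing idea in your proposal is exactly this $\simgrad$-preserving extension; once you have it, the triangle-inequality bootstrap is unnecessary.
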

\begin{proof}
This is a straightforward consequence of an analogous result for the extension operator applied to quasiperiodic functions. For the related construction, see \cite{Oleinik}.
\end{proof}
\begin{proposition} \label{estimate0000} 
There exists a constant $C>0$ such that for every $\vect u \in H_\#^1(Y_{\rm stiff};\C^3)$, $\chi \in Y'\setminus\{0\}$ we have the following estimate:
\begin{equation}
\label{estimate2}
    \left\lVert \vect u \right\rVert_{H^1(Y_{\rm stiff};\C^3)} \leq C|\chi|^{-1}\bigl\lVert \bigl(\simgrad +{\rm i}X_{\chi}\bigr) \vect u \bigr\rVert_{L^2(Y_{\rm stiff};\C^{3 \times 3})}.
\end{equation}
\end{proposition}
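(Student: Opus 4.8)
The plan is to reduce the estimate \eqref{estimate2} on the perforated domain $Y_{\rm stiff}$ to the full-cell estimate \eqref{estimate1} of Proposition \ref{Korn_est} via the extension operator furnished by Proposition \ref{extension}. The point is that on the full cell $Y$ the periodic function enjoys a Poincar\'e-type control by $(\simgrad+{\rm i}X_\chi)$ with the sharp $|\chi|^{-1}$ factor, coming from the Fourier-series computation (the Fourier multiplier $|2\pi k + \chi|$ is bounded below by $|\chi|$ uniformly in $k\in\Z^3$, since $\chi\in Y'=[-\pi,\pi)^3$ and $2\pi k$ either vanishes or has modulus at least $2\pi$, so the worst case is $k=0$). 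The extension lets us transport both sides of this inequality back to $Y_{\rm stiff}$ without losing the $|\chi|$-dependence, because the bounds in \eqref{extension1} are uniform in $\chi$.

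First I would fix $\vect u\in H^1_\#(Y_{\rm stiff};\C^3)$ and $\chi\in Y'\setminus\{0\}$, and let $\widetilde{\vect u}\in H^1_\#(Y;\C^3)$ denote the extension provided by Proposition \ref{extension}, so that $\widetilde{\vect u}|_{Y_{\rm stiff}}=\vect u$ and
\begin{equation*}
\bigl\lVert \bigl(\simgrad +{\rm i}X_{\chi}\bigr) \widetilde{\vect u} \bigr\rVert_{L^2(Y;\C^{3 \times 3})} \leq C\bigl\lVert \bigl(\simgrad + {\rm i}X_{\chi}\bigr) \vect u \bigr\rVert_{L^2(Y_{\rm stiff};\C^{3 \times 3})}
\end{equation*}
with $C$ independent of $\chi$. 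Next I would apply \eqref{estimate1} to $\widetilde{\vect u}$, obtaining
\begin{equation*}
\left\lVert \widetilde{\vect u} \right\rVert_{H^1(Y;\C^3)}\leq \frac{C}{|\chi|}\left\lVert \left(\simgrad +{\rm i}X_{\chi}\right) \widetilde{\vect u} \right\rVert_{L^2(Y;\C^{3 \times 3})}.
\end{equation*}
Finally, since $\|\vect u\|_{H^1(Y_{\rm stiff};\C^3)}\leq \|\widetilde{\vect u}\|_{H^1(Y;\C^3)}$ (restriction does not increase the norm), chaining the three displays yields
\begin{equation*}
\left\lVert \vect u \right\rVert_{H^1(Y_{\rm stiff};\C^3)} \leq C|\chi|^{-1}\bigl\lVert \bigl(\simgrad +{\rm i}X_{\chi}\bigr) \vect u \bigr\rVert_{L^2(Y_{\rm stiff};\C^{3 \times 3})},
\end{equation*}
which is \eqref{estimate2}.

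There is essentially no serious obstacle here: all the heavy lifting has already been done in Propositions \ref{Korn_est} and \ref{extension}. The only point requiring a moment's care is the uniformity in $\chi$ of the extension constant — but this is exactly what the second inequality in \eqref{extension1} asserts, and it follows from the corresponding statement for quasiperiodic functions (multiplying by ${\rm e}^{{\rm i}\chi y}$ turns $(\simgrad+{\rm i}X_\chi)\vect u$ into $\simgrad({\rm e}^{{\rm i}\chi y}\vect u)$, and the standard extension for quasiperiodic functions has a $\chi$-independent norm by a compactness/covering argument, cf. \cite{Oleinik}). One could alternatively observe that \eqref{estimate2} is in fact a restatement of the already-proved estimate \eqref{estim1} of Proposition \ref{lemmascalar1} combined with a trace bound, but the extension-based route above is the most self-contained.
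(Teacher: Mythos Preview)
Your proof is correct and follows essentially the same route as the paper: extend $\vect u$ to the full cell via Proposition \ref{extension}, apply the full-cell estimate \eqref{estimate1}, and use that restriction does not increase the $H^1$ norm together with the $\chi$-uniform extension bound \eqref{extension1}. The paper's proof is the same chain of inequalities written on one line, only keeping the same symbol $\vect u$ for the extension rather than introducing $\widetilde{\vect u}$.
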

\begin{proof}
By Proposition \ref{extension}, the function $\vect u$ 
is extended to $\vect u \in H_\#^1(Y;\C^3)$. Combining \eqref{estimate1} and \eqref{extension1} yields
\begin{equation*}
    \left\lVert \vect u \right\rVert_{H^1(Y_{\rm stiff};\C^3)} \leq \left\lVert \vect u \right\rVert_{H^1(Y;\C^3)} \leq C|\chi|^{-1}\bigl\lVert \bigl(\simgrad +{\rm i}X_{\chi}\bigr) \vect u \bigr\rVert_{L^2(Y;\C^{3 \times 3})} \leq C|\chi|^{-1}\bigl\lVert \bigl(\simgrad +{\rm i}X_{\chi}\bigr) \vect u \bigr\rVert_{L^2(Y_{\rm stiff};\C^{3 \times 3})}.
\end{equation*}
\end{proof}

\begin{proposition}
	\label{nakk112}
There exists a constant $C>0$ such that for all $\vect u \in H_\#^1(Y_{\rm stiff};\C^3)$, $\chi \in Y'$ one has
\begin{align}
\label{finalestimate1}
    \left\lVert \vect u - \frac{1}{|Y_{\rm stiff}|}\int_{Y_{\rm stiff}} \vect u \right\rVert_{H^1(Y_{\rm stiff};\C^3)} \leq C\bigl\lVert \bigl(\simgrad +{\rm i}X_{\chi}\bigr) \vect u \bigr\rVert_{L^2(Y_{\rm stiff};\C^{3 \times 3})},\\[0.3em]
\label{finalestimate2}
    \left\lVert \vect u - \frac{1}{|\Gamma|}\int_{\Gamma} \vect u \right\rVert_{H^{1/2} (\Gamma;\C^3)} \leq C\bigl\lVert \bigl(\simgrad +{\rm i}X_{\chi}\bigr) \vect u \bigr\rVert_{L^2(Y_{\rm stiff};\C^{3 \times 3})}.
\end{align}
\end{proposition}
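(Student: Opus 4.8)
The plan is to establish both estimates simultaneously by a standard Korn-type argument, treating \eqref{finalestimate1} and \eqref{finalestimate2} as variations on Proposition \ref{estimate0000}. First I would extend $\vect u \in H^1_{\#}(Y_{\rm stiff};\C^3)$ to a function on all of $Y$ using the extension operator from Proposition \ref{extension}, so that the $H^1(Y_{\rm stiff})$-norm is controlled by the $H^1(Y)$-norm and the norm of $(\simgrad + {\rm i}X_\chi)\vect u$ over $Y$ is controlled by its norm over $Y_{\rm stiff}$. The key point in which \eqref{finalestimate1}--\eqref{finalestimate2} differ from \eqref{estimate2} is that the factor $|\chi|^{-1}$ disappears; this is because subtracting the mean kills precisely the part of $\vect u$ that is badly controlled when $\chi$ is small (the constant, i.e.\ the $k=0$ Fourier mode), whereas when $|\chi|$ is bounded below \eqref{estimate2} already gives a $\chi$-independent bound.

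For \eqref{finalestimate1}, the cleanest route is a compactness (contradiction) argument uniform in $\chi$, in the same spirit as the proof of Proposition \ref{appendixkorn2}. Suppose the inequality fails: then there are sequences $(\vect u_n)\subset H^1_{\#}(Y_{\rm stiff};\C^3)$ with $\int_{Y_{\rm stiff}}\vect u_n = 0$, $\|\vect u_n\|_{H^1(Y_{\rm stiff};\C^3)}=1$, and $\chi_n \in Y'$ such that $\|(\simgrad + {\rm i}X_{\chi_n})\vect u_n\|_{L^2(Y_{\rm stiff};\C^{3\times 3})}\to 0$. Passing to a subsequence, $\chi_n \to \chi_* \in \overline{Y'}$ and $\vect u_n \rightharpoonup \vect u_*$ weakly in $H^1$, strongly in $L^2$. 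If $\chi_* \neq 0$ one reaches a contradiction via Proposition \ref{estimate0000} (or its proof) applied with the limiting $\chi_*$, using that $X_{\chi_n}\vect u_n \to X_{\chi_*}\vect u_*$ in $L^2$ and lower semicontinuity. If $\chi_* = 0$, then $\simgrad \vect u_* = 0$ in $L^2$, so $\vect u_*$ is an infinitesimal rigid motion; but $\vect u_*$ is $Y$-periodic on the (extended) cell, which forces it to be constant, and the constraint $\int_{Y_{\rm stiff}}\vect u_n = 0$ passes to the limit, forcing $\vect u_* = 0$; meanwhile strong $L^2$ convergence plus the vanishing of $(\simgrad + {\rm i}X_{\chi_n})\vect u_n$ and the bound \eqref{estimate1}-type control of $\nabla \vect u_n$ by $(\simgrad + {\rm i}X_{\chi_n})\vect u_n$ (Proposition \ref{Korn_est}, applied after extension and adjusted for the mean) forces $\|\vect u_n\|_{H^1}\to 0$, contradicting $\|\vect u_n\|_{H^1}=1$. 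Alternatively — and perhaps more transparently — one may argue directly: split $Y' = \{|\chi|\ge \delta\}\cup\{|\chi|<\delta\}$; on the first region \eqref{estimate2} already yields the claim with a $\chi$-independent constant (the mean only helps), and on the second region one writes $\vect u = (\vect u - \langle\vect u\rangle_{Y_{\rm stiff}}) $ and uses the Fourier-series computation of Proposition \ref{Korn_est}, noting that $|a_k \odot(2\pi {\rm i}k + {\rm i}\chi)| \gtrsim |a_k|$ for all $k\neq 0$ uniformly in $\chi$ once the $k=0$ term is removed, which gives $\|\vect u - \langle\vect u\rangle\|_{H^1}^2 \lesssim \sum_{k\neq 0}|a_k\odot(2\pi{\rm i}k+{\rm i}\chi)|^2 \le \|(\simgrad+{\rm i}X_\chi)\vect u\|_{L^2}^2$; here one must be slightly careful that the Fourier argument is on $Y$, so one applies it to the extension and then uses \eqref{extension1}, but since $\langle \vect u\rangle_{Y_{\rm stiff}}$ and $\langle \text{ext}\,\vect u\rangle_Y$ differ only by a constant this is harmless.

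Estimate \eqref{finalestimate2} then follows from \eqref{finalestimate1} essentially for free: by the trace theorem $\|\vect v\|_{H^{1/2}(\Gamma;\C^3)} \le C\|\vect v\|_{H^1(Y_{\rm stiff};\C^3)}$ for any $\vect v \in H^1(Y_{\rm stiff};\C^3)$, and one would like to apply this to $\vect v = \vect u - |\Gamma|^{-1}\int_\Gamma \vect u$. The small subtlety is that the constant being subtracted is the boundary average rather than the bulk average, so I would instead apply the trace estimate to $\vect v = \vect u - |Y_{\rm stiff}|^{-1}\int_{Y_{\rm stiff}}\vect u$, obtaining $\|\vect u - \langle\vect u\rangle_{Y_{\rm stiff}}\|_{H^{1/2}(\Gamma;\C^3)} \le C\|(\simgrad+{\rm i}X_\chi)\vect u\|_{L^2(Y_{\rm stiff};\C^{3\times 3})}$, and then note that $\vect u - |\Gamma|^{-1}\int_\Gamma\vect u$ is the $L^2(\Gamma)$-orthogonal projection of $\vect u - \langle\vect u\rangle_{Y_{\rm stiff}}$ onto the orthogonal complement of constants in $L^2(\Gamma;\C^3)$ shifted by a constant; since subtracting a further constant can only decrease the $H^{1/2}$-seminorm and the full $H^{1/2}$-norm of a mean-zero (on $\Gamma$) function is equivalent to its seminorm, we get $\|\vect u - |\Gamma|^{-1}\int_\Gamma \vect u\|_{H^{1/2}(\Gamma;\C^3)} \le C\|\vect u - \langle\vect u\rangle_{Y_{\rm stiff}}\|_{H^{1/2}(\Gamma;\C^3)}$, and \eqref{finalestimate2} follows. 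The main obstacle is making the constant in \eqref{finalestimate1} genuinely independent of $\chi \in Y'$ — i.e.\ the uniform treatment of the degenerate regime $\chi \to 0$; both the compactness argument and the Fourier split handle this, but one must be attentive that the extension operator of Proposition \ref{extension} has $\chi$-independent norm (which it does) and that removing the mean is what cures the $\chi^{-1}$ blow-up present in \eqref{estimate2}.
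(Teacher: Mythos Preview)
Your proposal is correct, but the balance of effort is inverted relative to the paper.

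For \eqref{finalestimate1} the paper is much shorter: it simply extends $\vect u$ to $Y$ via Proposition~\ref{extension} and invokes \eqref{estimate12} directly (which already gives the uniform-in-$\chi$ bound with the $Y$-mean subtracted), then absorbs the difference between $\langle\vect u\rangle_Y$ and $\langle\vect u\rangle_{Y_{\rm stiff}}$. No contradiction argument and no splitting of $Y'$ are needed; your ``alternative'' Fourier route is essentially this, but you are re-deriving \eqref{estimate12} rather than citing it. The compactness argument you sketch would work, but it is overkill here.

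For \eqref{finalestimate2}, by contrast, your route is more economical than the paper's. The paper goes through Proposition~\ref{josipapp1} (Korn with boundary $L^2$ term) and then has to control an extra $|\chi|\,\bigl|\int_\Gamma \vect u\bigr|$ term via \eqref{estimate2}. Your approach---apply the trace theorem to $\vect u-\langle\vect u\rangle_{Y_{\rm stiff}}$ and then switch the constant to $\langle\vect u\rangle_\Gamma$---is direct and avoids this detour. One correction: the $H^{1/2}$-\emph{seminorm} is invariant under subtraction of constants, not decreasing; what makes the switch work is that the $L^2(\Gamma)$ part of the norm is minimised by the $\Gamma$-mean, so in fact $\|\vect w-\langle\vect w\rangle_\Gamma\|_{H^{1/2}(\Gamma)}\le\|\vect w\|_{H^{1/2}(\Gamma)}$ holds with constant $1$.
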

\begin{proof} 
The bound \eqref{finalestimate1} is deduced from \eqref{estimate12} and Proposition \ref{extension} similar to how Proposition \ref{estimate0000} was established.  To prove \eqref{finalestimate2}, note first that by \eqref{finalestimate1} and the continuity of traces, one has  	
\begin{equation} \label{nak41} 
\begin{split}
    \left\lVert \vect u - \frac{1}{|\Gamma|}\int_{\Gamma} \vect u \right\rVert_{L^2(\Gamma;\C^3)} \leq
\left\lVert \vect u - \frac{1}{|Y_{\rm stiff}|}\int_{Y_{\rm stiff}} \vect u \right\rVert_{L^2(\Gamma;\C^3)}  & \leq C
 \left\lVert \vect u - \frac{1}{|Y_{\rm stiff}|}\int_{Y_{\rm stiff}} \vect u \right\rVert_{H^1(Y_{\rm stiff};\C^3)} \\[0.4em]
 &\leq C\left\lVert \left(\simgrad +{\rm i}X_{\chi}\right) \vect u \right\rVert_{L^2(Y_{\rm stiff};\C^{3 \times 3})}. 
\end{split}
\end{equation} 
Second, from Proposition \ref{josipapp1}  and the trace inequality, one has 
\begin{equation}
	\label{nak43} 
 \begin{aligned}
    &\left\Vert \vect u- \frac{1}{|\Gamma|}\int_{\Gamma} \vect{u}\right\Vert_{H^{1/2}(\Gamma ;\C^3)}\\[0.4em]
    &\hspace{2cm}\leq C\left( \left\lVert \left(\simgrad +{\rm i}X_{\chi}\right) \left(\vect u - \frac{1}{|\Gamma|}\int_{\Gamma} \vect{u}\right) \right\rVert_{L^2(Y_{\rm stiff};\C^{3 \times 3})} + \left\lVert  \vect u - \frac{1}{|\Gamma|}\int_{\Gamma}  \vect{u} \right\rVert_{L^2(\Gamma;\C^3)} \right) \\[0.4em]
    &\hspace{2cm}\leq C \left( \left\lVert \left(\simgrad +{\rm i}X_{\chi}\right) \vect u  \right\rVert_{L^2(Y_{\rm stiff};\C^{3 \times 3})} + |\chi|\left\lvert   \frac{1}{|\Gamma|}\int_{\Gamma}  \vect{u} \right\rvert_{L^2(\Gamma;\C^3)} + \left\lVert  \vect u - \frac{1}{|\Gamma|}\int_{\Gamma}  \vect{u} \right\rVert_{L^2(\Gamma;\C^3)} \right).
\end{aligned} 
\end{equation}
The bound \eqref{finalestimate2} now follows from \eqref{nak43} by using \eqref{nak41}, \eqref{estimate2}, and the continuity of traces. 
\end{proof}

\subsection{Well-posedness and regularity of elasticity boundary value problems}
Here we state some results regarding the properties of the weak solution to the boundary value problem
\begin{equation}
\label{appendixrobinstongform}
- \div\left(\A \simgrad \vect u \right)  = \vect f \ \ \mbox{ on $\Omega$},\qquad \ \ 
(\A\simgrad \vect u){\vect n}|_{\partial\Omega} + \vect u = \vect g \ \  \mbox{ on $\partial \Omega$}, 
\end{equation}
where $\Omega \subset \R^3$ is a bounded Lipschitz domain with the exterior normal ${\vect n},$ $\vect f \in L^2(\Omega;\C^3)$, $\vect g \in H^{-1/2}(\partial \Omega;\C^3),$ and the tensor of material properties $\A \in L^\infty(\Omega;\R^{3 \times 3\times 3 \times 3})$  satisfies the following assumptions (cf. Assumption \ref{coffassumption}).
\begin{assumption}
	\label{coffassumptionappendix}
	\begin{itemize}
		\item 
		Uniform 
		positive-definiteness and uniform boundedness on symmetric matrices: there exists $\nu>0$ such that
		$\nu|\vect\xi|^2 \leq \A(x)\vect\xi :\vect\xi \leq\nu^{-1}|\vect\xi|^2$ for all $\vect\xi \in \R^{3\times 3}, \vect\xi^\top = \vect\xi,$ $x\in\Omega.$
		\item 
		Material symmetries:
		$[\A]_{ijkl}=[\A]_{jikl}=[\A]_{klij},$ $i,j,k,l\in\{1,2,3\}.$
	\end{itemize}
\end{assumption}
The weak form of this problem is stated in the following definition.
\begin{definition}[Robin boundary problem for elasticity]
	\label{appendixrobinweakform}
	 For given functions $\vect f \in H^{-1}(\Omega;\C^3)$, $\vect g \in H^{-1/2}(\partial \Omega;\C^3),$ find $\vect u \in H^1(\Omega;\C^3)$ such that
	\begin{equation*}
	\int_\Omega\A \simgrad \vect u: \overline{\simgrad \vect v} + \int_{\partial \Omega} \vect u \cdot \overline{\vect v}  = \int_\Omega \vect f \cdot\overline{\vect v} + \int_{\partial \Omega} \vect g\cdot \overline{\vect v}\qquad \forall\vect v \in H^1(\Omega;\C^3).
	\end{equation*}
\end{definition}
Notice that the map $\vect v \mapsto \int_{\partial \Omega}\vect{g} \cdot  \overline{\vect{v}}$ is a bounded linear functional on $H^1(\Omega;\C^3)$. Also, the form
\begin{equation*}
(\vect u, \vect v) \mapsto \int_\Omega\A \simgrad \vect u: \overline{\simgrad \vect v} + \int_{\partial \Omega} \vect u \cdot \overline{\vect v},
\end{equation*}
is coercive on $H^1(\Omega;\C^3)$ by Proposition \ref{appendixkorn2}. Thus by the Lax-Milgram lemma, there exists a unique weak solution $\vect u \in H^1(\Omega;\C^3)$ of \eqref{appendixrobinstongform} in the sense of Definition \ref{appendixrobinweakform}, and
\begin{equation*}
\lVert \vect u \rVert_{H^1(\Omega;\C^3)} \leq C \bigl(\lVert \vect f\rVert_{H^{-1}(\Omega;\C^3)} + \lVert \vect g\rVert_{H^{-1/2}(\partial\Omega;\C^3)}\bigr).
\end{equation*}
 The following two lemmata can be found in, e.g., \cite{McLean}.
\begin{lemma}[Regularity of the solution of Robin problem]
	\label{appendixregularitiyofrobin}
	Let $\Omega \subset \R^3$ be a bounded domain with boundary $\partial\Omega$ of class $C^{1,1}$. Let $\vect f \in L^2(\Omega;\C^3)$, $\vect g \in H^{1/2}(\partial \Omega;\C^3)$ and, in addition to Assumption \ref{coffassumptionappendix}, $\A \in \mathcal{C}^{0,1}(\overline{\Omega};\R^{3 \times 3 \times 3 \times 3})$. Then the unique weak solution to the problem \eqref{appendixrobinstongform}
	belongs to $H^2(\Omega;\C^3),$ and
	\begin{equation*}
		\lVert \vect u\rVert_{H^2(\Omega;\C^3)} \leq C \bigl(\lVert \vect f\rVert_{L^2(\Omega;\C^3)} + \lVert \vect g\rVert_{H^{1/2}(\partial\Omega;\C^3)} \bigr).
	\end{equation*}
\end{lemma}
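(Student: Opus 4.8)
\textbf{Proof plan for Lemma \ref{appendixregularitiyofrobin} (regularity of the Robin problem).}

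The plan is to reduce the Robin boundary value problem to a pure Neumann (co-normal derivative) problem with a modified right-hand side, and then to invoke the classical interior and boundary elliptic regularity theory for systems satisfying the strong ellipticity (Legendre--Hadamard, or in our case the stronger pointwise coercivity) condition. First I would record that, by the Lax--Milgram argument already described before the statement, the weak solution $\vect u \in H^1(\Omega;\C^3)$ exists, is unique, and satisfies $\lVert \vect u\rVert_{H^1(\Omega;\C^3)} \leq C(\lVert \vect f\rVert_{L^2(\Omega;\C^3)} + \lVert \vect g\rVert_{H^{1/2}(\partial\Omega;\C^3)})$; in particular the boundary trace $\vect u|_{\partial\Omega}$ lies in $H^{1/2}(\partial\Omega;\C^3)$ with the corresponding bound. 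Consequently $\vect u$ is a weak solution of the Neumann-type problem
\begin{equation*}
-\div(\A\simgrad \vect u) = \vect f \ \text{ in } \Omega, \qquad (\A\simgrad \vect u)\vect n|_{\partial\Omega} = \vect g - \vect u|_{\partial\Omega} =: \widetilde{\vect g} \in H^{1/2}(\partial\Omega;\C^3),
\end{equation*}
where $\widetilde{\vect g}$ satisfies $\lVert \widetilde{\vect g}\rVert_{H^{1/2}(\partial\Omega;\C^3)} \leq C(\lVert \vect f\rVert_{L^2} + \lVert \vect g\rVert_{H^{1/2}})$ by the trace theorem.

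Next I would apply the standard $H^2$ regularity theory for second-order elliptic systems in divergence form with Lipschitz coefficients on a $C^{1,1}$ domain, for Neumann boundary data in $H^{1/2}(\partial\Omega)$; the relevant statement is precisely the one quoted from \cite{McLean} (and also available in \cite{Agranovich}). The hypotheses needed are exactly those assumed: $\A \in \mathcal{C}^{0,1}(\overline{\Omega};\R^{3\times3\times3\times3})$ with the symmetries and uniform pointwise coercivity on symmetric matrices of Assumption \ref{coffassumptionappendix}, together with $\partial\Omega \in C^{1,1}$. Here one should note that pointwise coercivity on symmetric matrices combined with Korn's inequality (Proposition \ref{appendixkorn2}) yields coercivity of the bilinear form on $H^1(\Omega;\C^3)$ modulo the rigid displacements, which the Robin boundary term removes; this is what makes the $L^2$-based elliptic estimate applicable, so that one obtains $\vect u \in H^2(\Omega;\C^3)$ together with
\begin{equation*}
\lVert \vect u\rVert_{H^2(\Omega;\C^3)} \leq C\bigl(\lVert \vect f\rVert_{L^2(\Omega;\C^3)} + \lVert \widetilde{\vect g}\rVert_{H^{1/2}(\partial\Omega;\C^3)} + \lVert \vect u\rVert_{H^1(\Omega;\C^3)}\bigr).
\end{equation*}
Combining this with the $H^1$ bound and the trace estimate for $\widetilde{\vect g}$ absorbs the lower-order term and gives the asserted inequality $\lVert \vect u\rVert_{H^2(\Omega;\C^3)} \leq C(\lVert \vect f\rVert_{L^2(\Omega;\C^3)} + \lVert \vect g\rVert_{H^{1/2}(\partial\Omega;\C^3)})$.

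The only real subtlety — and the step I expect to be the main obstacle — is making sure that the cited elliptic regularity theorem is invoked under hypotheses that genuinely match ours: namely Lipschitz (not smooth) coefficients, a $C^{1,1}$ (not $C^\infty$) boundary, and the co-normal/Neumann boundary operator rather than the Dirichlet one. This is exactly the generality in which the results of \cite{McLean}, Chapter 4, are proved (Robin and Neumann problems for strongly elliptic systems on Lipschitz domains, with the $C^{1,1}$ assumption used to upgrade to $H^2$); so the argument goes through, but one has to be careful to state the structural conditions on $\A$ precisely (boundedness, measurability, and the pointwise coercivity on symmetric gradients that together with Korn's inequality implies the Gårding inequality) and to cite the appropriate boundary-regularity result rather than merely the interior one. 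Once the reduction above is in place, no further computation is needed.
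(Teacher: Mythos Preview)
Your proposal is correct, but you should know that the paper does not actually supply a proof of this lemma: it simply introduces Lemmas \ref{appendixregularitiyofrobin} and \ref{appendixregularityofdirichlet} with the sentence ``The following two lemmata can be found in, e.g., \cite{McLean}'' and states them without argument. Your reduction of the Robin problem to a Neumann problem with the modified datum $\widetilde{\vect g} = \vect g - \vect u|_{\partial\Omega} \in H^{1/2}(\partial\Omega;\C^3)$, followed by an appeal to the $H^2$ boundary regularity theory for strongly elliptic systems with Lipschitz coefficients on $C^{1,1}$ domains, is a standard and valid way to establish the result --- and is in fact the kind of argument underlying the results in \cite{McLean} that the paper is invoking. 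In short, you have written out what the paper elected to leave as a black-box citation; nothing in your sketch is incompatible with the paper's treatment.
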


We are also interested in the solution to the Dirichlet boundary value problem
\begin{equation}
	\label{appendixdirichletstrong}
	\left\{ \begin{array}{ll}
		- \div\left(\A \simgrad \vect u \right)  = \vect f \quad &\mbox{on $\Omega$},\\[0.15em]
		\vect u = \vect g \quad &\mbox{ on $\partial \Omega$}, \end{array} \right.
\end{equation}
where $\vect f \in H^{-1}(\Omega;\C^3)$, $ \vect g \in H^{1/2}(\partial \Omega;\C^3)$. Applying the Lax-Milgram lemma, we infer that it has  a unique weak solution $\vect u \in H^1(\Omega;\C^3)$.
\begin{lemma}[Regularity of the solution of Dirichlet problem]
	\label{appendixregularityofdirichlet}
	Suppose that $\Omega \subset \R^3$ be a bounded domain with $C^{1,1}$ boundary $\partial\Omega,$ $\vect f \in L^2(\Omega;\C^3)$, $\vect g \in H^{3/2}(\partial \Omega;\C^3),$ and $\A \in \mathcal{C}^{0,1}(\overline{\Omega};\R^{3 \times 3 \times 3 \times 3})$. Then the solution to \eqref{appendixdirichletstrong} belongs to $H^2(\Omega;\C^3),$ and 
	\begin{equation*}
		\lVert \vect u\rVert_{H^2(\Omega;\C^3)} \leq C \bigl(\lVert \vect f\rVert_{L^2(\Omega;\C^3)}+ \lVert \vect g\rVert_{H^{3/2}(\partial\Omega;\C^3)} \bigr).
	\end{equation*}
	where the constant $C$ depends only on $\Omega$ and the $\mathcal{C}^{0,1}$-norm of $\mathbb{A}$.  
\end{lemma}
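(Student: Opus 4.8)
The statement to be proved is the regularity estimate for the Dirichlet problem \eqref{appendixdirichletstrong}: under the assumption that $\Omega$ has $C^{1,1}$ boundary, $\vect f\in L^2$, $\vect g\in H^{3/2}(\partial\Omega)$, and $\mathbb A\in\mathcal C^{0,1}$ satisfies Assumption \ref{coffassumptionappendix}, the unique weak solution of \eqref{appendixdirichletstrong} lies in $H^2(\Omega;\C^3)$ and obeys the bound $\lVert\vect u\rVert_{H^2}\le C(\lVert\vect f\rVert_{L^2}+\lVert\vect g\rVert_{H^{3/2}})$. This is the classical $H^2$-regularity theorem for second-order strongly elliptic systems in divergence form, and the plan is to cite \cite{McLean} for the homogeneous-boundary case and then reduce the general case to it by a lifting argument, precisely as is done for the Robin problem in Lemma \ref{appendixregularitiyofrobin}.

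\medskip

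The first step is to reduce to zero boundary data. Given $\vect g\in H^{3/2}(\partial\Omega;\C^3)$, by the trace theorem for $C^{1,1}$ domains there is an extension $\vect G\in H^2(\Omega;\C^3)$ with $\vect G|_{\partial\Omega}=\vect g$ and $\lVert\vect G\rVert_{H^2(\Omega;\C^3)}\le C\lVert\vect g\rVert_{H^{3/2}(\partial\Omega;\C^3)}$. Setting $\vect w:=\vect u-\vect G$, the function $\vect w\in H^1_0(\Omega;\C^3)$ solves $-\div(\mathbb A\simgrad\vect w)=\vect f+\div(\mathbb A\simgrad\vect G)=:\widetilde{\vect f}$ in the weak sense. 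Here one uses the Lipschitz regularity of $\mathbb A$ together with $\vect G\in H^2$ to see that $\div(\mathbb A\simgrad\vect G)\in L^2(\Omega;\C^3)$, with $\lVert\widetilde{\vect f}\rVert_{L^2}\le C(\lVert\vect f\rVert_{L^2}+\lVert\vect g\rVert_{H^{3/2}})$; this is the only place where the assumption $\mathbb A\in\mathcal C^{0,1}$ (rather than merely $L^\infty$) enters. The second step is then to invoke the interior-and-boundary $H^2$ regularity theory for the homogeneous Dirichlet problem for strongly elliptic systems with Lipschitz coefficients on $C^{1,1}$ domains — this is exactly the content available in \cite{McLean} (and equivalently in \cite{Agranovich}) — to conclude $\vect w\in H^2(\Omega;\C^3)$ with $\lVert\vect w\rVert_{H^2}\le C\lVert\widetilde{\vect f}\rVert_{L^2}$, where $C$ depends only on $\Omega$ and on the ellipticity constant and $\mathcal C^{0,1}$-norm of $\mathbb A$. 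Adding back $\vect G$ and using the triangle inequality gives the claimed estimate.

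\medskip

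A small point that must be addressed before invoking the regularity theorem is that the bilinear form associated with \eqref{appendixdirichletstrong} is coercive on $H^1_0(\Omega;\C^3)$: this follows from Korn's inequality (Proposition \ref{appendixkorn2}, or the simpler Proposition \ref{appendixkorn1} since $\vect w$ vanishes on all of $\partial\Omega$) combined with the uniform positive-definiteness of $\mathbb A$ on symmetric matrices, so Lax--Milgram already gives existence, uniqueness, and the $H^1$-bound $\lVert\vect u\rVert_{H^1}\le C(\lVert\vect f\rVert_{H^{-1}}+\lVert\vect g\rVert_{H^{1/2}})$ that was asserted in the paragraph preceding the lemma. Strictly speaking one should note that the second-order system $-\div(\mathbb A\simgrad\,\cdot)$ with $\mathbb A$ satisfying the symmetry and Legendre(--Hadamard) conditions of Assumption \ref{coffassumptionappendix} is properly (strongly) elliptic in the sense required by the cited regularity results, so that the hypotheses of those results are genuinely met; this is standard for the elasticity operator.

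\medskip

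\textbf{Main obstacle.} There is no serious obstacle here: the result is classical and the proof is a routine localisation/lifting argument of the kind already carried out in the excerpt for the Robin problem. The only genuinely substantive ingredient is the boundary $H^2$-estimate for strongly elliptic systems on $C^{1,1}$ domains with Lipschitz coefficients, which I would not reprove but simply cite from \cite{McLean}; the mild point requiring care is that $\mathbb A\in\mathcal C^{0,1}$ is used (and needed) to ensure $\div(\mathbb A\simgrad\vect G)\in L^2$ in the reduction, and that the constant $C$ in the final bound inherits its dependence on $\Omega$ and on $\lVert\mathbb A\rVert_{\mathcal C^{0,1}}$ from that citation.
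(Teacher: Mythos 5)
Your proposal is correct and matches the paper's treatment: the paper gives no proof of this lemma at all, simply noting that it (together with the Robin-problem analogue) "can be found in, e.g., \cite{McLean}," and your lifting argument --- extend $\vect g$ to $\vect G\in H^2$, pass to $\vect w=\vect u-\vect G$ with modified right-hand side in $L^2$ (using $\A\in\mathcal C^{0,1}$), and cite the homogeneous-Dirichlet $H^2$ theory for strongly elliptic systems --- is exactly the standard reduction implicit in that citation.
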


\subsubsection{Trace extension lemma}
A version of the following theorem can be found in \cite{Buffa}.
\begin{theorem}
	\label{theoremtraceextension}
	Let $\Omega \subset \R^3$ be a bounded domain with $C^{0,1}$ boundary $\partial \Omega.$ Let $\vect g_0 \in H^1(\partial \Omega;\C^3)$, $\vect g_1 \in L^2(\partial \Omega;\C^3)$. Then there exists $\vect u \in H^2(\Omega;\C^3)$ such that
			$\partial_{{\vect n}} \vect u  = \vect g_1$ 
			and
			$\vect u = \vect g_0$ on $\partial \Omega$ 
	if and only if 
	\begin{equation}
		\label{thmtraceextensioncondition}
		\nabla_{\partial \Omega} \vect g_0 + \vect g_1 \otimes {\vect n} \in H^{1/2}(\partial \Omega;\C^3),
	\end{equation}
where $\nabla_{\partial \Omega}$ is the tangential gradient.
\end{theorem}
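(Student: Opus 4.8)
The final statement to prove is Theorem \ref{theoremtraceextension}, the trace extension lemma: given $\vect g_0 \in H^1(\partial\Omega;\C^3)$ and $\vect g_1 \in L^2(\partial\Omega;\C^3)$, there exists $\vect u \in H^2(\Omega;\C^3)$ with $\vect u = \vect g_0$ and $\partial_{\vect n}\vect u = \vect g_1$ on $\partial\Omega$ if and only if the compatibility condition \eqref{thmtraceextensioncondition} holds.

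\medskip

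\textbf{Plan.} The necessity direction is the routine one: if $\vect u \in H^2(\Omega;\C^3)$, then by the standard trace theory for $H^2$ functions on a $C^{0,1}$ domain (see \cite{McLean, Buffa}) the full gradient $\nabla\vect u$ has a trace in $H^{1/2}(\partial\Omega;\C^{3\times 3})$. On $\partial\Omega$ the trace of $\nabla\vect u$ decomposes, relative to the splitting of $\C^3$ into the tangent plane and the normal line at (almost) every boundary point, as $\nabla_{\partial\Omega}(\vect u|_{\partial\Omega}) + (\partial_{\vect n}\vect u)\otimes \vect n$; this is just the chain rule applied to the restriction of $\vect u$ to the boundary together with the definition of the normal derivative. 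Hence $\nabla_{\partial\Omega}\vect g_0 + \vect g_1 \otimes \vect n = (\nabla\vect u)|_{\partial\Omega} \in H^{1/2}(\partial\Omega;\C^3)$, which is \eqref{thmtraceextensioncondition}. First I would state this carefully, being explicit that on a merely Lipschitz boundary the tangent/normal decomposition and the identity $\nabla\vect u|_{\partial\Omega} = \nabla_{\partial\Omega}(\vect u|_{\partial\Omega}) + (\partial_{\vect n}\vect u)\otimes\vect n$ hold a.e. and in the appropriate trace spaces.

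\medskip

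\textbf{Sufficiency.} Assume \eqref{thmtraceextensioncondition}. The strategy is to build $\vect u$ componentwise (or directly, keeping track of the $\C^3$-valued nature), reducing matters to the scalar $H^2$-trace extension theorem. Set $\vect h := \nabla_{\partial\Omega}\vect g_0 + \vect g_1\otimes\vect n \in H^{1/2}(\partial\Omega;\C^3)$. The classical surjectivity of the trace map $H^2(\Omega;\C^3) \to H^{3/2}(\partial\Omega;\C^3)\times H^{1/2}(\partial\Omega;\C^3)$ does \emph{not} directly apply because $\vect g_0$ is only $H^1$, not $H^{3/2}$; the point of the theorem is precisely that the missing regularity of $\vect g_0$ is exactly compensated by the compatibility of its tangential gradient with $\vect g_1$. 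The construction I would use: first extend the pair $(\vect g_0, \vect h)$ — treating $\vect g_0$ as a would-be boundary value and $\vect h$ as a would-be boundary gradient — by solving an auxiliary problem. Concretely, note that $\vect h$ being in $H^{1/2}$ means it has an $H^1(\Omega;\C^3)$ extension $\vect H$ (by the standard trace theorem, Proposition \ref{josipapp2}); then one wants $\vect u$ with $\nabla\vect u|_{\partial\Omega} = \vect H|_{\partial\Omega}$ and $\vect u|_{\partial\Omega} = \vect g_0$. This is an overdetermined first-order system for $\vect u$ on the boundary, but the compatibility condition $\nabla_{\partial\Omega}\vect g_0 = \vect h - \vect g_1\otimes\vect n$ (with $\vect g_1$ recovered as the normal component $\vect h\,\vect n$) ensures that the tangential part of the prescribed gradient is consistent with the prescribed Dirichlet datum. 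I would then invoke the lifting result of \cite{Buffa} (or a localisation-and-flattening argument patching together the half-space case via a partition of unity subordinate to a Lipschitz atlas of $\partial\Omega$) to produce $\vect u \in H^2(\Omega;\C^3)$ with the required two traces, and record the accompanying estimate $\|\vect u\|_{H^2} \le C(\|\vect g_0\|_{H^1} + \|\vect g_1\|_{L^2} + \|\vect h\|_{H^{1/2}})$, which follows from the norms of the extension operators involved.

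\medskip

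\textbf{Main obstacle.} The delicate point is the sufficiency construction on a genuinely Lipschitz (not $C^{1,1}$ or smoother) boundary: flattening the boundary by a bi-Lipschitz change of variables does not preserve $H^2$ regularity in general, so one cannot simply reduce to the half-space and glue. The right way around this is to use the characterisation of the $H^2$-trace pair recorded in \cite{Buffa}, where the compatibility condition \eqref{thmtraceextensioncondition} appears precisely as the necessary and sufficient condition for the existence of such an extension on Lipschitz domains — so the substance of the proof is an appeal to that result after identifying our hypotheses with its hypotheses and checking that the tangential gradient $\nabla_{\partial\Omega}$ and the $H^{1/2}(\partial\Omega)$ scale are the ones used there. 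Thus the bulk of the write-up is (i) the clean derivation of the necessity via the chain rule on $\partial\Omega$, and (ii) a careful citation-and-matching argument for sufficiency, together with the quantitative bound that the rest of the paper uses (e.g.\ in the proof of Theorem \ref{nakk200}). I do not expect any genuinely new analysis beyond organising these ingredients.
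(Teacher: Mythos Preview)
Your approach is correct and matches the paper's treatment: the paper does not prove this theorem at all but simply states it with the remark ``A version of the following theorem can be found in \cite{Buffa}.'' Your plan --- a direct chain-rule argument for necessity and an appeal to the Buffa--Geymonat result \cite{Buffa} for sufficiency on a Lipschitz domain --- is exactly right, and in fact supplies more detail than the paper does.
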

This leads us to the trace extension lemma.
\begin{lemma}
	\label{traceextensionlemma}
	Let $\Omega \subset \R^3$ be a bounded domain with boundary $\partial\Omega$ of class $C^{1,1}$ and $\A \in \mathcal{C}^{0,1}(\overline{\Omega};\R^{ 3 \times 3 \times 3 \times 3})$.     Let $\vect g \in H^{1/2}(\partial \Omega;\C^3)$. Then there exists  $\vect u \in H^2(\Omega;\C^3)$ such that
	\begin{equation}
		\label{traceproblem}
			(\A \simgrad\vect u) {\vect n} = \vect g \quad \mbox{ on $\partial \Omega$, }\quad\qquad 
			\vect u = 0 \quad \mbox{ on $\partial \Omega$. } 
	\end{equation}
\end{lemma}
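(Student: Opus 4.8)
\textbf{Proof plan for Lemma \ref{traceextensionlemma}.}
The plan is to reduce the claim to Theorem \ref{theoremtraceextension}, which characterises the simultaneous prescription of a Dirichlet datum $\vect g_0\in H^1(\partial\Omega;\C^3)$ and a Neumann datum $\vect g_1\in L^2(\partial\Omega;\C^3)$ of an $H^2$ function by the compatibility condition \eqref{thmtraceextensioncondition}, namely $\nabla_{\partial\Omega}\vect g_0+\vect g_1\otimes{\vect n}\in H^{1/2}(\partial\Omega;\C^3)$. Here we wish to take $\vect g_0\equiv 0$, so $\nabla_{\partial\Omega}\vect g_0=0$ and the compatibility condition reduces to $\vect g_1\otimes{\vect n}\in H^{1/2}(\partial\Omega;\C^3)$. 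The subtlety is that the conormal datum we are given is $(\A\simgrad\vect u){\vect n}=\vect g$, not the plain normal derivative $\partial_{\vect n}\vect u$, so some translation between the two is needed; since $\vect u$ vanishes on $\partial\Omega$, its full boundary gradient is purely normal, $\simgrad\vect u|_{\partial\Omega}=(\partial_{\vect n}\vect u)\odot{\vect n}$, and therefore $(\A\simgrad\vect u){\vect n}$ is a pointwise linear (matrix-valued, $\C^{0,1}$ in $x$, uniformly invertible on the relevant subspace by the coercivity in Assumption \ref{coffassumption}) transformation of $\partial_{\vect n}\vect u$.

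The steps, in order, would be as follows. First, observe that if $\vect u=0$ on $\partial\Omega$ then on $\partial\Omega$ one has $\simgrad\vect u = (\partial_{\vect n}\vect u)\odot{\vect n}$, hence
\begin{equation*}
(\A\simgrad\vect u){\vect n} = \A\bigl((\partial_{\vect n}\vect u)\odot{\vect n}\bigr){\vect n} =: \mathcal{M}(x)\,\partial_{\vect n}\vect u,
\end{equation*}
where $\mathcal{M}(x)\in\C^{3\times3}$ depends only on $\A(x)$ and ${\vect n}(x)$, is Lipschitz in $x$ (as $\partial\Omega$ is $C^{1,1}$ and $\A\in C^{0,1}$), and is uniformly invertible: for any $\vect w\in\C^3$ one has, by Assumption \ref{coffassumption}, $\Re\langle\mathcal{M}(x)\vect w,\vect w\rangle = \A(\vect w\odot{\vect n}):\overline{\vect w\odot{\vect n}}\ge \nu|\vect w\odot{\vect n}|^2\ge c\nu|\vect w|^2$, using $|\vect w\odot{\vect n}|\ge|\vect w|/\sqrt2$ as in Proposition \ref{Korn_est}. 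Thus we may set $\vect g_1:=\mathcal{M}^{-1}\vect g$; since $\mathcal{M}^{-1}$ is Lipschitz and bounded, $\vect g_1\in H^{1/2}(\partial\Omega;\C^3)\subset L^2(\partial\Omega;\C^3)$.

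Second, verify the compatibility condition of Theorem \ref{theoremtraceextension} with $\vect g_0\equiv 0$: we need $\vect g_1\otimes{\vect n}\in H^{1/2}(\partial\Omega;\C^3)$. This follows because $\vect g_1\in H^{1/2}(\partial\Omega;\C^3)$ and ${\vect n}$, the outward unit normal of a $C^{1,1}$ domain, is Lipschitz on $\partial\Omega$, so multiplication by ${\vect n}$ is a bounded operator on $H^{1/2}(\partial\Omega)$; hence the tensor product $\vect g_1\otimes{\vect n}$ lies in $H^{1/2}(\partial\Omega;\C^{3\times3})$ (componentwise). Third, apply Theorem \ref{theoremtraceextension} to obtain $\vect u\in H^2(\Omega;\C^3)$ with $\vect u=0$ and $\partial_{\vect n}\vect u=\vect g_1$ on $\partial\Omega$. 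Fourth, check that this $\vect u$ solves \eqref{traceproblem}: $\vect u=0$ on $\partial\Omega$ is immediate, and on $\partial\Omega$ we get $(\A\simgrad\vect u){\vect n}=\mathcal{M}\partial_{\vect n}\vect u=\mathcal{M}\mathcal{M}^{-1}\vect g=\vect g$, using Step 1. I expect the main obstacle to be the bookkeeping in Step 1 — keeping the identification $(\A\simgrad\vect u){\vect n}=\mathcal{M}(x)\partial_{\vect n}\vect u$ rigorous at the level of $H^{1/2}$ (resp. $L^2$) traces rather than pointwise, and confirming that $\mathcal{M}^{-1}$ inherits the Lipschitz regularity needed so that multiplication by it preserves $H^{1/2}(\partial\Omega)$; once these multiplier facts are in place, everything else is a direct invocation of Theorem \ref{theoremtraceextension}.
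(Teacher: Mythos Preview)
Your proposal is correct and follows essentially the same route as the paper's own proof: both observe that $\vect u|_{\partial\Omega}=0$ forces $\simgrad\vect u|_{\partial\Omega}=(\partial_{\vect n}\vect u)\odot{\vect n}$, define the pointwise matrix $\mathcal{M}(x)\vect w=\bigl(\A(\vect w\odot{\vect n})\bigr){\vect n}$ (the paper calls it $L_x$), show it is uniformly invertible via the coercivity bound $|\vect w\odot{\vect n}|\ge|\vect w|/\sqrt{2}$, set $\vect g_1=\mathcal{M}^{-1}\vect g\in H^{1/2}$ using the Lipschitz regularity of $\mathcal{M}^{-1}$, and then invoke Theorem~\ref{theoremtraceextension} with $\vect g_0=0$ after checking $\vect g_1\otimes{\vect n}\in H^{1/2}$. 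The obstacle you flag --- that $\mathcal{M}^{-1}$ preserves $H^{1/2}$ --- is handled in the paper exactly as you anticipate, via Cramer's rule and the fact that $\det L_x$ is Lipschitz and bounded below.
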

\begin{proof}
	The first step is to note that for  $\vect u \in H^2(\Omega;\C^3)$ such that $\vect{u}\vert_\Gamma=0$ one has 
	\begin{equation}
		\label{nakk55} 
		\nabla \vect u |_{\partial \Omega} = \partial_{{\vect n}} \vect u|_{\partial \Omega} \otimes {\vect n}.
	\end{equation}
	Indeed, for an arbitrary point on $\partial \Omega$ consider an arbitrary vector ${\vect v} = \alpha_1 {\vect \tau}_1 + \alpha_2 {\vect \tau}_2 + \alpha_3 {\vect n}$,  where ${\vect \tau}_1, {\vect \tau}_2, {\vect n}$ is an orthonormal basis of vectors ${\vect \tau}_1, {\vect \tau}_2$ tangential on $\partial \Omega$ and the vector ${\vect n}$ is normal to $\partial \Omega$. Then
	\begin{equation*}
		\nabla \vect u |_{\partial \Omega} \cdot {\vect v} = \alpha_1 \partial_{{\vect \tau}_1} \vect u|_{\partial \Omega} + \alpha_2 \partial_{{\vect \tau}_2} \vect u|_{\partial \Omega} + \alpha_3 \partial_{{\vect n}} \vect u|_{\partial \Omega} = \alpha_3 \partial_{{\vect n}} \vect u|_{\partial \Omega},
	\end{equation*}
	due to the fact that $\partial_{{\vect \tau}_1} \vect u |_{\partial \Omega} = \partial_{{\vect \tau}_2} \vect u = 0|_{\partial \Omega} $. But, since $\alpha_3 = {\vect v} \cdot {\vect n}$, one has
	\begin{equation*}
		\bigl(\nabla \vect u|_{\partial \Omega}\bigr){\vect v} = \left( {\vect v} \cdot {\vect n}\right) \partial_{{\vect n}} \vect u|_{\partial \Omega} = \bigl(\left( \partial_{{\vect n}} \vect u|_{\partial \Omega} \otimes {\vect n} \right)\bigr){\vect v},
	\end{equation*}
	and \eqref{nakk55} follows.
	With this in hand, the first equation in \eqref{traceproblem} becomes
		$(\A   \left( \partial_{{\vect n}} \vect u|_{\partial \Omega} \odot {\vect n}\right)){\vect n} = \vect g.$
	We proceed by introducing the new variable $\vect \omega := \partial_{{\vect n}} \vect u|_{\partial \Omega}$ and consider the problem of finding $\vect \omega$ such that
	\begin{equation}
		\label{omegaboundaryproblem}
		\bigl(\A  \left( \vect \omega \odot {\vect n}\right)\bigr){\vect n} = \vect g.
	\end{equation}

	The second step of the proof consists in showing that there exists a unique solution $\vect \omega \in H^{1/2}(\partial \Omega;\C^3)$ to  \eqref{omegaboundaryproblem}. Note that this  
	is an algebraic equation for every $x \in \partial \Omega$. 
	For a fixed point $x \in \partial \Omega$, we introduce an operator $L_x :\C^3 \to \C^3$ as follows:
		$L_x \vect \omega :=(\A\left( \vect \omega \odot {\vect n}(x)\right)){\vect n(x)}.$
	It is symmetric and positive definite:
		\begin{align}
		\langle L_x \vect \omega, \vect w \rangle_{\C^3}&= \bigl\langle\bigl(\A  \left( \vect \omega \odot {\vect n}(x)\right)\bigr){\vect n}(x), \vect w \bigr\rangle_{\C^3} = \A    \left( \vect \omega \odot {\vect n}(x)\right) :   \left( \vect w \odot {\vect n}(x)\right) = \langle L_x \vect w, \vect \omega \rangle_{\C^3},\nonumber\\[0.3em]
	\label{nakk60} 
		\langle L_x \vect \omega, \vect \omega \rangle_{\C^3}&= \A    \left( \vect \omega \odot {\vect n}(x)\right) :    \left( \vect \omega \odot {\vect n}(x)\right) \geq \nu| \vect \omega \odot {\vect n}(x)|^2\geq\frac{\nu}{2}  | \vect \omega|^2 |{\vect n}(x)|^2 = \frac{\nu}{2} |\vect \omega|^2,
		\end{align}
	where we have used Assumption \ref{coffassumptionappendix}.
	Thus, the problem
		$L_x \vect \omega(x)=\vect g(x)$
	has a unique solution for a.e. $x \in \partial \Omega$. It follows from \eqref{nakk60} that $\det \left(L_x \right) \geq \alpha >0$, uniformly in $x \in \partial \Omega$. By  $C^{0,1}$ regularity of both the material coefficients and the normal ${\vect n}$, we know that both $\det \left(L_x \right)$ and $\left(\det \left(L_x \right)\right)^{-1}$ are of class $C^{0,1}$. Then, by virtue of Cramer's rule, the function $\vect \omega(x) := \left( L_x \right)^{-1} \vect g(x)$ belongs to $H^{1/2}(\partial \Omega;\C^3)$.
	Thus, we have reduced the problem \eqref{traceproblem} to finding $\vect u \in H^2(\Omega;\C^3)$ such that 
			$\partial_{{\vect n}} \vect u  = \vect \omega$ on $\partial \Omega,$ 
			$\vect u = 0$ on $\partial \Omega$,  
	where $\vect{\omega}$ is the solution of \eqref{omegaboundaryproblem}. To complete the proof, it remains to check the validity of the condition \eqref{thmtraceextensioncondition} and apply  Theorem \ref{theoremtraceextension}, which is possible as
		$\vect \omega \otimes {\vect n} \in H^{1/2}(\partial \Omega;\C^3)$
	due to the $C^{0,1}(\partial\Omega;\R^3)$ regularity of ${\vect n}$.
\end{proof}

\subsection{Auxiliary proofs}
\begin{proof}[Proof of Theorem \ref{Piasymptheorem}]
	The definition of $\Pi_\chi^{\rm stiff}$ can be restated as follows:
		$\Pi_\chi^{\rm stiff} \vect  g := \vect G + \vect  u,$ 
	where $\vect  u \in H_\#^1(Y_{\rm stiff};\C^3),$ $\vect u|_\Gamma = 0$ is such that
	\begin{equation*}
			a_{0,\chi}^{\rm stiff}(\vect u,\vect v) = -a_{0,\chi}^{\rm stiff}(\vect G,\vect v)\qquad \forall \vect v \in H_\#^1(Y_{\rm stiff};\C^3),\quad\vect v\vert_{\Gamma}=0,
	\end{equation*}
	and $\vect G \in H_\#^1(Y_{\rm stiff};\C^3)$ is an extension of $\vect g\in H^{1/2}(\Gamma;\C^3)$ satisfying the bound
		$\left\lVert \vect G\right\rVert_{H^1} \leq C \left\lVert \vect g \right\rVert_{H^{1/2}(\Gamma;\C^3)},$ see Proposition \ref{josipapp2}.
	Recall that $a_{0,\chi}^{\rm stiff}: H_\#^1(Y_{\rm stiff};\C^3)\times H_\#^1(Y_{\rm stiff};\C^3) \to \R$ is the sesquilinear form associated with the operator $\mathcal{A}_{0, \chi}^{\rm stiff}$.
We aim to find an expansion of the solution $\vect u \in H_\#^1(Y_{\rm stiff};\C^3)$,  $\vect{u}\vert_\Gamma=0,$  to
	\begin{equation}
		\label{Piequationdefinition}
		\begin{aligned}
			&\int_{Y_{\rm stiff}} \A_{\rm stiff}\left(\simgrad +{\rm i}X_\chi \right) \vect u(y):  \left(\simgrad +{\rm i}X_\chi \right) \vect v(y) dy\\[0.3em] 
			&=-\int_{Y_{\rm stiff}} \A_{\rm stiff}\left(\simgrad +{\rm i}X_\chi \right) \vect G(y)  :  \left(\simgrad +{\rm i}X_\chi \right) \vect v(y) dy\qquad\forall\vect v\in H_\#^1(Y_{\rm stiff};\C^3),\ \vect v\vert_\Gamma = 0. 
		\end{aligned}
	\end{equation}
	in the form
	\begin{equation}
		\label{expansionforPi}
		\vect u = \vect u_0 + \sum_{n=1}^\infty \vect u_{n},
	\end{equation}
where ${\vect u}_n\vert_\Gamma=0$ for all $n,$ 
	the $H^1$ norm of $\vect u_n$ is of order $|\chi|^n$, while the error of approximation
	\begin{equation*}
		\vect u_{\rm error,n}:= \vect u - \vect u_0 - \sum_{k=1}^n  \vect u_k
	\end{equation*}
	is of order $|\chi|^{n+1}$. 
	The leading-order term $\vect u_0$ corresponds to the harmonic lift in the case $\chi = 0$.
	With this apriori intention, we plug the expansion \eqref{expansionforPi} into \eqref{Piequationdefinition} and equate the terms which would be of order $\mathcal{O}(1)$ to obtain
	\begin{equation*}
		\begin{aligned}
			\int_{Y_{\rm stiff}} \A_{\rm stiff}\simgrad \vect u_0(y) :  \simgrad \vect v(y)dy &= -\int_{Y_{\rm stiff}} \A_{\rm stiff}\simgrad \vect  G(y) :  \simgrad \vect  v(y)dy 
			\qquad\forall \vect v\in H_\#^1(Y_{\rm stiff};\C^3), \quad
			\vect v\vert_\Gamma=0.
		\end{aligned}
	\end{equation*}
	We recognise here the definition of the order-zero lift operator $\Pi_0^{\rm stiff}$, namely
		$\Pi_0^{\rm stiff} \vect g = \vect u_0 + \vect G =:\widetilde{\vect u}.$
	It is clear that
	\begin{equation*}
		\left\lVert \vect u_0 \right\rVert_{H^1(Y_{\rm stiff};\C^3)} \leq C \left\lVert \vect G \right\rVert_{H^1(Y_{\rm stiff};\C^3)} \leq C \left\lVert \vect g \right\rVert_{H^{1/2}(\Gamma;\C^3)}.
	\end{equation*}
	We proceed to define $\vect u_1$ as the solutions to
	\begin{equation}
		\label{definitionofpichi1}
		\begin{aligned}
			&\int_{Y_{\rm stiff}}\A_{\rm stiff}\simgrad \vect u_1(y):  \simgrad \vect v(y)dy\\[0.3em]
			=&-\int_{Y_{\rm stiff}} \A_{\rm stiff}{\rm i} X_\chi  \widetilde{\vect u}(y) :  \simgrad \vect v(y)dy  - \int_{Y_{\rm stiff}} \A_{\rm stiff}\simgrad  \widetilde{\vect u}(y):{\rm i}X_\chi \vect v(y)dy\qquad  
			\forall \vect v\in H_\#^1(Y_{\rm stiff};\C^3),\ \vect v\vert_{\Gamma}=0.
		\end{aligned}
	\end{equation}
	Clearly, one has
		$\left\lVert \vect u_1 \right\rVert_{H^1(Y_{\rm stiff};\C^3)} \leq C |\chi| \left\lVert\vect g \right\rVert_{H^{1/2}(\Gamma;\C^3)}.$
The next-order term of the expansion is defined by
	\begin{equation*}
		\begin{aligned}
			\int_{Y_{\rm stiff}}\A_{\rm stiff}\simgrad  \vect u_2(y):\simgrad \vect v(y)dy
		    =&-\int_{Y_{\rm stiff}} \A_{\rm stiff}{\rm i} X_\chi  \widetilde{\vect u}(y) :{\rm i}X_\chi \vect v(y)dy-\int_{Y_{\rm stiff}} \A_{\rm stiff}{\rm i} X_\chi  \vect u_1(y) :  \simgrad \vect v(y)dy\\[0.3em]
		    &-\int_{Y_{\rm stiff}} \A_{\rm stiff}\simgrad  \vect u_1(y) :{\rm i}X_\chi \vect v(y)dy\qquad\forall \vect v\in H_\#^1(Y_{\rm stiff};\C^3),\ \vect v\vert_{\Gamma}=0.
		\end{aligned}
	\end{equation*}
	We see that
	$\left\lVert \vect u_2 \right\rVert_{H^1(Y_{\rm stiff};\C^3)} \leq C |\chi|^2\left\lVert \vect  g \right\rVert_{H^{1/2}(\Gamma;\C^3)}.$
	Continuing this process by induction for $n \geq 3$,  we define $\vect u_n$ by the recurrence relation
	\begin{equation*}
		\begin{aligned}
			\int_{Y_{\rm stiff}}&\A_{\rm stiff}\simgrad  \vect u_n(y):\simgrad \vect v(y)dy
			=-\int_{Y_{\rm stiff}} \A_{\rm stiff}{\rm i}X_\chi  \vect u_{n-2}(y) :  {\rm i}X_\chi \vect v(y)dy\\[0.3em]
			&-\int_{Y_{\rm stiff}} \A_{\rm stiff}{\rm i}X_\chi  \vect u_{n-1}(y) :  \simgrad \vect v(y)dy 
			-\int_{Y_{\rm stiff}} \A_{\rm stiff}\simgrad  \vect u_{n-1}(y) :{\rm i}X_\chi \vect v(y)dy\qquad
			\forall \vect v\in H_\#^1(Y_{\rm stiff};\C^3),\ \vect v\vert_\Gamma=0,
		\end{aligned}
	\end{equation*}
	so that 
		$\left\lVert \vect u_n \right\rVert_{H^1(Y_{\rm stiff};\C^3)}  \leq C |\chi|^n \left\lVert \vect  g \right\rVert_{H^{1/2}(\Gamma;\C^3)}$ for all $n \in \N.$
	The error term
	\begin{equation*}
		\vect u_{\rm error, n}:= \vect u - \sum_{k = 0}^n \vect u_k, \quad n \in \N,
	\end{equation*}
	satisfies
	\begin{equation*}
		\begin{aligned}
			\int_{Y_{\rm stiff}}&\A_{\rm stiff}\left(\simgrad + {\rm i}X_\chi \right)\vect u_{\rm error,n}(y):  \left(\simgrad +{\rm i}X_\chi \right) \vect v(y) dy 
			\\[0.3em]
			&= -\int_{Y_{\rm stiff}} \A_{\rm stiff}{\rm i}X_\chi  \vect u_{n}(y):{\rm i}X_\chi \vect v(y)dy  -\int_{Y_{\rm stiff}} \A_{\rm stiff}{\rm i}X_\chi  \vect u_{n-1}(y):{\rm i}X_\chi \vect v(y)dy \\[0.3em]
			&-\int_{Y_{\rm stiff}} \A_{\rm stiff}{\rm i}X_\chi  \vect u_{n}(y) :  \simgrad \vect v(y)dy 
			- \int_{Y_{\rm stiff}} \A_{\rm stiff}\simgrad  \vect u_{n}(y):{\rm i}X_\chi \vect v(y)dy\qquad
			\forall \vect v\in H_\#^1(Y_{\rm stiff};\C^3),\ \vect  v\vert_\Gamma= 0.
		\end{aligned}
	\end{equation*}
	It follows that
	$\lVert \vect u_{\rm error,n}\rVert_{H^1(Y_{\rm stiff};\C^3)} \leq C |\chi|^{n+1} \left\lVert \vect g \right\rVert_{H^{1/2}(\Gamma;\C^3)}.$
\end{proof}

\begin{proof}[Proof of Lemma \ref{lemmanormequivalence_f}]

The proof is by contradiction. Assume that
for all $n \in \N$ there exist $\chi_n \in Y'$ and $\vect f_n \in \widehat{\mathcal{H}}_{\chi_n}^{\rm stiff(soft)}$ such that $\lVert \vect f_n\rVert_{H^1(Y_{\rm stiff(soft)};\C^3)}=1$ and $ \lim_{n \to \infty} \lVert \vect f_n\rVert_{L^2(Y_{\rm stiff(soft)};\C^3)}=0$. From this, it is clear that
\begin{equation}
\label{fweak}
    \vect f_n \xrightharpoonup{H^1(Y_{\rm stiff(soft)};\C^3)} 0.
\end{equation}

We proceed by defining 
$\vect g_n:=\vect f_n |_{\Gamma}\in \widehat{\mathcal{E}}_{\chi_n}$, i.e., $\vect f_n =  \widehat{\Pi}_{\chi_n}^{\rm stiff(soft)} \vect g_n.$ Since $\lVert \vect f_n\rVert_{H^1(Y_{\rm stiff(soft)};\C^3)}=1$, due to the continuity  of the trace operator, we conclude that the sequence $(\vect g_n)_{n \in \N}$ is bounded in $H^{1/2}(\Gamma;\C^3)$ and there exists $\vect g \in \mathcal{E}$ such that (on a subsequence)
   $\vect g_n \xrightharpoonup{H^{1/2}(\Gamma;\C^3)} \vect g.$
Furthermore, due to Proposition \ref{propositionpiestimate}, the sequence $\left(\lVert\vect g_n  \rVert_{H^{1/2}(\Gamma;\C^3)} \right)_{n \in \N}$ is bounded below by a $\chi$-independent $C>0:$
\begin{equation}
\label{glowerbound}
    \lVert\vect g_n  \rVert_{H^{1/2}(\Gamma;\C^3)} \geq C.
\end{equation}
However, due to the compactness of the trace operator acting from $H^1(Y_{\rm stiff(soft)};\C^3)$ to $L^2(\Gamma;\C^3)$, from \eqref{fweak} we infer that
    $\vect g_n \xrightarrow{L^2(\Gamma;\C^3)}0,$
hence $\vect g = 0.$ 
Next, we note that, on the one hand, 
\begin{equation*}
    \langle \Lambda_{\chi_n}^{\rm stiff} \vect g_n,\vect g_n\rangle_{L^2(\Gamma;\C^3)} + \langle
\vect g_n , \vect g_n \rangle_{L^2(\Gamma;\C^3)} \leq \left(C|\chi|^2 + 1 \right) \lVert \vect g_n\rVert_{L^2(\Gamma;\C^3)}^2,
\end{equation*}
where we use the fact that $\vect g_n \in \widehat{\mathcal{E}}_{\chi_n}$. On the other hand, one has
\begin{equation*}
    \begin{aligned}
         \langle \Lambda_{\chi_n}^{\rm stiff} \vect g_n,\vect g_n\rangle_{L^2(\Gamma;\C^3)} + \langle
\vect g_n , \vect g_n \rangle_{L^2(\Gamma;\C^3)} &= \lambda_{\chi_n}^{\rm stiff}(\vect g_n, \vect g_n) + \lVert \vect g_n\rVert_{L^2(\Gamma;\C^3)}^2 \\
& \geq C\left( \left\lVert \left(\simgrad + {\rm i}X_{\chi_n} \right)\Pi_{\chi_n}^{\rm stiff} \vect g_n\right\rVert_{L^2(Y_{\rm stiff};\C^3)}^2 + \lVert \vect g_n\rVert_{L^2(\Gamma;\C^3)}^2  \right)\\
& \geq C \left\lVert \Pi_{\chi_n}^{\rm stiff} \vect g_n\right\rVert_{H^1(Y_{\rm stiff};\C^3)}^2 \geq C \left\lVert  \vect g_n\right\rVert_{H^{1/2}(\Gamma;\C^3)}^2,
    \end{aligned}
\end{equation*}
where we use 
the coercivity of the sesquilinear form \eqref{lambda_chi_form_def},
Proposition \eqref{josipapp1}, and the continuity of the trace operator. It follows that 
\begin{equation*}
    \left\lVert  \vect g_n\right\rVert_{H^{1/2}(\Gamma;\C^3)} \leq C\lVert \vect g_n\rVert_{L^2(\Gamma;\C^3)} \to 0,
\end{equation*}
which contradicts \eqref{glowerbound}.
\end{proof}

\section*{Acknowledgements}
KC and AVK have been supported by EPSRC (Grants EP/L018802/2, EP/V013025/1.)
IV and J\v{Z} have been supported by the Croatian Science Foundation under Grant agreement No. IP-2018-01-8904 (Homdirestroptcm) and under 
Grant agreement No. IP-2022-10-5181 (HOMeOS).

\section*{Data availability statement}

This manuscript has no associated data.



\end{document}